\documentclass[11pt]{amsart}
\usepackage{geometry}
\geometry{a4paper,left=1.5cm,right=1.5cm,top=2.5cm,bottom=2.5cm}
\usepackage{graphpap}

\usepackage{amsmath}
\usepackage{cases}
\usepackage{amssymb}
\usepackage{hyperref}
\usepackage{xcolor}
\usepackage{color}

\setlength{\textwidth}{15cm}
\setlength{\oddsidemargin}{0cm}
\setlength{\evensidemargin}{0cm}
\setcounter{tocdepth}{2}


\newcommand{\noi}{\noindent}
\newcommand{\wt}{\widetilde}

\newtheorem{theorem}{Theorem}

\newtheorem{proposition}{Proposition}
\newtheorem{lemme}[proposition]{Lemma}

\newtheorem{question}[proposition]{Question}
\newtheorem{corollaire}[proposition]{Corollary}

\newtheorem{remarque}[proposition]{Remark}

\numberwithin{equation}{section}
\numberwithin{proposition}{section}

\def\ov{\overline} 
\def\11{{\rm 1~\hspace{-1.4ex}l} }
\def\R{\mathbb R}

\def\Z{\mathbb Z}
\def\N{\mathbb N}

\def\T{\mathbb T}

\def\P{\mathbf{P}}

\def\lg{\langle}
\def\rg{\rangle}
\def\wt{\widetilde}
\def\wh{\widehat}
\begin{document}
	\title[Refined probabilistic global well-posedness for the weakly dispersive NLS ]
	{
Refined probabilistic global well-posedness for the weakly dispersive NLS
	}
	\author{Chenmin Sun, Nikolay Tzvetkov}
	\address{
		Universit\'e de Cergy-Pontoise,  Cergy-Pontoise, F-95000,UMR 8088 du CNRS}
	\email{nikolay.tzvetkov@u-cergy.fr}
	\email{chenmin.sun@u-cergy.fr}
	\begin{abstract} 
We continue our study of the cubic fractional NLS with very weak dispersion $\alpha>1$ and data distributed according to the Gibbs measure. We construct the natural strong solutions for $\alpha>\alpha_0=\frac{31-\sqrt{233}}{14}\approx 1.124$ which is strictly smaller than $\frac{8}{7}$, the threshold beyond which the first nontrivial Picard iteration has no longer the Sobolev regularity needed for the deterministic well-posedness theory. This also improves our previous result in Sun-Tzvetkov \cite{Sun-Tz2}. We rely on recent ideas of Bringmann \cite{Bringmann} and Deng-Nahmod-Yue \cite{Deng2}.
In particular we adapt to our situation the new resolution ansatz in \cite{Deng2} which captures the most singular frequency interaction parts in the $X^{s,b}$ type space. To overcome the difficulties caused by the weakly dispersive effect, our specific strategy is to benefit from the ``almost" transport effect of these singular parts and to exploit their $L^{\infty}$ as well as the Fourier-Lebesgue property in order to inherit the random feature from the linear evolution of high frequency portions. 
\end{abstract}

	\maketitle
	\tableofcontents

\section{Introduction}


\subsection{Motivation}
In this article, we continue our study of the defocusing cubic fractional nonlinear Schr\"odinger equation (FNLS)
\begin{equation}\label{main-NLS}
i\partial_t u+|D_x|^\alpha u+|u|^{2}u=0, \quad (t,x)\in\R\times \T,
\end{equation}
where $u$ is complex-valued and $|D_x|^\alpha =(-\partial_x^2)^{\alpha/2}$ is defined as the Fourier-multiplier 
$\widehat{|D_x|^\alpha f}(k)=|k|^\alpha\widehat{f}(k)$. 
The parameter $\alpha$ measures the strength of the dispersion. In this article, we are always in the weak dispersive regime where $1<\alpha<2$.
The equation  \eqref{main-NLS} is a Hamiltonian system with  conserved energy functional
$$ H(u)=\int_{\T}||D_x|^{\frac{\alpha}{2}}u|^2dx+\frac{1}{2}\int_{\T}|u|^4dx\,.
$$ 
Moreover, the mass
$ M(u)=\int_{\T}|u|^2dx
$ 
is also conserved along the flow of \eqref{main-NLS}. 
The fractional Schr\"odinger equation was introduced in the
theory of the fractional quantum mechanics where the Feynmann path integrals
approach is generalized to $\alpha$-stable L\'evy process \cite{Laskin}. Also, it appears in the water
wave models (see \cite{Ionescu-Pusateri} and references therein). 
In addition, we refer to \cite{KLS} where the  fractional NLS on the line appears as a limit of the discrete NLS with long range interactions. 

The motivation in our previous work \cite{Sun-Tz2} is to provide macroscopic properties for the solutions of \eqref{main-NLS}, and in particular to detect the strength of the dispersion in the construction of the Gibbs measure. In that work, we construct global solutions on a full measure set with respect to the Gibbs measure by different methods, depending on the value of $\alpha$. More precisely, when $\alpha>\frac{6}{5}$, we construct the global strong solution satisfying the recurrence properties and show that the sequence of smooth solutions for FNLS with truncated initial data converges almost surely to the constructed strong solution. When $1<\alpha\leq \frac{6}{5}$, we rely on a simple method of Bourgain-Bulut \cite{BB1,BB2,BB3}  to prove the convergence of the Galerkine approximation scheme for the FNLS with truncated both data and nonlinearity. However,  we were not able to show that  the limit constructed by that method satisfies the flow property and therefore  it is a natural question to investigate whether there exists global strong solution in the full range $\alpha>1$ on the support of the Gibbs measure and if the strong solution coincides with the limit constructed by the Galerkine approximation scheme. 
\subsection{Setup and the main result}
To present the main result and to explain the different methods of constructing solutions, we recall the standard randomization procedure. Let $(g_k)_{k\in\Z}$ be a sequence of independent, standard complex-valued Gaussian random variables on a fixed probability space 
$(\Omega,\mathcal{F},\mathbb{P})$. Denote by $\mu$ the Gaussian measure on $H^{\frac{\alpha-1}{2}-\epsilon}(\T)$ for any $\epsilon>0$ induced by the map
\begin{equation}\label{induced-Gaussian}
\omega\longmapsto \phi^{\omega}(x):=\sum_{k\in\Z} \frac{g_k(\omega)}{[k]^{\frac{\alpha}{2}}}\mathrm{e}_k(x),
\end{equation}
where $\mathrm{e}_k(x)=\mathrm{e}^{ikx}$ and $[k]^{\frac{\alpha}{2}}=(1+|k|^{\alpha})^{\frac{1}{2}}$. Set $E_n=\textrm{span}\{\mathrm{e}_k: |k|\leq n
\}$.
We denote by 
$$
\Pi_n:  H^{\frac{\alpha-1}{2}-\epsilon}(\T) \longrightarrow E_n
$$ 
the corresponding spectral projection. When $\alpha>1$, it is well-known that for any $0\leq \sigma_0<\frac{\alpha-1}{2}$, $\||D|^{\sigma_0}u\|_{L^{\infty}(\T)}$ is $\mu$-almost surely finite. Then the Gibbs measure $\rho$ associated with \eqref{main-NLS} is $$d\rho(u)=\mathrm{e}^{-\frac{1}{2}\int|u|^4 }d\mu(u).$$
This measure can be viewed as the limit of $\rho_n$, the Gibbs measure associated with the truncated Hamiltonian 
$$ H_n(u)=\int_{\T}||D_x|^{\frac{\alpha}{2}}\Pi_nu|^2dx+\frac{1}{2}\int_{\T}|\Pi_nu|^4dx
$$
whose associated Hamiltonian flow is the truncated FNLS (ODE):
\begin{align}\label{FNLS-truncated} 
i\partial_tv_n+|D_x|^{\alpha}v_n+\Pi_n(|\Pi_nv_n|^2)=0,\quad v_n|_{t=0}=\Pi_n\phi.
\end{align}
Once the Gibbs measure $\rho$ is constructed, we need to construct the dynamics on the support of the measure, namely to solve \eqref{main-NLS} with randomized initial data \eqref{induced-Gaussian}. 

There are two ways to solve the dynamical problem, the first is to prove the convergence of \eqref{FNLS-truncated}, since for each fixed $n$, the truncated FNLS admits a global solution, as it is a Hamiltonian ODE on the finite dimensional space $E_n$. In \cite{Sun-Tz2}, using the Bourgain-Bulut argument, we have proved:
\begin{theorem}[\cite{Sun-Tz2}]\label{convergence-truncated}
Assume that $\alpha>1$ and $\sigma_0<\frac{\alpha-1}{2}$. The sequence $(v_n^{\omega})_{n\in\N}$ of solutions of \eqref{FNLS-truncated} with randomized initial data \eqref{induced-Gaussian} converges a.s. in $C(\R;H^{\sigma_0}(\T))$ to some limit $v$ which solves \eqref{main-NLS} in the distributional sense. 
\end{theorem}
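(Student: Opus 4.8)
The plan is to globalize via the invariance of the truncated Gibbs measures $\rho_n$, in the spirit of Bourgain and Bourgain--Bulut \cite{BB1,BB2,BB3}. For each fixed $n$, equation \eqref{FNLS-truncated} is a Hamiltonian ODE on the finite-dimensional space $E_n$ with conserved energy $H_n$ and mass $M$, so its flow $\Phi_n^t$ is global; by Liouville's theorem together with conservation of $H_n$, the measure $\rho_n$ (more precisely its pushforward to $E_n$, the factor in $E_n^\perp$ being untouched) is invariant under $\Phi_n^t$. We also use that $\rho_n\to\rho$ and that the densities $\mathrm{e}^{-\frac12\int|\Pi_n u|^4}$ are bounded in $L^p(d\mu)$ for every $p<\infty$ uniformly in $n$: this lets one pass freely between $\mu$ and $\rho_n$, so that Gaussian large-deviation bounds for quantities built from \eqref{induced-Gaussian} hold under $\rho_n$ with constants independent of $n$, and statements proved under the invariant $\rho_n$ can be read back under $\mu$.

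Next I would set up the local-in-time analysis, deterministic conditionally on the randomness. Fix $T>0$ and $\lambda\gg1$ and decompose $v_n^\omega=z_n^\omega+w_n^\omega$ with $z_n^\omega(t)=\mathrm{e}^{it|D_x|^\alpha}\Pi_n\phi^\omega$ the free evolution of the truncated data, which $\mu$-a.s. gains $\tfrac{\alpha-1}{2}-\sigma_0-\epsilon$ derivatives over $H^{\sigma_0}$ and, by the stated fact, has $\||D_x|^{\sigma_0}z_n^\omega\|_{L^\infty}$ finite, while $w_n^\omega$ solves the Duhamel equation with forcing trilinear in $z_n^\omega,w_n^\omega$. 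Using $X^{s,b}$-type (or energy plus $L^4$ Strichartz) estimates for $|D_x|^\alpha$, available with losses for $\alpha>1$, one produces a time $\tau=\tau(\lambda)>0$ and an event of probability $\ge 1-C\mathrm{e}^{-c\lambda^{c'}}$ on which, whenever the relevant data norm of $\Pi_n\phi^\omega$ (built from $H^{\sigma_0}$ and the a.s.-finite $\||D_x|^{\sigma_0}\,\cdot\,\|_{L^\infty}$) is $\le\lambda$, one has $\|v_n^\omega\|_{C([0,\tau];H^{\sigma_0})}\lesssim\lambda$, uniformly in $n$. Splitting $[0,T]$ into $\sim T/\tau(\lambda)$ intervals, at each endpoint the current datum is again $\rho_n$-distributed, so the good event recurs with the same probability; a union bound over the endpoints, carried out under the invariant $\rho_n$ and transferred back to $\mu$, yields a set $\Omega_{n,\lambda,T}$ of probability $\ge 1-C(T)\mathrm{e}^{-c\lambda^{c'}}$ on which $\|v_n^\omega\|_{C([0,T];H^{\sigma_0})}\lesssim\lambda$, uniformly in $n$. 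Optimizing $\tau(\lambda)$ against the number of steps is where any restriction on $\alpha$ would enter.

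To obtain convergence I would run the same machine on differences. For $n>m\ge N$, set $r=v_n^\omega-v_m^\omega$; it solves an equation linear in $r$ with forcing multilinear in $v_n^\omega,v_m^\omega$ and in $(\Pi_n-\Pi_m)\phi^\omega$, while $r(0)=(\Pi_n-\Pi_m)\phi^\omega$ is $O(N^{-\delta})$ in $H^{\sigma_0}$ on an event of probability $\ge 1-C\mathrm{e}^{-cN^{c'}}$ by the tail bound for the series \eqref{induced-Gaussian}. Feeding in the a priori bounds of the previous paragraph for $v_n^\omega,v_m^\omega$ (refreshed at the interval endpoints via invariance) and iterating over $[0,T]$, one gets $\|v_n^\omega-v_m^\omega\|_{C([0,T];H^{\sigma_0})}\lesssim_T N^{-\delta}$ on an event of probability $\ge 1-C(T)\bigl(\mathrm{e}^{-c\lambda^{c'}}+\mathrm{e}^{-cN^{c'}}\bigr)$. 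Choosing $\lambda=\lambda(N)$ growing slowly, taking $T$ along $\N$, and applying Borel--Cantelli produces a set of full $\mu$-measure on which $(v_n^\omega)$ is Cauchy in $C(\R;H^{\sigma_0})$; call the limit $v^\omega$. On this set $v_n^\omega\to v^\omega$ in $C([-T,T];H^{\sigma_0})\subset C([-T,T];L^2(\T))$, while the uniform bound above plus Strichartz gives uniform control in $L^4_{t,x}$; interpolating, $\Pi_n v_n^\omega\to v^\omega$ in $L^3_{t,x}$, so $\Pi_n(|\Pi_n v_n^\omega|^2)\to|v^\omega|^2v^\omega$ in $\mathcal D'$ while $i\partial_t v_n^\omega+|D_x|^\alpha v_n^\omega\to i\partial_t v^\omega+|D_x|^\alpha v^\omega$ in $\mathcal D'$; hence $v^\omega$ solves \eqref{main-NLS} in the distributional sense.

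The main obstacle is the local estimate in the rough space $H^{\sigma_0}$ with $\sigma_0<\tfrac{\alpha-1}{2}<\tfrac12$, uniformly in the truncation parameter $n$: the weak dispersion makes the Strichartz gains small, so the trilinear forcing must be controlled by carefully exploiting the probabilistic regularization of $z_n^\omega$, both its Sobolev gain and its $L^\infty$/Fourier--Lebesgue features. The tension between the length $\tau(\lambda)$ of the local interval and the number $T/\tau(\lambda)$ of steps in the union bound is exactly what limits this route and, ultimately, what prevents it from yielding a genuine flow rather than merely a distributional solution.
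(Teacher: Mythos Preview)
This theorem is not proved in the present paper; it is quoted from \cite{Sun-Tz2}, where (as the introduction explains) it was obtained via the Bourgain--Bulut argument, in explicit contrast to the probabilistic local well-posedness route (affine ansatz plus $X^{s,b}$), which in \cite{Sun-Tz2} only reached $\alpha>\tfrac65$. Your proposal correctly names the Bourgain--Bulut framework---invariance of $\rho_n$, iterate a local bound along a partition of $[0,T]$, refresh probabilistic bounds via invariance at each endpoint---and your globalization and distributional-limit steps are broadly right. The gap is the local step: you set up the Da~Prato--Debussche split $v_n^\omega=z_n^\omega+w_n^\omega$ and appeal to ``$X^{s,b}$-type (or energy plus $L^4$ Strichartz)'' estimates for $w_n^\omega$. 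This is exactly the mechanism the introduction analyses in detail and shows does \emph{not} close for $\alpha$ near $1$: the deterministic trilinear bound in $X^{s,\frac12+}$ requires $s\ge\frac12-\frac\alpha4$, while the affine ansatz only places the remainder in $X^{(\alpha-1)-,\frac12+}$, forcing $\alpha>\tfrac65$. (Overcoming this obstruction---for the harder smooth-data approximation of Theorem~\ref{thm5}---is the purpose of the entire paper, and even the random-averaging-operator machinery only reaches $\alpha>\alpha_0$.) Your own caveat that ``any restriction on $\alpha$ would enter'' at this step is the tell: the theorem is stated for every $\alpha>1$, so a local step that introduces a threshold is the wrong one.

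What makes the Bourgain--Bulut argument cover the full range is that it \emph{bypasses} dispersive local well-posedness entirely. Invariance of $\rho_n$ yields uniform-in-$n$ \emph{space-time} bounds on $v_n$ directly: for any norm $X$ and $p<\infty$ one has $\mathbb{E}_{\rho_n}\!\int_0^T\!\|v_n(t)\|_X^p\,dt=T\,\mathbb{E}_{\rho_n}\|u\|_X^p$, in particular for $X=W^{\sigma,\infty}$, which is finite $\mu$-a.s.\ precisely because $\alpha>1$. With these $L^p_tL^\infty_x$-type bounds on $v_n$ in hand, a plain energy/Gr\"onwall estimate (using the fractional Leibniz rule, not Strichartz) controls both $\|v_n\|_{C_tH^{\sigma_0'}}$ and the difference $\|v_n-v_m\|_{C_tL^2}$; interpolation against the uniform $C_tH^{\sigma_0'}$ bound, $\sigma_0<\sigma_0'<\tfrac{\alpha-1}{2}$, then gives Cauchy in $C_tH^{\sigma_0}$. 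No affine split, no $X^{s,b}$, and consequently no threshold beyond $\alpha>1$.
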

The second approximation, more natural from the PDE view-point, is to consider the convergence of the sequence of smooth solutions $u_n$ of
\begin{equation}\label{FNLS:smooth}
i\partial_tu_n+|D_x|^{\alpha}u_n+|u_n|^2u_n=0,\quad u_n|_{t=0}=\Pi_n\phi.
\end{equation} 
Note that for each fixed $n$, the global well-posedness of \eqref{FNLS:smooth} is guaranteed, thanks to a theorem proved (in the range $\alpha>\frac{2}{3}$) in \cite{Thir} or \cite{Cho} (in the range $\alpha>1$). The major difference of the aforementioned approximations is that for the PDE approximation, we need to establish a probabilistic local well-posedness which provides us more information on the structure of the solution. While only to prove the convergence for the first approximation, some probabilistic compactness methods exploiting the invariance of the finite dimensional Gibbs measure $\rho_n$ are sufficient, see for example \cite{BB1},\cite{BB2},\cite{BB3},\cite{BTT-Toulouse},\cite{OThNLS} in the context of nonlinear Schr\"odinger and nonlinear wave equations. Therefore, a natural question can be formulated as follows:

\begin{question}\label{conjecture}
Can we show that for $\alpha>1$, the sequence $(u_n^{\omega})_{n\in\N}$ 
of solutions of \eqref{FNLS:smooth}
with randomized initial data \eqref{induced-Gaussian} converges a.s. in $C(\R;H^{\sigma_0}(\T))$ to some  unique limit $u$ which coincides with the limit obtained in Theorem \ref{convergence-truncated} ?  Moreover, can we define the solution map $\Phi(t)$ satisfying the flow property and Poincar\'e's recurrence property on a full measure set with respect to the Gibbs measure ?
\end{question}
We will call strong solutions those obtained when giving a positive answer of Question~\ref{conjecture}. The threshold $\alpha>1$ is designed for two reasons. Firstly, we do not need to renormalize the equation as the initial data lives in $L^{\infty}$ almost surely. Secondly, as we will see later, for $\alpha>1$, the second Picard's iteration enjoys some smoothing effect, due to the presence of the dispersion.  

 The main result of this article is the following partial answer of Question~\ref{conjecture} which improves our previous result in \cite{Sun-Tz2} for $\alpha>\frac{6}{5}$.
\begin{theorem}\label{thm5}
	Assume that $\alpha>\alpha_0=\frac{31-\sqrt{233}}{14}$ and $\sigma_0<\frac{\alpha-1}{2}$.
	Then the sequence of smooth solutions $(u_n^{\omega})_{n\in\N}$ of
	\begin{equation*}\label{FNLS-sequence}
	i\partial_tu_n+|D_x|^\alpha u_n+|u_n|^2u_n=0,\quad
	u_n|_{t=0}=\displaystyle{\sum_{|k|\leq n}\frac{g_k(\omega)}{[k]^{\frac{\alpha}{2}}}}\mathrm{e}_k,
	\end{equation*}
	 converges almost surely in $C(\R;H^{\sigma_0}(\T))$ to a limit which solves 
	\eqref{main-NLS}. 
\end{theorem}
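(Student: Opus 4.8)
The plan is to deduce Theorem~\ref{thm5} from a probabilistic local well-posedness statement that is stable under the truncation parameter $n$, and then to globalize by the standard Bourgain invariant-measure argument, using the (quasi-)invariance of the finite-dimensional Gibbs measures $\rho_n$ together with the a priori information already provided by Theorem~\ref{convergence-truncated} (which furnishes a global distributional limit and pins down what the limit must be). Concretely, I would write $u_n=\psi_n+v_n$ with $\psi_n(t)=\mathrm{e}^{it|D_x|^{\alpha}}\Pi_n\phi^{\omega}$ the random linear evolution, and construct $v_n$ on a random interval $[0,\tau]$, $\tau=\tau(\omega)>0$ almost surely, with $\tau$ bounded below on large-probability sets with the dependence on a truncation of the relevant $H^{\sigma_0}$-type norm that makes the globalization go through. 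The core of the matter is to produce $v_n$ by a contraction that (i) gains enough regularity over $\psi_n$ to land in $C([0,\tau];H^{\sigma_0})$ for $\sigma_0<\frac{\alpha-1}{2}$, and (ii) depends on $n$ in a Lipschitz way, so that $(u_n^{\omega})$ is Cauchy in $C([0,\tau];H^{\sigma_0})$ almost surely.

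The refinement enters in the decomposition of $v_n$. Rather than the two-step Da Prato--Debussche ansatz $u_n=\psi_n+z_n+w_n$ (with $z_n$ the first Duhamel iterate), which fails for $\alpha\le\tfrac{8}{7}$ because $z_n$ no longer carries the Sobolev regularity required by the deterministic theory, I would adapt the resolution ansatz of Deng--Nahmod--Yue \cite{Deng2}: decompose $v_n=\sum_{N}\mathcal{Z}_{N}+w_n$, where $N$ ranges over dyadic frequencies, $\mathcal{Z}_{N}$ is built from the most singular cubic interactions — those with two high-frequency ($\sim N$) nearly conjugate inputs, for which essentially no dispersive gain is available — and $w_n$ collects the genuinely smoothing interactions and is sought in an $X^{s,b}$-type space with $s$ well above $\sigma_0$. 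Each $\mathcal{Z}_N$ solves a linear equation of random-averaging-operator type driven by the lower-frequency part of $u_n$; the structural point, as in \cite{Bringmann,Deng2}, is that this operator is conditionally independent of the Gaussians $\{g_k:|k|\sim N\}$, so $\mathcal{Z}_N$ inherits the probabilistic multilinear smoothing of the linear evolution at frequency $N$. In the weakly dispersive regime one further exploits that the symbol $|k|^{\alpha}$ has group velocity $\alpha|k|^{\alpha-1}\mathrm{sgn}(k)$, essentially constant across a frequency-$N$ packet, so that $\mathcal{Z}_N$ behaves like an \emph{almost transport} of $\psi_N$; this is what allows one to replace the $X^{s,b}$ bilinear Strichartz gain (which degenerates as $\alpha\downarrow 1$) by $L^{\infty}_{t,x}$ and Fourier--Lebesgue $\mathcal{F}L^{s,p}$ bounds on $\mathcal{Z}_N$.

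With the ansatz fixed, the proof reduces to a coupled system of multilinear estimates, all to be proved uniformly in $n$. First, probabilistic bounds on $\psi_n$ and on each $\mathcal{Z}_N$ in $L^{p}_{t,x}$, $L^{\infty}_{t,x}$ and $\mathcal{F}L^{s,p}$, via the conditional Gaussian structure together with the transport/Strichartz input. Second, the trilinear estimate closing the $X^{s,b}$ contraction for $w_n$: every interaction containing a high-frequency factor $\psi_n$ or $\mathcal{Z}_N$ is controlled by combining the dispersive gain available for the remaining factors with the $L^{\infty}$/Fourier--Lebesgue control of the random pieces, while only the fully smooth interactions use deterministic $X^{s,b}$ bilinear estimates. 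Third, summation over dyadic blocks $N$, which demands a genuine small power gain in $N$ in each block; it is the optimization of the exponents in these block estimates — how much Fourier--Lebesgue integrability can be afforded against how little dispersive smoothing is available — that produces the explicit threshold $\alpha_0=\tfrac{31-\sqrt{233}}{14}$. Once the contraction closes on $[0,\tau]$ with estimates independent of $n$ and Lipschitz in the data, the difference $u_n-u_m$ is dominated by $\|\Pi_n\phi^{\omega}-\Pi_m\phi^{\omega}\|$ plus lower-order contributions, giving the almost sure Cauchy property; passing to the limit in the Duhamel formulation identifies the limit as a solution of \eqref{main-NLS}, and Bourgain's globalization upgrades local to global time.

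I expect the main obstacle to be the second step: controlling, uniformly in $n$ and with summable dyadic gains, the mixed interactions between the singular pieces $\mathcal{Z}_N$ (only as regular as the data, essentially $H^{\frac{\alpha-1}{2}-}$, carrying no $X^{s,b}$ smoothing) and themselves or $\psi_n$ inside the equation for $w_n$. This is exactly where the weak dispersion bites: the $X^{s,b}$-based bilinear estimates lose a power $|k|^{-(\alpha-1)}$ that is too weak for $\alpha$ near $1$, and one must instead show that the almost transport structure of $\mathcal{Z}_N$ — more precisely, that its space-time Fourier support is concentrated near $\tau=|k|^{\alpha}$ up to an $O(1)$ rather than $O(|k|^{\alpha-1})$ thickening — yields $\mathcal{F}L^{s,p}$ bounds strong enough to beat that loss, while simultaneously preserving the conditional independence needed for the probabilistic input. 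Making the transport approximation, its error terms, and the random-averaging-operator estimates fit together with room to spare for a positive power of $N$ is the crux of the argument.
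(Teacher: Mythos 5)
Your plan is essentially the one the paper carries out: after the gauge transform it proves the probabilistic local statement Theorem~\ref{prob:LWP} by adapting the Deng--Nahmod--Yue random-averaging-operator ansatz, isolates the singular high-low-low pieces as almost-transport objects carrying $L^\infty$ and Fourier--Lebesgue (rather than $X^{s,b}$) control, closes the remainder in $X^{s,b}$, and obtains $\alpha_0$ from the exponent optimization, with globalization following \cite{Sun-Tz2} exactly as you suggest. The one structural point you compress is that the paracontrolled pieces carry a second dyadic index $L$ (recording the reach of the low-frequency perturbation) and the local bounds are established by a dyadic induction $\mathrm{Loc}(M)\Rightarrow\mathrm{Loc}(2M)$, which is what makes the operator estimates for $h^{N,L}$ and the estimates for $\zeta_L^N$, $w_N$ feed back into one another consistently.
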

Let us give a brief explanation about the number $\alpha_0$ appearing in the above statement.  The important feature is that the number $\alpha_0$ appearing in Theorem~\ref{thm5} is smaller than $8/7$ which is the threshold beyond which the first nontrivial Picard iteration has no longer the Sobolev regularity needed for the deterministic well-posedness theory (see the discussion below for more details). For this reason we find that the progress made in this paper is at a conceptual level.   

Following the argument in \cite{Sun-Tz2}, we are able to show that the unique limit satisfies the flow property and the Gibbs measure $\rho$ is invariant under the flow. The key point is to establish a  probabilistic local well-posedness result which provides a fine structure of the solution of \eqref{main-NLS}. Let us mention that when $\alpha>\frac{4}{3}$, the above theorem is proved in \cite{Dem} using only the deterministic theory without appealing to any random oscillation effect. In \cite{Sun-Tz2}, when $\frac{6}{5}<\alpha\leq \frac{4}{3}$, we go beyond the available deterministic theory by adapting the Da Prato-Debussche affine decomposition in conjugation with a gauge transformation to prove the probabilistic local well-posedness.
\subsection{Boundedeness of the Picard iterates in $L^\infty$}
To motive the necessity of a refined analysis and to compare with the context of parabolic equations, let us look at the formal Picard iteration scheme associated with our equation.  Denote by
$$ z_1^{\omega}(t)=\mathrm{e}^{it|D_x|^{\alpha}}\phi^{\omega}\,,
$$
where $\phi^{\omega}$ is given by \eqref{induced-Gaussian}. By formally expanding the solution of \eqref{main-NLS} as power series in terms of the initial data, we write 
$$ Z^{\omega}(t)=\sum_{j=0}^{\infty}z_{2j+1}^{\omega}(t).
$$
Formally inserting into the equation $(i\partial_t+|D_x|^{\alpha})Z^{\omega}+|Z^{\omega}|^2Z^{\omega}=0$ and comparing the coefficients, $z_{2j+1}^{\omega}$ should satisfy the equation:
$$ (i\partial_t+|D_x|^{\alpha})z_{2j+1}^{\omega}=-\!\!\!\!\!\!\!\!\!\!\!\sum_{\substack{j_1,j_2,j_3\geq 0\\
j_1+j_2+j_3=j-1 } }\!\!\!\!\!\!\!z_{2k_1+1}^{\omega}\ov{z}_{2j_2+1}^{\omega}z_{2j_3+1},\quad z_{2j+1}^{\omega}|_{t=0}=0.
$$
By induction we see that $z_{2j+1}^{\omega}$ is a $(2j+1)$-multilinear form of Gaussians:
\begin{align}\label{expressionz2k+1}
z_{2j+1}^{\omega}(t,x)=\sum_{k_1,\cdots,k_{2j+1}}c_{j}(t,k_1,\cdots,k_{2j+1})\frac{g_{k_1}\ov{g}_{k_2}\cdots \ov{g}_{k_{2j}}g_{k_{2j+1}}
}{[k_1]^{\frac{\alpha}{2}}\cdots [k_{2j+1}]^{\frac{\alpha}{2}}  }\mathrm{e}_{k_1-k_2+\cdots-k_{2j}+k_{2j+1}}(x).
\end{align}
The following proposition shows that every finite order of Picard's iteration is bounded in $C([0,T];L^{\infty}(\T))$:
\begin{proposition}\label{iteratedPicard} 
There exists $C_0>0$, such that for any $j\in\N$, $t\in\R, x\in\T$, we have
$$ \mathbb{E}[|z_{2j+1}^{\omega}(t,x)|^2]\leq C_0t^{2j}(2j+1)!\big(\frac{(2j-1)!!}{j!}\big)^2.
$$
In particular, for any $T>0$ sufficiently small, there exists $\Omega_T\subset\Omega$, with $\mathbb{P}[\Omega_T]=1$, such that for any $\omega\in\Omega_T$ and any $j$, the partial sum of the Picard iteration satisfies
$$ Z_{2j+1}^{\omega}(t):=\sum_{j'=0}^{j}z^{\omega}_{2j'+1}(t)\in C([0,T];L^{\infty}(\T)).
$$
\end{proposition}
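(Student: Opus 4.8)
The plan is to expand $z_{2j+1}^\omega$ over ternary trees, to obtain a sharp pointwise bound on the resulting deterministic coefficients via a generating-function identity, to compute the second moment by Wick's theorem, and finally to upgrade the $L^2(\Omega)$ bound to the almost sure statement through hypercontractivity and the Kolmogorov continuity theorem. First I would iterate the Duhamel formula for the system satisfied by the $z_{2j+1}^\omega$, so as to write $z_{2j+1}^\omega(t)$ as a finite sum over ordered rooted ternary trees $\tau$ with $j$ internal vertices (hence $2j+1$ leaves); on the Fourier side this is exactly the representation \eqref{expressionz2k+1} with $c_j=\sum_\tau c_j^\tau$, where each $c_j^\tau(t,k_1,\dots,k_{2j+1})$ is a $j$-fold nested oscillatory integral over the tree simplex $\Delta_\tau(t)=\{(s_v)_{v\ \mathrm{internal}}:0\le s_v\le s_{p(v)}\le t\}$ of a unimodular integrand $\prod_v\mathrm{e}^{is_v\Omega_v}$ ($p(v)$ the parent of $v$, and $\Omega_v$ the associated modulation function). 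Since the integrand has modulus one, $|c_j^\tau(t,\vec k)|\le|\Delta_\tau(t)|$ uniformly in $\vec k$.

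Next I would estimate $c_j$ sharply. Put $S_j(t)=\sum_\tau|\Delta_\tau(t)|$, the sum running over ordered ternary trees with $j$ internal vertices; decomposing a tree into its root and its three subtrees gives $S_0\equiv1$ and $S_j(t)=\int_0^t\sum_{j_1+j_2+j_3=j-1}S_{j_1}(s)S_{j_2}(s)S_{j_3}(s)\,ds$, so the generating function $G(t,z)=\sum_{j\ge0}S_j(t)z^j$ satisfies $\partial_tG=zG^3$ with $G(0,z)=1$, whence $G(t,z)=(1-2tz)^{-1/2}$ and $S_j(t)=\frac{(2j-1)!!}{j!}|t|^j$. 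Therefore $|c_j(t,\vec k)|\le\frac{(2j-1)!!}{j!}|t|^j$ uniformly in $\vec k$ --- much better than what the crude estimate ``Fuss--Catalan number $\frac1{2j+1}\binom{3j}{j}$ of trees, times the largest simplex volume'' would give, and it is precisely this gain which produces the weight $\big(\frac{(2j-1)!!}{j!}\big)^2$ in the statement. With this in hand, by \eqref{expressionz2k+1} and Wick's theorem $\mathbb{E}[|z_{2j+1}^\omega(t,x)|^2]$ is a sum over the $(2j+1)!$ bijections pairing the $2j+1$ unconjugated Gaussians of $z_{2j+1}^\omega$ with the $2j+1$ conjugated ones of $\overline{z_{2j+1}^\omega}$, each such pairing forcing $2j+1$ identifications among the $4j+2$ summation indices; bounding $|c_j(t,\vec k)\,\overline{c_j(t,\vec l)}|\le\big(\frac{(2j-1)!!}{j!}\big)^2t^{2j}$ and summing the leftover weight $\prod_i[k_i]^{-\alpha/2}[l_i]^{-\alpha/2}$, in which each of the remaining $2j+1$ free indices contributes $\sum_{m\in\Z}[m]^{-\alpha}$ (finite since $\alpha>1$), and absorbing these convergent sums into the constant, one obtains the stated bound uniformly in $(t,x)$.

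For the last assertion, observe that $z_{2j+1}^\omega(t,x)$ is a polynomial of degree $2j+1$ in the Gaussians, hence lies in the sum of the first $2j+1$ Wiener chaoses, so hypercontractivity gives $\|z_{2j+1}^\omega(t,x)\|_{L^p(\Omega)}\le C_j\,(p-1)^{(2j+1)/2}\|z_{2j+1}^\omega(t,x)\|_{L^2(\Omega)}$ for every $p\ge2$. Running the same tree computation on the space-time increments $z_{2j+1}^\omega(t,x)-z_{2j+1}^\omega(t',x')$ --- using $|\mathrm{e}^{ik(x-x')}-1|\le|k|^\sigma|x-x'|^\sigma$ with $0<\sigma<\frac{\alpha-1}{2}$ (so that $\sum_m\langle m\rangle^{2\sigma}[m]^{-\alpha}<\infty$) for the spatial part, and the H\"older-in-$t$ dependence of the oscillatory integrals, with a controllable frequency weight, for the temporal part --- yields, for some $\delta>0$ and every $p<\infty$, an estimate $\mathbb{E}\big[|z_{2j+1}^\omega(t,x)-z_{2j+1}^\omega(t',x')|^p\big]\le C_{j,p,T}\big(|t-t'|^{\delta p}+|x-x'|^{\delta p}\big)$ on $[0,T]\times\T$. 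By the Kolmogorov continuity theorem $z_{2j+1}^\omega$ admits a version continuous on $[0,T]\times\T$, hence $z_{2j+1}^\omega\in C([0,T];L^\infty(\T))$ almost surely; taking $\Omega_T=\bigcap_{j\ge0}\{z_{2j+1}^\omega\in C([0,T];L^\infty(\T))\}$, a countable intersection of full-measure events, every partial sum $Z_{2j+1}^\omega(t)=\sum_{j'=0}^jz_{2j'+1}^\omega(t)$ lies in $C([0,T];L^\infty(\T))$ on $\Omega_T$.

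The main obstacle I anticipate is the combinatorial bookkeeping of the deterministic coefficient $c_j$ rather than the probabilistic input: because the second moment of the trilinear Duhamel term couples its three branches through the Wick pairings, a recursion set up directly on $\|z_{2j+1}^\omega(t)\|_{L^2(\Omega)}$ does not close, which forces one to expand fully over trees first and invoke Wick's theorem only at the end, and the identity $\sum_\tau|\Delta_\tau(t)|=\frac{(2j-1)!!}{j!}|t|^j$ --- in place of the exponentially lossy ``count times largest volume'' bound --- is exactly what produces the sharp factorial growth in the statement. (For the stated conclusion alone only finiteness of the second moment and of its space-time increments is used, so $\alpha>1$ suffices; the precise growth rate is what will be used in the global construction later in the paper.)
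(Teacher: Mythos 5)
Your proof is correct and follows essentially the same route as the paper: your tree-simplex volumes $S_j(t)$ satisfy exactly the recursion that the paper writes for the coefficients $\kappa_j$, yielding the same generating-function ODE $f'=f^3$ and the bound $(2j-1)!!/j!$, and the second-moment computation via Wick pairings is identical. The only deviation is at the very end, where you obtain the almost-sure continuity via hypercontractivity plus a Kolmogorov continuity argument on space-time increments, whereas the paper disposes of this step in one sentence by invoking the Wiener chaos estimate; your version is more detailed but amounts to the same mechanism.
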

Though the partial sum of the formal expansion $Z$ is bounded in $L^{\infty}$, this proposition does not tell anything about the convergence of the remainder in the formal expansion $Z(t)=\sum_{j=0}z_{2j+1}^{\omega}(t)$.  Much effort has to been addressed to in order to prove the convergence of the remainder.

Let us also mention a comparison with parabolic equations. For $\alpha>1$, a typical function with respect to $\mu$ is an $L^{\infty}$ function. As a consequence,
if we were dealing with a similar problem for a parabolic PDE then thanks to the nice
$L^{\infty}$ mapping properties of the heat flow, the analysis would become essentially trivial. On
the other hand, since we are dealing with a dispersive PDE, the linear problem is only
well-posed in $L^2$ 
in the scale of the $L^p$
spaces which makes that even at positive regularities, refined detereministic estimates and probabilistic considerations are essential in
the analysis. 

 \vspace{0.3cm}
 
\subsection{Difficulties and the Strategy}

Let us consider two extreme situations $\alpha=1$ and $\alpha=2$. When $\alpha=2$, the equation \eqref{main-NLS} is the classical cubic Schr\"odinger equation which has nice dispersive properties. In particular, the $L^4$ Strichartz estimate holds with no loss of spatial derivative. When $\alpha=1$, the equation \eqref{main-NLS} is the cubic half-wave equation. If we ignore the nonlocal issue and consider only the transport equation
\begin{align}\label{transport} 
i\partial_tu+i\partial_xu=|u|^2u
\end{align}
we can solve this equation simply in the space $L^{\infty}$. These facts indicate that in the intermediate case $1<\alpha<2$, we should balance the dispersive effect and the transport property of the solutions according to different regimes. However, when $\alpha$ is very close to 1, there are two major difficulties. Unlike the classical Schr\"odinger case $\alpha=2$, the $L^4$ Strichartz estimate loses almost $\frac{1}{8}$ derivatives (due to the degeneracy of the resonant function). Moreover, the fractional dispersion $|D_x|^{\alpha}$ is non-local which prevents us to use directly the transport property like \eqref{transport}.

Our strategy is based on the following observations. Firstly, the most singular parts in $X^{s,b}$ space come from the  high-low-low type frequency interactions. These parts satisfy morally the transport equation. Secondly, the loss of derivatives in the Strichartz inequality occurs in the high-high-high or high-high-low frequency interaction regimes. Hence we should place the most singular part in the space $L^{\infty}$ instead of $X^{s,b}$ in these regimes when estimating tri-linear expressions. To realize this strategy, we use the refined resolution ansatz introduced by Deng-Nahmod-Yue in \cite{Deng2}. Roughly speaking, it concerns refining the affine ansatz and decomposing the solution roughly as $e^{it|D_x|^{\alpha}}\phi^{\omega}+\Psi+w$ with a ``random averaging operator'' term $\Psi$ which captures the most singular frequency interactions. Additionally in our situation, the term $\Psi$ can be further decomposed into different parts carrying relatively ``good'' $L^{\infty}$ property and relatively ``good'' $X^{s,b}$ and Fourier-Lebesgue property.

\vspace{0.3cm}

{\bf $\!\!\!\!\!\!\!\!$The threshold $\alpha>\frac{6}{5}$ for the affine decomposition structure.}
To be more precise, we breifly recall the decomposition due to Bourgain \cite{B3} and Da Prato-Debussche \cite{DPD2} used in our previous work \cite{Sun-Tz2}. By using the gauge transformation
$$ v(t,x)=u(t,x)\mathrm{e}^{\frac{it}{\pi}\int_{\T}|u|^2dx },
$$
we transform the FNLS as
\begin{align}\label{FNLS-gauged}
 i\partial_tv+|D_x|^{\alpha}v=\mathcal{N}(v),\quad v|_{t=0}=\phi^{\omega}\,,
\end{align}
where the Wick-ordered nonlinearity is given by 
$$ \mathcal{N}(v):=-\mathcal{N}_3(v,v,v)+\mathcal{N}_0(v,v,v),
$$
and the trilinear forms $\mathcal{N}_3(\cdot,\cdot,\cdot)$ and $\mathcal{N}_0(\cdot,\cdot,\cdot)$ are defined as
\begin{equation}\label{trilinearform}
\begin{split}
&\mathcal{N}_3(f_1,f_2,f_3):=\sum_{\substack{k_1,k_2,k_3\\
		k_2\neq k_1,k_3 }} \widehat{f}_1(k_1)\ov{\widehat{f}_2}(k_2)\widehat{f}_3(k_3)\mathrm{e}_{k_1-k_2+k_3},\\
& \mathcal{N}_0(f_1,f_2,f_3):=\sum_{k\in\Z}\widehat{f}_1(k)\ov{\widehat{f}_2}(k)\widehat{f}_3(k)\mathrm{e}_k.
\end{split}
\end{equation}
To solve \eqref{FNLS-gauged}, we used the affine ansatz
$$ v(t)=S_{\alpha}(t)\phi^{\omega}+w(t),
$$
where $S_{\alpha}(t)=\mathrm{e}^{it|D_x|^{\alpha}}$ is the linear propagator. It turns out that the Duhamel's integration of the first Picard's iteration $\mathcal{I}\mathcal{N}\big(S_{\alpha}(t)\phi^{\omega}\big)$ has the spatial regularity $H^{\frac{3(\alpha-1)}{2}-}$. Yet, if we place it into the $X^{s,b}$ space, it is bounded a.s. in $X^{(\alpha-1)-,\frac{1}{2}+}$. In both these spaces, the spatial regularity for the first Picard's iteration is better than the initial data which merely lives in $H^{\frac{\alpha-1}{2}-}$. In order to close the fix-point argument, we should place the error term into some $X^{s,\frac{1}{2}+}$ space. Due to the weak dispersive effect, when $\alpha<2$, it was proved in \cite{Cho} (see also \cite{Sun-Tz2}) that the Duhamel's integration of the tri-linear operator is bounded on $X^{s,\frac{1}{2}+}$ only if $s\geq \frac{1}{2}-\frac{\alpha}{4}$. Therefore, this affine decomposition ansatz is suitable in $X^{s,b}$ type space only if $\alpha-1>\frac{1}{2}-\frac{\alpha}{4}$ which gives us the constraint $\alpha>\frac{6}{5}$.

Even if we do not place the first Picard's iteration in the $X^{s,b}$ type space, the other place that gives us the constraint $\alpha>\frac{6}{5}$ is the high-low-low frequency interaction for the crossing terms of the form $$f_N:=\mathcal{I}\mathcal{N}(S_{\alpha}(t)\mathbf{P}_N\phi^{\omega},\mathbf{P}_{\ll N}w,\mathbf{P}_{\ll N}w).$$
 By ignoring the issue of the modulation, the above term can be written formally as
 $$  \sum_{\substack{k,k_1,k_2,k_3\\
 k_1-k_2+k_3=k}}\mathbf{1}_{ |k_1|\sim N, |k_2|,|k_3|\ll N }\mathbf{1}_{|k_1|^{\alpha}-|k_2|^{\alpha}+|k_3|^{\alpha}-|k|^{\alpha}=O(1)}\frac{g_{k_1}}{[k_1]^{\frac{\alpha}{2}}
}\wh{w}(k_2)\wh{w}(k_3).
 $$
From a counting argument, the $X^{s,b}$ norm of the above quantity can be bounded by $$N^{(s-(\alpha-1))-}\|\wh{w}_2\|_{l^2}\|\wh{w}_3\|_{l^2},$$ hence we should require $s<\alpha-1$ to ensure that the above expression is bounded. It turns out that this high-low-low frequency interaction is the most singular part in the analysis. In order to improve the constraint of $\alpha$, a better understanding of this singular part is necessary.
\subsection{Refined resolution ansatz}
 Refined resolution ansatz to treat the singular high-low type interaction has been recently introduced by Bringmann \cite{Bringmann} for the wave equation and by Deng-Nahmod-Yue \cite{Deng2} for the 2D NLS in very different ways. The common feature in both these work is the observation that the low frequency component is independent with the high frequency linear evolution and the most singular interactions (high-low type) are removed by viewing them as part of the linear evolution for the high-frequency data and isolating them from $w(t)$ in the previous affine ansatz $u(t)=S_{\alpha}(t)\phi^{\omega}+w(t)$. More importantly, the authors in \cite{Deng2} exploits the fact that the low frequency components are also random, and this randomness of low frequency components is exactly what is captured by the matrix/operator norms introduced there.
 To better explain the idea in the context of FNLS, we need to introduce an extra term $\zeta$ such that $\Psi=S_{\alpha}(t)\phi^{\omega}+\zeta$ solves
 $$ i\partial_t\Psi+|D_x|^{\alpha}\Psi=\mathcal{I}\mathcal{N}(\mathbf{P}_{\text{high}}\Psi,\mathbf{P}_{\text{low}}u,\mathbf{P}_{\text{low}}u ),\quad \Psi|_{t=0}=\mathbf{P}_{\text{high}}\phi^{\omega}.
 $$
 Through this decomposition, on the one hand, the new remainder will solve some nonlinear equation with essentially no high-low-low type frequency interaction. On the other hand, since the isolated singular part $\Psi$ solves roughly a linear transport equation with some ``potential" independent of the high frequency initial data, it will inherit the randomness from the initial data $\phi^{\omega}$. Though $\Psi$ is no more regular than $X^{(\alpha-1)-,\frac{1}{2}+}$ in general, it has its own random structure though captured by certain matrix-norms. 

Now we recall the precise resolution ansatz of \cite{Deng2} in our context.  Set $y_N=v_N-v_{\frac{N}{2}}$.  Then $y_N$ solves the equation
\begin{equation*}
\begin{cases}
(i\partial_t+|D_x|^{\alpha})y_N=\mathcal{N}\big(v_{\frac{N}{2}}+y_N\big)-\mathcal{N}\big(v_{\frac{N}{2}}\big),\\
y_N|_{t=0}=\P_N\phi^{\omega}.
\end{cases}
\end{equation*}
For fixed $N$, we denote by $L_N$ the largest dyadic number $L$ such that $L<N^{1-\delta}$. For $L\leq L_N$, we introduce the function $\psi^N_L$ which captures the high-low-low frequency interaction:
\begin{equation}\label{psiNL}
\begin{cases}
(i\partial_t+|D_x|^{\alpha})\psi_L^N=-2\Pi_N\mathcal{N}_3\big(\psi^N_{L},\Pi_Lv_L,\Pi_Lv_L \big),\\
\psi^N_{L}|_{t=0}=\P_N\phi^{\omega}.
\end{cases}
\end{equation}
%
When $L=\frac{1}{2}$, we define $\psi^N_{\frac{1}{2}}:=S_{\alpha}(t)\P_N\phi^{\omega}$. Set $w_N=y_N-\psi^N_{L_N}$, then $w_N$ solves the equation
\begin{equation}\label{errorequation}
\begin{cases}
(i\partial_t+|D_x|^{\alpha})w_N=\mathcal{N}\big(w_N+\psi^N_{L_N}+v_{\frac{N}{2}} \big)-\mathcal{N}\big(v_{\frac{N}{2}} \big) 
+2\Pi_N\mathcal{N}_3\big(\psi^N_{L_N},\Pi_{L_N}v_{L_N},\Pi_{L_N}v_{L_N} \big),\\
w_N|_{t=0}=0.
\end{cases}
\end{equation}
%
Denote by $f_N=\psi_{\frac{1}{2}}^N$ the free evolution part and $\zeta_L^N=\psi_L^N-\psi_{\frac{L}{2}}^N$ if $\frac{1}{2}<L\leq L_N$, $\zeta_{\frac{1}{2}}^N=0.$ Then
\begin{equation}\label{y:ansatz}
y_N(t)=f_N(t)+\sum_{\frac{1}{2}<L\leq L_N}\zeta_L^N(t)+w_N(t),
\end{equation}
and the full resolution ansatz is
$$ v(t)=S_{\alpha}(t)\phi^{\omega}+\sum_{N}\sum_{\frac{1}{2}<L\leq L_N}\zeta_L^N(t)+\sum_{N}w_N(t).
$$
\begin{remarque}
	$v_{\frac{N}{2}}-v_{L_N}$ is pretended to have frequencies greater than $L_N$ and $w_N$ is pretended to have frequencies comparable to $N$, and $\zeta_L^N$ is pretended to contain the portion of frequency interaction from  $(\sim N) \times (\sim L)\times (\lesssim L)$. Therefore, by expanding the right hand side of \eqref{errorequation}, all the multi-linear forms essentially do not have bad frequency interaction of the form $(\sim N)\times (\ll L_N)\times (\ll L_N)$. 
\end{remarque}

The second parameter $L$ quantifies the range of ``low-frequency'' perturbation for the linear evolution of the high frequency data. It can be viewed as a deformation from the random oscillation effect to the time-oscillation effect (dispersive effect). When $L$ is relatively small, $\zeta_L^N$ behaves like the first Picard iteration of the linear evolution of Gaussian variables whose random effect is dominant. When $L$ is relatively large, $\zeta_L^N$ behaves like the error $w_N$ whose $X^{0,b}$-regularity is much better.

\vspace{0.3cm}
	
{\bf$\!\!\!\!\!\!\!\!\!$ Structure of $\psi_L^N$ in terms of operators.}
Given $v_L$, the equation of $\psi_L^N$ is linear with respect to the initial data. Therefore, we can write
$$ \psi^N_{L}=\mathcal{H}^{N,L}(\P_N\phi^{\omega}).
$$
The operator $\mathcal{H}^{N,L}$ is the random averaging operator and $\mathcal{F}_x\mathcal{H}^{N,L}\mathcal{F}_x^{-1}$ has kernel $\big(H_{kk*}^{N,L}(t)\big)$, thus we have
$$ \widehat{\psi_L^N}(t,k)=\sum_{\frac{N}{2}<|k^*|\leq N} H_{kk^*}^{N,L}(t)\frac{g_{k^*}(\omega)}{[k^*]^{\frac{\alpha}{2}}}.
$$
In other words, $H_{kk^*}^{N,L}(t)$ is the $k$-th Fourier mode of the solution to 
\begin{equation*}
\begin{cases}
(i\partial_t+|D_x|^{\alpha})\varphi=-2\Pi_N\mathcal{N}_3(\varphi,\Pi_Lv_L,\Pi_Lv_L),\\
\varphi|_{t=0}=\mathbf{1}_{\frac{N}{2}<|k^*|\leq N}\mathrm{e}_{k^*}.
\end{cases}
\end{equation*} 
Obviously, $$\text{supp}_{k,k^*}\big(H_{k,k*}^{N,L} \big)\subset\big\{(k,k^*): |k|\leq N, \frac{N}{2}<|k^*|\leq N \big\}.$$ When $L=\frac{1}{2}$, we use the convention $H_{kk^*}^{N,\frac{1}{2}}=\mathrm{e}^{it|k|^{\alpha}}\mathbf{1}_{k=k^*}$. Similarly, we denote by
$ h^{N,L}=\mathcal{H}^{N,L}-\mathcal{H}^{N,\frac{L}{2}},
$
hence $\zeta_L^N=h^{N,L}(\P_N\phi^{\omega})$. The kernel of $\mathcal{F}_xh^{N,L}\mathcal{F}_x^{-1}$ is denoted by $(h_{kk^*}^{N,L})$ and $h_{kk^*}^{N,L}$ has the same $k,k^*$ support property as $H_{kk^*}^{N,L}$.
The key point here is that
$ H_{kk^*}^{N,L},h_{kk^*}^{N,L} $ belong to the Borel $\sigma$-algebra $\mathcal{B}_{\leq L}$ generated by $\{g_k(\omega):|k|\leq L \}$, hence $H_{kk^*}^{N,L},h_{kk^*}^{N,L}$ are independent of $\sigma$-algebra $ \mathcal{B}_{>\frac{N}{2}}
$ generated by $\{g_k(\omega):|k|>N/2 \}$.
The random oscillation effect will be captured in terms of suitable norms for the operators $\mathcal{H}^{N,L}, h^{N,L}$, as explained in \cite{Deng2}. In this article, we need the Hilbert-Schmidt type norm to capture the $X^{s,b}$-regularity as well as the $L^{\infty}$ size of $\psi^N_{L}$ and a Fourier-Lebesgue type norm to  measure the size of the Fourier-coefficients of $\psi_L^N$.\\

\vspace{0.3cm}
{\bf$\!\!\!\!\!\!\!\!\!$ Probabilistic local convergence.}
To prove Theorem \ref{thm5}, the key point is a local convergence result for dyadic sequences which we will describe below:
\begin{theorem}\label{prob:LWP}
	Let $\alpha>\alpha_0$. Then there exist $C_0>0 $ and sufficiently small numbers $\theta>0, \epsilon>0$, such that for each sufficiently small $T>0$, there exists a set $\Omega_T\subset\Omega$ with the following properties:
	\begin{itemize}
		\item[(i)] $\mathbb{P}[\Omega_T^{c}]<C_0\mathrm{e}^{-T^{-\theta}}$.\\
		\item[(ii)] $\forall \omega\in\Omega_T$, the sequence of unique smooth solutions $(v_N)_{N\in 2^{\N}}$ of 
		$$ i\partial_tv_N+|D_x|^{\alpha}v_N=\mathcal{N}(v_N)
		$$
         with initial data $v_N|_{t=0}=\Pi_N\phi^{\omega}$ given by \eqref{induced-Gaussian} is a Cauchy sequence in
         $ C([-T,T];H^{\sigma_0}(\T))$. More precisely, for all $|t|\leq T$ and $N$, $v_N$ admits a decomposition
		$$ v_N(t)=S(t)\Pi_N\phi^{\omega}+\zeta^N+W_N,\text{ where }\zeta^N:=\sum_{M\leq N}\sum_{\frac{1}{2}<L\leq L_N }\zeta_L^N 
		$$
		with the property that $(W_N)_{N\in 2^{\N}}$ is a Cauchy sequence in $ X_T^{\frac{1}{2}-\frac{\alpha}{4}+\epsilon,\frac{1}{2}+\epsilon}$ and for each $N,L$,
		\begin{align*}
		& \|\zeta_L^N\|_{L_t^{4}([-T,T];L^{\infty}(\T)) }\leq C_0N^{-(\alpha-1)-\epsilon}L^{\frac{1}{2}-\nu},\\
		& \|\zeta_L^N\|_{L^{\infty}([-T,T];H^{(\alpha-1)-\epsilon}(\T) )}\leq C_0L^{-\nu}N^{-\frac{\epsilon}{2}},\\
		&\|\zeta_L^N\|_{L^{\infty}([-T,T];\mathcal{F}L^{\frac{\alpha}{2}-\epsilon,\infty}(\T)) }\leq C_0L^{-\nu}N^{-\frac{\epsilon}{2}}
		\end{align*}
		with $\nu=\min\{\frac{1}{2}-\frac{\alpha}{4},\frac{7(\alpha-1)}{4} \}-\epsilon$.
		\item[(iii)] The sequence $(v_n)_{n\in\N}$ of unique smooth solutions of
		$$ i\partial_tv_n+|D_x|^{\alpha}v_n=\mathcal{N}(v_n),\quad v_n|_{t=0}=\Pi_n\phi^{\omega}
		$$
		converges in $C([-T,T];H^{\sigma_0}(\T))$.
	\end{itemize} 
\end{theorem}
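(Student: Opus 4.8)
The plan is to prove Theorem~\ref{prob:LWP} by an induction over dyadic frequencies $N$: assuming all the asserted bounds at scales below $N$ (in particular control of $v_{N/2}$ and of $v_L$ for $L\le L_N\sim N^{1-\delta}$), derive them at scale $N$, which is possible because the equations \eqref{psiNL} for $\psi^N_L$ and \eqref{errorequation} for $w_N$ involve only lower-scale data together with the scale-$N$ unknowns themselves, and the latter are small on $[-T,T]$ for $T$ small. Inside each step there is a further induction on the deformation parameter $L$ running from $\tfrac12$ up to $L_N$. The unknowns are organised as follows: the accumulated remainder $W_N=\sum_{M\le N}w_M$ (so that $W_N-W_{N/2}=w_N$) is sought in $X_T^{s,b}$ with $s=\tfrac12-\tfrac\alpha4+\epsilon$, $b=\tfrac12+\epsilon$, this being the smallest Sobolev index for which the Duhamel trilinear operator is bounded in the weakly dispersive range, as recalled in the introduction; and each building block $\zeta_L^N=h^{N,L}(\P_N\phi^{\omega})$ is controlled through three norms of the kernel $(h_{kk^*}^{N,L})$ \emph{simultaneously}: a Hilbert--Schmidt type norm, which after Cauchy--Schwarz in $k^*$ and Gaussian hypercontractivity yields the $L^\infty_tH^{(\alpha-1)-\epsilon}_x$ bound and, via a randomised Strichartz estimate exploiting the $L^4_t$ averaging, the $L^4_tL^\infty_x$ bound; and a Fourier--Lebesgue type norm of the same kernel, yielding the $\mathcal{F}L^{\frac{\alpha}{2}-\epsilon,\infty}_x$ control. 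The exponent $\nu=\min\{\tfrac12-\tfrac\alpha4,\tfrac{7(\alpha-1)}{4}\}-\epsilon$ and the threshold $\alpha_0$ emerge as exactly the constraints under which all three norms gain a positive power of $L$, and an $N^{-\epsilon/2}$ gain, in every frequency interaction regime.

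The deterministic core has two parts. For \eqref{psiNL} I would argue by induction on $L$: given the kernel of $\mathcal H^{N,L/2}$ and the lower-scale control of $v_L$, solve the equation for $\psi^N_L$ — linear in the high data — by Duhamel and a trilinear estimate in which $\Pi_Lv_L$ is placed in a Strichartz/$L^\infty$ space while $\psi^N_{L/2}$ is measured in the Hilbert--Schmidt and Fourier--Lebesgue norms above. The point, following \cite{Bringmann,Deng2}, is that the high--low--low interaction behaves like a linear transport with a ``potential'' built from the low modes, so one never pays the lossy $L^4$ Strichartz estimate here and the random-averaging kernel is propagated with only an $L$-admissible loss. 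For \eqref{errorequation} I would expand the right-hand side into its trilinear constituents and use the Remark after \eqref{y:ansatz}: no $(\sim N)\times(\ll L_N)\times(\ll L_N)$ interaction survives; in the high-high-high and high-high-low regimes, where the resonance function degenerates and the $L^4$ Strichartz inequality loses nearly $\tfrac18$ derivative, the singular factors $\zeta_L^N$ are placed in $L^\infty_x$ or $\mathcal{F}L^{\frac{\alpha}{2}-\epsilon,\infty}_x$, while in the remaining regimes one invokes the $X^{s,b}$ trilinear estimate of \cite{Cho}. This closes a contraction for $w_N$ in $X_T^{s,b}$ of small radius once $T$ is small, and the same scheme run on differences gives $\|W_N-W_{N/2}\|_{X_T^{s,b}}=\|w_N\|_{X_T^{s,b}}$ decaying in $N$, which upgrades boundedness to the Cauchy property of $(W_N)$; similarly $\zeta^N-\zeta^{N/2}=\sum_{L\le L_N}\zeta_L^N$ is summable in $C([-T,T];H^{(\alpha-1)-\epsilon}(\T))\hookrightarrow C([-T,T];H^{\sigma_0}(\T))$ thanks to the $L^{-\nu}$ and $N^{-\epsilon/2}$ gains.

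The probabilistic input is to exhibit $\Omega_T$ on which all the operator norms invoked above are finite with the claimed $L$- and $N$-dependence. The mechanism is that $H_{kk^*}^{N,L},h_{kk^*}^{N,L}\in\mathcal B_{\le L}$ are independent of $\mathcal B_{>N/2}$, so conditionally on the low modes $\wh{\zeta_L^N}(t,k)=\sum_{\frac N2<|k^*|\le N}h_{kk^*}^{N,L}(t)\,g_{k^*}/[k^*]^{\alpha/2}$ is a Gaussian chaos in the high variables whose moments are controlled, via hypercontractivity, by the deterministic kernel bounds from the inductive step; a Gaussian large-deviation (moment) estimate then converts these into an $\exp(-cT^{-\theta})$-type tail, the smallness in $T$ entering through the $T$-localisation of the $X^{s,b}$ and $L^q_t$ norms (which insert a positive power of $T$, in the spirit of the $L^\infty$ bound of Proposition~\ref{iteratedPicard}). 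Summing these tails over all dyadic $N$ and all $L\le L_N$ converges with the same exponent because of the $N^{-\epsilon/2}$ gains, giving $\mathbb P[\Omega_T^c]\le C_0\exp(-T^{-\theta})$ and hence (i)--(ii). Finally (iii) follows from (ii): for general $n$ with nearest dyadic $N$ one writes $v_n$ through the same ansatz and the established bounds force $v_n-v_N\to0$ in $C([-T,T];H^{\sigma_0}(\T))$, so $(v_n)$ inherits the Cauchy property.

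The step I expect to be the main obstacle is the deterministic multilinear analysis for $w_N$ at $\alpha$ near $\alpha_0$: one must decide, regime by regime, in which of the three norms each singular factor $\zeta_L^N$ is measured so that the combined loss coming from the degenerate resonance function in the $L^4$ Strichartz inequality and from the non-locality of $|D_x|^\alpha$ is beaten by the gains $L^{-\nu}$ and $N^{-\epsilon/2}$, and making these choices mutually consistent across \emph{all} interaction types is exactly what forces the quadratic condition in $\alpha$ defining $\alpha_0=\frac{31-\sqrt{233}}{14}$. A secondary difficulty is the bookkeeping of the two nested inductions so that the implicit constants do not degenerate as $L\uparrow L_N\sim N^{1-\delta}$, which is why $\delta$ must be chosen small relative to $\epsilon$.
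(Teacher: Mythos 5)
Your proposal reproduces the paper's scheme essentially faithfully: a double induction (over dyadic frequency $N$ and over the deformation parameter $L$), a fixed-point for the accumulated remainder in $X_T^{\frac12-\frac\alpha4+\epsilon,\frac12+\epsilon}$ using Proposition \ref{Multilinearkey}-type multilinear estimates, control of the random averaging kernels $(h_{kk^*}^{N,L})$ through matrix norms combined with conditional Wiener chaos and large deviation, and the reduction of (iii) to the dyadic case. This is the same route taken in Section~\ref{sec:keyiterative}.

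One point you should flesh out: you describe controlling $(h_{kk^*}^{N,L})$ through ``three norms'' but you only name a Hilbert--Schmidt type norm (what the paper calls $Z^b$) and a Fourier--Lebesgue type norm ($S^{b,q}$). The paper's induction hypothesis Loc($M$) also keeps a third, the genuine operator norm $Y^b$ of \eqref{matrixnorm} (the $l^2_{k^*}\to L^2_\lambda l^2_k$ bound), and this one is not optional: it is what closes the linear fixed-point equation \eqref{induction:tosolve1.5} for $h^{N,M,\#}$ by contraction, via (v) of Loc($M$), precisely because $Y^b$ is the norm in which $\mathcal P^+_{N,L}$ is a small operator uniformly in $N$. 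The $Z^b$ bound by contrast carries an $N^{1-\frac\alpha2+\epsilon_1}$ loss and cannot serve as the contraction norm. So as written your scheme for solving \eqref{psiNL} would not close; once you add the $Y^b$ norm (and the almost-diagonal localization weight of Loc(iv), used to upgrade the far-off-diagonal part into negligible errors), you recover exactly the paper's argument.
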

Note that by undoing the gauge transform
$$ u_n(t)=v_n(t)\mathrm{e}^{-\frac{it}{\pi}\int_{\T}|v_n|^2dx },
$$
 the unique solution $u_n\in C([-T,T];H^{\sigma_0}(\T))$ of the original FNLS equation is also a Cauchy sequence in $C([-T,T];H^{\sigma_0}(\T))$, which proves Theorem \ref{thm5} (locally in time).  
Unlike \cite{Deng2} where the dispersive effect is very strong while the nonlinearity can be arbitrarily large, we deal with the NLS model with a fixed nonlinearity but with very weak dispersion. Another different feature is that we do not need to renormalize the equation which makes the problem more natural from a purely PDE perspective.
Therefore the type of probabilistic well-posedness we get in this paper is close in spirit to the line of research initiated by Burq and the second author in \cite{BT1/2,BT-JEMS}. More importantly, we perform the multi-linear estimates in a very different manner compared with \cite{Deng2}. Indeed, in \cite{Deng2}, all the analysis was performed in the Fourier space, thanks to the strong linear and  multi-linear smoothing effect. However, in our situation, the deterministic smoothing is very weak (for Strichartz we loose almost $\frac{1}{8}$ derivative) and we rely more on the linear random oscillation effect. It is at this point that we need to define an extra Fourier-Lebesgue type operator norm $S^{b,q}$ in Section \ref{sec:keyiterative}. 

 We believe that the constraint $\alpha>\alpha_0=\frac{31-\sqrt{233}}{14}$ is technical. We point out again that $\alpha_0<\frac{8}{7}$ where $\frac{8}{7}$ is the threshold for the constraint $\frac{1}{2}-\frac{\alpha}{4}>\frac{3(\alpha-1)}{2}$ and $H^{\frac{3(\alpha-1)}{2}-}$ is the regularity of the first Picard's iteration $\mathcal{I}\mathcal{N}(S_{\alpha}(t)\phi^{\omega})$. The technical constraint $\alpha>\alpha_0$ is mainly caused by the condition  $\nu<\frac{7(\alpha-1)}{4}$. Indeed, this comes from the upper bound $N^{-(\alpha-1)+}L^{-\frac{7(\alpha-1)}{4}+}$ of the $X^{0,b}$ norm of the expression
$$ \mathcal{I}\mathcal{N}(\mathbf{P}_NS_{\alpha}(t)\phi,\mathbf{P}_LS_{\alpha}(t)\phi+\zeta_R^{L},\mathbf{P}_LS_{\alpha}(t)\phi+\zeta_R^L ).
$$
Note that $\zeta_R^L$ can be viewed as a size $R$ perturbation of Gaussians with Fourier support $\sim L$. Compared with the expression
$\mathcal{I}\mathcal{N}(\mathbf{P}_NS_{\alpha}(t)\phi,\mathbf{P}_LS_{\alpha}(t)\phi,\mathbf{P}_LS_{\alpha}(t)\phi ),
$
 the non-resonant relation $k_2\neq k_1,k_3$ will be destroyed and consequently, the estimate for terms like $$\mathcal{I}\mathcal{N}(\mathbf{P}_NS_{\alpha}(t)\phi,\zeta_R^N,\zeta_R^L ),\quad  \mathcal{I}\mathcal{N}(\mathbf{P}_NS_{\alpha}(t)\phi,\mathbf{P}_LS_{\alpha}(t)\phi,\zeta_R^L )$$ is worse than the former\footnote{ See (v) of Lemma \ref{2random-Zb} and (ii) of Lemma \ref{2random-Sb}}. 

Refined resolution ansatz in the context of nonlinear PDE in the presence of singular randomness were used in many previous works. 
In \cite{BOP}, \cite{OTW} ansatz taking contributions from possibly infinitely many Picard iterations are introduced. 
In \cite{H}, \cite{GIP}, in the context of parabolic equations, resolution ansatz exploiting randomness structure of certain terms beyond the affine ansatz are introduced (the randomness is captured using certain linearisation operators).
This type of ansatz was first introduced in the context of dispersive PDE in  \cite{GKO} and further developed in \cite{Bringmann2,OOT}. Different ansatz, which involve the randomness structure of operators and tensors, are introduced in \cite{Deng2},\cite{Deng3}.
It should be underlined that all these contributions are extensions of the ideas introduced in the fundamental papers by Bourgain \cite{B1,B2,B3}. 
\vspace{0.3cm}

{\bf $\!\!\!\!\!\!\!\!$ Organization of the article.} In this article, we only address the proof of Theorem~\ref{prob:LWP} since the remaining arguments of the proof of 
Theorem~\ref{thm5} follow from \cite{Sun-Tz2}.  In Section~2, we recall some preliminaries and define the functional spaces for functions and operators. In Section 3, following the iterative scheme in \cite{Deng2}, we first reduce the proof of Theorem \ref{prob:LWP} to an induction statement (Proposition \ref{keyinduction}). Then by assuming key multi-linear estimates summarized in Proposition \ref{Multilinearkey}, we prove the induction Proposition \ref{keyinduction}. The remaining sections are devoted to the proof of the statements in Proposition~\ref{Multilinearkey}. In Section 4, we deduce the $L^{\infty}$ and Fourier-Lebesgue property for the ``paracontrolled'' terms which will be used intensively. Next in Section 5, we prove the mapping properties of the \emph{ random averaging operators} leading to the self-closeness of the fix-point problem for $h^{N,L}$. Then in Section 6, we reduce the key multi-linear operators to the low-modulation cases in order to focus only on the discrete multi-linear summations later. In Section 7, we prove the bilinear estimates for the kernels of random averaging operators which helps us to control the source term of the fix-point problem for $h^{N,L}$. Finally in the remaining sections, we focus on the tri-linear estimates used to close the fix-point problem for the error $w_N$, in different frequency interactions regimes. In all multi-linear estimates, we always describe available algorithms first and then do the case-by-case analysis by implementing the algorithms.       

\subsection*{Acknowledgment}
We thank Tadahiro Oh for interesting discussions while the first author visiting the University of Edinburgh. We thank Yu Deng for valuable comments on the first version of the manuscript. The authors are supported by the ANR grant ODA (ANR-18-CE40- 0020-01).

\section{Notations and preliminaries}

\subsection{General notations}
The capital numbers $N,M,L,R$ represent dyadic numbers greater than $\frac{1}{2}$. For a finite collection of dyadic numbers $\{N_1,N_2,\cdot,N_k\}$, we denote by $N_{(1)}\geq N_{(2)}\geq N_{(3)}\geq\cdots N_{(k)}$ be the non-increasing rearrangement of it.

For two quantities $A,B$, the asymptotic notation $A\lesssim B$ ($A\gtrsim B$) means that there exists a constant $C$ such that $A\leq CB$($A\geq CB$). The notation $A\sim B$ means that $A\lesssim B$ and $A\gtrsim B$. The notation $A\lesssim_X B$($A\gtrsim_X B$) is used to specify that the constant $C$ depends on $X$.

For the Lebesgue exponents $1\leq p,q,r\leq \infty$, we always use $p',q',r'$ to denote their conjugate exponents such that  $\frac{1}{p}+\frac{1}{p'}=1$ with the canonical modification when $p=1$ or $\infty$.  For $1\leq j\leq n$, denote by $(Z_j,\mu_j)$ a finite sequence of measure space with the standard $L^{p_j}$ norm
$$ \|f(z_j)\|_{L_{z_j}^{p_j}}:=\Big(\int_{Z_j}|f(z_j)|^{p_j}d\mu_j\Big)^{\frac{1}{p_j}}.
$$ 
We will simply denote by $L_{z_1}^{p_1}L_{z_2}^{p_2}\cdots L_{z_j}^{p_j}$ to stand for $L^{p_1}(Z_1;L^{p_2}(Z_2;\cdots;L^{p_j}(Z_j)\cdots ) )$. For example, we denote by $L_t^qL_x^rl_k^q$ to stand for $L^q(\R_t;L^r(\T_x;l^q(\Z)))$. The Fourier-Lebesgue space $\mathcal{F}L^{s,q}(\T)$ is defined via the norm
\begin{align}\label{FL}
\|f\|_{\mathcal{F}L^{s,q}}:=\|\lg k\rg^s\wh{f}(k)\|_{l_k^q}.
\end{align}

 We denote by
$\Pi_N:=\mathcal{F}_x^{-1}\mathbf{1}_{|k|\leq N}\mathcal{F}_x$, and $\P_N:=\mathcal{F}_x^{-1}\mathbf{1}_{N/2<|k|\leq N}\mathcal{F}_x$ if $N\geq 1$ and $\P_{2^{-1}}=\Pi_{2^{-1}}$. 
$S_{\alpha}(t)=e^{it|D_x|^{\alpha}}$.
The twisted spacetime Fourier transform is defined as
$$ \widetilde{u}(\lambda,k):=(\mathcal{F}_{t,x}u)(\lambda+|k|^{\alpha},k).
$$
We also denote by
$$ \widetilde{h}_{kk^*}(\lambda):=(\mathcal{F}_th_{kk^*})(\lambda+|k|^{\alpha}),\quad \widetilde{\Theta}_{kk'}(\lambda,\lambda')=2\pi (\mathcal{F}_{t,t'}\Theta_{kk'})(\lambda+|k|^{\alpha},-\lambda'-|k'|^{\alpha}).
$$
The definition of $\widetilde{\Theta}_{kk'}(\lambda,\lambda')$ is such that if a operator is given by
$$ \mathcal{Q}(w)(t,k)=\sum_{k'}\int \Theta_{kk'}(t,t')(\mathcal{F}_xw)(t',k')dt',
$$
then
$$ \widetilde{\mathcal{Q}}(w)(\lambda,k)=\sum_{k'}\int\widetilde{\Theta}_{kk'}(\lambda,\lambda')\widetilde{w}(\lambda',k')d\lambda'.
$$

Define the affine space for a given number $k\in\Z$
$$ \Gamma(k):=\{(k_1,k_2,k_3)\in\Z^3: k_2\neq k_1, k_2\neq k_3, k_1-k_2+k_3=k \},
$$
and the resonant function on $\Gamma(k)$
$$ \Phi_{k_1,k_2,k_3}:=|k_1|^{\alpha}-|k_2|^{\alpha}+|k_3|^{\alpha}-|k|^{\alpha}.
$$


\subsection{Spaces for functions and operators}

Denote by $S_{\alpha}(t)=\mathrm{e}^{it|D_x|^{\alpha}}$. Recall that the Fourier restriction type space $X^{s,b}$ is defined with the associated norm
$$ \|u\|_{X^{s,b}}:=\|S_{\alpha}(-t)u\|_{H^s(\T_x;H^b(\R_t))}=\|\lg\lambda\rg^b\lg k\rg^s\wt{u}(\lambda,k)\|_{l_k^2L_{\lambda}^2}.
$$
Similarly, the Fourier-Lebesgue restriction space $X_{p,q}^{s,\gamma}$ is defined via the norm
$$ \|u\|_{X_{p,q}^{s,\gamma}}:=\|\lg\lambda\rg^{\gamma}\lg k\rg^s\wt{u}(\lambda,k) \|_{l_k^pL_{\lambda}^q}.
$$
Note that $X_{2,2}^{s,b}=X^{s,b}$.
For finite time interval $I\subset\R$, the localized restriction space $X_{I}^{s,b}$ is defined via the norm
$$ \|u\|_{X_I^{s,b}}:=\inf\{\|v\|_{X^{s,b}}:v|_{I}=u \}.
$$ 
For $X^{s,b}$ spaces, we have the following statements.
\begin{lemme}\label{localizationXsb} 
Let $\chi\in\mathcal{S}(\R)$. Then for $0<T<1$, $s\in\R$ and $-\frac{1}{2}<\wt{b}\leq b<\frac{1}{2}$, we have the estimate
$$ \|\chi(t/T)u\|_{X^{s,\wt{b}}}\lesssim T^{b-\wt{b}}\|u\|_{X^{s,b}}.
$$	
Moreover, if $u|_{t=0}=0$, then the above estimate holds for $0<\widetilde{b}\leq b<1$.
\end{lemme}
Note that the proof of the last statement can be found as Proposition 2.7 of \cite{Deng2}.
\begin{lemme}\label{linearestimate} 
Let $\chi\in \mathcal{S}(\R)$. Then for $s\in\R$, $\frac{1}{2}<b<1$, we have the estimate
$$ \Big\|\chi(t)\int_0^tS_{\alpha}(t-t')F(t')dt' \Big\|_{X^{s,b}}\lesssim \|F\|_{X^{s,b-1}}.
$$	
\end{lemme}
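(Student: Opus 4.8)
The final statement to prove is Lemma~\ref{linearestimate}: the standard inhomogeneous (Duhamel) estimate in Bourgain spaces,
$$ \Big\|\chi(t)\int_0^tS_{\alpha}(t-t')F(t')dt' \Big\|_{X^{s,b}}\lesssim \|F\|_{X^{s,b-1}}, \qquad \tfrac12<b<1.$$

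\textbf{Overall approach.} The plan is to conjugate by the linear flow so that everything reduces to a one-dimensional statement in the time variable at each fixed frequency $k$, and then prove that scalar statement by splitting the Fourier multiplier that represents the Duhamel operator into a ``diagonal'' piece (supported near the resonance $\lambda=0$, i.e.\ near the characteristic surface) and a ``tail'' piece. Concretely, write $v(t,x)=\int_0^t S_\alpha(t-t')F(t')\,dt'$. Applying $S_\alpha(-t)$ and taking the spatial Fourier transform, one gets $\widetilde{v}(t,k)=\int_0^t \widetilde{F}(t',k)\,dt'$ where $\widetilde{\cdot}$ is the twisted transform of the excerpt; since the weight $\lg k\rg^s$ is inert under this manipulation, it suffices (by the definition of the $X^{s,b}$ norm as $\|\lg\lambda\rg^b\lg k\rg^s\widetilde{u}(\lambda,k)\|_{l^2_kL^2_\lambda}$) to prove, uniformly in $k$, the scalar estimate
$$ \Big\| \chi(t)\int_0^t g(t')\,dt' \Big\|_{H^b_t(\R)} \lesssim \|g\|_{H^{b-1}_t(\R)} $$
for the function $g(t')=\widetilde{F}(t',k)$. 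This is a purely $1$D harmonic-analysis fact with no dependence on $\alpha$ or on the dimension, so the weak dispersion plays no role here.

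\textbf{Key steps.} First I would record the Fourier-side formula for the primitive cut off in time. Writing $\chi(t)\int_0^t g(t')dt'$ and taking the time Fourier transform, one obtains (after using $\widehat{\11_{[0,t]}}$-type manipulations, or equivalently differentiating and using $\tfrac{d}{dt}\int_0^t g = g$) an expression of the schematic form
$$ \mathcal{F}_t\Big(\chi(t)\int_0^t g\Big)(\lambda) = \int \widehat{\chi}(\lambda-\lambda')\,\frac{\widehat{g}(\lambda')}{i\lambda'}\,d\lambda' \;-\; \widehat{g}(0)\cdot(\text{smooth in }\lambda), $$
which is the standard identity used in Bourgain's original argument and in, e.g., Ginibre--Velo or Tao's book. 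Then I would split into $|\lambda'|\le 1$ and $|\lambda'|>1$. On $|\lambda'|>1$ the factor $1/\lambda'$ is harmless and the convolution with the Schwartz function $\widehat{\chi}$ gains the needed decay, so one controls $\lg\lambda\rg^b$ against $\lg\lambda'\rg^{b}\sim \lg\lambda'\rg^{b-1}\cdot\lg\lambda'\rg$; Schur's test / Young's inequality with the rapidly decaying kernel $\lg\lambda-\lambda'\rg^{-M}$ closes it. On $|\lambda'|\le 1$, one instead Taylor-expands $\int_0^t g = t\int_0^1 g(st)\,ds$ type representations, or simply uses that $\int g(t')\11_{[0,t]}(t')dt'$ restricted to the low-frequency part of $g$ is $\chi(t)$ times a bounded smooth function with the size $\|g\|_{H^{b-1}}$ (here $b-1<0$ so low frequencies are the dangerous ones, and it is exactly the cutoff $\chi$ and the condition $b<1$, via $L^2_t$-local-in-time integrability, that saves this piece). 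The constraint $b>\tfrac12$ enters to guarantee $H^b_t\hookrightarrow C_t$ so that ``evaluation at $t=0$'' and the primitive are well defined, and $b<1$ enters so that the kernel estimates converge.

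\textbf{Main obstacle.} The only delicate point is the low-frequency part $|\lambda'|\le 1$ of $g$ combined with the singular factor $1/\lambda'$: naively this looks divergent, and the resolution is the cancellation coming from the subtracted term $\widehat{g}(0)$ (equivalently, that $\int_0^t g\,dt'$ vanishes to first order at $t=0$), together with the fact that $\chi$ is compactly supported in time so that the primitive of a low-frequency (hence essentially constant-plus-linear) function is itself low-frequency up to the Schwartz tails of $\widehat\chi$. Making this cancellation quantitative — i.e.\ writing $\int_0^t g = \int_0^t (g-\text{(its mean-type low mode)}) + (\text{explicit low mode})$ and bounding each piece in $H^b_t$ by $\|g\|_{H^{b-1}_t}$ — is the heart of the matter, but it is entirely classical; I would simply cite that this is the content of, e.g., Lemma~2.1 in Bourgain's work or Proposition~2.7-type statements, analogous to the one the authors already invoke after Lemma~\ref{localizationXsb}. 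Everything else is Young/Schur with Schwartz kernels and the trivial reduction to each spatial frequency $k$.
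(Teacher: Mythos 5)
Your proposal is correct and follows the standard route: conjugating by $S_\alpha(-t)$ reduces the claim, at each fixed spatial frequency $k$, to the one-dimensional estimate $\|\chi(t)\int_0^t g\,dt'\|_{H^b_t}\lesssim \|g\|_{H^{b-1}_t}$, which is then proved via the Fourier representation of the primitive and a low/high modulation split exploiting the cancellation at $\lambda'=0$. The paper gives no proof for this lemma and treats it as classical; the structure you use is exactly what is encoded quantitatively in Lemma~\ref{DuhamelKernel}, where $|K(\lambda,\mu)|\lesssim_B(\lg\lambda\rg^{-B}+\lg\lambda-\mu\rg^{-B})\lg\mu\rg^{-1}$, so the two presentations are equivalent.

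Two small corrections to your commentary on the parameter ranges, neither of which affects the validity of the argument. The role of $b>\tfrac12$ is not really to secure $H^b_t\hookrightarrow C_t$; it is needed because the contribution from the ``diagonal'' term (the $\lg\lambda\rg^{-B}\lg\mu\rg^{-1}$ piece of the kernel, equivalently your subtracted boundary term) leads, after Cauchy--Schwarz, to the factor $\|\lg\mu\rg^{-b}\|_{L^2_\mu}$, which is finite precisely when $b>\tfrac12$. And the upper restriction $b<1$ is not in fact required for the kernel estimates to converge: since $\widehat\chi$ is Schwartz, one can take $B$ arbitrarily large and the argument closes for any $b>\tfrac12$. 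The paper simply states the lemma in the range it uses.
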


For $\mathcal{H}(t)$, time-dependent linear operator on $l^2$ with kernel $(H_{kk^*}(t))$, we introduce the norms:
\begin{equation}\label{matrixnorm}
\begin{split}
&\|\mathcal{H}\|_{Y^b}:=\|\langle\lambda\rangle^b\widetilde{H}_{kk^*}(\lambda)\|_{l_{k^*}^2\rightarrow L_{\lambda}^2l_k^2},\\
&\|\mathcal{H}\|_{Z^b}:=\|\langle\lambda\rangle^b\widetilde{H}_{kk^*}(\lambda)\|_{L_{\lambda}^2l_{k,k^*}^2},\\
&\|\mathcal{H}\|_{S^{b,q}}:=\|\langle\lambda\rangle^{\frac{2b}{q'}}\widetilde{H}_{kk^*}(\lambda)\|_{l_k^{\infty}L_{\lambda}^ql_{k^*}^2},
\end{split}
\end{equation}
where $\frac{1}{q}+\frac{1}{q'}=1$, $1\leq q\leq \infty$. For a linear operator $\Theta$ with kernel  $(\Theta_{kk'}(t,t'))$, we introduce the matrix norms:
\begin{equation}\label{matrixnorm2}
\begin{split} 
&\|\Theta\|_{Y^{b_1,b_2}}:=\|\langle\lambda\rangle^{b_1}\langle\lambda'\rangle^{-b_2}\widetilde{\Theta}_{kk'}(\lambda,\lambda')\|_{L_{\lambda'}^2l_k'^2\rightarrow L_{\lambda}^2l_{k}^2},\\ &\|\Theta\|_{Z^{b_1,b_2}}:=\|\langle\lambda\rangle^{b_1}\langle\lambda'\rangle^{-b_2}\widetilde{\Theta}_{kk'}(\lambda,\lambda')\|_{L_{\lambda,\lambda'}^2l_{k,k'}^2},\\
&\|\Theta\|_{S^{b_1,b_2,q}}:=\|\langle\lambda\rangle^{\frac{2b_1}{q'}}\langle\lambda'\rangle^{-b_2}\widetilde{\Theta}_{kk'}(\lambda,\lambda')\|_{l_k^{\infty}L_{\lambda}^qL_{\lambda'}^2l_{k'}^2}.
\end{split}
\end{equation}
Note that when we ignore the $k^*$ variable, the $S^{b,q}$ norm is just the restricted-type Fourier-Lebesgue norm $X_{\infty,q}^{0,\gamma}$ with $\gamma=\frac{2b}{q'}$.
The reason for introducing of the space $S^{b,q}$ is two-fold. First it characterizes the Fourier-Lebesgue norm of the para-controlled term $\psi_L^{N}$ which is morally $N^{-\frac{\alpha}{2}}$ for small $L$. This allows us to carry out many 
multi-linear estimates simply by Cauchy-Schwartz, as in our previous work \cite{Sun-Tz2}.
The second reason is technical. When we do the Wiener chaos estimate, in almost all the situations, leaving out $\|h^{N,L}\|_{S^{b,q}}$ is better than leaving out $\|h^{N,L}\|_{Z^{b}}$ since the later losses $N^{1-\frac{\alpha}{2}}$ factor.

Sometimes we will abuse the notation and write simply
$$ \|\Theta_{kk'}(\lambda,\lambda')\cdot m(k,k')\|_{\mathcal{X}^{b_1,b_2}}:=\|\mathcal{F}_{t,x}^{-1}(\widetilde{\Theta}_{kk'}(\lambda,\lambda')m(k,k') )\|_{\mathcal{X}^{b_1,b_2}},
$$
for $\mathcal{X}=Y,Z$ or $S$.

\begin{lemme}\label{Timelocalization}
	Let $\chi\in\mathcal{S}(\R)$, and recall that $\chi_T(t):=\chi(T^{-1}t)$ for $0<T\ll 1$. Then for $u(t,x)$ and operator $\Theta(t,t')=(\Theta_{kk'}(t,t'))$ satisfying $u(t=0,\cdot)=0,$ $\Theta(t=0,t')=0$, we have
	$$ \|\chi_T(t)u\|_{X_{\infty,q}^{0,\gamma}}\lesssim T^{\gamma_1-\gamma}\|u\|_{X_{\infty,q}^{0,\gamma_1}},\quad \|\chi_T(t)\Theta\|_{S^{b,q}}\lesssim T^{\frac{2(b_1-b)}{q'}}\|\Theta\|_{S^{b_1,q}},
	$$	
	with $1\leq q<\infty$, $ 0<\gamma<\gamma_1<1+\frac{1}{q'},$ and $ 0<b<b_1<1$.
\end{lemme}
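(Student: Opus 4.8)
\textbf{Proof proposal for Lemma~\ref{Timelocalization}.}

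The plan is to reduce both estimates to the scalar Fourier-Lebesgue statement, since the second inequality is just the first one applied entry-wise to the kernel. Concretely, for the operator estimate I would note that $\|\chi_T(t)\Theta\|_{S^{b,q}} = \|\langle\lambda\rangle^{2b/q'}(\mathcal{F}_t(\chi_T \Theta_{kk'}))(\lambda+|k|^\alpha)\|_{l_k^\infty L_\lambda^q L_{\lambda'}^2 l_{k'}^2}$, and that the $\lambda'$-side weight $\langle\lambda'\rangle^{-b_2}$ plays no role in the time localization; so after freezing $k,k'$ and the $\lambda'$ (or $L_{\lambda'}^2 l_{k'}^2$) variables, the claim becomes exactly $\|\chi_T u\|_{X_{\infty,q}^{0,\gamma}} \lesssim T^{\gamma_1-\gamma}\|u\|_{X_{\infty,q}^{0,\gamma_1}}$ with $u(t)$ replaced by the appropriate slice and $\gamma = 2b/q'$, $\gamma_1 = 2b_1/q'$ (note $\gamma<\gamma_1 \iff b<b_1$, and $\gamma_1 < 1+1/q' \iff b_1<1$, matching the hypotheses). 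Hence it suffices to prove the first inequality.

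For the first inequality I would work on the Fourier side after twisting: writing $v(t,x) := S_\alpha(-t)u(t,x)$ so that $\widetilde u(\lambda,k) = \widehat{v}(\lambda,k)$ in the $\lambda$-variable, the $X_{\infty,q}^{0,\gamma}$ norm is $\|\langle\lambda\rangle^\gamma \mathcal{F}_t(\chi_T v)(\lambda,k)\|_{l_k^\infty L_\lambda^q}$ (the spatial multiplier $S_\alpha(\pm t)$ commutes with multiplication by $\chi_T(t)$ and is a Fourier multiplier of modulus one in $k$, so it does not affect the $l_k^\infty$ norm — this is where having $u|_{t=0}=0$ is \emph{not} yet used). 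The key point is the convolution identity $\mathcal{F}_t(\chi_T v)(\lambda) = (T\,\widehat\chi(T\cdot) * \widehat v)(\lambda)$, so I need a one-dimensional Fourier-multiplier/convolution lemma: if $\widehat v$ is supported away from a neighborhood of the origin appropriately (the role of $v|_{t=0}=0$, which forces $\int \widehat v\,d\lambda = 0$, i.e. a cancellation that upgrades the gain from $T^{\max(0,\gamma_1-\gamma)}$ to the full $T^{\gamma_1-\gamma}$ even in the regime $\gamma_1 > 1$), then $\|\langle\lambda\rangle^\gamma (T\widehat\chi(T\cdot)*\widehat v)\|_{L^q_\lambda} \lesssim T^{\gamma_1-\gamma}\|\langle\lambda\rangle^{\gamma_1}\widehat v\|_{L^q_\lambda}$. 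This is the content already invoked for the $X^{s,b}$ analogue in Lemma~\ref{localizationXsb} (and for $X_{\infty,q}$ it is the Fourier-Lebesgue version of Proposition~2.7 of \cite{Deng2}); I would either cite that directly or reprove it by splitting $\langle\lambda\rangle \sim \langle\lambda-\eta\rangle\langle\eta\rangle$ in the convolution, using that $T\widehat\chi(T\cdot)$ has $L^1$-norm $O(1)$ and moments $\|\langle\eta\rangle^{a} T\widehat\chi(T\cdot)\|_{L^1_\eta} \lesssim T^{-a}$ for $a \geq 0$, together with Young's inequality $L^1 * L^q \to L^q$; the vanishing-at-$t=0$ hypothesis is used via the Fundamental Theorem of Calculus $\chi_T(t)v(t) = \chi_T(t)\int_0^t \partial_s v(s)\,ds$ to pay one extra power of $T$ exactly when $\gamma_1 - \gamma$ would otherwise exceed what non-cancellative Young estimates give, i.e. when $\gamma_1 \geq 1$.

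The main obstacle I anticipate is bookkeeping the exponent thresholds at $\gamma_1 = 1$: for $\gamma_1 < 1$ the estimate is the soft one that holds with no hypothesis on $v$ (and indeed Lemma~\ref{localizationXsb}'s first statement covers $-\tfrac12 < \tilde b \leq b < \tfrac12$ unconditionally), but to reach $\gamma_1 < 1 + 1/q'$ one genuinely needs $u|_{t=0}=0$ and a careful interpolation/iteration of the FTC trick — this is precisely the mechanism by which Proposition~2.7 of \cite{Deng2} extends the range, and translating it from the $L^2_\lambda$ ($X^{s,b}$) setting to the $L^q_\lambda$ ($X_{\infty,q}^{0,\gamma}$) setting requires replacing Plancherel by Young's inequality, which costs nothing since $\widehat\chi \in \mathcal{S}(\R)$. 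Everything else — the passage from the operator norm $S^{b,q}$ to the scalar norm, the commutation of $S_\alpha(\pm t)$ with $\chi_T$, and the identification $\gamma = 2b/q'$ — is routine and I would dispatch it in a sentence each.
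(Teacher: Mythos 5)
Your outer structure is right and matches the paper's: you reduce the operator estimate to the scalar one by freezing $(k',\lambda')$ and treating $\|\widetilde\Theta_{kk'}(\lambda,\lambda')\|_{L_{\lambda'}^2 l_{k'}^2}$ as $\widetilde u(\lambda,k)$, and the exponent identification $\gamma=2b/q'$, $\gamma_1=2b_1/q'$ is exactly the translation the paper uses. The commutation of $S_\alpha(\pm t)$ with the scalar multiplier $\chi_T(t)$ is also fine. So the only substantive question is the one-variable Fourier--Lebesgue estimate, and there your write-up stops short of a proof.

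What the paper actually does for the scalar estimate is a Fourier-side modulation split: write $u=u_1+u_2$ with $\widetilde u_1$ supported on $|\tau-|k|^\alpha|\ge 1/T$ and $\widetilde u_2$ on the complement. For $u_1$ the cancellation is irrelevant; the estimate is a Schur test on the kernel $K_T^+(\lambda,\omega)=\widehat\chi(T(\lambda-\omega))\,T\langle\lambda\rangle^\gamma/\langle\omega\rangle^{\gamma_1}\,\mathbf 1_{|\omega|\ge 1/T}$, and the two Schur sums each give $T^{\gamma_1-\gamma}$ because $\lambda$ and $\omega$ are essentially within $O(1/T)$ of each other and of the origin. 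For $u_2$ the cancellation $\int_\R\widetilde u(\omega,k)\,d\omega=0$ (the Fourier-side incarnation of $u|_{t=0}=0$) is used to subtract $\widehat\chi(T\lambda)\int\widetilde u\,d\omega$, leaving a difference $\widehat\chi(T(\lambda-\omega))-\widehat\chi(T\lambda)=O(T|\omega|\langle T\lambda\rangle^{-100})$, and both pieces are then closed by H\"older in $\omega$ and in $\lambda$; the exponent $\gamma_1<1+1/q'$ enters precisely as the condition under which the weight $\langle\omega\rangle^{-\gamma_1}$ is $L^{q'}$ on $|\omega|\ge 1/T$ with the right $T$-power and the first-order gain $T|\omega|$ is affordable.

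Your proposed mechanism is different: you want to exploit $u|_{t=0}=0$ in physical time via $\chi_T(t)v(t)=\chi_T(t)\int_0^t\partial_s v\,ds$ and then run Young after a peeling $\langle\lambda\rangle\lesssim\langle\lambda-\eta\rangle\langle\eta\rangle$. This is a plausible alternative, but it is not carried out: the integration $\int_0^t$ introduces a nonlocal operator on the Fourier side (essentially division by $\lambda$ plus a boundary term), and you do not explain how to iterate the gain past $\gamma_1=1$ up to $\gamma_1<1+1/q'$, nor how to handle the endpoint near $\lambda=0$ where the antiderivative does not decay. You correctly flag ``bookkeeping the exponent thresholds at $\gamma_1=1$'' as the main obstacle, but that flag is the whole lemma: the range $\gamma_1\ge 1$ is what distinguishes it from the unconditional Lemma~\ref{localizationXsb}, and the proof proposal does not fill it. If you want to run your physical-time version you would need to make the ``iteration of the FTC trick'' precise; otherwise the cleaner route is the modulation split above, which works entirely on the Fourier side and needs only one Schur test and one first-order Taylor expansion of $\widehat\chi$.
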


\begin{proof}
	The proof is essentially the same as in \cite{Deng2}. We present a proof in the appendix.
\end{proof}

Fix a time cutoff $\chi\in C_c^{\infty}((-1,1))$, we define the time truncated Duhamel operator
\begin{equation}\label{truncatedDuhamel}
\mathcal{I}F(t):=\chi(t)\int_0^tS_{\alpha}(t-t')\big(\chi(t')F(t') \big)dt'.
\end{equation}
\begin{lemme}[\cite{Deng1}]\label{DuhamelKernel}
	The twisted space-time Fourier transformation is given be
	$$ \widetilde{\mathcal{I}F}(\lambda,k)=\int_{\R} K(\lambda,\mu)\wt{F}(\mu,k)d\mu,
	$$
	where
	$$ K(\lambda,\mu)=\int_{\R}\Big[\frac{\widehat{\chi}(\lambda-\sigma)\widehat{\chi}(\sigma-\mu)}{i\sigma}-\frac{\widehat{\chi}(\lambda)\widehat{\chi}(\sigma-\mu)}{i\sigma}\Big]d\sigma .
	$$ 	
	Moreover, for any $B>1$, we have
	$$ |K(\lambda,\mu)|\lesssim_B \Big(\frac{1}{\langle\lambda\rangle^B}+\frac{1}{\langle\lambda-\mu\rangle^B} \Big)\frac{1}{\langle\mu\rangle}.
	$$
\end{lemme}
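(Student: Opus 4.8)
The plan is to first derive the stated representation by a direct computation on the Fourier side, and then obtain the pointwise bound by splitting the $\sigma$-integral near and away from the origin.

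For the representation, fix $k$ and work with the $k$-th $x$-Fourier coefficient. Since $S_\alpha(t)$ acts as multiplication by $\mathrm{e}^{it|k|^\alpha}$ on that mode,
$$
\mathcal{F}_x\mathcal{I}F(t,k)=\chi(t)\,\mathrm{e}^{it|k|^\alpha}\int_0^t \mathrm{e}^{-it'|k|^\alpha}\chi(t')\widehat{F}(t',k)\,dt'.
$$
I would insert the inverse twisted Fourier transform $\widehat{F}(t',k)=\frac{1}{2\pi}\int \mathrm{e}^{it'(\mu+|k|^\alpha)}\widetilde{F}(\mu,k)\,d\mu$, so that $\mathrm{e}^{-it'|k|^\alpha}\widehat{F}(t',k)=\frac{1}{2\pi}\int \mathrm{e}^{it'\mu}\widetilde{F}(\mu,k)\,d\mu$, and expand the inner cutoff via $\mathrm{e}^{it'\mu}\chi(t')=\frac{1}{2\pi}\int \mathrm{e}^{it'\sigma}\widehat{\chi}(\sigma-\mu)\,d\sigma$. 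Integrating in $t'\in[0,t]$ produces the factor $\int_0^t \mathrm{e}^{it'\sigma}\,dt'=\frac{\mathrm{e}^{it\sigma}-1}{i\sigma}$; then applying the outer cutoff $\chi(t)$ and taking the twisted transform in $t$ (which cancels the $\mathrm{e}^{it|k|^\alpha}$, leaving $\int \mathrm{e}^{-it\lambda}\chi(t)(\mathrm{e}^{it\sigma}-1)\,dt=\widehat{\chi}(\lambda-\sigma)-\widehat{\chi}(\lambda)$) yields, after absorbing the $(2\pi)^{-2}$ into the normalization,
$$
\widetilde{\mathcal{I}F}(\lambda,k)=\int K(\lambda,\mu)\widetilde{F}(\mu,k)\,d\mu,\qquad K(\lambda,\mu)=\int\Big[\frac{\widehat{\chi}(\lambda-\sigma)\widehat{\chi}(\sigma-\mu)}{i\sigma}-\frac{\widehat{\chi}(\lambda)\widehat{\chi}(\sigma-\mu)}{i\sigma}\Big]d\sigma.
$$
All the interchanges of integrals are legitimate provided one keeps the difference $\widehat{\chi}(\lambda-\sigma)-\widehat{\chi}(\lambda)$ (equivalently $\mathrm{e}^{it\sigma}-1$) together: the combined integrand $\widehat{\chi}(\sigma-\mu)\cdot\frac{\widehat{\chi}(\lambda-\sigma)-\widehat{\chi}(\lambda)}{i\sigma}$ extends to a smooth, rapidly decaying function of $\sigma$ since $\frac{\widehat{\chi}(\lambda-\sigma)-\widehat{\chi}(\lambda)}{\sigma}\to-\widehat{\chi}'(\lambda)$ as $\sigma\to0$, so no principal value is needed.

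For the pointwise bound I would split $K=K_{\mathrm{near}}+K_{\mathrm{far}}$ according to $|\sigma|\le1$ and $|\sigma|>1$. On $|\sigma|\le1$, the fundamental theorem of calculus gives $\frac{\widehat{\chi}(\lambda-\sigma)-\widehat{\chi}(\lambda)}{i\sigma}=-\frac1i\int_0^1\widehat{\chi}'(\lambda-\tau\sigma)\,d\tau$, and since $|\tau\sigma|\le1$ one has $|\widehat{\chi}'(\lambda-\tau\sigma)|\lesssim_B\langle\lambda\rangle^{-B}$; together with $\int_{|\sigma|\le1}|\widehat{\chi}(\sigma-\mu)|\,d\sigma\lesssim\langle\mu\rangle^{-1}$ this yields $|K_{\mathrm{near}}|\lesssim_B\langle\lambda\rangle^{-B}\langle\mu\rangle^{-1}$. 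On $|\sigma|>1$ I would estimate the two terms separately: the one with $\widehat{\chi}(\lambda)$ is bounded by $|\widehat{\chi}(\lambda)|\int_{|\sigma|>1}\frac{|\widehat{\chi}(\sigma-\mu)|}{|\sigma|}\,d\sigma\lesssim_B\langle\lambda\rangle^{-B}\langle\mu\rangle^{-1}$, using that $\int_{|\sigma|>1}\frac{\langle\sigma-\mu\rangle^{-M}}{|\sigma|}\,d\sigma\lesssim\langle\mu\rangle^{-1}$ for $M$ large (split into $|\sigma|\sim|\mu|$, where $\frac1{|\sigma|}\lesssim\frac1{\langle\mu\rangle}$, and $|\sigma|\not\sim|\mu|$, where the rapid decay of $\widehat{\chi}$ wins); the one with $\widehat{\chi}(\lambda-\sigma)$ is bounded using $\langle\sigma-\mu\rangle\langle\lambda-\sigma\rangle\gtrsim\langle\lambda-\mu\rangle$ to extract $\langle\lambda-\mu\rangle^{-B}$ while keeping enough decay in $\langle\sigma-\mu\rangle$ to rerun the same $\sigma$-integral, giving $\lesssim_B\langle\lambda-\mu\rangle^{-B}\langle\mu\rangle^{-1}$. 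Adding the three contributions produces $|K(\lambda,\mu)|\lesssim_B(\langle\lambda\rangle^{-B}+\langle\lambda-\mu\rangle^{-B})\langle\mu\rangle^{-1}$.

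The only genuine subtlety — and what I expect to be the main obstacle — is the $1/\sigma$ singularity: the two terms constituting $K$ are individually only conditionally (principal-value) convergent, and it is essential to exploit the difference structure both to justify Fubini in the derivation and to produce the gain $\langle\lambda\rangle^{-B}$ near $\sigma=0$ via the mean value theorem. Everything else reduces to routine manipulations with Schwartz bounds on $\widehat{\chi},\widehat{\chi}'$ and the elementary inequality $\langle a\rangle\langle b\rangle\gtrsim\langle a+b\rangle$.
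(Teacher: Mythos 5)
Your proof is correct, and indeed this is the standard argument. The paper itself does not prove this lemma — it cites it from Deng–Nahmod–Yue \cite{Deng1} — so there is no internal proof to compare against, but your derivation of the kernel (expanding the inner cutoff and the inverse twisted transform, integrating $\int_0^t e^{it'\sigma}\,dt'=(e^{it\sigma}-1)/(i\sigma)$, then Fourier-transforming the outer cutoff) is exactly the natural computation, and you correctly isolate the one genuine subtlety, namely that the two summands in $K$ are individually only principal-value convergent in $\sigma$ and must be kept together near $\sigma=0$ to exhibit the cancellation $\frac{\widehat\chi(\lambda-\sigma)-\widehat\chi(\lambda)}{\sigma}\to-\widehat\chi'(\lambda)$. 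The splitting $|\sigma|\le 1$ versus $|\sigma|>1$, the mean-value argument for the near piece, the Schur-type estimate $\int_{|\sigma|>1}|\widehat\chi(\sigma-\mu)|/|\sigma|\,d\sigma\lesssim\langle\mu\rangle^{-1}$, and the Peetre inequality $\langle\lambda-\sigma\rangle\langle\sigma-\mu\rangle\gtrsim\langle\lambda-\mu\rangle$ for the far piece all check out and combine to give the stated bound. The only cosmetic discrepancy is the $(2\pi)^{-2}$ normalization constant, which you flag explicitly and which is immaterial.
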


We will need an elementary lemma:
\begin{lemme}\label{convolution}
	Let $0\leq \sigma\leq \beta$ and $\sigma+\beta>1$. Then for any $\epsilon>0$, we have
	$$ \int_{\R}\frac{dy}{\lg y-x\rg^{\sigma} \lg y\rg^{\beta} }\lesssim \frac{1}{\lg x\rg^{\gamma}}
	$$
	where
	\begin{align*} 
	\gamma=\begin{cases}
	& \sigma+\beta-1,\;\beta<1 \\
	& \sigma-\epsilon,\; \beta=1\\
	& \sigma,\; \beta>1\\
	\end{cases}
	\end{align*}
	uniformly in $x\in\R$.
\end{lemme}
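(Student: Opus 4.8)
The final statement to prove is Lemma~\ref{convolution}, the elementary convolution estimate
$$
\int_{\R}\frac{dy}{\lg y-x\rg^{\sigma}\lg y\rg^{\beta}}\lesssim \frac{1}{\lg x\rg^{\gamma}},
$$
with $\gamma$ depending on the three regimes $\beta<1$, $\beta=1$, $\beta>1$ (and hypotheses $0\le\sigma\le\beta$, $\sigma+\beta>1$). Let me think about how to prove this.

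The standard approach: split the real line into regions according to which of $|y|$ and $|y-x|$ is comparable to $|x|$.

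Region 1: $|y| \le |x|/2$. Then $|y-x| \ge |x|/2$, so $\lg y-x\rg \gtrsim \lg x\rg$. The integral over this region is $\lesssim \lg x\rg^{-\sigma} \int_{|y|\le|x|/2} \lg y\rg^{-\beta} dy$. If $\beta > 1$ this integral is $O(1)$, giving $\lg x\rg^{-\sigma}$. If $\beta = 1$ it's $\lesssim \log\lg x\rg \lesssim \lg x\rg^\epsilon$, giving $\lg x\rg^{-\sigma+\epsilon}$. If $\beta < 1$ it's $\lesssim \lg x\rg^{1-\beta}$, giving $\lg x\rg^{-\sigma-\beta+1}$... wait, that's $\lg x\rg^{1-\beta-\sigma}$, which matches $\gamma = \sigma+\beta-1$ since $-(\sigma+\beta-1) = 1-\sigma-\beta$. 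Good.

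Region 2: $|y-x| \le |x|/2$. Then $|y| \gtrsim |x|$, so $\lg y\rg \gtrsim \lg x\rg$. The integral is $\lesssim \lg x\rg^{-\beta} \int_{|y-x|\le|x|/2} \lg y-x\rg^{-\sigma} dy \lesssim \lg x\rg^{-\beta} \lg x\rg^{1-\sigma}$ (since $\sigma \le \beta$ and... hmm, need $\sigma$ could be $< 1$; if $\sigma \ge 1$ we'd get $O(1)$ or $\log$). Actually since $\sigma \le \beta$ and we want the worst case. If $\sigma < 1$: $\lg x\rg^{1-\sigma-\beta}$. Since $\sigma \le \beta$, $1-\sigma-\beta \ge 1-2\beta$... and compared to the claimed bounds: when $\beta < 1$, $\gamma = \sigma+\beta-1$, so we need $1-\sigma-\beta \le -(\sigma+\beta-1)$, equality. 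Fine. When $\beta \ge 1$, $1-\sigma-\beta \le -\sigma$ (if $\beta \ge 1$) so this is fine, even better. If $\sigma = 1$: get $\lg x\rg^{-\beta}\log\lg x\rg$, still $\lesssim \lg x\rg^{-\gamma}$ with room. If $\sigma > 1$: get $\lg x\rg^{-\beta} = \lg x\rg^{-\gamma}$ or better.

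Region 3: $|y| > |x|/2$ and $|y-x| > |x|/2$, i.e., the "far" region where both factors are large. Here we want to show $\int_{\text{large }|y|} \lg y-x\rg^{-\sigma}\lg y\rg^{-\beta} dy \lesssim \lg x\rg^{-\gamma}$. In this region roughly $\lg y-x\rg \sim \lg y\rg$ when $|y| \gg |x|$, but near $|y|\sim|x|$ they can differ. Bound $\lg y-x\rg^{-\sigma} \le$ ... hmm. Actually on $|y| > |x|/2$, use $\lg y\rg \gtrsim \lg x\rg$ partially: write $\lg y\rg^{-\beta} = \lg y\rg^{-(\sigma+\beta-1)-\epsilon'}\lg y\rg^{-(1-\sigma+\epsilon'-... )}$... this is getting complicated. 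Cleaner: on region 3, since $|y|>|x|/2$, I can extract $\lg y\rg^{-a}\lesssim \lg x\rg^{-a}$ for any $a \le \beta$ and still have $\int \lg y-x\rg^{-\sigma}\lg y\rg^{-(\beta-a)}dy$ convergent provided $\sigma + \beta - a > 1$, i.e. $a < \sigma+\beta-1$. But we want $a$ as large as possible, $a\to\sigma+\beta-1$. So take $a = \sigma+\beta-1-\epsilon$ (when $\beta<1$) — wait but we need the remaining integral $\int_\R \lg y-x\rg^{-\sigma}\lg y\rg^{-(1-\sigma+\epsilon)}dy$; does that converge? $\sigma + (1-\sigma+\epsilon) = 1+\epsilon > 1$. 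Yes. And it's bounded uniformly in $x$ — actually by the very lemma in the $\beta>1$-type... no wait, here $1-\sigma+\epsilon$ could be less than 1. Hmm, need it bounded uniformly in $x$. $\int_\R \lg y-x\rg^{-\sigma}\lg y\rg^{-\tau}dy$ with $\sigma+\tau>1$, $\sigma,\tau\ge 0$ — this is bounded uniformly in $x$? No! If both $\sigma<1$ and $\tau<1$ it can grow like $\lg x\rg^{1-\sigma-\tau}$... but $1-\sigma-\tau < 0$ here, so it's actually $\lg x\rg^{\text{negative}}$, hence bounded. OK so uniformly bounded. Good. So Region 3 contributes $\lg x\rg^{-(\sigma+\beta-1)+\epsilon}$ when $\beta<1$.

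So the plan is: split $\R$ into the three regions above; in each, estimate crudely by pulling out a power of $\lg x\rg$ from whichever factor is bounded below by $\lg x\rg$, then bound the remaining one-dimensional integral using only elementary facts ($\int_{|y|\le R}\lg y\rg^{-\beta}dy \lesssim R^{1-\beta}$ for $\beta<1$, $\lesssim\log R$ for $\beta=1$, $\lesssim 1$ for $\beta>1$, and $\int_\R \lg y-x\rg^{-\sigma}\lg y\rg^{-\tau}dy \lesssim 1$ uniformly when $\sigma+\tau>1$ with both exponents $\ge 0$ — itself proved by the same region splitting, or by a dyadic decomposition). Collect the three contributions and take the worst (largest) exponent, which in each of the three cases $\beta\lessgtr 1$ equals the claimed $\gamma$; an arbitrarily small $\epsilon$ loss appears whenever a logarithm or a borderline convergent integral is involved, which is exactly where the statement allows an $\epsilon$.

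I do not anticipate a genuine obstacle here: this is a textbook lemma. The only mildly delicate point is bookkeeping — making sure the hypotheses $0\le\sigma\le\beta$ and $\sigma+\beta>1$ are exactly what is needed so that (a) in Region 2 one never pulls out more than $\lg x\rg^{-\beta}$ from $\lg y\rg$ while keeping $\int\lg y-x\rg^{-\sigma}$ controlled, and (b) in Region 3 the "leftover" integral is convergent, which forces $a<\sigma+\beta-1$ and hence the $\epsilon$-loss in the $\beta\le 1$ endpoints. An alternative, perhaps cleaner write-up: do a Littlewood–Paley / dyadic decomposition $\R = \bigcup_j \{|y|\sim 2^j\}$, on each piece estimate $\lg y-x\rg$ by cases ($2^j\ll|x|$, $2^j\sim|x|$, $2^j\gg|x|$) and sum the resulting geometric-type series. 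I would present the region-splitting version since it is the shortest. I would also note that by symmetry $y\mapsto x-y$ one may assume $x\ge 0$, and that for $|x|\lesssim 1$ the claim is trivial since then $\lg x\rg\sim 1$ and the left side is a convergent integral bounded by an absolute constant (using $\sigma+\beta>1$), so one may assume $|x|\gg 1$ throughout.
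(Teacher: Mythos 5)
The paper itself gives no proof of this lemma --- it simply refers to Lemma~2.2 of \cite{Sun-Tz2} --- so there is no argument in this paper to compare yours against; your region-splitting is the standard way to prove such a convolution bound. However, there is a small but real gap in your treatment of Region~3 when $\beta<1$. You extract $\lg y\rg^{-a}\lesssim\lg x\rg^{-a}$ and require the leftover integral $\int\lg y-x\rg^{-\sigma}\lg y\rg^{-(\beta-a)}\,dy$ to converge, which forces $a<\sigma+\beta-1$; taking $a=\sigma+\beta-1-\epsilon'$ then yields only $\lg x\rg^{-(\sigma+\beta-1)+\epsilon'}$ from Region~3. But the lemma claims the \emph{sharp} exponent $\gamma=\sigma+\beta-1$ when $\beta<1$, with no $\epsilon$ allowance; the $\epsilon$-loss is permitted only at the borderline $\beta=1$. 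Your closing sentence (``an arbitrarily small $\epsilon$ loss appears\ldots which is exactly where the statement allows an $\epsilon$'') is therefore not quite right for $\beta<1$.

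The fix is elementary and stays within your framework: split Region~3 further according to whether $|y|\ge 2|x|$ or $|x|/2<|y|<2|x|$. In the first sub-region, $\lg y-x\rg\gtrsim\lg y\rg$, so the integrand is $\lesssim\lg y\rg^{-\sigma-\beta}$ and
$$
\int_{|y|\ge 2|x|}\lg y\rg^{-\sigma-\beta}\,dy\lesssim \lg x\rg^{1-\sigma-\beta},
$$
using $\sigma+\beta>1$. In the second sub-region, both $\lg y\rg\gtrsim\lg x\rg$ and $\lg y-x\rg\gtrsim\lg x\rg$ hold pointwise and the region has Lebesgue measure $\lesssim |x|$, again giving $\lg x\rg^{1-\sigma-\beta}$. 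In either case Region~3 contributes $\lg x\rg^{1-\sigma-\beta}$, which equals $\lg x\rg^{-\gamma}$ for $\beta<1$ (and is strictly better for $\beta\ge 1$), with no $\epsilon$-loss. With that correction your Regions~1 and~2, which you did carry out to the sharp exponent for $\beta<1$, combine with Region~3 to give exactly the stated bound.
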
	
\begin{proof}
	See Lemma 2.2 of \cite{Sun-Tz2}.
\end{proof}

\subsection{Counting lemmas and the Strichartz inequality}

We need the following elementary counting principle:
\begin{lemme}\label{countingprinciple}
Let $I,J$ be two intervals and $\phi$ be a real-valued $C^1$ function defined on $I$, then
$$ \#\{k\in I\cap\Z:\phi(k)\in J \}\leq 1+\frac{|J|}{\inf_{\xi\in I}|\phi'(\xi)| }.
$$
\end{lemme}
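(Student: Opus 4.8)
The plan is to reduce to the case where $\phi$ is strictly monotone on $I$ and then invoke the mean value theorem. First I would dispose of the trivial case: if $\inf_{\xi\in I}|\phi'(\xi)|=0$, the right-hand side is $+\infty$ and there is nothing to prove, so I may assume $c:=\inf_{\xi\in I}|\phi'(\xi)|>0$. The key observation is that, since $\phi$ is $C^1$, the derivative $\phi'$ is continuous on the interval $I$; being continuous and nowhere vanishing it cannot change sign on $I$ (intermediate value theorem applied to $\phi'$), hence $\phi$ is strictly monotone on $I$.

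Next I would enumerate the elements of the finite set $\{k\in I\cap\Z:\phi(k)\in J\}$ as $k_1<k_2<\cdots<k_m$. If $m\le 1$ the asserted bound holds trivially, so assume $m\ge 2$. Because $\phi$ is strictly monotone, the values $\phi(k_1),\dots,\phi(k_m)$ are themselves strictly monotone, and by hypothesis all of them lie in the interval $J$; therefore $|\phi(k_m)-\phi(k_1)|\le |J|$.

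Finally I would apply the mean value theorem on $[k_1,k_m]\subset I$: there is $\xi\in I$ with $\phi(k_m)-\phi(k_1)=\phi'(\xi)(k_m-k_1)$, whence
$$
|J|\ \ge\ |\phi(k_m)-\phi(k_1)|\ =\ |\phi'(\xi)|\,(k_m-k_1)\ \ge\ c\,(k_m-k_1)\ \ge\ c\,(m-1),
$$
where the last inequality uses that $k_1,\dots,k_m$ are distinct integers so $k_m-k_1\ge m-1$. Rearranging yields $m\le 1+|J|/c=1+|J|/\inf_{\xi\in I}|\phi'(\xi)|$, which is the claim.

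There is no genuine obstacle in this argument; it is entirely elementary. The only point worth isolating — and the one that makes the claim correct exactly as stated, with no monotonicity hypothesis imposed a priori — is that a strictly positive uniform lower bound on $|\phi'|$ over $I$ already forces $\phi$ to be strictly monotone there, which is precisely what is needed for the one-dimensional counting step.
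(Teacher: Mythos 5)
Your argument is correct. The paper states this lemma without proof, calling it an ``elementary counting principle''; your reasoning (nonvanishing continuous derivative forces monotonicity, then the mean value theorem gives $|J|\ge c\,(k_m-k_1)\ge c\,(m-1)$) is exactly the standard way to establish it, and the observation that the uniform lower bound on $|\phi'|$ already yields the needed monotonicity is the right thing to isolate.
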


\begin{lemme}\label{counting1}
Assume that $ N\gg N_2\vee N_3$, then for fixed $k_2,k_3$ such that $|k_2|\sim N_2, |k_3|\sim N_3$ and $k_2\neq k_3$, we have
$$ \sum_{\substack{|k_1|\sim N } }\mathbf{1}_{\Phi_{k_1,k_2,k_3}=\mu+O(N^{\epsilon})}\lesssim N^{\epsilon}\Big(1+\frac{N^{2-\alpha}}{\lg k_2-k_3\rg}\Big),
$$	
and the implicit constant is independent of $\mu$.
\end{lemme}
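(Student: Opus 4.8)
The plan is to view $\Phi_{k_1,k_2,k_3}=|k_1|^{\alpha}-|k_2|^{\alpha}+|k_3|^{\alpha}-|k_1-k_2+k_3|^{\alpha}$, with $k_2,k_3$ fixed, as a function of the single variable $k_1$ and apply the counting principle of Lemma~\ref{countingprinciple}. Write $k=k_1-k_2+k_3$, so on the range $|k_1|\sim N$ we have $\phi(k_1):=|k_1|^{\alpha}-|k|^{\alpha}+(\text{const})$, and I need a lower bound for $|\phi'(k_1)|=\alpha\bigl||k_1|^{\alpha-1}\operatorname{sgn}(k_1)-|k|^{\alpha-1}\operatorname{sgn}(k)\bigr|$ with $J$ an interval of length $O(N^{\epsilon})$. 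Since $|k_1|\sim N$ and $|k|=|k_1-(k_2-k_3)|$ with $|k_2-k_3|\lesssim N_2\vee N_3\ll N$, also $|k|\sim N$; but $|\phi'|$ can still be small when $k_1$ and $k$ have the same sign and $|k_1|\approx|k|$, so the naive bound $|\phi'|\gtrsim N^{\alpha-1}$ fails and one does \emph{not} simply get $N^{\epsilon}$ summands.

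The key step is to split the $k_1$-range dyadically according to the size of $\langle k_1-k\rangle=\langle k_2-k_3\rangle$ — but this is fixed, so instead I split according to whether $k_1$ and $k$ have the same or opposite sign. In the opposite-sign region $|\phi'(k_1)|=\alpha(|k_1|^{\alpha-1}+|k|^{\alpha-1})\gtrsim N^{\alpha-1}$, which by Lemma~\ref{countingprinciple} gives $\lesssim 1+N^{\epsilon}N^{1-\alpha}\lesssim N^{\epsilon}$ solutions, well within the claimed bound. In the same-sign region, WLOG $k_1,k>0$, the mean value theorem gives $|k_1|^{\alpha-1}-|k|^{\alpha-1}=(\alpha-1)\theta^{\alpha-2}(k_1-k)$ for some $\theta\sim N$, so $|\phi'(k_1)|\sim N^{\alpha-2}|k_1-k|=N^{\alpha-2}|k_2-k_3|\gtrsim N^{\alpha-2}\langle k_2-k_3\rangle$ (using $k_2\neq k_3$ so $|k_2-k_3|\geq 1\sim\langle k_2-k_3\rangle$). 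Here one must be slightly careful that $\phi'$ doesn't change sign across the same-sign block, which holds because on $k_1>0$ the function $k_1\mapsto k_1^{\alpha-1}-k^{\alpha-1}$ with $k=k_1-(k_2-k_3)$ is monotone; if needed, further subdivide into $O(1)$ subintervals on which monotonicity is clear and apply Lemma~\ref{countingprinciple} on each. This yields $\lesssim 1+\dfrac{N^{\epsilon}}{N^{\alpha-2}\langle k_2-k_3\rangle}=1+\dfrac{N^{\epsilon}N^{2-\alpha}}{\langle k_2-k_3\rangle}$ solutions.

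Adding the two regions gives the stated bound $N^{\epsilon}\bigl(1+\frac{N^{2-\alpha}}{\langle k_2-k_3\rangle}\bigr)$, with the constant independent of $\mu$ since $\mu$ only enters through the location, not the length, of the interval $J$. The main obstacle is the same-sign near-diagonal regime just described: controlling $|\phi'|$ there requires exploiting the gap $k_1-k=k_2-k_3$ and the concavity/convexity of $t\mapsto t^{\alpha-1}$, and one has to ensure that $\phi'$ does not vanish or change sign inside the block where Lemma~\ref{countingprinciple} is applied — handled by cutting the $k_1$-interval into a bounded number of monotonicity intervals. Everything else is a direct application of the one-dimensional counting principle.
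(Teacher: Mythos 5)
Your proof is correct and follows the same route as the paper: lower-bound $|\partial_{k_1}\Phi_{k_1,k_2,k_3}|\gtrsim|k_2-k_3|\,N^{\alpha-2}$ on the range $|k_1|\sim N$ via the mean value theorem (using $k_2\neq k_3$ to convert $|k_2-k_3|$ into $\langle k_2-k_3\rangle$), then invoke Lemma~\ref{countingprinciple}. One minor remark: since $|k_2-k_3|\ll N\sim|k_1|$, the integers $k_1$ and $k_1-(k_2-k_3)$ automatically have the same sign, so your opposite-sign case is vacuous, and once $\inf_I|\phi'|>0$ the map $\phi$ is automatically monotone on $I$, so the subdivision into monotonicity blocks is also unnecessary.
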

\begin{proof}
This follows from the fact that
$$ \big|\frac{\partial\Phi_{k_1,k_2,k_3}}{\partial k_1}\big|=\alpha\big|\text{sgn}(k_1)|k_1|^{\alpha-1}-\text{sgn}(k_1-k_2+k_3)|k_1-k_2+k_3|^{\alpha-1}  \big|\gtrsim |k_2-k_3||k_1|^{\alpha-2}, 
$$
if $|k_1|\sim N\gg |k_2|+|k_3|$. We conclude by the elementary counting principal.
\end{proof}

\begin{lemme}\label{counting1.5}
Assume that $N_1\sim N_2\sim N_3\sim N$, then for fixed $k_2,k_3$ such that $|k_2|\sim N_2, |k_3|\sim N_3$ and $k_2\neq k_3$, we have
$$ \sum_{|k_1|\sim N_1}\mathbf{1}_{\Phi_{k_1,k_2,k_3}=\mu+O(N^{\epsilon})}\lesssim N^{\epsilon}\Big(1+\frac{N^{2-\alpha}}{\lg k_2-k_3\rg}\Big),
$$
and the implicit constant is independent of $\mu$.	
\end{lemme}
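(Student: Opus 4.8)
The plan is to follow verbatim the scheme of Lemma~\ref{counting1}: reduce to a lower bound on the $k_1$-derivative of the resonant function and then invoke the elementary counting principle (Lemma~\ref{countingprinciple}). Fix $k_2,k_3$ with $k_2\neq k_3$ and regard $\Phi_{k_1,k_2,k_3}$ as a function of the real variable $k_1$, with $k=k_1-k_2+k_3$. Since $\alpha>1$ the map $\xi\mapsto|\xi|^{\alpha}$ is $C^1$, so this function is $C^1$ on $\{|k_1|\sim N_1\}$, and because $\partial k/\partial k_1=1$ we get
$$ \frac{\partial\Phi_{k_1,k_2,k_3}}{\partial k_1}=\alpha\big(G(k_1)-G(k)\big),\qquad G(\xi):=\mathrm{sgn}(\xi)|\xi|^{\alpha-1},\quad G'(\xi)=(\alpha-1)|\xi|^{\alpha-2}>0. $$

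The one new point compared with Lemma~\ref{counting1} is that here one may have $|k|\ll N$ (the regime $N_1\sim N_2\sim N_3$ does not force $|k|\sim N$), so we must check that the derivative bound does not degenerate. This is where $\alpha<2$ enters: on the segment with endpoints $k_1$ and $k$, whose length is $|k_1-k|=|k_2-k_3|$, every point $\xi$ satisfies $|\xi|\leq\max(|k_1|,|k|)\lesssim N_1\vee N_2\vee N_3\sim N$, since $|k|=|k_1-k_2+k_3|\leq|k_1|+|k_2|+|k_3|\lesssim N$. As $\alpha-2<0$, small $|\xi|$ only \emph{increases} $G'$, so $G'(\xi)=(\alpha-1)|\xi|^{\alpha-2}\gtrsim N^{\alpha-2}$ uniformly on that segment. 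Integrating and using $G'>0$,
$$ \Big|\frac{\partial\Phi_{k_1,k_2,k_3}}{\partial k_1}\Big|=\alpha\,|G(k_1)-G(k)|=\alpha\!\!\int_{[\,k_1\wedge k,\ k_1\vee k\,]}\!\!(\alpha-1)|\xi|^{\alpha-2}\,d\xi\ \gtrsim\ |k_2-k_3|\,N^{\alpha-2}\ \sim\ \langle k_2-k_3\rangle N^{\alpha-2}, $$
the last step using $k_2\neq k_3$, hence $|k_2-k_3|\geq1$.

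Finally, $\{k_1:|k_1|\sim N_1\}$ is a union of at most two intervals, and on each of them the lower bound above holds; applying Lemma~\ref{countingprinciple} with $|J|\sim N^{\epsilon}$ gives, on each interval,
$$ \#\{|k_1|\sim N_1:\Phi_{k_1,k_2,k_3}=\mu+O(N^{\epsilon})\}\ \lesssim\ 1+\frac{N^{\epsilon}}{\langle k_2-k_3\rangle N^{\alpha-2}}=1+\frac{N^{2-\alpha+\epsilon}}{\langle k_2-k_3\rangle}\ \lesssim\ N^{\epsilon}\Big(1+\frac{N^{2-\alpha}}{\langle k_2-k_3\rangle}\Big), $$
uniformly in $\mu$; summing the (at most two) contributions yields the claim. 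I do not expect a genuine obstacle: the only thing worth double-checking is precisely the robustness of the derivative estimate in the fully resonant regime $N_1\sim N_2\sim N_3$, where one cannot exploit $|k|\sim N$ as in Lemma~\ref{counting1}, but where the monotonicity/integration argument above goes through unchanged because $\alpha<2$ makes the weight $|\xi|^{\alpha-2}$ only more favorable near the origin.
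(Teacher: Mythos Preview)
Your proof is correct and follows essentially the same approach as the paper: bound $|\partial_{k_1}\Phi|$ from below and then invoke Lemma~\ref{countingprinciple}. The only (cosmetic) difference is that the paper splits into the cases $\mathrm{sgn}(k_1)\neq\mathrm{sgn}(k)$ (where $|\partial_{k_1}\Phi|\sim N^{\alpha-1}$ directly) and $\mathrm{sgn}(k_1)=\mathrm{sgn}(k)$ (where the integral bound gives $|k_2-k_3|\min\{|k_1|^{\alpha-2},|k|^{\alpha-2}\}$), whereas your observation that $G$ is globally increasing lets you treat both cases at once via the same integral, which is a mild streamlining.
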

\begin{proof}
Arguing as in the proof of the previous lemma, when sgn$(k_1)\neq$sgn$(k_1-k_2+k_3)$, we have $|\partial_{k_1}\Phi|\sim N^{\alpha-1}$. When sgn$(k_1)=$sgn$(k_1-k_2+k_3)$, we may assume that $k_1>0$, hence
$$ \big|\frac{\partial\Phi_{k_1,k_2,k_3}}{\partial k_1}\big|=\alpha(\alpha-1) \int_{\min\{k_1,k_1-(k_2-k_3)\} }^{\max\{k_1,k_1-(k_2-k_3)\} }\frac{d\xi}{|\xi|^{2-\alpha}}\gtrsim |k_2-k_3|\min\big\{\frac{1}{|k_1|^{2-\alpha}},\frac{1}{|k_1-k_2+k_3|^{2-\alpha}} \big\}.
$$
This completes the proof of Lemma \ref{counting1.5}.
\end{proof}
We need also the following Lemma proved in \cite{Sun-Tz2}.
\begin{lemme}\label{counting2ndorder}
	Denote by
	$$ A_{a,l,M_1,M_2}:=\{k\in\Z: M_1\leq |k|\leq 2M_1, M_2\leq |a-k|\leq 2M_2, \big||k|^{\alpha}+|a-k|^{\alpha}-l \big|\leq r\}.
	$$
	Then for $r\geq \frac{1}{100}, 1<\alpha<2$, we have
	$$ \#A_{a,l,M_1,M_2}(r)\lesssim \min\{M_1,M_2\}^{1-\frac{\alpha}{2}}r^{\frac{1}{2}},
	$$
	where the implicit constant is independent of $a,l,r, M_1$ and $M_2$.
\end{lemme}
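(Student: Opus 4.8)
## Proof Strategy for Lemma~\ref{counting2ndorder}

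The plan is to reduce the counting of lattice points in $A_{a,l,M_1,M_2}(r)$ to an application of the elementary counting principle (Lemma~\ref{countingprinciple}) applied to the function $\phi(k) = |k|^\alpha + |a-k|^\alpha$, but this alone will not suffice because $\phi'$ degenerates near the critical point of $\phi$ (where $|k|^{\alpha-1} = |a-k|^{\alpha-1}$, i.e.\ near $k = a/2$). So the main point is to handle the region near this critical point separately using the convexity (second derivative lower bound) of $\phi$, which accounts for the square-root factor $r^{1/2}$ in the bound. By symmetry in $M_1 \leftrightarrow M_2$ we may assume $M_1 \leq M_2$; the factor $\min\{M_1,M_2\}^{1-\alpha/2} = M_1^{1-\alpha/2}$ then comes from the lower bound on $|\phi''|$ on the relevant scale.

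First I would split the admissible range $M_1 \le |k| \le 2M_1$ into the ``non-degenerate'' part, where $|k|$ and $|a-k|$ differ by more than a constant factor (say $|a-k| \geq 2|k|$ or the critical point is far), and the ``degenerate'' part near $k \approx a/2$. On the non-degenerate part, $\mathrm{sgn}(k)$ and $\mathrm{sgn}(a-k)$ considerations together with the mean value computation (as in the proof of Lemma~\ref{counting1.5}, writing $\partial_k\phi = \alpha(|k|^{\alpha-1}\mathrm{sgn}(k) - |a-k|^{\alpha-1}\mathrm{sgn}(a-k))$ and bounding below by an integral of $|\xi|^{\alpha-2}$) gives $|\phi'(k)| \gtrsim M_1^{\alpha-1}$, or more precisely a lower bound comparable to the separation of $|k|$ from $|a-k|$ times $M_2^{\alpha-2}$; plugging into Lemma~\ref{countingprinciple} with $|J| = 2r$ yields a bound of the form $1 + r/M_1^{\alpha-1}$, which is acceptable since $M_1^{1-\alpha/2} r^{1/2} \gtrsim 1$ when $r \gtrsim 1$, and $r/M_1^{\alpha-1} \lesssim M_1^{1-\alpha/2}r^{1/2}$ precisely when $r^{1/2} \lesssim M_1^{\alpha/2}$, which is the generic regime; the complementary regime $r \gtrsim M_1^\alpha$ makes $\phi$ range over an interval of length $\lesssim M_1^\alpha \lesssim r$, so \emph{all} of $[M_1,2M_1]$ is admitted and the count is trivially $\lesssim M_1 \lesssim M_1^{1-\alpha/2}r^{1/2}$.

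For the degenerate part near $k \approx a/2$, I would use the second-order information: on any interval of length $\ell$ around the critical point $k_c$ of $\phi$, one has $\phi(k) - \phi(k_c) \sim |\phi''(k_c)|(k-k_c)^2$ with $|\phi''| \sim M_1^{\alpha-2}$ there (note $|k| \sim |a-k| \sim M_1$ in this regime since $M_1 \le M_2$ forces the critical point, if it contributes, to sit at scale $M_1$). Hence the condition $|\phi(k) - l| \le r$ forces $(k-k_c)^2 \lesssim r/M_1^{\alpha-2} = r M_1^{2-\alpha}$, i.e.\ $|k - k_c| \lesssim r^{1/2} M_1^{1-\alpha/2}$, and the number of integers in such an interval is $\lesssim 1 + r^{1/2}M_1^{1-\alpha/2} \lesssim r^{1/2}M_1^{1-\alpha/2}$ using $r \gtrsim 1$. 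Combining the non-degenerate and degenerate contributions gives the claimed bound, uniformly in $a,l,r,M_1,M_2$.

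The main obstacle I anticipate is making the case division clean: the transition between ``use $\phi'$'' and ``use $\phi''$'' regions, and the bookkeeping of when $|k| \sim |a-k|$ versus $|k| \ll |a-k|$, together with tracking that all implied constants genuinely depend only on $\alpha$ and not on $a, l, r, M_1, M_2$. The reference to ``Lemma 2.2 of \cite{Sun-Tz2}'' indicates the authors simply cite their earlier paper, so the clean route is to quote it; but the sketch above is how one proves it from scratch. One technical subtlety worth flagging: since $\alpha - 2 < 0$, the function $|\xi|^{\alpha-1}$ is not $C^1$ across $0$, so on intervals straddling the origin (possible when $M_1$ is comparable to $|a|$) one should argue directly via monotonicity of $|k|^{\alpha-1}$ on each sign component rather than via $\phi''$, which is exactly the mean-value-integral trick already used in Lemma~\ref{counting1.5}.
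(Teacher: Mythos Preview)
The paper does not give a proof of this lemma; it simply cites \cite{Sun-Tz2}. (Your parenthetical reference to ``Lemma~2.2 of \cite{Sun-Tz2}'' is a slip: that citation in the paper belongs to Lemma~\ref{convolution}, not to the present counting lemma.) So there is no in-paper argument to compare against; your sketch is a self-contained proof.

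Your approach is correct. One streamlining is worth pointing out, since you flagged the case bookkeeping as the main obstacle: the degenerate/non-degenerate split with a fixed threshold $|\phi'|\gtrsim M_1^{\alpha-1}$ is unnecessary. On each of the at most four intervals where $\mathrm{sgn}(k)$ and $\mathrm{sgn}(a-k)$ are constant, $\phi(k)=|k|^\alpha+|a-k|^\alpha$ is $C^2$ with
\[
\phi''(k)=\alpha(\alpha-1)\big(|k|^{\alpha-2}+|a-k|^{\alpha-2}\big)\gtrsim \min\{M_1,M_2\}^{\alpha-2}=M_1^{\alpha-2},
\]
uniformly (using $\alpha-2<0$). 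The standard van der Corput sublevel-set estimate (split at $|\phi'|\le\lambda$ versus $|\phi'|>\lambda$ and optimize $\lambda\sim (r\,\phi'')^{1/2}$) then gives directly that $\{x:|\phi(x)-l|\le r\}$ has measure $\lesssim (r/M_1^{\alpha-2})^{1/2}=r^{1/2}M_1^{1-\alpha/2}$; since by convexity this set is a union of at most two intervals, the integer count is $\lesssim 1+r^{1/2}M_1^{1-\alpha/2}$, and the additive $1$ is absorbed because $r\ge\frac{1}{100}$ and $M_1\ge 1$. This subsumes both of your regimes and removes the separate treatment of $r\gtrsim M_1^\alpha$.
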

Next we recall the following bilinear Strichartz inequality:
\begin{lemme}[\cite{Sun-Tz2}]\label{bilinearStrichartz}
	Let $1<\alpha\leq 2$ and $s\geq \frac{1}{2}-\frac{\alpha}{4}$. Then for any $N\geq M$, we have
	\begin{align}\label{eq:bilinear}
	\|\mathbf{P}_Nf\cdot \mathbf{P}_M g\|_{L_{t,x}^2}\lesssim M^s\|\mathbf{P}_Nf\|_{X^{0,\frac{3}{8}}}\|\mathbf{P}_Mg\|_{X^{0,\frac{3}{8}}}.
	\end{align}	
\end{lemme}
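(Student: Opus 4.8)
\textbf{Proof strategy for Lemma \ref{bilinearStrichartz}.}

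The plan is to reduce the bilinear estimate to a counting problem on the space-time Fourier side, exactly in the spirit of Bourgain's original bilinear Strichartz argument, adapted to the fractional dispersion relation $\omega(k)=|k|^\alpha$. First I would write $u=\mathbf P_N f$ and $v=\mathbf P_M g$ in terms of their twisted Fourier transforms $\wt u(\lambda_1,k_1)$, $\wt v(\lambda_2,k_2)$, so that $\widehat{u(t)\cdot v(t)}$ at frequency $k$ and time-frequency $\lambda$ is a convolution in $(\lambda_i,k_i)$ constrained by $k_1+k_2=k$ and $\lambda_1+|k_1|^\alpha+\lambda_2+|k_2|^\alpha=\lambda+$ (contributions from the two modulation shifts). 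Using Plancherel in $(t,x)$ and Cauchy--Schwarz in the convolution variables, the square of the left-hand side is bounded by
\begin{equation*}
\sup_{k,\lambda}\Big(\#\big\{(k_1,k_2,\lambda_1,\lambda_2\text{-cells})\ :\ k_1+k_2=k,\ |k_1|\sim N,\ |k_2|\sim M,\ |k_1|^\alpha+|k_2|^\alpha=\lambda+O(\lg\lambda_1\rg+\lg\lambda_2\rg)\big\}\Big)
\end{equation*}
times $\|u\|_{X^{0,3/8}}^2\|v\|_{X^{0,3/8}}^2$, after the weights $\lg\lambda_1\rg^{3/8}\lg\lambda_2\rg^{3/8}$ are used to make the $\lambda$-integrations converge (here $3/8+3/8=3/4>1/2$ is what gives room in the modulation variables; one integrates out $\lambda_1,\lambda_2$ against $\lg\lambda_1\rg^{-3/4}\lg\lambda_2\rg^{-3/4}$ via Lemma \ref{convolution}, absorbing a harmless power of the largest modulation into the counting radius $r$, which stays of polynomial size and only costs $M^{0+}$).

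The heart of the matter is then the counting: for fixed $k$ and fixed modulation level, how many $k_1$ with $|k_1|\sim N$, $|k-k_1|\sim M$ satisfy $|k_1|^\alpha+|k-k_1|^\alpha=l+O(r)$? This is precisely what Lemma \ref{counting2ndorder} supplies with $a=k$, $M_1\sim N$, $M_2\sim M$: the count is $\lesssim \min\{N,M\}^{1-\alpha/2}r^{1/2}=M^{1-\alpha/2}r^{1/2}$. Taking square roots and recalling $1-\alpha/2 = 2(\tfrac12-\tfrac\alpha4)$, this yields the loss $M^{\frac12-\frac\alpha4}$, and since $s\ge \frac12-\frac\alpha4$ we may replace $M^{1/2-\alpha/4}$ by $M^s$. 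The $r^{1/2}$ factor, together with the $\lambda$-integrations, contributes only the modulation weights already accounted for (one checks $r$ is bounded by a fixed power of $\lg\lambda_1\rg\lg\lambda_2\rg\lg\lambda\rg$, and $3/8$ is comfortably more than enough to sum the resulting geometric series in dyadic modulation blocks; the small surplus is absorbed since $3/8 > 1/4$).

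The main obstacle is making the modulation bookkeeping clean: unlike the classical $\alpha=2$ case there is no exact algebraic identity relating the output modulation to $k_1$, and the $\pm$ sign structure of $|D_x|^\alpha$ (no complex conjugate on either factor, both propagators are $e^{it|D_x|^\alpha}$) means the relevant resonance function is $|k_1|^\alpha+|k-k_1|^\alpha-|k|^\alpha$ rather than a difference, which is exactly the ``elliptic'' sum-type expression that Lemma \ref{counting2ndorder} is designed for — so the degeneracy is milder here than in the difference case and the lemma applies directly. I would organize the proof by first dyadically decomposing in the two modulations $\lg\lambda_i\rg\sim L_i$, proving the estimate with an extra gain $\min\{L_1,L_2\}^{-\delta}$ for some small $\delta>0$ coming from the surplus in $3/8-1/4$, and then summing over $L_1,L_2$. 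The only genuinely dispersion-specific input is Lemma \ref{counting2ndorder}; everything else is the standard $X^{s,b}$ bilinear machinery.
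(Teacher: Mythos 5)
Your overall strategy is the right one for this lemma: Plancherel, Cauchy--Schwarz against the $X^{0,b}$ weights, dyadic decomposition in modulation, and then Lemma~\ref{counting2ndorder} applied to the resonance sum $|k_1|^\alpha+|k-k_1|^\alpha$. That is precisely the Bourgain-type counting argument, and Lemma~\ref{counting2ndorder} is the correct dispersion-specific input. So the skeleton is sound.

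There is, however, a concrete error in the modulation bookkeeping that would cause the proof to fail as written. After decomposing $\lg\lambda_1\rg\sim L_1$, $\lg\lambda_2\rg\sim L_2$, the Cauchy--Schwarz step bounds the square of the $L^2$ norm by the \emph{measure} of the constraint set, which is $\lesssim L_{\min}\cdot\#\{k_1: |k_1|^\alpha+|k-k_1|^\alpha=\tau+O(L_{\max})\}\lesssim L_{\min}\,M^{1-\alpha/2}L_{\max}^{1/2}$. Converting $\|u_{L_1}\|_{L^2}\sim L_1^{-b}\|u_{L_1}\|_{X^{0,b}}$ and taking square roots gives, after localising, the dyadic estimate
\begin{equation*}
\|\mathbf P_N u_{L_1}\,\mathbf P_M v_{L_2}\|_{L^2_{t,x}}\lesssim L_{\min}^{\frac12-b}L_{\max}^{\frac14-b}\,M^{\frac12-\frac\alpha4}\,\|u_{L_1}\|_{X^{0,b}}\|v_{L_2}\|_{X^{0,b}}.
\end{equation*}
At $b=3/8$ this factor is $(L_{\min}/L_{\max})^{1/8}$, \emph{not} $\min\{L_1,L_2\}^{-\delta}$. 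The exponent $1/8=3/8-1/4$ that you compute does appear, but your interpretation of it as a ``surplus'' because ``$3/8>1/4$'' is misleading in two ways. First, $b=3/8$ is the \emph{exact} threshold for this scheme, not a point above some threshold $1/4$: the two exponents satisfy $(1/2-b)+(1/4-b)=0$ when $b=3/8$, and for $b<3/8$ the argument fails. Second, and more importantly, the gain is only in the \emph{ratio} $L_{\min}/L_{\max}$, not in $L_{\min}$ alone; if the gain really were $\min\{L_1,L_2\}^{-\delta}$ the double sum over modulations would still diverge, since for each fixed $L_{\min}$ the sum over all $L_{\max}\ge L_{\min}$ has infinitely many terms carrying no decay. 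What actually closes the sum is a Schur/shift argument: writing $L_1=2^{i}$, $L_2=2^{j}$ and $d=|i-j|$, one has $\sum_d 2^{-d/8}\sum_i c_i d_{i+d}\le\big(\sum_d 2^{-d/8}\big)\|c\|_{\ell^2}\|d\|_{\ell^2}$, which uses the $\ell^2$ structure of the $X^{0,b}$ norm across modulation blocks rather than absolute summability of the coefficients. This is a standard device, but it is not what you describe, and without it the proof as outlined does not close; with it, it does, and moreover it gives the stated endpoint $s=\frac12-\frac\alpha4$ with no $\log$ or $\varepsilon$ loss (which, incidentally, your cruder ``$\sup_{\tau,k}\sum_{k_1}\lg\tau-\Psi(k_1)\rg^{-1/2}$'' phrasing would not deliver, since that sum picks up a $\log$ at the borderline exponent $1/2$).
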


\subsection{Estimates for operators}
\begin{lemme}\label{matrix-bound}
	Let $\Lambda:l^2\rightarrow l^2$ be a bounded operator with kernel $(\sigma_{kk'})_{k,k'\in\Z}$. Then
	$$ \|\Lambda\|_{l^2\rightarrow l^2}\leq \sup_{k}|\sigma_{kk}|+\Big(\sum_{k,k':k\neq k'}|\sigma_{kk'}|^2\Big)^{\frac{1}{2}}.
	$$
\end{lemme}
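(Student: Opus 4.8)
The plan is to split $\Lambda$ into its diagonal and off-diagonal parts and to control each by the triangle inequality for the operator norm. Concretely, I would write $\Lambda=D+R$, where $D$ is the diagonal operator with kernel $\sigma_{kk'}\mathbf{1}_{k=k'}$ and $R=\Lambda-D$ has kernel $\sigma_{kk'}\mathbf{1}_{k\neq k'}$. To see that this splitting makes sense, test $\Lambda$ against the standard orthonormal basis $(e_k)_{k\in\Z}$ of $l^2(\Z)$: since $\sigma_{kk}=\langle \Lambda e_k,e_k\rangle$, the boundedness of $\Lambda$ gives $\sup_k|\sigma_{kk}|\leq\|\Lambda\|_{l^2\to l^2}<\infty$, so $D$, and hence $R=\Lambda-D$, is a bounded operator on $l^2$.

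For the diagonal part, $(Da)_k=\sigma_{kk}a_k$, so
$$\|Da\|_{l^2}^2=\sum_{k}|\sigma_{kk}|^2|a_k|^2\leq\Big(\sup_k|\sigma_{kk}|\Big)^2\|a\|_{l^2}^2,$$
which yields $\|D\|_{l^2\to l^2}\leq\sup_k|\sigma_{kk}|$. For the off-diagonal part, I would simply bound the operator norm by the Hilbert–Schmidt norm: by Cauchy–Schwarz in the $k'$ summation,
$$\|Ra\|_{l^2}^2=\sum_{k}\Big|\sum_{k'\neq k}\sigma_{kk'}a_{k'}\Big|^2\leq\sum_{k}\Big(\sum_{k'\neq k}|\sigma_{kk'}|^2\Big)\|a\|_{l^2}^2=\Big(\sum_{k,k':k\neq k'}|\sigma_{kk'}|^2\Big)\|a\|_{l^2}^2,$$
so $\|R\|_{l^2\to l^2}\leq\big(\sum_{k\neq k'}|\sigma_{kk'}|^2\big)^{1/2}$. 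Adding the two estimates through $\|\Lambda\|_{l^2\to l^2}\leq\|D\|_{l^2\to l^2}+\|R\|_{l^2\to l^2}$ gives the claimed bound.

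Since the statement is elementary there is no genuine obstacle; the only point that deserves a sentence is the remark above that the diagonal entries are uniformly bounded by $\|\Lambda\|_{l^2\to l^2}$, which is what legitimizes treating $D$ and $R$ as honest bounded operators rather than merely as formal kernels. Everything else is the triangle inequality together with the trivial fact that a diagonal operator has norm equal to the supremum of its entries and that an operator is dominated by its Hilbert–Schmidt norm.
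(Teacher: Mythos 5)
Your proof is correct and follows essentially the same route as the paper: both split $\Lambda$ into its diagonal part (bounded by $\sup_k|\sigma_{kk}|$) and its off-diagonal part (bounded by the Hilbert–Schmidt norm via Cauchy–Schwarz), the paper doing so at the level of the bilinear form $(\Lambda a,d)_{l^2}$ and concluding by duality, while you do it directly at the operator level via $\Lambda=D+R$ and the triangle inequality. Your added remark that $\sup_k|\sigma_{kk}|\leq\|\Lambda\|$ legitimizes the operator-level split, which the paper's bilinear-form formulation sidesteps automatically; this is a minor presentational difference, not a different argument.
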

\begin{proof}
For any $a\in l^2,d\in l^2$, 
$$  (\Lambda a,d)_{l^2}=\sum_{k\neq k'}\sigma_{kk'}a_{k'}\ov{d}_k+\sum_{k}\sigma_{kk}a_k\ov{d}_k.
$$ 
By Cauchy-Schwartz, we have
\begin{align*}
 |(\Lambda a,d)_{l^2}|\leq &\Big(\sum_{k}|d_k|^2\Big)^{\frac{1}{2}}\Big(\sum_{k}\Big|\sum_{k,k':k'\neq  k'}|\sigma_{kk'}||a_{k'}|\Big|^2\Big)^2+\sup_{k}|\sigma_{kk}|\|a\|_{l^2}\|d\|_{l^2}\\
 \leq & \big[\sup_k|\sigma_{kk}|+\Big(\sum_{k,k':k\neq k'}|\sigma_{kk'}|^2\Big)^{\frac{1}{2}} \big]\|a\|_{l^2}\|d\|_{l^2}.
\end{align*}
In view of the duality, this completes the proof of Lemma \ref{matrix-bound}.
\end{proof}

The same argument yields:
\begin{lemme}\label{matrixboundoffdiagonal}
Let $\Lambda: l^2\rightarrow l^2$ be a bounded operator with kernel $(\sigma_{kk'})_{k,k'\in\Z}$. Then for any $L>0$,
$$ \|\Lambda\|_{l^2\rightarrow l^2}\leq L\sup_{k,k':|k-k'|< L}|\sigma_{k,k'}|+\Big(\sum_{k,k':|k-k'|\geq L}|\sigma_{kk'}|^2 \Big)^{\frac{1}{2}}.
$$
\end{lemme}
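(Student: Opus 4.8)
The plan is to adapt the proof of Lemma~\ref{matrix-bound} by splitting the kernel into a ``near-diagonal'' block $|k-k'|<L$ and an ``off-diagonal'' block $|k-k'|\geq L$, treating the first as a banded operator and the second exactly as the off-diagonal part in Lemma~\ref{matrix-bound}. Concretely, write $\Lambda=\Lambda_{\text{near}}+\Lambda_{\text{off}}$ where $(\Lambda_{\text{near}})_{kk'}=\sigma_{kk'}\mathbf{1}_{|k-k'|<L}$ and $(\Lambda_{\text{off}})_{kk'}=\sigma_{kk'}\mathbf{1}_{|k-k'|\geq L}$, and bound $\|\Lambda\|_{l^2\to l^2}\leq\|\Lambda_{\text{near}}\|_{l^2\to l^2}+\|\Lambda_{\text{off}}\|_{l^2\to l^2}$ by the triangle inequality.

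For $\Lambda_{\text{off}}$, the Cauchy--Schwarz computation from the proof of Lemma~\ref{matrix-bound} goes through verbatim: pairing against $a,d\in l^2$ and applying Cauchy--Schwarz in $k'$ then in $k$ yields $|(\Lambda_{\text{off}}a,d)_{l^2}|\leq\big(\sum_{|k-k'|\geq L}|\sigma_{kk'}|^2\big)^{1/2}\|a\|_{l^2}\|d\|_{l^2}$, hence $\|\Lambda_{\text{off}}\|_{l^2\to l^2}\leq\big(\sum_{k,k':|k-k'|\geq L}|\sigma_{kk'}|^2\big)^{1/2}$. For $\Lambda_{\text{near}}$, I would use the Schur test: since each row and each column of $\Lambda_{\text{near}}$ has at most $2L-1$ nonzero entries, each bounded in modulus by $\sup_{|k-k'|<L}|\sigma_{kk'}|$, the Schur bound $\|\Lambda_{\text{near}}\|_{l^2\to l^2}\leq\big(\sup_k\sum_{k'}|\sigma_{kk'}|\mathbf{1}_{|k-k'|<L}\big)^{1/2}\big(\sup_{k'}\sum_{k}|\sigma_{kk'}|\mathbf{1}_{|k-k'|<L}\big)^{1/2}$ gives $\|\Lambda_{\text{near}}\|_{l^2\to l^2}\leq (2L-1)\sup_{|k-k'|<L}|\sigma_{kk'}|\leq L\sup_{|k-k'|<L}|\sigma_{kk'}|$, absorbing the constant. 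Adding the two bounds gives the claim.

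There is no serious obstacle here — the lemma is an elementary variant of Lemma~\ref{matrix-bound} and the statement is explicitly flagged in the excerpt as following from ``the same argument.'' The only point requiring a moment of care is that the near-diagonal block is estimated by a banded-matrix (Schur) bound rather than by the Hilbert--Schmidt-type bound used for the off-diagonal part, since the diagonal band need not be small in $\ell^2$ but is automatically bounded in operator norm by its width times its sup. Everything else is a routine reprise of the duality and Cauchy--Schwarz steps already written out in the proof of Lemma~\ref{matrix-bound}.
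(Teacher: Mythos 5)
Your proof is correct and follows essentially the same route as the paper. The only cosmetic difference is in how the near-diagonal block is controlled: the paper pulls out $\sup_{|k-k'|<L}|\sigma_{kk'}|$ and bounds $\sum_{|k-k'|<L}|a_k||d_{k'}|$ by Cauchy--Schwarz plus Young's convolution inequality, which produces exactly the factor $\|\mathbf 1_{|\cdot|<L}\|_{\ell^1}$, whereas you apply the Schur test to the banded operator $\Lambda_{\mathrm{near}}$; these two calculations yield the identical quantity, namely the band width $2L-1$ times the sup, so this is the same argument dressed differently, not a genuinely distinct approach. Note that the step where you write ``$(2L-1)\le L$, absorbing the constant'' is not literally correct for $L>1$, but the paper's own proof makes the same leap (it asserts $\|\mathbf 1_{|\cdot|<L}\|_{\ell^1}\le L$), so this is a shared looseness in the stated constant rather than a gap introduced by you.
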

\begin{proof}
The only difference is the estimate for the quantity
$$ \Big|\sum_{k,k':|k-k'|< L}\sigma_{k,k'}a_k d_{k'} \Big|.
$$
We first pull out $\sup_{k,k':|k-k'|< L}|\sigma_{k,k'}|$ and then use Cauchy-Schwartz and Young's convolution inequality to estimate $\sum_{k,k'}\mathbf{1}_{|k-k'|< L}a_k d_{k'}$ as $$\|a\|_{l^2}\|d\|_{l^2}\|\mathbf{1}_{|\cdot|<L}\|_{l^1}\leq L\|a\|_{l^2}\|d\|_{l^2}.$$
This completes the proof of Lemma \ref{matrixboundoffdiagonal}.
\end{proof}

\begin{lemme}\label{matrix-bound2}
Let $\mathcal{G}:l_k^1l_{k_1}^2\rightarrow l_{k_2}^2$ is a bounded operator defined via
$$ b_{k,k_1}\mapsto \sum_{k_1,k}\sigma_{k,k_1}^{k_2}b_{k,k_1}.
$$
Then
$$ \|\mathcal{G}\|_{l_k^1l_{k_1}^2\rightarrow l_{k_2}^2}\leq \sup_{k,k_1}\Big(\sum_{k_2}|\sigma_{k,k_1}^{k_2}|^2 \Big)^{\frac{1}{2}}+\sup_{k,k'}\Big(\sum_{\substack{k_1,k_1'\\ (k,k_1)\neq (k',k_1')}}\Big|\sum_{k_2}\sigma_{k',k_1'}^{k_2}\ov{\sigma}_{k,k_1}^{k_2}\Big|^2 \Big)^{\frac{1}{4}}.
$$
\end{lemme}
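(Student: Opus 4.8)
The plan is to use the $l^1$-summation in the variable $k$ to decouple $\mathcal{G}$ into a family of operators acting only in the remaining variables, and then to bound each of these by a $TT^*$ argument together with Lemma~\ref{matrix-bound}.

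First I would introduce, for each fixed $k\in\Z$, the operator $\mathcal{G}_k:l_{k_1}^2\to l_{k_2}^2$ with kernel $(\sigma_{k,k_1}^{k_2})_{k_1,k_2}$, so that $\mathcal{G}b=\sum_k\mathcal{G}_k(b_{k,\cdot})$ with $b_{k,\cdot}=(b_{k,k_1})_{k_1}$. By the triangle inequality in $l_{k_2}^2$ and the definition of the $l_k^1l_{k_1}^2$-norm,
\[
\|\mathcal{G}b\|_{l_{k_2}^2}\leq\sum_k\|\mathcal{G}_k\|_{l_{k_1}^2\to l_{k_2}^2}\,\|b_{k,\cdot}\|_{l_{k_1}^2}\leq\Big(\sup_k\|\mathcal{G}_k\|_{l_{k_1}^2\to l_{k_2}^2}\Big)\|b\|_{l_k^1l_{k_1}^2},
\]
hence $\|\mathcal{G}\|_{l_k^1l_{k_1}^2\to l_{k_2}^2}\leq\sup_k\|\mathcal{G}_k\|_{l_{k_1}^2\to l_{k_2}^2}$ (in fact equality holds, by testing on $b$ supported on a single value of $k$, but only this inequality is needed). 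This reduction is the heart of the matter; the remainder is bookkeeping.

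Next, for fixed $k$ I would write $\|\mathcal{G}_k\|_{l^2\to l^2}^2=\|\mathcal{G}_k^*\mathcal{G}_k\|_{l_{k_1}^2\to l_{k_1}^2}$. The operator $\mathcal{G}_k^*\mathcal{G}_k$ has kernel $\tau_{k_1,k_1'}^k:=\sum_{k_2}\sigma_{k,k_1'}^{k_2}\ov{\sigma}_{k,k_1}^{k_2}$, whose diagonal entries $\tau_{k_1,k_1}^k=\sum_{k_2}|\sigma_{k,k_1}^{k_2}|^2$ are nonnegative. Applying Lemma~\ref{matrix-bound} to $\mathcal{G}_k^*\mathcal{G}_k$ and then $\sqrt{a+b}\leq\sqrt a+\sqrt b$ for $a,b\geq0$ gives
\[
\|\mathcal{G}_k\|_{l^2\to l^2}\leq\Big(\sup_{k_1}\sum_{k_2}|\sigma_{k,k_1}^{k_2}|^2\Big)^{1/2}+\Big(\sum_{k_1\neq k_1'}\Big|\sum_{k_2}\sigma_{k,k_1'}^{k_2}\ov{\sigma}_{k,k_1}^{k_2}\Big|^2\Big)^{1/4}.
\]

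Finally I would take the supremum over $k$, use $\sup_k(f(k)+g(k))\leq\sup_k f+\sup_k g$, and observe that the second term so obtained, namely $\sup_k\big(\sum_{k_1\neq k_1'}|\sum_{k_2}\sigma_{k,k_1'}^{k_2}\ov{\sigma}_{k,k_1}^{k_2}|^2\big)^{1/4}$, is dominated by $\sup_{k,k'}\big(\sum_{(k,k_1)\neq(k',k_1')}|\sum_{k_2}\sigma_{k',k_1'}^{k_2}\ov{\sigma}_{k,k_1}^{k_2}|^2\big)^{1/4}$ (restrict the latter quantity to $k'=k$). Combining the two terms yields exactly the stated estimate. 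I do not expect a genuine obstacle here: the one point requiring care is the first step, i.e.\ recognizing that the $l^1$-summation over $k$ is precisely what permits the supremum over $k$ to be pulled outside; once this is done, everything reduces to the already-established Lemma~\ref{matrix-bound} via the standard $TT^*$ identity.
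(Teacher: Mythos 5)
Your proof is correct, and it takes a genuinely different route from the paper's. The paper proves $\|\mathcal{G}\|_{l_k^1 l_{k_1}^2 \to l_{k_2}^2}^2=\|\mathcal{G}^*\mathcal{G}\|_{l_k^1 l_{k_1}^2 \to l_k^\infty l_{k_1}^2}$ and then bounds $\mathcal{G}^*\mathcal{G}$ as a single operator on the anisotropic spaces by testing against $d\in l_k^1 l_{k_1}^2$ and splitting the pairing into a diagonal part (pulling out $\sup_{k,k_1}|\sigma_{k,k_1}^{k,k_1}|$) and an off-diagonal part treated by Cauchy--Schwarz in $(k_1,k_1')$ and then $l^1\hookrightarrow l^\infty$ in $(k,k')$. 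Your approach instead exploits the $l^1_k$ structure at the very outset: the triangle inequality reduces matters to $\sup_k\|\mathcal{G}_k\|_{l^2\to l^2}$ for the slices $\mathcal{G}_k$ with kernel $(\sigma_{k,k_1}^{k_2})_{k_1,k_2}$, and then each slice is handled by $TT^*$ together with the already-proved Lemma~\ref{matrix-bound}. This is arguably cleaner: it never requires the somewhat delicate duality manipulation with $l^1/l^\infty$, and it reuses Lemma~\ref{matrix-bound} as a black box. It also yields a slightly sharper conclusion, namely with $\sup_k\bigl(\sum_{k_1\neq k_1'}|\sum_{k_2}\sigma_{k,k_1'}^{k_2}\ov\sigma_{k,k_1}^{k_2}|^2\bigr)^{1/4}$ in place of the stated $\sup_{k,k'}$-term (only the $k'=k$ cross-correlations appear), from which the stated bound follows by monotonicity. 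This sharpening is harmless for the lemma as stated, though it is worth noting that the paper's subsequent applications (e.g.\ the quantities $\widetilde\Lambda_j$ in \eqref{inputA2-3}--\eqref{inputA2-4}) are written in the weaker form with the genuine $\sup_{k,k'}$, which the paper's duality proof produces naturally but your decomposition shows is not actually needed.
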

\begin{proof}
One verifies directly that $\mathcal{G}^*:l_{k_2}^2\rightarrow l_k^{\infty}l_{k_1}^2$ is given by
$$ a_{k_2}\mapsto \sum_{k_2}\ov{\sigma}_{k,k_1}^{k_2}a_{k_2}
$$
and the matrix element of $\mathcal{G}^*\mathcal{G}$ is
$$ \sigma_{k,k_1}^{k',k_1'}=\sum_{k_2}\ov{\sigma}_{k,k_1}^{k_2}\sigma_{k',k_1'}^{k_2}.
$$ 
For $b\in l_k^{1}l_{k_1}^2, d\in l_{k}^1l_{k_1}^2$, we have
\begin{align*}
&|\langle\mathcal{G}^*\mathcal{G}b,d \rangle|\leq \Big|\sum_{k,k_1}\sigma_{k,k_1}^{k,k_1}b_{k,k_1}d_{k,k_1} \Big|+\Big|\sum_{\substack{k,k_1,k',k_1'\\
(k,k_1)\neq (k',k_1') } }\sigma_{k,k_1}^{k',k_1'}b_{k',k_1'}d_{k,k_1} \Big|\\
\leq &\sup_{k,k_1}|\sigma_{k,k_1}^{k,k_1}|\cdot \|b_{k,k_1}\|_{l_k^1l_{k_1}^2}\|d_{k,k_1}\|_{l_k^{\infty}l_{k_1}^2}+\sum_{k',k}\Big(\sum_{\substack{k_1,k_1'\\ (k',k_1')\neq (k,k_1)}}|\sigma_{k,k_1}^{k',k_1'}|^2\Big)^{\frac{1}{2}}\Big(\sum_{k_1,k_1'}|b_{k',k_1'}|^2|d_{k,k_1}|^2 \Big)^{\frac{1}{2}}\\
\leq &\sup_{k,k_1}|\sigma_{k,k_1}^{k,k_1}|\cdot \|b_{k,k_1}\|_{l_k^1l_{k_1}^2}\|d_{k,k_1}\|_{l_k^{\infty}l_{k_1}^2}+\sup_{k',k}\Big(\sum_{\substack{k_1,k_1'\\ (k',k_1')\neq (k,k_1)} }|\sigma_{k,k_1}^{k',k_1'}|^2\Big)^{\frac{1}{2}}\cdot \|b_{k',k_1'}\|_{l_{k'}^1l_{k_1'}^2}\|d_{k,k_1}\|_{l_k^1l_{k_1}^2}.
\end{align*}
Using the fact that $l^1\hookrightarrow l^{\infty}$, this implies that
$$ \|\mathcal{G}^*\mathcal{G}\|_{l_{k}^1l_{k_1}^2\rightarrow l_k^{\infty}l_{k_1}^2}\leq \sup_{k,k_1}\sum_{k_2}|\sigma_{k,k_1}^{k_2}|^2+\sup_{k,k'}\Big(\sum_{\substack{k_1,k_1'\\ (k',k_1')\neq (k,k_1) } } \Big|\sum_{k_2}\sigma_{k',k_1'}^{k_2}\ov{\sigma}_{k,k_1}^{k_2} \Big|^2\Big)^{\frac{1}{2}}
$$
From $\|\mathcal{G}\|_{l_k^1l_{k_1}^2\rightarrow l_{k_2}^2}=\|\mathcal{G}^*\mathcal{G}\|_{l_{k}^1l_{k_1}^2\rightarrow l_k^{\infty}l_{k_1}^2}^{\frac{1}{2}}$, we complete the proof of Lemma \ref{matrix-bound2}.	
\end{proof}

Given $h(\lambda)$, a $\lambda$-dependent family of linaer operators on $L^2(\T)$, we may identify it as a $\lambda$-dependent family of linear operators on $l^2$ with kernel $(h_{kk^*}(\lambda))$, where
$$ h_{kk^*}(\lambda)=(h(\lambda)(\mathrm{e}_{k^*}),\mathrm{e}_{k}).
$$
We will need two technical lemmas (in the proof of Proposition \ref{keyinduction}) concerning some estimates of the kernel related to $h(\lambda)$: 
\begin{lemme}\label{lemma:operator1}
Let $h$ be a $\lambda$-dependent family of operators with kernel $(h_{kk^*}(\lambda))$.	Consider the operator $\widetilde{\mathcal{H}}$ with kernel
	$$ \wt{H}_{kk^*}(\lambda):=\sum_{k'}\int_\R \wt{\Theta}_{kk'}(\lambda,\lambda')h_{k'k^*}(\lambda')d\lambda',
	$$
	where $\wt{\Theta}_{kk'}(\lambda,\lambda')$ is supported in $|k-k'|\lesssim L$, then for any $\beta\geq 0$, we have
	$$ \Big\|\Big\lg\frac{|k-k^*|}{L}\Big\rg^{\beta}\wt{H}_{kk^*}(\lambda) \Big\|_{L_{\lambda}^2l_{k}^2l_{k^*}^2}\lesssim_{\beta} \|\wt{\Theta}_{kk'}(\lambda,\lambda')\|_{L_{\lambda'}^2l_{k'}^2\rightarrow L_{\lambda}^2l_k^2}\cdot \Big\|\Big\lg\frac{|k'-k^*|}{L}\Big\rg^{\beta}h_{k'k^*}(\lambda') \Big\|_{L_{\lambda'}^2l_{k'}^2l_{k^*}^2}.
	$$
\end{lemme}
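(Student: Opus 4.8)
The statement is a weighted boundedness property: the composed kernel $\wt H_{kk^*}$ inherits both the operator-norm bound on $\wt\Theta$ (acting in the $(\lambda,k)$-Hilbert-Schmidt sense, tensored with $\ell^2_{k^*}$) and the weighted $L^2_{\lambda}\ell^2_{k'}\ell^2_{k^*}$-size of the input kernel $h_{k'k^*}$, with the weight $\lg|k-k^*|/L\rg^{\beta}$ transferred from $\lg|k'-k^*|/L\rg^{\beta}$. The only place the hypothesis $\mathrm{supp}\,\wt\Theta_{kk'}\subset\{|k-k'|\lesssim L\}$ enters is to control the difference between the two weights. So the plan is: first reduce to the unweighted statement (which is literally the definition of the operator norm applied fiberwise in $k^*$), then handle the weight by a pointwise comparison.

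\textbf{Step 1: the weight comparison.} For any $(k,k',k^*)$ with $|k-k'|\lesssim L$, the elementary inequality $|k-k^*|\le |k-k'|+|k'-k^*|\lesssim L + |k'-k^*|$ gives
\[
\Big\lg\frac{|k-k^*|}{L}\Big\rg \lesssim \Big\lg\frac{|k'-k^*|}{L}\Big\rg
\]
and hence, raising to the power $\beta\ge 0$,
\[
\Big\lg\frac{|k-k^*|}{L}\Big\rg^{\beta}\lesssim_{\beta}\Big\lg\frac{|k'-k^*|}{L}\Big\rg^{\beta}
\qquad\text{whenever } \wt\Theta_{kk'}(\lambda,\lambda')\neq 0.
\]
Therefore, inserting the definition of $\wt H_{kk^*}$ and using that the $k'$-sum is supported on $|k-k'|\lesssim L$,
\[
\Big|\Big\lg\tfrac{|k-k^*|}{L}\Big\rg^{\beta}\wt H_{kk^*}(\lambda)\Big|
\lesssim_{\beta}\sum_{k'}\int_{\R}\big|\wt\Theta_{kk'}(\lambda,\lambda')\big|\,\Big\lg\tfrac{|k'-k^*|}{L}\Big\rg^{\beta}\big|h_{k'k^*}(\lambda')\big|\,d\lambda'.
\]

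\textbf{Step 2: freeze $k^*$ and apply the operator norm.} For each fixed $k^*$, set $g^{k^*}_{k'}(\lambda'):=\lg|k'-k^*|/L\rg^{\beta}h_{k'k^*}(\lambda')$, viewed as an element of $L^2_{\lambda'}\ell^2_{k'}$, and similarly let $G^{k^*}_{k}(\lambda):=\lg|k-k^*|/L\rg^{\beta}\wt H_{kk^*}(\lambda)\in L^2_{\lambda}\ell^2_k$. The pointwise bound from Step 1 says exactly that $|G^{k^*}|\lesssim_\beta \big(|\wt\Theta|\big)(|g^{k^*}|)$ where $|\wt\Theta|$ denotes the integral operator with nonnegative kernel $|\wt\Theta_{kk'}(\lambda,\lambda')|$. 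Since passing to absolute values in the kernel can only increase the $L^2_{\lambda'}\ell^2_{k'}\to L^2_{\lambda}\ell^2_{k}$ operator norm by at most a harmless comparison (or, if one prefers, one keeps the original kernel and the inequality of Step 1 is used inside the estimate before taking norms, with no absolute value on $\wt\Theta$ needed since we only ever bound $|\wt H|$ by a positive quantity), we get
\[
\big\|G^{k^*}\big\|_{L^2_{\lambda}\ell^2_k}
\lesssim_{\beta}\|\wt\Theta_{kk'}(\lambda,\lambda')\|_{L^2_{\lambda'}\ell^2_{k'}\to L^2_{\lambda}\ell^2_k}\,\big\|g^{k^*}\big\|_{L^2_{\lambda'}\ell^2_{k'}}.
\]

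\textbf{Step 3: sum in $k^*$.} Squaring the last display and summing over $k^*\in\Z$ yields
\[
\sum_{k^*}\big\|G^{k^*}\big\|_{L^2_{\lambda}\ell^2_k}^2
\lesssim_{\beta}\|\wt\Theta\|_{L^2_{\lambda'}\ell^2_{k'}\to L^2_{\lambda}\ell^2_k}^2\sum_{k^*}\big\|g^{k^*}\big\|_{L^2_{\lambda'}\ell^2_{k'}}^2,
\]
which is precisely the claimed inequality
\[
\Big\|\Big\lg\tfrac{|k-k^*|}{L}\Big\rg^{\beta}\wt H_{kk^*}(\lambda)\Big\|_{L_{\lambda}^2\ell_{k}^2\ell_{k^*}^2}
\lesssim_{\beta}\|\wt\Theta_{kk'}(\lambda,\lambda')\|_{L_{\lambda'}^2\ell_{k'}^2\to L_{\lambda}^2\ell_k^2}\cdot\Big\|\Big\lg\tfrac{|k'-k^*|}{L}\Big\rg^{\beta}h_{k'k^*}(\lambda')\Big\|_{L_{\lambda'}^2\ell_{k'}^2\ell_{k^*}^2},
\]
since Fubini lets us interchange the $k^*$-sum with the $(\lambda,k)$ (resp. $(\lambda',k')$) norms on each side.

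\textbf{Main obstacle.} There is no genuine analytic difficulty here; the whole content is the weight transfer in Step 1, which is why the support hypothesis $|k-k'|\lesssim L$ is imposed. The one point requiring a little care is that the operator norm of $\wt\Theta$ is an $L^2_{\lambda'}\ell^2_{k'}\to L^2_{\lambda}\ell^2_k$ norm with no $k^*$ present, so one must genuinely \emph{freeze} $k^*$ (Step 2) and only afterwards take the $\ell^2_{k^*}$ norm (Step 3); writing it as a vector-valued bound $L^2_{\lambda'}\ell^2_{k'}(\ell^2_{k^*})\to L^2_{\lambda}\ell^2_k(\ell^2_{k^*})$ with the \emph{same} scalar operator acting on the $\ell^2_{k^*}$-valued functions makes this transparent, and that tensorization is immediate for a scalar kernel.
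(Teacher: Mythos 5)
There is a genuine gap in Step~2, at the point where you replace the kernel $\wt\Theta_{kk'}$ by its absolute value. Your pointwise bound from Step~1 is correct and gives $|G^{k^*}|\lesssim_\beta |\wt\Theta|(|g^{k^*}|)$, but the hypothesis only controls $\|\wt\Theta\|_{L^2_{\lambda'}l^2_{k'}\to L^2_\lambda l^2_k}$, and the operator norm of the absolute-valued kernel can be much larger even with the band support $|k-k'|\lesssim L$: for a kernel with random signs on the band, $\|\wt\Theta\|$ is of order $\sqrt{L}$ while $\||\wt\Theta|\|$ is of order $L$. The parenthetical escape (``no absolute value on $\wt\Theta$ needed since we only ever bound $|\wt H|$'') does not produce an alternative estimate: the $L^2$-norm of $G^{k^*}$ equals that of $|G^{k^*}|$, and the only inequality available is the pointwise one against $|\wt\Theta|$. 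Equivalently, what is needed is that the conjugated operator with kernel $\tfrac{w(k)}{w(k')}\wt\Theta_{kk'}(\lambda,\lambda')$, where $w(k)=\lg|k-k^*|/L\rg^\beta$, has operator norm $\lesssim_\beta \|\wt\Theta\|$ uniformly in $k^*$. The factor $w(k)/w(k')$ is bounded on the support, but it is not a separable product of a function of $k$ and a function of $k'$, so it cannot be pulled off as diagonal multiplications on the left and right; replacing it by its supremum is exactly the step that takes absolute values of $\wt\Theta$.

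The fix requires one more idea: an $L$-scale block decomposition that makes the weight separable on each block. Partition $\Z$ into intervals $I_j$ of length $L$ with indicators $\psi_j$; the support hypothesis gives $D_{k^*}\wt\Theta D_{k^*}^{-1}=\sum_{|\ell|\le C}\sum_j\psi_j D_{k^*}\wt\Theta D_{k^*}^{-1}\psi_{j-\ell}$, where $D_{k^*}$ is multiplication by $w(k)$. On the $(j,j-\ell)$-block with $|\ell|\le C$, fix $c_j\in I_j$ and write $\tfrac{w(k)}{w(k')}=\tfrac{w(k)}{w(c_j)}\cdot\tfrac{w(c_j)}{w(k')}$; each factor is $O_\beta(1)$ on the block because $|k-c_j|,|k'-c_j|\lesssim L$, and this factorization is separable, so $\psi_jD_{k^*}\wt\Theta D_{k^*}^{-1}\psi_{j-\ell}=M_j(\psi_j\wt\Theta\psi_{j-\ell})M_j'$ with $\|M_j\|,\|M_j'\|\lesssim_\beta 1$, hence has norm $\lesssim_\beta\|\wt\Theta\|$. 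Since the $\psi_j$ have disjoint supports, for each fixed shift $\ell$ one has $\|\sum_j\psi_j A\psi_{j-\ell}\|=\sup_j\|\psi_j A\psi_{j-\ell}\|$; summing over the $O(1)$ values of $\ell$ gives $\|D_{k^*}\wt\Theta D_{k^*}^{-1}\|\lesssim_\beta\|\wt\Theta\|$ uniformly in $k^*$, and your Step~3 then closes the argument as written. Note that the paper gives no in-house proof of this lemma, only a reference to Proposition~2.5 of Deng--Nahmod--Yue, so the gap cannot be read off by comparison; but the lemma does not follow from the pointwise weight comparison alone.
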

\begin{proof}
	See Proposition 2.5 of \cite{Deng2}.
\end{proof}

\begin{lemme}\label{lemma:operator2}
	Consider the operator $\wt{\mathcal{H}}$ with kernel
	$$ \wt{H}_{kk^*}(\lambda):=\sum_{k'}\int_\R \wt{\Theta}_{kk'}(\lambda,\lambda')h_{k'k^*}(\lambda')d\lambda',
	$$
	then we have
	\begin{align*}
	& \mathrm{(i)}\quad \|\wt{H}_{kk^*}(\lambda)\|_{l_k^{\infty}L_{\lambda}^ql_{k^*}^2}\lesssim  \big\|\wt{\Theta}_{kk'}(\lambda,\lambda') \big\|_{l_k^{\infty}L_{\lambda}^qL_{\lambda'}^2l_{k'}^2}\|h_{k'k^*}(\lambda')\|_{l_{k^*}^2\rightarrow L_{\lambda}^2l_{k'}^2};\\
	&\mathrm{(ii)}\quad  \|\widetilde{H}_{kk^*}(\lambda)\|_{L_{\lambda}^2l_k^2l_{k^*}^2}\lesssim \|\widetilde{\Theta}_{k,k'}(\lambda,\lambda')\|_{L_{\lambda,\lambda'}^2l_{k,k'}^2 }\|h_{k'k^*}(\lambda')\|_{l_{k^*}^2\rightarrow L_{\lambda'}^2l_{k'}^2};\\
	&\mathrm{(iii)}\quad 
	\|\widetilde{H}_{kk^*}(\lambda)\|_{L_{\lambda}^2l_k^2l_{k^*}^2}\leq \|\wt{\Theta}_{k,k'}(\lambda,\lambda')\|_{L_{\lambda'}^2l_{k'}^2\rightarrow L_{\lambda}^2l_k^2}\|h_{k'k^*}(\lambda')\|_{L_{\lambda'}^2l_{k',k^*}^2}, 
	\end{align*}
	where we mean
	$$ \|h_{k'k^*}(\lambda')\|_{l_{k^*}^2\rightarrow L_{\lambda'}^2l_{k'}^2}:=\|h(\lambda')\|_{L_x^2\rightarrow L_{\lambda'}^2L_x^2}.
	$$
\end{lemme}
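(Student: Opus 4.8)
\textbf{Plan of proof for Lemma \ref{lemma:operator2}.}
The three estimates all follow the same template: write out the kernel composition, insert the weights so that the target mixed-norm of $\wt{H}$ is recovered, and then split the $\lambda'$-integral and the $k'$-sum between the factor coming from $\wt{\Theta}$ and the factor coming from $h$ via H\"older and duality. In each case the operator $h(\lambda')$, identified with the matrix $(h_{k'k^*}(\lambda'))$, will be used only through a bound of the form $l_{k^*}^2\to L_{\lambda'}^2l_{k'}^2$; since $h(\lambda')$ for fixed $\lambda'$ is a genuine linear map on $L^2(\T)$, this is exactly the hypothesis $\|h\|_{L_x^2\to L_{\lambda'}^2L_x^2}$ rewritten on the Fourier side, which is why the stated identification at the end is legitimate.

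For (i): fix $k$ and view $\lambda\mapsto\wt H_{kk^*}(\lambda)$ as the composition, in the $(\lambda',k')$ variables, of the kernel $\wt\Theta_{kk'}(\lambda,\lambda')$ (with $k$ frozen, so for each $\lambda$ it is an element of $L^2_{\lambda'}l^2_{k'}$) against the operator $h$ acting from $l^2_{k^*}$ to $L^2_{\lambda'}l^2_{k'}$. The plan is: apply the operator bound $\|h_{k'k^*}(\lambda')\|_{l^2_{k^*}\to L^2_{\lambda'}l^2_{k'}}$ to pull the $k^*$ input out (so we are left taking $l^2_{k^*}$ on the right and $L^2_{\lambda'}l^2_{k'}$ as the intermediate space), then take $L^q_\lambda$ of the remaining pairing $\langle \wt\Theta_{kk'}(\lambda,\cdot),\cdot\rangle_{L^2_{\lambda'}l^2_{k'}}$, bounded by $\|\wt\Theta_{kk'}(\lambda,\lambda')\|_{L^2_{\lambda'}l^2_{k'}}$ pointwise in $\lambda$, and finally take $\sup_k$. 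This reproduces exactly $\|\wt\Theta\|_{l^\infty_k L^q_\lambda L^2_{\lambda'}l^2_{k'}}$ times the operator norm of $h$, as claimed.

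For (ii) and (iii): now the target norm is $L^2_\lambda l^2_k l^2_{k^*}$, i.e.\ Hilbert--Schmidt-type in $(k,k^*)$. For (iii) I would regard $\wt{\mathcal{H}}$ as the composition of two operators between $L^2$-based spaces: the map with kernel $\wt\Theta_{kk'}(\lambda,\lambda')$, bounded $L^2_{\lambda'}l^2_{k'}\to L^2_\lambda l^2_k$, post-composed after the map with kernel $h_{k'k^*}(\lambda')$, and use that composing a bounded operator with a Hilbert--Schmidt operator gives a Hilbert--Schmidt operator with norm controlled by (operator norm)$\times$(HS norm) --- here the HS norm of $h$ in the $(k',k^*)$ and $\lambda'$ variables is precisely $\|h_{k'k^*}(\lambda')\|_{L^2_{\lambda'}l^2_{k',k^*}}$. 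For (ii) the roles are reversed: now $\wt\Theta$ is the Hilbert--Schmidt factor (its HS norm is $\|\wt\Theta\|_{L^2_{\lambda,\lambda'}l^2_{k,k'}}$) and $h$ is used only as a bounded operator $l^2_{k^*}\to L^2_{\lambda'}l^2_{k'}$, so again the composition is Hilbert--Schmidt with the product bound. The main point to be careful about --- and the only genuine obstacle --- is bookkeeping: making sure that in each of the three cases the norm on $h$ that survives is indeed an \emph{operator} norm on the correct pair of spaces and not accidentally a Hilbert--Schmidt norm (or vice versa), since interchanging the two would be false; once the compositions are set up with the weights placed correctly this is immediate, and the argument is essentially the same as Proposition 2.5--2.6 of \cite{Deng2}.
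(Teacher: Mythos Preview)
Your proposal is correct and takes essentially the same approach as the paper. The paper establishes the pointwise-in-$(k,\lambda)$ bound $\|\wt{H}_{kk^*}(\lambda)\|_{l_{k^*}^2}\leq \|h\|_{l_{k^*}^2\to L_{\lambda'}^2l_{k'}^2}\,\|\wt{\Theta}_{kk'}(\lambda,\lambda')\|_{L_{\lambda'}^2l_{k'}^2}$ by writing out the adjoint $h^*$ explicitly via Plancherel and then takes the appropriate outer norms to get (i) and (ii), while you phrase the same thing as applying the transpose/adjoint of $h$ (for (i)) and as a Hilbert--Schmidt composition inequality (for (ii) and (iii)); the content is identical.
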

\begin{proof}
	(iii) is relatively simple. For fixed $k^*$, viewing $\wt{\Theta}_{kk'}(\lambda,\lambda')$ as the kernel of the operator from $L_{\lambda'}^2l_{k'}^2$ to $L_{\lambda}^2l_k^2$, we have
	$$ \|\wt{H}_{kk^*}(\lambda)\|_{L_{\lambda}^2l_k^2}\leq \|\wt{\Theta}_{k,k'}(\lambda,\lambda')\|_{L_{\lambda'}^2l_{k'}^2\rightarrow L_{\lambda}^2l_k^2}\|h_{k'k^*}(\lambda')\|_{L_{\lambda'}^2l_{k'}^2}.
	$$
	Taking $l_{k^*}^2$ to both sides and by Fubini, we obtain (iii).
	
	To prove (i) and (ii), recall that   $h_{k'k^*}(\lambda')=\big( h(\lambda')(\mathrm{e}_{k^*}),\mathrm{e}_{k'}\big)_{L_x^2}$. Denote by $h^*(\lambda')$ the adjoint of $h(\lambda')$. By linearity, we have
	\begin{align*}
	\wt{H}_{kk^*}(\lambda)=&\sum_{k'}\int \wt{\Theta}_{kk'}(\lambda,\lambda')\big(h(\lambda')(\mathrm{e}_{k^*}),\mathrm{e}_{k'} \big)_{L_{x}^2}d\lambda'\\
	=&\int \big(\mathrm{e}_{k^*},\sum_{k'}\ov{\wt{\Theta}}_{kk'}(\lambda,\lambda')h^*(\lambda')(\mathrm{e}_{k'}) \big)_{L_x^2}d\lambda'\\
	=&\Big(\mathrm{e}_{k^*}, \int h^*(\lambda')\big(\sum_{k'}\ov{\wt{\Theta}}_{kk'}(\lambda,\lambda')\mathrm{e}_{k'} \big)d\lambda \Big)_{L_{x}^2}.
	\end{align*}
	Here we omit the issue of the legality of changing the order of the integration and the summation, which can be justified by a standard density argument. 
	
	Denote by $F_k(\lambda,\lambda')=\sum_{k'}\ov{\wt{\Theta}}_{kk'}(\lambda,\lambda')\mathrm{e}_{k'}$. For fixed $k,\lambda$, viewing $\int h^*(\lambda')F_k(\lambda,\lambda')d\lambda'$ as a function in $L^2(\T)$, we have
	\begin{align*}
	\|\wt{H}_{kk^*}(\lambda)\|_{l_{k^*}^2}^2:=\sum_{k^*}\big|\big(\mathrm{e}_{k^*},\int h^*(\lambda')F_k(\lambda,\lambda')d\lambda' \big)_{L_x^2} \big|^2
	=\Big\|\int h^*(\lambda')F_k(\lambda,\lambda')d\lambda' \Big\|_{L_x^2}^2,
	\end{align*}
	thanks to Plancherel. Now viewing $h$ as a linear operator from $L_{x}^2$ to $L_{\lambda'}^2L_x^2$ with kernel $h_{k'k^*}(\lambda')$, hence $h^*$ is from $L_{\lambda'}^2L_x^2$ to $L_x^2$ and
	$$ h^*(G):=\int h^*(\lambda')(G(\lambda'))d\lambda'.
	$$
	By viewing $F_k(\lambda,\cdot)$ as a function (for fixed $k,\lambda$) in $L_{\lambda'}^2L_x^2$, we have
	$$ \int h^*(\lambda')F_k(\lambda,\lambda')d\lambda'=h^*\big(F_k(\lambda,\cdot)\big).
	$$
	Therefore, 
	\begin{align*}
	\|\wt{H}_{kk^*}(\lambda)\|_{l_{k^*}^2}=&\big\|h^*\big(F_k(\lambda,\cdot) \big)\big\|_{L_x^2}\leq \|h^*\|_{L_{\lambda'}^2L_x^2\rightarrow L_x^2}\|F_k(\lambda,\lambda')\|_{L_{\lambda'}^2L_x^2}\\
	=&\|h\|_{L_x^2\rightarrow L_{\lambda'}^2L_x^2}\cdot\Big\|\sum_{k'}\ov{\wt{\Theta}}_{kk'}(\lambda,\lambda')\mathrm{e}_{k'} \Big\|_{L_{\lambda'}^2L_x^2}\\=&\|h\|_{L_x^2\rightarrow L_{\lambda'}^2L_x^2}\cdot\big\|\wt{\Theta}_{kk'}(\lambda,\lambda') \big\|_{L_{\lambda'}^2l_{k'}^2}.
	\end{align*}
	Taking  $L_{\lambda}^q$ and $l_k^{\infty}$ or $L_{\lambda}^2l_k^{2}$ to both sides, we obtain (i) and (ii), with respectively.
	The proof of Lemma \ref{lemma:operator2} is complete.
\end{proof}

\subsection{Probability tool-box}

We denote by $\mathcal{B}_{\leq N}$ ($\mathcal{B}_{>N}$), the Borel $\sigma$ algebra generated by $\{g_k(\omega):|k|\leq N \}$( $\{g_k(\omega):|k|>N \}$), and $\mathcal{B}_N$ be the Borel $\sigma$ algebra generated by $\big\{g_k(\omega): \frac{N}{2}<|k|\leq N \big\}$. For a $\sigma$ algebra $\mathcal{B}$, we use the notation $X\in\mathcal{B}$ to mean that $X$ is $\mathcal{B}$-measurable and $X\perp\mathcal{B}$ to mean that $X$ is independent of $\mathcal{B}$.

Let $(\Omega,\mathcal{F},\mathbb{P})$ be a probability space, $(\mathcal{X},\mu)$ be a measure space and $\mathcal{G}\subset\mathcal{F}$ be a sub $\sigma$-algebra. Let $X,Y$ be two random variables and let $f(x,\omega)$ be a random function with value on $L^r(\mathcal{X},\mu)$. We recall the following classical inequalities for the conditional expectations:
\begin{align*}
&\text{(i) H\"older: }\quad \mathbb{E}[|XY||\mathcal{G}]\leq \big(\mathbb{E}[|X|^p|\mathcal{G}]\big)^{\frac{1}{p}}\cdot 
\big(\mathbb{E}[|Y|^{p'}|\mathcal{G}]\big)^{\frac{1}{p'}};\\
&\text{(ii) Minkowski: If $p\geq r\geq 1$, }
\mathbb{E}[\|f(x,\omega)\|_{L_x^r}^p|\mathcal{G} ]\leq \big\|\big(\mathbb{E}[|f(x,\omega)|^p|\mathcal{G}]
\big)^{\frac{1}{p}}
\big\|_{L_x^r};\\
&\text{(iii) Chebyshev: For any } \lambda>0 \text{ and } p\in(0,\infty), \quad 
\mathbb{P}[|X|>\lambda|\mathcal{G}]\leq \frac{1}{\lambda^p}\mathbb{E}[|X|^p|\mathcal{G} ].
\end{align*}

\begin{lemme}[Conditional Wiener Chaos]\label{chaosWiener}
	Let $(g_j(\omega))_{j\in E}$ be a inpendent, identically distributed complex Gaussians and $E$ is a finite index set. Let $\mathcal{C}$ be a $\sigma$-algebra independent of $(g_j(\omega))_{j\in E}$. Assume that $(c_{k_1,k_2,\cdots,k_m}(\omega))_{(k_1,\cdots,k_n)\in\N^m}$ is a sequence of $\mathcal{C}$-measurable random variables. 
	Then for any finite subset $S\subset E^m$ and $p\geq 1$, we have
	\begin{align*}
	&\Big(\mathbb{E}\Big[\Big|\sum_{(k_1,k_2,\cdots, k_m)\in S} c_{k_1,k_2,\cdots,k_m}(\omega)\prod_{j=1}^{m}g_{k_j}^{\iota_j}(\omega) \Big|^{2p}\Big|\mathcal{C}\Big]\Big)^{\frac{1}{2p}}\\ \leq &C_0(2p-1)^{\frac{m}{2}} \Big(\mathbb{E}\Big[\Big|\sum_{(k_1,k_2,\cdots, k_m)\in S} c_{k_1,k_2,\cdots,k_m}(\omega)\prod_{j=1}^{m}g_{k_j}^{\iota_j}(\omega) \Big|^2\Big|\mathcal{C}\Big]\Big)^{\frac{1}{2}},
	\end{align*}
	where  $g_{k_j}^{\iota_j}(\omega)=g_{k_j}(\omega)$ or $\ov{g}_{k_j}(\omega)$ and the uniform constant $C_0$ is independent of the set $S$, the $\sigma$-algebra $\mathcal{C}$ and the number $m$ and $p\geq 1$.
\end{lemme}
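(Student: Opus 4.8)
I would prove Lemma~\ref{chaosWiener} by reducing the conditional estimate to the classical (unconditional) hypercontractivity bound for polynomials of Gaussians, using a disintegration of the measure along $\mathcal{C}$. Since $\mathcal{C}$ is independent of the family $(g_j)_{j\in E}$, I would pass to a regular conditional probability $\mathbb{P}(\cdot\,|\,\mathcal{C})$, the heuristic picture being that $(\Omega,\mathcal{F},\mathbb{P})$ splits as a product $(\Omega_1\times\Omega_2,\mathbb{P}_1\otimes\mathbb{P}_2)$ with $\mathcal{C}$ carried by the first factor and the Gaussians by the second. For $\omega_1$ fixed the coefficients $c_{k_1,\dots,k_m}(\omega_1)$ become deterministic complex numbers, and
\[
F(\omega_1,\omega_2):=\sum_{(k_1,\dots,k_m)\in S}c_{k_1,\dots,k_m}(\omega_1)\prod_{j=1}^{m}g_{k_j}^{\iota_j}(\omega_2)
\]
is, as a function of $\omega_2$, a \emph{finite} (here the finiteness of $S$ and of $E$ is used) linear combination of monomials of degree at most $m$ in the i.i.d.\ complex Gaussians and their conjugates.

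Next I would write $g_k=\tfrac{1}{\sqrt2}(a_k+ib_k)$ with $(a_k,b_k)_{k\in E}$ i.i.d.\ real standard Gaussians, so that $F(\omega_1,\cdot)$ is a polynomial of degree $\leq m$ in these real Gaussians, i.e.\ it lies in the sum of the real Wiener chaoses of order $\leq m$ over the Gaussian Hilbert space $\mathrm{span}\{a_k,b_k\}$. Decomposing $F(\omega_1,\cdot)=\sum_{\ell=0}^{m}P_\ell$ into its chaos components and invoking Nelson's hypercontractivity for the Ornstein--Uhlenbeck semigroup $(T_\rho)$ with $\rho=(2p-1)^{-1/2}\leq 1$: since $F(\omega_1,\cdot)=T_\rho G$ for $G:=\sum_{\ell}\rho^{-\ell}P_\ell$ and $\|G\|_{L^2(\Omega_2)}^2=\sum_\ell\rho^{-2\ell}\|P_\ell\|_{L^2}^2\leq\rho^{-2m}\|F(\omega_1,\cdot)\|_{L^2(\Omega_2)}^2$, one obtains
\[
\|F(\omega_1,\cdot)\|_{L^{2p}(\Omega_2)}=\|T_\rho G\|_{L^{2p}(\Omega_2)}\leq\|G\|_{L^2(\Omega_2)}\leq(2p-1)^{m/2}\,\|F(\omega_1,\cdot)\|_{L^2(\Omega_2)},
\]
with a constant independent of $S$, of $\omega_1$ (hence of $\mathcal{C}$), of $m$ and of $p$; indeed one may take $C_0=1$, the $C_0$ in the statement being kept only for later convenience.

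To finish I would use that, by independence of $\mathcal{C}$ and the Gaussians, the conditional expectation given $\mathcal{C}$ is precisely integration in $\omega_2$, i.e.\ $\mathbb{E}[|F|^{r}\,|\,\mathcal{C}](\omega_1)=\|F(\omega_1,\cdot)\|_{L^{r}(\Omega_2)}^{r}$ for $r\in\{2,2p\}$. Raising the previous display to the power $2p$ and substituting these identities gives, for a.e.\ $\omega_1$,
\[
\mathbb{E}\big[|F|^{2p}\,\big|\,\mathcal{C}\big]\leq C_0^{2p}(2p-1)^{mp}\big(\mathbb{E}\big[|F|^{2}\,\big|\,\mathcal{C}\big]\big)^{p},
\]
and taking the $2p$-th root is exactly the claimed bound. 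The main (and only mildly delicate) obstacle is the measure-theoretic disintegration step that makes rigorous the ``freeze the $\mathcal{C}$-measurable coefficients and apply the deterministic chaos estimate pointwise in $\omega_1$'' heuristic; once this is set up, the finiteness of $S$ guarantees integrability throughout and the rest is bookkeeping.
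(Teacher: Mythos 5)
Your proposal is correct and takes essentially the same route as the paper: the paper's entire proof is the one sentence ``Since the conditional expectation can be viewed as partial integration for the product probability space $\Omega=\Omega_1\times\Omega_2$, the conclusion follows from the usual Wiener chaos estimate,'' and your write-up simply spells out that partial integration (via disintegration of $\mathbb{P}$ along $\mathcal{C}$) together with the standard Nelson hypercontractivity argument behind the Wiener chaos bound.
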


\begin{proof}
	Since the conditional expectation can be viewed as partial integration for the product probability space $\Omega=\Omega_1\times\Omega_2$, the conclusion follows from the usual Wiener chaos estimate.
\end{proof}

\begin{corollaire}\label{largedeviation}
	Assume that $1\leq p_1,p_2,\cdots, p_n<\infty$ and $E\subset\N$ is a finite index set. Let $(g_j)_{j\in E}$ be a sequence of independent standard complex Gaussians. Let $\mathcal{C}$ be a $\sigma$-algebra independent of the $\sigma$-algebra generated by $(g_j)_{j\in E}$. Let $(c_{k_1^*,\cdots,k_m^*}(z_1,\cdots,z_n;\omega))_{(k_1^*,\cdots,k_m^*)\in E^m}$ be a sequence of $\mathcal{C}$-measurable random variables with values in $L^{p_1}(Z_1)\times\cdots L^{p_n}(Z_n)$. Consider the function
	$$ F(z_1,\cdots,z_n,\omega):=\sum_{(k_1^*,\cdots, k_m^*)\in E^m }c_{k_1^*,\cdots, k_{m}^*}(z_1,\cdots,z_n;\omega)\prod_{j=1}^mg_{k_j}^{\iota_j}(\omega).
	$$
	Assume that there exists some constant $\Lambda_0>0$, such that 
	\begin{align}\label{conditionalbound}
	\| \big(\mathbb{E}[|F|^2|\mathcal{C} ]\big)^{\frac{1}{2}} \|_{L_{z_1}^{p_1}\cdots L_{z_n}^{p_n}}\leq \Lambda_0,
	\end{align}
	then for any $R>0$, outside an exceptional set of probability\footnote{This exceptional set depends on the random functions $c_{k_1^*,\cdots,k_m^*}$} $<C\mathrm{e}^{-cR^{\frac{2}{m}}}$, 
	we have
	$$ \|F\|_{L_{z_1}^{p_1}\cdots L_{z_n}^{p_n} }\leq R\Lambda_0.
	$$
\end{corollaire}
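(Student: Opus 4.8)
The plan is to upgrade the conditional second-moment hypothesis \eqref{conditionalbound} to a high-moment bound by means of the conditional Wiener chaos estimate (Lemma \ref{chaosWiener}), then to pass from moments to a tail estimate via Chebyshev's inequality, and finally to optimize over the moment exponent.

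\emph{Step 1 (pointwise hypercontractivity).} Fix a real $p\geq 1$ with $2p\geq\max_{1\leq j\leq n}p_j$. For each fixed $(z_1,\dots,z_n)$ the quantity $F(z_1,\dots,z_n,\omega)$ is an $m$-linear Gaussian form $\sum_{(k_1^*,\dots,k_m^*)\in E^m}c_{k_1^*,\dots,k_m^*}(z_1,\dots,z_n;\omega)\prod_{j}g_{k_j}^{\iota_j}$ whose coefficients are $\mathcal{C}$-measurable, while $\mathcal{C}$ is independent of $(g_j)_{j\in E}$; since $E$ is finite, $E^m$ is a finite index set and Lemma \ref{chaosWiener} applies, yielding
$$\big(\mathbb{E}[|F(z_1,\dots,z_n,\omega)|^{2p}\mid\mathcal{C}]\big)^{\frac{1}{2p}}\leq C_0(2p-1)^{\frac{m}{2}}\big(\mathbb{E}[|F(z_1,\dots,z_n,\omega)|^{2}\mid\mathcal{C}]\big)^{\frac{1}{2}}.$$

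\emph{Step 2 (spatial norms and removing the conditioning).} Taking the $L^{p_1}_{z_1}\cdots L^{p_n}_{z_n}$ norm of both sides and commuting the conditional expectation inside the iterated spatial norm by successive applications of the conditional Minkowski inequality (item (ii) of the probability toolbox, applicable exactly because $2p\geq p_j$ for every $j$), one obtains
$$\big(\mathbb{E}\big[\|F\|_{L^{p_1}_{z_1}\cdots L^{p_n}_{z_n}}^{2p}\ \big|\ \mathcal{C}\big]\big)^{\frac{1}{2p}}\leq C_0(2p-1)^{\frac{m}{2}}\big\|\big(\mathbb{E}[|F|^{2}\mid\mathcal{C}]\big)^{\frac{1}{2}}\big\|_{L^{p_1}_{z_1}\cdots L^{p_n}_{z_n}}\leq C_0(2p-1)^{\frac{m}{2}}\Lambda_0,$$
the last inequality being precisely \eqref{conditionalbound}. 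Taking expectations and using the tower property $\mathbb{E}\,\mathbb{E}[\,\cdot\mid\mathcal{C}]=\mathbb{E}[\,\cdot\,]$ gives
$$\big(\mathbb{E}\big[\|F\|_{L^{p_1}_{z_1}\cdots L^{p_n}_{z_n}}^{2p}\big]\big)^{\frac{1}{2p}}\leq C_0(2p-1)^{\frac{m}{2}}\Lambda_0.$$

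\emph{Step 3 (tail bound and optimization).} By Chebyshev's inequality, for every $R>0$,
$$\mathbb{P}\big[\|F\|_{L^{p_1}_{z_1}\cdots L^{p_n}_{z_n}}>R\Lambda_0\big]\leq (R\Lambda_0)^{-2p}\,\mathbb{E}\big[\|F\|_{L^{p_1}_{z_1}\cdots L^{p_n}_{z_n}}^{2p}\big]\leq\Big(\frac{C_0(2p-1)^{\frac{m}{2}}}{R}\Big)^{2p}.$$
Once $R$ is large enough that $(R/(eC_0))^{2/m}\geq\max(1,\max_j p_j)$, I would choose $p$ so that $2p-1=(R/(eC_0))^{2/m}$ (this is admissible since then $p\geq 1$ and $2p\geq\max_j p_j$); the bracket then equals $e^{-1}$ and the bound becomes $e^{-2p}\leq Ce^{-cR^{2/m}}$ for constants $C,c>0$ depending only on $C_0$, $m$ and the $p_j$. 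For the remaining bounded range of $R$ the asserted inequality is trivial after enlarging $C$ so that $Ce^{-cR^{2/m}}\geq 1$ there. The exceptional set is $\{\omega:\|F(\cdot,\omega)\|_{L^{p_1}_{z_1}\cdots L^{p_n}_{z_n}}>R\Lambda_0\}$, which depends on the coefficient functions $c_{k_1^*,\dots,k_m^*}$ as recorded in the footnote.

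\emph{On the difficulties.} Given Lemma \ref{chaosWiener}, the argument is essentially mechanical; the only points requiring care are the bookkeeping in the iterated conditional Minkowski inequality — which forces the constraint $2p\geq\max_j p_j$ and hence the separate (trivial) treatment of small $R$ — and checking that none of the constants degenerate as $m\to\infty$, which is ensured by the uniformity in $m$ of the constant $C_0$ in Lemma \ref{chaosWiener}.
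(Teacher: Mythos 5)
Your argument is correct and follows essentially the same route as the paper's proof: conditional Minkowski to move the conditional expectation inside the iterated spatial norm, the conditional Wiener chaos estimate of Lemma~\ref{chaosWiener} to bound the conditional $2p$-moment by the conditional $2$-moment, Chebyshev, and finally optimization over $p$. The only (minor) differences are that you state the slightly sharper constraint $2p\geq\max_j p_j$ where the paper uses $p\geq 2\max_j p_j$, and you treat the bounded range of $R$ explicitly rather than leaving the adjustment of constants implicit.
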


\begin{proof}
	Let $p\geq 2\max\{p_1,\cdots,p_n\}$. By the conditional Chebyshev and the Minkowski inequalities, 
	$$ \mathbb{P}[\|F\|_{L_{z_1}^{p_1}\cdots L_{z_n}^{p_n} }>\lambda |\mathcal{C} ]\leq \frac{1}{\lambda^{2p}}\big\|
	(\mathbb{E}[|F(z_1,\cdots,z_n,\omega)|^{2p}|\mathcal{C}])^{\frac{1}{2p}} 
	\big\|_{L_{z_1}^{p_1}\cdots L_{z_n}^{p_n}
	}^{2p}. 
	$$
	By Lemma \ref{chaosWiener} and the assumption \eqref{conditionalbound}, we have
	$$ \big\|
	(\mathbb{E}[|F(z_1,\cdots,z_n,\omega)|^{2p}|\mathcal{C}])^{\frac{1}{2p}} 
	\big\|_{L_{z_1}^{p_1}\cdots L_{z_n}^{p_n}
	}^{2p}\leq C^{2p}(2p)^{mp}\Lambda_0^{2p}.
	$$
	By choosing $\lambda=R\Lambda_0$ and optimizing the choice of $p$, we obtain that
	$$ \mathbb{P}[\|F\|_{L_{z_1}^{p_1}\cdots L_{z_n}^{p_n}}>R\Lambda_0|\mathcal{C}]\leq C\mathrm{e}^{-cR^{\frac{2}{m}}}.
	$$
	By taking the expectation once to the inequality above, the proof of Corollary \ref{largedeviation} is complete.
\end{proof}


\section{Key iterative steps}\label{sec:keyiterative}

Though the smooth solutions $(v_N)_{N\in 2^{\N}}$ of \eqref{FNLS-gauged} with initial data $\Pi_N\phi^{\omega}$ already exist, the proof of Theorem \ref{prob:LWP} will be achieved by solving local-in-time fix-point problems.   
To this end, we recall the iteration scheme introduced in \cite{Deng2} with a slightly different setting, in order to solve the non-truncated equation. 
\subsection{Rigorous resolution scheme}
Here we need to take into consideration of the time-restriction issue. Recall that $\chi\in C_c^{\infty}(\R)$ is a bump function which is $1$ on $[-\frac{1}{2},\frac{1}{2}]$ and is zero outside $[-1,1]$. For $0<T<1$, we define $\chi_T(t):=\chi(t/T)$. We now rewrite the ansatz with time localization. We will use the notation $A^{\#}$ to define the quantities after time-localization procedure. In what follows, we describe inductive definition for the time-restriction that gives the rigorous resolution scheme\footnote{Since for the truncated initial data, the global smooth solution exists and is unique, the fix-point procedure used here is only to establish required bounds in suitable function spaces.}. This contains four steps, including two fix-point problems:

\noi
$\bullet${\bf Initial step:} We define 
$$w_1^{\#}(t):=w_1(t)\chi_T(t),\quad \psi_{\frac{1}{2}}^{M,\#}(t):=\chi(t)S_{\alpha}(t)\mathbf{P}_M\phi^{\omega},\quad H_{kk^*}^{M,\frac{1}{2},\#}=\chi(t)\mathrm{e}^{it|k|^{\alpha}}\mathbf{1}_{k=k^*},$$
for any dyadic number $M\geq \frac{1}{2}$.

\noi
$\bullet${\bf Induction assumption}:
Suppose that $M\geq 1$ is a given dyadic number and we have defined:
$$w_N^{\#}(t), \; \forall N\leq M; \quad  h^{N',L,\#},\;\forall L\leq L_{N'}, L<M.$$
We need to define $w_{2M}^{\#}$ and $h^{N',M,\#}$ for all $N'>M^{\frac{1}{1-\delta}}$ (i.e. $M\leq L_{N'}$). First, note that for all $L\leq M, R<L^{1-\delta}<M$, $\zeta_R^{L,\#}:=\psi_R^{L,\#}-\psi_{\frac{R}{2}}^{L,\#}$ are well-defined, since they can be written in terms of the operators $H^{L,R,\#}$ (hence in terms of the operators $h^{L,R,\#}$). Thus for all $L\leq N$, the following functions 
$$ v_L^{\#}:=\sum_{\frac{1}{2}\leq L'\leq L}y_{L'},\quad \text{ where }\quad  y_{L'}^{\#}:=\chi(t)S_{\alpha}(t)\mathbf{P}_{L'}\phi^{\omega}+\sum_{(L',R'):R'<L'^{1-\delta}}\zeta_{R'}^{L',\#} +w_{L'}(t)
$$
are also well-defined.

\noi
$\bullet${\bf Uniform bounds 1: Estimates for the linear operator:}
Next for $L=M$ and $N'>M^{\frac{1}{1-\delta}}$, we define $H_{kk*}^{N',M,\#}$ by taking the $k$-th Fourier mode of the solution $\varphi^{\#}$ to the equation
\begin{equation}\label{induction:tosolve1}
\varphi^{\#}(t)=\chi(t)S_{\alpha}(t)(\mathrm{e}_{k^*})+2i\chi_T(t)\cdot\Pi_{N'}\mathcal{I}\mathcal{N}_3(\varphi^{\#},\Pi_Mv_M^{\#},\Pi_Mv_M^{\#}),\quad \text{ where } \frac{N'}{2}<|k^*|\leq N'.
\end{equation}
We note that knowing $v_M$, for fixed $N'$, the solution $\varphi^{\#}(t)$ exists and is unique, from a simple Grownwall type argument\footnote{More precisely, we may multiply both sides by $\ov{\varphi^{\#}}$ and doing the integration by part. This allows us to control $\frac{d}{dt}\|\varphi^{\#}(t)\|_{L^2(\T)}^2$ by $C(N',v_M)\|\varphi^{\#}(t)\|_{L^2(\T)}^2$. }. Since it is a linear equation, this will turn out to be true, if $T>0$ is sufficiently small.

\noi
$\bullet${\bf Uniform bounds 2: Estimates for the smooth remainder:}
We finally write down the equation of $w_{2M}^{\#}$ to finish the induction step. Since $L_{2M}<(2M)^{1-\delta}\leq M$ (true for large $M$, the case that we concern), the function $\psi_{L_{2M}}^{2M,\#}:=\mathcal{H}^{2M,L_{2M},\#}(\mathbf{P}_{2M}\phi^{\omega})$ is well-defined. Now we define $w_{2M}^{\#}$ by solving the following equation:
\begin{equation}\label{induction:tosolve2}
\begin{split}
w_{2M}^{\#}(t)=&-i\chi_T(t)\mathcal{I}\big[\mathcal{N}\big(w_{2M}^{\#}+\psi_{L_{2M}}^{2M,\#}+v_M^{\#} \big)-\mathcal{N}\big(v_{M}^{\#}\big) \big]\\
&-2i\chi_T(t)\Pi_{2M}\mathcal{I}\mathcal{N}_3\big(\psi_{L_{2M}}^{2M,\#},\Pi_{L_{2M}}v_{L_{2M}}^{\#}, \Pi_{L_{2M}}v_{L_{2M}}^{\#} \big).
\end{split}
\end{equation}

Note that
to solve $w_{2M}^{\#}$ through \eqref{induction:tosolve2}, we expand the right side of \eqref{induction:tosolve2}, the resulting terms can be grouped as follows:
\begin{enumerate}
 	\item[1)] At least two entries in $\mathcal{I}\mathcal{N}_3\big(\cdot,\cdot,\cdot \big)$ are $\psi_{L_{2M}}^{2M,\#}$ and the other one (if not the same) is $v_{M}^{\#}$;
	\item[2)] Exactly one entry in $\mathcal{I}\mathcal{N}_3\big(\cdot,\cdot,\cdot \big)$ is $\psi_{L_{2M}}^{2M,\#}$, at least one $v_{M}^{\#}-\Pi_{L_{2M}}v_{L_{2M}}^{\#}$, and the last one (if it is different from the two) is $v_{M}^{\#}$;
	\item[3)] $\mathcal{N}_3\big(v_{M}^{\#},\psi_{L_{2M}}^{2M,\#},v_{M}^{\#} \big)$;
	\item[4)] At least two $w_{2M}^{\#}$ or exactly one $w_{2M}^{\#}$ and at least one $\psi_{L_{2M}}^{2M}$.
	\item[5)] Exactly one $w_{2M}^{\#}$ and all others equal $v_{M}^{\#}$;
	\item[6)] The projective term: $\Pi_{2M}^{\perp}\mathcal{I}\mathcal{N}_3(\psi_{L_{2M}}^{2M,\#},v_M^{\#},v_M^{\#} )$;
	\item[7)] The diagonal nonlinear term $\mathcal{I}\mathcal{R}_0:=\mathcal{I}\mathcal{N}_0(w_{2M}^{\#}+\psi_{L_{2M}}^{2M,\#}+v_M^{\#})-\mathcal{I}\mathcal{N}_0(v_M^{\#})$. 
\end{enumerate}

\subsection{Key multi-linear terms}
First we describe the key multi-linear terms in order to estimate the linear operators through \eqref{induction:tosolve1}, 
Since the solution $\varphi^{\#}$ of \eqref{induction:tosolve1} is $\mathcal{H}^{N',M,\#}(\mathrm{e}_{k^*})$, by taking the difference $\mathcal{H}^{N',M,\#}(\mathrm{e}_{k^*})-\mathcal{H}^{N',\frac{M}{2},\#}(\mathrm{e}_{k^*})$, we have
\begin{equation*}
\begin{split}
h^{N',M,\#}(\mathrm{e}_{k^*})=&2i\chi_T(t)\mathcal{I}\Pi_{N'}\mathcal{N}_3\big(h^{N',M,\#}(\mathrm{e}_{k^*}),\Pi_Mv_M^{\#},\Pi_Mv_M^{\#} \big)+2i\mathcal{P}_{N',M}^{+}\big(H^{N',\frac{M}{2}}(\mathrm{e}_{k^*})\big),
\end{split}
\end{equation*} 
where the operator $\mathcal{P}_{N',L}^{+}$ is defined by
\begin{equation}\label{Pml+}
\mathcal{P}_{N',L}^{+}(w):=\chi_T(t)\cdot\mathcal{I}\Pi_{N'}\big[\mathcal{N}_3\big(w,\Pi_Lv_L^{\#},\Pi_Lv_L^{\#}\big)-\mathcal{N}_3\big(w,\Pi_{\frac{L}{2}}v_{\frac{L}{2}}^{\#}, \Pi_{\frac{L}{2}}v_{\frac{L}{2}}^{\#}  \big)\big]
\end{equation}
for $L>\frac{1}{2}$ and
$$ \mathcal{P}_{N',\frac{1}{2}}^+(w):=\chi_T(t)\cdot\mathcal{I}\Pi_{N'}\mathcal{N}_3(w,\Pi_{\frac{1}{2}}v_{\frac{1}{2}}^{\#},\Pi_{\frac{1}{2}}v_{\frac{1}{2}}^{\#}).
 $$ 
We will need an analogue of the operator above later:
\begin{equation}\label{Pml-}
\mathcal{P}_{N',L}^{-}(w):=\chi_T(t)\cdot\mathcal{I}\big[\Pi_{M'}\mathcal{N}_3\big(\Pi_Lv_L^{\#},w,\Pi_Lv_L^{\#}\big)-\Pi_{N'}\mathcal{N}_3\big(\Pi_{\frac{L}{2}}v_{\frac{L}{2}}^{\#},w,\Pi_{\frac{L}{2}} v_{\frac{L}{2}}^{\#}  \big)\big]
\end{equation}
for $L>\frac{1}{2}$ and
$$ \mathcal{P}_{N',\frac{1}{2}}^-(w):=\chi_T(t)\cdot\mathcal{I}\Pi_{N'}\mathcal{N}_3(\Pi_{\frac{1}{2}}v_{\frac{1}{2}}^{\#},w,\Pi_{\frac{1}{2}}v_{\frac{1}{2}}^{\#}).
$$ 
Now \eqref{induction:tosolve1} is reduced to the following equation:
\begin{equation}\label{induction:tosolve1.5}
\begin{split}
h^{N',M,\#}(\mathrm{e}_{k^*})=&2i\sum_{L\leq M}\mathcal{P}_{N',L}^{+}\big(h^{N',M,\#}(\mathrm{e}_{k^*})\big)+2i\sum_{L<M}\mathcal{P}_{N',M}^{+}\big(h^{N',L,\#}(\mathrm{e}_{k^*}) \big)\\
+&2i\mathcal{P}_{N',M}^{+}\big(H^{N',\frac{1}{2},\#}(\mathrm{e}_{k^*})\big).
\end{split}
\end{equation}

To deal with \eqref{induction:tosolve2}, we need to treat the terms of type 1)-7). By definition, we have the decompositions
$$ v_M^{\#}=\sum_{N\leq M}\big(\psi_{L_N}^{N,\#}+w_N^{\#}\big),\quad v_M^{\#}-\Pi_{L_{2M}}v_{L_{2M}}^{\#}=\sum_{L_{2M}<R\leq M}\big(\psi_{L_R}^{R,\#}+w_R^{\#} \big)+\Pi_{L_{2M}}^{\perp}v_{L_{2M}}^{\#},
$$
$$ \psi_{L_N}^{N,\#}=\chi(t)S_{\alpha}(t)\mathbf{P}_N\phi^{\omega}+\sum_{L\leq L_N}\zeta_L^{N,\#}.
$$
We now precise the multi-linear terms according to their types. To simplify the notation, we will not write $(\cdot)^{\#}$ for the time-restriction here, and we mean $\zeta^{2M}$ by a term of the form $\zeta_{L}^{2M}$ for some $L\leq L_{2M}$ or $\psi_{\frac{1}{2}}^{2M}$.
\begin{enumerate}
	\item[1)] $\mathcal{I}\mathcal{N}_3\big(\zeta^{2M},\zeta^{2M},* \big),\quad \mathcal{I}\mathcal{N}_3\big(\zeta^{2M},*,\zeta^{2M}\big)$;\\
	\item[2)] $\mathcal{I}\mathcal{N}_3\big(\zeta^{2M},w_N+\psi_{L_N}^{N}+\Pi_{L_{2M}}^{\perp}v_{L_{2M}}^{\#},* \big),\quad \mathcal{I}\mathcal{N}_3\big(\zeta^{2M},*, w_N+\psi_{L_N}^N+\Pi_{L_{2M}}^{\perp}v_{L_{2M}}^{\#} \big),\\ \mathcal{I}\mathcal{N}_3\big(w_M+\psi_{L_N}^N+\Pi_{L_{2M}}^{\perp}v_{L_{2M}}^{\#},\zeta^{2M},* \big)$ with some $N\geq M^{1-\delta}$;
	\item[3)] $\mathcal{I}\mathcal{N}_3\big(*,\zeta^{2N},* \big)$;\\
	\item[4)] $\mathcal{I}\mathcal{N}_3\big(w_{2M}\big),\quad \mathcal{I}\mathcal{N}_3\big( w_{2M},*, w_{2M}\big),\quad \mathcal{I}\mathcal{N}_3\big( w_{2M},w_{2M},*\big),$\\
	$\mathcal{I}\mathcal{N}_3\big(w_M,\zeta^{2M},* \big),\quad \mathcal{I}\mathcal{N}_3\big( \zeta^{2M},w_{2M},*\big),\quad \mathcal{I}\mathcal{N}_3\big( \zeta^{2M},*,w_{2M}\big)$;\\
		\item[5)] $\mathcal{I}\mathcal{N}_3\big(w_{2M},v_M,v_M \big),\quad \mathcal{I}\mathcal{N}_3\big(v_M,w_{2M},v_M \big)$;
	\item[6)] $\Pi_{2M}^{\perp}\mathcal{I}\mathcal{N}_3(\zeta^{2M},v_M,v_M)$;
	\item[7)] Trivial resonances: $\mathcal{I}\mathcal{R}_0$.
\end{enumerate}
where the input $*$ stands for a term belonging to the set of functions
$$\big\{ v_M,\; \Pi_{L_{2M}}v_{L_{2M}},\; \zeta^{2M}\big\}.
$$

In summary, the \emph{only possible high-low-low interactions} appear in the following situations:
\begin{itemize}
	\item Case 3), but $\zeta^{2M}$ is in the "good" position.
	\item Case 5), but with $w_{2M}$ who has the dominated frequency. These terms can be viewed as errors of certain linearization procedure and will be treated by the operator $\mathcal{P}^{\pm}$ defined later.
	\item Pseudo high-low interactions\footnote{This is the main different issue compared to the truncated FNLS.}: Terms in  Case 2) and Case 5) involving the entry $\Pi_{L_{2M}}^{\perp}v_{L_{2M}}$ or $\Pi_M^{\perp}v_M$. Though these are not high-low interaction, when we decompose $v_M=\sum_{M'\leq M}y_{M'}$, the portions $y_{M'}$ coming from $M'\ll M$ may not have sufficient decay in the estimates, they behave like just $y_{M'}$ in a priori. This is caused by the fact that the Fourier support of $v_M$ is not bounded, when we truncate only the initial data. Extra estimates for $\Pi_M^{\perp}v_M$ is needed. 
\end{itemize} 

\subsection{Induction step}
Now we summarize the induction step. First we define the linear operators (with $L<N^{1-\delta}$):
\begin{equation}\label{PNL+}
\mathcal{P}_{N,L}^{+}(w):=\chi_T(t)\cdot\mathcal{I}\Pi_N\big[\mathcal{N}_3\big(w,\Pi_Lv_L^{\#},\Pi_Lv_L^{\#}\big)-\mathcal{N}_3\big(w,\Pi_{\frac{L}{2}}v_{\frac{L}{2}}^{\#}, \Pi_{\frac{L}{2}}v_{\frac{L}{2}}^{\#}  \big)\big]
\end{equation}
and
\begin{equation}\label{PNL-}
\mathcal{P}_{N,L}^{-}(w):=\chi_T(t)\cdot\mathcal{I}\Pi_N\big[\mathcal{N}_3\big(\Pi_Lv_L^{\#},w,\Pi_Lv_L^{\#}\big)-\mathcal{N}_3\big(\Pi_{\frac{L}{2}}v_{\frac{L}{2}}^{\#},w, \Pi_{\frac{L}{2}}v_{\frac{L}{2}}^{\#}  \big)\big]
\end{equation}

for $L\leq L_N=\max\{L':L'<N^{1-\delta} \}$. Denote by $\Theta_{k,k'}^{N,L}(t,t')$ the kernel of the operator $\mathcal{P}_{N,L}^{+}$. Note that on the support of $\Theta_{kk'}^{N,L}(t,t')$, $|k-k'|\lesssim L$.
Following \cite{Deng2}, for a given dyadic number $M$, we call Loc($M$) the following uniform bounds:
for all $(L,N)$, such that $\frac{1}{2}<L<M$, $L<N^{1-\delta}$,
\begin{align*}
&(\mathrm{i})\quad  \|h^{N,L,\#}\|_{Y^b}\leq L^{-\delta_0^2},\;\|h^{N,L,\#}\|_{S^{b,q}}\leq N^{\epsilon_1}L^{-\nu},\;\|h^{N,L,\#}\|_{Z^b}\leq N^{1-\frac{\alpha}{2}+\epsilon_1}L^{-\nu}; \\
&(\mathrm{ii})\quad  \|\zeta_L^{N,\#}\|_{X_{\infty,q}^{0,\frac{2b_0}{q'}}}\leq N^{\epsilon_2}L^{-\nu},\; \|\zeta_L^{N,\#}\|_{X^{0,b_0}}\leq N^{-(\alpha-1)+\epsilon_2}L^{-\nu}; \\
&(\mathrm{iii})\quad
\|\zeta_L^{N,\#}\|_{L_t^{4}L_x^{\infty} }\leq N^{-(\alpha-1)+\epsilon_2}L^{\frac{1}{2}-\nu}
 \\
&(\mathrm{iv})\quad
\Big\|\Big\lg\frac{|k-k^*|}{L}\Big\rg^{\kappa} h^{N,L,\#}_{kk^*} \Big\|_{Z^b}\leq N;\\
&(\mathrm{v})\quad
\|\mathcal{P}_{N,L}^{\pm}\|_{X^{0,b}\rightarrow X^{0,b}}\leq T^{\theta}L^{-\delta_0}; 
\\
&(\mathrm{vi})\quad
\big\|\mathbf{1}_{|k|,|k'|\geq \frac{N}{4}}\cdot\Theta_{kk'}^{N,L}(t,t') \big\|_{Z^{b,b}}\leq T^{\theta}N^{1-\frac{\alpha}{2}+\epsilon_1}L^{-\nu};\\
&(\mathrm{vii})\quad
\big\|\mathbf{1}_{|k|,|k'|\geq \frac{N}{4}}\cdot\Theta_{kk'}^{N,L}(t,t') \big\|_{S^{b,b}}\leq T^{\theta}N^{\epsilon_1}L^{-\nu};\\
&(\mathrm{viii})\quad
\|w_N^{\#}\|_{X^{0,b}}\leq N^{-s}, \|\Pi_{N_0}^{\perp}w_N^{\#}\|_{X^{0,b}}\leq N_0^{-s}  \quad \forall N\leq M, N_0\geq N.
\end{align*}
\begin{remarque}\label{numerical}
Hierarchy of numerical constants: Let $\sigma>0$ be the free small parameter to choose. 
\begin{align*}
& b_0=0.5+\sigma^{200},\; b=0.5+2\sigma^{200},\; b_1=0.5+3\sigma^{200},\;\theta=0.01\sigma^{200},\; q^{-1}=\sigma^{50},\; \kappa=\sigma^{-500};\\
& \epsilon_1=\sigma^2,\;\epsilon_2=\epsilon_1+100\sigma^{5},\; \delta=\sigma^{20},\;\delta_0=\sigma^{10},\; 
s=\frac{1}{2}-\frac{\alpha}{4}+\sigma;\\ &\nu=\min\big\{\frac{1}{2}-\frac{\alpha}{4},\;\frac{7(\alpha-1)}{4} \big\}-\sigma.
\end{align*} 
With these choices, for $\sigma>0$ small enough, we have
\begin{align*}
&\nu\leq \min\big\{s,\;\frac{7(\alpha-1)}{4} \big\}-100(\epsilon_1+\epsilon_2),\; \frac{b_1-\frac{1}{2}}{q'-1}\ll q'-2b_1\ll 1;\\
& \epsilon_1>100(b_1-\frac{1}{2}+\frac{1}{q}+\theta),\quad (\alpha-1)+2s\nu-s>0.
\end{align*}
In the remaining part of this article, all these numerical constants are reserved to depend only on the free small parameter $\sigma$, which will be chosen small enough if necessary.
\end{remarque}
\begin{remarque}
In the induction argument, the condition (i) for the next step is inherent from (v), the condition (ii) is inherent from (vii), and the condition (iii) is inherent from (vi).		The condition (iv) means that the support of $h_{k,k^*}^{N,L}$ is essentially restricted on $|k-k^*|\lesssim L$. 
\end{remarque}

The key inductive proposition is the following:
\begin{proposition}\label{keyinduction}
Assume that $\alpha\in(\alpha_0,\frac{6}{5}]$. There exists $\sigma>0$, sufficiently small and we fix the numerical constants as in Remark \ref{numerical}. Suppose that \text{Loc}($M$) is true for all $\omega\in \Omega^*$. Then there exists a measurable set $\Omega^*_1\subset \Omega^*$ with the property that (for some $\theta>0$) $\mathbb{P}[\Omega^*\setminus \Omega_1^*]<C_{\theta}\mathrm{e}^{-T^{-\theta}M^{\theta}}$, such that for all $\omega\in\Omega_1^*$, the statement \text{Loc}($2M$) is true. Consequently, outside a exceptional set of probability $<C_{\theta}\mathrm{e}^{-T^{-\theta}}$, the statement Loc($M$) holds true for every dyadic number $M\geq \frac{1}{2}$.
\end{proposition}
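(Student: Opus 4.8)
The plan is to run the induction on dyadic $M$. The base case $M=\frac12$ is the Initial step: there $h^{N,L,\#}$ is only defined for $L<\frac12$, i.e.\ the statement Loc$(\frac12)$ is vacuous, while $\psi_{\frac12}^{N,\#}=\chi(t)S_\alpha(t)\mathbf{P}_N\phi^\omega$ and $H^{N,\frac12,\#}_{kk^*}=\chi(t)\mathrm{e}^{it|k|^\alpha}\mathbf{1}_{k=k^*}$ are the explicit free data, so all uniform bounds in Loc$(M)$ that survive are trivially true on all of $\Omega$. Hence it suffices to prove the inductive step: assuming Loc$(M)$ on a set $\Omega^*$, produce $\Omega_1^*\subset\Omega^*$ with $\mathbb{P}[\Omega^*\setminus\Omega_1^*]<C_\theta\mathrm{e}^{-T^{-\theta}M^\theta}$ on which Loc$(2M)$ holds. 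Summing the exceptional probabilities over all dyadic $M\geq\frac12$ gives a geometric-type series $\sum_M C_\theta\mathrm{e}^{-T^{-\theta}M^\theta}\lesssim C_\theta\mathrm{e}^{-T^{-\theta}}$ (for $T$ small), which yields the final assertion that Loc$(M)$ holds for every $M$ outside a set of probability $<C_\theta\mathrm{e}^{-T^{-\theta}}$; this is exactly property (i) of Theorem \ref{prob:LWP}.

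The inductive step itself has the four-part structure laid out in Section \ref{sec:keyiterative}. \textbf{First}, I would solve the linear fixed-point problem \eqref{induction:tosolve1.5} for $h^{N',M,\#}$ (all $N'>M^{1/(1-\delta)}$): using the operator bound (v) of Loc$(M)$, the contraction factor $\sum_{L\leq M}\|\mathcal{P}^+_{N',L}\|_{X^{0,b}\to X^{0,b}}\lesssim T^\theta\sum_{L}L^{-\delta_0}\lesssim T^\theta$ closes the fixed point, and then estimate the three relevant norms of $h^{N',M,\#}$ ($Y^b$, $S^{b,q}$, $Z^b$, plus the weighted bound (iv)) by pushing the source terms $\mathcal{P}^+_{N',M}(H^{N',1/2,\#})$ and $\mathcal{P}^+_{N',M}(h^{N',L,\#})$ through the multilinear estimates of Proposition \ref{Multilinearkey}, combined with Lemmas \ref{lemma:operator1}, \ref{lemma:operator2} to transfer operator bounds through the $\widetilde\Theta$-kernel composition and with the time-localization Lemma \ref{Timelocalization} to extract the small power $T^\theta$. \textbf{Second}, from these operator bounds and the identity $\zeta_L^{N,\#}=h^{N,L,\#}(\mathbf{P}_N\phi^\omega)$ I would deduce the function-space bounds (ii)-(iii) for $\zeta_L^{N,\#}$, which is the place Corollary \ref{largedeviation} enters: one checks the conditional $L^2$-bound \eqref{conditionalbound} (conditioning on $\mathcal{B}_{\leq L}$, using that $H^{N,L},h^{N,L}\in\mathcal{B}_{\leq L}$ are independent of the high-frequency Gaussians $\{g_{k^*}:N/2<|k^*|\leq N\}$) in terms of the Hilbert--Schmidt norm $Z^b$ and the Fourier--Lebesgue norm $S^{b,q}$, then invokes the large-deviation bound to get the pointwise-in-$\omega$ estimate outside a set of probability $<C\mathrm{e}^{-cR^{2/m}}$; choosing $R\sim T^{-\theta/c'}M^{\theta/c'}$ of the right size produces the claimed exceptional probability. \textbf{Third}, with $\zeta^{2M}$, $v^\#_M$ and the lower $w^\#_N$ all controlled, I would solve the remainder fixed-point \eqref{induction:tosolve2} for $w_{2M}^\#$ in $X^{s,b}$ with $s=\frac12-\frac\alpha4+\sigma$, expanding the right side into the seven groups 1)--7); each group is estimated by the corresponding statement of Proposition \ref{Multilinearkey}, the dangerous high-low-low pieces in Cases 3), 5) and the pseudo-high-low pieces of Cases 2), 5) being absorbed respectively into the $\mathcal{P}^\pm$ operators (whose bounds (v)-(vii) we just propagated) and handled via the separate $\Pi^\perp_{N_0}w_N^\#$ decay in (viii). \textbf{Fourth}, I would re-derive the operator norms (v)-(vii) and the weighted bound (iv) at level $2M$ from the same multilinear estimates plus Lemma \ref{matrixboundoffdiagonal}, and collect (viii) for $w_{2M}^\#$ from the fixed-point bound; this closes Loc$(2M)$. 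The number of Gaussian factors $m$ in each Wiener-chaos application is bounded (the nonlinearity is cubic, so only a fixed finite chaos order appears at each induction level), so the constants $C,c$ in Corollary \ref{largedeviation} are uniform in $M$ — this is what makes the geometric summation legitimate.

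The \emph{main obstacle} I expect is not the bookkeeping of the induction but the multilinear estimates themselves (the content of Proposition \ref{Multilinearkey}, whose proofs occupy Sections 4--9), and within that, the single term
$$ \mathcal{I}\mathcal{N}(\mathbf{P}_NS_\alpha(t)\phi,\mathbf{P}_LS_\alpha(t)\phi+\zeta_R^L,\mathbf{P}_LS_\alpha(t)\phi+\zeta_R^L), $$
whose $X^{0,b}$-norm must be bounded by $N^{-(\alpha-1)+}L^{-7(\alpha-1)/4+}$ — this is precisely the bound forcing the constraint $\nu<\frac{7(\alpha-1)}{4}$ and hence $\alpha>\alpha_0=\frac{31-\sqrt{233}}{14}$. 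The difficulty is that the perturbation $\zeta_R^L$ destroys the non-resonance relation $k_2\neq k_1,k_3$, so the clean counting available for the pure-Gaussian trilinear term (Lemmas \ref{counting1}, \ref{counting1.5}, \ref{counting2ndorder}) degrades; one must instead exploit the almost-transport structure of $\zeta_R^L$, placing it in $L^\infty$ (via (iii) of Loc) in the high-high regimes where the $L^4$ Strichartz inequality (Lemma \ref{bilinearStrichartz}) loses almost $\frac18$ derivative, and in the Fourier--Lebesgue space $\mathcal{F}L^{\alpha/2-,\infty}$ (via the $S^{b,q}$ norm and (ii) of Loc) in the high-low regimes where a crude Cauchy--Schwarz on the Fourier coefficients is affordable. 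Balancing these three placements of $\zeta_R^L$ across all frequency-interaction configurations, and verifying that the worst exponent is exactly the stated one, is the technical heart of the argument; everything above is the scaffolding that turns those estimates into the induction.
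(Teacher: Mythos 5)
Your high-level plan has the right ingredients — the linear fix-point for $h^{N,M,\#}$, the large-deviation transfer to the $\zeta$-bounds, the remainder fix-point for $w_{2M}^\#$, and the propagation of the operator bounds — but the order of the first and fourth steps is circular as written, and this is exactly where the paper's proof is organized differently. You solve the fix-point \eqref{induction:tosolve1.5} ``using the operator bound (v) of Loc($M$)'' with the contraction factor $\sum_{L\leq M}\|\mathcal{P}^+_{N',L}\|_{X^{0,b}\to X^{0,b}}$, but Loc($M$) only gives (v) for $L<M$: the $L=M$ self-term $\mathcal{P}^+_{N',M}$, which appears both in the contraction sum and in the source terms $\mathcal{P}^+_{N',M}(H^{N',\frac12,\#})$, $\mathcal{P}^+_{N',M}(h^{N',L,\#})$, is \emph{not} covered by the hypothesis. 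Deferring the derivation of (v)--(vii) at level $2M$ (i.e.\ at $L=M$) to a fourth step, after the fix-points are solved, would therefore be using bounds before you prove them. The paper's proof establishes (v), (vi), (vii) for $L=M$ \emph{first} (its Step 1), which is possible without circularity because $\mathcal{P}^+_{N',M}$ is built from $v_M^{\#}$ alone, and $v_M^{\#}$ decomposes into type (G), (C), (D) pieces with parameters strictly below $M$ — all controlled by Loc($M$) and the already-proved multilinear Propositions \ref{Multilinearkey}, \ref{KernelSb}, \ref{kernelZb}. Only then does it run the linear fix-point (Step 2), where the newly obtained (v)--(vii) at $L=M$ together with Lemmas \ref{lemma:operator1}, \ref{lemma:operator2} close the $Y^b$, $S^{b,q}$, $Z^b$ and weighted bounds for $h^{N,M,\#}$. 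Two smaller misplacements in the same vein: the weighted localization (iv) is a bound on $h^{N,M,\#}$ itself, proved inside the fix-point (your first step, not your fourth), using Lemma \ref{lemma:operator1} to commute the weight $\langle|k-k^*|/M\rangle^\kappa$ with the kernel; and (ii), (iii) are then read off from (i), (iv), (vi), (vii) via the probabilistic pointwise bounds of Section \ref{Sec:pointwisebound} (Lemmas \ref{pointwisebound}, \ref{paracontrolregularityX}), exactly as you outline. Once the steps are reordered, the rest of your proposal — including the remainder fix-point in the stronger $\mathcal{W}_{2M}$-type norm that also tracks $\Pi_{N_0}^\perp w_{2M}^\#$, the cubic Wiener-chaos order keeping the large-deviation constants uniform in $M$, and the geometric summation of the exceptional probabilities — matches the paper.
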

\begin{remarque}
The main reason for the constraint $\alpha>\alpha_0$ is the condition
\begin{align}\label{constraint-main}
(\alpha-1)+2s\nu-s>0.
\end{align}
By numerical computation, one verifies easily that, for sufficiently small choice of the free parameter $\sigma>0$, the above condition holds if $\alpha>\alpha_0$.
\end{remarque}
Using Proposition \ref{keyinduction}, we can easily deduce Theorem \ref{prob:LWP}, (i) and (ii). Indeed, we first delete a set of probability smaller than $C_{\theta}\mathrm{e}^{-T^{-\theta}}$ such that the statements Loc($M$) are true for all dyadic $M\geq \frac{1}{2}$ on the interval $[-T,T]$\footnote{Note that $T$ is involved in the time cutoff functions to define $\zeta_L^{N,\#}$, $h^{N,L,\#}, w_N^{\#}$. }. In particular, for each dyadic number $M$, we have
$$ v_M^{\#}=\sum_{N\leq M} (\chi(t)S_{\alpha}(t)\mathbf{P}_N\phi^{\omega}+w_N^{\#})+\sum_{N\leq M}\sum_{\frac{1}{2}<L\leq L_N}\zeta_L^{N,\#},
$$
satisfying the estimates (i) to (viii) listed in the hypothesis Loc($M$). Therefore, $w^{\#}=\sum_{M}w_M^{\#}$ is a convergent sequence in $X_T^{\frac{1}{2}-\frac{\alpha}{4}+\epsilon,\frac{1}{2}+\epsilon}$, and $\zeta^{\#}=\sum_{N,L}\zeta_L^{N,\#}$ is a convergent sequence in $L^{\infty}([-T,T];H^{\alpha-1-\epsilon}(\T) )\cap L^{\infty}([-T,T];\mathcal{F}L^{\frac{\alpha}{2}-\epsilon,\infty}(\T) )$. Now for fixed $N$, the smooth solution $v_M$ and $v_M^{\#}$ are both solutions of \eqref{FNLS-gauged} with \emph{the same} initial data $\Pi_N\phi^{\omega}$. By uniquenss of the smooth solution, when restricting to a smaller time interval, say $[-T/2,T/2]$, we have $v_M^{\#}=v_M$. This allows us to decompose $v_N$ similarly as sums of $S(t)\Pi_N\phi^{\omega}, \zeta_L^N$ and $w_N$. Moreover,  the same equations \eqref{induction:tosolve1} and \eqref{induction:tosolve2} hold for if we drop the $\#$ notation and the time truncation $\chi_T(t)$. This shows that $\zeta_L^N, w_N$ coincide with $\zeta_L^{N,\#}, w_N^{\#}$ on $[-T/2,T/2]$. This proves Theorem \ref{prob:LWP}, (i) and (ii).

\subsection{The key multilinear estimate}
We make the following assumptions on $\wt{v}_j(\lambda_j,k_j)$:
\\
\noi
$\bullet${\bf Type (G)} Gaussian: $$\wt{v}_j(\lambda_j,k_j)=\wt{\psi}^{N_j}_{\frac{1}{2}}(\lambda_j,k_j):=\mathbf{1}_{\frac{N_j}{2}<|k_j|\leq N_j}\frac{g_{k_j}(\omega)}{[k_j]^{\frac{\alpha}{2}}}\widehat{\chi}(\lambda_j)
 $$
 with the bounds
 \begin{align}\label{Gaussianbounds} 
 \|v_j\|_{X^{0,b}}\leq N_j^{-\frac{\alpha-1}{2}+\epsilon_1},\; \|v_j\|_{X_{\infty,q}^{0,\frac{2b}{q'}}}\leq N_j^{-\frac{\alpha}{2}+\epsilon_1}
 \end{align}
\noi
$\bullet${\bf Type (C)} Colored:
$$ \wt{v}_j(\lambda_j,k_j)=\wt{\zeta}_{L_j}^{N_j}(\lambda_j,k_j)=\sum_{\frac{N_j}{2}<|k_j^*|\leq N_j}\wt{h}_{k_jk_j^*}^{N_j,L_j}(\lambda_j)\frac{g_{k_j^*}(\omega)}{[k_j^*]^{\frac{\alpha}{2}}},
$$
where $1\leq L_j\leq L_{N_j}<N_j^{1-\delta}$, 
$$ \mathrm{supp}(\wt{h}_{k_jk_j^*}^{N_j,L_j})\subset\{ |k_j|\leq N_j, \frac{N_j}{2}<|k_j^*|\leq N_j \} 
$$
and $\wt{h}_{k_jk_j^*}^{N_j,L_j}$ is $\mathcal{B}_{\leq L_j}$ measurable. Moreover, we assume that
\begin{align}
Y^b \text{ norm: } &\big\|\lg\lambda_j\rg^b\wt{h}_{k_jk_j^*}^{N_j,L_j}(\lambda_j) \big\|_{l_{k_j^*}^2\rightarrow l_{k_j}^2L_{\lambda_j}^2}\leq L_j^{-\delta_0^2},\label{eq(C):hypothesis1}\\
S^{b,q} \text{ norm: } & \big\|\lg\lambda_j\rg^{\frac{2b}{q'}}\wt{h}_{k_jk_j^*}^{N_j,L_j}(\lambda_j) \big\|_{l_{k_j}^{\infty}L_{\lambda_j}^ql_{k_j^*}^2}\leq N_j^{\epsilon_1}L_j^{-\nu}\label{eq(C):hypothesis2}\\
Z^b \text{ norm: } &\big\|\lg\lambda_j\rg^b\wt{h}_{k_jk_j^*}^{N_j,L_j}(\lambda_j) \big\|_{L_{\lambda_j}^2l_{k_j}^{2}l_{k_j^*}^2}\leq N_j^{1-\frac{\alpha}{2}+\epsilon_1}L_j^{-\nu}\label{eq(C):hypothesis3}  \\
L^{\infty} \text{ norm: } &\|v_j\|_{L_t^4L_x^{\infty}}\leq N_j^{-(\alpha-1)+\epsilon_2}L_j^{\frac{1}{2}-\nu} \label{eq(C):pointwise}\\
X^{0,b_0} \text{ norm: }
&\|v_j\|_{X^{0,b_0}}\leq N_j^{-(\alpha-1)+\epsilon_2}L_j^{-\nu}\\
X_{\infty,q}^{0,\frac{2b_0}{q'}}
\text{ norm: } 
&\|v_j\|_{X_{\infty,q}^{0,\frac{2b_0}{q'} }}\leq N_j^{-\alpha+\epsilon_2}L_j^{-\nu}
\end{align}
and the almost localization condition:
\begin{align}\label{almostlocal}
\Big\|\langle\lambda_j\rangle^b\Big\lg\frac{|k_j-k_j^*|}{L_j} \Big\rg^{\kappa}\wt{h}_{k_jk_j^*}^{N_j,L_j}(\lambda_j) \Big\|_{L_{\lambda_j}^2l_{k_jk_j^*}^2}\leq N_j.
\end{align}
\noi
$\bullet${\bf Type (D)} Deterministic: 
\begin{align}\label{TypeD}
 \big\|\langle\lambda_j\rangle^b\wt{v}_j(\lambda_j,k_j) \big\|_{l_{k_j}^2L_{\lambda_j}^2}\leq N_j^{-s},\; \|\langle\lambda_j\rangle^b\widetilde{v}_j(\lambda_j,k_j)\mathbf{1}_{|k_j|>N_0} \|_{l_{k_j}^2L_{\lambda_j}^2 }\leq N_0^{-s},\;\forall N_0\geq N_j.
\end{align}
For functions $v_j$ of the form (G),(C) or (D), they are all associated with dyadic numbers $N_j$ (or $(N_j,L_j)$ for the type (C)). In order to organize the terms in a unified way, we will call that $(N_j,L_j)$ a \emph{characterized pair }for a function $v_j$ of the form (G), (C) or (D), where we use the convention $L_j=\frac{1}{2}$ if $v_j$ is of type (G) while $L_j=2L_{N_j}$ if $v_j$ is of type (D). We will also call $N_j$ the \emph{characterized frequency} of $v_j$, in the sense that the Fourier support of $v_j$ is essentially localized at scale $N_j$. Recall that $N_{(1)}\geq N_{(2)}\geq\cdots$ is the non-increasing rearrangement of $N_1,N_2,\cdots$, we will denote by $v_{(j)}$, the function in the set $\{v_1,v_2,\cdots\}$ with the characterized frequency $N_{(j)}$. Yet, the order of $L_{(1)}, L_{(2)},\cdots$ is not specified.  

 The proof of Proposition \ref{keyinduction} consists of solving the fix-point problem 1 and the fix-point problem 2. The following two propositions are crucial for solving the first fix-point problem. Recall that $\Theta_{k,k_1}^{N,L}(t,t')$ is the kernel of the operator $\mathcal{P}_{N,L}^+$ defined in \eqref{PNL+}.
\begin{proposition}\label{KernelSb}
Assume that $\Omega_0\subset\Omega$ is such that \eqref{Gaussianbounds}-\eqref{TypeD} holds for all $N_j\leq L$. Then  outside a set of probability $<\mathrm{e}^{-cL^{\theta}R }$ and any $N>L^{\frac{1}{1-\delta}}$,  we have
	$$ \|\Theta_{k,k_1}^{N,L}(t,t_1)\mathbf{1}_{|k|,|k_1|\geq\frac{N}{4}}\|_{S^{b_1,b,q}}\leq RN^{100(\frac{1}{q}+\kappa^{-0.1}+b_1-\frac{1}{2})}L^{-\nu},
	$$
provided that $\nu\leq \min\{s,\frac{7(\alpha-1)}{4} \}-10(\epsilon_1+\epsilon_2)$.
\end{proposition}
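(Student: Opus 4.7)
My plan is to work directly with the kernel of $\mathcal{P}_{N,L}^{+}$ on the Fourier side. By the telescoping identity
\[
\mathcal{N}_3(w,v_L^\#,v_L^\#)-\mathcal{N}_3(w,v_{L/2}^\#,v_{L/2}^\#)=\mathcal{N}_3(w,y_L^\#,v_L^\#)+\mathcal{N}_3(w,v_{L/2}^\#,y_L^\#),
\]
one of the two $v$-entries is always $y_L^\# = v_L^\# - v_{L/2}^\#$, which by the ansatz \eqref{y:ansatz} decomposes as a sum of pieces of type (G), (C) or (D) whose characterized frequency is exactly $L$. The remaining entry decomposes into pieces of type (G), (C), (D) with characterized frequency $\leq L$. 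So after expansion, $\Theta_{kk_1}^{N,L}$ is a finite sum of bilinear-in-$v$ expressions in which at least one factor has characterized pair $(L,L')$ — this is where the decay $L^{-\nu}$ must come from.

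\textbf{Step 1: pointwise formula for the kernel.} Using the Duhamel kernel $K(\lambda,\mu)$ from Lemma~\ref{DuhamelKernel}, one writes
\[
\widetilde{\Theta}^{N,L}_{kk_1}(\lambda,\lambda_1)=\mathbf{1}_{|k|\leq N}\int K(\lambda,\mu)\,\widetilde{\mathcal{M}}_{kk_1}(\mu-\lambda_1)\,d\mu,
\]
where $\mathcal{M}_{kk_1}(t)$ is a bilinear functional in $(v_2,v_3)\in\{y_L^\#,v_L^\#,v_{L/2}^\#\}$ evaluated at the frequencies $k_2,k_3$ with $k_2-k_3=k_1-k$, $k_2\neq k_1$, $k_2\neq k_3$. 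The factor $K(\lambda,\mu)$ gives the $\langle\lambda\rangle^{-1}$-type weight that, combined with the time cut-off in $v_L^\#$, lets us trade a tiny amount of $\lambda$-regularity for a $T^\theta$ factor via Lemma~\ref{Timelocalization}; this is only used to absorb harmless losses.

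\textbf{Step 2: Wiener chaos and the probabilistic reduction.} For each fixed $k$ and most of $\lambda$, $\widetilde{\Theta}^{N,L}_{kk_1}(\lambda,\cdot)$ is a (conditional) homogeneous Gaussian polynomial of degree at most two in the $g_{k^*}$ with $|k^*|\leq L$. Conditioning on the $\sigma$-algebra generated by any (C)-type kernels, which lie in $\mathcal{B}_{\leq L}$, Corollary~\ref{largedeviation} converts the $l_k^\infty L_\lambda^q$ norm into a conditional $L^2$-in-$\omega$ norm at the cost of a factor $R$ and an exceptional set of probability $\leq\exp(-cR)$. The Chebyshev/union bound over a grid in $(k,\lambda)$ of size $\lesssim N^{C}$ produces the stated probability $\lesssim\exp(-cL^\theta R)$, since $L$ is a fixed small power of $N$ (we use $N\leq L^{1/(1-\delta)}$ is absent here — rather $L<N^{1-\delta}$, so any extra $N^\theta$ is absorbed into the displayed $N^{100(\cdots)}$ factor).

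\textbf{Step 3: the conditional $L^2$ estimate.} The remaining task is a bilinear estimate for
\[
\Big\|\langle\lambda\rangle^{\frac{2b_1}{q'}}\langle\lambda_1\rangle^{-b}\,\big(\mathbb{E}[|\widetilde{\Theta}^{N,L}_{kk_1}(\lambda,\lambda_1)|^2\mid\mathcal{B}_{\leq L}]\big)^{1/2}\Big\|_{l_k^\infty L_\lambda^2L_{\lambda_1}^2 l_{k_1}^2}\,,
\]
which, after Plancherel in $\lambda,\lambda_1$, reduces to a discrete sum over $(k_2,k_3)$ constrained by the resonance relation and the support of the two $v$-inputs. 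Here the counting Lemmas~\ref{counting1}, \ref{counting1.5}, \ref{counting2ndorder} yield the sharp bounds; in the most dangerous high-low-low regime where the characterized frequency of $v_{(2)}$ is $\sim L$ and $|k|,|k_1|\sim N$, the counting gains a factor $\lesssim L^{1-\alpha/2}$. Combined with the hypothesized bounds \eqref{Gaussianbounds}--\eqref{TypeD} on each of the decomposed factors — in particular the input Gaussian factor contributes $L^{-(\alpha-1)+\epsilon_1}$ and any (C)-factor contributes $L^{-\nu}$ by \eqref{eq(C):hypothesis3} — we obtain the bound $L^{-\nu}$ up to the stated $N^{100(\cdots)}$ losses.

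\textbf{Step 4: collecting cases and the obstacle.} Iterating this over each of the finitely many (G)/(C)/(D) type combinations for the two entries produces the claim, with the exceptional set being the union of the exceptional sets from each case. The almost-localization bound \eqref{almostlocal} is used to localize (C)-kernels to $|k_j-k_j^*|\lesssim L_j\cdot\kappa^{0.1}$-tail, and any pay-off for violating this is absorbed into the $N^{100\kappa^{-0.1}}$ factor. The one genuine constraint is the bookkeeping of the exponent of $L$: after combining counting gains with the a priori $L^{-\nu}$ bounds of the input factors and summing over the finitely many $L',L''\leq L$ that appear, the worst total still beats $L^{-\nu}$ precisely because $\nu\leq \min\{s,\,7(\alpha-1)/4\}-10(\epsilon_1+\epsilon_2)$. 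This tight exponent-matching is the only delicate part; everything else is essentially bookkeeping of Wiener-chaos factors and counting.
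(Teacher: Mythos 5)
Your overall architecture matches the paper's: decompose $v_L^\#-v_{L/2}^\#$ into typed pieces so that one entry has characterized frequency $\sim L$, reduce via the Duhamel kernel to low-modulation discrete sums tested against a dual element, apply conditional Wiener chaos with respect to $\mathcal{B}_{\leq L}$, and sum over the type combinations using the hypothesis on $\nu$. This is exactly the route of Proposition \ref{modulationreduction1}, Algorithms A1--A3, and the case lemmas in Section \ref{section:kernel}.

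However, Step 3 has a genuine gap, and it sits at the quantitative heart of the proposition. First, the counting input you invoke is the wrong one for this regime: the gain $L^{1-\alpha/2}$ of Lemma \ref{counting2ndorder} pertains to counting a frequency comparable to the two large ones (high-high interactions); in the kernel regime $|k|,|k_1|\sim N\gg L\gtrsim |k_2|,|k_3|$ the relevant facts are $\sup_{k,k_2}\sum_{k_3}S_{k+k_2-k_3,k_2,k_3}\lesssim 1$ (since $|\partial_{k_3}\Phi|\gtrsim N^{\alpha-1}\gg1$) and Lemma \ref{counting1} for the sum over $k_1\sim N$, namely $1+N^{2-\alpha}/\langle k_2-k_3\rangle$. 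Second, and more seriously, your claimed source of the decay $L^{-\nu}$ ("the Gaussian factor contributes $L^{-(\alpha-1)+\epsilon_1}$, any (C)-factor contributes $L^{-\nu}$") does not cover the case where \emph{both} low-frequency inputs are of type (G) with frequencies comparable to $L$. There is then no input factor carrying $L^{-\nu}$, and the product of the available $X^{0,b}$ bounds is only $L^{-(\alpha-1)+2\epsilon_1}$. Since for $\alpha\in(\alpha_0,9/8)$ one has $\nu=\tfrac{7(\alpha-1)}{4}-\sigma>\alpha-1$, this is strictly weaker than $L^{-\nu}$, so your arithmetic does not close precisely in the range of $\alpha$ the paper is after. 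The missing ingredient is the second-moment/pairing computation after the $TT^*$ reduction (the paper's Lemma \ref{2random-Sb}, parts (i) and (iii)), which upgrades the bound to $(N_2\wedge N_3)^{-\frac{\alpha-1}{2}}(N_2\vee N_3)^{-\frac{\alpha}{2}}$ resp. $N_2^{\frac12-\alpha}$ by exploiting that only paired Gaussians survive the expectation and that the resulting diagonal sums are $O(1)$ per resonance class; summing these over $N_3\leq N_2\sim L$ is what produces $L^{-(\alpha-1)-\nu}+L^{-\frac{7(\alpha-1)}{4}}$ and hence forces the hypothesis $\nu\leq\min\{s,\tfrac{7(\alpha-1)}{4}\}-10(\epsilon_1+\epsilon_2)$. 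As written, your sketch defers exactly this step to "bookkeeping," but it is the step the proposition is really about.
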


\begin{proposition}\label{kernelZb}
Assume that $\Omega_0\subset\Omega$ is such that \eqref{Gaussianbounds}-\eqref{TypeD} holds for all $N_j\leq L$. Then  outside a set of probability $<\mathrm{e}^{-cL^{\theta}R }$ and any $N>L^{\frac{1}{1-\delta}}$,  we have
	$$ \|\Theta_{k,k_1}^{N,L}(t,t_1)\mathbf{1}_{|k|,|k_1|\geq \frac{N}{4}}\|_{Z^{b_1,b}}\leq R N^{1-\frac{\alpha}{2}}\cdot N^{100(\frac{1}{q}+\kappa^{-0.1}+b_1-\frac{1}{2})}L^{-\nu},
	$$
provided that $\nu\leq \min\{s,\frac{7(\alpha-1)}{4} \}-10(\epsilon_1+\epsilon_2)$.
\end{proposition}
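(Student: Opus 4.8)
The plan is to follow the same strategy as for Proposition \ref{KernelSb}, but tracking the $Z^{b_1,b}$ norm (i.e. the pure $L^2_{\lambda,\lambda'}l^2_{k,k'}$ Hilbert-Schmidt-type norm) in place of the $S^{b_1,b,q}$ (Fourier-Lebesgue in $k$) norm. Recall that $\Theta^{N,L}_{k,k'}(t,t')$ is the kernel of $\mathcal{P}^+_{N,L}$, which by \eqref{PNL+} is a difference of two trilinear Duhamel operators whose two "potential" entries are $\Pi_L v_L^{\#}$ resp. $\Pi_{L/2}v_{L/2}^{\#}$, and by the induction hypothesis $\text{Loc}(M)$ these entries decompose (through $v_L^{\#}=\sum_{N'\le L}(\chi S_\alpha \mathbf{P}_{N'}\phi^\omega + w_{N'}^{\#})+\sum_{N',R}\zeta_R^{N',\#}$) into a sum of functions of types (G), (C), (D) with characterized frequencies $N_j\le L$. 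First I would reduce to a single dyadic block: after Littlewood-Paley decomposing both potential entries, one is reduced to estimating, for each choice of frequencies $N_2,N_3\le L$ (with the difference structure forcing $N_2\vee N_3\sim L$ in at least one slot, which is exactly what produces the gain $L^{-\nu}$), the Hilbert-Schmidt norm of the kernel built from $\mathcal{I}\mathcal{N}_3(\,\cdot\,, v^{(2)},v^{(3)})$ with $v^{(2)},v^{(3)}$ of the admissible types and $|k|,|k'|\ge N/4$. The outer Duhamel operator $\mathcal{I}$ is handled by Lemma \ref{DuhamelKernel}, which converts the time-integration into a convolution kernel $K(\lambda,\mu)$ obeying $|K(\lambda,\mu)|\lesssim_B(\langle\lambda\rangle^{-B}+\langle\lambda-\mu\rangle^{-B})\langle\mu\rangle^{-1}$; together with Lemma \ref{convolution} this lets me trade the weight $\langle\lambda\rangle^{b_1}$ for $\langle\mu\rangle^{b_1-1}$ up to acceptable losses, reducing matters to the "low-modulation" multiplier bound for the raw trilinear form (cf. the reduction announced for Section 6).

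Next I would run the probabilistic estimate. Expanding $v^{(2)},v^{(3)}$ into their Gaussian content gives, on the frequency-restricted region $|k|,|k'|\ge N/4$, a multilinear Gaussian chaos in the variables $g_{k^*}$ with $|k^*|\le L$ (the kernels $h^{N',R}_{k\,k^*}$ are $\mathcal{B}_{\le L}$-measurable, hence independent of the randomness attached to the "output" index $k^*$ of $\Theta$, which is what makes the chaos genuine); I would apply Corollary \ref{largedeviation} conditionally, so that outside a set of probability $<e^{-cL^\theta R}$ the $Z^{b_1,b}$ norm is bounded by $R$ times the square root of its conditional second moment. The second-moment computation then becomes a deterministic counting problem: after Plancherel in $\lambda,\lambda'$ and Wick-pairing the Gaussians, I must bound a sum over $k,k_1,k_2,k_3$ (with $k_1-k_2+k_3=k$, $|k|,|k_1|\ge N/4$, $|k_2|\sim N_2$, $|k_3|\sim N_3$) of the product of the relevant coefficient norms, weighted by the modulation factor coming from the resonance function $\Phi_{k_1,k_2,k_3}$. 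This is where Lemma \ref{counting1} (for $N\gg N_2\vee N_3$) and Lemma \ref{counting1.5} / Lemma \ref{counting2ndorder} (for comparable frequencies) enter, yielding the bound $\lesssim N^{2-\alpha}/\langle k_2-k_3\rangle$ on the number of admissible $k_1$, which after summing in $k_2,k_3$ against the $l^2$-type bounds \eqref{eq(C):hypothesis3}, \eqref{Gaussianbounds}, \eqref{TypeD} produces precisely the $L^{-\nu}$ decay with $\nu\le\min\{s,\tfrac{7(\alpha-1)}{4}\}-10(\epsilon_1+\epsilon_2)$. The extra factor $N^{1-\frac{\alpha}{2}}$ in the statement — absent from Proposition \ref{KernelSb} — is exactly the price of using the full $l^2_k$ norm rather than $l^\infty_k$: one loses a factor $N^{(1-\alpha/2)}$ when passing from an $l^\infty_k$ control of $\langle\lambda\rangle^b\widetilde{h}$ to an $l^2_k$ control over the $\sim N$-sized support in $k$, just as recorded in the discussion of why $S^{b,q}$ is preferable to $Z^b$.

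The technical losses $N^{100(1/q+\kappa^{-0.1}+b_1-\frac12)}$ are, as usual, absorbed harmlessly: $1/q$ is the cost of converting between $L^q_\lambda$ and $L^2_\lambda$ on modulation-localized pieces, $\kappa^{-0.1}$ is the cost of the almost-localization tail in \eqref{almostlocal} (the region $|k-k^*|\gg L$ carries a large power of $\langle |k-k^*|/L\rangle$ which I trade against $\kappa$), and $b_1-\frac12$ tracks the slightly-supercritical time regularity; by the hierarchy in Remark \ref{numerical} all of these are $\ll\epsilon_1$, so they do not interfere with closing the induction. The main obstacle I anticipate is the bookkeeping of the non-resonant restriction $k_2\ne k_1,k_3$ once a "colored" entry $\zeta_R^{N'}$ is present: as the authors flag, inserting such an entry destroys the clean non-resonance relation and one has to treat the degenerate diagonal contributions separately (these are the terms controlled by the operators $\mathcal{P}^\pm$ and by the diagonal resonance $\mathcal{N}_0$), carefully verifying that the worst case — the high-low-low interaction $\mathcal{I}\mathcal{N}_3(\mathbf{P}_N S_\alpha\phi,\mathbf{P}_L S_\alpha\phi+\zeta^L_R,\mathbf{P}_L S_\alpha\phi+\zeta^L_R)$ — still obeys the claimed $N^{1-\alpha/2+}L^{-\nu}$ bound; this is precisely the estimate whose sharpness forces $\nu<\tfrac{7(\alpha-1)}{4}$ and hence the threshold $\alpha>\alpha_0$.
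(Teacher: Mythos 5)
Your plan follows the paper's actual proof closely: decompose $\mathcal P^+_{N,L}$ via the inductive decomposition of $v_L^{\#}$ into type (G)/(C)/(D) inputs with $N_2\vee N_3\sim L$, reduce the $Z^{b_1,b}$ norm to a low-modulation multilinear sum via the Duhamel kernel estimate (Lemma \ref{DuhamelKernel}) and $\langle\mu\rangle^{-1}$-integration (Lemma \ref{convolution}) — this is precisely Proposition \ref{modulationreduction1} — then apply a conditional large-deviation argument (Corollary \ref{largedeviation}) and reduce the second-moment computation to Wick-pairing and counting via Lemmas \ref{counting1}, \ref{counting1.5}, \ref{counting2ndorder}. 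You correctly locate the decisive role of the high-low-low interaction and of the almost-localization weight \eqref{almostlocal}. The one point worth correcting is your attribution of the factor $N^{1-\alpha/2}$: you describe it as the cost of replacing $l^\infty_k$ by $l^2_k$ over a support of size $N$ — that would give $N^{1/2}$, not $N^{1-\alpha/2}$. In the paper's algorithms the factor actually arises from the counting bound $\sum_{k_1}S_{k_1,k_2,k_3}\lesssim 1+N^{2-\alpha}/\langle k_2-k_3\rangle$ (Lemma \ref{counting1}), which after Cauchy--Schwartz and summation in $k_2,k_3$ yields $N^{1-\alpha/2}$; the reason $S^{b_1,b,q}$ avoids this loss is that its dual uses $l^1_k$, so $k$ is \emph{fixed} in the inner sum and $\sum_{k_3}S_{k+k_2-k_3,k_2,k_3}\lesssim1$, eliminating the $N^{2-\alpha}$ term entirely (compare Lemma \ref{lemA1-123} versus Lemma \ref{lemA1-456}). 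Apart from this secondary mischaracterization, the plan is sound and takes the paper's route.
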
 
The following proposition is crucial to solve the fix-point problem 2:
\begin{proposition}\label{Multilinearkey}
Assume that $\alpha>\alpha_0.$  There exists $\sigma>0$, sufficiently small in the definition of the numerical constants in Remark \ref{numerical}, such that the following holds true:
Suppose that $v_1,v_2,v_3$ are of type (G), (C) or (D) with characterized parameters $(N_j,L_j)$, $j=1,2,3$, with respectively. Then outside an exceptional set of probability $<\mathrm{e}^{-N_{(1)}^{\theta}R^{\frac{2}{3}}}$, independent of the choice of functions $v_j$ of type (D), we have 
\begin{itemize}
	\item[(1)] If $v_{(1)}$ is of type (G) or (C) and $N_{(2)}\gtrsim N_{(1)}^{1-\delta}$, we have
	$$ \|\mathcal{I}\mathcal{N}_3(v_1,v_2.v_3)\|_{X^{0,b_1}}\lesssim RN_{(1)}^{-s}N_{(2)}^{-\delta_0}.
	$$
	\item[(2)] If $N_2\gg N_1, N_3$ and $v_2$ is of type (G) or (C), we have
	 	$$ \|\mathcal{I}\mathcal{N}_3(v_1,v_2.v_3)\|_{X^{0,b_1}}\lesssim RN_{(1)}^{-s}N_{(2)}^{-\delta_0}.
	 $$
	 \item[(3)] For any $N_0\gg N_{(1)}$, 
	 	$$ \|\Pi_{N_0}^{\perp}\mathcal{I}\mathcal{N}_3(v_1,v_2.v_3)\|_{X^{0,b_1}}\lesssim RN_{0}^{-s}N_{(2)}^{-\delta_0}.
	 $$
	\item[(4)] If $N_1\gg N_2,N_3$ and $v_1$ is of type (G) or (C), then 
	$$ \|\Pi_{N_1}^{\perp}\mathcal{I}\mathcal{N}_3(v_1,v_2,v_3)\|_{X^{0,b_1}}\lesssim RN_1^{-s}N_{(2)}^{-\delta_0}.
	$$
	\item[(5)] The operator	
	$$ \mathcal{P}^{+}: v\mapsto \mathcal{I}\mathcal{N}_3(v,v_2,v_3)
	$$
	satisfies
	$$ \|\mathcal{P}^+\|_{X^{0,\frac{3}{8}}\rightarrow X^{0,b_1}}\lesssim R(N_2\vee N_3)^{-\delta_0},
	$$
	and similarly,
	the operator	
	$$ \mathcal{P}^{-}: v\mapsto \mathcal{I}\mathcal{N}_3(v_1,v,v_3)
	$$
	satisfies
	$$ \|\mathcal{P}^-\|_{X^{0,\frac{3}{8}}\rightarrow X^{0,b_1}}\lesssim R(N_1\vee N_3)^{-\delta_0},
	$$
		and the implicit constants are independent of $v_2,v_3$.
		
 \item[(6)] When $L<N^{1-\delta}$, we have
   $$ \|\mathcal{P}_{N,L}^+\circ (\widetilde{\mathbf{P}}_{N}\mathcal{H})\|_{S^{b_1,q}}\lesssim RL^{-\delta_0}\|\mathcal{H}\|_{S^{b,q}},
   $$
   where $\widetilde{\mathbf{P}}_{N}=\mathcal{F}_x^{-1}\mathbf{1}_{|k_1|\sim N}\mathcal{F}_x$ is a Fourier projector similar as $\mathbf{P}_N$,
   and $\mathcal{P}_{N,L}^+$ is given by \eqref{PNL+}.
	\item[(7)] For all $L<N^{1-\delta}$,
	$$ \|\mathcal{P}_{N,L}^{\pm}\|_{X^{0,\frac{3}{8}}\rightarrow X^{0,b_1}}\lesssim RL^{-\delta_0}.
	$$
	\item[(8)] For the resonant terms, we have
	$$ \|\mathcal{I}\mathcal{N}_0(v_1,v_2,v_3)\|_{X^{0,b_1}}\lesssim RN_{(1)}^{-s}N_{(2)}^{-\delta_0},\quad \|\Pi_{N_0}^{\perp}\mathcal{I}\mathcal{N}_0(v_1,v_2,v_3)\|_{X^{0,b_1}}\lesssim RN_0^{-s}N_{(2)}^{-\delta_0}
	$$
	for all $N_0\gg N_{(1)}$.
\end{itemize}
\end{proposition}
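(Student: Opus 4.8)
The plan is to deduce all eight estimates from one case-analysis scheme. The first step is to reduce each multilinear expression to its \emph{low-modulation} part --- this is exactly the systematic reduction to low modulations carried out later in the paper --- so that, after discarding the weights $\langle\lambda_j\rangle^{b}$, the problem becomes a discrete estimate for the sum over $(k_1,k_2,k_3)\in\Gamma(k)$ with $k_1-k_2+k_3=k$, fed with a dyadic decomposition of the three inputs at scales $N_1,N_2,N_3$ and of the output at scale $N$. One then runs a case-by-case analysis organised by (a) the types (G), (C), (D) of $v_1,v_2,v_3$, (b) the relative sizes of $N_1,N_2,N_3$, and (c) the size of the resonance function $\Phi_{k_1,k_2,k_3}$. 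The two resources are the deterministic bilinear Strichartz estimate of Lemma~\ref{bilinearStrichartz} together with the counting Lemmas~\ref{counting1}--\ref{counting2ndorder}; and the probabilistic gain obtained by conditioning on the low Gaussians $\mathcal{B}_{\leq L_j}$ and running the conditional Wiener chaos estimate (Lemma~\ref{chaosWiener}) with the large-deviation bound (Corollary~\ref{largedeviation}) in the high Gaussians --- the independence needed for this being built into the definitions of types (G) and (C). The Duhamel gain from $X^{0,b-1}$ to $X^{0,b_1}$ is supplied by Lemmas~\ref{linearestimate} and~\ref{DuhamelKernel}, the spurious factors $N^{100(1/q+\kappa^{-0.1}+b_1-1/2)}$ being absorbed into $R$ via Remark~\ref{numerical}.

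In the \emph{high-high regimes}, where $N_{(1)}\sim N_{(2)}$ --- this includes part~(1) (whose hypothesis $N_{(2)}\gtrsim N_{(1)}^{1-\delta}$ still forces a genuine high-high pairing up to the $\delta$-slack), most of part~(8), the operator bound (5), and the diagonal contributions to (3)--(4) --- I would pair the two largest-frequency factors in an $L^2_{t,x}$ bilinear Strichartz estimate and place the remaining factor in $X^{0,b}$ or in $L^4_tL^\infty_x$. The loss $N_{(2)}^{s}$ in Lemma~\ref{bilinearStrichartz} is exactly what one can afford at regularity $s=\tfrac12-\tfrac\alpha4+\sigma$, and the surplus $N_{(1)}^{-s}N_{(2)}^{-\delta_0}$ comes either from the $\sigma$-room in $s$ and in $\nu=\min\{\tfrac12-\tfrac\alpha4,\tfrac{7(\alpha-1)}4\}-\sigma$, or, when one of the two high factors is random, directly from the Wiener-chaos gain, which beats any fixed dyadic power of $N_{(1)}$.

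The delicate regime is \emph{high-low-low}, $N_{(1)}\gg N_{(2)}\vee N_{(3)}$; this is where $\alpha>\alpha_0$ is forced and where I expect the main obstacle to lie. Strichartz loses too much here, so, following the strategy announced in the introduction, I would exploit the ``almost transport'' nature of the high factor: place it in $L^\infty$ through its $L^4_tL^\infty_x$ bound (\eqref{eq(C):pointwise} for type (C), Proposition~\ref{iteratedPicard} for the free evolution part), or use its Fourier--Lebesgue size $\mathcal{F}L^{\alpha/2-\epsilon,\infty}$ carried by the $S^{b,q}$ norm, estimate the two low factors in $X^{0,b}$ (or again $L^4_tL^\infty_x$), and control the $k$-sum by the sharp counting Lemma~\ref{counting1}. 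When the high factor is of type (G) or (C) --- parts~(2), (4) and the operator bounds (6)--(7) --- one additionally conditions on $\mathcal{B}_{\leq N_{(1)}/2}$ and applies Corollary~\ref{largedeviation} in the independent high Gaussian, converting the $L^\infty_x$/Fourier--Lebesgue size of that factor into the missing power $N_{(1)}^{-s}$. The crux is that in parts~(2) and~(6) the two low factors are themselves coloured pieces $\zeta_R^L$ rather than bare Gaussians, so the non-resonance restriction $k_2\neq k_1,k_3$ in $\mathcal{N}_3$ is broken and the counting degenerates; tracking this loss carefully yields only $N^{-(\alpha-1)+}L^{-7(\alpha-1)/4+}$ for the worst term $\mathcal{I}\mathcal{N}_3(\mathbf{P}_NS_\alpha\phi^{\omega},\mathbf{P}_LS_\alpha\phi^{\omega}+\zeta_R^L,\mathbf{P}_LS_\alpha\phi^{\omega}+\zeta_R^L)$, which is exactly the constraint $\nu<\tfrac{7(\alpha-1)}4$, and --- together with $(\alpha-1)+2s\nu-s>0$, i.e. \eqref{constraint-main}, which is forced when summing the $\zeta_L^N$ over $L$ --- pins down $\alpha_0=\tfrac{31-\sqrt{233}}{14}$.

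It remains to treat the projections, the resonant form, and the summation. For (3) and (4), inputs of type (G) or (C) are genuinely frequency-localised at their scale $N_j$, so $k=k_1-k_2+k_3$ never exceeds $\sim N_{(1)}$ and $\Pi_{N_0}^\perp$ (resp. $\Pi_{N_1}^\perp$) simply annihilates them; the only contributions come from factors of type (D), for which I would invoke the tail bound $\|\langle\lambda_j\rangle^b\widetilde v_j\,\mathbf{1}_{|k_j|>N_0}\|_{l^2_{k_j}L^2_{\lambda_j}}\leq N_0^{-s}$ of \eqref{TypeD} on whichever input carries the output-size frequency, and then close as before. Statement (8) for $\mathcal{N}_0$ is easier, since its output frequency coincides with an input frequency, so only the high-high analysis is relevant. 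Finally, every dyadic bound obtained carries a strictly negative power of $N_1,N_2,N_3,N$ (and, for type (C), of $L_1,L_2,L_3$) by the numerology of Remark~\ref{numerical}, so the dyadic sums converge; the exceptional set is the union of the (polynomially in $N_{(1)}$ many) bad events produced by Corollary~\ref{largedeviation}, each of probability $\lesssim\mathrm{e}^{-N_{(1)}^\theta R^{2/3}}$, and hence of the claimed size.
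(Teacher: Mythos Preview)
Your overall plan matches the paper's approach closely: the low-modulation reduction (Proposition~\ref{modulationreduction:trilinear}), the algorithm case-split of Section~\ref{reductionsection}, the bilinear Strichartz/counting/Wiener-chaos toolbox, and the identification of the high-low-low regime as the source of the constraint $\alpha>\alpha_0$ are all correct. Parts (3), (5), (7), (8) and most of (1), (2) would go through essentially as you describe.

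There is, however, a genuine gap in your treatment of part~(4). You claim that when $v_1$ is of type (G) or (C), the output frequency $k=k_1-k_2+k_3$ ``never exceeds $\sim N_{(1)}$'' and hence $\Pi_{N_1}^\perp$ ``simply annihilates'' the contribution, leaving only type-(D) tails. This is false: since $|k_1|\leq N_1$ but $|k_2|,|k_3|\lesssim N_2,N_3\ll N_1$, the output frequency can reach $N_1+O(N_2+N_3)$, so $\Pi_{N_1}^\perp$ picks up a nontrivial \emph{thin slice} $N_1<|k|\leq N_1+O(N_2\vee N_3)$ even when all three inputs are of type (G)/(C). This slice is not negligible --- indeed, without the projection the high-low-low term with $N_2,N_3<N_1^{1-\delta}$ is precisely what the random averaging ansatz was designed to absorb, and it does \emph{not} satisfy the bound $N_1^{-s}$ on its own.

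The paper handles this (Case~B-5 in Section~11) via a sharpened counting bound that exploits the thinness of the slice: for fixed $k_2,k_3$ the constraint $|k|>N_1$ forces $k_1$ into a window of length $\leq 2|k_2-k_3|$, so
\[
\sum_{|k|>N_1}S_{k+k_2-k_3,k_2,k_3}\lesssim \min\Big\{\langle k_2-k_3\rangle,\ 1+\frac{N_1^{2-\alpha}}{\langle k_2-k_3\rangle}\Big\}
\]
(equation~\eqref{improvedcounting}). Even with this improvement, closing the estimate requires a lengthy subcase analysis (Lemma~\ref{Projective-1}, Corollary~\ref{projective-2}, Lemma~\ref{SubcaseB5a}, Corollary~\ref{Cor5Ba}, and all of Subcase~B-5(b)), combining the improved counting with Algorithms~1 and~2, interpolating between the $L^4_tL^\infty_x$ and $X^{0,b}$ bounds on the low factors, and checking several separate numerical inequalities against $\alpha>\alpha_0$. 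This is one of the most involved parts of the whole proof, and your sketch does not touch it.
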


\subsection{Proof of the main theorem}
In this section, we assume Proposition \ref{KernelSb}, Proposition \ref{kernelZb} and Proposition \ref{Multilinearkey} and proceed to prove Proposition \ref{keyinduction}.

\begin{proof}[Proof of Proposition \ref{keyinduction}]

We assume Loc($M$) is true for some large dyadic number $M$, and we will show that Loc($2M$) holds. To make the argument clean, first we delete a set of probability $<\mathrm{e}^{-{2M}^{\theta}R^{\frac{2}{3}}}$ and we do not explicitly make any claim when there is necessary to delete some exceptional set of of the same size of the probability.  

\noi
$\bullet$ {\bf Step 1:} For $L=M, M<N^{1-\delta}$, we first show that (v),(vi), (vii) holds. From the decomposition
$$ v_M^{\#}=\psi^{M,\frac{1}{2},\#}+\sum_{\frac{1}{2}<L<M^{1-\delta}}\zeta_L^{M,\#}+w_M^{\#},
$$ 
for any $w$, $\mathcal{P}_{N,M}^{+}(w)$ is a sum of $\chi_T(t)\mathcal{I}\Pi_N\mathcal{N}_3(w,v_2,v_3)$ for $v_2,v_3$ run over all terms of type (G), (C) or (D) with characterized parameters $(N_2,L_2), (N_3,L_3)$ satisfying
$$ \frac{M}{2}\leq N_2\vee N_3\leq M,\quad L_j<N_j^{1-\delta}<M,\; j=1,2.
$$ 
Hence by (5) of Proposition \ref{Multilinearkey}, we have
\begin{align*}
\|\mathcal{P}_{N,M}^+\|_{X^{0,b}\rightarrow X^{0,b}}\lesssim T^{b_1-b}R\sum_{\substack{N_{(2)}=\frac{M}{2},M\\
N_{(3)}\leq M } }M^{-\delta_0}\lesssim T^{\frac{b_1-b}{2}}M^{-\delta_0^2},
\end{align*}
provided that $T\ll R^{-\frac{2}{b_1-b}}$ is chosen small enough. The kernel estimates (vi), (vii) are direct consequences of Proposition \ref{kernelZb} and Proposition \ref{KernelSb}, with respectively.

\noi
$\bullet${\bf Step 2:} Next we prove (i),(ii),(iii) and (iv) by using \eqref{induction:tosolve1.5}. Note that (ii),(iii) is a direct consequence of (i) and (iv), see Section \ref{Sec:pointwisebound} for details. From \eqref{induction:tosolve1.5}, we have 
\begin{align*}
h^{N,M,\#}=&2i\sum_{L\leq M} \mathcal{P}_{N,L}^+\circ h^{N,M,\#}+2i\sum_{L<M}\mathcal{P}_{N,M}^+\circ h^{N,L,\#}\\
+&2i\mathcal{P}_{N,M}^+\circ H^{N,\frac{1}{2},\#}.
\end{align*}
Therefore,
\begin{align*}
\|h^{N,M,\#}\|_{Y^b}\lesssim &\sum_{L\leq M}\|\mathcal{P}_{N,L}^+\|_{X^{0,b}\rightarrow X^{0,b}}\|h^{N,M,\#}\|_{Y^b}+\sum_{L<M}\|\mathcal{P}_{N,M}^+\|_{X^{0,b}\rightarrow X^{0,b}}\|h^{N,L,\#}\|_{Y^b}\\
+&\|\mathcal{P}_{N,M}^+\|_{X^{0,b}\rightarrow X^{0,b}}\|H^{N,\frac{1}{2},\#}\|_{Y^b}\\
\lesssim &\|h^{N,M,\#}\|_{Y^b}\sum_{L\leq M}T^{\theta}L^{-\delta_0^2}+\sum_{L<M}T^{\theta}M^{-\delta_0^2}L^{-\delta_0^2}+T^{\theta}M^{-\delta_0^2}.
\end{align*}
This implies that 
$$ \|h^{N,M,\#}\|_{Y^b}\leq M^{-\delta_0^2},
$$
provided that $T>0$ is small enough. This proves the first inequality of (i). Next we prove (iv).  From \eqref{induction:tosolve1.5}, we have
\begin{align}\label{formulafix-point1}
\widetilde{h}_{kk^*}^{N,M,\#}(\lambda)=&2i\sum_{L\leq M}\int\sum_{k'} \Theta_{kk'}^{N,L}(\lambda,\lambda')\widetilde{h}_{k'k^*}^{N,M,\#}(\lambda')d\lambda'+2i\sum_{L<M}\int \sum_{k'} \Theta_{kk'}^{N,M,}(\lambda,\lambda')\widetilde{h}_{k'k^*}^{N,L,\#}(\lambda')d\lambda'\notag \\
+&2i\int \sum_{k'} \Theta_{kk'}^{N,M}(\lambda,\lambda')\widetilde{H}_{k'k^*}^{N,\frac{1}{2},\#}(\lambda')d\lambda'.
\end{align}
Note that $\Theta_{kk'}^{N,L}(t,t')$ is supported at $|k-k'|\lesssim L$, from Lemma \ref{lemma:operator1}, we have
\begin{align*}
&\Big\|\Big\langle\frac{|k-k^*|}{M}\Big\rangle^{\kappa}\langle\lambda\rangle^b\widetilde{h}_{kk^*}^{N,M,\#}(\lambda)\Big\|_{L_{\lambda}^2l_{k,k^*}^2}\\ \lesssim &T^{b_1-b}\sum_{L\leq M}\|\langle\lambda\rangle^{b_1}\langle\lambda'\rangle^{-b}\Theta_{kk'}^{N,L}(\lambda,\lambda')\|_{L_{\lambda'}^2l_{k'}^2\rightarrow L_{\lambda}^2l_k^2}\Big\|
\Big\langle\frac{|k'-k^*|}{M} \Big\rangle^{\kappa}\lg\lambda'\rg^b
 \widetilde{h}_{k'k^*}^{N,M,\#}(\lambda')\Big\|_{L_{\lambda'}^2l_{k',k^*}^2}\\
 +&T^{b_1-b}\sum_{L<M}
 \|\langle\lambda\rangle^{b_1}\langle\lambda'\rangle^{-b}\Theta_{kk'}^{N,M}(\lambda,\lambda')\|_{L_{\lambda'}^2l_{k'}^2\rightarrow L_{\lambda}^2l_k^2}\Big\|
 \Big\langle\frac{|k'-k^*|}{L} \Big\rangle^{\kappa}\lg\lambda'\rg^b
 \widetilde{h}_{k'k^*}^{N,L,\#}(\lambda')\Big\|_{L_{\lambda'}^2l_{k',k^*}^2}\\
 +&T^{b_1-b}\|\langle\lambda\rangle^{b_1}\langle\lambda'\rangle^{-b}\Theta_{kk'}^{N,M}(\lambda,\lambda')\|_{L_{\lambda'}^2l_{k'}^2\rightarrow L_{\lambda}^2l_k^2}\|
\langle k'-k^*\rangle^{\kappa}\lg\lambda'\rg^b
 \widetilde{H}_{k'k^*}^{N,\frac{1}{2},\#}(\lambda')\|_{L_{\lambda'}^2l_{k',k^*}^2}.
\end{align*}
By using the induction hypothesis and the boundeness of $\mathcal{P}_{N,L'}^+$ (the property (v) that we have just proved) for all $L'\leq M$, we obtain (iv), provided that $T$ is chosen small enough.

Next we prove the third inequality of (i).
From Lemma \ref{lemma:operator2}, we have
\begin{align*}
\|h^{N,M,\#}\|_{Z^b}\lesssim &T^{b_1-b}\sum_{L\leq M} \|\mathcal{P}_{N,L}^{+} \|_{X^{0,b}\rightarrow X^{0,b_1}}\|h^{N,M,\#}\|_{Z^b}\\
+&T^{b_1-b}\sum_{L<M}\min\{\|\Theta_{kk'}^{N,M}(t,t')\|_{Z^{b_1,b}}\|h^{N,L,\#}\|_{Y^b}, \|\mathcal{P}_{N,M}^{+}\|_{X^{0,b}\rightarrow X^{0,b_1}  }\|h^{N,L,\#}\|_{Z^b} \}.
\end{align*}
Note that for the term $$\sum_{L<M}\int\sum_{k'}\Theta_{kk'}^{N,M}(\lambda,\lambda')\widetilde{h}_{k'k^*}^{N,L,\#}(\lambda')d\lambda' $$
in \eqref{formulafix-point1}, we may assume that $|k-k^*|\ll N$ and $|k'-k^*|\ll N$, since otherwise the bound follows trivially from (vi) that we have just proved. In particular, we have $|k|,|k'|\geq \frac{N}{4}$ as $|k^*|\geq \frac{N}{2}$. Using (vi) that we have just proved, we obtain the third inequality of (i).

Finally in this step, we prove the second inequality of (i). Again in \eqref{formulafix-point1}, we may assume that $|k|,|k'|\geq \frac{N}{4}$ since otherwise we can use (vii) to obtain a better bound. After this reduction, we could apply (6) of Proposition \ref{Multilinearkey} to treat the term
$$ \int \sum_{L\leq M}\sum_{k}\Theta^{N,L}_{kk'}(\lambda,\lambda')\wt{h}_{k'k^*}^{N,M,\#}(\lambda')d\lambda'=2i\sum_{L\leq M}\mathcal{P}_{N,L}^{+}\circ(\wt{\mathbf{P}}_Nh^{N,M,\#})_{kk^*}(\lambda).
$$
Combining with Lemma \ref{Timelocalization}, Lemma \ref{lemma:operator1} and (6) or Proposition \ref{Multilinearkey}, we have 
\begin{align*}
\|h^{N,M,\#}\|_{S^{b,q}}\lesssim & RT^{\frac{2(b_1-b)}{q'}}\sum_{L\leq M} L^{-\delta_0}\|\wt{h}^{N,M,\#}\|_{S^{b,q}}+\sum_{L<M}\|\Theta^{N,M}\|_{S^{b,b,q}}\|h^{N,L,\#}\|_{Y^b}\\
+&\|\Theta^{N,M}\|_{S^{b,b,q}}\|H^{N,\frac{1}{2},\#}\|_{Y^b},
\end{align*}
and this is conclusive when $T\ll 1$ is chosen small enough.
\vspace{0.3cm}

\noi
$\bullet${\bf Step 3:} We prove (viii) by solving the equation \eqref{induction:tosolve2}. We will construct the fix-point of the equation \eqref{induction:tosolve2} in the set $\mathcal{Z}_{2M}$, where $$\mathcal{Z}_{N}:=\{w:\;\|w\|_{X^{0,b}}\leq N^{-s}, \|\Pi_{N_0}^{\perp}w\|_{X^{0,b}}\leq N_0^{-s},\;\forall N_0\geq N \}$$
for dyadic numbers $N$. By hypothesis, we already know that $w_N^{\#}\in\mathcal{Z}_N$ for all $N\leq M$.
For $N$, we define the norm
$$ \|w\|_{\mathcal{W}_N}:=\max\big\{ N^s\|w\|_{X^{0,b}},\sup_{N_0\geq N}N_0^s\|\Pi_{N_0}^{\perp}w\|_{X^{0,b}}\big\},
$$
where the sup is taken over all dyadic integers greater than $N$. Then finding a fix-point in the set $\mathcal{Z}_{2M}$ is equivalent to find a fix-point in the unit ball of $\mathcal{W}_{2M}$. Since it is not difficult to verify that (with $M$ fixed) $\|\cdot\|_{\mathcal{W}_{2M}}$ is a norm on some Banach space embedded in $X^{0,b}$ (see Lemma \ref{Banachnorm}), we can still apply the Banach fix-point theorem (contraction principle).

First we verify that the mapping induced by the right side of \eqref{induction:tosolve2} sends a unit ball of $\mathcal{W}_{2M}$ to itself, provided that $T$ is sufficiently small (recall that $T$ is involved to define $w^{\#}=w(t)\chi_T(t)$). Thanks to Lemma \ref{localizationXsb}, it suffices to estimate the $X^{0,b_1}$ norm for the multi-linear terms \emph{without} the time cutoff $\chi_T(t)$ factor in front of the Duhamel operator $\mathcal{I}$. From \eqref{induction:tosolve2} (changing $w_{2M}^{\#}$ there to $w^{\#}$), the right side of the integration equation of $w^{\#}(t)$ is a linear combination of multi-linear terms of types 1)-7) below \eqref{induction:tosolve2}. Since for $\delta\ll 1$, $M<(2M)^{1-\delta}$, all the conditions \eqref{eq(C):hypothesis1}, \eqref{eq(C):hypothesis2}, \eqref{eq(C):hypothesis3} and \eqref{almostlocal} are satisfies for type (C) terms with characterized parameters $(N_j,L_j)$ satisfying $N_j\leq 2M$ and $L_j<N_j^{1-\delta}$. Moreover,
$$ \|w_{N_j}^{\#}\|_{X^{0,b}}\leq N_j^{-s},  \;\|\Pi_{N_0}^{\perp}w_{N_j}\|_{X^{0,b}}\leq N_0^{-s}
$$
for all $N_j\leq M$ and $N_0\geq N_j$. Then the rest argument is a direct application of the statements in Proposition \ref{Multilinearkey}. Next, to verify that the mapping defined by the right side of \eqref{induction:tosolve2} is a contraction, the argument is similar. Indeed, we pick two different $w,w'\in\mathcal{Z}_{2M}$, due to the tri-linearity of the right side of \eqref{induction:tosolve2}, there must be $w-w'$ appearing in at least one place in each multi-linear expression $\mathcal{N}(\cdot,\cdot,\cdot)$. Then applying (5) of Proposition \ref{Multilinearkey} and Lemma \ref{localizationXsb}, we are able to leave out a factor $T^{b_1-b}\|w-w'\|_{\mathcal{W}_{2M}}$ when estimating the $\mathcal{W}_{2M}$ norm of the difference. From the Banach fix-point theorem, we are able to find the unique fix-point $w^{\#}(t)=w(t)\chi_{2T}(t)$ in $\mathcal{Z}_{2M},$ supported in $|t|\leq 2T$.

The proof of Proposition \ref{keyinduction} is now complete. 

\end{proof}

\begin{lemme}\label{Banachnorm}
	Assume that $\mathcal{W}$ is a Banach space with the norm $\|\cdot\|$and $(T_j)_{j\in\N}$ is a sequence of bounded linear operators on $\mathcal{X}$ and $T_1=\mathrm{Id}$. Consider another space
	$$ \mathcal{W}_*:=\{w\in \mathcal{W}:\; \|w\|_*<+\infty \},
	$$
	where
	$$ \|w\|_*:=\sup_{j\in\N}\|T_jw\|.
	$$
	Then with $(\mathcal{W}_*,\|\cdot\|_*)$ is a Banach space.
\end{lemme}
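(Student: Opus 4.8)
The plan is to verify the two requirements separately: that $\|\cdot\|_*$ is a genuine norm on $\mathcal{W}_*$, and that $\mathcal{W}_*$ is complete for it. Throughout, the operators $T_j$ are bounded on $\mathcal{W}$ and $T_1=\mathrm{Id}$.

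First I would record that $\|\cdot\|_*$ is a norm. Absolute homogeneity and the triangle inequality are immediate, inherited from $\|\cdot\|$ by taking suprema over $j$; the only point to check is positive definiteness, and here one uses the hypothesis $T_1=\mathrm{Id}$: since $\|w\|=\|T_1w\|\leq\|w\|_*$ for every $w\in\mathcal{W}_*$, vanishing of $\|w\|_*$ forces $\|w\|=0$, hence $w=0$. The same inequality $\|\cdot\|\leq\|\cdot\|_*$ shows that the inclusion $\mathcal{W}_*\hookrightarrow\mathcal{W}$ is continuous, which is what lets us transfer convergence back and forth between the two norms.

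For completeness, I would take a $\|\cdot\|_*$-Cauchy sequence $(w_n)\subset\mathcal{W}_*$. By $\|w_n-w_m\|\leq\|w_n-w_m\|_*$ it is Cauchy in $\mathcal{W}$, hence converges in $\mathcal{W}$ to some limit $w$. It remains to see that $w\in\mathcal{W}_*$ and that $w_n\to w$ in $\|\cdot\|_*$. Fix $\varepsilon>0$ and choose $N$ with $\|w_n-w_m\|_*\leq\varepsilon$ for all $n,m\geq N$, i.e. $\|T_jw_n-T_jw_m\|\leq\varepsilon$ for every $j$ simultaneously. Now fix $j$ and use the continuity of $T_j$ on $\mathcal{W}$ to let $m\to\infty$ inside the norm, obtaining $\|T_jw_n-T_jw\|\leq\varepsilon$ for all $n\geq N$; since $j$ was arbitrary, $\sup_j\|T_jw_n-T_jw\|\leq\varepsilon$, that is, $\|w_n-w\|_*\leq\varepsilon$ for $n\geq N$. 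In particular $w-w_N$ has finite $\|\cdot\|_*$-norm, so $w=(w-w_N)+w_N\in\mathcal{W}_*$, and the displayed bound is precisely $w_n\to w$ in $\mathcal{W}_*$.

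There is essentially no obstacle here: this is the textbook argument that a supremum of Banach-space norms pulled back through bounded operators (with the identity among them) is again a complete norm. The only step I would flag as requiring a moment's care is the passage ``$w\in\mathcal{W}_*$'': one must not merely produce a candidate limit in $\mathcal{W}$ but verify its $\|\cdot\|_*$-finiteness, which comes for free once the Cauchy estimate has been upgraded to an estimate against the actual limit by sending the second index to infinity, exactly as above.
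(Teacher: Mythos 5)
Your argument is correct and follows essentially the same route as the paper: pass to the $\mathcal{W}$-limit via $T_1=\mathrm{Id}$, then upgrade the uniform-in-$j$ Cauchy bound to a bound against the actual limit by sending the second index to infinity using the continuity of each $T_j$. You additionally spell out positive definiteness and the finiteness $w\in\mathcal{W}_*$, which the paper leaves implicit, but this is the same proof.
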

\begin{proof}
	The triangle inequality is trivial. We only need to show that $\mathcal{W}_*$ is complete. Take a Cauchy-sequence $(w^{(k)})\subset \mathcal{W}_*$ such that
	$$ \lim_{k,k'\rightarrow\infty}\|w^{(k)}-w^{(k')}\|_*=0.
	$$
	In particular, since $T_1=\mathrm{Id}$ and $\mathcal{W}$ is complete, there exists $w\in\mathcal{W}$ such that $\|w^{(k)}-w\|\rightarrow 0$. Since for any $\epsilon>0$, there exists $k_0=k_0(\epsilon)$, such that for all $k,k'\geq k_0$,
	$$ \sup_{j\in\N}\|T_jw^{(k)}-T_jw^{(k')}\|<\epsilon.
	$$
	Thus for each fixed $j$, passing $k'\rightarrow+\infty$, we have
	$\|T_jw^{(k)}-T_jw\|\leq \epsilon.$ This implies that
	$$ \lim_{k\rightarrow\infty}\sup_{j\in\N}\|T_jw^{(k)}-T_jw\|=0.
	$$
	The proof of Lemma \ref{Banachnorm} is complete.
\end{proof}

\subsection{Sketch of the convergence of the whole sequence} 
We now explain briefly how to modify the arguments in this section to prove the convergence for the whole sequence $(v_n)_{n\in\N}$, satisfying
$$  i\partial_tv_n+|D_x|^{\alpha}v_n=\mathcal{N}(v_n),\quad v_n|_{t=0}=(\Pi_n-\Pi_{\frac{N}{2}})\phi^{\omega},
$$
where $\frac{N}{2}<n<N$. For this, we first define similar random averaging operator (as well as their kernels):
$  H^{n,L},\; h^{n,L},\; \mathcal{P}_{n,L}^{\pm} ,\; \Theta_{kk'}^{n,L}  
$
and the corresponding ``para-controlled'' objects $\psi_L^{n}, \zeta_L^n=\psi_L^n-\psi_{\frac{L}{2}}^n$
by changing $N$ to $n$ while keeping the constraint $L<N^{1-\delta}$. Then we add the same bounds for these objects as $H^{N,L}, h^{N,L}, \mathcal{P}_{N,L}^{\pm}, \Theta_{kk'}^{N,L}, \psi_L^N, \zeta_L^{N}$ in the definition of Loc($M$) for all $(L,N)$ such that $L<M, L<N^{1-\delta}$ and all $\frac{N}{2}<n<N$. We need also to add $X^{0,b}$ bounds of $w_{n'}$ and $\Pi_{N_0}^{\perp}w_{n'}$ for all $\frac{N'}{2}<n'< N', N_0\geq N'$ and $N'\leq M$ in the definition of Loc($M$). Then to pass from Loc($M$) to Loc($2M$), we make use of Proposition \ref{KernelSb}, Proposition \ref{kernelZb} and Proposition \ref{Multilinearkey}. Note that here we should prove stronger statements in these propositions accordingly, providing estimates of $S^{b_1,b,q}$ and $Z^{b_1,b}$ norms of the kernel $\Theta_{k,k_1}^{n,L}$. Here the observation is that, the proof of Proposition \ref{KernelSb} and Proposition \ref{kernelZb} (in Section \ref{section:kernel}) is not specific to dyadic numbers $N$ (in the definition of $\Theta^{n,L}$, the letter $n$ appears only in the frequency truncation $\Pi_n$ in front of the multi-linear expression $\mathcal{N}_3$) and the probability of the exceptional set that we delete each time can depend only on the \emph{dyadic parameters} $L,N$ such that $L<N^{1-\delta}$. Therefore, the results of Proposition \ref{KernelSb} and Proposition \ref{kernelZb} are also true for all $\Theta_{k,k_1}^{n,L}$ such that $\frac{N}{2}<n<N$ and $L<N^{1-\delta}$.
Finally, to get desired bounds for $w_m$ if $M<m<2M$, the analysis is similar as solving the Fix-point 2 for $w_{2M}$. Therefore, if $\frac{N}{2}<n<N$, through the decomposition
$$ v_n(t)=v_{\frac{N}{2}}(t)+S(t)(\Pi_n-\Pi_{\frac{N}{2}})\phi^{\omega}+\sum_{\frac{1}{2}<L\leq L_{N/2}}\zeta_L^n+w_n(t),
$$
we deduce that $(v_n(t))_{n\in\N}$ is also a Cauchy sequence in $C([-T,T];H^{\sigma_0}(\T))$.

Once Theorem \ref{prob:LWP} is proved, we are able to deduce Theorem \ref{thm5} as in \cite{Sun-Tz2}, and we omit the detail.
\section{$L^{\infty}$ and Fourier-Lebesgue property for  paracontrolled objects}\label{Sec:pointwisebound}
In this section, we prove (ii) (iii) of the statement Loc(2$M$). Note that $\epsilon_2>\theta+2\epsilon_1$, the key for the proof of (iii) is the following probabilistic pointwise bound:
\begin{lemme}\label{pointwisebound}
Assume that $0<T<1, \frac{1}{2}<L_1<N_1^{1-\delta}$ and $$  \|h^{N_1L_1}\|_{Z^b}\leq N_1^{1-\frac{\alpha}{2}+\epsilon_1}L_1^{-\nu},\quad  \Big\|\Big\lg\frac{|k-k^*|}{L_1}\Big\rg^{\kappa}\lg\lambda\rg^b\wt{h}_{kk^*}^{N_1L_1}(\lambda) \Big\|_{L_{\lambda}^2l_{k,k^*}^2}\leq N_1,
$$	
then for any $R\gg \epsilon_1^{-\frac{1}{2}}$, outside a set of probability $<\mathrm{e}^{-cN_1^{\theta}R^2}$, we have
	$$\Big\|\sum_{|k_1|\leq N_1}\sum_{|k_1^*|\sim N_1}\chi_T(t)h_{k_1k_1^*}^{N_1L_1}(t)\frac{g_{k_1^*}}{[k_1^*]^{\frac{\alpha}{2}}}\mathrm{e}^{ik_1x} \Big\|_{L_t^{4}L_x^{\infty} }\leq C_{\epsilon}T^{\frac{1}{8}}RN_1^{-(\alpha-1)+\theta+2\epsilon_1}L_1^{\frac{1}{2}-\nu}.
	$$
\end{lemme}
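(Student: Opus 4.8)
The plan is to bound the $L_t^4L_x^\infty$ norm by first upgrading $L_x^\infty$ to $L_x^r$ for large finite $r$ via Sobolev embedding, then applying the conditional large deviation estimate (Corollary \ref{largedeviation}) with the randomness coming only from $(g_{k_1^*})_{|k_1^*|\sim N_1}$, which is independent of the $\mathcal{B}_{\le L_1}$-measurable kernels $h^{N_1L_1}_{k_1k_1^*}$. Concretely, write $F(t,x,\omega)=\sum_{|k_1|\le N_1}\sum_{|k_1^*|\sim N_1}\chi_T(t)h^{N_1L_1}_{k_1k_1^*}(t)[k_1^*]^{-\alpha/2}g_{k_1^*}(\omega)\mathrm{e}^{ik_1x}$. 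For $r\ge 2$ finite, choose $\sigma_0'\in(0,\tfrac{\alpha-1}{2})$ close to the endpoint so that $H^{\sigma_0'}\hookrightarrow L^\infty$ is false but $H^{1/r^-}\hookrightarrow L^r$; more simply use $\|F\|_{L_x^\infty}\lesssim \|F\|_{H_x^{\sigma}}^{\theta_r}\|F\|_{L_x^2}^{1-\theta_r}$ for any $\sigma>1/2$ by interpolation/Sobolev, or pass directly to $L_x^r$ with $r$ large and lose $N_1^{\epsilon}$. So I would instead estimate $\|F\|_{L_t^4 L_x^r}$ for $r$ very large (so that $L_x^r$ is close to $L_x^\infty$ up to $N_1^{\sigma^{100}}$ loss) and invoke Bernstein over the frequency block $|k_1|\le N_1$. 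Since $4,r\ge 2$, Corollary \ref{largedeviation} (with $m=1$, conditioning on $\mathcal{C}=\mathcal{B}_{\le L_1}$, which is independent of $(g_{k_1^*})_{|k_1^*|\sim N_1}$) reduces the problem, outside a set of probability $<C\mathrm{e}^{-cR^2}$, to bounding the conditional second moment
\[
\Lambda_0:=\Big\|\big(\mathbb{E}[|F(t,x,\omega)|^2\,|\,\mathcal{B}_{\le L_1}]\big)^{1/2}\Big\|_{L_t^4L_x^r}.
\]

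The main computation is then the estimate of $\Lambda_0$. Expanding the square and using $\mathbb{E}[g_{k_1^*}\overline{g_{l_1^*}}]=\delta_{k_1^*l_1^*}$ together with the independence of the kernels from these Gaussians, one gets
\[
\mathbb{E}[|F(t,x)|^2\,|\,\mathcal{B}_{\le L_1}]=|\chi_T(t)|^2\sum_{|k_1^*|\sim N_1}\frac{1}{[k_1^*]^{\alpha}}\Big|\sum_{|k_1|\le N_1}h^{N_1L_1}_{k_1k_1^*}(t)\mathrm{e}^{ik_1x}\Big|^2.
\]
Since $N_1^{-\alpha}\lesssim [k_1^*]^{-\alpha}$, the $x$-integral (in $L_x^r$, then Bernstein to $L_x^\infty$ up to a factor $N_1^{0+}$, or directly estimating the $L_x^r$ norm of the inner sum by its $\ell^2$ Fourier norm times $N_1^{1/2-1/r}$) brings in $\sum_{k_1}|h^{N_1L_1}_{k_1k_1^*}(t)|^2$, and integrating in $t$ (using the $L_\lambda^2$ side of the $Z^b$-norm via the transfer/embedding $X^{0,b}\hookrightarrow L_t^\infty L_x^2$-type argument, or simply $\|\cdot\|_{L_t^2}\lesssim\|\langle\lambda\rangle^b\widetilde{\cdot}\|_{L_\lambda^2}$ combined with the $\chi_T$-gain $T^{1/4}$ from Hölder in $t$ on $[-T,T]$) produces exactly $T^{1/4}\cdot N_1^{-\alpha}\cdot \|h^{N_1L_1}\|_{Z^b}^2$, hence $\Lambda_0\lesssim T^{1/8}N_1^{-\alpha/2}\|h^{N_1L_1}\|_{Z^b}N_1^{1/2-1/r}\lesssim T^{1/8}N_1^{-(\alpha-1)+\epsilon_1}L_1^{-\nu}\cdot N_1^{\sigma^{100}}$ after taking $r$ large. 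Absorbing $N_1^{\sigma^{100}}$ into $N_1^{\theta+2\epsilon_1}$ (recall $\theta,\epsilon_1$ are powers of $\sigma$ of much smaller order, and the Bernstein loss is $\le N_1^{\sigma^{100}}$) gives the claimed bound $C_\epsilon T^{1/8}RN_1^{-(\alpha-1)+\theta+2\epsilon_1}L_1^{1/2-\nu}$; note the extra $L_1^{1/2}$ gain over the naive $L_1^{-\nu}$ does not actually appear here — rather the statement already only asks for $L_1^{1/2-\nu}$, which is weaker, so the bound $L_1^{-\nu}$ we obtain is more than enough, consistent with the factor $L_1^{1/2}$ being slack we can afford.

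The only place where the almost-localization hypothesis $\big\|\langle |k-k^*|/L_1\rangle^\kappa\langle\lambda\rangle^b\widetilde{h}^{N_1L_1}_{kk^*}(\lambda)\big\|_{L_\lambda^2\ell^2_{k,k^*}}\le N_1$ is needed is to control the diagonal/low-frequency contributions: when $|k_1|\ll L_1$ the summation in $k_1^*$ is over a range of size $\sim N_1$ but the effective support in $k_1$ for each $k_1^*$ is only $|k_1-k_1^*|\lesssim L_1$, which is incompatible with $|k_1^*|\sim N_1\gg L_1$ unless one is careful; more precisely this hypothesis guarantees the inner sum $\sum_{k_1}h^{N_1L_1}_{k_1k_1^*}\mathrm{e}^{ik_1x}$ is essentially supported on $\lesssim L_1$ frequencies near $k_1^*$, which is what lets Bernstein cost only $L_1^{1/2}$ rather than $N_1^{1/2}$ — and this is precisely the source of the $L_1^{1/2-\nu}$ (rather than $N_1^{1/2-\nu}$) on the right-hand side. \textbf{The main obstacle} I expect is this interplay between the $L_x^\infty$ (Bernstein) loss and the localization scale: one must split $|k_1|\lesssim L_1$ versus $|k_1|\gg L_1$, use the $\kappa$-weighted bound \eqref{almostlocal} to kill the off-diagonal tail $|k_1-k_1^*|\gg L_1$ with arbitrary polynomial decay (here $\kappa=\sigma^{-500}$ is huge so this is free), and on the main term $|k_1-k_1^*|\lesssim L_1$ apply Bernstein at scale $L_1$ to gain $L_1^{1/2}$ while paying only $N_1^{0+}$ for passing $L_x^r\to L_x^\infty$. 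Carrying the $\chi_T$ time-cutoff through the $X^{0,b}\to L_t^\infty$ and the Hölder-in-$t$ steps so that the power $T^{1/8}$ comes out cleanly (using Lemma \ref{Timelocalization} / Lemma \ref{localizationXsb}-type gains) is the other bookkeeping point, but poses no conceptual difficulty.
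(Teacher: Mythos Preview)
Your overall approach is the same as the paper's: pass from $L_x^\infty$ to $L_x^{q_1}$ by Sobolev embedding (paying $N_1^{O(1/q_1)}$), apply the conditional large-deviation/Wiener chaos estimate with $\mathcal{C}=\mathcal{B}_{\le L_1}$, compute the conditional second moment, split the $k_1$-sum according to $|k_1-k_1^*|\le L_1 N_1^\epsilon$ versus the tail, kill the tail with the $\kappa$-weighted hypothesis, and extract $T^{1/8}$ from the $Z^b$-norm via $H_t^{1/4}\hookrightarrow L_t^4$ and time localization. The paper does exactly this.

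However, your middle paragraph contains a genuine arithmetic slip that you only implicitly correct later. When you write ``the bound $L_1^{-\nu}$ we obtain is more than enough, consistent with the factor $L_1^{1/2}$ being slack we can afford'', this is wrong: your naive Bernstein at scale $N_1$ gives
\[
\Lambda_0\lesssim T^{1/8}N_1^{-\alpha/2}\,\|h^{N_1L_1}\|_{Z^b}\,N_1^{1/2-1/r}\lesssim T^{1/8}N_1^{3/2-\alpha+\epsilon_1}L_1^{-\nu},
\]
which exceeds the target $N_1^{-(\alpha-1)+\theta+2\epsilon_1}L_1^{1/2-\nu}$ by a factor $N_1^{1/2}/L_1^{1/2}$, and this is \emph{not} small since $L_1<N_1^{1-\delta}$ can be as small as $O(1)$. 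The $L_1^{1/2}$ on the right-hand side is not slack; it is exactly the factor that must replace the naive $N_1^{1/2}$, and it comes precisely from the localization argument you describe in your final paragraph (Cauchy--Schwarz over the $\lesssim L_1$ values of $k_1$ near $k_1^*$, after discarding the $|k_1-k_1^*|>L_1N_1^\epsilon$ tail using the $\kappa$-weighted bound with $\kappa\epsilon_1\gg 1$). So the final paragraph is the real proof and the middle paragraph should be rewritten to incorporate the split from the start.

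One further bookkeeping point you omit: Corollary~\ref{largedeviation} as stated gives an exceptional set of probability $<C\mathrm{e}^{-cR^2}$, not $\mathrm{e}^{-cN_1^\theta R^2}$. The paper obtains the stated probability by applying the large-deviation bound with parameter $RN_1^{\theta/2}$ (equivalently, optimizing $q_1\sim R^2 N_1^\theta$) and absorbing the resulting extra $N_1^{\theta/2}$ into the $N_1^\theta$ already present in the target bound.
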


\begin{proof}
By abusing notation, we still denote by $h^{NL}(t)=\chi_T(t)h^{NL}(t)$. Assume that $\epsilon_1^{-1}<q_1<\infty$, then from Chebyshev's inequality, Sobolev embedding $W_x^{\frac{2}{q_1},q_1}\hookrightarrow L_x^{\infty}$ and Minkowski's inequality, we have
	\begin{align*}
	&\mathbb{P}\Big[\Big\|\sum_{|k_1|\leq N_1, |k_1^*|\sim N_1}h_{k_1k_1^*}^{N_1L_1}(t)\frac{g_{k_1^*}}{[k_1^*]^{\frac{\alpha}{2}}}\mathrm{e}^{ik_1x}  \Big\|_{L_t^{4}L_x^{\infty} }>A \Big] \\ \leq &\frac{1}{A^{q_1}} 
	\Big\|\sum_{|k_1|\leq N_1, |k_1^*|\sim N_1}h_{k_1k_1^*}^{N_1L_1}(t)\frac{g_{k_1^*}}{[k_1^*]^{\frac{\alpha}{2}}}\mathrm{e}^{ik_1x}  \Big\|_{L_{\omega}^{q_1}L_t^{4}L_x^{\infty} }^{q_1}\\
	\leq &\frac{1}{A^{q_1}} 
	\Big\|\sum_{|k_1|\leq N_1, |k_1^*|\sim N_1}h_{k_1k_1^*}^{N_1L_1}(t)\frac{\langle k_1\rangle^{\frac{2}{q_1}} g_{k_1^*}}{[k_1^*]^{\frac{\alpha}{2}}}\mathrm{e}^{ik_1x}  \Big\|_{L_t^{4}L_x^{q_1}L_{\omega}^{q_1} }^{q_1}.
	\end{align*}
	Since $g_{k_1^*}(\omega)$ and $h_{k_1k_1^*}^{N_1L_1}(\omega)$ are independent, we may write them as $g_{k_1^*}(\omega_1)$ and $h_{k_1k_1^*}^{N_1L_1}(\omega_2)$. Then for fixed $t$ and $x$, we have from Lemma \ref{chaosWiener} that
	\begin{align*}
	\Big\|\sum_{|k_1^*|\sim N_1}\frac{g_{k_1^*}(\omega) }{[k_1^*]^{\frac{\alpha}{2}} }\sum_{|k_1|\leq N_1}\langle k_1\rangle^{\frac{2}{q_1}}h_{k_1k_1^*}^{N_1L_1}(t)\mathrm{e}^{ik_1x}   \Big\|_{L_{\omega_1}^{q_1} }\lesssim &q_1^{\frac{1}{2}}\Big\|\sum_{|k_1^*|\sim N_1}\frac{g_{k_1^*}(\omega) }{[k_1^*]^{\frac{\alpha}{2}} }\sum_{|k_1|\leq N_1}\langle k_1\rangle^{\frac{2}{q_1}}h_{k_1k_1^*}^{N_1L_1}(t)\mathrm{e}^{ik_1x}   \Big\|_{L_{\omega_1}^2 }\\
	\lesssim &q_1^{\frac{1}{2}}N_1^{-\frac{\alpha}{2}+\frac{2}{q_1}}\Big(\sum_{|k_1^*|\sim N_1}\Big|\sum_{|k_1|\leq N_1}h_{k_1k_1^*}^{N_1L_1}(t)  \Big|^2\Big)^{\frac{1}{2}}\\
	\end{align*} 
	Note that 
	\begin{align*}
	\Big(\sum_{|k_1^*|\sim N_1}\Big|\sum_{|k_1|\leq N_1}h_{k_1k_1^*}^{N_1L_1}(t)  \Big|^2\Big)^{\frac{1}{2}} \leq &(L_1N_1^{\epsilon_1})^{\frac{1}{2}}\Big(\sum_{|k_1^*|\sim N_1, |k_1-k_1^*|\leq L_1N_1^{\epsilon_1}}|h_{k_1k_1^*}^{N_1L_1}(t)|^2\Big)^{\frac{1}{2}}\\+ &N_1^{1-\epsilon_1\kappa}\Big(\sum_{\substack{ |k_1^*|\sim N_1, |k_1|\leq N_1,\\ |k_1^*-k_1|>L_1N_1^{\epsilon_1} } } \Big\langle\frac{|k_1-k_1^*|}{L}\Big\rangle^{2\kappa}|h_{k_1k_1^*}^{N_1L_1}(t) |^2 \Big)^{\frac{1}{2}}.
	\end{align*}
	Therefore,
	\begin{align*}
	&\mathbb{P}\Big[\Big\|\sum_{|k_1|\leq N_1, |k_1^*|\sim N_1}h_{k_1k_1^*}^{N_1L_1}(t)\frac{g_{k_1^*}}{|k_1^*|^{\frac{\alpha}{2}}}\mathrm{e}^{ik_1x}  \Big\|_{L_t^{4}L_x^{\infty} }>A \Big]\\ \lesssim &\frac{{q_1}^{\frac{q_1}{2}}N_1^{-\frac{q_1\alpha}{2}+2 }}{A^{q_1}} \Big[(L_1N_1^{\epsilon_1})^{\frac{q_1}{2}} \|h_{k_1k_1^*}^{N_1L_1}(t)\|_{L_t^{4}l_{k_1,k_1^*}^2}^{q_1}
	+ N_1^{\frac{q_1}{2}-q_1\epsilon_1\kappa}\Big\|\Big\langle\frac{|k_1-k_1^*|}{L} \Big\rangle^{\kappa} h_{k_1k_1^*}^{N_1L_1}(t) \Big\|_{L_t^{4}l_{k_1,k_1^*}^2 }^{q_1}
	\Big].
	\end{align*}
using again the Minkowski inequality and the Sobolev embedding $H_t^{\frac{1}{4}}\hookrightarrow L_t^{4}$, for any  $h_{k_1k_1^*}(t)$, we have
	\begin{align*}
	\|h_{k_1k_1^*}^{N_1L_1}(t)\|_{L_t^{4}l_{k_1,k_1^*}^2 }=&\|\mathrm{e}^{it|k_1|^{\alpha}}h_{k_1k_1^*}^{N_1L_1}(t)\|_{L_t^{4}l_{k_1,k_1^*}^2 } \\ \leq &\|\mathrm{e}^{it|k_1|^{\alpha}}h_{k_1k_1^*}^{N_1L_1}(t)\|_{l_{k_1,k_1^*}^2H_{t}^{\frac{1}{4}} }= \|\langle\lambda_1\rangle^{\frac{1}{4}}\widetilde{h}^{N_1L_1}_{k_1k_1^*}(\lambda_1) \|_{l_{k_1,k_1^*}^2L_{\lambda}^2}\lesssim T^{\frac{1}{8}} \|h_{k_1,k_1^*}^{N_1L_1}\|_{Z^{b}},
	\end{align*}
where to the last step, we use the fact that $h_{k_1k_1^*}^{N_1L_1}(t)=\chi_T(t)h_{k_1k_1^*}^{N_1L_1}(t)$ and the time-localization property (Lemma \ref{Timelocalization}). Similarly,
$$ \Big\|\Big\lg\frac{|k_1-k_1^* |}{L} \Big\rg^{\kappa}h_{k_1k_1^*}^{N_1L_1}(t) \Big\|_{L_t^4l_{k_1,k_1^*}^2}\lesssim
T^{\frac{1}{8}}\Big\|\Big\lg\frac{|k_1-k_1^* |}{L} \Big\rg^{\kappa}h_{k_1k_1^*}^{N_1L_1}(t) \Big\|_{Z^b} 
$$
	Therefore,
	\begin{align*}
&	\mathbb{P}\Big[\Big\|\sum_{|k_1|\leq N_1, |k_1^*|\sim N_1}h_{k_1k_1^*}^{N_1L_1}(t)\frac{g_{k_1^*}}{|k_1^*|^{\frac{\alpha}{2}}}\mathrm{e}^{ik_1x}  \Big\|_{L_t^{4}L_x^{\infty} }>A \Big]\\ \lesssim &\frac{q_1^{\frac{q_1}{2}}}{A^{q_1}}N_1^{-\frac{q_1\alpha}{2}+2}T^{\frac{q_1}{8}}\big[L_1^{\frac{q_1}{2}-q_1\nu}N_1^{q_1(1-\frac{\alpha}{2}+2\epsilon_1)}+N_1^{q_1(1-\epsilon_1\kappa)} \big]\lesssim \frac{q_1^{\frac{q_1}{2}}}{A^{q_1}}N_1^{q_1(1-\alpha)+2q_1\epsilon}T^{\frac{q_1}{8}}L_1^{q_1(\frac{1}{2}-\nu)}.
	\end{align*}
	Since $\kappa\epsilon_1\gg 1$, by choosing $R=RN_1^{\theta}T^{\frac{1}{8}}N_1^{-(\alpha-1)+2\epsilon_1}L_1^{\frac{1}{2}-\nu}$ and optimizing the choice of $q_1\sim R^2$ (thanks to the fact that $R\gg \epsilon_1^{-1/2}$), we obtain the desired estimate. This completes the proof of Lemma \ref{pointwisebound}.
\end{proof}
Similarly, to prove (ii) for Loc(2$M$), it suffices to prove:
\begin{lemme}\label{paracontrolregularityX}
Assume that $0<T<1, \frac{1}{2}<L_1<N_1^{1-\delta}$ and $$  \|h^{N_1L_1}\|_{Z^b}\leq CN_1^{1-\frac{\alpha}{2}+\epsilon_1}L_1^{-\nu},\quad  \Big\|\Big\lg\frac{|k-k^*|}{L_1}\Big\rg^{\kappa}\lg\lambda\rg^b\wt{h}_{kk^*}^{N_1L_1}(\lambda) \Big\|_{L_{\lambda}^2l_{k,k^*}^2}\leq N_1^{\frac{1}{2}}
$$	
and
$$ \|h^{N_1L_1}\|_{S^{b,q}}\leq N_1^{-\frac{\alpha}{2}+\epsilon_1}L_1^{-\nu}.
$$
Let
$$ v_1(t,x)=\sum_{|k_1|\leq N_1}\sum_{|k_1^*|\sim N_1}\chi_T(t)h_{k_1k_1^*}^{N_1L_1}(t)\frac{g_{k_1^*}}{[k_1^*]^{\frac{\alpha}{2}}}\mathrm{e}^{ik_1x}.
$$
 Then for any $R\gg \epsilon_1^{-\frac{1}{2}}$,  outside a set of probability $<\mathrm{e}^{-cN_1^{\theta}R^2}$, we have
	$$ \|v_1\|_{X_{\infty,q}^{0,\frac{2b_0}{q'}}}=\|\langle\lambda\rangle^{\frac{2b_0}{q'}}\widetilde{v}(\lambda,k)\|_{l_k^{\infty}L_{\lambda}^q}\lesssim T^{b-b} RN_1^{-\frac{\alpha}{2}+\epsilon_1+\theta+\frac{1}{q}}L_1^{-\nu} 
	$$
	and
		$$ \|v_1\|_{X^{0,b_0}}\lesssim T^{b-b_0}RN_1^{-(\alpha-1)+\epsilon_1+\theta}L_1^{-\nu}.
	$$
\end{lemme}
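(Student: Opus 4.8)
The plan is to mimic the proof of Lemma \ref{pointwisebound}, replacing the $L_t^4 L_x^\infty$ norm by the Fourier-restriction norms $X_{\infty,q}^{0,\frac{2b_0}{q'}}$ and $X^{0,b_0}$. First I would unfold the $X_{\infty,q}^{0,\frac{2b_0}{q'}}$ norm directly in terms of the twisted Fourier transform: since $\widetilde v_1(\lambda,k)=\sum_{|k^*|\sim N_1}\widetilde h_{kk^*}^{N_1L_1}(\lambda)\,g_{k^*}/[k^*]^{\frac\alpha2}$ (up to the $\widehat\chi$-convolution coming from $\chi_T$, which I absorb by Lemma \ref{Timelocalization} exactly as before), I want to bound $\|\langle\lambda\rangle^{\frac{2b_0}{q'}}\widetilde v_1(\lambda,k)\|_{l_k^\infty L_\lambda^q}$. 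I would estimate the $L_\omega^p$ norm by Chebyshev, then swap $L_\omega^p$ inside $l_k^\infty L_\lambda^q$ by Minkowski (taking $p\geq q$, which is allowed since $q<\infty$ is small and $p$ will be chosen large, $p\sim R^2$). For fixed $k,\lambda$ the random variable $\sum_{|k^*|\sim N_1}\widetilde h_{kk^*}^{N_1L_1}(\lambda)g_{k^*}/[k^*]^{\frac\alpha2}$ is a first-order Gaussian chaos in the variables $g_{k^*}$, $|k^*|\sim N_1$, conditioned on the $\mathcal B_{\leq L_1}$-measurable coefficients $\widetilde h_{kk^*}^{N_1L_1}(\lambda)$; by the conditional Wiener chaos estimate (Lemma \ref{chaosWiener}) its $L_{\omega_1}^p$ norm is $\lesssim p^{1/2}$ times its $L_{\omega_1}^2$ norm, which (since the $g_{k^*}$ are independent standard Gaussians) is $\lesssim N_1^{-\frac\alpha2}\big(\sum_{|k^*|\sim N_1}|\widetilde h_{kk^*}^{N_1L_1}(\lambda)|^2\big)^{1/2}$.

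Next I would take $L_\lambda^q l_k^\infty$ of this bound. The key input is the hypothesis $\|h^{N_1L_1}\|_{S^{b,q}}=\|\langle\lambda\rangle^{\frac{2b}{q'}}\widetilde h_{kk^*}^{N_1L_1}(\lambda)\|_{l_k^\infty L_\lambda^q l_{k^*}^2}\leq N_1^{-\frac\alpha2+\epsilon_1}L_1^{-\nu}$: writing $\langle\lambda\rangle^{\frac{2b_0}{q'}}=\langle\lambda\rangle^{-\frac{2(b-b_0)}{q'}}\langle\lambda\rangle^{\frac{2b}{q'}}$ and using the time-localization gain $T^{\frac{2(b-b_0)}{q'}}$ from Lemma \ref{Timelocalization} (valid because $v_1$ carries the $\chi_T$ factor and, morally, vanishes at $t=0$ — or else one simply accepts a loss $T^{0}$ and uses the bare bound), I get $\|v_1\|_{X_{\infty,q}^{0,\frac{2b_0}{q'}}}\lesssim p^{1/2}T^{b-b_0}N_1^{-\frac\alpha2}\cdot N_1^{-\frac\alpha2+\epsilon_1}L_1^{-\nu}$ on a good set. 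Wait — this double-counts the $N_1^{-\alpha/2}$; the point is that the $S^{b,q}$ norm already contains the $l_{k^*}^2$ summation, so after pulling out $N_1^{-\alpha/2}$ from the $1/[k^*]^{\alpha/2}$ factor I am left precisely with $\|\langle\lambda\rangle^{\frac{2b}{q'}}\widetilde h_{kk^*}^{N_1L_1}(\lambda)\|_{l_k^\infty L_\lambda^q l_{k^*}^2}$ — but that already is the $S^{b,q}$ norm, which does NOT include the extra $[k^*]^{-\alpha/2}$; so I must be careful to note $[k^*]^{-\alpha/2}\sim N_1^{-\alpha/2}$ and that this factor is separate. Thus the exponent of $N_1$ is $-\frac\alpha2+\epsilon_1$ plus the $\frac1q$ loss from converting $l_k^\infty L_\lambda^q$ back (there is no such loss here since we directly measure the $l_k^\infty L_\lambda^q$ norm) — more precisely the statement records $N_1^{-\frac\alpha2+\epsilon_1+\theta+\frac1q}$, the $\frac1q$ presumably coming from a Sobolev-type embedding $l_\lambda^q\hookrightarrow$ or from comparing $\frac{2b_0}{q'}$ exponents; I would track this constant carefully using Remark \ref{numerical}. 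Choosing $p\sim R^2$ and optimizing (legal since $R\gg\epsilon_1^{-1/2}$) gives the exceptional probability $<e^{-cN_1^\theta R^2}$.

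For the $X^{0,b_0}$ bound I would argue identically but with the $Z^b$-type norm playing the role of $S^{b,q}$: after the Wiener chaos step I need $\|\langle\lambda\rangle^{b_0}\widetilde v_1(\lambda,k)\|_{l_k^2 L_\lambda^2}$, which by the same conditioning is $\lesssim p^{1/2}N_1^{-\frac\alpha2}\|\langle\lambda\rangle^{b_0}\widetilde h_{kk^*}^{N_1L_1}(\lambda)\|_{L_\lambda^2 l_{k,k^*}^2}$, and the hypothesis $\|h^{N_1L_1}\|_{Z^b}\leq N_1^{1-\frac\alpha2+\epsilon_1}L_1^{-\nu}$ together with time-localization $T^{b-b_0}$ gives $\lesssim p^{1/2}T^{b-b_0}N_1^{-\frac\alpha2}\cdot N_1^{1-\frac\alpha2+\epsilon_1}L_1^{-\nu}=p^{1/2}T^{b-b_0}N_1^{-(\alpha-1)+\epsilon_1}L_1^{-\nu}$; again choose $p\sim R^2$. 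The main obstacle I anticipate is \emph{not} the probabilistic step, which is routine Wiener chaos, but rather the bookkeeping of the deterministic exponents: keeping track of precisely where the extra $N_1^{\frac1q}$ and $N_1^\theta$ factors enter (from the $l_k^\infty$-vs-$l_k^2$ discrepancy, from $\frac{2b}{q'}$ vs $\frac{2b_0}{q'}$, and from the time-localization lemma, which requires checking the hypotheses $0<b_0<b<1+\frac1{q'}$ and that $v_1$ is supported in $|t|\lesssim T$), and verifying that the almost-localization hypothesis on $\langle|k-k^*|/L_1\rangle^\kappa$ is actually not needed for this particular estimate (unlike in Lemma \ref{pointwisebound}, where the $L_x^\infty$ loss forced a splitting of the $k^*$-sum into near-diagonal and off-diagonal pieces — here the $l_k^2$ or $l_k^\infty$ structure already matches the $Z^b$/$S^{b,q}$ norms with no loss, so no such splitting is required). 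I would close by remarking that this is exactly how (ii) of Loc($2M$) follows, since $\epsilon_2>\theta+2\epsilon_1$ (indeed $\epsilon_2=\epsilon_1+100\sigma^5$ by Remark \ref{numerical}) absorbs the loss.
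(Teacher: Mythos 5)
Your overall route --- conditional Wiener chaos, Chebyshev with $p\sim R^2$, and time-localization to pass from the $S^{b,q}$ hypothesis to the $S^{b_0,q}$ norm --- is exactly the paper's approach; your observations that the almost-localization hypothesis $\big\langle|k-k^*|/L_1\big\rangle^{\kappa}$ is not needed here (in contrast to Lemma~\ref{pointwisebound}) and that the $X^{0,b_0}$ bound is obtained identically with $Z^b$ in place of $S^{b,q}$ are both correct.

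There is, however, a genuine error at the step where you ``swap $L_\omega^p$ inside $l_k^\infty L_\lambda^q$ by Minkowski (taking $p\geq q$).'' Minkowski's inequality pulls $L_\omega^p$ inside $L_\lambda^q$ when $p\geq q$, but it cannot pull $L_\omega^p$ inside $l_k^\infty$ --- that would require $p\geq\infty$. The paper sidesteps this by applying Corollary~\ref{largedeviation} separately for each fixed $|k_1|\leq N_1$ (with $n=1$, $p_1=q$, $m=1$), obtaining an $L_\lambda^q$ bound outside a bad set $\Omega_{k_1}$, and then union-bounding over the $\lesssim N_1$ values of $k_1$; the union is still of size $\lesssim N_1 e^{-cN_1^\theta R^2}\lesssim e^{-c'N_1^\theta R^2}$. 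Only after this pointwise-in-$k_1$ large deviation step is the $l_{k_1}^\infty$ taken. If you prefer to keep the Minkowski route, the fix is to bound $l_k^\infty\leq l_k^p$ (no loss), commute the two $L^p$'s, apply Minkowski for $L_\lambda^q$, and then convert $\|\widetilde h\|_{l_k^p L_\lambda^q l_{k^*}^2}$ back to $\|\widetilde h\|_{l_k^\infty L_\lambda^q l_{k^*}^2}=\|h\|_{S^{b,q}}$ at the cost of a factor $N_1^{1/p}$ using $|k|\leq N_1$; with $p\sim R^2\gg\epsilon_1^{-1}$ this loss is $\lesssim N_1^{\epsilon_1}$ and hence admissible. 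As written, the step is a gap.
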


\begin{proof}
We only prove the bound $X_{\infty,q}^{0,\frac{2b_0}{q'}}$ and the bound for the norm $X^{0,b_0}$ can be obtained in the similar way. Since
$$ \wt{v}_1(\lambda_1,k_1)=\sum_{k_1^*:|k_1^*|\sim N_1}\wt{h}_{k_1k_1^*}^{N_1L_1}(\lambda_1)\frac{g_{k^*}}{[k_1^*]^{\frac{\alpha}{2}}}.
$$
Note that for fixed $|k_1|\leq N_1$, applying Corollary \ref{largedeviation}, outside a set $\Omega_{k_1}$ (depending on $k_1$) of probability $<\mathrm{e}^{-cN_1^{\theta}R_1^2}$, 
\begin{align}\label{eq:FLbound} \Big|\sum_{k_1^*:|k_1^*|\sim N_1}\wt{h}_{k_1k_1^*}^{N_1L_1}(\lambda_1)\frac{g_{k_1^*}}{[k_1^*]^{\frac{\alpha}{2}}}\Big|\leq CN^{\theta}R\cdot \Big(\mathbb{E}^{\mathcal{B}_{\leq L}}\Big[\Big|\sum_{k_1^*:|k_1^*|\sim N} \wt{h}_{k_1k_1^*}^{N_1L_1}(\lambda)\frac{g_{k_1^*}}{[k_1^*]^{\frac{\alpha}{2}}} \Big|^2 \Big]\Big)^{\frac{1}{2}}.
\end{align}
By deleting the union $\cup_{|k_1|\leq N_1}\Omega_{k_1}$ for which the probability is smaller than $$\sum_{|k_1|\leq N_1}\mathrm{e}^{-cN_1^{\theta}R^2}<\mathrm{e}^{-c'N_1^{\theta}R^2},$$ above bound \eqref{eq:FLbound} is uniform for $|k_1|\leq N_1$.
Using the independence of $h_{k_1k_1^*}^{N_1L_1}$ and $g_{k_1^*}$, the conditional expectation can be bounded by
$$ N_1^{-\alpha+\theta}R\|\wt{h}_{k_1k_1^*}^{N_1L_1}(\lambda)\|_{l_{k_1}^{\infty}l_{k_1^*}^2}\leq N_1^{-\alpha+\theta}R\|\wt{h}_{k_1k_1^*}^{N_1L_1}(\lambda)\|_{l_k^{q}l_{k^*}^2}.
$$ 
Noticing that $k_1$ is constraint in the set $|k_1|\leq N_1$, multiplying by $\lg\lambda\rg^{\frac{2b_0}{q'}}$ to both sides of \eqref{eq:FLbound} and taking the $L_{\lambda_1}^{q}$ and then $l_{k_1}^{\infty}$, we have
$$ \|v_1\|_{X_{\infty,q}^{0,\frac{2b_0}{q'}}}\leq N_1^{-\alpha+\theta+\frac{1}{q}}R\|\lg\lambda\rg^{\frac{2b_0}{q'}}\wt{h}_{k_1k_1^*}^{N_1L_1}(\lambda_1)\|_{l_{k_1}^{\infty}L_{\lambda_1}^ql_{k_1^*}^2 }=  N_1^{-\alpha+\theta+\frac{1}{q}}R\|h^{N_1L_1}\|_{S^{b_0,q}}.
$$
Using Lemma \ref{Timelocalization}, we have $\|h^{N_1L_1}\|_{S^{b_0,q}}\lesssim T^{b-b_0}\|h^{N_1L_1}\|_{S^{b,q}}$ since $h^{N_1L_1}|_{t=0}=0$. 
The proof of Lemma \ref{paracontrolregularityX} is now complete.
\end{proof}

\section{Mapping properties of the operator $\mathcal{P}_{N,L}^+$ }

In this section, we will prove (3),(5),(6),(7) of Proposition \ref{Multilinearkey}.

For given space-time functions $v_2, v_3$, consider the operator
$$ \mathcal{Q}_{3,N}(w):=\mathcal{I}\Pi_{N}\mathcal{N}_3\big(w, v_2,v_3\big)
$$
and we denote by $\Theta_{kk'}(t,t')$ its kernel. Note that the operator $\mathcal{Q}_{3,N}$ depends on the functions $v_2,v_3$ and $N$. By implicitly inserting $w=\chi(t)w$, we have from Lemma \ref{DuhamelKernel} that
\begin{align*}
\widetilde{\mathcal{Q}_{3,N}}(w)(\lambda,k)=&\int_{\R}K(\lambda,\mu)\widetilde{\big(\mathcal{N}_3}(w,v_2,v_3) \big)(\mu,k)d\mu.
\end{align*}
Note that
\begin{align*}
&\widetilde{\mathcal{N}_3}(w,v_2,v_3)(\lambda,k)\\
=&\frac{1}{(2\pi)^3}\!\!\!\!\!\sum_{\substack{|k|\leq N,k_1,k_2,k_3\\
		(k_1,k_2,k_3)\in\Gamma(k) } }\!\!\!\!\!
\int \widehat{\chi}(\lambda-\lambda_1+\lambda_2-\lambda_3-\Phi_{k_1,k_2,k_3})\widetilde{w}(\lambda_1,k_1)\ov{\widetilde{v_2}}(\lambda_2,k_2)\widetilde{v_3}(\lambda_3,k_3)d\lambda_1 d\lambda_2 d\lambda_3.
\end{align*}
Denote by
\begin{align}\label{interkernel1}
\Xi_{kk'}(\lambda,\lambda'):=\frac{\mathbf{1}_{k\neq k',|k'|\leq N}}{(2\pi)^3}\!\!\!\!\!\sum_{\substack{k_2,k_3\\
		k_2\neq k_3, k_2-k_3=k-k' }}\!\!\!\!\!\!\!\int\widehat{\chi}(\lambda-\lambda'+\lambda_2-\lambda_3-\Phi_{k',k_2,k_3})\ov{\widetilde{v_2}}(\lambda_2,k_2)\widetilde{v_3}(\lambda_3,k_3)d\lambda_2 d\lambda_3,
\end{align}
then from Lemma \ref{DuhamelKernel}, the kernel of $\mathcal{Q}_{3,N}$ is given by
\begin{align}\label{kernel1}
\widetilde{\Theta}_{kk'}(\lambda,\lambda')=\int_{\R} K(\lambda,\mu)\Xi_{kk'}(\mu,\lambda')d\mu.
\end{align}

\subsection{$S^{b,q}$-mapping properties of the operator $\mathcal{P}_{N,L}^+$}
In this subsection, we prove (6) of Proposition \ref{Multilinearkey}.	By decomposing $v_L^{\#}$ as sums of type (G), (C) and (D) terms in $\mathcal{P}_{N,L}^+$, it suffices to prove the estimate by changing $\mathcal{P}_{N,L}^+$ to some operator
$$ w\mapsto \Theta_N(w):=\Pi_N\mathcal{I}\mathcal{N}_3(w,v_2,v_3)
$$	
for functions $v_2=\Pi_{L}v_2, v_3=\Pi_{L}v_3$ of type (G),(C) or (D) with characterized parameters $(N_2,L_2),(N_3,L_3)$ satisfying $N_2\vee N_3=L$ or $\frac{L}{2}$ and $N_2,N_3\ll N$. We denote by $(\Theta_{kk_1}(t,t'))$ the kernel of $\Theta_N$. Let $\mathcal{H}(t)$ be a linear operator with kernel $(H_{kk^*}(t))$. By abusing the notation, we still denote by $q_{kk^*}(t)$ the matrix-element of the operator $\Theta_N\circ(\widetilde{\mathbf{P}}_N\mathcal{H}(\lambda))$. Therefore,
$$ \widetilde{q}_{kk^*}(\lambda)=\int_{\R}\sum_{|k_1|\sim N}\widetilde{\Theta}_{kk_1}(\lambda,\lambda_1)\widetilde{H}_{k_1k^*}(\lambda_1)d\lambda_1.
$$
Note that on the support of $\widetilde{H}_{k_1k^*}(\lambda_1)$, we have $|k_1|\sim N$. Inserting \eqref{interkernel1} and \eqref{kernel1} into the expression above, we have
\begin{align*}
\widetilde{q}_{kk^*}(\lambda)=&\sum_{\substack{(k_1,k_2,k_3)\in\Gamma(k)\\
		|k_1|\sim N\\
		|k_j|\leq N_j,j=2,3 }}
\!\!\!\!\!\!\!\int K(\lambda,\mu) \widetilde{H}_{k_1k^*}(\lambda_1) d\mu d\lambda_1\\
&\hspace{1.5cm}\times \int \widehat{\chi}(\mu-\lambda_1+\lambda_2-\lambda_3+\Phi_{k_1,k_2,k_3})\ov{\widetilde{v}_2}(\lambda_2,k_2)\widetilde{v}_3(\lambda_3,k_3)d\lambda_2d\lambda_3,
\end{align*}
where the kernel $|K(\lambda,\mu)|\lesssim \langle\mu\rangle^{-1}(\lg\lambda\rg^{-10}+\lg\lambda-\mu\rg^{-10} )$. Our goal is to estimate 
 $$\|\lg\lambda\rg^{\frac{2b_1}{q'}}\widetilde{q}_{kk^*}(\lambda)\|_{l_k^{\infty}L_{\lambda}^ql_{k^*}^2}.$$ We will control it in two ways, according to the size of $L$.

\noi
$\bullet${\bf Case 1: $L^{\frac{2b_1}{q'}-\frac{\alpha-1}{2}+\epsilon_2+\delta_0}\lesssim N^{\frac{(\alpha-1)(q'-2b_1)}{q'}}$} 

\vspace{0.3cm}	
In this case, we will \emph{integrate high modulations first}.

By taking $l_{k^*}^2$ and using Minkowski, H\"older, we have (here we hide the constraints on $k_j$ by implicitly inserting some indicators to $\widetilde{H}_{k_1k^*}, \widetilde{v}_2,$ and $\widetilde{v}_3$ )
\begin{align*}
\|\widetilde{q}_{kk^*}(\lambda)\|_{l_{k^*}^2}\lesssim &\!\!\!\sum_{(k_1,k_2,k_3)\in\Gamma(k)}\!
\int \frac{1}{\lg\mu\rg}\Big(\frac{1}{\lg\lambda\rg^{10} }+\frac{1}{\lg\lambda-\mu\rg^{10}}\Big)\frac{a_1(k_1)a_2(k_2)a_3(k_3)}{\lg\mu-\Phi_{k_1,k_2,k_3}\rg^{b_0}} d\mu\end{align*}
since $\frac{2b}{q'}>b_0>\frac{1}{2}$,
where
$$ a_1(k_1)=\|\lg\lambda_1\rg^{\frac{2b}{q'}}H_{k_1k^*}(\lambda_1)\|_{L_{\lambda_1}^ql_{k^*}^2},\quad a_j(k_j)=\|\lg\lambda_j\rg^{b_0}\widetilde{v}_j(\lambda_j,k_j)\|_{L_{\lambda_j}^2},\;j=2,3.
$$
Note that here we used the fact that 
$$ |\widehat{\chi}(\mu-\lambda_1+\lambda_2-\lambda_3-\Phi_{k_1,k_2,k_2})|\lesssim \lg\mu-\lambda_1+\lambda_2-\lambda_3-\Phi_{k_1,k_2,k_2}\rg^{-10}
$$
and Lemma \ref{convolution}. Using Lemma \ref{convolution} again, we have
$$ \int_{|\mu-\lambda|\geq 2|\lambda|}\frac{d\mu}{\lg\mu\rg \lg\lambda-\mu\rg^{10} \lg\mu-\Phi_{k_1,k_2,k_3}\rg^{b_0} }\lesssim \frac{1}{\lg\lambda\rg\lg\lambda-\Phi_{k_1,k_2,k_3}\rg^{b_0}}.
$$
By writing $$\int_{|\mu-\lambda|<2|\lambda|}\leq \int_{|\mu|\leq \frac{\lambda}{2}}+\int_{|\mu-\lambda|\leq \frac{|\lambda|}{2}}+\int_{|\mu-\lambda|\geq \frac{|\lambda|}{2},\frac{|\lambda|}{2}\leq |\mu|\leq 3|\lambda|},
$$
we deduce from Lemma \ref{convolution} that
$$\int_{|\mu-\lambda|<2|\lambda|} \frac{d\mu}{\lg\mu\rg \lg\lambda-\mu\rg^{10} \lg\mu-\Phi_{k_1,k_2,k_3}\rg^{b_0} }\lesssim_{\epsilon} \frac{1}{\lg\lambda\rg\lg\lambda-\Phi_{k_1,k_2,k_3}\rg^{b_0-\epsilon}}.
$$ 
Thus
$$ \int\frac{1}{\lg\mu\rg}\Big(\frac{1}{\lg\lambda\rg^{10} }+\frac{1}{\lg\lambda-\mu\rg^{10}}\Big)\cdot\frac{1}{\lg\mu-\Phi_{k_1,k_2,k_3}\rg^{b_0}}d\mu\lesssim_{\epsilon} \frac{1}{\lg\lambda\rg\lg\lambda-\Phi_{k_1,k_2,k_3}\rg^{b_0-\epsilon}}.
$$
Next, by H\"older's inequality and Lemma \ref{convolution}, we have
\begin{align*}
&\|\lg\lambda\rg^{\frac{2b_1}{q'}}\widetilde{q}_{kk^*}(\lambda)\|_{L_{\lambda}^ql_{k^*}^2}\lesssim 
\sum_{(k_1,k_2,k_3)\in\Gamma(k)}\frac{\prod_{j=1}^3a_j(k_j)}{\lg\Phi_{k_1,k_2,k_3}\rg^{1-\frac{2b_1}{q'}}}\\
\lesssim &\sum_{|m|\leq 10L}\frac{1}{1+(mN^{\alpha-1})^{1-\frac{2b_1}{q'} }} \sum_{(k_1,k_2,k_3)\in\Gamma(k)}a_1(k_1)a_2(k_2)a_3(k_3)\mathbf{1}_{\Phi_{k_1,k_2,k_3}\in I_m^{(N)}},
\end{align*}
provided that $q'>2b_1$, where $I_m^{(N)}=[(m-1)N^{\alpha-1}, mN^{\alpha-1}]$, since for $N\gg L$, the value of $\Phi_{k_1,k_2,k_3}$ is constraint in $\cup_{|m|\leq 10L}I_m^{(N)}$. Since $|\partial_{k_2}\Phi_{k_1,k_2,k_3}|,|\partial_{k_3}\Phi_{k_1,k_2,k_3}|\gtrsim N^{\alpha-1}$, we have 
$$ \sum_{k_2}\mathbf{1}_{\Phi_{k_1,k_2,k_3}\in I_m^{(N)}}\lesssim 1,\quad \sum_{k_3}\mathbf{1}_{\Phi_{k_1,k_2,k_3}\in I_m^{(N)}}\lesssim 1,
$$
uniformly in $m$. Then by Schur's test, for fixed $k$,
$$ \sum_{k_2,k_3}a_1(k+k_2-k_3)a_2(k_2)a_3(k_3)\mathbf{1}_{\Phi_{k+k_2-k_3,k_2,k_3}\in I_m^{(N)}}\lesssim \|a_1(k_1)\|_{l_{k_1}^{\infty}}\|a_2(k_2)\|_{l_{k_2}^2}\|a_3(k_3)\|_{l_{k_3}^2}.
$$
Thus from the elementary inequality
$$ \sum_{|m|\leq 2L}\frac{1}{1+(mN^{\alpha-1})^{\beta}}\leq 1+2\int_0^{10L}\frac{dz}{1+(N^{\alpha-1}z)^{\beta}}\lesssim_{\beta} 1+\frac{L^{1-\beta}}{N^{(\alpha-1)\beta}}
$$
for $0<\beta<1$, we have (with $\beta=1-\frac{2b_1}{q'}$)
\begin{align}\label{method1} 
\|\lg\lambda\rg^{\frac{2b_1}{q'}}\widetilde{q}_{kk^*}(\lambda)\|_{l_k^{\infty}L_{\lambda}^ql_{k^*}^2 }\lesssim &\Big(1+L^{\frac{2b_1}{q'}}N^{-(\alpha-1)\big(1-\frac{2b_1}{q'}\big)}\Big)\|\mathcal{H}\|_{S^{b,q}}\|v_2\|_{X^{0,b_0}}\|v_3\|_{X^{0,b_0}}  \notag \\
\lesssim &\big(L^{-\frac{\alpha-1}{2}+\epsilon_2}+L^{\frac{2b_1}{q'}-\frac{\alpha-1}{2}+\epsilon_2}N^{-(\alpha-1)\big(1-\frac{2b_1}{q'}\big) }\big)\|\mathcal{H}\|_{S^{b,q}},
\end{align}
since $N_2\vee N_3\sim L$ and at least (when $v_2,v_3$ are both of type (G)) 
$$ \|v_2\|_{X^{0,b_0}}\lesssim N_2^{-\frac{\alpha-1}{2}+\epsilon_2},\; \|v_3\|_{X^{0,b_0}}\lesssim N_3^{-\frac{\alpha-1}{2}+\epsilon_2}. 
$$
Therefore, whenever
\begin{align*}
L^{\frac{2b_1}{q'}-\frac{\alpha-1}{2}+\epsilon_2+\delta_0}\lesssim N^{\frac{(\alpha-1)(q'-2b_1)}{q'}},
\end{align*}
the upper bound \eqref{method1} is $L^{-\delta_0}\|\mathcal{H}\|_{S^{b,q}}$, which is conclusive.

\vspace{0.3cm}

\noi
$\bullet${\bf Case 2: $L^{\frac{2b_1}{q'}-\frac{\alpha-1}{2}+\epsilon_2+\delta_0}\gg N^{\frac{(\alpha-1)(q'-2b_1)}{q'}}$} 

\vspace{0.3cm}

In this case, we will \emph{reduce to multi-linear sums of low modulations. }

By duality, it suffices to estimate
\begin{align*}
&\int \sum_{\substack{k,k_1,k_2,k_3,k^*\\
		(k_1,k_2,k_3)\in\Gamma(k) } }
\lg\lambda\rg^{\frac{2b_1}{q'}}K(\lambda,\mu)y_{kk^*}(\lambda)\wt{H}_{k_1k^*}(\lambda_1)\ov{\widetilde{v}}_2(\lambda_2,k_2)\widetilde{v}_3(\lambda_3,k_3)\\
&\hspace{2cm} \times \widehat{\chi}(\mu-\lambda_1+\lambda_2-\lambda_3-\Phi_{k_1,k_2,k_3})d\lambda_1 d\lambda_2 d\lambda_3 d\mu d\lambda
\end{align*}
where $\|y_{kk^*}(\lambda)\|_{l_k^1L_{\lambda}^{q'}l_{k^*}^2}=1$.
Summing over $k^*$ and using Cauchy-Schwartz, it suffices to estimate the expression
\begin{align}\label{eqsubsec5.1case2}
&\sum_{\substack{k,k_1,k_2,k_3\\
		(k_1,k_2,k_3)\in\Gamma(k) } } \int \frac{\lg\lambda\rg^{\frac{2b_1}{q'}} }{\lg\mu\rg}\Big(\frac{1}{\lg\lambda\rg^A}+\frac{1}{\lg\mu-\lambda\rg^A} \Big)\cdot \frac{g_k(\lambda)f_{k_1}(\lambda_1)w_2(\lambda_2,k_2)w_3(\lambda_3,k_3) }{\lg\lambda_1\rg^{\frac{2b}{q'}}\lg\lambda_2\rg^{b_0}\lg\lambda_3\rg^{b_0} }\\
&\hspace{3cm} \times |\widehat{\chi}(\mu-\lambda_1-\lambda_2+\lambda_3+\Phi_{k_1,k_2,k_3})|d\lambda_1 d\lambda_2 d\lambda_3 d\mu d\lambda, \notag
\end{align}
where $A\gg 1$, $g_k(\lambda)=\|y_{kk^*}(\lambda)\|_{l_{k^*}^2}$, $f_{k_1}(\lambda_1)=\lg\lambda_1\rg^{\frac{2b}{q'}}\|\wt{H}_{k_1k^*}(\lambda_1)\|_{l_{k^*}^2}$ and $w_j(\lambda_j,k_j)=|\lg\lambda_j\rg^{b_0}\widetilde{v}_j(\lambda_j,k_j)|$ for $j=2,3$. From the rapid decay of $\widehat{\chi}$, the contribution from $|\mu-\lambda_1+\lambda_2-\lambda_3|\gg N^{\alpha-1}L$ is negligible and can be simply controlled by \footnote{see the proof of Proposition \ref{modulationreduction1} later for details.} 
\begin{align}\label{error}
N^{-10}\|g_k(\lambda)\|_{l_k^1L_{\lambda}^{q'} }\|f_{k_1}(\lambda_1)\|_{l_{k_1}^{\infty}L_{\lambda_1}^{q} } \|w_2(\lambda_2,k_2)\|_{L_{\lambda_2}^2l_{k_2}^2}\|w_3(\lambda_3,k_3)\|_{L_{\lambda_3}^2l_{k_3}^2},
\end{align}
hence we may assume that the multiple integration is taken over $|\mu-\lambda_1+\lambda_2-\lambda_3|\lesssim N^{\alpha-1}L$. Denote by
$$ \mathcal{T}_{\mu,\lambda,\lambda_1,\lambda_2,\lambda_3}:=\sum_{\substack{k,k_1,k_2,k_3\\
		(k_1,k_2,k_3)\in\Gamma(k) } }
g_k(\lambda)f_{k_1}(\lambda_1)|\widehat{\chi}(\mu-\lambda_1+\lambda_2-\lambda_3-\Phi_{k_1,k_2,k_3})|w_2(\lambda_2,k_2)w_3(\lambda_3,k_3).
$$
Note that for fixed $|k|\sim N$ and $\mu_0\in\R$,
$$ \sup_{k_2}\sum_{k_3}f_{k+k_2-k_3}(\lambda_1)\mathbf{1}_{|k_2|,|k_3|\leq L}|\widehat{\chi}(\mu_0+\Phi_{k+k_2-k_3,k_2,k_3})|\lesssim \|f_{k+k_2-k_3}(\lambda_1)\mathbf{1}_{|k_2|,|k_3|\leq L}\|_{l_{k_2,k_3}^{\infty}},
$$
and
$$\sup_{k_3}\sum_{k_2}f_{k+k_2-k_3}(\lambda_1)\mathbf{1}_{|k_2|,|k_3|\leq L}|\widehat{\chi}(\mu_0+\Phi_{k+k_2-k_3,k_2,k_3})|\lesssim \|f_{k+k_2-k_3}(\lambda_1)\mathbf{1}_{|k_2|,|k_3|\leq L}\|_{l_{k_2,k_3}^{\infty}}
$$
since $|\partial_{k_j}\Phi_{k+k_2-k_3,k_2,k_3}|\gtrsim N^{\alpha-1}\gg 1$ for $j=2,3$.
Applying Schur's test, we have
\begin{equation}\label{Schur}
\begin{split}  
&\sum_{\substack{k_2,k_3\\ 
		|k_j|\sim N_j,j=2,3} }f_{k+k_2-k_3}(\lambda_1)|\widehat{\chi}(\mu_0+\Phi_{k+k_2-k_3,k_2,k_3})|w_2(\lambda_2,k_2)w_3(\lambda_3,k_3)\\ \lesssim &\big\|f_{k+k_2-k_3}(\lambda_1)\mathbf{1}_{|k_2|\leq L,
		|k_3|\leq L }\big\|_{l_{k_2,k_3}^{\infty}}\|w_2(\lambda_2,k_2)\|_{l_{k_2}^2}\|w_3(\lambda_3,k_3)\|_{l_{k_3}^2}\\
\leq &\big\|f_{k+k_2-k_3}(\lambda_1)\mathbf{1}_{|k_2|\leq L,|k_3|\leq L}\big\|_{l_{k_2,k_3}^{q}}\|w_2(\lambda_2,k_2)\|_{l_{k_2}^2}\|w_3(\lambda_3,k_3)\|_{l_{k_3}^2},
\end{split}
\end{equation}
where to the last inequality, we use the embedding $l^q\rightarrow l^{\infty}$. Writing $\varphi_j(\lambda)=\|w_j(\lambda_j,k_j)\|_{l_{k_j}^2}$ for $j=2,3$, by H\"older we estimate the contribution from $\frac{1}{\lg\mu\rg\lg\lambda\rg^A}$ in \eqref{eqsubsec5.1case2} by
\begin{align*}
&\int \mathbf{1}_{|\mu-\lambda_1+\lambda_2-\lambda_3|\lesssim N^{\alpha-1}L} \lg\mu\rg^{-1}\lg\lambda\rg^{-A+\frac{2b_1}{q'}}
\lg\lambda_1\rg^{-\frac{2b}{q'}}
\lg\lambda_2\rg^{-b_0}
\lg\lambda_3\rg^{-b_0} \mathcal{T}_{\mu,\lambda,\lambda_1,\lambda_2,\lambda_3}d\mu d\lambda d\lambda_1 d\lambda_2 d\lambda_3\\
\lesssim_{\epsilon} &N^{\epsilon}\sum_{k}\int g_k(\lambda)\lg\lambda\rg^{-A+\frac{2b_1}{q'}}\lg\lambda_1\rg^{-\frac{2b}{q'}}\lg\lambda_2\rg^{-b_0}\lg\lambda_3\rg^{-b_0}\varphi_2(\lambda_2)\varphi_3(\lambda_3)\big\|f_{k+k_2-k_3}(\lambda_1)\mathbf{1}_{\substack{|k_2|\leq L\\
		|k_3|\leq L }}\big\|_{l_{k_2,k_3}^q}d\lambda d\lambda_1 d\lambda_2 d\lambda_3\\
\lesssim & N^{\epsilon}\sum_{k}\|g_k(\lambda)\|_{L_{\lambda}^{q'}}\|\varphi_2(\lambda_2)\|_{L_{\lambda_2}^2}\|\varphi_3(\lambda_3)\|_{L_{\lambda_3}^2}\big\|f_{k+k_2-k_3}(\lambda_1)\mathbf{1}_{\substack{|k_2|\leq L\\
		|k_3|\leq L }}\big\|_{l_{k_2,k_3}^qL_{\lambda_1}^q}\\
\lesssim &N^{\epsilon}\|g_k(\lambda)\|_{l_k^1L_{\lambda}^{q'}}\|\varphi_2(\lambda_2)\|_{L_{\lambda_2}^2}\|\varphi_3(\lambda_3)\|_{L_{\lambda_3}^2}\big\|f_{k+k_2-k_3}(\lambda_1)\mathbf{1}_{\substack{|k_2|\leq L\\
		|k_3|\leq L }}\big\|_{l_k^{\infty}l_{k_2,k_3}^qL_{\lambda_1}^q}\\
\lesssim &N^{\epsilon}L^{\frac{2}{q}}\|g_k(\lambda)\|_{l_k^1L_{\lambda}^{q'}}\|\varphi_2(\lambda_2)\|_{L_{\lambda_2}^2}\|\varphi_3(\lambda_3)\|_{L_{\lambda_3}^2}\|f_{k_1}(\lambda_1)\|_{l_k^{\infty}L_{\lambda_1}^q}.
\end{align*}
It remains to estimate the last contribution from $\frac{1}{\lg\mu\rg\lg\mu-\lambda\rg^A}$ in \eqref{eqsubsec5.1case2}. Note that the integration over $|\lambda-\mu|\gg N^{\alpha-1}L$ gives us an error like \eqref{error}, hence we can assume that $|\lambda-\mu|\lesssim N^{\alpha-1}L$, and in particular, $|\lambda-\lambda_1+\lambda_2-\lambda_3|\lesssim N^{\alpha-1}L$, since the region of integration is $|\mu-\lambda_1+\lambda_2-\lambda_3|\lesssim N^{\alpha-1}L.$ From Lemma \ref{convolution},
$$  \int \frac{\lg\lambda\rg^{\frac{2b_1}{q'}} }{\lg\mu\rg\lg\mu-\lambda\rg^A}d\mu\lesssim \lg\lambda\rg^{-1+\frac{2b_1}{q'}}.
$$
Using \eqref{Schur}, the inequality above, and then H\"older's inequality for the integration in $\lambda$, we have
\begin{align*}
&\int \mathbf{1}_{\substack{|\mu-\lambda_1+\lambda_2-\lambda_3|\lesssim N^{\alpha-1}L\\
		|\lambda-\lambda_1+\lambda_2-\lambda_3|\lesssim N^{\alpha-1}L } } \lg\mu\rg^{-1}\lg\lambda-\mu\rg^{-A}\lg\lambda\rg^{\frac{2b_1}{q'}}
\lg\lambda_1\rg^{-\frac{2b}{q'}}
\lg\lambda_2\rg^{-b_0}
\lg\lambda_3\rg^{-b_0} \mathcal{T}_{\mu,\lambda,\lambda_1,\lambda_2,\lambda_3}d\mu d\lambda d\lambda_1 d\lambda_2 d\lambda_3\\
\lesssim &\sum_{k}\int \mathbf{1}_{|\lambda-\lambda_1+\lambda_2-\lambda_3|\lesssim N^{\alpha-1}L}\cdot g_k(\lambda)\lg\lambda\rg^{-\frac{q'-2b_1}{q'}}\lg\lambda_1\rg^{-\frac{2b}{q'}}\lg\lambda_2\rg^{-b_0}\lg\lambda_3\rg^{-b_0}\varphi_2(\lambda_2)\varphi_3(\lambda_3)\\
&\hspace{4cm}\times 
\big\|f_{k+k_2-k_3}(\lambda_1)\mathbf{1}_{|k_2|\leq L,|k_3|\leq L}\big\|_{l_{k_2,k_3}^{q}}d\lambda d\lambda_1 d\lambda_2 d\lambda_3\\
\lesssim &\sum_{k}\|g_k(\lambda)\|_{L_{\lambda}^{q'}}\|\mathbf{1}_{|\lambda-\cdot|\lesssim N^{\alpha-1}L}\|_{L_{\lambda}^{\frac{q'}{2b_1-1+\epsilon}}}\|\lg\lambda\rg^{-\frac{q'-2b_1}{q'}}\|_{L_{\lambda}^{\frac{q'}{q'-2b_1-\epsilon}}}\\
\times &\int\lg\lambda_1\rg^{-\frac{2b}{q'}}\lg\lambda_2\rg^{-b_0}\lg\lambda_3\rg^{-b_0}\varphi_2(\lambda_2)\varphi_3(\lambda_3)
\big\|f_{k+k_2-k_3}(\lambda_1)\mathbf{1}_{|k_2|\leq L,|k_3|\leq L}\big\|_{l_{k_2,k_3}^{q}}d\lambda_1 d\lambda_2 d\lambda_3,
\end{align*} 
for $\epsilon>0$ small and to be chosen later.
Next we use H\"older's inequality for the integration in $\lambda_1,\lambda_2,\lambda_3$, the above quantity can be bounded by
\begin{align*}
&N^{\frac{(\alpha-1)(2b_1-1+\epsilon)}{q'}}L^{\frac{2b_1-1+\epsilon}{q'}}\sum_{k}\|g_k(\lambda)\|_{L_{\lambda}^{q'}}\|\varphi_2(\lambda_2)\|_{L_{\lambda_2}^2}\|\varphi_3(\lambda_3)\|_{L_{\lambda_3}^2}\big\|f_{k+k_2-k_3}(\lambda_1)\mathbf{1}_{\substack{|k_2|\leq L\\|k_3|\leq L}}\big\|_{l_{k_2,k_3}^{q}L_{\lambda_1}^{q}}\\
\leq &N^{\frac{(\alpha-1)(2b_1-1+\epsilon)}{q'}}L^{\frac{2b_1-1+\epsilon}{q'}+\frac{2}{q}}\|g_k(\lambda)\|_{l_k^1L_{\lambda}^{q'}}\|\varphi_2(\lambda_2)\|_{L_{\lambda_2}^2}\|\varphi_3(\lambda_3)\|_{L_{\lambda_3}^2}\|f_{k_1}(\lambda_1)\|_{l_k^{\infty}L_{\lambda_1}^q}.
\end{align*} 
Since 
$$ \|\varphi_j(\lambda_j)\|_{L_{\lambda_j}^2}=\|v_j\|_{X^{0,b_0}}\lesssim N_j^{-\frac{\alpha-1}{2}+\epsilon_2},\; j=2,3
$$
and $N_2\vee N_3\sim L$,
we finally have (fixing $\epsilon=\epsilon_2$, say)
\begin{align}\label{method2}
\|\lg\lambda\rg^{\frac{2b_1}{q'}}\widetilde{q}_{kk^*}(\lambda)\|_{l_k^{\infty}L_{\lambda}^ql_{k^*}^2}\lesssim & N^{\frac{(\alpha-1)(2b_1-1+\epsilon)}{q'}}L^{-\frac{\alpha-1}{2}+\frac{2b_1-1+\epsilon_2}{q'}+\frac{2}{q}+\epsilon_2+\delta_0}\cdot L^{-\delta_0}\|\mathcal{H}\|_{S^{b,q}}.
\end{align}
Recall that for the case 2, 
$$ L^{\frac{2b_1}{q'}-\frac{\alpha-1}{2}+\epsilon_2+\delta_0}\gg N^{\frac{(\alpha-1)(q'-2b_1)}{q'}},
$$
then
$$ L^{\frac{\alpha-1}{2}-\frac{2b_1-1}{q'}-\frac{2}{q}-\epsilon_2-\delta_0}\gg N^{\frac{(\alpha-1)(q'-2b_1)}{q'} \cdot \big(\frac{2b_1}{q'}-\frac{\alpha-1}{2}+\epsilon_2+\delta_0\big)^{-1}}.
$$
By our definition of numerical parameters in \eqref{numerical}, if the free parameter $\sigma$ is chosen small enough, we have
$\frac{(\alpha-1)(q'-2b_1)}{q'}\cdot \big(\frac{2b_1}{q'}-\frac{\alpha-1}{2}+\epsilon_2+\delta_0\big)^{-1}\geq \frac{(\alpha-1)(2b_1-1)}{q'}
 ,$
in particular,
$$ L^{\frac{\alpha-1}{2}-\frac{2b_1-1+\epsilon}{q'}-\frac{2}{q}-\epsilon_2-\delta_0 }\geq  N^{\frac{(\alpha-1)(2b_1-1+\epsilon_2)}{q'}}.
$$
This completes the proof of (6) of Proposition \ref{Multilinearkey}.

\subsection{$X^{0,b}$-mapping property of the operator norm of $\mathcal{P}_{L}^{\pm}$}
In this subsection, we prove (3),(5),(7) of Proposition \ref{Multilinearkey}. The key point is the following lemma:
\begin{lemme}\label{lem:mappingXsb}
	Assume that $v_1, v_2, v_3$ are of type (G), (C) or (D) with characterized parameters $(N_j,L_j), j=1,2,3$, then
	$$ \|\mathcal{N}_3(v,v_2,v_3)\|_{X^{0,b_1-1}}\lesssim (N_2\vee N_3)^{-\delta_0 }\|v\|_{X^{0,\frac{3}{8}}}
	$$
	and
	$$ \|\mathcal{N}_3(v_1,v,v_3)\|_{X^{0,b_1-1}}\lesssim (N_1\vee N_3)^{-\delta_0}\|v\|_{X^{0,\frac{3}{8}}}.
	$$
	Moreover, the same estimate holds, with uniform implicit constants on the r.h.s., if we replace $v_1,v_2,v_3$ by $\Pi_{M_1}v_1$, $\Pi_{M_2}v_2$ and $\Pi_{M_3}v_3$ for any dyadic numbers $M_1,M_2,M_3$. 
\end{lemme}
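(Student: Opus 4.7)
My approach is to dualize, perform a Littlewood--Paley decomposition of the four factors, and apply the bilinear Strichartz estimate of Lemma \ref{bilinearStrichartz} with a pairing tailored to each frequency configuration. Since $1-b_1 > 3/8$ by the choice of parameters in Remark \ref{numerical}, the inequality reduces to
$$ I := \Big|\int \mathcal{N}_3(v,v_2,v_3)\,\bar{u}\,dt\,dx\Big| \lesssim (N_2\vee N_3)^{-\delta_0}\,\|v\|_{X^{0,3/8}}\,\|u\|_{X^{0,3/8}}, $$
the implicit constant depending on the type (G)/(C)/(D) bounds of $v_2,v_3$. Decomposing each factor dyadically as $v=\sum_{K_1}\mathbf{P}_{K_1}v$, $v_j=\sum_{K_j}\mathbf{P}_{K_j}v_j$, $u=\sum_{K_4}\mathbf{P}_{K_4}u$, the convolution constraint $k_1-k_2+k_3=k_4$ forces the two largest among $K_1,K_2,K_3,K_4$ to be comparable.

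Next, for each dyadic block $I_{K_1,K_2,K_3,K_4}$ I apply Cauchy--Schwarz with one of three possible pairings $\{a,b\}\sqcup\{c,d\}=\{1,2,3,4\}$ of the four factors, followed by Lemma \ref{bilinearStrichartz} on each $L^2_{t,x}$-product. This yields
$$ |I_{K_1,\ldots,K_4}| \lesssim (K_{(a)}\wedge K_{(b)})^s\,(K_{(c)}\wedge K_{(d)})^s\,\prod_{j=1}^4 \|\mathbf{P}_{K_j}\cdot\|_{X^{0,3/8}},\qquad s=\tfrac{1}{2}-\tfrac{\alpha}{4}+\sigma, $$
where I choose the pairing so that the $s$-loss falls on the smallest available frequencies. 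Using the a priori bounds
$$ \|\mathbf{P}_{K_j}v_j\|_{X^{0,b_0}} \lesssim \begin{cases} N_j^{-(\alpha-1)/2+\epsilon_1} & \text{type (G)},\\ N_j^{-(\alpha-1)+\epsilon_2}L_j^{-\nu} & \text{type (C)},\\ \max(K_j,N_j)^{-s} & \text{type (D)}, \end{cases} $$
the $s$-loss on $v_2,v_3$ is absorbed and a residual factor $(N_2\vee N_3)^{-\eta}$ with $\eta>\delta_0$ survives by the hierarchy of Remark \ref{numerical}. The dyadic sums in $K_1,K_4$ are closed by Cauchy--Schwarz using $\sum_{K_j}\|\mathbf{P}_{K_j}\cdot\|_{X^{0,3/8}}^2 \lesssim \|\cdot\|_{X^{0,3/8}}^2$; the constraint $K_{(1)}\sim K_{(2)}$ removes two of the four summations, and the remaining logarithmic factors are absorbed into $\eta-\delta_0$.

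The delicate regime is the ``high--high'' configuration $K_1\sim K_2\sim N_2\gg K_3,K_4$ (and its symmetric variant with $v_3$ in place of $v_2$), in which $v$ and $v_2$ simultaneously carry the top frequency. Here no bilinear Strichartz pairing yields immediate decay in $K_4$, so I switch to H\"older and place $v_2,v_3$ in $L^4_tL^\infty_x$ for types (G) and (C) (using $\|v_j\|_{L^4_tL^\infty_x}\lesssim N_j^{-(\alpha-1)+\epsilon_2}L_j^{1/2-\nu}$), with $v,u$ in $L^4_tL^2_x \hookleftarrow X^{0,3/8}$. For type (D) entries, where no $L^\infty_x$ bound is available, I use the refined Fourier-tail bound $\|\Pi_{N_0}^\perp v_j\|_{X^{0,b}}\lesssim N_0^{-s}$ together with the parameter condition $(\alpha-1)+2s\nu-s>0$ from Remark \ref{numerical} (which is precisely the constraint producing $\alpha>\alpha_0$) to obtain a strict gain. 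The second displayed inequality follows from the same argument after swapping $v_1$ and $v_2$; the uniformity under Fourier projectors $\Pi_{M_i}$ is automatic since these commute with $\mathbf{P}_{K_j}$ and are $L^2$-bounded.

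The main obstacle I anticipate is the high--high regime with a type (D) factor: the absence of a pointwise $L^\infty_x$ bound forces the argument to rely on the combination of fine dyadic summation, the boundary parameter condition, and the full $X^{0,b}$/Fourier-tail structure of type (D), and it is here that the threshold $\alpha_0$ of the paper appears naturally.
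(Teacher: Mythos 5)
Your skeleton (dualize, Littlewood--Paley, bilinear Strichartz with a chosen pairing, $L^4_tL^\infty_x$ for the random factors) is the paper's approach, but two of your key quantitative claims do not hold, and as written the argument does not close. First, the assertion in your middle paragraph that the bilinear Strichartz loss on $v_2,v_3$ is ``absorbed'' by their $X^{0,b_0}$ bounds fails for type (G) and for type (C) with small $L_j$: for a type (G) factor the product of the loss and the bound is $N_j^{\frac12-\frac{\alpha}{4}}\cdot N_j^{-\frac{\alpha-1}{2}+\epsilon_1}=N_j^{1-\frac{3\alpha}{4}+\epsilon_1}$, a \emph{positive} power of $N_j$ whenever $\alpha<\frac43$ (and similarly $N_j^{\frac32-\frac{5\alpha}{4}}L_j^{-\nu}$ for type (C), positive for $\alpha<\frac65$). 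So the pure $X^{0,b}$/bilinear-Strichartz route never produces decay from a (G) or (C) factor in this range of $\alpha$; the $L^4_tL^\infty_x$ bound is not a device for one ``delicate regime'' but must be used (or at least kept as one branch of a $\min$) for every (G)/(C) factor in every frequency configuration. This is exactly how the paper's proof is organized: in each case it takes the minimum of the $L^4_tL^\infty_x$ estimate and the bilinear-Strichartz estimate for the (G)/(C) entries.

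Second, you attach the condition $(\alpha-1)+2s\nu-s>0$ to the ``high--high configuration with a type (D) factor''. That case actually needs no such condition: type (D) factors close purely from $s-(\tfrac12-\tfrac{\alpha}{4})=\sigma>0$, which holds for all $\alpha$. The condition $(\alpha-1)+2s\nu>s$ (and hence the threshold $\alpha_0$) enters for type (C) factors, where neither branch of the $\min$ is uniformly good: the $L^4_tL^\infty_x$ bound $N_j^{-(\alpha-1)+\epsilon_2}L_j^{\frac12-\nu}$ degrades as $L_j$ grows while the bilinear-Strichartz bound $N_j^{\frac12-\frac{\alpha}{4}-(\alpha-1)+\epsilon_2}L_j^{-\nu}$ degrades as $L_j$ shrinks, and optimizing over $L_j$ yields $N_j^{-(\alpha-1)+s-2s\nu+\cdots}$, which is conclusive precisely under that condition. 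Since $\nu$ is a parameter of type (C) terms only, the condition cannot be what rescues a type (D) entry. Repairing the proof therefore requires restructuring the case analysis around the type of each factor (as the paper does in its Cases 1--3) rather than around the frequency configuration alone.
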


\begin{proof}
	We will only prove the estimate for $\mathcal{N}_3(v,v_2,v_3)$, since the other follows from the same argument.
	By duality, it suffices to show that\footnote{We omit the estimate for the diagonal nonlinearities $\mathcal{N}_0(v,v_2,v_3)$ here, since this term is always better and will be treated in Section \ref{sec:high-high-high}. }, for every $w\in X^{0,1-b_1}, \|w\|_{X^{0,1-b_1}}\leq 1$, we have
	$$ \Big|\iint v\ov{v}_2v_3\cdot\ov{w}dtdx \Big|\lesssim (N_2\vee N_3)^{-\delta_0}\|v\|_{X^{0,\frac{3}{8}}}.
	$$
	Splitting the functions as Littlewood-Paley pieces, $v_j=\sum_{M_j}\mathbf{P}_{M_j}v_j, j=2,3$ and $v=\sum_{M_1}\mathbf{P}_{M_1}v$, $w=\sum_{M}\mathbf{P}_{M}w$. 
	
	\vspace{0.3cm}
	\noi
	$\bullet${\bf Case 1: $v_2,v_3$ are both of type (D)}
	
	In this case, we have
	$$ \|\mathbf{P}_{M_j}v_j\|_{X^{0,b}}\lesssim (M_j\vee N_j)^{-s},\quad j=2,3.
	$$
	By inserting $\chi(t)$ into $w$ and using the bilinear Strichartz (Lemma \ref{bilinearStrichartz}), we have
	\begin{align*}
	&\Big|\iint v\ov{v}_2v_3\cdot\ov{w}dtdx\Big|\\ \lesssim&\sum_{\substack{M_1,M_2,M_3,M\\
			M\lesssim M_1\vee M_2\vee M_3 } } (M_{(2)}M_{(3)})^{\frac{1}{2}-\frac{\alpha}{4}}\|\mathbf{P}_{M_1}v\|_{X^{0,\frac{3}{8}}}
	\|\mathbf{P}_{M_2}v_2\|_{X^{0,\frac{3}{8}}}\|\mathbf{P}_{M_3}v_3\|_{X^{0,\frac{3}{8}}}
	\|\mathbf{P}_{M}w\|_{X^{0,\frac{3}{8}}},
	\end{align*}
	where $M_{(1)}\geq M_{(2)}\geq M_{(3)}$ is the non-increasing re-ordering of $M_1,M_2,M_3$. Without loss of generality, we may assume that $M_2\geq M_3$. Note that when $M_1\gg M_2\vee M_3$, we must have $M\sim M_1$ in the sum , otherwise $\int \mathbf{P}_{M_1}v\cdot\mathbf{P}_{M_2}\ov{v}_2\mathbf{P}_{M_3}v\cdot\mathbf{P}_{M}\ov{w}dx=0.$ We estimate this contribution as
	\begin{align*}
	&\sum_{\substack{M,M_1,M_2,M_3\\
			M\sim M_1\gg M_2\geq M_3 } }(M_{(2)}M_{(3)})^{\frac{1}{2}-\frac{\alpha}{4}}\|\mathbf{P}_{M_1}v\|_{X^{0,\frac{3}{8}}}
	\|\mathbf{P}_{M_2}v_2\|_{X^{0,\frac{3}{8}}}\|\mathbf{P}_{M_3}v_3\|_{X^{0,\frac{3}{8}}}
	\|\mathbf{P}_{M}w\|_{X^{0,\frac{3}{8}}}\\
	\lesssim &\sum_{\substack{M,M_1\\
			M\sim M_1 } }\|\mathbf{P}_{M_1}v\|_{X^{0,\frac{3}{8}}}\|\mathbf{P}_Mw\|_{X^{0,\frac{3}{8}}}\sum_{\substack{M_2,M_3\\
			M_3\leq M_2\ll M_1 } }(M_2M_3)^{\frac{1}{2}-\frac{\alpha}{4}}(N_2\vee M_2)^{-s}(N_3\vee M_3)^{-s} \\
	\lesssim &(N_2\vee N_3)^{\frac{1}{2}-\frac{\alpha}{4}-s}\|v\|_{X^{0,\frac{3}{8}}},
	\end{align*}
	where to the final inequality, we use the fact that $s-(\frac{1}{2}-\frac{\alpha}{4})=\sigma>0$, $\mathrm{min}\{b,1-b_1 \}>\frac{3}{8}$ and Cauchy-Schwartz for the sum $\sum_{M\sim M_1}$. For the contribution of $M_1\lesssim M_2$, we estimate it as
	\begin{align*}
	&\sum_{\substack{M,M_1,M_2,M_3\\
			M_3\leq M_2\\
			M,M_1\lesssim M_2 } }(M_{(2)}M_{(3)})^{\frac{1}{2}-\frac{\alpha}{4}}\|\mathbf{P}_{M_1}v\|_{X^{0,\frac{3}{8}}}
	\|\mathbf{P}_{M_2}v_2\|_{X^{0,\frac{3}{8}}}\|\mathbf{P}_{M_3}v_3\|_{X^{0,\frac{3}{8}}}
	\|\mathbf{P}_{M}w\|_{X^{0,\frac{3}{8}}}\\
	\lesssim &\sum_{\substack{M,M_1,M_2,M_3\\
			M_3\leq M_2\\
			M,M_1\lesssim M_2 } } (M_2M_3)^{\frac{1}{2}-\frac{\alpha}{4}}(M_2\vee N_2)^{-s}(N_3\vee M_3)^{-s} \|\mathbf{P}_{M_1}v\|_{X^{0,\frac{3}{8}}}\|\mathbf{P}_Mw\|_{X^{0,\frac{3}{8}}}\\
	\lesssim &\sum_{M_2,M_3:M_3\leq M_2}(M_2M_3)^{\frac{1}{2}-\frac{\alpha}{4}}\log(M_2)^2(M_2\vee N_2)^{-s}(M_3\vee N_3)^{-s}\cdot\|v\|_{X^{0,\frac{3}{8}}}\\ \lesssim_{\epsilon} & (N_2\vee N_3)^{\frac{1}{2}-\frac{\alpha}{4}-s+\epsilon}\|v\|_{X^{0,\frac{3}{8}}}\leq (N_2\vee N_3)^{-\delta_0}\|v\|_{X^{0,\frac{3}{8}}},
	\end{align*}
	by choosing $0<\epsilon<\sigma-\delta_0$ (which is positive if $\sigma\ll 1$)  here.
	Therefore,
	$$ \Big|\iint  v\ov{v}_2v_3\cdot\ov{w}dxdt \Big|\lesssim (N_2\vee N_3)^{-\delta_0}\|v\|_{X^{0,\frac{3}{8}}}.
	$$

	\noi
	$\bullet$ {\bf Case 2: One of $v_2,v_3$ is of type (D) and the other is of type (G) or (C) }
	
	Without loss of generality, we may assume that $v_2$ is of type (D) and $v_3$ is of type (G) or (C), thus
	$$ \|\mathbf{P}_{M_2}v_2\|_{X^{0,b}}\lesssim (M_2\vee N_2)^{-s}.
	$$
	Using the bilinear Strichartz inequality, we have
	\begin{align*}
	&\Big|\iint \mathbf{P}_{M_1}v\mathbf{P}_{M_2}\ov{v}_2 v_3\cdot\mathbf{P}_M\ov{w}dxdt \Big| \leq \|\mathbf{P}_{M_1}v\mathbf{P}_{M_2}v_2\|_{L_{t,x}^2}\|v_3\|_{L_t^4L_x^{\infty}}\|\mathbf{P}_Mw\|_{L_t^4L_x^2}\\
	\lesssim &(M_1\wedge M_2)^{\frac{1}{2}-\frac{\alpha}{4}}\|\mathbf{P}_{M_1}v\|_{X^{0,\frac{3}{8}}}\|\mathbf{P}_{M_2}v_2\|_{X^{0,\frac{3}{8}}}
	\|\mathbf{P}_{M}w\|_{X^{0,\frac{1}{4}}}\|v_3\|_{L_t^4L_x^{\infty}}.
	\end{align*}
	We then take the dyadic summation in $M_1,M_2$ and $M$. Since the Fourier support of $v_3$ is constraint at $|k_3|\lesssim N_3$, the contribution for $M\vee M_1\vee M_2\lesssim N_3$ is bounded by
	\begin{align*}
	\sum_{\substack{M,M_1,M_2\\
			M\vee M_1\vee M_2\lesssim N_3 }} (M_1\wedge M_2)^{\frac{1}{2}-\frac{\alpha}{4}} \|\mathbf{P}_{M_1}v\|_{X^{0,\frac{3}{8}}}\|\mathbf{P}_{M}w\|_{X^{0,\frac{1}{4}}}\|\mathbf{P}_{M_2}v_2\|_{X^{0,\frac{3}{8}}}
	\lesssim &N_2^{\frac{1}{2}-\frac{\alpha}{4}-s}\log(N_3)^2\|v\|_{X^{0,\frac{3}{8}}}.
	\end{align*}
	When $M\vee M_1\vee M_2\gg N_3$, then one of the situations must happen: $M\sim M_1\sim M_2$, or $M\sim M_1\gg M_2$, or $M\sim M_2\gg M_1$, or $M_1\sim M_2\gg M$. Therefore, we have
	\begin{align*}
	\sum_{\substack{M,M_1,M_2\\
			M\vee M_1\vee M_2\gg N_3 } }(M_1\wedge M_2)^{\frac{1}{2}-\frac{\alpha}{4}}
	\|\mathbf{P}_{M_1}v\|_{X^{0,\frac{1}{4}}}\|\mathbf{P}_{M}w\|_{X^{0,\frac{1}{4}}}\|\mathbf{P}_{M_2}v_2\|_{X^{0,\frac{3}{8}}}
	\lesssim_{\epsilon} &N_2^{\frac{1}{2}-\frac{\alpha}{4}-s+\epsilon}\|v\|_{X^{0,\frac{3}{8}}}.
	\end{align*}
	Choosing $0<\epsilon<\delta_0-\delta$, we obtain that
	$$ \Big|\iint v\ov{v}_2 v_3\cdot\ov{w}dxdt \Big|\lesssim N_2^{-\delta_0}\|v_3\|_{L_t^4L_x^{\infty}}\|v\|_{X^{0,\frac{3}{8}}}.
	$$
	
	Alternatively, we use the bilinear Strichartz inequality as for the Case 1 and obtain that
	\begin{align*}
	\sum_{M_1,M_2,M, M_3\lesssim N_3}\Big|\iint \mathbf{P}_{M_1}v\mathbf{P}_{M_2}\ov{v}_2\mathbf{P}_{M_3}v\cdot \mathbf{P}_{M}\ov{w}dxdt\Big|
	\lesssim & N_2^{-\delta_0}N_3^{\frac{1}{2}-\frac{\alpha}{4}}\|v_3\|_{X^{0,\frac{3}{8}}}\|v\|_{X^{0,\frac{3}{8}}}.
	\end{align*}
	Therefore, we have
	$$ \Big|\iint v\ov{v}_2v_3\cdot\ov{w}dxdt\Big|
	\lesssim N_2^{-\delta_0}\|v\|_{X^{0,\frac{3}{8}}}\min\big\{\|v_3\|_{L_t^4L_x^{\infty}}, N_3^{\frac{1}{2}-\frac{\alpha}{4}}\|v_3\|_{X^{0,\frac{3}{8}}}\big\}.
	$$
	When $v_3$ is of type (G), then $\|v_3\|_{L_t^4L_x^{\infty}}<N_3^{-\frac{\alpha-1}{2}+\epsilon_2}$ which is conclusive. When $v_3$ is of type (C), we have
	\begin{align*}
	\min\big\{\|v_3\|_{L_t^4L_x^{\infty}}, N_3^{\frac{1}{2}-\frac{\alpha}{4}}\|v_3\|_{X^{0,\frac{3}{8}}}\big\}\lesssim &\min\{N_3^{-(\alpha-1)+\epsilon_2}L_3^{\frac{1}{2}-\nu}, N_3^{\frac{1}{2}-\frac{\alpha}{4}-(\alpha-1)+\epsilon_2}L_3^{-\nu}  \}\\ \lesssim &N_3^{-(\alpha-1)+s-2s\nu+\epsilon_2-(1+2\nu)\sigma} 
	\end{align*}
	which is conclusive since $\epsilon_2<(1+2\nu)\sigma$ and
	\begin{align}\label{constraint-operatorbound}
	(\alpha-1)+2s\nu>s,
	\end{align}
	thanks to the choice of numerical parameters.

	\noi
	$\bullet${\bf Case 3: $v_2,v_3$ are both of type (G) or (C)}
	
In this case, $\mathbf{P}_{M_j}v_j=0$ when $M_j\gg N_j$. Therefore, by splitting as
$$ \iint v\ov{v}_2v_3\ov{w}dtdx=\sum_{M_1,M} \iint \mathbf{P}_{M_1}v\ov{v}_2v_3\mathbf{P}_M\ov{w}dtdx,
$$
we may assume that either $M\sim M_1\gg N_2\vee N_3$ or $M,M_1\lesssim N_2\vee N_3$. By H\"older,
	\begin{align*}
&	\Big|\iint \mathbf{P}_{M_1}v\ov{v}_2v_3\cdot\mathbf{P}_M\ov{w}dxdt \Big|\leq \|\mathbf{P}_{M_1}v\|_{L_t^4L_{x}^2}\|\mathbf{P}_Mw\|_{L_t^4L_{x}^2}\|v_2\|_{L_t^4L_x^{\infty}}\|v_3\|_{L_t^4L_x^{\infty}}\\
	\lesssim &\|\mathbf{P}_{M_1}v\|_{X^{0,\frac{1}{4}}}\|\mathbf{P}_Mw\|_{X^{0,\frac{1}{4}}}\|v_2\|_{L_t^4L_x^{\infty}}\|v_3\|_{L_t^4L_x^{\infty}}.
	\end{align*} 
Therefore, we have
	\begin{align*}
	\sum_{\substack{M_1,M\\
			M_1\sim M\gg N_2\vee N_3  }} \|\mathbf{P}_{M_1}v\|_{X^{0,\frac{1}{4}}}\|\mathbf{P}_Mw\|_{X^{0,\frac{1}{8}}} \|v_2\|_{L_t^4L_x^{\infty}}
	\|v_3\|_{L_t^4L_x^{\infty}} 
	\lesssim &\|v\|_{X^{0,\frac{1}{4}}}\|v_2\|_{L_t^4L_x^{\infty}}
	\|v_3\|_{L_t^4L_x^{\infty}},
	\end{align*}
and
	\begin{align*}
	\sum_{M_1,M\lesssim N_*  } \|\mathbf{P}_{M_1}v\|_{X^{0,\frac{1}{4}}}\|\mathbf{P}_Mv\|_{X^{0,\frac{1}{8}}}\|v_2\|_{L_t^4L_x^{\infty}}
	\|v_3\|_{L_t^4L_x^{\infty}} \lesssim &\log(N_2\vee N_3)^2\|v\|_{X^{0,\frac{1}{4}}}\|v_2\|_{L_t^4L_x^{\infty}}
	\|v_3\|_{L_t^4L_x^{\infty}}.
	\end{align*}
	Alternatively, using the bilinear Strichartz, we have
	\begin{align*}
	&\sum_{M,M_1,M_2,M}\Big|\iint \mathbf{P}_{M_1}v\mathbf{P}_{M_2}\ov{v}_2 \mathbf{P}_{M_2}v_3\mathbf{P}_M\ov{w}dtdx \Big|\\
	\lesssim & \sum_{\substack{M,M_1,M_2,M_3\\
			M_2\leq N_2,M_3\leq N_3\\
			M\sim M_1\gg N_2\vee N_3 }}
			\|\mathbf{P}_{M_1}v\|_{X^{0,\frac{3}{8}}}
		\|\mathbf{P}_{M}w\|_{X^{0,\frac{3}{8}}}	
		 (M_2M_3)^{\frac{1}{2}-\frac{\alpha}{4}}\|\mathbf{P}_{M_2}v_2\|_{X^{0,\frac{3}{8} }}
	\|\mathbf{P}_{M_3}v_3\|_{X^{0,\frac{3}{8}}}\\
	+&\sum_{\substack{M,M_1,M_2,M_3\\
			M_2\leq N_2,M_3\leq N_3\\
			M,M_1\lesssim N_2\vee N_3 } }(M_2M_3)^{\frac{1}{2}-\frac{\alpha}{4}}
	\|\mathbf{P}_{M_1}v\|_{X^{0,\frac{3}{8}}}
	\|\mathbf{P}_{M_2}v_2\|_{X^{0,\frac{3}{8}}}
	\|\mathbf{P}_{M_1}v_3\|_{X^{0,\frac{3}{8}}}
	\|\mathbf{P}_{M}w\|_{X^{0,\frac{3}{8}}}\\
	\lesssim &(N_2N_3)^{\frac{1}{2}-\frac{\alpha}{4}}(\log( N_2\vee N_3))^4 \|v\|_{X^{0,\frac{3}{8}}}\|v_2\|_{X^{0,\frac{3}{8}}}\|v_3\|_{X^{0,\frac{3}{8}}}.
	\end{align*}
	Thus we have
	\begin{align}\label{..}
	&	\Big|\iint v\ov{v}_2v_3\cdot\ov{w}dxdt\Big|\notag \\ \lesssim_{\epsilon} &\|v\|_{X^{0,\frac{3}{8}}}(N_2\vee N_3)^{\epsilon}\min\big\{\|v_2\|_{L_t^4L_x^{\infty}}\|v_3\|_{L_t^4L_x^{\infty}},
	(N_2N_3)^{\frac{1}{2}-\frac{\alpha}{4}}\|v_2\|_{X^{0,\frac{3}{8}}}\|v_3\|_{X^{0,\frac{3}{8}}}  \big\}.
	\end{align}
	When $v_2,v_3$ are both of type (G), the bound $\|v_2\|_{L_t^4L_x^{\infty}}\|v_3\|_{L_t^4L_x^{\infty}}\lesssim (N_2N_3)^{-\frac{\alpha-1}{2}+\epsilon_2}$ is conclusive. When $v_2,v_3$ are both of type (C), we have the bound (choosing $0<\epsilon<\sigma-\delta_0$)
	\begin{align*}
	&\min\big\{\|v_2\|_{L_t^4L_x^{\infty}}\|v_3\|_{L_t^4L_x^{\infty}},
	(N_2N_3)^{\frac{1}{2}-\frac{\alpha}{4}}\|v_2\|_{X^{0,\frac{3}{8}}}\|v_3\|_{X^{0,\frac{3}{8}}}  \big\}\\
	\lesssim &\min\{(N_2N_3)^{-(\alpha-1)+\epsilon_2}(L_2L_3)^{\frac{1}{2}-\nu},(N_2N_3)^{\frac{1}{2}-\frac{\alpha}{4}-(\alpha-1)}(L_2L_3)^{-\nu} \}\\
	\lesssim &(N_2N_3)^{-(\alpha-1)-2s\nu+s+\epsilon_2-(1+2\nu)\sigma},
	\end{align*}
	which is conclusive since \eqref{constraint-operatorbound} holds. Finally we assume that $v_2$ is of type (G) and $v_3$ is of type (C). Using the bilinear Strichartz inequality we have
	\begin{align*}
&	\Big|\iint \mathbf{P}_{M_1}v \ov{v}_2\mathbf{P}_{M_3}v_3\cdot \mathbf{P}_M\ov{w}dxdt \Big|
	\leq \|\mathbf{P}_{M_1}v\mathbf{P}_{M_3}v_3\|_{L_{t,x}^2}\|v_2\|_{L_t^4L_x^{\infty}}\|\mathbf{P}_Mw\|_{L_t^4L_x^2}\\
	\lesssim & (M_1\wedge M_3)^{\frac{1}{2}-\frac{\alpha}{4}}\|\mathbf{P}_{M_1}v\|_{X^{0,\frac{3}{8}}}\|\mathbf{P}_{M_3}v_3\|_{X^{0,\frac{3}{8}}}\|\mathbf{P}_Mw\|_{X^{0,\frac{1}{4}}}\|v_2\|_{L_t^4L_x^{\infty}}.
	\end{align*}
	For the non-zero contributions, we must have $M_3\lesssim N_2\vee N_3$, thus when $M_1\gg N_2\vee N_3$, we must have $M_1\sim M\gg N_2\vee N_3\geq M_3$, hence
	\begin{align*}
	\sum_{\substack{M_1,M_3,M\\
			M\sim M_1\gg N_2\vee N_3\\
			M_3\leq  N_3 } }(M_1\wedge M_3)^{\frac{1}{2}-\frac{\alpha}{4}}\|\mathbf{P}_{M_1}v\|_{X^{0,\frac{3}{8}}}\|\mathbf{P}_{M_3}v_3\|_{X^{0,\frac{3}{8}}}\|\mathbf{P}_Mw\|_{X^{0,\frac{1}{4}}}
	\lesssim & \|v\|_{X^{0,\frac{3}{8}}}N_3^{\frac{1}{2}-\frac{\alpha}{4}}\|v_3\|_{X^{0,\frac{3}{8}}}.
	\end{align*}
The other contribution can be bounded by
	\begin{align*}
& \sum_{\substack{M_1,M_3,M\\
			M,M_1\lesssim N_2\vee N_3,\\M_3\leq N_3 } } (M_1\wedge M_3)^{\frac{1}{2}-\frac{\alpha}{4}}\|\mathbf{P}_{M_1}v\|_{X^{0,\frac{3}{8}}}\|\mathbf{P}_{M_3}v_3\|_{X^{0,\frac{3}{8}}}\|\mathbf{P}_Mw\|_{X^{0,\frac{1}{4}}}
\\	\lesssim & N_3^{\frac{1}{2}-\frac{\alpha}{4}}(\log (N_2\vee N_3))^2\|v\|_{X^{0,\frac{3}{8}}}\|v_3\|_{X^{0,\frac{3}{8}}}.
	\end{align*}
Combining with \eqref{..}, we obtain that
	\begin{align*}
	\Big|\iint v\ov{v}_2v_3\cdot\ov{w}dxdt\Big| \lesssim &\|v\|_{X^{0,\frac{3}{8}}}(\log N_*)^4\|v_2\|_{L_t^4L_x^{\infty}}\min\big\{\|v_3\|_{L_t^4L_x^{\infty}},
	N_3^{\frac{1}{2}-\frac{\alpha}{4}}\|v_3\|_{X^{0,\frac{3}{8}}}  \big\}
	\\
	\lesssim_{\epsilon}
	&N_2^{-\frac{\alpha-1}{2}+\epsilon_2}( N_2\vee N_3)^{\epsilon}\|v\|_{X^{0,\frac{3}{8}}}\min\{N_3^{-(\alpha-1)+\epsilon_2}L_3^{\frac{1}{2}-\nu},N_3^{\frac{1}{2}-\frac{\alpha}{4}-(\alpha-1)}L_3^{-\nu} \}\\
	\lesssim_{\epsilon} &N_2^{-\frac{\alpha-1}{2}+\epsilon_2}N_3^{-(\alpha-1)-2s\nu+s+\epsilon_2-(1+2\nu)\sigma}(N_2\vee N_3)^{\epsilon} \|v\|_{X^{0,\frac{3}{8}}},
	\end{align*}
	which is conclusive since \eqref{constraint-operatorbound} holds.
	The proof of Lemma \ref{lem:mappingXsb} is now complete.
\end{proof}

The proof of (5) and (7) of Proposition \ref{Multilinearkey} is an immediate consequence of the above lemma. Now we prove (3) of Proposition \ref{Multilinearkey}. Consider $\Pi_{N_0}^{\perp}\mathcal{I}\mathcal{N}_3(v_1,v_2,v_3)$ for $v_1,v_2,v_3$ with characterized parameters $(N_1,L_1),(N_2,L_2),(N_3,L_3)$ with $N_0\gg N_{(1)}.$ If the projection $\Pi_{N_0}^{\perp}\mathcal{I}\mathcal{N}_3(v_1,v_2,v_3)$ does not vanish, then at least one of $v_1,v_2,v_3$ is of type (D), say $v_1$. Then we decompose $v_1$ as $\sum_{M}\mathbf{P}_Mv_1$, then for $M\geq N_0(\gg N_{1})$, we have $\|\mathbf{P}_{M}v_1\|_{X^{0,b}}\lesssim M^{-s}$. Applying Lemma \ref{lem:mappingXsb} to $\mathbf{P}_{M}v_1$ and using the triangle inequality, we obtain (3) of Proposition \ref{Multilinearkey}.

\section{Low modulation reduction}

\subsection{Modulation reduction for the estimates of operator kernels}
For given $v_2,v_3$ of type (G),(C),(D), recall that the kernel $\Theta_{kk'}(t,t')$ of the operator $\mathcal{Q}_{3,N}:$
$$ w\mapsto \mathcal{Q}_{3,N}(w):=\mathcal{I}\Pi_{N}\mathcal{N}_3(w,v_2,v_3)
$$ 
is given by \eqref{kernel1} with $\Xi_{kk'}(\mu,\lambda')$ given by \eqref{interkernel1}. In order to prove Proposition \ref{KernelSb} and Proposition \ref{kernelZb}, in this section, we will reduce the estimate of the kernel bounds $\|\Theta\|_{S^{b_1,b,q}}$ and $\|\Theta\|_{Z^{b_1,b,q}}$ to the low-modulation portion which consists of multi-linear expression of discrete sums. Recall that the numerical parameters satisfy $\frac{1}{2}<b_0<b<b_1, \frac{q}{q-1}-2b_1\ll 1$. We will use the notation $L_{\mu_0*}^{r}$ to stand for $L_{|\mu_0|\lesssim N^{\alpha}}^{r}$.
\begin{proposition}\label{modulationreduction1}
Assume that $\widehat{v_2},\widehat{v_3}$ are supported on $|k_j|\lesssim N$. Define $\widetilde{w}_j(\lambda_j,k_j)=\widetilde{v}_j(\lambda_j,k_j)\langle\lambda_j\rangle^{\frac{2b_0}{r_j'}}$, for $j=2,3$ with $r_j=2$ or $r_j=q$.
 Then 
\begin{align}\label{eq:modulationerduction1}
\big\|\Theta_{kk'}(\lambda,\lambda') \big\|_{Z^{b_1,b}} \lesssim &\sup\big\{ N^{50(2b_1-1)} \Upsilon_{N}[y^0;w_2,w_3]:\|y_{kk'}^0(\lambda,\lambda')\|_{L_{\lambda,\lambda'}^2l_{k,k'}^2}\leq 1\big\}, \notag \\
 +&N^{-10}\|\widetilde{w}_2\|_{L_{\lambda}^{r_2}l_k^2}\|\widetilde{w}_3\|_{L_{\lambda}^{r_3}l_k^2} 
\end{align}
and
\begin{align}\label{eq:modulationerduction2}
\big\|\Theta_{kk'}(\lambda,\lambda') \big\|_{S^{b_1,b,q}}  \lesssim  &\sup\big\{ N^{50(2b_1-1)} \Lambda_{N}[y^0;w_2,w_3]: \|y_{kk'}^0(\lambda,\lambda')\|_{l_k^1L_{\lambda}^{q'}L_{\lambda'}^2l_{k'}^2 }\leq 1 \big\} \notag \\ +&N^{-10}\|\widetilde{w}_2\|_{L_{\lambda}^{r_2}l_k^2}\|\widetilde{w}_3\|_{L_{\lambda}^{r_3}l_k^2},
\end{align}
where
\begin{align*}
\Upsilon_{N}[y^0,w_2,w_3]:=\Big\|\sum_{\substack{k,k',k_2,k_3\\(k',k_2,k_3)\in\Gamma(k)\\
	|k|\leq N }}\!\!\!\!\!\!\!\!\! \widehat{\chi}(\mu_0-\Phi_{k',k_2,k_3})y_{kk'}^0(\lambda,\lambda') \ov{\widetilde{w}_2}(\lambda_2,k_2)\widetilde{w}_3(\lambda_3,k_3)  \Big\|_{L_{\lambda,\lambda'}^2L_{\lambda_2}^{r_2}L_{\lambda_3}^{r_3}L_{\mu_0*}^{\frac{4}{2b-1} } }
\end{align*}
and
\begin{align*}
\Lambda_{N}[y^0;w_2,w_3]:=\Big\|\!\!\!\!\!\sum_{\substack{k,k',k_2,k_3 \\(k',k_2,k_3)\in\Gamma(k)\\
		|k|\leq N }}\!\!\!\!\!\!\! \widehat{\chi}(\mu_0-\Phi_{k',k_2,k_3})y_{kk'}^0(\lambda,\lambda')
\cdot\ov{\widetilde{w}_2}(\lambda_2,k_2)\widetilde{w}_3(\lambda_3,k_3)  \Big\|_{L_{\lambda}^{q'}L_{\lambda'}^2L_{\lambda_2}^{r_2}L_{\lambda_3}^{r_3}L_{\mu_0*}^{\frac{2q}{2b-1}} }.
\end{align*}
\end{proposition}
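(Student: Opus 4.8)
The plan is to start from the explicit formula \eqref{kernel1}--\eqref{interkernel1} for $\widetilde{\Theta}_{kk'}(\lambda,\lambda')$ and to peel off the Duhamel kernel $K(\lambda,\mu)$ together with the cutoff $\widehat\chi$ using the pointwise bound $|K(\lambda,\mu)|\lesssim_B \langle\mu\rangle^{-1}(\langle\lambda\rangle^{-B}+\langle\lambda-\mu\rangle^{-B})$ from Lemma \ref{DuhamelKernel} and the rapid decay of $\widehat\chi$. First I would fix a large $B$ and split the $\mu$-integration in \eqref{kernel1} into the ``main'' region $|\mu|\lesssim N^{\alpha}$ and its complement. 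On the complement, since the argument of $\widehat\chi$ in \eqref{interkernel1} is $\mu-\lambda'+\lambda_2-\lambda_3-\Phi_{k',k_2,k_3}$ and $|\Phi_{k',k_2,k_3}|\lesssim N^{\alpha}$ on the relevant frequency support (here one uses that $|k|,|k'|,|k_2|,|k_3|\lesssim N$), the phase must be large, so each factor of $\langle\,\cdot\,\rangle^{-B}$ from $\widehat\chi$ and from $K$ supplies as much polynomial decay in $\mu,\lambda,\lambda'$ as we wish; integrating out trivially and using Cauchy--Schwarz in $k_2,k_3$ one absorbs everything into the error term $N^{-10}\|\widetilde w_2\|_{L^{r_2}_\lambda l^2_k}\|\widetilde w_3\|_{L^{r_3}_\lambda l^2_k}$ (this is precisely the estimate \eqref{error} referenced in the text, adapted to the kernel norms).

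For the main region I would introduce the new variable $\mu_0:=\mu-\lambda'+\lambda_2-\lambda_3$, so that the $\widehat\chi$ factor becomes $\widehat\chi(\mu_0-\Phi_{k',k_2,k_3})$ and is decoupled from $\lambda,\lambda'$; the range $|\mu|\lesssim N^{\alpha}$ translates (modulo another rapidly-decaying error) into $|\mu_0|\lesssim N^{\alpha}$, i.e.\ the notation $L^r_{\mu_0*}$. The point is then to estimate $\langle\lambda\rangle^{b_1}\langle\lambda'\rangle^{-b}$ times the remaining expression. Writing $\langle\lambda\rangle^{b_1}=\langle\lambda\rangle^{b_1-\frac12}\langle\lambda\rangle^{\frac12}$ and using that on the support we have $\langle\lambda\rangle\lesssim N^{\alpha}$ (again from the $\widehat\chi$ decay linking $\lambda$ to $\mu$ to $\mu_0$ to the bounded quantity $\Phi$), the factor $\langle\lambda\rangle^{b_1-\frac12}$ is harmless up to the stated $N^{50(2b_1-1)}$ loss (the exponent $50$ is just a crude power of $\alpha<2$). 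The residual $\mu$-integral $\int K(\lambda,\mu)\,(\cdots)\,d\mu$ is handled by Lemma \ref{convolution}: against $\langle\mu\rangle^{-1}$ times $\widehat\chi$-type decay it reproduces, up to $\langle\lambda\rangle^{-1+}$, the same expression evaluated with $\mu$ replaced by its "resonant" value, which is exactly what appears inside $\Upsilon_N$ and $\Lambda_N$ after one recognizes $L^{4/(2b-1)}_{\mu_0*}$ (resp.\ $L^{2q/(2b-1)}_{\mu_0*}$) as the Hölder-dual bookkeeping exponent needed so that integrating the $\mu_0$-variable against $\langle\mu_0-\Phi\rangle^{-b}$-type weights converges — here the hypothesis $\frac{1}{q'}-b_1\ll 1$, equivalently $q'$ close to $2$ and $b,b_1$ close to $\tfrac12$, is what makes these exponents admissible.

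Finally, for the two different target norms: for $\|\Theta\|_{Z^{b_1,b}}=\|\langle\lambda\rangle^{b_1}\langle\lambda'\rangle^{-b}\widetilde\Theta_{kk'}\|_{L^2_{\lambda,\lambda'}l^2_{k,k'}}$ I would dualize against $y^0_{kk'}(\lambda,\lambda')$ with $\|y^0\|_{L^2_{\lambda,\lambda'}l^2_{k,k'}}\le 1$, move the weights $\langle\lambda_2\rangle^{2b_0/r_2'}$, $\langle\lambda_3\rangle^{2b_0/r_3'}$ onto $v_2,v_3$ to form $\widetilde w_2,\widetilde w_3$, and apply Hölder in $(\lambda_2,\lambda_3,\mu_0)$ to land exactly on $\Upsilon_N[y^0;w_2,w_3]$ — the exponent $\frac{4}{2b-1}$ in $\mu_0$ pairing with the two $L^{r_j}_{\lambda_j}$ slots. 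For $\|\Theta\|_{S^{b_1,b,q}}$, which is the $l^\infty_k L^q_\lambda L^2_{\lambda'}l^2_{k'}$-type norm, the same scheme runs with the dual exponent structure $l^1_k L^{q'}_\lambda L^2_{\lambda'}l^2_{k'}$ for $y^0$, producing $\Lambda_N$ with $\mu_0$-exponent $\frac{2q}{2b-1}$. The main obstacle I anticipate is purely organizational: carefully tracking that, after extracting $K$ and $\widehat\chi$, the surviving $\lambda,\lambda',\lambda_2,\lambda_3,\mu_0$ integrals actually decouple along the correct Hölder exponents and that all the "off-diagonal" pieces (large $|\mu|$, large $|\mu-\lambda|$, large $|\mu_0|$) genuinely fall into the $N^{-10}$ error rather than contributing a borderline divergence — in particular checking that the weights $\langle\lambda\rangle^{b_1}$ never cost more than the advertised power of $N$, which forces one to use the frequency-localization $|k|,|k'|\lesssim N$ together with $\Phi=O(N^\alpha)$ at each step.
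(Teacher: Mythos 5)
Your high-level scheme (dualize, peel off $K(\lambda,\mu)$ using Lemma \ref{DuhamelKernel}, isolate a main term plus a rapidly decaying error, then read off $\Upsilon_N$/$\Lambda_N$ by H\"older) matches the paper's, but the specific modulation decomposition you propose does not go through, and this is the actual content of the proof.

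The split into ``$|\mu|\lesssim N^\alpha$'' and its complement is the wrong decomposition. You claim that on $|\mu|\gg N^\alpha$ the argument $\mu-\lambda'+\lambda_2-\lambda_3-\Phi_{k',k_2,k_3}$ of $\widehat\chi$ is necessarily large; this fails whenever $\lambda'-\lambda_2+\lambda_3$ is itself comparable to $\mu$, since then the argument is $\approx -\Phi=O(N^\alpha)$ and $\widehat\chi$ gives no decay. The correct variable to split on is precisely $\mu_0:=\mu-(\lambda'-\lambda_2+\lambda_3)$: the paper isolates $|\mu_0|\gg N^\alpha$ (rapidly decaying error) from $|\mu_0|\lesssim N^\alpha$ (main term), and this is what ultimately justifies the notation $L^r_{\mu_0*}$. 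Your remark that ``$|\mu|\lesssim N^\alpha$ translates into $|\mu_0|\lesssim N^\alpha$'' is not correct without controlling $\lambda'-\lambda_2+\lambda_3$ separately, which is a distinct step.

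Consequently your explanation of the loss $N^{50(2b_1-1)}$ is also off. You assert that on the support $\langle\lambda\rangle\lesssim N^\alpha$ and that $50$ is ``a crude power of $\alpha<2$''; neither is true. There is no a priori bound $\langle\lambda\rangle\lesssim N^\alpha$: $\lambda\approx\mu\approx\lambda'-\lambda_2+\lambda_3$ can be arbitrarily large. In the paper the factor arises after discarding the regime $|\lambda'-\lambda_2+\lambda_3|>N^{100}$ as an error (using the decay of $\langle\lambda'\rangle^{-b}\langle\lambda_2\rangle^{-2b_0/r_2'}\langle\lambda_3\rangle^{-2b_0/r_3'}$), which then forces $|\lambda|\lesssim N^{100}$ in the main term; integrating $\langle\lambda\rangle^{b_1-1}$ over that range gives $N^{100(b_1-\frac12)}=N^{50(2b_1-1)}$. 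The exponent $50$ thus comes from the arbitrary cutoff $N^{100}$ in the $\lambda'-\lambda_2+\lambda_3$ variable, not from $\alpha$. Finally, you do not separately treat the two pieces $\langle\mu\rangle^{-1}\langle\lambda\rangle^{-A}$ and $\langle\mu\rangle^{-1}\langle\lambda-\mu\rangle^{-A}$ from Lemma \ref{DuhamelKernel}; the paper handles the second one with a further case analysis in $|\lambda-\mu|$ versus $N^\alpha$ (and $|\lambda|$ vs $|\mu|$) that cannot be skipped, since otherwise the $\mu$-integral does not reduce cleanly to an $L^{\frac{4}{2b-1}}_{\mu_0*}$ norm. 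Fixing these points requires redoing the decomposition along the lines just described.
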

\begin{proof}
We only prove \eqref{eq:modulationerduction1}, since \eqref{eq:modulationerduction2} follows from the similar argument (with possible changes of numerical parameters). By duality, it suffices to estimate
\begin{equation}\label{eq:pf4.1}
\begin{split}
\mathfrak{I}:=\int &\langle\lambda\rangle^{b_1}
\langle\lambda'\rangle^{-b}
\langle\lambda_2\rangle^{-\frac{2b_0}{r_2'}}
\langle\lambda_3\rangle^{-\frac{2b_0}{r_3'}}d\lambda d\lambda d\lambda_2 d\lambda_3\\ \times &\int K(\lambda,\mu)d\mu\!\!\!\!\!\! \cdot
\sum_{\substack{|k|\leq N, |k_2|,|k_3|\lesssim N \\
(k',k_2,k_3)\in\Gamma(k) }}\!\!\!\!\!\!
y_{kk'}^0(\lambda,\lambda')\widehat{\chi}(\mu-\lambda'+\lambda_2-\lambda_3-\Phi_{k',k_2,k_3})\ov{\widetilde{w}_2}(\lambda_2,k_2)\widetilde{w}_3(\lambda_3,k_3).
\end{split}
\end{equation}
From Lemma \ref{DuhamelKernel} and the triangle inequality, we have
\begin{equation}\label{eq:pf4.2}
\mathfrak{I}\lesssim_A \int \frac{1}{\langle\mu\rangle}\Big(\frac{1}{\langle\lambda-\mu\rangle^A}+\frac{1}{\langle\lambda\rangle^A} \Big)\mathcal{T}_{\mu,\lambda,\lambda',\lambda_2,\lambda_3}(w_2,w_3; y)\langle\lambda\rangle^{b_1}
\langle\lambda'\rangle^{-b}
\langle\lambda_2\rangle^{-\frac{2b_0}{r_2'}}
\langle\lambda_3\rangle^{-\frac{2b_0}{r_3'}}d\mu d\lambda d\lambda' d\lambda_2 d\lambda_3,
\end{equation}
where
\begin{equation}\label{eq:pf4.3}
\mathcal{T}_{\mu,\lambda',\lambda,\lambda_2,\lambda_3}(w_2,w_3;y):=\Big|\!\!\!\sum_{\substack{|k|\leq N, |k_2|,|k_3|\lesssim N \\
		(k',k_2,k_3)\in\Gamma(k) }}\!\!\!\!\!\!y_{kk'}^0(\lambda,\lambda')\widehat{\chi}(\mu-\lambda'+\lambda_2-\lambda_3-\Phi_{k',k_2,k_3})\ov{\widetilde{w}_2}(\lambda_2,k_2)\widetilde{w}_3(\lambda_3,k_3)\Big|.
\end{equation}
Here and in the sequel, $\mathcal{T}$ stands for $\mathcal{T}_{\mu,\lambda,\lambda',\lambda_2,\lambda_3}(v_2,v_3;y)$ when there is no risk of confusion.

\vspace{0.3cm}
\noi
$\bullet$ {\bf Contribution from the integration of $\mathcal{T}$	against $\frac{1}{\langle\mu\rangle\langle\lambda\rangle^A}$:}
\\
We split (for fixed $\lambda, \lambda',\lambda_2,\lambda_3$) the integration as $\int_{|\mu- (\lambda'-\lambda_2+\lambda_3)|\gg N^{\alpha}}$ and $\int_{|\mu-(\lambda'-\lambda_2+\lambda_3)|\lesssim N^{\alpha} }$. Note that for non-zero contributions in the summation of $k,k',k_2,k_3$, $|\Phi_{k',k_2,k_3}|\lesssim N^{\alpha}$, we have from the rapid decay of $\widehat{\chi}(\cdot)$ that
$$ |\widehat{\chi}(\mu-\lambda'+\lambda_2-\lambda_3-\Phi_{k',k_2,k_3})|\lesssim_A \langle\mu-(\lambda'-\lambda_2+\lambda_3)\rangle^{-A}
$$
if $|\mu-(\lambda'-\lambda_2+\lambda_3)|\gg N^{\alpha}$,
hence
\begin{equation}\label{eq:pf4.4}
\begin{split}
&\int_{|\mu- (\lambda'-\lambda_2+\lambda_3)|\gg N^{\alpha}} \frac{1}{\langle\mu\rangle\langle\lambda\rangle^A}\cdot\mathcal{T} d\mu\\ \lesssim & \int_{|\mu-(\lambda'-\lambda_2+\lambda_3)|\gg N^{\alpha}}\frac{N\cdot N^{\frac{1}{2}}N^{\frac{1}{2}}d\mu }{\langle\lambda\rangle^A\langle\mu\rangle\langle\mu-(\lambda'-\lambda_2+\lambda_3) \rangle^{-A} }\|\widetilde{w}_2(\lambda_2,\cdot)\|_{l_{k_2}^2}\|\widetilde{w}_3(\lambda_3,\cdot)\|_{l_{k_3}^2}\|y_{kk'}^0(\lambda,\lambda')\|_{l_{k,k'}^2}\\
\lesssim &\langle\lambda\rangle^{-A}N^{2-A\alpha}\|\widetilde{w}_2(\lambda_2,\cdot)\|_{l_{k_2}^2}\|\widetilde{w}_3(\lambda_3,\cdot)\|_{l_{k_3}^2}\|y_{kk'}^0(\lambda,\lambda')\|_{l_{k,k'}^2}.
\end{split}
\end{equation}
Choosing $A=200$, say, and multiplying by $\langle\lambda\rangle^{b_1}
\langle\lambda'\rangle^{-b}
\langle\lambda_2\rangle^{-\frac{2b_0}{r_2'}}
\langle\lambda_3\rangle^{-\frac{2b_0}{r_3'}}$ and integrating in $\lambda,\lambda',\lambda_2,\lambda_3$, this contribution for $\mathfrak{I}$ is bounded by $$N^{-100}\|\widetilde{w}_2\|_{L_{\lambda}^{r_2}l_k^2}\|\widetilde{w}_3\|_{L_{\lambda}^{r_3}l_k^2}\|y_{kk'}^0(\lambda,\lambda')\|_{L_{\lambda,\lambda'}^2l_{k,k'}^2}.$$ The other term can be estimated as
\begin{equation*}
\begin{split}
 \int_{|\mu-( \lambda'-\lambda_2+\lambda_3)|\lesssim N^{\alpha}}\frac{1}{\langle\mu\rangle\langle\lambda\rangle^A}\cdot\mathcal{T}d\mu\lesssim_{b,b_1} &\frac{1 }{\langle\lambda\rangle^A}\big\|\mathcal{T}_{\mu,\lambda',\lambda,\lambda_2,\lambda_3}\big\|_{L_{\mu:|\mu-(\lambda'-\lambda_2+\lambda_3)|\lesssim N^{\alpha}}^{\frac{4}{2b-1}} }.
\end{split}
\end{equation*}
Again, choosing $A=200$ and multiplying by $\langle\lambda\rangle^{b_1}
\langle\lambda'\rangle^{-b}
\langle\lambda_2\rangle^{-\frac{2b_0}{r_2'}}
\langle\lambda_3\rangle^{-\frac{2b_0}{r_3'}}$ and integrating in $\lambda,\lambda',\lambda_2,\lambda_3$, this contribution for $\mathfrak{I}$ is bounded by the first term of the right side of \eqref{eq:modulationerduction1}. 
\\

\noi
$\bullet$ {\bf Contribution from the integration of $\mathcal{T}$ against $\frac{1}{\langle\mu\rangle\langle\lambda-\mu\rangle^A}$:}
\\
As the previous case, we split (for fixed $\lambda, \lambda',\lambda_2,\lambda_3$) the integration in $\int_{|\mu|\gg |\lambda'-\lambda_2+\lambda_3|+N^{\alpha}}$ and $\int_{|\mu|\lesssim |\lambda'-\lambda_2+\lambda_3|+N^{\alpha} }$. Using Cauchy-Schwartz, we have
$$ \int \frac{\|y_{k,k'}^0(\lambda,\lambda')\|_{l_{k,k'}^2}\langle\lambda\rangle^{b_1} }{\langle\lambda-\mu\rangle^{A}}d\lambda \lesssim \|y_{k,k'}^{0}(\lambda,\lambda')\|_{L_{\lambda}^2l_{k,k'}^2}\langle\mu\rangle^{b_1},
$$
 then by similar manipulations as in \eqref{eq:pf4.4}, the contribution from the region $|\mu-( \lambda'-\lambda_2+\lambda)|\gg N^{\alpha}$ yields the second term of the right side of \eqref{eq:modulationerduction1} as an error. The main contribution comes from the region $|\mu- (\lambda'-\lambda_2+\lambda_3)|\lesssim N^{\alpha}$. 
  We further split the integration of $\mathfrak{I}$:
$$ \int \langle\lambda'\rangle^{-b}\langle\lambda_2\rangle^{-\frac{2b}{r_2'}}\langle\lambda_3\rangle^{-\frac{2b}{r_3'}}d\lambda'd\lambda_2 d\lambda_3\Big(\int_{\substack{|\lambda-\mu|\ll N^{\alpha} \\
|\mu- (\lambda'-\lambda_2+\lambda_3)\lesssim N^{\alpha} } }+\int_{\substack{|\lambda-\mu|\gtrsim N^{\alpha} \\
|\mu- (\lambda'-\lambda_2+\lambda_3)\lesssim N^{\alpha} } }  \Big)\frac{\langle\lambda\rangle^{b_1}}{\langle\mu\rangle\langle\lambda-\mu\rangle^A}\cdot\mathcal{T}d\mu d\lambda.
$$
Taking $A=200$ and applying H\"older for the integration in $\mu$ and using Lemma \ref{convolution}, we have
\begin{align}\label{highmodu} \int_{\substack{|\lambda-\mu|\ll N^{\alpha}\\
|\mu-( \lambda'-\lambda_2+\lambda_3)|\lesssim N^{\alpha} }}\frac{\langle\lambda\rangle^{b_1}}{\lg\lambda-\mu\rg^A\lg\mu\rg}\cdot\mathcal{T}d\mu d\lambda \lesssim \int_{|\lambda-(\lambda'-\lambda_2+\lambda_3)|\lesssim N^{\alpha}}\!\!\!\big\| \mathcal{T}_{\mu,\lambda',\lambda,\lambda_2,\lambda_3}\big\|_{L_{\mu:|\mu-(\lambda'-\lambda_2+\lambda_3)|\lesssim N^{\alpha} }^{\frac{4}{2b-1 }} }\!\!\!\!\!\!\!\!\!\!\!\!\!\!\!\!\!\!\!\!\!\!\!\!\lg\lambda\rg^{b_1-1} d\lambda.
\end{align} 
Alternatively, applying Cauchy-Schwartz for the integration in $\lambda$ and using Lemma \ref{convolution}, we have
\begin{align}\label{lowmodu}
\int_{\substack{|\lambda-\mu|\ll N^{\alpha}\\
		|\mu-( \lambda'-\lambda_2+\lambda_3)|\lesssim N^{\alpha} }}\frac{\langle\lambda\rangle^{b_1}}{\lg\lambda-\mu\rg^A\lg\mu\rg}\cdot\mathcal{T}d\mu d\lambda \lesssim \int_{|\mu-(\lambda'-\lambda_2+\lambda_3) |\lesssim N^{\alpha} } \frac{\|\mathcal{T}_{\mu,\lambda',\lambda,\lambda_2,\lambda_3}\|_{L_{\lambda}^2}}{\langle\mu\rangle^{1-b_1}}d\mu.
\end{align}
Multiplying by $\lg\lambda'\rg^{-b}\lg\lambda_2\rg^{-\frac{2b_0}{r_2'}}\lg\lambda_3\rg^{-\frac{2b_0}{r_3'}}\mathbf{1}_{|\lambda'-\lambda_2+\lambda_3|\leq N^{100}}$ to the left side of \eqref{highmodu} and integrating in $\lambda',\lambda_2,\lambda_3$, we have
\begin{align*}
&\int \lg\lambda'\rg^{-b}\lg\lambda_2\rg^{-\frac{2b_0}{r_2'}}\lg\lambda_3\rg^{-\frac{2b_0}{r_3'}}\mathbf{1}_{|\lambda'-\lambda_2+\lambda_3|\leq N^{100}}\cdot (\text{l.h.s. of } \eqref{highmodu} )d\lambda' d\lambda_2 d\lambda_3\\
\lesssim & \int_{|\lambda|\lesssim N^{100}}\big\| \mathcal{T}_{\mu,\lambda',\lambda,\lambda_2,\lambda_3}\big\|_{L_{\mu:|\mu-(\lambda'-\lambda_2+\lambda_3)|\lesssim N^{\alpha} }^{\frac{4}{2b-1 }} }\frac{1}{\lg\lambda\rg^{1-b_1}}\lg\lambda'\rg^{-b}\lg\lambda_2\rg^{-\frac{2b_0}{r_2'}}\lg\lambda_3\rg^{-\frac{2b_0}{r_3'}}d\lambda d\lambda' d\lambda_2 d\lambda_3\\
\lesssim &N^{100(b_1-\frac{1}{2})}\big\| \mathcal{T}_{\mu,\lambda',\lambda,\lambda_2,\lambda_3}\big\|_{L_{\lambda',\lambda}^2L_{\lambda_2}^{r_2}L_{\lambda_3}^{r_3}L_{\mu:|\mu-(\lambda'-\lambda_2+\lambda_3)|\lesssim N^{\alpha} }^{\frac{4}{2b-1 }} }.
\end{align*}
Multiplying by $\lg\lambda'\rg^{-b}\lg\lambda_2\rg^{-\frac{2b_0}{r_2'}}\lg\lambda_3\rg^{-\frac{2b_0}{r_3'}}\mathbf{1}_{|\lambda'-\lambda_2+\lambda_3|> N^{100}}$ to the left side of \eqref{lowmodu}, we obtain that
\begin{align*}
&\int \lg\lambda'\rg^{-b}\lg\lambda_2\rg^{-\frac{2b_0}{r_2'}}\lg\lambda_3\rg^{-\frac{2b_0}{r_3'}}\mathbf{1}_{|\lambda'-\lambda_2+\lambda_3|> N^{100}}\cdot (\text{r.h.s. of }\eqref{lowmodu})d\lambda' d\lambda_2 d\lambda_3\\
\lesssim &\int_{|\mu|\sim N^{100}}\frac{\|\mathcal{T}_{\mu,\lambda',\lambda,\lambda_2,\lambda_3}\|_{L_{\lambda}^2}}{\langle\mu\rangle^{1-b_1}}\lg\lambda'\rg^{-b}\lg\lambda_2\rg^{-\frac{2b_0}{r_2'}}\lg\lambda_3\rg^{-\frac{2b_0}{r_3'}} d\mu d\lambda' d\lambda_2 d\lambda_3\\
\leq &N^{-100(1-b_1)}\|\mathcal{T}_{\mu,\lambda',\lambda,\lambda_2,\lambda_3}\|_{L_{\lambda'}^2L_{\lambda_2}^{r_2}L_{\lambda_3}^{r_3}L_{\mu}^1L_{\lambda}^2}.
\end{align*}
By definition and Cauchy-Schwartz,
\begin{align*}
&N^{-100(1-b_1)}\|\mathcal{T}_{\mu,\lambda',\lambda,\lambda_2,\lambda_3}\|_{L_{\lambda'}^2L_{\lambda_2}^{r_2}L_{\lambda_3}^{r_3}L_{\mu}^1L_{\lambda}^2}\\ \leq &N^{-100(1-b_1)} \Big\|\sum_{|k|,|k'|,|k_2|,|k_3|\lesssim N } \|\widehat{\chi}\|_{L_{\mu}^1}\|y_{k,k'}^0(\lambda,\lambda')\|_{L_{\lambda}^2} \ov{\widehat{w}_2}(\lambda_2,k_2)\widehat{w}_3(\lambda_3,k_3) \Big\|_{L_{\lambda'}^2L_{\lambda_2}^{r_2}L_{\lambda_3}^{r_3}}\\
\lesssim & N^{2-100(1-b_1)}\|y_{k,k'}^0(\lambda,\lambda')\|_{l_{k,k'}^2L_{\lambda,\lambda'}^2}\|\widetilde{w}_2(\lambda_2,k_2)\|_{L_{\lambda_2}^{r_2}l_{k_2}^2}
\|\widetilde{w}_3(\lambda_3,k_3)\|_{L_{\lambda_3}^{r_3}l_{k_3}^2},
\end{align*}
hence it can be bounded by the second error term of the right side of \eqref{eq:modulationerduction1}.

For the integration over $|\lambda-\mu|\gtrsim N^{\alpha}$, we further split it in three parts: $|\lambda|>2|\mu|, |\lambda|< \frac{1}{2}|\mu|$ and $\frac{1}{2}|\mu|\leq|\lambda|\leq 2|\mu|$. For $|\lambda|< \frac{1}{2}|\mu|$, we have $|\mu|\sim |\lambda-\mu|\gtrsim N^{\alpha}$,thus
$$
 \int_{\substack{|\lambda-\mu|\gtrsim N^{\alpha},
	|\lambda|<\frac{1}{2}	|\mu|,\\ |\mu-(\lambda'-\lambda_2+\lambda_3)|\lesssim N^{\alpha} }}\frac{\langle\lambda\rangle^{b_1}}{\lg\lambda-\mu\rg^A\lg\mu\rg}\cdot\mathcal{T}d\lambda d\mu \lesssim \int_{|\mu|\gtrsim N^{\alpha}}\frac{\|\mathcal{T}_{\mu,\lambda',\lambda,\lambda_2,\lambda_3}\|_{L_{\lambda}^2} }{\langle\mu \rangle^{A-b_1} } d\mu.
$$ 
Multiplying by $\langle \lambda'\rangle^{-b}\langle \lambda_2\rangle^{-\frac{2b_0}{r_2'}}\langle \lambda_3\rangle^{-\frac{2b_0}{r_3'}}$ and integrating in $\lambda',\lambda_2,\lambda_3$ then using Cauchy-Schwartz and Minkowski, the above term is bounded by the second error term of the right side of \eqref{eq:modulationerduction1}. Similarly, for the case $|\lambda|>2 |\mu|$, we have $|\lambda|\sim |\lambda-\mu|\gtrsim N^{\alpha}$. This gives us an error term as the second term of the right side of \eqref{eq:modulationerduction1}, provided that $A$ is chosen large enough.  Finally, for the case $\frac{1}{2}|\mu|\leq |\lambda|\leq 2|\mu|$, using H\"older and Lemma \ref{convolution} (and we write $\lg\lambda-\mu\rg^{A}\gtrsim \lg\lambda-\mu\rg^{A/2}N^{A\alpha/2}$ for $A\gg 1$), we have 
\begin{align*}
& \int_{\substack{|\lambda-\mu|\gtrsim N^{\alpha}\\
\frac{1}{2}|\mu|\leq |\lambda|\leq 2|\mu|
  } } \frac{\lg\lambda\rg^{b_1}}{\lg\lambda-\mu\rg^{A}\lg\mu\rg }\mathcal{T}\mathbf{1}_{|\mu-(\lambda'-\lambda_2+\lambda_3)|\lesssim N^{\alpha}}d\lambda d\mu\\
\lesssim & N^{-A\alpha/2}\int_{|\mu-(\lambda'-\lambda_2+\lambda_3)\lesssim N^{\alpha} }\big\|\mathbf{1}_{|\mu|\sim |\lambda|}\mathcal{T}_{\mu,\lambda',\lambda,\lambda_2,\lambda_3} \big\|_{L_{\lambda}^{2 }} \cdot\frac{d\mu}{\langle\mu\rangle^{1-b_1}}\\
\lesssim &N^{-A\alpha/2}\big\|\langle \mu\rangle^{1-b_1}\big\|_{L_{\mu}^3}\big\|\mathbf{1}_{|\mu|\sim |\lambda|}\mathcal{T}_{\mu,\lambda',\lambda,\lambda_2,\lambda_3}\big\|_{L_{\mu}^{3/2}L_{\lambda}^2}.
\end{align*}
Multiplying by $\lg\lambda'\rg^{-b}\lg\lambda_2\rg^{-\frac{2b_0}{r_2'}}\lg\lambda_3\rg^{-\frac{2b_0}{r_3'}}$, integrating in $\lambda',\lambda_2,\lambda_3$ and using Cauchy-Schwartz, this contribution can be controlled by
\begin{align*}
&N^{-\frac{A\alpha}{2}}\Big\|\!\!\sum_{\substack{|k|,|k'|,|k_2|,|k_3|\lesssim N} }\!\!\!\!\!\!\widehat{\chi}(\mu-\lambda'+\lambda_2-\lambda_3-\Phi_{k',k_2,k_3}) y_{k,k'}^0(\lambda,\lambda')
\ov{\widetilde{w}_2}(\lambda_2,k_2)\widetilde{w}_3(\lambda_3,k_2) \Big\|_{L_{\lambda'}^2L_{\lambda_2}^{r_2}L_{\lambda_3}^{r_3}L_{\mu}^{\frac{3}{2}}L_{\lambda}^2 }\\
\leq & N^{-\frac{A\alpha}{2}}
\Big\|\sum_{|k|,|k'|,|k_2|,|k_3|\lesssim N }\|\widehat{\chi}\|_{L_{\mu}^{\frac{3}{2}}}\|y_{k,k'}^0(\lambda,\lambda')\|_{L_{\lambda}^2}\ov{\widetilde{w}_2}(\lambda_2,k_2)\widetilde{w}_3(\lambda_3,k_2)  \Big\|_{L_{\lambda'}^2L_{\lambda_2}^{r_2}L_{\lambda_3}^{r_3}}\\
\lesssim &N^{-A\alpha/2+2}\|y_{k,k'}^0(\lambda,\lambda')\|_{l_{k,k'}^2L_{\lambda,\lambda'}^2}\|\widetilde{w}_2(\lambda_2,k_2)\|_{L_{\lambda_2}^{r_2}l_{k_2}^2}
\|\widetilde{w}_3(\lambda_3,k_3)\|_{L_{\lambda_3}^{r_3}l_{k_3}^2},
\end{align*}
and it can be controlled by the second error term of the right side of \eqref{eq:modulationerduction1}). This completes the proof of Proposition \ref{modulationreduction1}.
\end{proof}

\subsection{Modulation reduction for the trilinear estimates}
Assume that $v_j\in X^{0,b_0}\cap X_{\infty,q}^{0,\frac{2b_0}{q'}}$ such that supp$(\widetilde{v}_j)\subset\{|k_j|\lesssim N_j \}$, for $j=1,2,3$. Let $\widetilde{w}_j^{(r_j)}=\langle\lambda\rangle^{\frac{2b_0}{r_j'}}\widetilde{v}_j$, and without loss of generality, we assume that $v_j=\chi_1(t)v_j$, for $j=1,2,3$, where $\chi_1\in C_c^{\infty}(\R)$. Let $\chi(t)$ be another time cut-off function such that $\chi\chi_1=\chi_1$.
\begin{proposition}\label{modulationreduction:trilinear}
Adapting to the notations above, we have for any $\epsilon>0$,
\begin{equation}\label{eq:modulationreduction3}
\begin{split}
&\|\mathcal{N}_3(v_1,v_2,v_3)\|_{X^{0,b_1-1}}\lesssim_{\epsilon} N_{(1)}^{-100}\prod_{j=1}^3\|v_j\|_{X^{0,b}}\\
+ &\Big\|\!\sum_{\substack{(k_1,k_2,k_3)\in\Gamma(k)\\
|k_j|\lesssim N_j,j=1,2,3 } }\!\widehat{\chi}(\mu_0-\Phi_{k_1,k_2,k_3})\widetilde{w}^{(r_1)}_1(\lambda_1,k_1)\ov{\widetilde{w}^{(r_2)}_2}(\lambda_2,k_2)\widetilde{w}_3^{(r_3)}(\lambda_3,k_3) \Big\|_{L_{\lambda_1}^{r_1}L_{\lambda_2}^{r_2}L_{\lambda_3}^{r_3}L_{|\mu_0|\lesssim N_{(1)}^{\alpha}}^{\frac{2}{2b_1+2\epsilon-1}} l_k^2}.
\end{split}
\end{equation}	
\end{proposition}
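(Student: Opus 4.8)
\textbf{Proof plan for Proposition \ref{modulationreduction:trilinear}.}
The strategy is the same as for Proposition \ref{modulationreduction1}: by duality we test $\mathcal{N}_3(v_1,v_2,v_3)$ against $w\in X^{0,1-b_1}$ with $\|w\|_{X^{0,1-b_1}}\leq 1$, and we must bound
$$ \Big|\iint \mathcal{N}_3(v_1,v_2,v_3)\,\ov{w}\,dt\,dx\Big|.
$$
Writing everything on the Fourier side via the twisted transform, this is a multiple integral over $\lambda,\lambda_1,\lambda_2,\lambda_3$ and a sum over $(k_1,k_2,k_3)\in\Gamma(k)$ of
$$ \lg\lambda\rg^{b_1-1}\,\wt w(\lambda,k)\,\wh\chi\big(\lambda-\lambda_1+\lambda_2-\lambda_3-\Phi_{k_1,k_2,k_3}\big)\,\wt v_1(\lambda_1,k_1)\,\ov{\wt v_2}(\lambda_2,k_2)\,\wt v_3(\lambda_3,k_3),
$$
after inserting the harmless cutoffs $\chi(t)$ on each factor. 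The plan is: (i) peel off the modulation weights $\lg\lambda_j\rg^{2b_0/r_j'}$ from $\wt v_j$ to form $\wt w_j^{(r_j)}$, which only costs negative powers of $\lg\lambda_j\rg$ that will be absorbed; (ii) introduce the auxiliary variable $\mu_0:=\lambda-\lambda_1+\lambda_2-\lambda_3-\Phi_{k_1,k_2,k_3}$, i.e.\ change variables so that $\wh\chi(\mu_0)$ appears, and note that on the support of the sum $|\Phi_{k_1,k_2,k_3}|\lesssim N_{(1)}^{\alpha}$; (iii) split the $\mu_0$-integration (equivalently the region of $\lambda-\lambda_1+\lambda_2-\lambda_3$) into $|\mu_0|\lesssim N_{(1)}^{\alpha}$ and its complement, handling the complement by the rapid decay of $\wh\chi$ exactly as in the proof of Proposition \ref{modulationreduction1}, producing the error term $N_{(1)}^{-100}\prod_j\|v_j\|_{X^{0,b}}$.

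For the main region $|\mu_0|\lesssim N_{(1)}^{\alpha}$, the idea is to apply Hölder in the $\lambda$-integration (using $\|\lg\lambda\rg^{b_1-1}\|$ on a suitable compact-in-$\lambda$ piece, or rather dualizing the $\lambda$ variable against $w$) and then Hölder in $\lambda_1,\lambda_2,\lambda_3$ to distribute the exponents $r_1,r_2,r_3$. Concretely, after Cauchy--Schwarz in $k$ and $\lambda$ to remove $\wt w$ (this is where $\|w\|_{X^{0,1-b_1}}\le 1$ is used, together with the fact that $1-b_1$ matches the weight $\lg\lambda\rg^{b_1-1}$ up to the extra $\epsilon$-room), one is left with an $L_{\mu_0}^{p}$ norm of the discrete-sum quantity appearing in \eqref{eq:modulationreduction3}; tracking the Hölder exponents, $\frac{1}{p}=\frac{1}{2}-(b_1-1)-\epsilon = b_1+\epsilon-\tfrac12$ gives the exponent $\frac{2}{2b_1+2\epsilon-1}$ stated. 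One must also check that the $\mu_0$-integration domain can be taken to be $|\mu_0|\lesssim N_{(1)}^{\alpha}$ (the frequency support of $v_1,v_2,v_3$ forces $N_{(2)}\sim N_{(1)}$ or $|k_1-k_2+k_3|$ small, but in any case $|\Phi_{k_1,k_2,k_3}|\lesssim N_{(1)}^{\alpha}$, so once $|\lambda-\lambda_1+\lambda_2-\lambda_3|$ is compared to $N_{(1)}^\alpha$ the split is legitimate).

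The one genuinely delicate point, and the main obstacle, is the bookkeeping of how the $\lg\lambda\rg$-weight from $X^{0,b_1-1}$ interacts with the $\mu_0$-substitution: since $\mu_0$ depends on $\lambda,\lambda_1,\lambda_2,\lambda_3$ \emph{and} on the $k_j$ through $\Phi_{k_1,k_2,k_3}$, one cannot naively use $\mu_0$ as a free integration variable while keeping $k_j$ fixed. The clean way around this is to first do the full $\lambda$-integral against $w$ by Cauchy--Schwarz (leaving out $\|\lg\lambda\rg^{-(1-b_1)}w\|$ which, after converting $\lg\lambda\rg^{b_1-1}=\lg\lambda\rg^{b_1+\epsilon-1}\lg\lambda\rg^{-\epsilon}$, is $\lesssim 1$ using the extra $\epsilon$ to gain integrability), so that $\lambda$ is replaced by an $L^2_\lambda$-to-$L^2_{\mu_0}$ bound and $\mu_0$ becomes a bona fide variable; after this the remaining estimate is purely in $\lambda_1,\lambda_2,\lambda_3,\mu_0$ and Hölder applies directly. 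One also has to verify that replacing $\wt v_j$ by $\wt w_j^{(r_j)}=\lg\lambda_j\rg^{2b_0/r_j'}\wt v_j$ is licit, i.e.\ that the weights $\lg\lambda_j\rg^{-2b_0/r_j'}$ that get left behind are integrable enough against the $L^{r_j}_{\lambda_j}$ norm — this is where the condition $b_0>\tfrac12$ and the choice $r_j\in\{2,q\}$ enter, and it is routine. Aside from this weight-juggling, every estimate used is either Hölder, Cauchy--Schwarz, or the rapid decay of $\wh\chi$, exactly as in Proposition \ref{modulationreduction1}, so I would present the proof largely by reference to that argument, only spelling out the exponent count that yields $\frac{2}{2b_1+2\epsilon-1}$.
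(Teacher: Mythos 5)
Your plan matches the paper's proof closely: the paper also dualizes against $v\in X^{0,1-b_1}$, separates the test function by Cauchy–Schwarz in $(\lambda,k)$, applies Hölder to split $\langle\lambda\rangle^{-(1-b_1)}$ from $\mathcal{M}$ (using the extra $\epsilon$ room, $\|\langle\lambda\rangle^{-(1-b_1)}\|_{L_\lambda^{1/(1-b_1-\epsilon)}}<\infty$, to produce exactly the exponent $\frac{2}{2b_1+2\epsilon-1}$), splits $|\lambda-\lambda_1+\lambda_2-\lambda_3|\gtrless N_{(1)}^{\alpha}$ for the error term, and at the end peels off the $\langle\lambda_j\rangle^{-2b_0/r_j'}$ weights by Hölder in $\lambda_j$ using $2b_0>1$. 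One small slip in your write-up: the intermediate identity $\frac1p=\frac12-(b_1-1)-\epsilon$ evaluates to $\frac32-b_1-\epsilon$, not $b_1+\epsilon-\frac12$; you presumably meant $\frac1p=\frac12-(1-b_1-\epsilon)=b_1+\epsilon-\frac12$, which is what the paper has and what gives the stated exponent.
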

\begin{proof}
Since there is no significant importance of the conjugate bar on $\widetilde{v}_2$, we will omit it in the proof. By duality, for $v=\chi(t)v\in X^{0,1-b_1}$, $\|\lg\lambda\rg^{1-b_1}\widetilde{v}(\lambda,k)\|_{L_{\lambda}^2l_k^2}\lesssim 1$, we need to estimate
\begin{equation}\label{eq:pf4.5}
\begin{split}
&\sum_{\substack{|k|\lesssim N_{(1)}, |k_j|\leq N_j, j=1,2,3\\
(k_1,k_2,k_3)\in \Gamma(k) }}\int \prod_{j=1}^3\widehat{v}_j(\tau_j,k_j)d\tau_j\int \widehat{v}(\tau_4,k)\widehat{\chi}(\tau_1-\tau_2+\tau_3-\tau_4)d\tau_4\\
=&\sum_{\substack{|k|\lesssim N_{(1)}, |k_j|\leq N_j, j=1,2,3\\
		(k_1,k_2,k_3)\in \Gamma(k) }}\int\Big(\prod_{j=1}^3\langle\lambda_j\rangle^{-\frac{2b_0}{r_j'}}\widetilde{w}_j^{(r_j)}(\lambda_j,k_j)d\lambda_j\Big)\cdot \widetilde{v}(\lambda,k)\widehat{\chi}(\lambda_1-\lambda_2+\lambda_3-\lambda-\Phi_{k_1,k_2,k_3})d\lambda.
\end{split}
\end{equation}
By Cauchy-Schwartz, \eqref{eq:pf4.5} is bounded by
\begin{equation}\label{eq:pf4.6}
\begin{split}
\int \Big(\prod_{j=1}^3\langle\lambda_j\rangle^{-\frac{2b_0}{r_j'}}d\lambda_j\Big)\|\lg\lambda\rg^{1-b_1}\widetilde{v}\|_{L_{\lambda}^2l_k^2}\cdot\|\lg\lambda\rg^{-(1-b_1)}\mathcal{M}_{\lambda,\lambda_1,\lambda_2,\lambda_3}(v_1,v_2,v_3)\|_{L_{\lambda}^2l_k^2},
\end{split}
\end{equation}
where
\begin{equation*}
\mathcal{M}_{\lambda,\lambda_1,\lambda_2,\lambda_3}(v_1,v_2,v_3)\!:=
\!\!\!\!\!\!\!\sum_{\substack{|k_j|\leq N_j, j=1,2,3\\
		(k_1,k_2,k_3)\in \Gamma(k) }}\!\!\!\!\!\!\widetilde{w}^{(r_1)}_1(\lambda_1,k_1)\widetilde{w}_2^{(r_2)}(\lambda_2,k_2)\widetilde{w}^{r_3}_3(\lambda_3,k_3)\widehat{\chi}(\lambda-\lambda_1+\lambda_2-\lambda_3-\Phi_{k_1,k_2,k_3}).
\end{equation*}
To simplify the notation, we denote $\mathcal{M}_{\lambda,\lambda_1,\lambda_2,\lambda_3}(v_1,v_2,v_3)$ simply by $\mathcal{M}$, when there is no risk of confusing. By H\"older, we have
$$ \|\lg\lambda\rg^{-(1-b_1)}\mathcal{M}\|_{L_{\lambda}^2l_k^2}\leq \|\lg\lambda\rg^{-(1-b_1)}\|_{L_{\lambda}^{\small{\frac{1}{1-b_1-\epsilon}}}}\|\mathcal{M}\|_{L_{\lambda}^{\small{\frac{2}{2b_1+2\epsilon-1}}}l_k^2}\lesssim_{\epsilon}\|\mathcal{M}\|_{L_{\lambda}^{\small{\frac{2}{2b_1+2\epsilon-1}}}l_k^2}.
$$ 
For fixed $\lambda_1,\lambda_2,\lambda_3$, we split the region of integration (in $\lambda$) in $|\lambda-\lambda_1+\lambda_2-\lambda_3|\gg N_{(1)}^{\alpha}$ and $|\lambda-\lambda_1+\lambda_2-\lambda_3|\lesssim N_{(1)}^{\alpha}$. For $|\lambda-\lambda_1+\lambda_2-\lambda_3|\gg N_{(1)}^{\alpha},$ using the rapid decay of $\widehat{\chi}(\cdot)$, we have 
$$ |\widehat{\chi}(\lambda-\lambda_1+\lambda_2-\lambda_3+\Phi_{k_1,k_2,k_3})|\lesssim_A\lg\lambda-\lambda_1+\lambda_2-\lambda_3\rg^{-A}\mathbf{1}_{|\lambda-\lambda_1+\lambda_2-\lambda_3|\gg N_{(1)}^{\alpha}}.
$$
By taking $A\gg 1$, we have
$$ \|\mathcal{M}\|_{L_{\lambda}^{\small{\frac{2}{2b_1+2\epsilon-1} }}l_k^2(|\lambda-\lambda_1+\lambda_2-\lambda_3|\gg N_{(1)}^{\alpha}) }\lesssim_{\epsilon} N_{(1)}^{-100}\prod_{j=1}^3\|\widetilde{w}_j(\lambda_j,k_j)\|_{l_{k_j}^2},
$$
provided that $A\gg 1$.
Hence this contribution in \eqref{eq:pf4.5} is an error and can be bounded by the first term on the right side of \eqref{eq:modulationreduction3}. The other term can be bounded by
\begin{equation*}
\begin{split}
\|\mathcal{M}\|_{L_{\lambda}^{\small{\frac{2}{2b_1+2\epsilon-1} }}l_k^2(|\lambda-\lambda_1+\lambda_2-\lambda_3|\lesssim N_{(1)}^{\alpha})} 
\end{split}
\end{equation*} 
as desired.
This completes the proof of Proposition \ref{modulationreduction:trilinear}.
\end{proof}


\section{Multilinear estimate for the kernel $\mathcal{P}_{N,L}^{+}$}\label{section:kernel}

The goal of this section is to prove Proposition \ref{KernelSb} and Proposition \ref{kernelZb}. 
Recall that  $\Theta_{k,k_1}^{N_1,L}(t,t_1)$ is the kernel of the operator
$$ \mathcal{P}_{N_1,L}^+:=\chi_T(t)\Pi_{N_1}\mathcal{I}\big[\mathcal{N}_3\big(\Pi_{N_1}\cdot, \Pi_Lv_L^{\#},\Pi_Lv_L^{\#} \big)-\Pi_{N_1}\mathcal{N}_3\big(\Pi_{N_1}\cdot,\Pi_{\frac{L}{2}} v_{\frac{L}{2}}^{\#},\Pi_{\frac{L}{2}}v_{\frac{L}{2}}^{\#} \big) \big]
$$
where $L<N_1^{1-\delta}$. Since we will only estimate the kernel restricted to $|k|,|k_1|\geq \frac{N_1}{4}$, by abusing the notation, we will sometimes regard $\Theta_{k,k_1}^{N_1,L}(t,t_1)$ as $\Theta_{k,k_1}(t,t_1)^{N_1,L}\mathbf{1}_{|k|,|k_1|\sim N_1}$. By decomposing $v_L^{\#}$ as sums of terms $v_j$ of type (D),(G) or (C) with corresponding characterized parameters $(N_2,L_2)$, we can write $\Theta_{kk_1}^{N_1,L}(t,t_1)\mathbf{1}_{|k|,|k_1|\sim N_1}$ as the sum of kernels of the linear combination of operators
$$\Pi_{N_1}\mathcal{I}\mathcal{N}_3(\cdot,\Pi_{L}v_2,\Pi_Lv_3),
$$
where $v_2,v_3$ are of type (G),(C) or (D) with characterized parameters $(N_2,L_2), (N_3,L_3)$, with respectively, satisfying
$ N_2\vee N_3\sim L. 
$ To prove Proposition \ref{KernelSb} and Proposition \ref{kernelZb}, we will first provide estimates for each single piece and then sum them up.

\subsection{Notational simplifications}
We fix another numerical parameter (remember that $q$ is reserved in Remark \ref{numerical}):
$$ q_0:=\frac{2}{2b_1-1} \text{ or } \frac{q}{2b_1-1}
$$
in this section, which will be clear in different contexts. The importance is that $q_0\gg 1$. As in the previous section, we will write $L_{\mu_0*}^{r}$ to stand for $L_{|\mu_0|\lesssim N_1^{\alpha}}^{r}$. Let $\mathcal{C}_j$ be the $\sigma$-algebra $\mathcal{B}_{\leq L_j}$ which is independent of the $\sigma$-algebra generated by $\{g_{k_j^*}:|k_j^*|\sim N_j\}$, we denote by $\mathbb{E}^{\mathcal{C}_j}[\cdot]=\mathbb{E}[\cdot|\mathcal{C}_j].$

Before doing the estimates, we observe that modulo terms of $$O_{\epsilon}(N_1^{-\frac{1}{\epsilon}+10})\|y_{k,k_1}^0(\lambda,\lambda_1)\|_{l_{k,k_1}^2}\|\widetilde{w}_2(\lambda_2,k_2)\|_{l_{k_2}^2}\|\widetilde{w}_3(\lambda_3,k_3)\|_{l_{k_3}^2} ,$$ we may replace $\widehat{\chi}_0(\mu_0-\Phi_{k_1,k_2,k_3})$ by $\mathbf{1}_{\Phi_{k_1,k_2,k_3}=\mu_0+O(N_1^{\epsilon})}$ for any $\epsilon>0$ and we denote by
$$ S_{k_1,k_2,k_3}:=\mathbf{1}_{\Phi_{k_1,k_2,k_3}=\mu_0+O(N_1^{\epsilon})}\mathbf{1}_{k_2\neq k_1,k_3}.
$$
Note that $S_{k_1,k_2,k_3}$ depends on $\mu_0$, but we will always have uniform estimates for $S_{k_1,k_2,k_3}$ in $\mu_0$. Therefore, we will not mention this dependence explicitly.
Here $\epsilon>0$ is another free (small) parameter which will be fixed later, according to different contexts.
Furthermore, when $v_j$ is of type $(C)$, we may replace it by
$$ \widetilde{v}_j(\lambda_j,k_j)=\sum_{|k_j^*|\sim N_j}\mathbf{1}_{|k_j-k_j^*|\leq L_jN_j^{\epsilon}}\widetilde{h}_{k_jk_j^*}^{N_jL_j}(\lambda_j)\frac{g_{k_j^*}(\omega)}{[k_j^*]^{\frac{\alpha}{2}}},
$$  
since the contribution coming from $\mathbf{1}_{|k_j-k_j^*|>L_jN_j^{\epsilon}}$ is bounded by
$$ N_j^{-\kappa\epsilon}\Big\|\lg\lambda_j\rg^b\Big\lg\frac{|k_j-k_j^*|}{L_j}\Big\rg^{\kappa}\widetilde{h}_{k_jk_j^*}^{N_j,L_j}(\lambda_j)  \Big\|_{L_{\lambda_j}^2l_{k_j,k_j^*}^2}
$$
which by \eqref{almostlocal} is much smaller than the main contribution\footnote{The far-diagonal part $|k_j-k_j^*|>L_jN_j^{\epsilon}$ can be easily treated by the deterministic estimate \eqref{outputA1-1}. However we need to be cautious when $v_2$ and $v_3$ are both of type (C). In that case, we may slightly change the constraint $\mathbf{1}_{|k_j-k_j^*|\leq L_jN_j^{\epsilon}}$ by $\mathbf{1}_{|k_j-k_j^*|\leq L_j(N_2\vee N_3)^{\epsilon}} $.  } from $\mathbf{1}_{|k_j-k_j^*|\leq L_jN_j^{\epsilon}},$ provided that $\epsilon^{10}<\kappa^{-1}$. In particular, we may assume that $|k_j|\sim |k_j^*|\sim N_j$. Recall that type (C) terms satisfy \eqref{eq(C):hypothesis1}, \eqref{eq(C):hypothesis2},\eqref{eq(C):hypothesis3}. Slightly different from previous sections, we will denote by $w_j^{(r_j)}(\lambda_j,k_j)=\lg\lambda_j\rg^{\frac{2b_0}{r_j'}}\wt{v}_j(\lambda_j,k_j)$ for $r_j\in\{2,q\}$. In order to clean the exposition, we introduce the following notations:
\begin{align}\label{newnameKernel}
\Upsilon_{L_2,L_3}^{N_1,N_2,N_3}(r_2,r_3):=\Big\|\!\!\!\!\!\!\sum_{\substack{k_1,k_2,k_3,k,|k_1|\sim N_1\\
		(k_1,k_2,k_3)\in \Gamma(k)\\ 
		|k_j|\leq N_j, j=2,3 } }\!\!\!\!\!\!\!\!\!\!\!\! S_{k_1,k_2,k_3}y_{k,k_1}^0(\lambda,\lambda_1)\ov{w}^{(r_2)}_2(\lambda_2,k_2)w^{(r_3)}_3(\lambda_3,k_3)  \Big\|_{L_{\lambda,\lambda_1}^2L_{\lambda_2}^{r_2}L_{\lambda_3}^{r_3}L_{\mu_0*}^{q_0} },
\end{align}
and
\begin{align}\label{newnameKernel'}
\Xi_{L_2,L_3}^{N_1,N_2,N_3}(r_2,r_3):=\Big\|\!\!\!\!\!\!\sum_{\substack{k_1,k_2,k_3,k,|k_1|\sim N_1\\
		(k_1,k_2,k_3)\in \Gamma(k)\\ 
		|k_j|\leq N_j, j=2,3 } }\!\!\!\!\!\!\!\!\!\!\!\! S_{k_1,k_2,k_3}y_{k,k_1}^0(\lambda,\lambda_1)\ov{w}^{(r_2)}_2(\lambda_2,k_2)w^{(r_3)}_3(\lambda_3,k_3)  \Big\|_{L_{\lambda}^{q'}L_{\lambda_1}^2L_{\lambda_2}^{r_2}L_{\lambda_3}^{r_3}L_{\mu_0*}^{q_0} },
\end{align}
where $v_2,v_3$ are of type (G), (C) or (D) with characterized pairs $(N_2,L_2), (N_3,L_3)$, with respectively. When $v_j$ is of type (G) or (C), we have the freedom to choose $r_j=2$ or $r_j=q$, while if $v_j$ is of type (D) (equivalently, $L_j=2L_{N_j}$), it forces $r_j=2$. 
We will fix $N_2,N_3\lesssim N_1^{1-\delta}$ such that $N_2\vee N_3\sim L$. According to Proposition \ref{modulationreduction1}, modulo an $N^{-A}$ error, we have
\begin{align} 
&\|\Theta_{kk_1}^{N_1,L}(\lambda,\lambda')\mathbf{1}_{|k_1|\sim N_1}\|_{Z^{b,b}}\lesssim N_1^{50(2b_1-1)}\!\!\!\!\!\!\!\!\!\!\!\!\!\!\!\!\sum_{\substack{N_2,N_3<N_1^{1-\delta}\\
		L_2\leq 2L_{N_2},
		L_3\leq 2L_{N_3}\\
	N_2\vee N_3\sim L }}\!\!\!\!\!\!\!\!\!\!\!\min\big\{\Upsilon_{L_2,L_3}^{N_1,N_2,N_3}(r_2,r_3): (r_2,r_3)\in I^{N_2}_{L_2}\times I^{N_3}_{L_3}  \big\},\label{Sec8-bound0}\\
&\|\Theta_{kk_1}^{N_1,L}(\lambda,\lambda')\mathbf{1}_{|k_1|\sim N_1}\|_{S^{b,b,q}}\lesssim N_1^{50(2b_1-1)}\!\!\!\!\!\!\!\!\!\!\!\!\!\!\!\!\sum_{\substack{N_2,N_3<N_1^{1-\delta}\\
		L_2\leq 2L_{N_2},
		L_3\leq 2L_{N_3} \\
	N_2\vee N_3\sim L }}\!\!\!\!\!\!\!\!\!\!\!\min\big\{\Xi_{L_2,L_3}^{N_1,N_2,N_3}(r_2,r_3): (r_2,r_3)\in I^{N_2}_{L_2}\times I^{N_3}_{L_3}  \big\},\label{Sec8-bound1}
\end{align}
where the index set $I^{N_j}_{L_j}=\{2,q\}$
if $L_j\leq L_{N_j}$ (type (G) or (C)) and $I_{L_j}^{N_j}=\{2\}$ if $L_j=2L_{N_j}$ (type (D)).

\subsection{Algorithms and reductions}
In order to clean up the arguments and to emphasize the point, we describe several algorithms to be used that reduce the analysis to the multi-linear summation.

Several algorithms to estimate the sum of the multi-linear expression
$$ \mathcal{M}_{N_1,N_2,N_3}(y;a_2,a_3):=\Big|\sum_{\substack{k_1,k_2,k_3,k,|k_1|\sim N_1\\
(k_1,k_2,k_3)\in \Gamma(k)\\
|k_j|\leq N_j,j=2,3 } } y_{k,k_1}S_{k_1,k_2,k_3}a_2(k_2)a_3(k_3)\Big| 
$$
are at our proposal:\\

\noi
$\bullet${\bf Algorithm A1: Deterministic estimates}

We may assume that $N_3\leq N_2$. Using Cauchy-Schwartz, we have
\begin{align}\label{A1-1}
 \mathcal{M}_{N_1,N_2,N_3}(y;a_2,a_3)
 \leq &\|y_{k_1-k_2+k_3,k_1}a_3(k_3)\|_{l_{k_1,k_2,k_3}^2}\|a_2(k_2)S_{k_1,k_2,k_3}\|_{l_{k_1,k_2,k_3}^2}\notag \\
 \leq &\|y_{k,k_1}\|_{l_{k,k_1}^2}\|a_3\|_{l_{k_3}^2}\Big\||a_2(k_2)|^2\sum_{|k_1|\sim N_1}S_{k_1,k_2,k_3} \Big\|_{l_{k_2,k_3}^1}^{\frac{1}{2}} \notag \\
 \lesssim &N_1^{\epsilon}\big(N_1^{1-\frac{\alpha}{2}}+N_3^{\frac{1}{2}}\big)\|y_{k,k_1}\|_{l_{k,k_1}^2}
 \|a_{2}\|_{l_{k_2}^2}
 \|a_{3}\|_{l_{k_3}^2},
\end{align}
where we use the counting estimate
$$ \sum_{k_1}S_{k_1,k_2,k_3}\lesssim N_1^{\epsilon}\Big( 1+\frac{N_1^{2-\alpha}}{\lg k_2-k_3\rg}\Big)
$$
for fixed $k_2,k_3$ such that $ |k_2|\leq N_2\ll N_1, |k_3|\leq N_3\ll N_1$. Alternatively, we have
\begin{align*}
\mathcal{M}_{N_1,N_2,N_3}(y;a_2,a_3)\leq &\|y_{k_1-k_2+k_3,k_1}a_3(k_3)\|_{l_{k_1,k_2,k_3}^2}\|a_2(k_2)S_{k_1,k_2,k_3}\|_{l_{k_1,k_2,k_3}^2}\\
\leq &\|y_{k,k_1}\|_{l_{k,k_1}^2}\|a_3\|_{l_{k_3}^2}\|a_2\|_{l_{k_2}^{q}}\|S_{k_1,k_2,k_3}\|_{l_{k_3}^2l_{k_2}^{\frac{2q}{q-2}}l_{k_1}^2}\\
\lesssim &N_1^{\epsilon}N_3^{\frac{1}{2}}(N_2^{\frac{1}{2}-\frac{1}{q}}+N_1^{1-\frac{\alpha}{2}} )\|y_{k,k_1}\|_{l_{k,k_1}^2}\|a_3\|_{l_{k_3}^2}\|a_2\|_{l_{k_2}^{q}}.
\end{align*}
Choosing in particular $\epsilon=\kappa^{-0.01}$, replacing $y_{kk_1}, a_2, a_3$ by $y_{kk_1}^0(\lambda,\lambda_1), w_2^{(2)}(\lambda_2,k_2), w_3^{(2)}(\lambda_3,k_3)$ or  $y_{kk_1}^0(\lambda,\lambda_1), w_2^{(q)}(\lambda_2,k_2),$ $w_3^{(2)}(\lambda_3,k_3)$ with respectively and then taking $L_{\lambda,\lambda_1}^2L_{\lambda_2}^{2}L_{\lambda_3}^{2}L_{\mu_0*}^{q_0}$ or $L_{\lambda,\lambda_1}^2L_{\lambda_3}^{2}L_{\lambda_2}^{q}L_{\mu_0*}^{q_0}$, we obtain (note that $\alpha q_0^{-1}<\kappa^{-0.09}$):
\begin{lemme}\label{lemA1-123}
Assume that $N_1\gg N_2, N_3$, then we have
\begin{align}\label{outputA1-1}
\Upsilon_{L_2,L_3}^{N_1,N_2,N_3}(2,2)\lesssim N_1^{\kappa^{-0.1}}(N_1^{1-\frac{\alpha}{2}}+(N_2\wedge N_3)^{\frac{1}{2}})\|y_{k,k_1}^0(\lambda,\lambda_1)\|_{L_{\lambda,\lambda_1}^2l_{k,k_1}^2}\|v_2\|_{X^{0,b_0}}\|v_3\|_{X^{0,b_0}}.
\end{align}
Moreover, if $N_3\leq N_2$, we have alternatively
\begin{align}\label{outputA1-2}
\Upsilon_{L_2,L_3}^{N_1,N_2,N_3}(q,2) \lesssim N_1^{\kappa^{-0.1}}N_3^{\frac{1}{2}}(N_2^{\frac{1}{2}}+N_2^{\frac{1}{q}}N_1^{1-\frac{\alpha}{2}})\|y_{kk_1}^0(\lambda,\lambda_1)\|_{L_{\lambda,\lambda_1}^2l_{k,k_1}^2}\|v_2\|_{X_{\infty,q}^{0,\frac{2b_0}{q'}}}\|v_3\|_{X^{0,b_0}},
\end{align}
if $N_2\leq N_3$, 
\begin{align}\label{outputA1-3}
\Upsilon_{L_2,L_3}^{N_1,N_2,N_3}(2,q) \lesssim N_1^{\kappa^{-0.1}}N_2^{\frac{1}{2}}(N_3^{\frac{1}{2}}+N_3^{\frac{1}{q}}N_1^{1-\frac{\alpha}{2}})\|y_{kk_1}^0(\lambda,\lambda_1)\|_{L_{\lambda,\lambda_1}^2l_{k,k_1}^2}\|v_2\|_{X^{0,b_0}}\|v_3\|_{X_{\infty,q}^{0,\frac{2b_0}{q'}}}.
\end{align}
\end{lemme}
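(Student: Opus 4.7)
Plan: The bound in Lemma \ref{lemA1-123} is obtained in two stages. \emph{Stage one} establishes pointwise-in-$(\lambda,\lambda_1,\lambda_2,\lambda_3,\mu_0)$ estimates for the discrete sum $\mathcal{M}_{N_1,N_2,N_3}$; \emph{stage two} integrates these estimates in the time-frequency variables. Both ingredients are purely deterministic, relying on Cauchy--Schwartz, H\"older's inequality, and the counting Lemma \ref{counting1}.

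For \eqref{outputA1-1} (the case $r_2=r_3=2$) I would apply Cauchy--Schwartz in $(k_1,k_2,k_3)$ as in the display \eqref{A1-1}, separating $y_{k_1-k_2+k_3,k_1}a_3(k_3)$ from $a_2(k_2)S_{k_1,k_2,k_3}$. The first factor yields $\|y\|_{l^2_{k,k_1}}\|a_3\|_{l^2_{k_3}}$ directly, and for the second, Lemma \ref{counting1} bounds $\sum_{|k_1|\sim N_1}S_{k_1,k_2,k_3}\lesssim N_1^\epsilon(1+N_1^{2-\alpha}\langle k_2-k_3\rangle^{-1})$; summation over $|k_3|\leq N_3$ then produces $N_3+N_1^{2-\alpha}\log N_3$, whose square root is $\lesssim N_{(3)}^{1/2}+N_1^{1-\alpha/2}$. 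For \eqref{outputA1-2}--\eqref{outputA1-3} one replaces Cauchy--Schwartz in $k_2$ (resp.\ $k_3$) by H\"older with exponents $(q,2q/(q-2))$, so that $\|a_2\|_{l^q_{k_2}}$ is pulled out and the remaining $\|S\|_{l^2_{k_3}l^{2q/(q-2)}_{k_2}l^2_{k_1}}$ is computed in the same way: the $l^2_{k_1}$-sum is again handled by the counting lemma, the $l^{2q/(q-2)}_{k_2}$-norm over $|k_2|\leq N_2$ contributes $N_2^{1/2-1/q}$ from the ``$1$''-piece and an $O(1)$ factor from the $\langle k_2-k_3\rangle^{-1}$-piece (since $q/(q-2)>1$), while the final $l^2_{k_3}$-sum costs $N_3^{1/2}$.

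In stage two I would insert $y=y^0_{k,k_1}(\lambda,\lambda_1)$, $a_j=w^{(r_j)}_j(\lambda_j,k_j)$ into the pointwise bounds and take $L^2_{\lambda,\lambda_1}L^{r_2}_{\lambda_2}L^{r_3}_{\lambda_3}L^{q_0}_{\mu_0*}$ on both sides. Since the right-hand side factorises through $l^2$ (or $l^q$) norms in each of $k,k_1,k_2,k_3$, the outer time-frequency norms simply transform $\|y\|_{l^2}$ and $\|w^{(r_j)}_j\|_{l^2_{k_j}}$ into $\|y^0_{k,k_1}(\lambda,\lambda_1)\|_{L^2_{\lambda,\lambda_1}l^2_{k,k_1}}$ and $\|v_j\|_{X^{0,b_0}}$; for $r_j=q$ one additionally uses the trivial embedding $\|w^{(q)}_j\|_{l^q_{k_j}(|k_j|\leq N_j)}\leq N_j^{1/q}\|w^{(q)}_j\|_{l^\infty_{k_j}}=N_j^{1/q}\|v_j\|_{X^{0,2b_0/q'}_{\infty,q}}$, which accounts precisely for the extra $N_j^{1/q}$ factor in \eqref{outputA1-2}--\eqref{outputA1-3}. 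The $L^{q_0}_{\mu_0*}$-norm over $|\mu_0|\lesssim N_1^\alpha$ costs at most $N_1^{\alpha/q_0}$; by Remark \ref{numerical} this is bounded by $N_1^{\kappa^{-0.09}}$, so fixing $\epsilon=\kappa^{-0.01}$ in the counting step keeps the total $N_1$-loss within the advertised $N_1^{\kappa^{-0.1}}$.

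The argument is routine and no substantial obstacle is anticipated; the only care required is parameter bookkeeping, namely verifying that the cumulative small losses $\epsilon+\alpha/q_0$ remain below the tolerance $\kappa^{-0.1}$ dictated by the numerical hierarchy in Remark \ref{numerical}, and that the constraint $q>2$ makes the geometric-type sum $\sum_{k_2}\langle k_2-k_3\rangle^{-q/(q-2)}$ uniformly bounded.
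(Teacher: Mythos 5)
Your proposal is correct and follows essentially the same route as the paper's Algorithm A1: the identical Cauchy--Schwartz (resp.\ H\"older with exponents $(q,\tfrac{2q}{q-2})$ in $k_2$ or $k_3$) splitting of $y\,a_3$ from $a_2 S$, the counting bound $\sum_{k_1}S_{k_1,k_2,k_3}\lesssim N_1^{\epsilon}(1+N_1^{2-\alpha}\langle k_2-k_3\rangle^{-1})$ followed by the $k_3$-summation, and the same second stage inserting $y^0,w_j^{(r_j)}$ and accounting for the $N_j^{1/q}$ embedding loss and the $N_1^{\alpha/q_0}$ cost of $L^{q_0}_{\mu_0*}$. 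The parameter bookkeeping you describe ($\epsilon=\kappa^{-0.01}$, $\alpha/q_0<\kappa^{-0.09}$) matches the paper's, so there is nothing to add.
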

\begin{remarque}
We will use \eqref{outputA1-1} when $v_2,v_3$ are of type (D) or type (C) with large $L_j$. The estimate \eqref{outputA1-2} or \eqref{outputA1-3} is useful when $v_2,v_3$ are all of type (G) or (C) and the $L_j$ parameter of one type (C) term is greater than the $N_j$ parameter of the other. Note that this is the case where we are no not able to exploit Wiener chaos.  
\end{remarque}

Similarly, we have
\begin{align}\label{A1-2}
\mathcal{M}_{N_1,N_2,N_3}(y;a_2,a_3)\leq & \|y_{k,k+k_2-k_3}a_3(k_3)\|_{l_k^1l_{k_2,k_3}^2}\|a_2(k_2)S_{k+k_2-k_3,k_2,k_3}\|_{l_k^{\infty}l_{k_2,k_3}^2}\notag \\
\lesssim &\|y_{k,k_1}\|_{l_k^1l_{k_1}^2}\|a_2\|_{l_{k_2}^2}\|a_3\|_{l_{k_3}^2},
\end{align}
where we use the counting estimate
$$\sup_{|k_2|\leq N_2}\sum_{|k_3|\leq N_3}S_{k+k_2-k_3,k_2,k_3}\lesssim 1,
$$
for fixed $k$, since $|k|\sim N_1\gg N_2,N_3$. Alternatively, when $N_3\leq N_2$, we have
\begin{align*}
\mathcal{M}_{N_1,N_2,N_3}(y;a_2,a_3)\leq &\|y_{k,k+k_2-k_3}a_3(k_3)\|_{l_k^1l_{k_2,k_3}^2}\|a_2(k_2)S_{k+k_2-k_3,k_2,k_3}\|_{l_k^{\infty}l_{k_2,k_3}^2}\\
\leq &\|y_{kk_1}\|_{l_k^1l_{k_1}^2}\|a_3(k_3)\|_{l_{k_3}^2}\|a_2\|_{l_{k_2}^{q}}\|S_{k+k_2-k_3,k_2,k_3}\|_{l_{k_3}^2l_{k_2}^{\frac{2q}{q-2}} }\\
\lesssim &N_3^{\frac{1}{2}}\|y_{kk_1}\|_{l_k^1l_{k_1}^2}\|a_3(k_3)\|_{l_{k_3}^2}\|a_2\|_{l_{k_2}^{q}}.
\end{align*}
Substituting the bounds \eqref{A1-1}, \eqref{A1-2} with 
$y_{k,k_1}=y^0_{k,k_1}(\lambda,\lambda_1)$, $a_2(k_2)=w_2^{(r_2)}(\lambda_2,k_2),$ and $ a_3(k_3)=w_3^{(r_3)}(\lambda_3,k_3)$ and then taking $L_{\lambda}^{q'}L_{\lambda_1}^2L_{\lambda_2}^{2}L_{\lambda_3}^{2}L_{\mu_0*}^{q_0} $ norm or
$L_{\lambda}^{q'}L_{\lambda_1}^2L_{\lambda_2}^{q}L_{\lambda_3}^{2}L_{\mu_0*}^{q_0}$ (after switching the order of $L^p$ spaces by Minkowski if necessary), we have proved:
\begin{lemme}\label{lemA1-456}
	Assume that $N_1\gg N_2,N_3$, then we have
\begin{align}\label{outputA1-4}
\Xi_{L_2,L_3}^{N_1,N_2,N_3}(2,2)\lesssim \|y_{k,k_1}^0(\lambda,\lambda_1)\|_{l_k^1L_{\lambda}^{q'}L_{\lambda_1}^2l_{k_1}^2}\|v_2\|_{X^{0,b_0}}\|v_3\|_{X^{0,b_0}}.
\end{align}
Alternatively, when $N_3\leq N_2$ we have
\begin{align}\label{outputA1-5}
\Xi_{L_2,L_3}^{N_1,N_2,N_3}(q,2)\lesssim N_1^{\kappa^{-0.1}}N_2^{\frac{1}{q}}N_3^{\frac{1}{2}}\|y_{k,k_1}^0(\lambda,\lambda_1)\|_{l_k^1L_{\lambda}^{q'}L_{\lambda_1}^2l_{k_1}^2}\|v_2\|_{X_{\infty,q}^{0,\frac{2b_0}{q'}}}\|v_3\|_{X^{0,b_0}},
\end{align}
and when $N_2\leq N_3$, we have
\begin{align}\label{outputA1-6}
\Xi_{L_2,L_3}^{N_1,N_2,N_3}(2,q)\lesssim N_1^{\kappa^{-0.1}}N_3^{\frac{1}{q}}N_2^{\frac{1}{2}}\|y_{k,k_1}^0(\lambda,\lambda_1)\|_{l_k^1L_{\lambda}^{q'}L_{\lambda_1}^2l_{k_1}^2}\|v_2\|_{X^{0,b_0}}\|v_3\|_{X_{\infty,q}^{0,\frac{2b_0}{q'}}}.
\end{align}
\end{lemme}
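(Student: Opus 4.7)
The plan is to upgrade the purely discrete estimates derived just above the lemma (the bound \eqref{A1-2} together with its $l^q$--variant) to the mixed-norm statement, by applying them slice-by-slice in the continuous parameters $(\lambda,\lambda_1,\lambda_2,\lambda_3,\mu_0)$ and then taking the corresponding $L^p$ norms. The key input is that the counting estimate
$$ \sup_{|k_2|\leq N_2}\sum_{|k_3|\leq N_3} S_{k+k_2-k_3,\,k_2,\,k_3}\lesssim 1 $$
holds uniformly in $\mu_0$ whenever $|k|\sim N_1\gg N_2,N_3$, because $\partial_{k_1}\Phi_{k_1,k_2,k_3}$ is $\gtrsim N_1^{\alpha-1}\gg 1$ on this regime by Lemma \ref{counting1}; this is the reason one may set $a_3\in l^2_{k_3}$ and pull the supremum onto $k$ rather than paying a volume factor.

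To obtain \eqref{outputA1-4}, I would specialize \eqref{A1-2} with $y_{k,k_1}\!=\!y^0_{k,k_1}(\lambda,\lambda_1)$ and $a_j(k_j)\!=\!w_j^{(2)}(\lambda_j,k_j)$, producing a bound that is uniform in $\mu_0$ and pointwise in $\lambda,\lambda_1,\lambda_2,\lambda_3$. Then I take successively $L_{\lambda}^{q'}$, $L_{\lambda_1}^2$, $L_{\lambda_2}^2$, $L_{\lambda_3}^2$, which distributes directly across the product because the discrete constants are independent of these variables; finally the $L_{\mu_0*}^{q_0}$ norm costs only $|\{|\mu_0|\lesssim N_1^{\alpha}\}|^{1/q_0}\lesssim N_1^{\alpha/q_0}\ll N_1^{\kappa^{-0.1}}$, which for \eqref{outputA1-4} is harmless (and I can absorb it in the implicit constant, since actually no $N_1^{\kappa^{-0.1}}$ is stated there). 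By definition $\|w_j^{(2)}\|_{L^2_{\lambda_j}l^2_{k_j}}=\|v_j\|_{X^{0,b_0}}$, which closes the estimate.

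For the two alternative bounds \eqref{outputA1-5} and \eqref{outputA1-6}, I would instead use the $l^q$--variant of \eqref{A1-2} obtained by pulling $\|a_2\|_{l^q_{k_2}}$ (respectively $\|a_3\|_{l^q_{k_3}}$) out of $\mathcal{M}_{N_1,N_2,N_3}$; the counting loss produced by Cauchy--Schwarz in the remaining factor is $N_3^{1/2}$ (resp.\ $N_2^{1/2}$), again uniformly in $\mu_0$. Plugging in the slice functions, the resulting norm $L_{\lambda_2}^q l_{k_2}^q$ acting on $w_2^{(q)}$ is handled by Minkowski's inequality (switching $l_{k_2}^q$ and $L_{\lambda_2}^q$), followed by the crude embedding $l^q_{k_2}(|k_2|\lesssim N_2)\hookrightarrow N_2^{1/q}\,l^\infty_{k_2}$; this yields $N_2^{1/q}\|v_2\|_{X_{\infty,q}^{0,2b_0/q'}}$, which accounts exactly for the extra $N_2^{1/q}$ in \eqref{outputA1-5} (and symmetrically for \eqref{outputA1-6}). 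The $L_{\mu_0*}^{q_0}$ integration then contributes the remaining $N_1^{\alpha/q_0}\lesssim N_1^{\kappa^{-0.1}}$.

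There is no genuine obstacle here, since the counting has already been done and the passage from the discrete bound to the mixed Lebesgue bound is mechanical: the only care required is the Minkowski bookkeeping when permuting the order of $L^q_{\lambda_2}$ and $l^q_{k_2}$, and the verification that all $L_{\mu_0*}^{q_0}$ losses are absorbed by the $N_1^{\kappa^{-0.1}}$ buffer (recall $q_0\gg 1$ and $\kappa\gg 1$ in Remark \ref{numerical}, so $\alpha/q_0\ll \kappa^{-0.1}$).
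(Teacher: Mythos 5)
Your proposal is correct and follows essentially the same route as the paper: the pointwise-in-modulation discrete bound \eqref{A1-2} (Cauchy--Schwartz splitting $y\,a_3$ against $a_2\,S$, with the counting bounds $\sum_{k_3}S_{k+k_2-k_3,k_2,k_3}\lesssim 1$ and $\sum_{k_2}S_{k+k_2-k_3,k_2,k_3}\lesssim 1$ in the regime $|k|\sim N_1\gg N_2,N_3$, and the H\"older/$l^{2q/(q-2)}$ variant costing $(N_2\wedge N_3)^{1/2}$), followed by substituting the slice functions, taking the mixed norms, and permuting exponents by Minkowski. The only caveat is that the $L^{q_0}_{\mu_0*}$ integration genuinely produces a growing factor $N_1^{\alpha/q_0}$ rather than a constant (so it cannot literally be ``absorbed in the implicit constant'' in \eqref{outputA1-4}), but this harmless $N_1^{o(1)}$ loss is present in the paper's own statement and is swept into the unified small-power factor when the lemma is later applied.
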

\noi
$\bullet${\bf Algorithm A2: One random input }

Recall that $w_j^{(r_j)}(\lambda_j,k_j)=\lg\lambda_j\rg^{\frac{2b_0}{r_j'}}\wt{v}_j(\lambda_j,k_j)$.
According to $v_2$ is of type (C),(G) or $v_3$ is of type (C),(G), for fixed $\lambda,\lambda_1,\lambda_2,\lambda_3,\mu_0$, we have 
\begin{lemme}\label{sum21random}
	We have
	\begin{align}
	&\mathcal{M}_{N_1,N_2,N_3}(y;w_2^{(2)},w_3^{(q)})\leq \Big[\sup_{k_2}|\sigma_{k_2k_2}^{(3)}|+\Big(\sum_{k_2,k_2':k_2\neq k_2' }|\sigma_{k_2k_2'}^{(3)}|^2\Big)^{\frac{1}{2}} \Big]^{\frac{1}{2}}\|y^0_{k,k_1}(\lambda,\lambda_1)\|_{l_{k_1,k}^2}\|w_2(\lambda_2,k_2)\|_{l_{k_2}^2},\label{eq:sum21random1}\\
	&\mathcal{M}_{N_1,N_2,N_3}(y;w_2^{(q)},w_3^{(2)})\leq \Big[\sup_{k_3}|\sigma_{k_3k_3}^{(2)}|+\Big(\sum_{k_3,k_3':k_3\neq k_3'}|\sigma_{k_3k_3'}^{(2)}|^2\Big)^{\frac{1}{2}} \Big]^{\frac{1}{2}}\|y^0_{k,k_1}(\lambda,\lambda_1)\|_{l_{k_1,k}^2}\|w_3(\lambda_3,k_3)\|_{l^2}\label{eq:sum21random2},
	\end{align}
	where 
	\begin{align}\label{lambda23}
	& \sigma_{k_2k_2'}^{(3)}=\sum_{\substack{k_1,k	
			:k_1\neq k,\\|k_1|\sim |k|\sim N_1 } }w_3^{(q)}(\lambda_3,k+k_2-k_1)\ov{w_3^{(q)}}(\lambda_3,k+k_2'-k_1)S_{k_1,k_2,k+k_2-k_1}S_{k_1,k_2',k+k_2'-k_1},\\
	&\sigma_{k_3k_3'}^{(2)}=\sum_{\substack{k_1,k	
			:k_1\neq k,\\|k_1|\sim |k|\sim N_1 } }\ov{w_2^{(q)}}(\lambda_2,k_1+k_3-k)w_2^{(q)}(\lambda_2,k_1+k_3'-k)S_{k_1,k_1+k_3-k,k_3}S_{k_1,k_1+k_3'-k,k_3'},
	\end{align}
	Note that the matrix elements $\sigma_{k_jk_j'}^{(j)}$ are functions of $\lambda_j,\mu_0$. 
\end{lemme}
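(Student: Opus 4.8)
The plan is to prove the two inequalities \eqref{eq:sum21random1} and \eqref{eq:sum21random2} by recognizing the multilinear sum as a pairing against a linear operator on $\ell^2$ and applying Lemma \ref{matrix-bound}. Since the two estimates are symmetric (after relabeling $k_2 \leftrightarrow k_3$ and $N_2 \leftrightarrow N_3$, and swapping which of $v_2, v_3$ is the random/Gaussian input whose Wiener chaos we want to exploit), I will prove only \eqref{eq:sum21random1} in detail.

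\textbf{Step 1: Reduce to a quadratic form.} Fix $\lambda, \lambda_1, \lambda_2, \lambda_3, \mu_0$. In $\mathcal{M}_{N_1,N_2,N_3}(y; w_2^{(2)}, w_3^{(q)})$ the variable $w_2^{(2)}(\lambda_2, k_2)$ is the ``non-random'' entry we leave out, and we want to view the remaining summation over $k, k_1, k_3$ (with $k_3 = k + k_2 - k_1$ forced by $(k_1,k_2,k_3)\in\Gamma(k)$, equivalently $k = k_1 - k_2 + k_3$, $k_1 - k_2 + k_3 = k$, so writing $k_3 = k + k_2 - k_1$) as a bilinear expression in $y^0$ and $w_3^{(q)}$. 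Using Cauchy--Schwarz in $k_2$,
$$
\mathcal{M}_{N_1,N_2,N_3}(y;w_2^{(2)},w_3^{(q)}) \le \|w_2(\lambda_2,k_2)\|_{\ell^2_{k_2}} \cdot \Big\| \sum_{\substack{k, k_1: k_1\neq k \\ |k_1|\sim |k| \sim N_1}} y^0_{k,k_1}(\lambda,\lambda_1)\, S_{k_1,k_2,k+k_2-k_1}\, w_3^{(q)}(\lambda_3, k+k_2-k_1) \Big\|_{\ell^2_{k_2}}.
$$
So it suffices to bound the $\ell^2_{k_2}$-norm of the inner sum by $\big[\sup_{k_2}|\sigma^{(3)}_{k_2 k_2}| + (\sum_{k_2\neq k_2'}|\sigma^{(3)}_{k_2k_2'}|^2)^{1/2}\big]^{1/2}\|y^0_{k,k_1}(\lambda,\lambda_1)\|_{\ell^2_{k,k_1}}$.

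\textbf{Step 2: Identify $\sigma^{(3)}$ as the kernel of $\Lambda^*\Lambda$.} Introduce the operator $\Lambda: \ell^2_{(k,k_1)} \to \ell^2_{k_2}$ defined by
$$
(\Lambda y)_{k_2} = \sum_{\substack{k,k_1: k_1\neq k\\ |k_1|\sim|k|\sim N_1}} y_{k,k_1}\, S_{k_1,k_2,k+k_2-k_1}\, w_3^{(q)}(\lambda_3, k+k_2-k_1).
$$
The inner sum in Step 1 is exactly $(\Lambda y^0)_{k_2}$, so its $\ell^2_{k_2}$-norm is at most $\|\Lambda\|_{\ell^2\to\ell^2}\|y^0\|_{\ell^2_{k,k_1}}$. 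Now compute $\Lambda^*\Lambda$ acting on $\ell^2_{(k,k_1)}$: its kernel is
$$
\sum_{k_2} \ov{S_{k_1,k_2,k+k_2-k_1}}\, \ov{w_3^{(q)}}(\lambda_3,k+k_2-k_1)\, S_{k_1',k_2,k'+k_2-k_1'}\, w_3^{(q)}(\lambda_3,k'+k_2-k_1').
$$
This is not literally of the displayed form $\sigma^{(3)}_{k_2k_2'}$; rather the roles of summed versus free indices are interchanged. The correct move is instead to realize $\mathcal{M}$ as the pairing $\langle \Lambda^* y^0, w_2\rangle$ where $\Lambda^*: \ell^2_{k_2} \to \ell^2_{(k,k_1)}$ — but more directly, one views the sum over $(k,k_1)$ as defining a matrix $\Sigma = (\sigma^{(3)}_{k_2 k_2'})$ via $\Sigma = A A^*$ where $A$ has entries $A_{k_2}^{(k_1,k)} = S_{k_1,k_2,k+k_2-k_1} w_3^{(q)}(\lambda_3,k+k_2-k_1)\mathbf{1}_{k_1\neq k}$, so that indeed $\sigma^{(3)}_{k_2k_2'} = \sum_{k_1,k: k_1\neq k} A^{(k_1,k)}_{k_2}\ov{A^{(k_1,k)}_{k_2'}}$, matching \eqref{lambda23} after the substitution $k_3 = k+k_2-k_1$, $k_3' = k + k_2' - k_1$ (note that in \eqref{lambda23} the summation index $k$ plays the role of $k_3$-shifted; one checks the index bookkeeping carefully here). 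Then $\|\Lambda\|_{\ell^2\to\ell^2} = \|A^*\|^{1/2} = \|\Sigma\|_{\ell^2_{k_2}\to\ell^2_{k_2}}^{1/2}$, where I use $\Lambda y = A^* y$ reading $y$ as indexed by $(k,k_1)$ — i.e. $\Lambda = A^*$ literally.

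\textbf{Step 3: Apply Lemma \ref{matrix-bound}.} By Lemma \ref{matrix-bound} applied to the operator $\Sigma$ with kernel $(\sigma^{(3)}_{k_2k_2'})$,
$$
\|\Sigma\|_{\ell^2\to\ell^2} \le \sup_{k_2}|\sigma^{(3)}_{k_2k_2}| + \Big(\sum_{k_2\neq k_2'}|\sigma^{(3)}_{k_2k_2'}|^2\Big)^{1/2},
$$
hence $\|\Lambda\|_{\ell^2\to\ell^2} = \|\Sigma\|^{1/2} \le \big[\sup_{k_2}|\sigma^{(3)}_{k_2k_2}| + (\sum_{k_2\neq k_2'}|\sigma^{(3)}_{k_2k_2'}|^2)^{1/2}\big]^{1/2}$, which is exactly what is needed to conclude \eqref{eq:sum21random1}. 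For \eqref{eq:sum21random2} one repeats verbatim with the roles of $v_2$ (now the random input carrying the $\ell^q$-norm and whose squares produce $\sigma^{(2)}$) and $v_3$ (now the input left out, with its $\ell^2$-norm pulled out by Cauchy--Schwarz) exchanged, and with $k_3$ in place of $k_2$.

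\textbf{Main obstacle.} The only genuinely delicate point is the index bookkeeping in Step 2 — verifying that after substituting the $\Gamma(k)$-constraint ($k = k_1 - k_2 + k_3$) and relabeling, the Gram matrix $A A^*$ really has the entries $\sigma^{(3)}_{k_2k_2'}$ written in \eqref{lambda23}, in particular that the free variables of $A$ are exactly $(k_1, k)$ with $k_1 \ne k$, $|k_1|\sim|k|\sim N_1$, and that the $S$-factors and $w_3^{(q)}$-arguments line up as stated ($k+k_2-k_1$ versus $k+k_2'-k_1$). Once this is pinned down, the rest is a mechanical application of Cauchy--Schwarz and Lemma \ref{matrix-bound}; there is no analytic difficulty. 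I would present the argument compactly: set up $A$, note $\mathcal{M} = |\langle A^* y^0, w_2\rangle|$, Cauchy--Schwarz, then cite Lemma \ref{matrix-bound} for $\|AA^*\|$.
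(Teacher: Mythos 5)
Your argument is correct and is essentially the paper's proof: Cauchy--Schwarz in $k_2$ to pull out $\|w_2\|_{\ell^2_{k_2}}$, identify the remaining quantity as $\|\mathcal{G}y^0\|_{\ell^2_{k_2}}$ for the operator $\mathcal{G}:\ell^2_{k,k_1}\to\ell^2_{k_2}$ (your $\Lambda=A$), use $\|\mathcal{G}\|^2=\|\mathcal{G}\mathcal{G}^*\|_{\ell^2_{k_2}\to\ell^2_{k_2}}$, recognize the kernel of $\mathcal{G}\mathcal{G}^*$ as $\sigma^{(3)}_{k_2k_2'}$, and invoke Lemma~\ref{matrix-bound}. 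The only wrinkle is the brief detour in Step~2 where you first write down $\Lambda^*\Lambda$ (which lives on $\ell^2_{k,k_1}$ and is the wrong object) before self-correcting to $\Lambda\Lambda^*$, and the inconsistent line ``$\|\Lambda\|=\|A^*\|^{1/2}=\|\Sigma\|^{1/2}$, i.e.\ $\Lambda=A^*$ literally'' --- under your own indexing convention ($A$ has row $k_2$, column $(k_1,k)$) one has $\Lambda=A$, $\Sigma=AA^*=\Lambda\Lambda^*$, and $\|\Lambda\|=\|\Sigma\|^{1/2}$; the middle expression $\|A^*\|^{1/2}$ should read $\|AA^*\|^{1/2}$. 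These are notational slips, not substantive gaps, and the conclusion is unaffected.
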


\begin{proof}
	We only prove \eqref{eq:sum21random1} since the proof for \eqref{eq:sum21random2} is similar. Consider the operator (depending on $\lambda_2,\mu_0$):
	$$ \mathcal{G}_2: l_{k,k_1}^2\rightarrow l_{k_2}^2: y_{k,k_1}\mapsto \sum_{k_1,k}y_{k,k_1}\widetilde{w}_3(\lambda_3,k+k_2-k_1)S_{k_1,k_2,k+k_2-k_1},
	$$
	where in the summation, we do not display the constraints $k_1\neq k, |k_1|\sim |k|\sim N$. By Cauchy-Schwartz, 
	\begin{align*}
 \mathcal{M}_{N_1,N_2,N_3}(y;w_2^{(2)}, w_3^{(q)})=&\Big|\sum_{k_2}\mathcal{G}_2(y^0)(k_2)\ov{\widetilde{w}_2}(k_2)\Big|=|(\mathcal{G}_2(y^0),\widetilde{w}_2)_{l^2}|\\ \leq &\|\mathcal{G}_2\|_{l_{k_1,k}^2\rightarrow l_{k_2}^2}\|y^0_{k_1,k}\|_{l_{k_1,k}^2}\|\widetilde{w}_2\|_{l_{k_2}^2}.
	\end{align*}
	Note that $\|\mathcal{G}_2\|_{l_{k_1,k}^2\rightarrow l_{k_2}^2}^2=\|\mathcal{G}_2\mathcal{G}_2^*\|_{l_{k_2}^2\rightarrow l_{k_2}^2}$, and one verifies that matrix element of $\mathcal{G}_2\mathcal{G}_2^*$ is exactly
	$ \sigma_{k_2k_2'}^{(2)}.
	$ Hence by Lemma \ref{matrix-bound}, we obtain \eqref{eq:sum21random1}. The proof of Lemma \ref{sum21random} is complete.
\end{proof}
Denote by $h_{k_jk_j^*}^{N_jL_j(r_j)}(\lambda_j,k_j)=\lg\lambda_j\rg^{\frac{2b_0}{r_j'}}\wt{h}_{k_jk_j^*}^{N_jL_j}(\lambda_j,k_j)$, and when there is no risk of confusing, we will denote simply by $h_{k_jk_j^*}^{(r_j)}(\lambda_j,k_j)$. When $v_3$ is of type (G) or (C), we have
\begin{align*} 
&\Upsilon_{L_2,L_3}^{N_1,N_2,N_3}(2,q)\\ \leq&\big(\|\sigma_{k_2k_2}^{(3)}(\lambda_3,\mu_0)\|_{L_{\lambda_3}^{\frac{q}{2}}L_{\mu_0*}^{\frac{q_0}{2}}l_{k_2}^{q_0}}^{\frac{1}{2}}+\|\sigma_{k_2k_2'}^{(3)}(\lambda_3,\mu_0)\mathbf{1}_{k_2\neq k_2'}\|_{L_{\lambda_3}^{\frac{q}{2}}L_{\mu_0*}^{\frac{q_0}{2}}l_{k_2,k_2'}^2 }^{\frac{1}{2}} \big)\|y_{kk_1}^0(\lambda,\lambda_1)\|_{L_{\lambda,\lambda_1}^2l_{k,k_1}^2}\|v_2\|_{X^{0,b_0}}
\end{align*}
and when $v_2$ is of type (G) or (C), we have
\begin{align*} 
&\Upsilon_{L_2,L_3}^{N_1,N_2,N_3}(q,2)\\ \leq&\big(\|\sigma_{k_3k_3}^{(2)}(\lambda_2,\mu_0)\|_{L_{\lambda_2}^{\frac{q}{2}}L_{\mu_0*}^{\frac{q_0}{2}}l_{k_3}^{q_0}}^{\frac{1}{2}}+\|\sigma_{k_3k_3'}^{(2)}(\lambda_2,\mu_0)\mathbf{1}_{k_3\neq k_3'}\|_{L_{\lambda_2}^{\frac{q}{2}}L_{\mu_0*}^{\frac{q_0}{2}}l_{k_3,k_3'}^2 }^{\frac{1}{2}} \big)\|y_{kk_1}^0(\lambda,\lambda_1)\|_{L_{\lambda,\lambda_1}^2l_{k,k_1}^2}\|v_3\|_{X^{0,b_0}}.
\end{align*}
Note that
$$ \sigma_{k_2k_2'}^{(3)}=\sum_{\substack{k_1,k:k_1\neq k\\
|k_1|\sim |k|\sim N_1 } }S_{k_1,k_2,k+k_2-k_1}S_{k_1,k_2',k+k_2'-k_1}\!\!\!\!\!\sum_{\substack{|k_3^*|,|k_3'^*|\sim N_3\\ |k_3-k_3^*|\leq L_3N_3^{\epsilon}\\
|k_3'-k_3'^*|\leq L_3N_3^{\epsilon}
} }h_{k+k_2-k_1,k_3^*}^{(q)}(\lambda_3)\ov{h}_{k+k_2'-k_1,k_3'^*}^{(q)}(\lambda_3)
\frac{g_{k_3^*}\ov{g}_{k_3'^*} }{[k_3^*]^{\frac{\alpha}{2}}[k_3^*]^{\frac{\alpha}{2}}  },
$$
$$ \sigma_{k_3k_3'}^{(2)}=\sum_{\substack{k_1,k:k_1\neq k\\
		|k_1|\sim |k|\sim N_1 } }S_{k_1,k_1+k_3-k,k_3}S_{k_1,k_1+k_3'-k,k_3'}\!\!\!\!\!\sum_{\substack{|k_2^*|,|k_2'^*|\sim N_2\\ |k_2-k_2^*|\leq L_2N_2^{\epsilon}\\
		|k_2'-k_2'^*|\leq L_2N_2^{\epsilon}
} }h_{k_1+k_3-k,k_3^*}^{(q)}(\lambda_3)\ov{h}_{k_1+k_3'-k,k_3'^*}^{(q)}(\lambda_3)
\frac{g_{k_2^*}\ov{g}_{k_2'^*} }{[k_2^*]^{\frac{\alpha}{2}}[k_2^*]^{\frac{\alpha}{2}}  }.
$$
If we are able to establish the estimates:
\begin{align}\label{inputA2-1}
 \sup_{|k_2|\sim N_2}\Big(\mathbb{E}^{\mathcal{C}_3}[|\sigma_{k_2k_2}^{(3)}|^2]\Big)^{\frac{1}{2}}+\Big(\sum_{k_2\neq k_2'}\mathbb{E}^{\mathcal{C}_3}[|\sigma_{k_2k_2'}^{(3)}|^2]\Big)^{\frac{1}{2}}\leq \Lambda_3(N_1,N_2,N_3,L_2,L_3)\|h_{k_3k_3^*}^{(q)}(\lambda_3)\|_{l_{k_3}^{\infty}l_{k_3^*}^2}^2,
\end{align}
and
\begin{align}\label{inputA2-2}
\sup_{|k_3|\sim N_3}\Big(\mathbb{E}^{\mathcal{C}_2}[|\sigma_{k_3k_3}^{(2)}|^2]\Big)^{\frac{1}{2}}+\Big(\sum_{k_3\neq k_3'}\mathbb{E}^{\mathcal{C}_2}[|\sigma_{k_3k_3'}^{(2)}|^2]\Big)^{\frac{1}{2}}\leq \Lambda_2(N_1,N_2,N_3,L_2,L_3)\|h_{k_2k_2^*}^{(q)}(\lambda_2)\|_{l_{k_2}^{\infty}l_{k_2^*}^2}^2,
\end{align}
then from the estimate
$$ \|h_{k_jk_j^*}^{(q)}(\lambda_j)\|_{L_{\lambda_j}^ql_{k_j}^{\infty}l_{k_j^*}^2}\leq \|h_{k_jk_j^*}^{(q)}(\lambda_j)\|_{L_{\lambda_j}^ql_{k_j}^{q}l_{k_j^*}^2}\lesssim N_j^{\frac{1}{q}}
\|h_{k_jk_j^*}^{(q)}(\lambda_j)\|_{l_{k_j}^{\infty}L_{\lambda_j}^ql_{k_j^*}^2}
$$ 
and the large deviation property (Corollary \ref{largedeviation}), we deduce that
outside a set of probability $<\mathrm{e}^{-N_1^{\theta}R}$:
\begin{align}\label{outputA2-1}
\Upsilon_{L_2,L_3}^{N_1,N_2,N_3}(2,q)\lesssim RN_1^{\frac{\alpha}{q_0}+\theta}N_3^{\frac{1}{q}}\Lambda_3(N_1,N_2,N_3,L_2,L_3)^{\frac{1}{2}}\|y_{kk_1}^0(\lambda,\lambda_1)\|_{L_{\lambda,\lambda_1}^2l_{k,k_1}^2}\|v_2\|_{X^{0,b_0}}\|h^{N_3L_3}\|_{S^{b_0,q}},
\end{align}
and
\begin{align}\label{outputA2-2}
\Upsilon_{L_2,L_3}^{N_1,N_2,N_3}(q,2)\lesssim N_1^{\frac{\alpha}{q_0}+\theta}N_2^{\frac{1}{q}}\Lambda_2(N_1,N_2,N_3,L_2,L_3)^{\frac{1}{2}}\|y_{kk_1}^0(\lambda,\lambda_1)\|_{L_{\lambda,\lambda_1}^2l_{k,k_1}^2}\|v_3\|_{X^{0,b_0}}\|h^{N_2L_2}\|_{S^{b_0,q}}.
\end{align}
The algorithm to estimate $\Xi_{L_2,L_3}^{N_1,N_2,N_3}(2,q)$ and $\Xi_{L_2,L_3}^{N_1,N_2,N_3}(2,q)$ is similar. Denote by
$$ \sigma_{k,k_1;k_2}^{(3)}=\sum_{\substack{k_3^*:|k_3^*|\sim N_3\\
|k_3-k_3^*|\leq L_3N_3^{\epsilon} } }
h_{k+k_2-k_1,k_3^*}^{(q)}(\lambda_3)\frac{g_{k_3^*}}{[k_3^*]^{\frac{\alpha}{2}}}S_{k_1,k_2,k+k_2-k_1}   
$$
and
$$ \sigma_{k,k_1;k_3}^{(2)}=\sum_{\substack{k_2^*:|k_2^*|\sim N_2\\
		|k_2-k_2^*|\leq L_2N_2^{\epsilon} } }
h_{k_1+k_3-k,k_2^*}^{(q)}(\lambda_2)\frac{g_{k_2^*}}{[k_2^*]^{\frac{\alpha}{2}}}S_{k_1,k_1+k_3-k,k_3}.
$$
Then by Lemma \ref{matrix-bound2} and Corollary \ref{largedeviation}, if we are able to establish the estimates:
\begin{align}\label{inputA2-3}
&\sup_{k,k_1}\sum_{k_2}\mathbb{E}^{\mathcal{C}_3}[|\sigma_{k,k_1;k_2}^{(3)}|^2]+\!\!\!\!\!\sup_{k,k':|k|,|k'|\sim N_1}\Big(\!\!\!\!\sum_{\substack{k_1,k_1'\\
(k,k_1)\neq (k',k_1') } }\!\!\!\!\!\! \mathbb{E}^{\mathcal{C}_3}\Big[\Big|\sum_{k_2}\sigma_{k',k_1';k_2}^{(3)}\ov{\sigma}_{k,k_1;k_2}^{(3)}
\Big|^2\Big]
 \Big)^{\frac{1}{2}} \notag \\
 \leq & \wt{\Lambda}_3(N_1,N_2,N_3,L_2,L_3)\|h_{k_3k_3^*}^{(q)}(\lambda_3)\|_{l_{k_3}^{\infty}l_{k_3^*}^2}^2,
\end{align}
and
\begin{align}\label{inputA2-4}
&\sup_{k,k_1}\sum_{k_3}\mathbb{E}^{\mathcal{C}_2}[|\sigma_{k,k_1;k_3}^{(2)}|^2]+\sup_{k,k':|k|,|k'|\sim N_1}\Big(\!\!\!\!\sum_{\substack{k_1,k_1'\\
		(k,k_1)\neq (k',k_1') } } \!\!\!\!\!\!\mathbb{E}^{\mathcal{C}_2}\Big[\Big|\sum_{k_3}\sigma_{k',k_1';k_3}^{(2)}\ov{\sigma}_{k,k_1;k_3}^{(2)}
\Big|^2\Big]
\Big)^{\frac{1}{2}} \notag \\
\leq & \wt{\Lambda}_2(N_1,N_2,N_3,L_2,L_3)\|h_{k_2k_2^*}^{(q)}(\lambda_2)\|_{l_{k_2}^{\infty}l_{k_2^*}^2}^2,
\end{align}
then outside a set of probability $<\mathrm{e}^{-N_1^{\theta}R}$, we have
\begin{align}\label{outputA2-3}
\Xi_{L_2,L_3}^{N_1,N_2,N_3}(2,q)\lesssim RN_1^{\frac{\alpha}{q_0}+\theta}N_3^{\frac{1}{q}}\widetilde{\Lambda}_3(N_1,N_2,N_3,L_2,L_3)^{\frac{1}{2}}\|y_{kk_1}^0(\lambda,\lambda_1)\|_{l_k^1L_{\lambda}^{q'}L_{\lambda_1}^2l_{k_1}^2 }\|v_2\|_{X^{0,b_0}}\|h^{N_3L_3}\|_{S^{b_0,q}}
\end{align}
and
\begin{align}\label{outputA2-4}
\Xi_{L_2,L_3}^{N_1,N_2,N_3}(q,2)\lesssim RN_1^{\frac{\alpha}{q_0}+\theta}N_2^{\frac{1}{q}}\widetilde{\Lambda}_2(N_1,N_2,N_3,L_2,L_3)^{\frac{1}{2}}\|y_{kk_1}^0(\lambda,\lambda_1)\|_{l_k^1L_{\lambda}^{q'}L_{\lambda_1}^2l_{k_1}^2 }\|v_3\|_{X^{0,b_0}}\|h^{N_2L_2}\|_{S^{b_0,q}}.
\end{align}

\noi
$\bullet${\bf Algorithm A3: Two random inputs}

We only use this algorithm to deal with the case where $v_2$ and $v_3$ are both of type (G) or (C) with characterized parameters $(N_2,L_2)$ and $(N_3,L_3)$ satisfying $(L_2\vee L_3)\ll (N_2\wedge N_3)$. Assume without loss of generality that $N_2\leq N_3$, we have 
\begin{align*}
|\Upsilon_{L_2,L_3}^{N_1,N_2,N_3}(2,q)|\leq \|y_{k,k_1}^0(\lambda,\lambda_1)\|_{l_{k,k_1}^2}\|\mathbf{1}_{k_1\neq k}G_{k_1,k}\|_{l_{k,k_1}^2},
\end{align*}
and
\begin{align*}
|\Xi_{L_2,L_3}^{N_1,N_2,N_3}(2,q)|\leq \|y_{k,k_1}^0(\lambda,\lambda_1)\|_{l_k^1l_{k_1}^2}\|\mathbf{1}_{k_1\neq k}G_{k_1,k}\|_{l_k^{\infty}l_{k_1}^2},
\end{align*}
where
\begin{align}\label{Gkk1}
 G_{k,k_1}(\lambda_2,\lambda_3,\mu_0):=\sum_{k_3}S_{k_1,k_1-k+k_3,k_3}\sum_{k_2^*,k_3^*}h_{k_1-k+k_3,k_2^*}^{(q)}(\lambda_2)h_{k_3k_3^*}^{(2)}(\lambda_3)\frac{\ov{g}_{k_2^*}g_{k_3^*} }{[k_2^*]^{\frac{\alpha}{2}}[k_3^*]^{\frac{\alpha}{2}}}.
\end{align}
Note that $h_{k_2k_2^*}^{(q)}$ and $ h_{k_3k_3^*}^{(2)}$ are $\mathcal{C}_{L_2\vee L_3}$ measurable, independent of $\{g_{k_2^*}(\omega):|k_2^*|\sim N_2 \}$ and $\{g_{k_3^*}(\omega):|k_3^*|\sim N_3 \}$. If we are able to show that
\begin{align}\label{inputA3-1}
\sum_{k_1,k:k_1\neq k}\mathbb{E}^{\mathcal{C}}[|G_{k_1,k}|^2]\leq \Lambda_{2,3}(N_1,N_2,N_3,L_2,L_3)\|h_{k_3k_3^*}^{(q)}\|_{l_{k_3}^{\infty}l_{k_3^*}^2}^2\|h_{k_2k_2^*}^{(2)}\|_{l_{k_2k_2^*}^2}^2
\end{align}
and
\begin{align}\label{inputA3-2}
\sup_{|k|\sim N_1}\sum_{k_1}\mathbb{E}^{\mathcal{C}}[|G_{k_1,k}|^2]\leq \wt{\Lambda}_{2,3}(N_1,N_2,N_3,L_2,L_3)\|h_{k_3k_3^*}^{(q)}\|_{l_{k_3}^{\infty}l_{k_3^*}^2}^2\|h_{k_2k_2^*}^{(2)}\|_{l_{k_2k_2^*}^2}^2
\end{align}
where $\mathcal{C}=\mathcal{C}_{L_2\vee L_3}$,
then from Corollary \ref{largedeviation}, outside a set of probability $<\mathrm{e}^{-N_1^{\theta}R}$, we have
\begin{align}\label{outputA3-1}
|\Upsilon_{L_2,L_3}^{N_1,N_2,N_3}(2,q)|\lesssim RN_1^{\frac{\alpha}{q_0}+\theta}N_3^{\frac{1}{q}}\Lambda_{2,3}(N_1,N_2,N_3,L_2,L_3)^{\frac{1}{2}}\|h^{N_3L_3}\|_{S^{b_0,q}}\|h^{N_2L_2}\|_{Z^{b_0}}
\end{align}
and
\begin{align}\label{outputA3-2}
|\Xi_{L_2,L_3}^{N_1,N_2,N_3}(2,q)|\lesssim RN_1^{\frac{\alpha}{q_0}+\frac{1}{q}+\theta}N_3^{\frac{1}{q}}\widetilde{\Lambda}_{2,3}(N_1,N_2,N_3,L_2,L_3)^{\frac{1}{2}}\|h^{N_3L_3}\|_{S^{b_0,q}}\|h^{N_2L_2}\|_{Z^{b_0}}.
\end{align}

We remark that when $v_2$ is of type (G) or $v_2,v_3$ are both of type (G) (note that $N_2\leq N_3$), we should instead estimate $\Upsilon_{L_2,L_3}^{N_1,N_2,N_3}(q,q)$ and $\Xi_{L_2,L_3}^{N_1,N_2,N_3}(q,q)$ and the norms on the right side of \eqref{outputA3-1} and \eqref{outputA3-2} should be modified by $\|h^{N_2L_2}\|_{S^{b_0,q}}\|h^{N_3L_3}\|_{S^{b_0,q}}$.

\subsection{Implementing the algorithms}

In this subsection, we compute $$\Upsilon_{L_2,L_3}^{N_1,N_2,N_3}\text{ and }\; \Xi_{L_2,L_3}^{N_1,N_2,N_3}$$
where\footnote{For fixed $v_2,v_3$ of characteristic parameters $(L_2,N_2), (L_3,N_3)$, we are free to choose $r_2,r_3\in\{2,q\}$, meaning that we specify \emph{in a priori} the norms of $h^{N_2,L_2}$ and $h^{N_3,L_3}$ to be used. However, we are not able to use both norms of $h^{N_2,L_2}, h^{N_3,L_3}$ in \emph{a single} multi-linear expression. }
$$ \Upsilon_{L_2,L_3}^{N_1,N_2,N_3}:=\min\{\Upsilon_{L_2,L_3}^{N_1,N_2,N_3}(r_2,r_3):(r_2,r_3)\in \{2,q\}\times \{2,q\} \}$$ and
$$\Xi_{L_2,L_3}^{N_1,N_2,N_3}:= \min\{\Xi_{L_2,L_3}^{N_1,N_2,N_3}(r_2,r_3):(r_2,r_3)\in \{2,q\}\times \{2,q\} \}
$$
for all possible $N_1,N_2,N_3$ such that $N_1\gg N_2,N_3$ and $L_j<N_j^{1-\delta},j=2,3$. This will be done by executing the {\bf Algorithm A1}-{\bf Algorithm A3} that we have just described.

\noi
$\bullet${\bf Two random inputs}

We begin with the case that $v_2,v_3$ are both of type (G) or (C). 
\begin{lemme}\label{2random-Zb} 
We normalize $\|y_{kk_1}^0(\lambda,\lambda_1)\|_{L_{\lambda,\lambda_1}^2l_{k,k_1}^2}=1$. Assume that $v_2,v_3$ are both of type (G) or (C) with characterized parameters $(N_2,L_2),(N_3,L_3)$ satisfying $L_2\vee L_3\ll (N_2\wedge N_3)$. Then outside a set of probability $<\mathrm{e}^{-N_1^{\theta}R}$, we have:
\begin{align*}
&\mathrm{(i)}\text{ If }N_2\ll N_3\ll N_1 \text{  and $v_2$ is of type (C), then }\\ 
& \boxed{|\Upsilon_{L_2,L_3}^{N_1,N_2,N_3}(2,q)|\lesssim RN_1^{\theta+3\kappa^{-0.1}}(N_2N_3)^{-\frac{\alpha}{2}+\epsilon_1}N_2^{1-\frac{\alpha}{2}}(N_3^{\frac{1}{2}}+N_1^{1-\frac{\alpha}{2}})(L_2\wedge L_3)^{\frac{1}{2}}(L_2L_3)^{-\nu}}
\\ 
&\mathrm{(ii)}\text{ If }N_3\ll N_2\ll N_1 \text{  and $v_3$ is of type (C), then }\\ 
& \boxed{|\Upsilon_{L_2,L_3}^{N_1,N_2,N_3}(q,2)|\lesssim RN_1^{\theta+3\kappa^{-0.1}}(N_2N_3)^{-\frac{\alpha}{2}+\epsilon_1}N_3^{1-\frac{\alpha}{2}}(N_2^{\frac{1}{2}}+N_1^{1-\frac{\alpha}{2}})(L_2\wedge L_3)^{\frac{1}{2}}(L_2L_3)^{-\nu}}
\\ 
&\mathrm{(iii)}\text{ If }N_2\ll N_3\ll N_1 \text{  and $v_2$ is of type (G), then }\\ 
&\boxed{ |\Upsilon_{L_2,L_3}^{N_1,N_2,N_3}(q,q)|\lesssim RN_1^{\theta+3\kappa^{-0.1}}(N_2N_3)^{-\frac{\alpha}{2}+\epsilon_1}N_2^{\frac{1}{2}}(N_3^{\frac{1}{2}}+N_1^{1-\frac{\alpha}{2}})L_3^{-\nu}}\\
&\mathrm{(iv)}\text{ If }N_3\ll N_2\ll N_1 \text{  and $v_3$ is of type (G), then }\\ 
& \boxed{|\Upsilon_{L_2,L_3}^{N_1,N_2,N_3}(q,q)|\lesssim RN_1^{\theta+3\kappa^{-0.1}}(N_2N_3)^{-\frac{\alpha}{2}+\epsilon_1}N_3^{\frac{1}{2}}(N_2^{\frac{1}{2}}+N_1^{1-\frac{\alpha}{2}})L_2^{-\nu}}\\
&\mathrm{(v)}\text{ If }N_2\sim N_3\ll N_1 \text{and at least one of $v_2,v_3$ is of type (C), then }\\ 
&\boxed{ |\Upsilon_{L_2,L_3}^{N_1,N_2,N_3}(2,q)|\lesssim RN_1^{\theta+3\kappa^{-0.1}}N_1^{1-\frac{\alpha}{2}}N_2^{-\frac{7(\alpha-1)}{4}+2\epsilon_1}(L_2L_3)^{-\nu}}\\
&\mathrm{(vi)}\text{ If }N_2\sim N_3\ll N_1 \text{and  $v_2,v_3$ are both of type (G), then }\\ 
&\boxed{ |\Upsilon_{L_2,L_3}^{N_1,N_2,N_3}(q,q)|\lesssim RN_1^{\theta+3\kappa^{-0.1}}N_2^{2\epsilon_1}(N_2^{-(\alpha-1)}+N_1^{1-\frac{\alpha}{2}}N_2^{\frac{1}{2}-\alpha} )}
\end{align*}
\end{lemme}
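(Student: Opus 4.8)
The plan is to prove Lemma \ref{2random-Zb} by running \textbf{Algorithm A3} (two random inputs), which is designed precisely for the standing hypothesis that $v_2,v_3$ are both of type (G) or (C) with $L_2\vee L_3\ll N_2\wedge N_3$. Up to relabeling $v_2\leftrightarrow v_3$ we may assume $N_2\le N_3$. Recalling that $\Upsilon_{L_2,L_3}^{N_1,N_2,N_3}(r_2,r_3)$ is the norm \eqref{newnameKernel} and that Algorithm A3 bounds $|\Upsilon_{L_2,L_3}^{N_1,N_2,N_3}(2,q)|$ by $\|y_{k,k_1}^0\|_{l^2_{k,k_1}}\cdot\|\mathbf 1_{k_1\ne k}G_{k,k_1}\|_{l^2_{k,k_1}}$ with $G_{k,k_1}(\lambda_2,\lambda_3,\mu_0)$ as in \eqref{Gkk1}, the whole lemma reduces to the two conditional-variance estimates \eqref{inputA3-1} and \eqref{inputA3-2}: once these are proved with the explicit functions $\Lambda_{2,3},\widetilde\Lambda_{2,3}$, the boxed bounds follow from \eqref{outputA3-1}--\eqref{outputA3-2} after substituting the a priori norms \eqref{eq(C):hypothesis2}, \eqref{eq(C):hypothesis3} of $h^{N_2,L_2},h^{N_3,L_3}$ (and, in cases (iii),(iv),(vi), the explicit $N_j^{-\alpha/2+\epsilon_1}$ size of a type-(G) factor). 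Since the lemma only asserts $\Upsilon$-type bounds we only need the \eqref{inputA3-1}/\eqref{outputA3-1} branch.

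First I would compute $\mathbb E^{\mathcal C}[|G_{k,k_1}|^2]$ with $\mathcal C=\mathcal C_{L_2\vee L_3}$ by Wick's theorem applied to the high Gaussians $\bar g_{k_2^*}g_{k_3^*}$, $|k_2^*|\sim N_2$, $|k_3^*|\sim N_3$, which are independent of the $\mathcal C$-measurable kernels $h^{N_2,L_2},h^{N_3,L_3}$. Before that I would use the almost-localization \eqref{almostlocal} to restrict to $|k_j-k_j^*|\lesssim L_jN_j^{\epsilon}$, hence $|k_j|\sim|k_j^*|\sim N_j$, exactly as in the ``Notational simplifications'' subsection. The contraction present for all $N_2,N_3$ is the variance pairing $k_2^*=\widetilde k_2^*$, $k_3^*=\widetilde k_3^*$; when $N_2\sim N_3$ there is an extra resonant pairing $k_2^*=\widetilde k_3^*$, $k_3^*=\widetilde k_2^*$, which forces $|k_2^*|\sim|k_3^*|$ and which I would estimate separately. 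After contracting, the variance contribution is a double sum over $k_3,\widetilde k_3$ of the counting indicators $S_{k_1,k_1-k+k_3,k_3}\,S_{k_1,k_1-k+\widetilde k_3,\widetilde k_3}$ multiplied by bilinear products of $h^{(q)}_{\cdot,k_2^*}(\lambda_2)$ and $h^{(2)}_{\cdot,k_3^*}(\lambda_3)$, the overlap of the two $S$-constraints forcing $\langle k_3-\widetilde k_3\rangle\lesssim L_2\wedge L_3$.

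The counting is where the case distinction lives. In the non-resonant regime $N_2\ll N_3\ll N_1$ (cases (i)--(iv)) I would sum over $k_1$ (equivalently $k$) by Lemma \ref{counting1}, picking up $N_1^{\epsilon}(1+N_1^{2-\alpha}\langle k_2-k_3\rangle^{-1})$; the remaining sums over $k_3,\widetilde k_3,k_2^*,k_3^*$ are then closed by placing an $l^2_{k_j^*}$ (Hilbert--Schmidt, via $\|h^{N_3,L_3}\|_{Z^{b_0}}$) or an $l^\infty_{k_j}l^2_{k_j^*}$ (via $\|h^{\cdot}\|_{S^{b_0,q}}$) norm on each factor, the allocation being dictated by $(r_2,r_3)$ and by which of $N_2,N_3$ is summed freely — this yields the $N_2^{1-\frac\alpha2}$ (or $N_2^{1/2}$) factors, the $N_3^{1/2}+N_1^{1-\frac\alpha2}$ factors, and the $(L_2\wedge L_3)^{1/2}$ factor; the modulation integration through $L^{q_0}_{\mu_0*}$ costs only $N_1^{\alpha/q_0}<N_1^{\kappa^{-0.1}}$, and Corollary \ref{largedeviation} applied to a degree-two chaos supplies the $R$. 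In the near-diagonal regime $N_2\sim N_3\ll N_1$ (cases (v),(vi)) the non-resonance $k_2\ne k_1,k_3$ is weak and the double sum is more singular; here I would use Lemma \ref{counting1.5} and, decisively, the second-order counting Lemma \ref{counting2ndorder} together with the $Z^{b_0}$ norm \eqref{eq(C):hypothesis3} of the type-(C) factor to extract the gain $N_2^{-7(\alpha-1)/4}$; in case (vi) the explicit $N_j^{-\alpha/2}$ sizes replace one such norm.

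I expect the main obstacle to be exactly this near-diagonal case $N_2\sim N_3$ and the production of the exponent $\tfrac{7(\alpha-1)}{4}$, since this is the estimate that forces the binding constraint $\nu<\tfrac{7(\alpha-1)}4$ and hence pins down $\alpha_0$. It requires combining the refined bound of Lemma \ref{counting2ndorder} with a careful apportioning of the $\mu_0$-modulation among the $L^{q_0}_{\mu_0*}$, $L^2_{\lambda_2}$, $L^2_{\lambda_3}$ norms, while making sure that the resonant Wick pairing occurring when $N_2\sim N_3$ is itself controlled (by the same counting) and does not degrade the gain. A secondary, purely bookkeeping difficulty is to carry the harmless $N_1^{\theta+3\kappa^{-0.1}}$ losses — coming from the modulation integration and the $O(N_1^{\epsilon})$ enlargements of the resonance level $S_{k_1,k_2,k_3}$ — uniformly through all six cases, and to check that in the type-(G) subcases the index $L_j=\tfrac12$ indeed makes the $(L_2\wedge L_3)^{1/2}$ factor disappear as written in the statement.
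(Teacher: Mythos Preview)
Your overall plan matches the paper's proof closely: Algorithm~A3 is the right framework, you correctly identify the two Wick pairings (the ``variance'' pairing $k_2^*=k_2'^*,\,k_3^*=k_3'^*$ and the ``resonant'' pairing $k_2^*=k_3^*,\,k_2'^*=k_3'^*$), you correctly observe that the resonant pairing only contributes when $N_2\sim N_3$ and at least one factor is of type~(C), and your treatment of the non-resonant cases (i)--(iv) via Lemma~\ref{counting1} is essentially what the paper does.

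There is one concrete misidentification in your plan for case~(v). The paper does \emph{not} use the second-order counting Lemma~\ref{counting2ndorder} here; the exponent $-\tfrac{7(\alpha-1)}{4}$ is obtained entirely from two \emph{first-order} counting bounds. For the resonant contribution $\mathcal S_2$ (your ``extra resonant pairing''), the paper first applies Cauchy--Schwarz in $k_2'$ together with the bound
\[
\sum_{k_2'} S_{k_1,k_2',k_3+k_2'-k_2}\;\lesssim\; N_1^{\epsilon}\Big(1+\frac{N_2^{2-\alpha}}{\langle k_2-k_3\rangle}\Big),
\]
and then sums over $k_1$ using Lemma~\ref{counting1} to obtain $N_1^{\epsilon}(1+N_1^{2-\alpha}/\langle k_2-k_3\rangle)$. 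The product of these two weights, summed over $k_2,k_3$ subject to $|k_2-k_3|\lesssim(L_2\vee L_3)N_3^{\epsilon}$, yields the bound $N_1^{2-\alpha}N_2^{(3-5\alpha)/2}$ for $\Lambda_{2,3}$; taking a square root and multiplying by $\|h^{N_2,L_2}\|_{Z^{b_0}}\lesssim N_2^{1-\frac{\alpha}{2}+\epsilon_1}L_2^{-\nu}$ and $\|h^{N_3,L_3}\|_{S^{b_0,q}}\lesssim N_3^{\epsilon_1}L_3^{-\nu}$ produces exactly $N_1^{1-\frac{\alpha}{2}}N_2^{-\frac{7(\alpha-1)}{4}}(L_2L_3)^{-\nu}$. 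There is no ``careful apportioning of the $\mu_0$-modulation'' needed beyond the trivial $N_1^{\alpha/q_0}$ loss already built into \eqref{outputA3-1}. If you try to make Lemma~\ref{counting2ndorder} the decisive ingredient instead, you will be summing $|k_2|^{\alpha}+|k_1+k_3-k_2|^{\alpha}$ with the two frequencies at scales $N_2$ and $N_1$, which gives only $N_2^{1-\alpha/2}$ per sum and does not by itself reproduce the sharper interplay with $\langle k_2-k_3\rangle$ that the first-order argument exploits.
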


\begin{proof}
Since in the regime $N_2,N_3\ll N_1$, there is no significant difference between the second and the third place in the multi-linear expression, so without loss of generality, we may assume that $N_2\leq N_3$.
By executing {\bf Algorithm A3}, it suffices to estimate the input constant $\Lambda_{2,3}(N_1,N_2,N_3,L_2,L_3)$ in \eqref{outputA3-1}. Since in this step, we do not operate on modulation variables $\lambda,\lambda_j,\mu_0$, we will omit these variables below to simplify the notation. Recall that the multi-linear expression $G_{k,k_1}$ is given by \eqref{Gkk1}, we have
\begin{equation}\label{situation1:formal}
\begin{split}
\sum_{k_1\neq k}\mathbb{E}^{\mathcal{C}}[|G_{k_1,k}|^2]\leq \sum_{\substack{k_1,k_2,k_2',k_3,k_3'\\k_2-k_2'=k_3-k_3'} }\sum_{ \substack{k_2^*,k_3^*,k_2'^*,k_3'^*\\ |k_2'-k_2'^*|<L_2N_3^{\epsilon}\\|k_3'-k_3'^*|<L_3N_3^{\epsilon}
	  } }&\ov{h}_{k_2k_2^*}^{(2)}h_{k_2'k_2'^*}^{(2)}h_{k_3k_3^*}^{(q)}\ov{h}_{k_3'k_3'^*}^{(q)}S_{k_1,k_2,k_3}S_{k_1,k_2',k_3'}\\
\times&\mathbb{E}[\ov{g}_{k_2^*}g_{k_2'^*}g_{k_3^*}\ov{g}_{k_3'^*} ]\cdot\frac{1}{[k_2^*]^{\frac{\alpha}{2}}[k_2'^*]^{\frac{\alpha}{2}}[k_3^*]^{\frac{\alpha}{2}}[k_3'^*]^{\frac{\alpha}{2}}  }.
\end{split}
\end{equation}
Note that the only non-zero contributions in the summation are $k_2^*=k_2'^*, k_3^*=k_3'^*$ and $k_2^*=k_3^*, k_2'^*=k_3'^*$. Denote by $\mathcal{S}_{1}$ the contribution from $k_2^*=k_2'^*$ and $k_3^*=k_3'^*$ and by $\mathcal{S}_{2}$ the contribution from $k_2^*=k_3^*$ and $k_2'^*=k_3'^*$. Note that $\mathcal{S}_2=0$ if $N_2\ll N_3$ or $v_2,v_3$ are both of type (G).

\underline{Estimate for $\mathcal{S}_1$:} Denote by $a_2(k_2)=\|h_{k_2k_2^*}^{(2)}\|_{l_{k_2^*}^2}$ and $a_3(k_3)=\|h_{k_3k_3^*}^{(q)}\|_{l_{k_3^*}^2}$. For this contribution we must have $|k_2-k_2'|\lesssim L_2N_3^{\epsilon}$ and $|k_3-k_3'|\lesssim L_3N_3^{\epsilon}$. Since $k_2-k_2'=k_3-k_3'$, we may change the constraint to $|k_2-k_2'|=|k_3-k_3'|\lesssim (L_2\wedge L_3)N_3^{\epsilon}$. By Cauchy-Schwartz, we have
\begin{align*}
\mathcal{S}_1\lesssim &(N_2N_3)^{-\alpha}\sum_{\substack{k_1,k_2,k_2',k_3,k_3'\\
k_2-k_2'=k_3-k_3'
 } }\mathbf{1}_{|k_2-k_2'|\lesssim (L_2\wedge L_3)N_3^{\epsilon}}
S_{k_1,k_2,k_3}S_{k_1,k_2',k_3'}a_2(k_2)a_2(k_2')a_3(k_3)a_3(k_3')\\
\lesssim &(N_2N_3)^{-\alpha}\|a_3\|_{l_{k_3}^{\infty}}^2\sum_{k_2,k_2',k_3}a_2(k_2)a_2(k_2')N_1^{\epsilon}\mathbf{1}_{|k_2-k_2'|\lesssim (L_2\wedge L_3)N_3^{\epsilon}}\Big(1+\frac{N_1^{2-\alpha}}{\lg k_2-k_3\rg}\Big),
\end{align*}
where to the last inequality, we use the counting bound $\sum_{k_1}S_{k_1,k_2,k_3}\lesssim N_1^{\epsilon}\Big(1+\frac{N_1^{2-\alpha}}{\lg k_2-k_3\rg}\Big)$.
Using Young's convolution inequality to the sum $\sum_{k_2,k_2'}a_2(k_2)a_2(k_2')\mathbf{1}_{|k_2-k_2'|\lesssim (L_2\wedge L_3)N_3^{\epsilon}}$, we have
\begin{align*}
\mathcal{S}_1\lesssim (N_2N_3)^{-\alpha}(L_2\wedge L_3)N_1^{2\epsilon}(N_3+N_1^{2-\alpha}) \|a_2\|_{l_{k_2}^2}^2\|a_3\|_{l_{k_3}^{\infty}}^2.
\end{align*}
When $v_2$ is of type (G), we could estimate $a_2(k_2)$ by $\|a_2\|_{l_{k_2}^{\infty}}$ and
$$ \mathcal{S}_1\lesssim (N_2N_3)^{-\alpha}\|a_2\|_{l^{\infty}}^2\|a_3\|_{l^{\infty}}^2N_3^{\epsilon}(N_2N_3+N_1^{2-\alpha}N_2).
$$

\underline{ Estimate for $\mathcal{S}_2$:}  Note that $\mathcal{S}_2\neq 0$ only if $N_2\sim N_3$ and at least one of $v_2,v_3$ is of type (C). Therefore, without loss of generality, we may assume that $v_2$ is of type (C). Moreover, we have the constraint $|k_2-k_3|=|k_2'-k_3'|\lesssim (L_2\vee L_3)N_3^{\epsilon}$. By Cauchy-Schwartz,
\begin{align*}
\mathcal{S}_2\lesssim &(N_2N_3)^{-\alpha}\|a_3\|_{l_{k_3}^{\infty}}^2\sum_{\substack{k_1,k_2,k_2',k_3,k_3'\\
k_2-k_2'=k_3-k_3' } }\mathbf{1}_{|k_2-k_3|\lesssim (L_2\vee L_3)N_3^{\epsilon}}S_{k_1,k_2,k_3}S_{k_1,k_2',k_3'}a_2(k_2)a_2(k_2')\\
\lesssim &(N_2N_3)^{-\alpha}\|a_3\|_{l_{k_3}^{\infty}}^2 \sum_{k_1,k_2,k_3}\mathbf{1}_{|k_2-k_3|\lesssim (L_2\vee L_3)N_3^{\epsilon}}S_{k_1,k_2,k_3}a_2(k_2)N_1^{\epsilon}\|a_2\|_{l_{k_2'}^2}\Big(1+\frac{N_2^{1-\frac{\alpha}{2}}}{\lg k_2-k_3\rg^{\frac{1}{2}}}\Big),
\end{align*}
where to the last inequality we use Cauchy-Schwartz for the sum in $k_2'$ and the counting bound
\begin{align}\label{counting} 
 \sum_{k_2'}S_{k_1,k_2',k_3+k_2'-k_2}\lesssim N_1^{\epsilon}\Big(1+\frac{N_2^{2-\alpha}}{\lg k_2-k_3\rg}\Big).
\end{align}
Using again $\sum_{k_1}S_{k_1,k_2,k_3}\lesssim N_1^{\epsilon}\Big(1+\frac{N_1^{2-\alpha}}{\lg k_2-k_3\rg}\Big)$, we have
\begin{align*}
\mathcal{S}_2\lesssim & (N_2N_3)^{-\alpha}N_1^{2\epsilon}\|a_3\|_{l_{k_3}^{\infty}}^2\|a_2\|_{l_{k_2}^2}\!\!\!\!\!\sum_{\substack{k_2,k_3 \\
|k_2-k_3|\leq (L_2\vee L_3)N_3^{\epsilon} } }\!\!\!\!\!\!\!\!\!\!\!\!\!\!a_2(k_2)\Big(1+\frac{N_2^{1-\frac{\alpha}{2}}}{\lg k_2-k_3\rg^{\frac{1}{2}}}+\frac{N_1^{2-\alpha}}{\lg k_2-k_3\rg}+\frac{N_1^{2-\alpha}N_2^{1-\frac{\alpha}{2}}}{\lg k_2-k_3\rg^{\frac{3}{2}}}\Big)\\
\lesssim &
(N_2N_3)^{-\alpha}N_1^{3\epsilon}
((L_2\vee L_3)N_2^{\frac{1}{2}}+(L_2\vee L_3)^{\frac{1}{2}}N_2^{\frac{3}{2}-\frac{\alpha}{2}}+N_1^{2-\alpha}N_2^{\frac{3}{2}-\frac{\alpha}{2}} )\|a_3\|_{l_{k_3}^{\infty}}^2\|a_2\|_{l_{k_2}^2}^2\\
\leq & (N_2N_3)^{-\alpha}N_1^{3\epsilon}\cdot N_1^{2-\alpha}N_2^{\frac{3-\alpha}{2}}\|a_3\|_{l_{k_3}^{\infty}}^2\|a_2\|_{l_{k_2}^2}^2\\
\lesssim & N_1^{2-\alpha+3\epsilon}N_2^{\frac{3-5\alpha}{2}}\|a_3\|_{l_{k_3}^{\infty}}^2\|a_2\|_{l_{k_2}^2}^2,
\end{align*}
since $\alpha<\frac{4}{3}$ and $N_2\sim N_3$. 

In summary, if at least one of $v_2,v_3$ is of type (C), when $N_2\ll N_3$, we obtain the upper bound
$$ \Lambda_{2,3}(N_1,N_2,N_3,L_2,L_3)=(N_2N_3)^{-\alpha}(L_2\wedge L_3)N_1^{2\epsilon}(N_3+N_1^{2-\alpha}),
$$ 
and when $N_2\sim N_3$, we obtain the upper bound
$$\Lambda_{2,3}(N_1,N_2,N_3,L_2,L_3)=N_1^{2-\alpha+3\epsilon}N_2^{\frac{3-5\alpha}{2}}.
$$
By choosing $4\epsilon<\kappa^{-0.1}$, plugging into \eqref{outputA3-1} and using the corresponding $Z^{b_0}$ and $S^{b_0,q}$ bounds of $h^{N_jL_j}$ (note that $\frac{\alpha}{q_0},\frac{1}{q}<\kappa^{-0.1}$), the proof of Lemma \ref{2random-Zb} is complete.

\end{proof}

For $\Xi_{L_2,L_3}^{N_1,N_2,N_3}$, we have:
\begin{lemme}\label{2random-Sb} 
	We normalize $\|y_{kk_1}^0(\lambda,\lambda_1)\|_{l_k^1L_{\lambda}^{q'}L_{\lambda_1}^2l_{k_1}^2}=1$. Assume that $v_2,v_3$ are both of type (G) or (C) with characterized parameters $(N_2,L_2),(N_3,L_3)$ satisfying $L_2\vee L_3\ll (N_2\wedge N_3)$. Then outside a set of probability $<\mathrm{e}^{-N_1^{\theta}R}$:
	\begin{align*}
	&\mathrm{(i)}\text{ If }N_2\wedge N_3\ll N_2\vee N_3\ll N_1, \text{ then }\\ 
	&\boxed{ |\Xi_{L_2,L_3}^{N_1,N_2,N_3}(q,q)|\lesssim RN_1^{\theta+3\kappa^{-0.1}}(N_2\wedge N_3)^{-\frac{\alpha-1}{2}}(N_2\vee N_3)^{-\frac{\alpha}{2}+2\epsilon_1}(L_2\wedge L_3)^{\frac{1}{2}}(L_2L_3)^{-\nu}}\\
	&\mathrm{(ii)}\text{ If }N_2\sim N_3\ll N_1 \text{and at least one of $v_2,v_3$ is of type (C), then }\\ 
	&\boxed{ |\Xi_{L_2,L_3}^{N_1,N_2,N_3}(2,q)|\lesssim RN_1^{\theta+3\kappa^{-0.1}}N_2^{-\frac{7(\alpha-1)}{4}+2\epsilon_1}(L_2L_3)^{-\nu}}\\
	&\mathrm{(iii)}\text{ If }N_2\sim N_3\ll N_1 \text{and  $v_2,v_3$ are both of type (G), then }\\ 
	&\boxed{ |\Xi_{L_2,L_3}^{N_1,N_2,N_3}(q,q)|\lesssim RN_1^{\theta+3\kappa^{-0.1}}N_2^{\frac{1}{2}-\alpha+2\epsilon_1}}
	\end{align*}
\end{lemme}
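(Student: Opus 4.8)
\textbf{Proof strategy for Lemma \ref{2random-Sb}.}
The plan is to repeat the structure of the proof of Lemma \ref{2random-Zb}, but now running \textbf{Algorithm A3} through the estimates \eqref{inputA3-2} and \eqref{outputA3-2} instead of \eqref{inputA3-1} and \eqref{outputA3-1}. The only new ingredient required is a bound on the input constant $\wt{\Lambda}_{2,3}(N_1,N_2,N_3,L_2,L_3)$, i.e.\ on $\sup_{|k|\sim N_1}\sum_{k_1}\mathbb{E}^{\mathcal{C}}[|G_{k_1,k}|^2]$ with $G_{k,k_1}$ given by \eqref{Gkk1}; then the three displayed boxed inequalities follow by substituting this $\wt{\Lambda}_{2,3}$ into \eqref{outputA3-2}, together with the induction hypotheses on the $Z^{b_0}$ and $S^{b_0,q}$ norms of $h^{N_2L_2}$, $h^{N_3L_3}$ (with one extra factor $N_1^{1/q}$ coming from the $l_k^\infty$ in place of $l_k^2$, which is absorbed into $N_1^{\theta+3\kappa^{-0.1}}$ since $\tfrac1q<\kappa^{-0.1}$). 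As before we suppress the modulation variables $\lambda,\lambda_j,\mu_0$ during the counting step, and keep only the indicators $S_{k_1,k_2,k_3}$.

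First I would expand $\mathbb{E}^{\mathcal{C}}[|G_{k_1,k}|^2]$ exactly as in \eqref{situation1:formal}: pairing the four Gaussians $\ov{g}_{k_2^*}g_{k_2'^*}g_{k_3^*}\ov{g}_{k_3'^*}$ gives a ``diagonal'' contribution $\mathcal{S}_1$ from $k_2^*=k_2'^*$, $k_3^*=k_3'^*$ (forcing $|k_2-k_2'|=|k_3-k_3'|\lesssim(L_2\wedge L_3)N_3^\epsilon$), and a ``crossed'' contribution $\mathcal{S}_2$ from $k_2^*=k_3^*$, $k_2'^*=k_3'^*$ which vanishes unless $N_2\sim N_3$ and one of $v_2,v_3$ is of type (C). The difference from Lemma \ref{2random-Zb} is that the outer sum is now $\sup_{|k|\sim N_1}\sum_{k_1}$ rather than $\sum_{k,k_1}$. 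For $\mathcal{S}_1$ with $N_2\wedge N_3\ll N_2\vee N_3$: after using Young's inequality on the near-diagonal $k$-sum as in the earlier proof, the cost of dropping the free summation in $k$ (and instead fixing $|k|\sim N_1$) is to replace one factor of the form ``$\|a_j\|_{l^2}^2$'' on the smaller-frequency side by a factor $(N_2\wedge N_3)^{-(\alpha-1)}$ coming from the $l^\infty$ estimate $\|a_{(2)}\|_{l^\infty}^2\lesssim (N_2\wedge N_3)^{-(\alpha-1)+}$; this is where the ``$(N_2\wedge N_3)^{-\frac{\alpha-1}{2}}$'' in (i) of the statement comes from, squared inside $\wt\Lambda_{2,3}$. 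One then uses the counting bounds Lemma \ref{counting1} and \eqref{counting} (and, for $N_2\sim N_3$, Lemma \ref{counting1.5} together with Lemma \ref{counting2ndorder}) to carry out the remaining $k_1$- and $k_2$- (resp. $k_3$-) sums, obtaining in (i)
$$ \wt\Lambda_{2,3}\lesssim (N_2\wedge N_3)^{-(\alpha-1)}(N_2\vee N_3)^{-\alpha}(L_2\wedge L_3)N_1^{2\epsilon}\big(N_2\vee N_3+N_1^{2-\alpha}\big),$$
but one must check that in the $S^{b,q}$ setting the $N_1^{2-\alpha}$ branch never dominates after it is combined with the $N_1^{\frac{\alpha}{q_0}+\frac1q+\theta}$ prefactor from \eqref{outputA3-2}; it does not, because $N_1\gg N_2\vee N_3$ and $\nu,\epsilon_1$ are chosen so that the power of $N_1$ stays below $\theta+3\kappa^{-0.1}$ after the $N_1^{1-\frac{\alpha}{2}}$ appearing implicitly in the $Z^{b_0}$-norm of the smaller frequency is accounted for — this is the only place the numerical hierarchy of Remark \ref{numerical} is used.

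For $\mathcal{S}_2$ (cases (ii)--(iii)) the computation is essentially identical to the $\mathcal{S}_2$-estimate in the proof of Lemma \ref{2random-Zb}: using $|k_2-k_3|=|k_2'-k_3'|\lesssim(L_2\vee L_3)N_3^\epsilon$, Cauchy--Schwartz in the ``primed'' variables with the second-order counting bound Lemma \ref{counting2ndorder}, and then Lemma \ref{counting1.5} for the $k_1$-sum, one arrives at $\wt\Lambda_{2,3}\lesssim N_1^{2-\alpha+3\epsilon}N_2^{\frac{3-5\alpha}{2}}\|a_3\|_{l^\infty}^2\|a_2\|_{l^2}^2$ when one input is type (C), using $\alpha<\tfrac43$ and $N_2\sim N_3$; feeding this into \eqref{outputA3-2} gives (ii) with the exponent $-\frac{7(\alpha-1)}{4}$ (this is precisely the arithmetic identity $\tfrac12(2-\alpha)+\tfrac14(3-5\alpha)+\nu\text{-losses} = -\tfrac{7(\alpha-1)}{4}+$, recalling $\nu\le\frac{7(\alpha-1)}{4}-\sigma$). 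For (iii), when both $v_2,v_3$ are type (G) there is no $\mathcal{S}_2$ contribution and all $a_j$ can be estimated in $l^\infty$ by $N_j^{-\frac\alpha2}$, so $\mathcal{S}_1$ alone yields $\wt\Lambda_{2,3}\lesssim N_2^{1-2\alpha}N_1^{2\epsilon}$ (the $N_1^{2-\alpha}$-branch again being subdominant), and \eqref{outputA3-2} gives the $N_2^{\frac12-\alpha+2\epsilon_1}$ bound. The main obstacle I expect is bookkeeping: ensuring in every sub-case that the powers of the large frequency $N_1$ coming from (a) the prefactor $N_1^{50(2b_1-1)}$ in \eqref{Sec8-bound1}, (b) the $N_1^{\frac{\alpha}{q_0}+\frac1q+\theta}$ in \eqref{outputA3-2}, and (c) the hidden $N_1^{1-\frac\alpha2}$ inside the $Z^{b_0}$-norm of the smaller-frequency factor all combine with the $N_1^{2-\alpha}$ branch of $\wt\Lambda_{2,3}$ to something $\lesssim N_1^{\theta+3\kappa^{-0.1}}$ — i.e.\ verifying that whenever the ``$+N_1^{2-\alpha}$'' term is picked the gain $(N_2\vee N_3)^{-\text{something}}$ beats it, which is exactly the content of the constraint $\nu\le\min\{s,\frac{7(\alpha-1)}{4}\}-100(\epsilon_1+\epsilon_2)$ together with $N_2\vee N_3<N_1^{1-\delta}$.
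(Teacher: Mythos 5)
The overall strategy (Algorithm A3 via the $\sup_k\sum_{k_1}$ input \eqref{inputA3-2}/\eqref{outputA3-2}, then pairing the four Gaussians into $\mathcal{S}_1$ and $\mathcal{S}_2$) is the same as the paper's, but there is a genuine error in the key counting step that the rest of your argument cannot repair.

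You claim
\[
\wt\Lambda_{2,3}\lesssim (N_2\wedge N_3)^{-(\alpha-1)}(N_2\vee N_3)^{-\alpha}(L_2\wedge L_3)N_1^{2\epsilon}\big(N_2\vee N_3+N_1^{2-\alpha}\big),
\]
and similarly you carry an $N_1^{2-\alpha}$ into the $\mathcal{S}_2$ estimate. This $N_1^{2-\alpha}$ branch is imported from the $\Upsilon$ estimate of Lemma \ref{2random-Zb}, where it is genuinely present because $y^0$ is normalized in $l_{k,k_1}^2$ and the pairing forces a free sum over $k$, so one must use $\sum_{k_1}S_{k_1,k_2,k_3}\lesssim 1+N_1^{2-\alpha}/\lg k_2-k_3\rg$ (Lemma \ref{counting1}). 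For $\Xi$, however, the normalization is $l_k^1L_\lambda^{q'}L_{\lambda_1}^2l_{k_1}^2$, so in \eqref{inputA3-2} one takes $\sup_{|k|\sim N_1}$ and sums only over $k_1=k+k_2-k_3$ with $k$ \emph{fixed}. For fixed $k,k_2$, the remaining count is $\sum_{k_3}S_{k+k_2-k_3,k_2,k_3}\lesssim 1$ (because $|\partial_{k_3}\Phi_{k+k_2-k_3,k_2,k_3}|\gtrsim N_1^{\alpha-1}\gg1$), with \emph{no} $N_1^{2-\alpha}$ term. The correct bound, which the paper derives, is $\wt\Lambda_{2,3}\lesssim (N_2N_3)^{-\alpha}(L_2\wedge L_3)N_3^{\epsilon}(N_2\wedge N_3)$ — i.e.\ replace your factor $(N_2\vee N_3+N_1^{2-\alpha})$ by $(N_2\wedge N_3)$ — and an analogous statement holds for $\mathcal{S}_2$, giving $\mathcal{S}_2\lesssim N_1^{\epsilon}N_2^{(3-5\alpha)/2}\|a_3\|_{l^\infty}^2\|a_2\|_{l^2}^2$ rather than $N_1^{2-\alpha+3\epsilon}N_2^{(3-5\alpha)/2}\cdots$.

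Your attempt to excuse the $N_1^{2-\alpha}$ branch (``it does not dominate, because $N_1\gg N_2\vee N_3$ and the hierarchy of Remark \ref{numerical} ...'') is not a valid argument: after the square root in Algorithm A3 this branch contributes $N_1^{1-\frac{\alpha}{2}}$, a macroscopic positive power of $N_1$, which cannot possibly be absorbed into the factor $N_1^{\theta+3\kappa^{-0.1}}$ (with $\theta,\kappa^{-0.1}$ arbitrarily small) nor into a $Z^{b_0}$-norm of a \emph{lower}-frequency piece, which only carries $N_j^{1-\frac{\alpha}{2}}$ with $N_j\ll N_1$. The actual gain of the $\Xi$ estimate over the $\Upsilon$ one (compare item (v) of Lemma \ref{2random-Zb}, which has an explicit $N_1^{1-\frac\alpha2}$, with item (ii) of Lemma \ref{2random-Sb}, which does not) is \emph{precisely} the disappearance of $N_1^{2-\alpha}$ thanks to the $\sup_k$ structure; your computation discards exactly that gain and therefore would only rederive the weaker bounds of Lemma \ref{2random-Zb} rather than prove the ones stated here.

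A secondary remark: the explanatory remark about ``replacing $\|a_j\|_{l^2}^2$ by a factor $(N_2\wedge N_3)^{-(\alpha-1)+}$ from the $l^\infty$ estimate'' conflates the role of the Gaussian weights (which supply $(N_2N_3)^{-\alpha}$ after pairing) with the norms of the $h$-kernels extracted by \eqref{inputA3-2}; this does not by itself cause the gap, but it suggests the bookkeeping should be redone in the form $\wt\Lambda_{2,3}\cdot\|h_2\|^2\|h_3\|^2$ exactly as set up in Algorithm A3, to avoid miscounting which powers of $N_j,L_j$ are already carried by the $h$-norms and which come from the counting sums.
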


\begin{proof}
Without loss of generality, we may assume that $N_2\leq N_3$. First we assume that $N_2\ll N_3$. From {\bf Algorithm A3}, it suffices to estimate, for fixed $|k|\in N_1$, the expression
 \begin{align}\label{esperance} 
 \sum_{k_1:k_1\neq k}\mathbb{E}^{\mathcal{C}}[|G_{k_1,k}|^2]\sim &(N_2N_3)^{-\alpha}\sum_{\substack{k_2,k_2',k_3,k_3'\\
 k_2-k_2'=k_3-k_3' } }\!\!\!\!\!\!S_{k+k_2-k_3,k_2,k_3}S_{k+k_2'-k_3',k_2',k_3'}\notag \\
\times &\sum_{\substack{k_2^*,k_3^*,k_2'^*,k_3'^* } }\prod_{j=2}^3\mathbf{1}_{\substack{|k_j-k_j^*|<L_jN_3^{\epsilon}\\
|k_j'-k_j'^*|<L_jN_3^{\epsilon} }}\cdot \ov{h}_{k_2k_2^*}^{(q)}h_{k_2'k_2'^*}^{(q)}h_{k_3k_3^*}^{(q)}\ov{h}_{k_3'k_3'^*}^{(q)}\cdot\mathbb{E}[\ov{g}_{k_2^*}g_{k_2'^*}g_{k_3^*}\ov{g}_{k_3'^*} ].
 \end{align}
Since $N_2\ll N_3$ and $|k_j^*|\sim N_j$, the only non-zero contribution comes from $k_2^*=k_2'^*$ and $k_3^*=k_3'^*$.  Consequently, since $k_2-k_2'=k_3-k_3'$,  $|k_j-k_j'|\lesssim (L_2\wedge L_3)N_3^{\epsilon}, j=2,3$. Denote by $a_{j}(k_j)=\|h_{k_jk_j^*}^{(q)}\|_{l_{k_j^*}^2}$, by Cauchy-Schwartz, the quantity above can be bounded by
\begin{align*}
& (N_2N_3)^{-\alpha}\|a_2\|_{l_{k_2}^{\infty}}^2\|a_3\|_{l_{k_3}^{\infty}}^2\sum_{k_2,k_2' }\mathbf{1}_{|k_2-k_2'|\lesssim (L_2\wedge L_3)N_3^{\epsilon}}\sum_{k_3}S_{k+k_2-k_3,k_2,k_3}\\
\lesssim & (N_2N_3)^{-\alpha}\|a_2\|_{l_{k_2}^{\infty}}^2\|a_3\|_{l_{k_3}^{\infty}}^2\cdot N_3^{\epsilon}(L_2\wedge L_3)N_2=N_2^{-(\alpha-1)}N_3^{-\alpha+\epsilon}(L_2\wedge L_3)\|a_2\|_{l_{k_2}^{\infty}}^2\|a_3\|_{l_{k_3}^{\infty}}^2.
\end{align*}
Note that this estimate remains true if $N_2\sim N_3$ and $v_2,v_3$ are both of type (G). By choosing $\epsilon<\kappa^{-0.1}$ and noticing that $\frac{\alpha}{q_0},\frac{1}{q}<\kappa^{-0.1}$, we have proved (i) and (iii).

Next we assume that $N_2\sim N_3$, then by {\bf Algorithm 3}, to estimate $\Xi_{L_2,L_3}^{N_1,N_2,N_3}(2,q)$, it suffices to calculate \eqref{esperance} by changing $h_{k_2k_2^*}^{(q)}, h_{k_2'k_2'^*}^{(q)}$ to $h_{k_2k_2^*}^{(2)}, h_{k_2'k_2'^*}^{(2)}$. Denote by $\mathcal{S}_1$ the contribution from $k_2^*=k_2'^*, k_3^*=k_3'^*$ and by $\mathcal{S}_2$ the contribution from $k_2^*=k_3^*, k_2'^*=k_3'^*$. 

\underline{Estimate of $\mathcal{S}_1$:} by Cauchy-Schwartz, we have
\begin{align*}
\mathcal{S}_1\lesssim & (N_2N_3)^{-\alpha}\|a_3\|_{l_{k_3}^{\infty}}^2\sum_{k_2,k_2' }a_2(k_2)a_2(k_2')\mathbf{1}_{|k_2-k_2'|<(L_2\wedge L_3)N_3^{\epsilon}}\sum_{k_3}S_{k+k_2-k_3,k_2,k_3}\\
\lesssim & (N_2N_3)^{-\alpha}\|a_3\|_{l_{k_3}^{\infty}}^2\|a_2\|_{l_{k_2}^{2}}^2(L_2\wedge L_3)N_3^{\epsilon},
\end{align*}
where to the last inequality, we used Schur's test and the fact that 
$\sum_{k_3}S_{k+k_2-k_3,k_2,k_3}\lesssim1$.

\underline{Estimate of $\mathcal{S}_2$:} by Cauchy-Schwartz and \eqref{counting}, we have
\begin{align*}
\mathcal{S}_1\lesssim & (N_2N_3)^{-\alpha}\|a_3\|_{l_{k_3}^{\infty}}^2\sum_{k_2,k_2',k_3 }a_2(k_2)a_2(k_2')\mathbf{1}_{|k_2-k_3|\lesssim (L_2\vee L_3)N_3^{\epsilon}}S_{k+k_2-k_3,k_2,k_3}S_{k+k_2-k_3,k_2',k_3+k_2'-k_2}\\
\lesssim & (N_2N_3)^{-\alpha}\|a_3\|_{l_{k_3}^{\infty}}^2\sum_{k_2,k_3} a_2(k_2)\mathbf{1}_{|k_2-k_3|\lesssim(L_2\vee L_3)N_3^{\epsilon} }S_{k+k_2-k_3,k_2,k_3}\|a_2(k_2')\|_{l_{k_2'}^2}N_1^{\epsilon}\Big(1+\frac{N_2^{1-\frac{\alpha}{2}}}{\lg k_2-k_3\rg^{\frac{1}{2}}}\Big)\\
\lesssim &(N_2N_3)^{-\alpha}N_1^{\epsilon}\|a_3\|_{l_{k_3}^{\infty}}^2\|a_2\|_{l_{k_2}^2}\sum_{k_2}a_2(k_2)\sum_{k_3}S_{k+k_2-k_3,k_2,k_3}\Big(1+\frac{N_2^{1-\frac{\alpha}{2}}}{\lg k_2-k_3\rg^{\frac{1}{2}}}\Big)\\
\lesssim &N_1^{\epsilon}(N_2N_3)^{-\alpha}\|a_3\|_{l_{k_3}^{\infty}}^2\|a_2\|_{l_{k_2}^2}^2\cdot N_2^{\frac{1}{2}}\cdot N_2^{1-\frac{\alpha}{2}}\\
\lesssim & N_1^{\epsilon}N_2^{\frac{3-5\alpha}{2}}\|a_3\|_{l_{k_3}^{\infty}}^2\|a_2\|_{l_{k_2}^2}^2,
\end{align*}
since $\sum_{k_3}S_{k+k_2-k_3,k_2,k_3}\lesssim 1$.
Implementing {\bf Algorithm 3}, the proof of Lemma \ref{2random-Sb} is complete.
\end{proof}

It remains to deal with the case where $L_2\vee L_3\gtrsim N_2\wedge N_3$, say $N_2\leq N_3$ and $L_3\gtrsim N_2$. Then from Lemma \ref{lemA1-123} and Lemma \ref{lemA1-456}, we have
\begin{lemme}\label{2random-Zb'}
We normalize $\|y_{kk_1}^0(\lambda,\lambda_1)\|_{L_{\lambda,\lambda_1}^2l_{k,k_1}^2}=1.$ Assume that $v_2,v_3$ are both of type (G) or (C) with characterized parameters $(N_2,L_2), (N_3,L_3)$ satisfying $L_2\vee L_3\gtrsim N_2\wedge N_3$. Then
$$
\boxed{ |\Upsilon_{L_2,L_3}^{N_1,N_2,N_3}|\lesssim (N_2\wedge N_3)^{1-\frac{\alpha}{2}-\nu+\epsilon_1}(N_2\vee N_3)^{-\frac{\alpha}{2}+\frac{2}{q}+\epsilon_2}((N_2\vee N_3)^{\frac{1}{2}}+N_1^{1-\frac{\alpha}{2}}) }
$$ 	
\end{lemme}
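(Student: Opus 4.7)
The key observation is that in the regime $L_2\vee L_3\gtrsim N_2\wedge N_3$, at least one of the type (C) inputs has its characterized $L$-parameter exceeding $N_{(3)}:=N_2\wedge N_3$, so the factor $L^{-\nu}$ in its $X^{0,b_0}$ or $X^{0,\frac{2b_0}{q'}}_{\infty,q}$ bound can be absorbed as $N_{(3)}^{-\nu}$. This is precisely the threshold past which the gain from Wiener chaos (used in Lemma \ref{2random-Zb} and Lemma \ref{2random-Sb}) is no longer needed; a purely deterministic Cauchy-Schwartz-based estimate suffices. Consequently, no exceptional set has to be deleted here, and the statement is deterministic.

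The plan is to apply Algorithm A1 (Lemma \ref{lemA1-123}) directly. Assuming without loss of generality $N_2\leq N_3$, the hypothesis forces $L_2\vee L_3\gtrsim N_2$, and I split according to which of $L_2,L_3$ is the large one (both of these scenarios can only occur when the corresponding input is of type (C), as type (G) has $L_j=\frac{1}{2}$). In either case, use \eqref{outputA1-3}, which places $v_3$ in the $l^q$ slot (since $N_3\geq N_2$):
\[
\Upsilon_{L_2,L_3}^{N_1,N_2,N_3}(2,q)\lesssim N_1^{\kappa^{-0.1}}N_2^{\frac{1}{2}}\big(N_3^{\frac{1}{2}}+N_3^{\frac{1}{q}}N_1^{1-\frac{\alpha}{2}}\big)\|v_2\|_{X^{0,b_0}}\|v_3\|_{X^{0,\frac{2b_0}{q'}}_{\infty,q}}.
\]
If $L_3\gtrsim N_2$, I use $\|v_3\|_{X^{0,\frac{2b_0}{q'}}_{\infty,q}}\leq N_3^{-\alpha+\epsilon_2}L_3^{-\nu}\leq N_3^{-\alpha+\epsilon_2}N_2^{-\nu}$, together with the worst-case bound $\|v_2\|_{X^{0,b_0}}\lesssim N_2^{-\frac{\alpha-1}{2}+\epsilon_1}$ (type (G); type (C) is better). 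If instead $L_2\gtrsim N_2$, I use $\|v_2\|_{X^{0,b_0}}\leq N_2^{-(\alpha-1)+\epsilon_2}L_2^{-\nu}\leq N_2^{-(\alpha-1)-\nu+\epsilon_2}$ and the worst-case bound $\|v_3\|_{X^{0,\frac{2b_0}{q'}}_{\infty,q}}\lesssim N_3^{-\frac{\alpha}{2}+\epsilon_1}$ (type (G); type (C) is again better).

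A direct comparison of exponents then shows, in each case, that the resulting upper bound is dominated by the target
\[
(N_2\wedge N_3)^{1-\frac{\alpha}{2}-\nu+\epsilon_1}(N_2\vee N_3)^{-\frac{\alpha}{2}+\frac{2}{q}+\epsilon_2}\big((N_2\vee N_3)^{\frac{1}{2}}+N_1^{1-\frac{\alpha}{2}}\big),
\]
where the $N_1^{\kappa^{-0.1}}$ factor is absorbed into the small gain $\tfrac{2}{q}-\kappa^{-0.1}>0$ on $N_3$ (this is a consequence of the numerical hierarchy in Remark \ref{numerical}). No step is difficult; the only thing to track is the exponent arithmetic. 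The ``main obstacle'' is simply checking that, after replacing the relevant $L^{-\nu}$ factor with $N_{(3)}^{-\nu}$, the coarse deterministic estimate loses just enough decay on $N_3$ (namely $2/q-\epsilon$) to match the $\frac{2}{q}$ in the target exponent, but no more.
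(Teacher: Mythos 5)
Your proposal is correct and takes essentially the same route as the paper, whose entire proof of this lemma is the single line ``apply \eqref{outputA1-2} or \eqref{outputA1-3} according to $N_3\leq N_2$ or $N_2\leq N_3$'', i.e.\ exactly your deterministic Algorithm A1 argument combined with absorbing $L^{-\nu}\lesssim (N_2\wedge N_3)^{-\nu}$. The one inaccuracy is your claim that the $N_1^{\kappa^{-0.1}}$ prefactor is absorbed into the gain $N_3^{\frac{2}{q}-\kappa^{-0.1}}$ --- this fails when $N_1\gg N_3$ --- but it is harmless because in this section of the paper all such small powers of $N_1$ are by convention swept into the unified factor $N^{100(\frac{1}{q}+\kappa^{-0.1}+b_1-\frac{1}{2})}$ that appears explicitly in Propositions \ref{KernelSb} and \ref{kernelZb}.
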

\begin{proof}
	We apply \eqref{outputA1-2} or \eqref{outputA1-3}, according to $N_3\leq N_2$ or $N_2\leq N_3$.
\end{proof}
\begin{lemme}\label{2random-Sb'}
	We normalize $\|y_{kk_1}^0(\lambda,\lambda_1)\|_{l_k^{1}L_{\lambda}^{q'}L_{\lambda_1}^2l_{k_1}^2}=1.$ Assume that $v_2,v_3$ are both of type (G) or (C) with characterized parameters $(N_2,L_2), (N_3,L_3)$ satisfying $L_2\vee L_3\gtrsim N_2\wedge N_3$. Then
	$$
	\boxed{ |\Xi_{L_2,L_3}^{N_1,N_2,N_3}|\lesssim (N_2\wedge N_3)^{1-\frac{\alpha}{2}-\nu+\epsilon_1}(N_2\vee N_3)^{-\frac{\alpha}{2}+\frac{1}{q}+\epsilon_2}}
	$$ 	
\end{lemme}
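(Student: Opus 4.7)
The strategy parallels exactly that of Lemma \ref{2random-Zb'}: in the regime $L_2\vee L_3\gtrsim N_2\wedge N_3$, the randomness of one input is too weak to be exploitable via Wiener chaos, so we discard all random structure and invoke the purely deterministic estimates of {\bf Algorithm A1}, applied in the $\Xi$-flavor (i.e.\ using Lemma \ref{lemA1-456}) rather than the $\Upsilon$-flavor. Normalize $\|y_{kk_1}^0(\lambda,\lambda_1)\|_{l_k^1L_\lambda^{q'}L_{\lambda_1}^2l_{k_1}^2}=1$, and assume without loss of generality that $N_3\le N_2$, so the hypothesis reads $L_2\vee L_3\gtrsim N_3$. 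Since Gaussian inputs carry $L_j=1/2$, the assumption forces at least one of $v_2,v_3$ to be of type~(C) with its $L$-parameter $\gtrsim N_3$.

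The plan is to apply \eqref{outputA1-5}, which puts $v_2$ into the $X_{\infty,q}^{0,2b_0/q'}$ norm and $v_3$ into $X^{0,b_0}$:
\[
|\Xi_{L_2,L_3}^{N_1,N_2,N_3}(q,2)|\lesssim N_1^{\kappa^{-0.1}}N_2^{\frac{1}{q}}N_3^{\frac{1}{2}}\|v_2\|_{X_{\infty,q}^{0,\frac{2b_0}{q'}}}\|v_3\|_{X^{0,b_0}},
\]
and then substitute the worst-case bounds allowed for each type. I split into two cases. If $v_2$ is of type (G), then $L_2=1/2$ forces $L_3\gtrsim N_3$ with $v_3$ of type (C); I would use $\|v_2\|_{X_{\infty,q}^{0,2b_0/q'}}\le N_2^{-\alpha/2+\epsilon_1}$ and $\|v_3\|_{X^{0,b_0}}\le N_3^{-(\alpha-1)+\epsilon_2}L_3^{-\nu}\le N_3^{-(\alpha-1)-\nu+\epsilon_2}$, yielding $N_1^{\kappa^{-0.1}}N_2^{-\alpha/2+\frac{1}{q}+\epsilon_1}N_3^{\frac{3}{2}-\alpha-\nu+\epsilon_2}$; the identity $\tfrac{3}{2}-\alpha=1-\tfrac{\alpha}{2}-\tfrac{\alpha-1}{2}\le 1-\tfrac{\alpha}{2}$ absorbs this into the claimed bound. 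If instead $v_2$ is of type (C), use $\|v_2\|_{X_{\infty,q}^{0,2b_0/q'}}\le N_2^{-\alpha+\epsilon_2}L_2^{-\nu}$; the factor $N_2^{-\alpha+\frac{1}{q}+\epsilon_2}$ is strictly better than the claimed $N_2^{-\alpha/2+\frac{1}{q}+\epsilon_2}$, so I can afford to use for $v_3$ the worst-case (type (G)) bound $N_3^{-(\alpha-1)/2+\epsilon_1}$, and the needed $L_3^{-\nu}$ or $L_2^{-\nu}$ factor (bounded by $N_3^{-\nu}$) comes from either $v_3$ being type (C) with $L_3\gtrsim N_3$ or from $v_2$ directly via $L_2\gtrsim N_3$.

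The estimation is essentially a bookkeeping exercise, with the only subtle point being to verify in each subcase that an $N_3^{-\nu}$ factor can always be extracted: this is precisely what the hypothesis $L_2\vee L_3\gtrsim N_3$ guarantees, combined with the rule ``type~(G) means $L_j=1/2$'' and the $\nu$-loss in the $L_j$-parameter of the type~(C) bounds. Because the claim is formulated uniformly in terms of $(N_2\wedge N_3)^{-\nu}$ (rather than $L^{-\nu}$), we do not need to track exactly \emph{which} input contributes the $L$-factor---only that at least one does. Consequently no genuinely novel multilinear analysis is required beyond what is already encoded in Lemma \ref{lemA1-456}; the main (minor) obstacle is simply verifying that the worst-case combinations of types never violate the claimed exponent, which the identity $\tfrac{3}{2}-\alpha\le 1-\tfrac{\alpha}{2}$ and the elementary inequality $-\alpha\le -\tfrac{\alpha}{2}$ take care of.
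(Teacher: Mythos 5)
Your proposal is correct and coincides with the paper's own (one-line) proof, which likewise just invokes \eqref{outputA1-5} or \eqref{outputA1-6} according to whether $N_3\leq N_2$ or $N_2\leq N_3$ and leaves the type-by-type bookkeeping implicit; your case analysis, including the key inequalities $\tfrac32-\alpha\leq 1-\tfrac{\alpha}{2}$ and $-\alpha\leq-\tfrac{\alpha}{2}$ and the extraction of $(N_2\wedge N_3)^{-\nu}$ from whichever input has the large $L$-parameter, is exactly the verification the paper omits.
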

\begin{proof}
We apply \eqref{outputA1-5} or \eqref{outputA1-6}, according to $N_3\leq N_2$ or $N_2\leq N_3$.
\end{proof}
\noi
$\bullet${\bf One random input }
Now we deal with the case where one of $v_2,v_3$ is of type (G) or (C) and the other is of type (D).

\begin{lemme}\label{Case2formallemma1}
Assume that $\|y_{kk_1}^0(\lambda,\lambda_1)\|_{L_{\lambda,\lambda_1}^2l_{k,k_1}^2 }=1$.	Assume that one of $v_2,v_3$ is of type (G) or (C) and the other is of type (D). Then outside a set of probability $<\mathrm{e}^{-N_1^{\theta}R},$ the following estimates hold:
	\begin{itemize}
		\item[(i)] If $N_2\ll N_3\ll N_1$ and $v_3$ is of type (C) or (G), we have
		\begin{equation*}
		\begin{split}
	&\boxed{|\Upsilon_{L_2,L_3}^{N_1,N_2,N_3}(2,q)| 
		\lesssim RN_1^{\theta+3\kappa^{-0.1} }N_3^{-\frac{\alpha}{2}+\epsilon_1}N_2^{\frac{1}{4}-s}L_3^{\frac{1}{4}-\nu}\big(N_3^{\frac{1}{2}}+N_1^{1-\frac{\alpha}{2}} \big)	}
		\end{split}
		\end{equation*}
		\item[(ii)] If $N_3\ll N_2\ll N_1$ and $v_2$ is of type (G) or (C), we have
			\begin{equation*}
		\begin{split}
	&\boxed{|\Upsilon_{L_2,L_3}^{N_1,N_2,N_3}(q,2) |
		\lesssim RN_1^{\theta+3\kappa^{-0.1} }N_2^{-\frac{\alpha}{2}+\epsilon_1}N_3^{\frac{1}{4}-s}L_2^{\frac{1}{4}-\nu}\big(N_2^{\frac{1}{2}}+N_1^{1-\frac{\alpha}{2}} \big)	}
		\end{split}
		\end{equation*}
		\item[(iii)] In any of these situations:
		\begin{itemize}
			\item $N_2\sim N_3\ll N_1$;
			\item $N_2\ll N_3\ll N_1$ and $v_3$ is of type (D);
			\item $N_3\ll N_2\ll N_1$ and $v_2$ is of type (D);
		\end{itemize}
	we have
		\begin{align*}
	&\boxed{|\Upsilon_{L_2,L_3}^{N_1,N_2,N_3}(2,2)| \lesssim RN_1^{\theta+3\kappa^{-0.1} }\big[N_1^{1-\frac{\alpha}{2}}+(N_2\wedge N_3)^{\frac{1}{2}}\big](N_{2}\vee N_3)^{-s}(N_2\wedge N_3)^{-\frac{\alpha-1}{2}}}
		\end{align*}
	\end{itemize}
\end{lemme}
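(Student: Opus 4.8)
The proof proceeds by executing \textbf{Algorithm A2} (one random input) as set up just above Lemma \ref{Case2formallemma1}. The key point is that exactly one of $v_2,v_3$ is random (of type (G) or (C)) and the other is a deterministic type (D) function; we keep the random function inside a Wiener-chaos-of-order-one and treat the deterministic one by Cauchy--Schwarz together with the counting lemmas \ref{counting1} and \ref{counting1.5}. The plan is: (1) reduce, via Proposition \ref{modulationreduction1}, to controlling $\Upsilon_{L_2,L_3}^{N_1,N_2,N_3}(r_2,r_3)$ for the appropriate choice of $(r_2,r_3)$, and then further reduce, via \textbf{Algorithm A2}, to bounding the conditional-expectation quantities in \eqref{inputA2-1}--\eqref{inputA2-2} (i.e.\ estimating the input constants $\Lambda_2,\Lambda_3$); (2) compute the diagonal and off-diagonal matrix elements $\sigma_{k_jk_j}^{(j)}$ and $\sigma_{k_jk_j'}^{(j)}$ using the pairing $\mathbb{E}[\overline g_{k^*}g_{k'^*}]=\delta_{k^*k'^*}$; (3) apply the counting estimates to sum over $k_1$ and over the off-diagonal variables; (4) optimize the choice of norms ($l^2$ versus $l^q$, equivalently $r_j\in\{2,q\}$) depending on which of $N_2,N_3$ is larger and whether the random factor is of type (C) or (G); (5) plug into \eqref{outputA2-1}--\eqref{outputA2-2} and insert the hypothesized bounds on $\|h^{N_jL_j}\|_{S^{b_0,q}}$, $\|v_j\|_{X^{0,b_0}}$, $\|v_j\|_{X_{\infty,q}^{0,2b_0/q'}}$ from Loc($M$) (equivalently \eqref{eq(C):hypothesis2}, \eqref{eq(C):pointwise}) and the type (D) bounds \eqref{TypeD}.

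\textbf{Key steps in order.} First, in case (i) with $N_2\ll N_3$ and $v_3$ random (type (C) or (G)), the relevant matrix element is $\sigma_{k_2k_2'}^{(3)}$ from \eqref{lambda23}; after the Gaussian pairing it collapses (because $|k_3^*|\sim N_3$ forces $k_3^*=k_3'^*$) to a sum over $k_1,k$ with the constraint $|k_2-k_2'|\lesssim L_3 N_3^\epsilon$ coming from the near-diagonal reduction. Bounding this via Cauchy--Schwarz in the $k_3^*$ variable and then summing $\sum_{k_1}S_{k_1,k_2,k+k_2-k_1}\lesssim N_1^\epsilon(1+N_1^{2-\alpha}/\langle\cdot\rangle)$ produces the factor $N_3^{1/2}+N_1^{1-\alpha/2}$, while the near-diagonal width $L_3$ gives $L_3^{1/2-\nu}$ after inserting the $S^{b_0,q}$ bound on $h^{N_3L_3}$. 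Meanwhile $v_2$ of type (D) contributes $\|v_2\|_{X^{0,b_0}}\lesssim N_2^{-s}$; we place it in $l^2_{k_2}$ and gain an extra $N_2^{1/4}$ from a Cauchy--Schwarz/Schur balancing against the $L_3^{1/4}$ loss — this is exactly the interpolation between the $L^\infty$-type and $X^{0,b_0}$-type information encoded in the exponents $\tfrac14-s$ and $\tfrac14-\nu$ in the boxed bound. Case (ii) is symmetric with the roles of $(v_2,N_2,L_2)$ and $(v_3,N_3,L_3)$ exchanged and $(r_2,r_3)=(q,2)$. Case (iii) covers the three subcases where no profitable randomness balance is available (either $N_2\sim N_3$, so the $\sigma$-matrix has a nonzero off-diagonal and we cannot split favourably, or the random factor has the \emph{smaller} characteristic frequency so the Wiener chaos gives nothing beyond $l^2$): here one simply uses \eqref{outputA1-1} of Lemma \ref{lemA1-123} (the purely deterministic Algorithm A1 estimate) with $(r_2,r_3)=(2,2)$, which yields the factor $N_1^{1-\alpha/2}+(N_2\wedge N_3)^{1/2}$ together with $(N_2\vee N_3)^{-s}$ from the type (D) factor and $(N_2\wedge N_3)^{-(\alpha-1)/2}$ from the $X^{0,b_0}$ bound on the random factor.

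\textbf{Main obstacle.} The delicate point is case (i)/(ii): the constraint $|k_2-k_2'|\lesssim L_3N_3^\epsilon$ (or $|k_3-k_3'|\lesssim L_2N_3^\epsilon$) arising from the almost-localization \eqref{almostlocal} must be exploited correctly to gain the $L_3^{1/4}$ (resp.\ $L_2^{1/4}$) factor rather than a full power of $L_3$; getting the \emph{right} power requires distributing the near-diagonal width between the diagonal term and the off-diagonal Schur bound, i.e.\ using Young's convolution inequality on $\sum_{k_2,k_2'}a_2(k_2)a_2(k_2')\mathbf{1}_{|k_2-k_2'|\lesssim L_3N_3^\epsilon}$ in a way that also respects the type (D) norm being measured in $l^2$. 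A secondary bookkeeping issue is ensuring the $N_1^{\theta+3\kappa^{-0.1}}$ loss stays under control: each application of Corollary \ref{largedeviation} costs $N_1^{\theta}$ and each crude counting step costs $N_1^{\kappa^{-0.1}}$, so one must verify there are at most three such steps, and that $\alpha q_0^{-1}, q^{-1}<\kappa^{-0.1}$ as recorded in the hierarchy of Remark \ref{numerical}. None of these steps is conceptually hard, but the exponent arithmetic — reconciling $\tfrac14-s$, $\tfrac14-\nu$, $-\alpha/2+\epsilon_1$, and the $N_3^{1/2}+N_1^{1-\alpha/2}$ dichotomy — is where all the care goes, and it is precisely these bounds that feed into the constraint \eqref{constraint-main} defining $\alpha_0$.
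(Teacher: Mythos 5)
Your high-level plan is the correct one and matches the paper: execute Algorithm A2 for cases (i)/(ii), read off case (iii) directly from the deterministic Algorithm A1 bound \eqref{outputA1-1} of Lemma~\ref{lemA1-123}, and then plug into \eqref{outputA2-1}--\eqref{outputA2-2} together with the Loc($M$) hypotheses. However, the explanation of where the factor $N_2^{1/4}L_3^{1/4}$ comes from is misattributed, and this is precisely the step you flag as the ``delicate point''. After Algorithm A2's step (a) (Lemma~\ref{sum21random}), the deterministic input $w_2$ is \emph{already} extracted entirely as the scalar $\|w_2\|_{l^2_{k_2}}$; the matrix element $\sigma^{(3)}_{k_2k_2'}$ in \eqref{lambda23} involves only $w_3$, and the remaining task is to compute $\mathrm{I}^2=\sum_{k_2\neq k_2'}\mathbb{E}^{\mathcal{C}_3}[|\sigma^{(3)}_{k_2k_2'}|^2]$. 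There is no sum $\sum_{k_2,k_2'}a_2(k_2)a_2(k_2')\mathbf{1}_{|k_2-k_2'|\lesssim L_3N_3^\epsilon}$ to Young-convolve here --- that structure arises in the two-random-input lemmas (e.g.\ the $\mathcal S_1$ computation in Lemma~\ref{2random-Zb}), not in the present one-random-input argument.

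What actually produces $N_2^{1/4}L_3^{1/4}$ is much simpler: expanding $|\sigma^{(3)}_{k_2k_2'}|^2$ gives four Gaussians paired in two ways ($\mathrm{I}_1$ and $\mathrm{I}_2$); in the dominant pairing $k_3^*=k_3'^*, m_3^*=m_3'^*$, the near-diagonal support of $h^{N_3L_3}$ together with $k_2-k_2'=k_3-k_3'$ forces $0<|k_2-k_2'|\lesssim L_3N_3^\epsilon$, the inner sum $\sum_{k_1,k_3}S_{k_1,k_2,k_3}S_{k_1,k_2',k_3+k_2'-k_2}$ is bounded by $N_1^\epsilon(N_3+N_1^{2-\alpha})$ via Lemma~\ref{counting1}, and the remaining outer sum over the $(k_2,k_2')$ domain has size $N_2\cdot L_3N_3^\epsilon$. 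Taking the fourth root (because $\mathrm{I}^{1/2}$ is what feeds into $\Lambda_3^{1/2}$ in \eqref{outputA2-1}) gives exactly $N_2^{1/4}L_3^{1/4}(N_3^{1/2}+N_1^{1-\alpha/2})N_3^{-\alpha/2}$; multiplying by $\|v_2\|_{X^{0,b_0}}\lesssim N_2^{-s}$ and $\|h^{N_3L_3}\|_{S^{b_0,q}}\lesssim N_3^{\epsilon_1}L_3^{-\nu}$ recovers the boxed exponents $N_2^{1/4-s}$, $L_3^{1/4-\nu}$ (your $L_3^{1/2-\nu}$ in the first mention is a typo for $L_3^{1/4-\nu}$). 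The diagonal term $\mathrm{II}=\sup_{k_2}(\mathbb{E}^{\mathcal C_3}[|\sigma^{(3)}_{k_2k_2}|^2])^{1/2}$ is estimated separately and turns out to be dominated, so no balancing between diagonal and off-diagonal is needed.
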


\begin{proof}
	Note that (iii) follows directly from Lemma \ref{lemA1-123}, it suffices to prove (i) and (ii). The situations (i) and (ii) are similar, so we only prove (i). Recall that in this case, $N_2\ll N_3$, $v_2$ is of type (D) and $v_3$ is of type (G) or (C).

	By executing {\bf Algorithm A2}, we need to estimate (see \eqref{inputA2-1}) \footnote{To save the notation, in the summation below, we implicitly insert the constraint $|k_2|,|k_2'|\leq N_2, |k_3|,|k_3'|,|m_3|,|m_3'|\sim N_3.$}
	$$ \underbrace{\Big(\sum_{k_2\neq k_2'}\mathbb{E}^{\mathcal{C}_3}[|\sigma_{k_2k_2'}^{(3)}|^2]\Big)^{\frac{1}{2}}}_{\mathrm{I}}
	+\sup_{|k_2|\sim N_2}\underbrace{\Big(\mathbb{E}^{\mathcal{C}_3}[|\sigma_{k_2k_2}^{(3)}|^2]\Big)^{\frac{1}{2}}}_{\mathrm{II}}.
	$$
	We calculate
	\begin{align*}
	\mathrm{I}^2=\sum_{\substack{k_2,k_2',k_1,k_3,m_1,m_3,k_3',m_3'\\
			k_2\neq k_2'\\
			k_3-k_3'=m_3-m_3'=k_2-k_2' } }&S_{k_1,k_2,k_3}S_{k_1,k_2',k_3'}
	S_{m_1,k_2,m_3}S_{m_1,k_2',m_3'}\\
	\times \sum_{\substack{|k_3^*|\sim N_3, |k_3'^*|\sim N_3\\ 
			|m_3^*|\sim N_3, |m_3'^*|\sim N_3  }  }&h_{k_3k_3^*}^{(q)}\ov{h}_{k_3'k_3'^*}^{(q)}\ov{h}_{m_3m_3^*}^{(q)}h_{m_3'm_3'^*}^{(q)}\cdot\frac{ \mathbb{E}[g_{k_3^*}\ov{g}_{k_3'^*}\ov{g}_{m_3^*}g_{m_3'^*}] }{[k_3^*]^{\frac{\alpha}{2}}
		[k_3'^*]^{\frac{\alpha}{2}}
		[m_3^*]^{\frac{\alpha}{2}}
		[m_3'^*]^{\frac{\alpha}{2}} },
	\end{align*}
	where in the last line, the range of summation in $k_3^*,k_3'^*,m_3^*,m_3'^*$ satisfies:
	$$|k_3-k_3^*|\leq L_3N_3^{\epsilon},\;
	|k_3'-k_3'^*|\leq L_3N_3^{\epsilon},\;
	|m_3-m_3^*|\leq L_3N_3^{\epsilon},\;
	|m_3'-m_3'^*|\leq L_3N_3^{\epsilon}.
	$$
	As usual, to simplify the notation, we omit the dependence on $\lambda_3,\mu_0$ and denote simply $h_{k_3k_3^*}^{(q)}=\lg\lambda_3\rg^{\frac{2b_0}{q'}}\widetilde{h}_{k_3k_3^*}^{N_3L_3}(\lambda_3)$. 
Using the independence,
	\begin{equation}\label{EI}
	\begin{split}
	\mathrm{I}^2\lesssim &N_3^{-2\alpha}\sum_{\substack{k_2,k_2',k_1,k_3,m_1,m_3,k_3',m_3'\\
			k_2\neq k_2'\\
			k_2-k_2'=k_3-k_3'=m_3-m_3'   }  }S_{k_1,k_2,k_3}S_{k_1,k_2',k_3'}
	S_{m_1,k_2,m_3}S_{m_1,k_2',m_3'}\\
	\times &\mathbb{E}\Big[\underbrace{\mathbf{1}_{|k_3-k_3'|\lesssim L_3N_3^{\epsilon}}\sum_{k_3^*,m_3^*} |h_{k_3k_3^*}^{(q)}\ov{h}_{k_3'k_3^*}^{(q)}\ov{h}_{m_3m_3^*}^{(q)}h_{m_3'm_3^*}^{(q)}| }_{\mathrm{I}_{1}}+\underbrace{\mathbf{1}_{|k_3-m_3|\lesssim L_3N_3^{\epsilon}}\sum_{k_3^*,k_3'^*}|h_{k_3k_3^*}^{(q)}\ov{h}_{m_3k_3^*}^{(q)}\ov{h}_{k_3'k_3'^*}^{(q)}h_{m_3'k_3'^*}^{(q)} }_{\mathrm{I}_{2}}| \Big].
	\end{split}
	\end{equation}
	In the arguments below, we will not display the expectation $\mathbb{E}$ since we will not use the random feature of the coefficients $h_{k_jk_j^*}$.

	Using Cauchy-Schwartz, we have
	$$ \mathrm{I}_1\leq \mathbf{1}_{|k_3-k_3'|\lesssim L_3N_3^{\epsilon}}\|h_{k_3k_3^*}^{(q)}\|_{l_{k_3}^{\infty}l_{k_3^*}^2}^4,\quad \mathrm{I}_2\leq \mathbf{1}_{|k_3-m_3|\lesssim L_3N_3^{\epsilon}}\|h_{k_3k_3^*}^{(q)}\|_{l_{k_3}^{\infty}l_{k_3^*}^2}^4.
	$$
	Plugging into \eqref{EI}, the summation corresponding to the contribution $\mathrm{I}_1$ can be bounded by
	\begin{align}\label{bianhao1}
	&N_3^{-2\alpha}\|h_{k_3k_3^*}^{(q)}\|_{l_{k_3}^{\infty}l_{k_3^*}^2}^4\!\!\!\!\sum_{\substack{k_2,k_2'\\0<|k_2-k_2'|\lesssim L_3N_3^{\epsilon} }}\!\!\!\!\Big(\sum_{k_1,k_3:k_3\neq k_2}S_{k_1,k_2,k_3}S_{k_1,k_2',k_3+k_2'-k_2}\Big)^2 \notag\\
	\lesssim &N_3^{-2\alpha}\|h_{k_3k_3^*}^{(q)}\|_{l_{k_3}^{\infty}l_{k_3^*}^2}^4\!\!\!\! \sum_{\substack{k_2,k_2'\\0<|k_2-k_2'|\lesssim L_3N_3^{\epsilon} }}
	\!\!\!\!\!\!\!\!(N_3+N_1^{2-\alpha}N_3^{\epsilon})^2\notag\\
	\lesssim & N_3^{-2\alpha}N_1^{2\epsilon}L_3N_3^{\epsilon}N_2\big[N_3^2+N_1^{2(2-\alpha)}N_3^{2\epsilon}\big]\|h_{k_3k_3^*}^{(q)}\|_{l_{k_3}^{\infty}l_{k_3^*}^2}^4.
	\end{align}
	since
	$$ \sum_{k_3:k_3\neq k_2}\sum_{k_1}S_{k_1,k_2,k_3}\lesssim \sum_{k_3}N_1^{\epsilon}\Big(1+\frac{N_1^{2-\alpha}}{\lg k_2-k_3\rg}\Big)\lesssim N_1^{\epsilon}\big(N_3+N_1^{2-\alpha}N_3^{\epsilon}\big),
	$$ 
	thanks to the fact that $|k_1|\sim N_1\gg |k_2|,|k_3|$.
	Next we consider the summation corresponding to the contribution $\mathrm{I}_2$ which can be bounded by
	\begin{align}\label{EI2}
	&N_3^{-2\alpha}\|h_{k_3k_3^*}^{(q)}\|_{l_{k_3}^{\infty}l_{k_3^*}^2}^4
	\sum_{\substack{ k_2,k_2',k_1,m_1,k_3,m_3\\k_2\neq k_2' \\
			|k_3-m_3|\lesssim L_3N_3^{\epsilon} } }
	S_{k_1,k_2,k_3}S_{k_1,k_2',k_3+k_2'-k_2 }S_{m_1,k_2,m_3}S_{m_1,k_2',m_3+k_2'-k_2}.
	\end{align}
	Since $|k_2|,|k_2'|\leq N_2\ll N_3\sim |k_3|$ 	and
	$ |\partial_{k_2'}\Phi(k_1,k_2',k_3+k_2'-k_2)|\sim N_3^{\alpha-1}>1$, we have, for fixed $k_1,k_3,k_2$,
	$ \sum_{k_2'}S_{k_1,k_2',k_3+k_2'-k_2}\lesssim N_1^{\epsilon}.$
	Thus
	\begin{align*}
	&
	\sum_{\substack{ k_2,k_2',k_1,m_1,k_3,m_3\\k_2\neq k_2'  } }\mathbf{1}_{|k_3-m_3|\lesssim L_3N_3^{\epsilon} }
	S_{k_1,k_2,k_3}S_{k_1,k_2',k_3+k_2'-k_2 }S_{m_1,k_2,m_3}S_{m_1,k_2',m_3+k_2'-k_2}
	\\
	\lesssim & N_1^{\epsilon}\sum_{\substack{k_2,k_3,m_3, \\
			|k_3-m_3|\lesssim L_3N_3^{\epsilon} } }\sum_{k_1,m_1}S_{k_1,k_2,k_3}S_{m_1,k_2,m_3}\\
	\lesssim & \sum_{\substack{k_2,k_3,m_3:k_2\neq k_3,m_3\\
			|k_3-m_3|\lesssim L_3N_3^{\epsilon}  } } \!\!\!\!\!\!\!N^{3\epsilon}\Big(1+\frac{N_1^{2-\alpha}}{\lg k_2-k_3\rg}\Big)\Big(1+\frac{N_1^{2-\alpha}}{\lg k_2-m_3\rg}\Big)\\ \lesssim &N_1^{2\epsilon}\big[N_1^{2(2-\alpha)}N_2+ L_3N_2N_3+L_3N_2N_1^{2-\alpha} \big],
	\end{align*}
	which is smaller than the upper bound \eqref{bianhao1} of $\mathrm{I}_1$.
	Therefore, we have
	\begin{align*}
	\mathrm{I}^{\frac{1}{2}}\lesssim &N_3^{-\frac{\alpha}{2}}\|h_{k_3k_3^*}^{(q)}(\lambda_3)\|_{l_{k_3}^{\infty}l_{k_3^*}^2 }N_1^{2\epsilon}N_2^{\frac{1}{4}}L_3^{\frac{1}{4}}\cdot(N_3^{\frac{1}{2}}+N_1^{1-\frac{\alpha}{2}} ).
	\end{align*}

	Next we estimate $\mathrm{II}^2$. For fixed $k_2$, we have
	\begin{align*}
	&\mathbb{E}^{\mathcal{C}_3}[|\sigma_{k_2k_2}^{(3)}|^2]\\=&\mathbb{E}^{\mathcal{C}_3}\Big[\Big(\sum_{k_1,k:k_1\neq k}|w^{(q)}_3(k+k_2-k_1)|^2S_{k_1,k_2,k+k_2-k_1}\Big)^2 \Big]\\
	=&\mathbb{E}^{\mathcal{C}_3}\Big[\sum_{\substack{k_1,k_3:k_1,k_3\neq k_2\\
			m_1,m_3:m_1,m_3\neq k_2 } }S_{k_1,k_2,k_3}S_{m_1,k_2,m_3}\Big|\sum_{\substack{|k_3^*|\sim N_3\\|k_3-k_3^*|\lesssim L_3N_3^{\epsilon} }} h_{k_3k_3^*}^{(q)}\frac{g_{k_3^*}}{[k_3^*]^{\frac{\alpha}{2}}} \Big|^2\Big|\sum_{\substack{|m_3^*|\sim N_3\\|m_3-m_3^*|\lesssim L_3N_3^{\epsilon} }} h_{m_3m_3^*}^{(q)}\frac{g_{m_3^*}}{[k_3^*]^{\frac{\alpha}{2}}} \Big|^2 \Big]\\
	\sim & N_3^{-2\alpha}\|h_{k_3k_3^*}^{(q)}\|_{l_{k_3}^{\infty}l_{k_3^*}^2}^4\Big(\sum_{k_3,k_1:k_1,k_3\neq k_2}S_{k_1,k_2,k_3}\Big)^2\\
	\lesssim & N_3^{-2\alpha}\|h_{k_3k_3^*}^{(q)}\|_{l_{k_3}^{\infty}l_{k_3^*}^2}^4\Big(\sum_{k_3:k_3\neq k_2}N_1^{\epsilon}\Big(1+\frac{N_1^{2-\alpha}}{\lg k_3-k_2\rg}\Big)\Big)^2\lesssim N_3^{-2\alpha}N_1^{4\epsilon}\big[N_1^{2(2-\alpha)}+N_3^2\big]\|h_{k_3k_3^*}^{(q)}\|_{l_{k_3}^{\infty}l_{k_3^*}^2}^4.
	\end{align*}
	Hence
	$$ \mathrm{II}^{\frac{1}{2}}\lesssim \big[N_1^{1-\frac{\alpha}{2}+\epsilon}+N_3^{\frac{1}{2}}N_1^{\epsilon}\big]N_3^{-\frac{\alpha}{2}}\|h_{k_3k_3^*}^{(q)}(\lambda_3)\|_{l_{k_3}^{\infty}l_{k_3^*}^2},
	$$
	which is smaller than the upper bound \eqref{bianhao1} of $\mathrm{I}^{\frac{1}{2}}$. 
	Plugging into the output \eqref{outputA2-1} and choosing $\epsilon\ll \kappa^{-0.1}$, we complete the proof of Lemma \ref{Case2formallemma1}.
\end{proof}
For the $\Xi_{L_2,L_3}^{N_1,N_2,N_3}$ norm, we have:
\begin{lemme}\label{Case2formallemma1'}
Assume that $\|y_{kk_1}^0(\lambda,\lambda_1)\|_{l_k^{1}L_{\lambda}^{q'}L_{\lambda_1}^2l_{k_1}^2 }=1$.	Assume that one of $v_2,v_3$ is of type (G) or (C) and the other is of type (D). Then outside a set of probability $<\mathrm{e}^{-N_1^{\theta}R},$ the following estimates hold:
	\begin{itemize}
		\item[(i)] If $N_2\ll N_3\ll N_1$ and $v_3$ is of type (G) or (C) , we have
		\begin{equation*}
		\begin{split}
	&\boxed{|\Xi_{L_2,L_3}^{N_1,N_2,N_3}(2,q)|\lesssim RN_1^{\theta+3\kappa^{-0.1}}N_3^{-\frac{\alpha}{2}}N_2^{\frac{1}{4}-s}\cdot N_3^{\epsilon_1}L_3^{-\nu}}
		\end{split}
		\end{equation*}
		\item[(ii)]If $N_3\ll N_2\ll N_1$ and $v_2$ is of type (G) or (C) , we have
		\begin{equation*}
		\begin{split}
	&\boxed{ |\Xi_{L_2,L_3}^{N_1,N_2,N_3}(q,2)|\lesssim  RN_1^{\theta+3\kappa^{-0.1}}N_2^{-\frac{\alpha}{2}}N_3^{\frac{1}{4}-s}\cdot N_2^{\epsilon_1}L_2^{-\nu}}
		\end{split}
		\end{equation*}	
		\item[(iii)] In any of these situations:	\begin{itemize}
			\item $N_2\sim N_3\ll N_1$;
			\item $N_2\ll N_3\ll N_1$ and $v_3$ is of type (D);
			\item $N_3\ll N_2\ll N_1$ and $v_2$ is of type (D);
		\end{itemize} then
		\begin{align*}
&\boxed{ |\Xi_{L_2,L_3}^{N_1,N_2,N_3}(2,2)|\lesssim 
 RN_1^{\theta+3\kappa^{-0.1}}(N_2\vee N_3)^{-s}(N_2\wedge N_3)^{-\frac{\alpha-1}{2}}  } 
		\end{align*}
	\end{itemize}
\end{lemme}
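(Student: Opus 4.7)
\medskip

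\noindent\textbf{Plan.} The argument parallels exactly that of Lemma \ref{Case2formallemma1}, with the $\Xi$-norm replacing $\Upsilon$; accordingly, Algorithm A2 is applied through \eqref{outputA2-3}/\eqref{outputA2-4} in place of \eqref{outputA2-1}/\eqref{outputA2-2}, and Algorithm A1 through \eqref{outputA1-4} in place of \eqref{outputA1-1}. Case (iii) is immediate from \eqref{outputA1-4}: $\Xi_{L_2,L_3}^{N_1,N_2,N_3}(2,2)\lesssim \|v_2\|_{X^{0,b_0}}\|v_3\|_{X^{0,b_0}}$. In each of the three sub-situations of (iii), at least one factor is of type (D) and contributes $(N_{(1)})^{-s}$, while the other contributes $(N_{(2)})^{-(\alpha-1)/2+\epsilon_2}$ at worst; combining yields the claim, modulo logarithmic losses absorbed into $N_1^{\kappa^{-0.1}}$.

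For (i) and (ii), which are symmetric, I focus on (i): $N_2\ll N_3\ll N_1$, $v_2$ of type (D), $v_3$ of type (G) or (C). By Algorithm A2 in the form \eqref{outputA2-3}, it suffices to bound the input constant $\widetilde{\Lambda}_3$ from \eqref{inputA2-3}. Decompose it as $\mathrm{I}+\mathrm{II}$, where $\mathrm{I}$ is the diagonal piece and $\mathrm{II}$ is the off-diagonal piece. For $\mathrm{I}$, since $\{g_{k_3^*}\}$ is independent of $\mathcal{C}_3$ and since $\sum_{k_2}S_{k_1,k_2,k+k_2-k_1}\lesssim N_1^\epsilon$ by Lemma \ref{countingprinciple} (as $|\partial_{k_2}\Phi|\sim N_3^{\alpha-1}\geq 1$), one obtains $\mathrm{I}\lesssim N_3^{-\alpha/2}N_1^\epsilon\|h^{(q)}_{k_3k_3^*}\|_{l^\infty_{k_3}l^2_{k_3^*}}^2$.

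The substantial step is the bound for $\mathrm{II}$. Expanding $\bigl|\sum_{k_2}\sigma^{(3)}_{k',k_1';k_2}\bar\sigma^{(3)}_{k,k_1;k_2}\bigr|^2$, taking $\mathbb{E}^{\mathcal{C}_3}$, and applying Wick's formula to the four Gaussians $\{g_{k_j^*}\}$, only two pairings survive; both force --- via the near-diagonal support $|k_3-k_3^*|\leq L_3N_3^\epsilon$ of $h^{(q)}_{k_3k_3^*}$ --- the localization
$$|(k-k_1)-(k'-k_1')|\leq 2L_3N_3^\epsilon\qquad(\Delta).$$
Estimating $|h^{(q)}_{k_3k_3^*}|^2$ by $\|h^{(q)}\|_{l^\infty_{k_3}l^2_{k_3^*}}^2$, $\mathrm{II}^2$ reduces to controlling, for fixed $(k,k')$,
$$N_3^{-2\alpha}\|h^{(q)}\|_{l^\infty l^2}^4\!\!\!\sum_{\substack{(k_1,k_1',k_2,k_2'):\Delta\\ (k,k_1)\neq(k',k_1')}}\!\!\!S_{k_1,k_2,k+k_2-k_1}\,S_{k_1',k_2,k'+k_2-k_1'}\,S_{k_1,k_2',k+k_2'-k_1}\,S_{k_1',k_2',k'+k_2'-k_1'}.$$
To estimate this, first sum over $k_2$ using the constraint $|k_2|\leq N_2$ (contributing $\lesssim N_2$); for each $k_2$, two of the $S$-constraints together with $|\partial_{k_1}\Phi|\sim N_1^{\alpha-1}\geq 1$ (since $|k_1|\sim N_1\gg N_3$) yield $\sum_{k_1}\lesssim N_1^\epsilon$ and $\sum_{k_1'}\lesssim N_1^\epsilon$; for each $(k_1,k_1')$, the remaining $S$-constraint $S_{k_1,k_2',\cdot}$ gives $\sum_{k_2'}\lesssim N_1^\epsilon$. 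Altogether $\mathrm{II}^2\lesssim N_3^{-2\alpha}N_2N_1^{3\epsilon}\|h^{(q)}\|_{l^\infty l^2}^4$, and hence $\widetilde{\Lambda}_3^{1/2}\lesssim N_3^{-\alpha/2}N_2^{1/4}N_1^{\epsilon'}$ for any $\epsilon'<\kappa^{-0.1}$. Plugging back into \eqref{outputA2-3} with $\|v_2\|_{X^{0,b_0}}\leq N_2^{-s}$ and $\|h^{N_3L_3}\|_{S^{b_0,q}}\leq N_3^{\epsilon_1}L_3^{-\nu}$, and noting $\alpha/q_0,1/q\ll \kappa^{-0.1}$, yields the asserted bound for (i). Case (ii) follows symmetrically via \eqref{outputA2-4} and the analogous computation for $\widetilde{\Lambda}_2$.

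\medskip

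\noindent\textbf{Main obstacle.} As in the proof of Lemma \ref{Case2formallemma1}, the technical heart is the counting bound for the off-diagonal term $\mathrm{II}$. The new feature is the matrix structure of $\sigma^{(3)}_{k,k_1;k_2}$, now indexed by \emph{two} output variables $(k,k_1)$ and \emph{one} input variable $k_2$, which forces one to invoke Lemma \ref{matrix-bound2} in place of Lemma \ref{matrix-bound}; the additional $\sup_{k,k'}$ appearing in \eqref{inputA2-3} is precisely what enables us to avoid the loss of a factor $N_1$ from an external sum in $k$. The delicate point is to arrange the four $S$-constraints together with the Wick-generated $\Delta$-constraint so that the free sum in $k_2$ (giving $N_2$ from the support restriction $|k_2|\leq N_2$) produces the decisive gain, rather than the weaker one coming from the $k_1$-range $|k_1-k|\lesssim N_3$; this is also what eliminates the $L_3^{1/4}$ factor present in the analogous $\Upsilon$-bound of Lemma \ref{Case2formallemma1}.
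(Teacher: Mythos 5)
Your proposal is correct and follows essentially the same route as the paper: case (iii) via the deterministic bound \eqref{outputA1-4}, and cases (i)--(ii) via Algorithm A2 through \eqref{inputA2-3}/\eqref{outputA2-3}, with the same diagonal/off-diagonal split and the same counting ($\sum_{k_2}\lesssim N_2$ from the support of $k_2$, followed by $N_1^{\epsilon}$-counts for $k_1,k_1'$ and the remaining $k_2'$-sum using the $S$-constraints), yielding $\widetilde{\Lambda}_3^{1/2}\lesssim N_3^{-\alpha/2}N_2^{1/4}$ up to small powers. One harmless slip: the Wick pairing $k_3^*=m_3^*$, $k_3'^*=m_3'^*$ forces $|k_2-k_2'|\lesssim L_3N_3^{\epsilon}$ rather than your constraint $(\Delta)$ on $(k-k_1)-(k'-k_1')$, but since your counting never actually invokes $(\Delta)$ this does not affect the conclusion.
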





\begin{proof}

For similar reasons, it suffices to treat the situation (i) where $v_2$ is deterministic and $N_3\gg N_2$.   Executing {\bf Algorithm A2}, the key point is to control
	$$ \sup_{k,k_1}\underbrace{\Big(\sum_{k_2}\mathbb{E}^{\mathcal{C}_3}|\sigma_{k,k_1;k_2}^{(3)}|^2\Big)}_{\mathrm{I}},\quad \sup_{k,k'}\underbrace{\Big(\sum_{\substack{k_1,k_1'\\
				(k,k_1)\neq (k',k_1') } } \mathbb{E}^{\mathcal{C}_3}\Big|\sum_{k_2}\sigma_{k',k_1';k_2}^{(3)}\ov{\sigma}_{k,k_1;k_2}^{(3)} \Big|^2 \Big)^{\frac{1}{2}} }_{\mathrm{II}},
	$$
	where
	$$ \sigma_{k,k_1;k_2}^{(3)}=\sum_{\substack{k_3^*:|k_3^*|\sim N_3 \\ |k_3-k_3^*|\leq L_3N_3^{\epsilon} } }h_{k+k_2-k_1,k_3^*}^{(q)}\frac{g_{k_3^*}}{[k_3^*]^{\frac{\alpha}{2}}}S_{k_1,k_2,k+k_2-k_1}. 
	$$
	Using the independence and Cauchy-Schwartz
	\begin{align*}
	\mathrm{I}=&\sum_{\substack{k_2,k_3^*,k_3^{'*}\\
			|k+k_2-k_1-k_3^*|<L_3N_3^{\epsilon}\\ |k+k_2-k_1-k_3^{'*}|<L_3N_3^{\epsilon} }}\!\!\!\!\!\!\!\!\! \mathbb{E}^{\mathcal{C}_3}\Big[h_{k+k_2-k_1,k_3^*}^{(q)}\ov{h}_{k+k_2-k_1,k_3^{'*}}^{(q)}\frac{g_{k_3^* }\ov{g}_{k_3^{'*}} }{|k_3^*|^{\frac{\alpha}{2}}|k_3^{'*}|^{\frac{\alpha}{2}}  }S_{k_1,k_2,k+k_2-k_1} \Big]\\
	&\quad\quad\quad\lesssim N_1^{\epsilon}N_3^{-\alpha}\|h_{k_3k_3^*}^{N_3L_3}\|_{l_{k_3}^{\infty}l_{k_3^*}^2}^2,
	\end{align*}
	where we used
	$ \sum_{k_2}S_{k_1,k_2,k+k_2-k_1}\lesssim N_1^{\epsilon}.
	$
	Next,
	\begin{align*}
	\mathrm{II}^2=\sum_{\substack{k_1,k_1',k_2,m_2\\
			(k,k_1)\neq (k_1,k_1') } }&S_{k_1,k_2,k+k_2-k_1}S_{k_1',k_2,k'+k_2-k_1'}S_{k_1,m_2,k+m_2-k_1}S_{k_1',m_2,k'+m_2-k_1'}\\
	\times\sum_{k_3^*,k_3^{'*}, m_3^*,m_3^{'*} }&h_{k+k_2-k_1,k_3^*}\ov{h}_{k'+k_2-k_1',k_3^{'^*}} \ov{h}_{k+m_2-k_1,m_3^*}h_{k'+m_2-k_1',m_3^{'*}} \frac{\mathbb{E}[ g_{k_3^*}\ov{g}_{k_3^{'*}} \ov{g}_{m_3^*}g_{m_3^{'*}}] }{|k_3^*|^{\frac{\alpha}{2}}|k_3^{'*}|^{\frac{\alpha}{2}}|m_3^*|^{\frac{\alpha}{2}}|m_3^{'*}|^{\frac{\alpha}{2}}  } .
	\end{align*}
	The non-zero contributions are $k_3^*=k_3^{'*}, m_3^*=m_3^{'*}$ and $k_3^*=m_3^*, k_3^{'*}=m_3^{'*}$.
	
	For either the case $k_3^*=k_3^{'*}, m_3^*=m_3^{'*}$ or  $k_3^*=m_3^*, k_3^{'*}=m_3^{'*}$, using Cauchy-Schwartz, we can estimate these contributions by
	\begin{align*}
	&N_3^{-2\alpha}\|h_{k_3k_3^*}^{(q)}\|_{l_{k_3}^{\infty}l_{k_3^*}^2 }^4\sum_{\substack{k_1,k_1',k_2,m_2\\ (k,k_1)\neq (k',k_1') } } S_{k_1,k_2,k+k_2-k_1}S_{k_1',k_2,k'+k_2-k_1'}S_{k_1,m_2,k+m_2-k_1}S_{k_1',m_2,k'+m_2-k_1'}\\
	\leq &N_3^{-2\alpha}\|h_{k_3k_3^*}^{(q)}\|_{l_{k_3}^{\infty}l_{k_3^*}^2 }^4\sum_{k_3,k_3',k_2,m_2}S_{k+k_2-k_3,k_2,k_3}S_{k'+k_2-k_3',k_2,k_3'} S_{k+k_2-k_3, m_2, m_2+k_3-k_2 }S_{k'+k_2-k_3',m_2,m_2+k_3'-k_2}.
	\end{align*}
	Since $N_1\gg N_3\gg N_2$, we take the inner sum in the order
	$\sum_{k_2}\sum_{k_3}\sum_{k_3'}\sum_{m_2}$. From
	$$\sum_{m_2}S_{k+k_2-k_3,m_2,m_2+k_3-k_2}\lesssim N_1^{\epsilon}$$
	and
	$$ \sum_{k_3}S_{k+k_2-k_3,k_2,k_3}\sum_{k_3'}S_{k'+k_2-k_3',k_2,k_3'}\lesssim 1,
	$$
	finally we obtain that
	$$ \mathrm{II}^{\frac{1}{4}}\lesssim N_1^{\epsilon}N_3^{-\frac{\alpha}{2}}N_2^{\frac{1}{4}}\|h_{k_3k_3^*}^{(q)}\|_{l_{k_3}^{\infty}l_{k_3^*}^2}.
	$$
By implementing the output \eqref{outputA2-3} and choosing $\epsilon\ll \kappa^{-0.1}$, we complete the proof of Lemma \ref{Case2formallemma1'}. 
\end{proof}


\subsection{Kernel estimates}
To finish the proof of Proposition \ref{kernelZb} and Proposition \ref{KernelSb}, it suffices to estimate the right side of 
\eqref{Sec8-bound0} and \eqref{Sec8-bound1}.
Note that we ignore the error terms appearing in Proposition \ref{modulationreduction1} here, as they are accompanied with some large negative power of $N_1$ which is negligible. 
\vspace{0.3cm}

\noi
$\bullet$ {\bf Proof of Proposition \ref{KernelSb}:} By symmetry, we may assume that $N_2\sim L$ is a fixed parameter and $N_3\leq N_2, L_3<N_3^{1-\delta}$, $L_2<N_2^{1-\delta}$ and $N_1>L^{\frac{1}{1-\delta}}$. First we note that, for fixed $(N_1,N_2,N_3,L_1,L_2,L_3)$, each time using Lemma \ref{2random-Zb}, Lemma \ref{2random-Sb}, Lemma \ref{2random-Zb'}, Lemma \ref{2random-Sb'} and Lemma \ref{Case2formallemma1}, Lemma \ref{Case2formallemma1'}, we should delete a set of probability $<\mathrm{e}^{-RN_1^{\theta}}$. Therefore, the probability of all the exceptional sets is bounded by $$\sum_{N_3\leq L, L_1,L_2,L_3\leq L}\sum_{N_1>L^{\frac{1}{1-\delta}}} O(\mathrm{e}^{-RN_1^{\theta}})=O(\mathrm{e}^{-cRL^{\theta}}),\quad 0<c<1.$$
Therefore, outside a set of probability $<\mathrm{e}^{-cRL^{\theta}}$, we may assume that all the estimates in Lemma \ref{2random-Zb}, Lemma \ref{2random-Sb}, Lemma \ref{2random-Zb'}, Lemma \ref{2random-Sb'} and Lemma \ref{Case2formallemma1}, Lemma \ref{Case2formallemma1'} hold.
To finish the proof, we only need to estimate the sum over $N_3\leq N_2, L_3<N_3^{1-\delta}$ and $L_2<N_2^{1-\delta}$. To simplify the notation, we first pull out all possible small powers of $N_1, N_2, N_3$ like $$N_1^{\kappa^{-1}}, N_1^{\frac{\alpha}{q_0}+\frac{1}{q}}, N_1^{100(b-\frac{1}{2})}, N_2^{\epsilon_1}, N_2^{\epsilon_2}, N_3^{\epsilon_1}, N_3^{\epsilon_2}$$
in a unified one\footnote{The only caution is that the inductive small numbers $\epsilon_1,\epsilon_2$ should not fall on $N_1$. } $N_1^{100(\kappa^{-1}+b-\frac{1}{2}+\frac{1}{q})}N_2^{10(\epsilon_1+\epsilon_2)}$ and \emph{ignore this small power in all the estimates below within this section}. Recall the notation
$$ \Xi_{L_2,L_3}^{N_1,N_2,N_3}=\min\{\Xi_{L_2,L_3}^{N_1,N_2,N_3}(r_2,r_3): (r_2,r_3)\in\{2,q\}\times\{2,q \} \}.
$$
We split the sum into
\begin{align}\label{4terms}
&\underbrace{\sum_{\substack{N_3,L_2,L_3\\ N_3\leq N_2 \\
L_2=2L_{N_2}, L_3=2L_{N_3}   } }\Xi_{L_2,L_3}^{N_1,N_2,N_3} }_{\mathrm{I}}
+\!\!\!\!\!\!\!\underbrace{\sum_{\substack{N_3,L_2,L_3\\N_3\leq N_2 \\
			L_2<2L_{N_2}, L_3<2L_{N_3}   } }\!\!\!\!\!\!\!\Xi_{L_2,L_3}^{N_1,N_2,N_3}}_{\mathrm{II}}\\
	+&\!\!\!\!\!\!\underbrace{\sum_{\substack{N_3,L_2,L_3\\N_3\leq N_2 \\
				L_2=2L_{N_2}, L_3<2L_{N_3}   } }\!\!\!\!\!\! \Xi_{L_2,L_3}^{N_1,N_2,N_3} }_{\mathrm{III}}
		+\underbrace{\sum_{\substack{N_3,L_2,L_3\\N_3\leq N_2 \\
					L_2<2L_{N_2}, L_3=2L_{N_3}   } }\!\!\!\!\!\!\!
				\Xi_{L_2,L_3}^{N_1,N_2,N_3}}_{\mathrm{IV}} \notag.
\end{align} 
Since all the inputs in $\mathrm{I}$ are of type (D), by Lemma \ref{lemA1-456},
\begin{align}\label{sumI'}
\mathrm{I}\lesssim \sum_{N_3: N_3\leq N_2}N_2^{-s}N_3^{-s}\sim N_2^{-s}. 
\end{align}
The inputs in $\mathrm{II}$ are all of type (G) or (C), by Lemma \ref{2random-Sb} and \ref{2random-Sb'}, we have
\begin{align*}
\mathrm{II}\lesssim & \sum_{N_3:N_3\ll N_2}\sum_{L_2,L_3\ll N_3} N_3^{-\frac{\alpha-1}{2}}N_2^{-\frac{\alpha}{2}}L_3^{\frac{1}{2}-\nu}L_2^{-\nu}+\sum_{N_3:N_3\ll N_2}\sum_{\substack{L_3\ll N_3\\
N_3\lesssim L_2\ll N_2 } }N_3^{1-\frac{\alpha}{2}-\nu}N_2^{-\frac{\alpha}{2}}\\
+&\sum_{N_3:N_3\sim N_2}\sum_{\substack{L_2\ll N_2\\ L_3\ll N_3
 } } N_2^{-\frac{7(\alpha-1)}{4}}(L_2L_3)^{-\nu}+N_2^{\frac{1}{2}-\alpha}
\lesssim  \log(N_2)N_2^{-(\alpha-1)-\nu}+N_2^{-\frac{7(\alpha-1)}{4}}.
\end{align*}
Next, applying Lemma \ref{Case2formallemma1'}, we deduce that
\begin{align*}
\mathrm{III}+\mathrm{IV}\lesssim N_2^{-s}+N_2^{-\frac{\alpha}{2}+\frac{1}{4}-s}\log N_2\lesssim N_2^{-s}.
\end{align*}
Therefore, since $\nu< \min\{s,\frac{7(\alpha-1)}{4}\}$ and $L=N_2$,
we have proved Proposition \ref{KernelSb}.

\noi
$\bullet$ {\bf Proof of Proposition \ref{kernelZb}:} 
As before, we ignore a unified factor $N_1^{100\kappa^{-0.1}}N_2^{10(\epsilon_1+\epsilon_2)}$ and split the sum into $\mathrm{I}+\mathrm{II}+\mathrm{III}+\mathrm{IV}$ as \eqref{4terms}. From Lemma \ref{lemA1-123},
$$ \mathrm{I}\lesssim N_1^{1-\frac{\alpha}{2}}N_2^{-s},\quad \text{ if } N_2<N_1^{2-\alpha}
$$
and
$$ \mathrm{I}\lesssim N_1^{1-\frac{\alpha}{2}}N_2^{-s}+N_2^{\frac{1}{2}-2s}<N_1^{1-\frac{\alpha}{2}}N_2^{-s}
\quad \text{ if } N_1^{2-\alpha}\leq N_2(<N_1^{1-\delta}),\text{ since } \alpha<\frac{4}{3}.
$$
To estimate $\mathrm{II}$, we apply Lemma \ref{2random-Zb} and Lemma \ref{2random-Zb'} and obtain that
\begin{align*}
\mathrm{II}\lesssim & \sum_{N_3:N_3\ll N_2}\sum_{\substack{ L_3,L_2\ll N_3\\
L_3>\frac{1}{2} } } (N_2N_3)^{-\frac{\alpha}{2}}N_3^{1-\frac{\alpha}{2}}(N_2^{\frac{1}{2}}+N_1^{1-\frac{\alpha}{2}})L_2^{\frac{1}{2}-\nu}L_3^{-\nu}\\
+&\sum_{N_3:N_3\ll N_2}\sum_{\substack{ L_2\ll N_3
		 } } 
	 (N_2N_3)^{-\frac{\alpha}{2}}N_3^{\frac{1}{2}}(N_2^{\frac{1}{2}}+N_1^{1-\frac{\alpha}{2}})L_2^{-\nu}\\
+&\sum_{N_3:N_3\ll N_2}\sum_{\substack{L_2\ll N_2,L_3\ll N_3\\
L_2\vee L_3\gtrsim N_3 } } N_3^{1-\frac{\alpha}{2}-\nu}N_2^{-\frac{\alpha}{2}}(N_2^{\frac{1}{2}}+N_1^{1-\frac{\alpha}{2}} )\\
+&\sum_{N_3:N_3\sim N_2}\sum_{L_2\ll N_2, L_3\ll N_3} N_1^{1-\frac{\alpha}{2}}N_2^{-\frac{7(\alpha-1)}{4}}(L_2L_3)^{-\nu}+(N_2^{-(\alpha-1)}+N_1^{1-\frac{\alpha}{2}}N_2^{\frac{1}{2}-\alpha}).
\end{align*}
Then direct computation gives
$$ \mathrm{II}\lesssim \log(N_2)N_1^{1-\frac{\alpha}{2}}N_2^{-\frac{\alpha-1}{2}-\nu}+N_1^{1-\frac{\alpha}{2}}N_2^{-\frac{7(\alpha-1)}{4}}.
$$
From Lemma \ref{Case2formallemma1}, 
\begin{align*}
\mathrm{III}\lesssim &\!\!\!\!\!\!\!\!\!\sum_{\substack{N_3, L_2,L_3\\
N_3\leq N_2\\
L_2=2L_{N_2},\; L_3<N_3^{1-\delta} } }\!\!\!\!\!\!\!\!\!(N_1^{1-\frac{\alpha}{2}}+N_3^{\frac{1}{2}})N_2^{-s}N_3^{-\frac{\alpha-1}{2}}
\lesssim  N_1^{1-\frac{\alpha}{2}}N_2^{-s}+N_2^{1-\frac{\alpha}{2}-s}\log(N_2)\lesssim N_1^{1-\frac{\alpha}{2}}N_2^{-s},
\end{align*}
since $L\sim N_2<N_1^{1-\delta}$.
Similarly,
\begin{align*}
\mathrm{IV}\lesssim & \sum_{N_3:N_3\ll N_2}\sum_{\substack{L_2,L_3\\
L_2< N_2^{1-\delta},\; L_3=2L_{N_3} }} N_2^{-\frac{\alpha}{2}}N_3^{\frac{1}{4}-s}L_2^{\frac{1}{4}-\nu}(N_1^{1-\frac{\alpha}{2}}+N_2^{\frac{1}{2}} )\\
+& \sum_{N_3:N_3\sim N_2}\sum_{\substack{L_2,L_3\\
		L_2< N_2^{1-\delta},\; L_3=2L_{N_3} }} (N_1^{1-\frac{\alpha}{2}}+N_3^{\frac{1}{2}})N_2^{-s}N_3^{-\frac{\alpha-1}{2}}\\
\lesssim &(N_1^{1-\frac{\alpha}{2}}+N_2^{\frac{1}{2}} )N_2^{-\frac{\alpha}{2}+\frac{1}{2}-\nu-s}+N_2^{1-\frac{\alpha}{2}-s}\log(N_2)\lesssim N_1^{1-\frac{\alpha}{2}}N_2^{-s}.
\end{align*}
Thus
$$ \mathrm{III}+\mathrm{IV}\lesssim N_1^{1-\frac{\alpha}{2}}N_2^{-s}.
$$
Therefore, since $\nu\leq \min\{s,\frac{7(\alpha-1)}{4}\}-100(\epsilon_1+\epsilon_2)$ and $L=N_2$,
we have proved Proposition \ref{kernelZb}.

\section{Reductions and algorithms for the tri-linear estimates}\label{reductionsection}

\subsection{Reduction on the Fourier supports of type (D) and (C) terms }
Before turning to concrete estimates, we will make some reductions, just in order to clean up the notations and the arguments below. We first reduce the estimate to the case where type (D) terms are localized in the Fourier space, in order to apply Proposition \ref{modulationreduction:trilinear}.  This can be seen as follows: if some $v_j$ is of type (D), we will decompose it as $$\Pi_{N_{(1)}^{10}}v_j+\Pi_{N_{(1)}^{10}}^{\perp}v_j.
$$ 
Then by the bilinear Strichartz inequality, the contributions in $\mathcal{N}_3(v_1,v_2,v_3)$ when we replace $v_j$ by $\Pi_{N_{(1)}^{10}}^{\perp}v_j$ is negligible. Indeed, by duality, to estimate $\iint v_1\ov{v}_2v_3\ov{v}dxdt$, we split each $v_i$ into dyadic pieces and we have to estimate
$$ \sum_{M_1,M_2,M_3,M}\iint \mathbf{P}_{M_1}v_1\cdot\mathbf{P}_{M_2}\ov{v_2}\cdot \mathbf{P}_{M_3}v_3\cdot \mathbf{P}_M\ov{v}dxdt
$$
where at least one of $M_1,M_2,M_3$ is greater than $N_{(1)}^{10}$ and the corresponding function $v_j$ is of type (D). In particular, if for some $k$, $M_k=\max\{M_1,M_2,M_3\}$, then $M_k> N_{(1)}^{10}$ and $v_k$ must be of type (D), since each $v_j$ of type (G) or (C) has Fourier support $|k_j|\leq N_j\leq N_{(1)}$. By H\"older and the bilinear Strichartz inequality, we can bound the sum by
$$ \sum_{\substack{ M_1,M_2,M_3,M\\
		M_{(1)\geq N_{(1)}^{10}  }} } (M_{(2)}M_{(3)})^{\frac{1}{2}-\frac{\alpha}{4}} \|\mathbf{P}_{M_1}v_1\|_{X^{0,\frac{3}{8}}}
\|\mathbf{P}_{M_2}v_2\|_{X^{0,\frac{3}{8}}}
\|\mathbf{P}_{M_3}v_3\|_{X^{0,\frac{3}{8}}}
\|\mathbf{P}_{M}v\|_{X^{0,\frac{3}{8}}}.
$$ 
Note that if $v_{(j)}, j=2,3$ is not of type (D), then $M_{(j)}\leq N_{(1)}, j=2,3$, the dyadic summation converges and is bounded by $N_{(1)}^{-10s}$, which is negligible. Therefore, from now on, we always assume that a term $v_j$ of type (D) has Fourier support $|k_j|\leq N_{(1)}^{10}$, hence the modulation reduction, Proposition \ref{modulationreduction:trilinear} is always applicable. 

Next, we claim that without loss of generality, 
\begin{itemize}
	\item[{\bf(H)}] for $v_j$ of type (D) or (C), we may further assume that $$\mathrm{supp}_{k_j}(\widehat{v}_j)\subset \{k_j:|k_j|\sim N_j \} $$
\end{itemize}
	and our goal is to prove an estimate of the form
	\begin{align}\label{enhancedslightly}
	\|\mathcal{I}\mathcal{N}(v_1,v_2,v_3)\|_{X^{0,b_1}}\lesssim RN_{(1)}^{-s-\delta_0}.
	\end{align}
Indeed, if $v_j$ is of type (D), we can decompose it as
$$ v_j=\sum_{M_j\leq N_{(1)}^{10} }\mathbf{P}_{M_j}v_j.
$$  
By our assumption that $v_j$ has characterized frequency $N_j$, we have
$$ \|\mathbf{P}_{M_j}v_j\|_{X^{0,b}}\lesssim M_j^{-s}, \text{ if } M_j\geq N_j;\quad \|\mathbf{P}_{M_j}v_j\|_{X^{0,b}}\lesssim N_j^{-s}, \text{ if } M_j<N_j.
$$
Then to estimate the $X^{0,1-b_1}$ norm of $\mathcal{N}(v_1,v_2,v_3)$, we can replace such $v_j$ by $\mathbf{P}_{M_j}v_j$ and then sum over every $M_j\leq N_{(1)}^{10}$. Note that the dyadic sum of $N_j^{-s}$ over $M_j\leq N_j$ only contributes $N_j^{-s}\log(N_j)$ and the dyadic sum of $M_j^{-s}$ over $M_j>N_j$ contributes $N_j^{-s}$, finally the logarithmic loss and the loss from the small powers $N_{(1)}^{100(\frac{1}{q}+b_1-0.5+\theta+\kappa^{-0.1})}$ will be compensate by $N_{(1)}^{-\delta_0}$, thanks to Remark \ref{numerical}.  Similarly, if $v_j$ is of type (C), we decompose it as
$$ v_j=\sum_{|k_j|\sim N_j}\mathrm{e}_{k_j}\sum_{\substack{|k_j^*|\sim N_j,\\
		|k_j-k_j^*|\leq L_jN_j^{\epsilon} }  } h_{k_jk_j^*}^{N_jL_j}(t)\frac{g_{k_j^*}(\omega) }{[k_j^*]^{\frac{\alpha}{2}}}+\sum_{|k_j|\leq N_j}\mathrm{e}_{k_j}\sum_{\substack{|k_j^*|\sim N_j,\\
		|k_j-k_j^*|> L_jN_j^{\epsilon} }  } h_{k_jk_j^*}^{N_jL_j}(t)\frac{g_{k_j^*}(\omega) }{[k_j^*]^{\frac{\alpha}{2}}},
$$
where $\epsilon<\kappa^{-0.1}$.
The second term is negligible in the estimate, compared with the first term on the right side. Indeed, we can treat the second term as a function of type (D)\footnote{We then perform the same Littlewood-Paley decomposition as we just did for the true type (D) terms. } with the $X^{0,b}$ bound  $N_j^{10}\cdot N_j^{1-\kappa\epsilon}\ll N_j^{-10}$, thanks to \eqref{almostlocal}. From these discussions, we always assume {\bf(H)} in the sequel and proceed to prove \eqref{enhancedslightly}.

\subsection{Reduction to the corresponding dyadic summations}

Except for the high-high-high interactions with at least one term of type (D), we can reduce the matter to several modes of summation, depending on how many random structures we want to exploit.

First, applying Proposition \ref{modulationreduction:trilinear},
\begin{align*}
\|\mathcal{I}\mathcal{N}_3(v_1,v_2,v_3)\|_{X^{0,b_1}}\lesssim  &N_{(1)}^{-100}\prod_{j=1}^3\|v_j\|_{X^{0,b}}
\\+&\Big\|\mathcal{M}_{L_1,L_2,L_3}^{N_1,N_2,N_3}(\lambda_1,\lambda_2,\lambda_3,\mu_0,k) \Big\|_{L_{\lambda_1}^{r_1}L_{\lambda_2}^{r_2}L_{\lambda_3}^{r_3}L_{\mu_0}^{q_0}(|\mu_0|\lesssim N_{(1)}^{\alpha})l_k^2 },
\end{align*}
where
$$ \mathcal{M}_{L_1,L_2,L_3}^{N_1,N_2,N_3}=\sum_{\substack{(k_1,k_2,k_3)\in\Gamma(k) } }\widehat{\chi}(\mu_0-\Phi_{k_1,k_2,k_3})\widetilde{w}_1(\lambda_1,k_1)\ov{\widetilde{w}}_2(\lambda_2,k_2)\widetilde{w}_3(\lambda_3,k_3)  ,
$$
$q_0=\frac{1}{b_1-0.5}$, and $\widetilde{w}_j^{(r_j)}(\lambda_j,k_j)=\langle\lambda_j\rangle^{\frac{2b_0}{r_j'}}\widetilde{v}_j(\lambda_j,k_j)$ for $j=1,2,3.$  We can ignore the error $N_{(1)}^{-100}\prod_{j=1}^3\|v_j\|_{X^{0,b_0}}$ and concentrate to the estimate of $\mathcal{M}_{L_1,L_2,L_3}^{N_1,N_2,N_3}$. As before, we may replace $\widehat{\chi}_0(\mu_0-\Phi_{k_1,k_2,k_3})$ by $\mathbf{1}_{\Phi_{k_1,k_2,k_3}=\mu_0+O(N_{(1)}^{\epsilon})}.$ 
Moreover, we denote simply by $h_{k_jk_j^*}^{(r_j)}=h_{k_jk_j^*}^{N_jL_j,(r_j)}:=\langle\lambda_j\rangle^{\frac{2b}{r_j'}}\widetilde{h}_{k_jk_j^*}^{N_jL_j}(\lambda_j)$ when there is no risk of confusing. Recall also the notations:
$$ S_{k_1,k_2,k_3}:=\mathbf{1}_{k_2\neq k_1,k_3}\mathbf{1}_{\Phi_{k_1,k_2,k_3}=\mu_0+O(N_{(1)}^{\epsilon})},\quad \|\cdots\|_{L_{\mu_0*}^{q_0}}:=\|\cdots\|_{L_{\mu_0}^{q_0}(|\mu_0|\lesssim N_{(1)}^{\alpha}) }
$$


\noi
$\bullet$ {\bf Algorithm 1. Prototype: $v_1,v_2,v_3$ are all of type (G) or (C) }

Our algorithm in this case can be described as follows:\\
	(a) Denote by $\mathcal{C}=\mathcal{B}_{\leq \max\{L_1,L_2,L_3\} }$, then $\mathcal{M}_{L_1,L_2,L_3}^{N_1,N_2,N_3}$ is a tri-linear expression of Gaussian variables. When the coefficients are $\mathcal{C}-$measurable and are independent of all Gaussians in the expansion, we can apply Corollary \ref{largedeviation} to deduce that outside a set of probability $<\mathrm{e}^{-N_{(1)}^{\theta}R^{\frac{2}{3}}}$,
			$$ \|\mathcal{M}_{L_1,L_2,L_3}^{N_1,N_2,N_3} \|_{L_{\lambda_1}^qL_{\lambda_2}^{q}L_{\lambda_3}^{q}L_{\mu_0*}^{q_0}l_k^2 }\leq RN_{(1)}^{\theta}\|(\mathbb{E}^{\mathcal{C}}[|\mathcal{M}_{L_1,L_2,L_3}^{N_1,N_2,N_3}|^2])^{\frac{1}{2}} \|_{L_{\lambda_1}^qL_{\lambda_2}^{q}L_{\lambda_3}^{q}L_{\mu_0*}^{q_0}l_k^2 }.
			$$
	(b)
	As the crucial step, for fixed $\lambda_1,\lambda_2,\lambda_3, |\mu_0|\lesssim N_{(1)}^{\alpha}$, we need to establish the following estimate:
	$$ \mathbb{E}^{\mathcal{C}}[\|\mathcal{M}_{L_1,L_2,L_3}^{N_1,N_2,N_3} \|_{l_k^2}^2 ]\leq K(N_1,N_2,N_3;L_1,L_2,L_3)\prod_{j=1}^3\|h_{k_jk_j^*}^{N_jL_j,(q)}(\lambda_j)\|_{l_{k_j}^{\infty}l_{k_j^*}}^2.
	$$
	(c)
	Using the embedding $l^{\infty}\hookrightarrow l^q$, taking the square root of the output of the step (b) and then taking the $L_{\lambda_1}^2L_{\lambda_2}^{q}L_{\lambda_3}^{q}L_{\mu_0*}^{q_0}$ norm, we obtain that
	\begin{align*}
	 \hspace{1.2cm} \|\mathcal{M}_{L_1,L_2,L_3}^{N_1,N_2,N_3} \|_{L_{\lambda_1}^qL_{\lambda_2}^qL_{\lambda_3}^qL_{\mu_0*}^{q_0}L_{\omega}^2 } \leq N_{(1)}^{\frac{\alpha}{q_0}+\frac{3}{q}}K(N_1,N_2,N_3,L_1,L_2,L_3)^{\frac{1}{2}}\prod_{j=1}^3\|h^{N_jL_j}\|_{S^{b,q}}.
	\end{align*}
	From this algorithm, in practice,  only the step (b) is not robust. By abusing the notations a bit, we can forget the modulation variable and ignore all the small powers of $N_{1},N_2,N_3$ and write each $\widetilde{w}_j(\lambda_j,k_j)$ (with characterized parameters $(N_j,L_j)$ ) simply as $a_j(k_j)=a_j(k_j)\mathbf{1}_{|k_j|\sim N_j}$ and assume that:\\ 
	\begin{itemize}
		\item If $a_j$ is of type (C),
	\begin{align}\label{boundCsimplified}
	 \|h_{k_jk_j^*}^{N_jL_j}\|_{l_{k_j}^{\infty}l_{k_j^*}^2}\leq L_j^{-\nu};
		\end{align} 
\item If $a_j$ is of type (D),
	\begin{align}\label{boundDsimplified}
     \|a_j(k_j)\|_{l_{k_j}^2}\lesssim N_j^{-s};
		\end{align}
	\item 
	If $a_j$ is of type (G),
	\begin{align}\label{boundGsimplified}
	 \|a_j(k_j)\|_{l_{k_j}^2}\lesssim N_j^{-\frac{\alpha-1}{2}}.
		\end{align}
			\end{itemize}
	Moreover, to make the notations cleaner, we will ignore all the small powers (in terms of $\frac{1}{q},\frac{1}{q_0}\sim b_1-0.5,\epsilon<\kappa^{-0.1},\epsilon_1,\epsilon_2,\delta_0,\cdots$) of $N_j$ and 
	finally we multiply by a unified factor $$N_{(1)}^{\frac{\alpha}{q_0}+\frac{3}{q}+\theta+100\kappa^{-0.1}+3\epsilon_1+3\epsilon_2}$$ to the output. 
	
	In summary, by {\bf Algorithm 1}, we need to establish an estimate
	\begin{align}\label{Alg1}
	\mathbb{E}[|\mathcal{U}_{L_1,L_2,L_3}^{N_1,N_3,N_3}|^2]\leq K_1(N_1,N_2,N_3),
	\end{align}
	where
	$$ \mathcal{U}_{L_1,L_2,L_3}^{N_1,N_2,N_3}=\Big(\sum_{k}\Big|\sum_{(k_1,k_2,k_3)\in\Gamma(k)} a_1(k_1)\ov{a}_2(k_2)a_3(k_3) \Big|^2\Big)^{\frac{1}{2}},
	$$
	with $a_1,a_2,a_3$ having characterized parameters $(N_1,L_1),(N_2,L_2),(N_3,L_3)$, with respectively, satisfying corresponding estimates \eqref{boundCsimplified},\eqref{boundDsimplified} and \eqref{boundGsimplified}. Then the output is
	$$ 
	\boxed{\text{Output of Algorithm 1}\leq RN_{(1)}^{\frac{\alpha}{q_0}+\frac{3}{q}+\theta+100(\kappa^{-0.1}+\epsilon_1+\epsilon_2)}K_1(N_1,N_2,N_3)^{\frac{1}{2}}}
	$$

\noi
$\bullet$ {\bf Algorithm 2. Prototype: at least two of $v_1,v_2,v_3$ are of type (G) or (C)  }

We may assume that $v_1,v_2$ are of type (G) or (C) and we denote by $\mathcal{C}=\mathcal{B}_{\leq (L_1\vee L_2)}$. The algorithm can be described as follows:
\\
(a)
	$$ \|\mathcal{M}_{L_1,L_2,L_3}^{N_1,N_2,N_3}\|_{L_{\lambda_1}^qL_{\lambda_2}^qL_{\lambda_3}^2L_{\mu_0*}^{q_0}l_{k}^2 }\leq \|\mathcal{G}_3\mathcal{G}_3^*\|_{L_{\lambda_1,\lambda_2}^{\frac{q}{2}}L^{\frac{q_0}{2}}_{\mu_0*}\mathcal{L}(l_{k_3}^2)}^{\frac{1}{2}} \|\widetilde{w}_3(\lambda_3,k_3)\|_{l_{k_3}^2},
	$$
	where the kernel of the random operator $\mathcal{G}_3\mathcal{G}_3^*$ (depending on $\lambda_1,\lambda_2$ and $\mu_0$) is given by
	\begin{align}\label{randomopkernel1}
	\sigma_{k,k'}^{(3)}=\sum_{k_2,k_2',k_3}&\ov{\widetilde{w}}_1(\lambda_1,k'+k_2'-k_3)\widetilde{w}_1(\lambda_1,k+k_2-k_3)\widetilde{w}_2(\lambda_2,k_2')\ov{\widetilde{w}}_2(\lambda_2,k_2)\\\times &S_{k'+k_2'-k_3,k_2',k_3}S_{k+k_2-k_3,k_2,k_3}. \notag
	\end{align}
	Then by Lemma \ref{matrixboundoffdiagonal},
	$$ \|\mathcal{G}_3\mathcal{G}_3^*\|_{l_{k'}^2\rightarrow l_k^2}\leq L\sup_{k,k':|k-k'|< L}|\sigma_{k,k'}^{(3)}|+\Big(\sum_{k,k':|k-k'|\geq L}|\sigma_{k,k'}^{(3)}|^2 \Big)^{\frac{1}{2}}.
	$$
	(b)
	Since $|\sigma^{(3)}_{k,k'}|^2$ is a bilinear expression of Gaussian variables, when the coefficients are independent of these Gaussians in the expansion, by Corollary \ref{largedeviation}, outside a set of probability $<\mathrm{e}^{-N_1^{\theta}R}, $ we have
			$$ \Big\|\Big(\sum_{k,k':|k-k'|\geq L}\!\!\!\!\!\!\!\!|\sigma_{k,k'}^{(3)}|^2 \Big)^{\frac{1}{2}} \Big\|_{L_{\lambda_1,\lambda_2}^{\frac{q}{2}}L_{\mu_0*}^{\frac{q_0}{2}} }\leq RN_1^{\theta} \Big\|\Big(\sum_{k,k':|k-k'|\geq L}\!\!\!\!\!\!\mathbb{E}^{\mathcal{C}}[|\sigma_{k,k'}^{(3)}|^2] \Big)^{\frac{1}{2}} \Big\|_{L_{\lambda_1,\lambda_2}^{\frac{q}{2}}L_{\mu_0*}^{\frac{q_0}{2}} },
			$$ 	
			and
			$$ \big\|\sup_{k,k':|k-k'|< L}|\sigma_{k,k'}^{(3)}|^2 \big\|_{L_{\lambda_1,\lambda_2}^{\frac{q}{2}}L_{\mu_0*}^{\frac{q_0}{2}} }\leq RN_1^{\theta} 
			\big\|\sup_{k,k':|k-k'|< L}\mathbb{E}^{\mathcal{C}}[|\sigma_{k,k'}^{(3)}|^2] \big\|_{L_{\lambda_1,\lambda_2}^{\frac{q}{2}}L_{\mu_0*}^{\frac{q_0}{2}}}.
			$$
				The main step is to establish the estimates
	$$ \sum_{k,k':|k-k'|\geq L}\mathbb{E}^{\mathcal{C}}[|\sigma_{k,k'}^{(3)}|^2] \leq K(N_1,N_2,N_3,L_1,L_2,L_3)\prod_{j=1}^2\|h_{k_jk_j^*}^{N_jL_j,(q)}\|_{l_{k_j}^{\infty}l_{k_2^*}^2}^2
	$$
	and
	$$ \sup_{k,k':|k-k'|<L}\mathbb{E}^{\mathcal{C}}[|\sigma_{k,k'}^{(3)}|^2 ]\leq K'(N_1,N_2,N_3,L_1,L_2,L_3)\prod_{j=1}^2\|h_{k_jk_j^*}^{N_jL_j,(q)}\|_{l_{k_j}^{\infty}l_{k_2^*}^2}^2.
	$$
(c) Having the bounds in the step (b), we deduce that outside a set of probability $<\mathrm{e}^{-N_1^{\theta}R}$, 
	\begin{align*}
	 &\|\mathcal{G}_3\mathcal{G}_3^*\|_{L_{\lambda_1,\lambda_2}^{\frac{q}{2}}L_{\mu_0*}^{\frac{q_0}{2}}\mathcal{L}(l_{k_3}^2)}^{\frac{1}{2}}\\ \leq &RN_1^{\theta+\frac{\alpha}{q_0}+\frac{2}{q} }\big(L^{\frac{1}{2}}K'(N_1,N_2,N_3,L_1,L_2,L_3)^{\frac{1}{4}}+K(N_1,N_2,N_3,L_1,L_2,L_3)^{\frac{1}{4}} \big)\prod_{j=1}^2\|h^{N_jL_j}\|_{S^b}^{\frac{1}{2}}.
	\end{align*}
	Again, since only the estimates for the expectations in the step (b) are not robust, by abusing the notations a bit, we may write each $\widetilde{w}_j(\lambda_j,k_j)$ (with characterized parameters $(N_j,L_j)$ ) simply as $a_j(k_j)=a_j(k_j)\mathbf{1}_{|k_j|\sim N_j}$ and making the same assumptions \eqref{boundCsimplified},\eqref{boundDsimplified} and \eqref{boundGsimplified} for type (C), (D) and (G) terms as in {\bf Algorithm 1}, with respectively. Again, to make the notations cleaner, we will ignore all the small powers (in terms of $\epsilon<\kappa^{-0.1},\epsilon_1,\epsilon_2,\delta_0,\frac{1}{q},\frac{1}{q_0}\sim b_1-0.5$) of $N_j$ and 
	finally we multiply by a unified factor $N_{(1)}^{\frac{\alpha}{q_0}+\frac{2}{q}+\theta+100\kappa^{-0.1}+3\epsilon_1+3\epsilon_2}$ to the output.
	
	Similarly, if $v_1,v_3$ are of type (G) or (C), we denote by $\mathcal{C}=\mathcal{B}_{\leq (L_1\vee L_3)}$ we will apply the above algorithm to the operator $\mathcal{G}_2\mathcal{G}_2^*$ with matrix elements
	\begin{align}\label{randomopkernel2}
	\sigma_{k,k'}^{(2)}=\sum_{k_3,k_3',k_2}&\ov{\widetilde{w}}_1(\lambda_1,k'+k_2-k_3')\widetilde{w}_1(\lambda_1,k+k_2-k_3)\widetilde{w}_3(\lambda_3,k_3')\ov{\widetilde{w}}_3(\lambda_3,k_3)\\\times &S_{k'+k_2-k_3
		',k_2,k_3'}S_{k+k_2-k_3,k_2,k_3}. \notag
	\end{align}
	In summary, by {\bf Algorithm 2}, 
	we need to establish the bound:
	$$ \sum_{j=2}^3\mathbb{E}^{\mathcal{C}}\Big[\sum_{k,k':|k-k'|\geq L}|\eta_{k,k'}^{(j)}|^2 \Big]+\sup_{k,k':|k-k'|<L}\mathbb{E}^{\mathcal{C}}[|\eta_{k,k'}^{(j)}|^2] \leq K_2(N_1,N_2,N_3),
	$$
	where
\begin{align}\label{etakk3} 
 \eta_{k,k'}^{(3)}=&\sum_{k_2,k_2',k_3}\ov{a}_1(k'+k_2'-k_3)a_1(k+k_2-k_3)a_2(k_2')\ov{a}_2(k_2)S_{k'+k_2'-k_3,k_2',k_3}S_{k+k_2-k_3,k_2,k_3},
\end{align}
	\begin{align}\label{etakk2}  \eta_{k,k'}^{(2)}=\sum_{k_3,k_2',k_2}\ov{a}_1(k'+k_2-k_3')a_1(k+k_2-k_3)a_3(k_3')\ov{a}_3(k_3)S_{k'+k_2-k_3',k_2,k_3'}S_{k+k_2-k_3,k_2,k_3}.
	\end{align}
	Then 
	$$ \boxed{\text{ Output of Algorithm 2}\leq RN_1^{\frac{\alpha}{q_0}+\frac{2}{q}+\theta+100(\kappa^{-0.1}+\epsilon_1+\epsilon_2)}K_2(N_1,N_2,N_3)^{\frac{1}{4}}\|a_2\|_{l^2}}
	$$
	or
	$$\boxed{\text{ Output of Algorithm 2}\leq RN_1^{\frac{\alpha}{q_0}+\frac{2}{q}+\theta+100(\kappa^{-0.1}+\epsilon_1+\epsilon_2)}K_2(N_1,N_2,N_3)^{\frac{1}{4}}\|a_3\|_{l^2}.}
	$$


\noi
$\bullet${\bf Algorithm 3. Prototype: at least one term of type (G) or (C) }
Without loss of generality, we assume that $v_1$ is of type (G) or (C). By Cauchy-Schwartz, we have
\begin{align*}
&\|\mathcal{M}_{L_1,L_2,L_3}^{N_1,N_2,N_3}\|_{L_{\lambda_1}^qL_{\lambda_2,\lambda_3}^2L_{\mu_0*}^{q_0}l_k^2 }\\ \leq & \|\widetilde{w}_1(\lambda_1,k_1)\|_{L_{\lambda_1}^ql_{k_1}^{\infty}}
\Big\|\Big(\sum_{|k|\leq N_{(1)}}
\Big|\sum_{\substack{(k_1,k_2,k_3)\in\Gamma(k) }  } \ov{\widetilde{w}}_2(\lambda_2,k_2)\widetilde{w}_3(\lambda_3,k_3)     
\Big|^2 
\Big)^{\frac{1}{2}} \Big\|_{L_{\lambda_2,\lambda_3}^2L_{\mu_0*}^{q_0} }.
\end{align*}
From the embedding $l^{\infty}\hookrightarrow l^q$, H\"older's inequality and Lemma \ref{paracontrolregularityX}, we deduce that, outside a set of probability $<\mathrm{e}^{-cN^{\theta}R^2}$, 
$$\|\widetilde{w}_1(\lambda_1,k_1)\|_{L_{\lambda_1}^2l_{k_1}^{\infty}}\leq N_1^{\frac{1}{q}}\|\widetilde{w}_1(\lambda_1,k_1)\|_{l_{k_1}^{\infty}L_{\lambda_1}^q}\lesssim  N_1^{-\frac{\alpha}{2}+\epsilon_1+\frac{1}{q}}L^{-\nu}.$$ Again, since the key step is to estimate the discrete sum, and the $L_{\lambda_2,\lambda_3}^2L_{\mu_0*}^{q_0}$ will only contribute a $N_0^{\frac{\alpha}{q_0}}$ factor, we may write each $\widetilde{w}_j(\lambda_j,k_j)$ (with characterized parameters $(N_j,L_j)$ ) simply as $a_j(k_j)=a_j(k_j)\mathbf{1}_{|k_j|\sim N_j}$ and assume that:
\begin{itemize}
	\item
 If $a_j$ is of type (C),
\begin{align}\label{boundCsimplified'}
 \|a_j(k_j)\|_{l_{k_j}^{\infty}}\leq N_j^{-\frac{\alpha}{2}}L_j^{-\nu},
\text{ and }
 \|h_{k_jk_j^*}^{N_jL_j}\|_{l_{k_j}^{\infty}l_{k_j^*}^2}\leq L_j^{-\nu}
\end{align}
\item If $a_j$ is of type (D),
\begin{align}\label{boundDsimplified'}
 \|a_j(k_j)\|_{l_{k_j}^2}\lesssim N_j^{-s}
\end{align}
\item If $a_j$ is of type (G),
\begin{align}\label{boundGsimplified'}
 \|a_j(k_j)\|_{l_{k_j}^2}\lesssim N_j^{-\frac{\alpha-1}{2}},\; \|a_j(k_j)\|_{l_{k_j}^{\infty}}\leq N_j^{-\frac{\alpha}{2}}.
\end{align}
\end{itemize}
Similarly, to make the notations cleaner, we will ignore all the small powers (in terms of $\epsilon<\kappa^{-0.1},\epsilon_1,\epsilon_2,\delta_0,\frac{1}{q},\frac{1}{q_0}\sim b_1-0.5$) of $N_j$ and 
finally we multiply by $N_{(1)}^{\frac{\alpha}{q_0}+\frac{1}{q}+\theta+100\epsilon+3\epsilon_1+3\epsilon_2}$ to the output.

In summary, by {\bf Algorithm 3}, it suffices to establish the bound
$$ \mathcal{U}_{L_1,L_2,L_3}^{N_1,N_2,N_3}\leq K_3(N_1,N_2,N_3), 
$$ 
where
$$ \mathcal{U}_{L_1,L_2,L_3}^{N_1,N_2,N_3}=\Big(\sum_{k}\Big|\sum_{(k_1,k_2,k_3)\in\Gamma(k)} a_1(k_1)\ov{a}_2(k_2)a_3(k_3) \Big|^2\Big)^{\frac{1}{2}},
$$
with $a_1,a_2,a_3$ having characterized parameters $(N_1,L_1),(N_2,L_2),(N_3,L_3)$, with respectively, satisfying corresponding estimates \eqref{boundCsimplified'},\eqref{boundDsimplified'} and \eqref{boundGsimplified'}. Then the output is
$$ 
\boxed{\text{Output of Algorithm 3}\leq RN_{(1)}^{\frac{\alpha}{q_0}+\frac{1}{q}+\theta+100(\kappa^{-0.1}+\epsilon_1+\epsilon_2)}K_3(N_1,N_2,N_3).}
$$


\noi
$\bullet${\bf Algorithm 4. Prototype: all of type (D) }

In this case, the simple algorithm is to obtain an estimate of the type
$$ \|\mathcal{M}_{L_1,L_2,L_3}^{N_1,N_2,N_3}\|_{l_k^2}\leq K(N_1,N_2,N_3,L_1,L_2,L_3)\prod_{j=1}^3\|\widetilde{w}_j(\lambda_j,k_j)\|_{l_{k_j}^2} 
$$ 
and then take the $L_{\lambda_1,\lambda_2,\lambda_3}^2L_{\mu_0*}^{q_0}$ norm.

In summary, by {\bf Algorithm 4}, it suffices to establish the bound
$$ \mathcal{U}_{L_1,L_2,L_3}^{N_1,N_2,N_3}\leq K_4(N_1,N_2,N_3), 
$$ 
where
$$ \mathcal{U}_{L_1,L_2,L_3}^{N_1,N_2,N_3}=\Big(\sum_{k}\Big|\sum_{(k_1,k_2,k_3)\in\Gamma(k)} a_1(k_1)\ov{a}_2(k_2)a_3(k_3) \Big|^2\Big)^{\frac{1}{2}},
$$
with $a_1,a_2,a_3$ having characterized parameters $(N_1,L_1),(N_2,L_2),(N_3,L_3)$, with respectively, satisfying corresponding estimates \eqref{boundDsimplified'}. Then the output is
$$\boxed{  
\text{Output of Algorithm 4}\leq RN_{(1)}^{\frac{\alpha}{q_0}+\theta+100(\kappa^{-0.1}+\epsilon_1+\epsilon_2)}K_4(N_1,N_2,N_3).}
$$

\begin{remarque}
	Note that Algorithm 3 and Algorithm 4 are purely deterministic, the only difference is that the upper bound $K_3(N_1,N_2,N_3)$ is formed by one $l_{k_j}^{\infty}l_{k_j^*}^2$ norm and two $l_{k_jk_j^*}^2$ norms, while the upper bound $K_4(N_1,N_2,N_3)$ is formed by three $l_{k_jk_j^*}^2$ norms.
\end{remarque}


\section{Tri-linear estimates 1: high-high-high interactions}\label{sec:high-high-high}

In this section, we will prove (8) of Proposition \ref{Multilinearkey} and (1) of Proposition \ref{Multilinearkey} in the case $N_1\sim N_2\sim N_3$.
\subsection{Diagonal terms}
For $v_1,v_2,v_3$ of type (G),(C) or (D) with characterized parameters $(N_1,L_1),(N_2,L_2)$ and $(N_3,L_3)$, note that
\begin{align*}
 &\mathcal{F}_{t,x}(\chi(t)\mathcal{N}_0(v_1,v_2,v_3))(\lambda+|k|^{\alpha},k)\\=&\int \wh{\chi}(\lambda-\lambda_1+\lambda_2-\lambda_3)\wt{v}_1(\lambda_1,k)\ov{\wt{v}}_2(\lambda_2,k)\wt{v}_3(\lambda_3,k)d\lambda_1 d\lambda_2 d\lambda_3.
 \end{align*}
Our goal is to show that
\begin{equation}\label{goal-diagonal}
\begin{split} \mathcal{M}:=&\Big\|\lg\lambda\rg^{b_1-1}\int \wh{\chi}(\lambda-\lambda_1+\lambda_2-\lambda_3)\wt{v}_1(\lambda_1,k)\ov{\wt{v}}_2(\lambda_2,k)\wt{v}_3(\lambda_3,k)d\lambda_1 d\lambda_2 d\lambda_3 \Big\|_{L_{\lambda}^2l_n^2}\\ \lesssim &N_{(1)}^{-s}N_{(2)}^{-\delta_0}.
\end{split}
\end{equation}
Note that we will omit the similar argument to treat $\Pi_{N_0}^{\perp}\mathcal{I}\mathcal{N}_0(v_1,v_2,v_3)$ when $N_0\gg N_{(1)}$.
For $r_j\in\{2,q\}$, we denote by $$V_j^{(r_j)}(\lambda_j,k)=\lg\lambda_j\rg^{\frac{2b_0}{r_j'}}\wt{v}_j(\lambda_j,k),\; f_{j}(\lambda_j)=\|V_j^{(2)}(\lambda_j,k)\|_{l_{k_j}^2},\text{ and } \; g_j(\lambda_j)=\|V_j^{(q)}(\lambda_j,k)\|_{l_k^{q}}.$$ Note that when $v_j$ is of type (G) or (C), the spatial Fourier support is constraint on $|k_j|\leq N_j$, thus
\begin{align}\label{boundgj}
\|g_j(\lambda_j)\|_{L_{\lambda_j}^q}\leq N_j^{\frac{1}{q}}\|v_j\|_{X_{\infty,q}^{0,\frac{2b_0}{q'}}}. 
\end{align}
\vspace{0.3cm}
\noi
$\bullet${\bf Case 1: at least one of $v_1,v_2,v_3$ is of type (D)}

First we assume that $v_1,v_2,v_3$ are all of type (D). From the embedding $l^2\hookrightarrow l^{\infty}$, H\"older's inequality and Lemma \ref{convolution}, we have
\begin{align*}
\mathcal{M}\lesssim & \int \frac{f_1(\lambda_1)f_2(\lambda_2)f_3(\lambda_3)}{\lg\lambda_1-\lambda_2+\lambda_3\rg^{1-b_1}\lg\lambda_1\rg^{b_0}\lg\lambda_2\rg^{b_0}\lg\lambda_3\rg^{b_0}  }d\lambda_1 d\lambda_2 d\lambda_3\\
\lesssim &\prod_{j=1}^3\|f_j(\lambda_j)\|_{L_{\lambda_j}^2}=\prod_{j=1}^3\|v_j\|_{X^{0,b_0}}\lesssim (N_1N_2N_3)^{-s},
\end{align*}
which is conclusive.

Next, assume that exact one of $v_1,v_2,v_3$ is of type (G) or (C), say, $v_1$ is of type (G) or (C), then from the same argument together with the embedding $l^q\hookrightarrow l^{\infty}$, we have
\begin{align*}
\mathcal{M}=&\Big\|\lg\lambda\rg^{b_1-1}\int \wh{\chi}(\lambda-\lambda_1+\lambda_2-\lambda_3)\frac{V_1^{(q)}(\lambda_1,k)\ov{V}_2^{(2)}(\lambda_2,k)V_3^{(2)}(\lambda_3,k)  }{\lg\lambda_1\rg^{\frac{2b_0}{q'}}\lg\lambda_2\rg^{b_0}\lg\lambda_3\rg^{b_0} }d\lambda_1 d\lambda_2 d\lambda_3 \Big\|_{L_{\lambda}^2l_k^2}\\
\lesssim & \int \frac{g_1(\lambda_1)f_2(\lambda_2)f_3(\lambda_3)  }{\lg\lambda_1-\lambda_2+\lambda_3 \rg^{1-b_1}\lg\lambda_1\rg^{\frac{2b_0}{q'}}\lg\lambda_2\rg^{b_0}\lg\lambda_3\rg^{b_0} }d\lambda_1 d\lambda_2 d\lambda_3\\
\lesssim & \|g_1(\lambda_1)\|_{L_{\lambda_1}^q}\|f_2(\lambda_2)\|_{L_{\lambda_2}^2}\|f_3(\lambda_3)\|_{L_{\lambda_3}^2}\\ \leq &N_1^{\frac{1}{q}}\|v_1\|_{X_{\infty,q}^{0,\frac{2b_0}{q'}}}\|v_2\|_{X^{0,b_0}}\|v_3\|_{X^{0,b_0}}\lesssim N_1^{-\frac{\alpha}{2}+\frac{1}{q}+\epsilon_2}N_2^{-s}N_3^{-s},
\end{align*}
which is conclusive.

If there are exact two of $v_1,v_2,v_3$ of type (G) or (C) and the other is of type (D), the estimate is similar and we obtain that $$\mathcal{M}\lesssim N_{(1)}^{-s}(N_{(2)}N_{(3)})^{-\frac{\alpha}{2}+\frac{1}{q}+\epsilon_2},
$$
which is also conclusive.



\vspace{0.3cm}
\noi
$\bullet${\bf Case 2: $v_1,v_2,v_3$ are all of type (G) or (C)}

Without loss of generality, we may assume that $N_1\geq N_2\geq N_3$ and we will put the $X^{0,b}$ norm on $v_3$. From the same manipulation as in the previous cases, we have
\begin{align*}
\mathcal{M}\lesssim &\int \frac{g_1(\lambda_1)g_2(\lambda_2)f_3(\lambda_3)}{\lg\lambda_1-\lambda_2+\lambda_3 \rg^{1-b_1}\lg\lambda_1\rg^{\frac{2b_0}{q'}}\lg\lambda_2\rg^{\frac{2b_0}{q'}}\lg\lambda_3\rg^{b_0} }d\lambda_1\lambda_2 d\lambda_3\\
\lesssim & \|g_1(\lambda_1)\|_{L_{\lambda_1}^q}
\|g_2(\lambda_2)\|_{L_{\lambda_2}^q}
\|f_3(\lambda_3)\|_{L_{\lambda_3}^2}\\
\leq & (N_1N_2)^{\frac{1}{q}}\|v_1\|_{X_{\infty,q}^{0,\frac{2b_0}{q'}}} 
\|v_2\|_{X_{\infty,q}^{0,\frac{2b_0}{q'}}}
\|v_2\|_{X^{0,b_0}}\lesssim (N_1N_2)^{-\frac{\alpha}{2}+\frac{1}{q}+\epsilon_2}N_3^{-s},
\end{align*}
which is conclusive. The proof of (8) of Proposition \ref{Multilinearkey} is complete.

\subsection{Non-diagonal terms}

We assume that $N_1\sim N_2\sim N_3\sim N$. By Lemma \ref{localizationXsb} and the $X^{0,b}$-mapping property (Lemma \ref{lem:mappingXsb}), outside a set of probability $<\mathrm{e}^{-N^{\theta}R^2},$ we have
$$ \|\mathcal{I}\mathcal{N}_3(v_1,v_2,v_3)\|_{X^{0,b_1}}\lesssim RN^{-\delta_0}\|v_j\|_{X^{0,\frac{3}{8}}}
$$
for all $j\in\{1,2,3\}$, provided that the range of $\alpha$ satisfies
\begin{align}\label{constraint1}
\alpha-1+2\nu s>s.
\end{align}
 Therefore, if at least one $v_j$ is of type (D), the right side can be bounded by $RN^{-s-\delta_0}$. If at least one $v_j$ is of type (C), we have the upper bound $$RN^{-\delta_0-2(\alpha-1)+2\epsilon_1}<RN^{-s-\delta_0},$$ which is also conclusive from the hypothesis on the range of $\alpha$.

 \vspace{0.3cm}

The only case left to treat alternatively is that when $v_1,v_2,v_3$ are all of type (G). From {\bf Algorithm 1} in Section \ref{reductionsection}, it suffices to estimate
\begin{align*}
\mathbb{E}\big[|\mathcal{U}_{L_1,L_2,L_3}^{N_1,N_2,N_3}|^2\big]=&\sum_{k}\mathbb{E}\Big[\Big|\sum_{\substack{(k_1,k_2,k_3)\in\Gamma(k)\\
|k_j|\sim N_j, j=1,2,3 }}\frac{g_{k_1}\ov{g}_{k_2}g_{k_3} }{[k_1]^{\frac{\alpha}{2}}[k_2]^{\frac{\alpha}{2}}[k_3]^{\frac{\alpha}{2}}  } \Big|^2\Big].
\end{align*}
Recall that if the output of this estimate if $K_1(N_1,N_2,N_3)$, then
outside of set of probability $<\mathrm{e}^{-N_1^{\theta}R^{\frac{2}{3}}}$,
$$ \|\mathcal{N}_3(v_1,v_2,v_3)\|_{X^{0,b_1}}\lesssim RN^{\frac{\alpha}{q_0}+\theta+100(\kappa^{-0.1}+\epsilon_1+\epsilon_2)}K_1(N_1,N_2,N_3)^{\frac{1}{2}}.
$$ 
By expanding the square and using the independence, we have 
\begin{align*}
\mathbb{E}\big[|\mathcal{U}_{L_1,L_2,L_3}^{N_1,N_2,N_3}|^2 \big]\sim &(N_1N_2N_3)^{-\alpha}\sum_{\substack{k,k_1,k_2,k_3\\
(k_1,k_2,k_3)\in\Gamma(k) } } \prod_{j=1}^3\mathbf{1}_{|k_j|\sim N_j}\\
\lesssim &(N_1N_2N_3)^{-\alpha}\sum_{k,k_2,k_3}S_{k+k_2-k_3,k_2,k_3}\mathbf{1}_{|k+k_2-k_3|\sim N_1}\prod_{j=2}^3\mathbf{1}_{|k_j|\sim N_j} \\
\lesssim &(N_1N_2N_3)^{-\alpha}\sum_{k_2,k_3}N_1^{\epsilon}(1+\frac{N_1^{2-\alpha}}{\lg k_2-k_3\rg})\lesssim N_1^{-\alpha-2(\alpha-1)+\epsilon},
\end{align*}
thus $K_1(N_1,N_2,N_3)^{\frac{1}{2}}\sim N_1^{-\frac{\alpha}{2}-(\alpha-1)+\epsilon}$, which is conclusive by choosing $\epsilon<\kappa^{-0.1},$ say.


In summary, the proof of (1) of Proposition \ref{Multilinearkey} in the case $N_1\sim N_2\sim N_3$ is complete.




\section{High-high-low interactions}
In this section, we will prove (1) of Proposition \ref{Multilinearkey} in the case $N_{(1)}\sim N_{(2)}\gg N_{(3)}$.
More precisely, we will finish the estimate of  $\|\mathcal{M}_{L_1,L_2,L_3}^{N_1,N_2,N_3}\|_{L_{\lambda_1}^{r_1}L_{\lambda_2}^{r_2}L_{\lambda_3}^{r_3}L_{\mu_0*}^{q_0}l_k^2}$  by executing one of {\bf Algorithm} 1 to {\bf Algorithm }4, according to different triples of characterized parameters $(N_j,L_j), j=1,2,3$. Moreover, we will ignore all the small powers of $N_j$ in the estimates by assuming that
$$S_{k_1,k_2,k_3}=\mathbf{1}_{\Phi_{k_1,k_2,k_3}=\mu_0+O(1)}
\text{ and } h_{k_jk_j^*}^{N_jL_j}=h_{k_jk_j^*}^{N_jL_j}\mathbf{1}_{|k_j-k_j^*|\leq L_j},$$ 
since they are all compensated by a unified factor $N_{(1)}^{\frac{\alpha}{q_0}+\frac{3}{q}+\theta+100(\kappa^{-1}+\epsilon_1+\epsilon_2)}$.
To save the notation, in this section, all the sums for $k,k_j$ are taken for $|k_j|\sim N_j, |k|\lesssim N_{(1)}$ without declaration, when there is no risk of confusing.

\subsection{The case $N_1\sim N_2\gg N_3$}
First we deal with\\
\noi
$\bullet$ {\bf Case A-1: $a_1,a_2$ are all of type (D)}

We execute {\bf Algorithm 4}. By the triangle inequality and Cauchy-Schwartz,
\begin{align*}
|\mathcal{U}_{L_1,L_2,L_3}^{N_1,N_2,N_3}|^2\leq & \Big(\sum_k\sum_{(k_1,k_2,k_3)\in\Gamma(k)} |a_1(k_1)|^2|a_3(k_3)|^2\mathbf{1}_{k_1k_2<0}
\Big)\cdot
\Big(\sup_k\sum_{(k_1,k_2,k_3)\in\Gamma(k)}
|a_2(k_2)|^2\mathbf{1}_{k_1k_2<0}
\Big)\\
+&\Big(\sum_{k}\sum_{(k_1,k_2,k_3)\in\Gamma(k)} |a_1(k_1)|^2|a_3(k_3)|^2\mathbf{1}_{k_1k_2\geq 0}
\Big)\cdot
\Big(\sup_k\sum_{(k_1,k_2,k_3)\in\Gamma(k)}
|a_2(k_2)|^2\mathbf{1}_{k_1k_2\geq 0}
\Big).
\end{align*}
Since $N_3\ll N_1\sim N_2$, for fixed $k$ and $k_2$,
$$ \sum_{(k_1,k_2,k_3)\in\Gamma(k)}|a_2(k_2)|^2=\sum_{k_2}|a_2(k_2)|^2\sum_{k_3}S_{k+k_2-k_3,k_2,k_3}\lesssim \|a_2\|_{l^2}^2.
$$
When $k_1k_2<0$, for fixed $k_1,k_3$,
$$ |\partial_{k_2}\Phi_{k_1,k_2,k_3}|=\alpha\big|\mathrm{sign}(k_2)|k_2|^{\alpha-1}+\mathrm{sign}(k_2-k_1-k_3)|k_2-k_1-k_3|^{\alpha-1} \big|
\gtrsim N_1^{\alpha-1},
$$
hence 
$$ \sum_k\sum_{(k_1,k_2,k_3)\in\Gamma(k)}|a_1(k_1)|^2|a_3(k_3)|^2\mathbf{1}_{k_1k_2<0}=\sum_{k_1,k_3}|a_1(k_1)|^2|a_3(k_3)|^2\sum_{k_2}S_{k_1,k_2,k_3}\mathbf{1}_{k_1k_2<0}\lesssim \|a_1\|_{l^2}^2\|a_3\|_{l^2}^2.
$$
When $k_1k_2\geq 0$, if $\mathrm{sign}(k_2)=\mathrm{sign}(k_2-k_1-k_3)$, then $|\partial_{k_2}\Phi_{k_1,k_2,k_3}|\gtrsim N_1^{\alpha-1}$. Otherwise, we have for fixed $k_1,k_3$, by Lemma \ref{counting2ndorder},
$$ \sum_{k_2}S_{k_1,k_2,k_3}\mathbf{1}_{k_2(k_1+k_3-k_2)>0}\lesssim  N_2^{1-\frac{\alpha}{2}}\sim N_1^{1-\frac{\alpha}{2}}.
$$
Therefore,
$$ \sum_{k}\sum_{(k_1,k_2,k_3)\in \Gamma(k)}|a_1(k_1)|^2|a_3(k_3)|^2\mathbf{1}_{k_1k_2\geq 0}\lesssim N_1^{1-\frac{\alpha}{2}}\|a_1\|_{l^2}^2\|a_3\|_{l^2}^2.
$$
We have proved:
\begin{proposition}\label{CaseA-1}
When $N_1\sim N_2\gg N_3$ and $a_1, a_2$ are of type (D), the output of the Case A-1 is bounded by
$$ \mathcal{U}_{L_1,L_2,L_3}^{N_1,N_2,N_3}\lesssim N_1^{\frac{1}{2}-\frac{\alpha}{4}}N_1^{-2s}N_3^{-\frac{\alpha-1}{2}}.
$$
\end{proposition}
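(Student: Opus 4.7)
The statement reduces to a purely deterministic counting estimate for the discrete trilinear sum $\mathcal{U}_{L_1,L_2,L_3}^{N_1,N_2,N_3}$, with $a_1,a_2$ satisfying only the $l^2$ bound $\|a_j\|_{l^2_{k_j}}\lesssim N_j^{-s}$ and $a_3$ carrying the $l^2$ bound $\|a_3\|_{l^2_{k_3}}\lesssim N_3^{-(\alpha-1)/2}$ (the latter being the worst of the bounds available in any of types (G), (C), (D) at frequency $N_3$, up to harmless small powers). My plan is to separate the $k_2$–sum from the $(k_1,k_3)$–sum by a single application of the Cauchy--Schwartz inequality, and then bound each of the two resulting quantities by a direct counting argument on $\Gamma(k)$.

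Concretely, for fixed $k$ I would estimate
\[
\Big|\sum_{(k_1,k_2,k_3)\in\Gamma(k)} a_1(k_1)\ov{a}_2(k_2)a_3(k_3)\Big|^2
\leq \Big(\sum_{(k_1,k_2,k_3)\in\Gamma(k)}|a_1(k_1)|^2|a_3(k_3)|^2\Big)\cdot\Big(\sum_{(k_1,k_2,k_3)\in\Gamma(k)}|a_2(k_2)|^2\Big).
\]
The second factor is handled immediately: in the regime $N_3\ll N_1\sim N_2$, for each $k,k_2$ the variable $k_3$ is essentially determined by the modulation constraint (since $|\partial_{k_3}\Phi_{k_1,k_2,k_3}|\gtrsim N_1^{\alpha-1}\gg 1$ when $k_1=k+k_2-k_3$ is forced to have $|k_1|\sim N_1$), so $\sum_{k_3} S_{k+k_2-k_3,k_2,k_3}\lesssim 1$ and that factor is bounded by $\|a_2\|_{l^2}^2\lesssim N_1^{-2s}$.

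The actual work is in the first factor, where I split the summation over $(k_1,k_2,k_3)\in\Gamma(k)$ according to the sign condition on $k_1k_2$. The key observation, which is the crux of the argument, is that on the support of $S_{k_1,k_2,k_3}$ in the regime $N_1\sim N_2\gg N_3$ one has
\[
\partial_{k_2}\Phi_{k_1,k_2,k_3}=\alpha\bigl(\mathrm{sgn}(k_2)|k_2|^{\alpha-1}-\mathrm{sgn}(k_1+k_3-k_2)|k_1+k_3-k_2|^{\alpha-1}\bigr).
\]
When $k_1k_2<0$, or when $k_1k_2\geq 0$ with $\mathrm{sgn}(k_2)=\mathrm{sgn}(k_2-k_1-k_3)$, the two terms have opposite signs and magnitudes of order $N_1^{\alpha-1}$, so $|\partial_{k_2}\Phi|\gtrsim N_1^{\alpha-1}$; the elementary counting principle (Lemma \ref{countingprinciple}) then gives a uniform bound $\sum_{k_2}S_{k_1,k_2,k_3}\lesssim 1$, yielding the contribution $\|a_1\|_{l^2}^2\|a_3\|_{l^2}^2\lesssim N_1^{-2s}N_3^{-(\alpha-1)}$.

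The remaining, genuinely dangerous case is $k_1k_2\geq 0$ with $\mathrm{sgn}(k_2)\neq\mathrm{sgn}(k_2-k_1-k_3)$. Here $\partial_{k_2}\Phi$ may vanish and one can only use the second-derivative/sum-of-two-$\alpha$-powers structure. This is exactly the setting of Lemma \ref{counting2ndorder}: writing the constraint as $|k_2|^\alpha+|k_1+k_3-k_2|^\alpha=l+O(1)$ for a suitable $l$, with $|k_2|,|k_1+k_3-k_2|\sim N_1$, the lemma yields $\sum_{k_2}S_{k_1,k_2,k_3}\mathbf{1}_{k_2(k_1+k_3-k_2)>0}\lesssim N_1^{1-\alpha/2}$. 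Summing against $|a_1(k_1)|^2|a_3(k_3)|^2$ then contributes $N_1^{1-\alpha/2}\cdot N_1^{-2s}N_3^{-(\alpha-1)}$, which dominates the previous contribution. Taking the square root and multiplying by the $\|a_2\|_{l^2}$ factor produces the announced bound $N_1^{1/2-\alpha/4}N_1^{-2s}N_3^{-(\alpha-1)/2}$. The only subtlety one must check is that the cases $k_1k_2<0$ and $\mathrm{sgn}(k_2)=\mathrm{sgn}(k_2-k_1-k_3)$ indeed fall under the first-derivative bound; this is a straightforward sign analysis, and I do not anticipate any other obstacle.
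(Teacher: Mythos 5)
Your proposal is correct and follows essentially the same route as the paper: a single Cauchy--Schwartz separating the $k_2$-sum from the $(k_1,k_3)$-sum, the first-derivative counting principle for the $k_3$-sum and for the non-degenerate sign configurations of the $k_2$-sum, and Lemma \ref{counting2ndorder} for the degenerate case $k_2(k_1+k_3-k_2)>0$, which produces the $N_1^{1-\alpha/2}$ loss. The only cosmetic difference is that the paper inserts the sign indicators $\mathbf{1}_{k_1k_2<0}$, $\mathbf{1}_{k_1k_2\geq 0}$ before applying Cauchy--Schwartz rather than after, which changes nothing.
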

Note that the power $N_3^{-\frac{\alpha-1}{2}}$ comes from the worst case when $a_3$ is of type (G).

\vspace{0.3cm}

\noi
$\bullet${\bf Case A-2:} Exact one of $a_1,a_2$ is of type (G) or (C), the other is of type (D).

First we assume that $a_1$ is of type (G) or (C) and $a_2$ is of type (D). By Cauchy-Schwartz,
\begin{align*}
|\mathcal{U}_{L_1,L_2,L_3}^{N_1,N_2,N_3}|^2\leq &\|a_1\|_{l^{\infty}}^2\Big(\!\!\!\!\!\!\sum_{\substack{k,k_2,k_3\\|k_2|\sim N_2 \\
|k+k_2-k_3|\sim N_1   }}\!\!\!\!\!\!\!|a_3(k_3)|^2S_{k+k_2-k_3,k_2,k_3} \Big)\sup_{k}\!\!\!\!\!\!\!\sum_{\substack{ k_2,k_3\\
|k_3|\sim N_3\\|k+k_2-k_3|\sim N_1 } }\!\!\!\!\!\!|a_2(k_2)|^2S_{k+k_2-k_3,k_2,k_3}\\
\lesssim &\|a_1\|_{l^{\infty}}^2\|a_2\|_{l^2}^2\sum_{k_1,k_2,k_3}|a_3(k_3)|^2S_{k_1,k_2,k_3}\mathbf{1}_{|k_2|\sim N_2}\mathbf{1}_{|k_1|\sim N_1},
\end{align*}
since for fixed $k,k_2$
$$ \sum_{k_3}S_{k+k_2-k_3,k_2,k_3}\mathbf{1}_{|k_3|\sim N_3}\mathbf{1}_{|k+k_2-k_3|\sim N_1}\lesssim 1,
$$
due to the fact that $N_3\ll N_1$. Now for fixed $k_2,k_3$, if $k_1(k_1-k_2+k_3)\geq 0$, we have
$$ |\partial_{k_1}\Phi_{k_1,k_2,k_3}|\sim \big|\mathrm{sign}(k_1)|k_1|^{\alpha-1}-\mathrm{sign}(k_1-k_2+k_3)|k_1-k_2+k_3|^{\alpha-1} \big|\gtrsim N^{\alpha-2}|k_2-k_3|,
$$
otherwise $|\partial_{k_1}\Phi_{k_1,k_2,k_3}|\gtrsim N_1^{\alpha-1}, $
hence
$$ \sum_{k_1}S_{k_1,k_2,k_3}\mathbf{1}_{|k_1|\sim N_1,|k_3|\sim N_3}\mathbf{1}_{|k_2|\sim N_2}\lesssim 1+\frac{N_1^{2-\alpha}}{\lg k_2-k_3\rg}. 
$$
Therefore, modulo a factor $N_1^{\epsilon}$, we have
$$ |\mathcal{U}_{L_1,L_2,L_3}^{N_1,N_2,N_3}|\lesssim \|a_1\|_{l^{\infty}}\|a_2\|_{l^2}\|a_3\|_{l^2}\big(N_2^{\frac{1}{2}}+N_1^{1-\frac{\alpha}{2}} \big)\lesssim N_1^{-\frac{\alpha-1}{2}-s}L_1^{-\nu}N_3^{-\frac{\alpha-1}{2}}.
$$
Next we assume that $a_1$ is of type (D) and $a_2$ is of type (G) or (C). Repeating the argument above, we have
\begin{align*}
 |\mathcal{U}_{L_1,L_2,L_3}^{N_1,N_2,N_3}|^2\leq &\|a_2\|_{l^{\infty}}^2\Big(\!\!\!\!\!\!\sum_{\substack{k,k_1,k_3\\
|k_1|\sim N_1\\
|k_1+k_3-k|\sim N_2 } }\!\!\!\!\!\!|a_3(k_3)|^2S_{k_1,k_1+k_3-k,k_3} \Big)\sup_k\!\!\!\!\!\!\sum_{\substack{k_1,k_3\\
|k_3|\sim N_3\\
|k_1+k_3-k|\sim N_2 } }\!\!\!\!\!\!|a_1(k_1)|^2S_{k_1,k_1+k_3-k,k_3}\\
\lesssim &\|a_1\|_{l^2}^2\|a_2\|_{l^{\infty}}^2
\sum_{\substack{k_1,k_2,k_3 } }|a_3(k_3)|^2S_{k_1,k_2,k_3}\mathbf{1}_{|k_1|\sim N_1}\mathbf{1}_{|k_2|\sim N_2}\\
\lesssim &\|a_1\|_{l^2}^2\|a_2\|_{l^{\infty}}^2\|a_3\|_{l^2}^2(N_2+N_1^{2-\alpha})\lesssim \|a_1\|_{l^2}^2\|a_2\|_{l^{\infty}}^2\|a_3\|_{l^2}^2N_2,
\end{align*} 
thanks to the fact that $N_3\ll N_2\sim N_1$. 
Therefore, we have proved:
\begin{proposition}\label{CaseA-2}
	Assume that $N_1\sim N_2\gg N_3$  and exact one of $a_1, a_2$ is of type (G) or (C) and the other is of type (D).  Then modulo a factor $N_1^{\epsilon}$, $\epsilon<\kappa^{-0.1}$, the output of the Case A-1 is bounded by
	$$ \mathcal{U}_{L_1,L_2,L_3}^{N_1,N_2,N_3}\lesssim N_{1}^{-\frac{\alpha-1}{2}-s}L_1^{-\nu}N_3^{-\frac{\alpha-1}{2}},
	$$
	if $a_1$ is of type (C) or (G) and $a_2$ is of type (D). If $a_1$ is of type (D) and $a_2$ is of type (C) or (G), modulo a factor $N_1^{\epsilon}$, $\epsilon<\kappa^{-0.1}$, we have
	$$ \mathcal{U}_{L_1,L_2,L_3}^{N_1,N_2,N_3}\lesssim N_{1}^{-\frac{\alpha-1}{2}-s}L_2^{-\nu}N_3^{-\frac{\alpha-1}{2}}.
	$$
\end{proposition}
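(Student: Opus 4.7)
My plan is to execute Algorithm 3 from Section \ref{reductionsection}, which is adapted to situations where exactly one input carries a useful $l^{\infty}_{k}$ bound coming from its operator-kernel structure. Consider first the subcase where $a_1$ is of type (G) or (C) and $a_2$ of type (D). I would pull out $\|a_1\|_{l^\infty}\lesssim N_1^{-\alpha/2}L_1^{-\nu}$ from the inner sum defining $\mathcal{U}_{L_1,L_2,L_3}^{N_1,N_2,N_3}$, and then apply Cauchy-Schwartz in $(k_2,k_3)$ to decouple the remaining factors, yielding
\begin{equation*}
|\mathcal{U}|^2 \lesssim \|a_1\|_{l^\infty}^2 \,\Big(\sum_{k,k_2,k_3}|a_3(k_3)|^2\,S_{k+k_2-k_3,k_2,k_3}\Big)\cdot \sup_{k}\sum_{k_2,k_3}|a_2(k_2)|^2\,S_{k+k_2-k_3,k_2,k_3},
\end{equation*}
with the dyadic restrictions $|k_2|\sim N_2$, $|k+k_2-k_3|\sim N_1$, $|k_3|\sim N_3$ implicitly enforced.

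For the $\sup_k$ factor I would first sum in $k_3$: since $N_3\ll N_1$, for any fixed $(k,k_2)$ the constraint $|k+k_2-k_3|\sim N_1$ combined with $|k_3|\sim N_3$ admits $O(1)$ many $k_3$ (equivalently, $|\partial_{k_3}\Phi|\gtrsim N_1^{\alpha-1}$ dominates and one applies Lemma \ref{countingprinciple}), producing the trivial bound $\lesssim \|a_2\|_{l^2}^2\lesssim N_1^{-2s}$. For the other factor, I would reindex via $k_1=k+k_2-k_3$, rewriting it as $\sum_{(k_1,k_2,k_3)\in\Gamma}|a_3(k_3)|^2\,S$, and then summing first in $k_1$ via the counting estimate of Lemma \ref{counting1.5}, whose derivation only requires $|k_1|\sim|k_1-k_2+k_3|\sim N_1$ and is therefore insensitive to the gap $|k_3|\sim N_3\ll N_1$; this gives $\sum_{k_1}S\lesssim 1 + N_1^{2-\alpha}/\langle k_2-k_3\rangle$. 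Summing against $1$ in $k_2$ over $|k_2|\sim N_1$ produces $N_1+N_1^{2-\alpha}\log N_1\sim N_1$ (since $\alpha>1$), so the double sum is bounded by $N_1\,\|a_3\|_{l^2}^2\lesssim N_1\cdot N_3^{-(\alpha-1)}$ in the worst case (type (G) for $a_3$).

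Multiplying the three bounds and taking square roots yields $N_1^{1/2-\alpha/2-s}L_1^{-\nu}N_3^{-(\alpha-1)/2}=N_1^{-(\alpha-1)/2-s}L_1^{-\nu}N_3^{-(\alpha-1)/2}$, exactly matching the stated output (the losses $N_1^\epsilon$ are absorbed into the universal factor $N_{(1)}^{100(\kappa^{-0.1}+\epsilon_1+\epsilon_2)}$). The second subcase, where $a_1$ is of type (D) and $a_2$ of type (G) or (C), is perfectly symmetric: pull out $\|a_2\|_{l^\infty}\lesssim N_2^{-\alpha/2}L_2^{-\nu}$ instead, apply Cauchy-Schwartz, handle the $|a_1|^2$ factor trivially via the $k_3$-summation at fixed $(k_1,k_3)$ switched appropriately, and count the $k_2$-summation via Lemma \ref{counting1.5} at fixed $(k_1,k_3)$. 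There is no substantive obstacle --- the argument is essentially a careful bookkeeping of dyadic gains. The only mildly delicate point is verifying that Lemma \ref{counting1.5}'s bound persists in both sign regimes of $k_1,k_1-k_2+k_3$, but this is automatic because $|k_3|\ll N_1$ keeps both $|k_1|$ and $|k_1-k_2+k_3|$ of size $\sim N_1$, preserving the derivative lower bound $\alpha(\alpha-1)|k_2-k_3|\min\{|k_1|^{\alpha-2},|k_1-k_2+k_3|^{\alpha-2}\}$ used in the proof of that lemma.
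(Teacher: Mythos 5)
Your treatment of the first subcase ($a_1$ of type (G)/(C), $a_2$ of type (D)) is correct and coincides with the paper's argument: the Cauchy--Schwartz splitting, the $O(1)$ count for $\sum_{k_3}S_{k+k_2-k_3,k_2,k_3}$ at fixed $(k,k_2)$, and the non-degenerate count $\sum_{k_1}S_{k_1,k_2,k_3}\lesssim 1+N_1^{2-\alpha}\langle k_2-k_3\rangle^{-1}$ at fixed $(k_2,k_3)$ are exactly what the paper does; your remark that the latter count survives even though $|k_1-k_2+k_3|$ need not be $\sim N_1$ (only $\max\{|k_1|,|k_1-k_2+k_3|\}\gtrsim N_1$ enters the derivative lower bound) is the right observation.

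The second subcase, however, is not ``perfectly symmetric'', and as written it contains a genuine gap. The resonance function $\Phi_{k_1,k_2,k_3}=|k_1|^{\alpha}-|k_2|^{\alpha}+|k_3|^{\alpha}-|k_1-k_2+k_3|^{\alpha}$ is symmetric under $k_1\leftrightarrow k_3$ but not under $k_1\leftrightarrow k_2$. Summing over $k_2$ at fixed $(k_1,k_3)$ amounts to counting the level set $|k_2|^{\alpha}+|k_1+k_3-k_2|^{\alpha}=l+O(1)$, a \emph{sum} of two $\alpha$-powers whose $k_2$-derivative degenerates near $k_2=(k_1+k_3)/2$; the correct bound is the one of Lemma \ref{counting2ndorder}, namely $\lesssim N_2^{1-\frac{\alpha}{2}}$, not the $1+N_1^{2-\alpha}\langle k_2-k_3\rangle^{-1}$ bound of Lemma \ref{counting1.5}. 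Running your scheme with the correct count gives $\sum_{k_1,k_2,k_3}|a_3(k_3)|^2 S_{k_1,k_2,k_3}\lesssim N_1^{2-\frac{\alpha}{2}}\|a_3\|_{l^2}^2$ and hence $|\mathcal{U}_{L_1,L_2,L_3}^{N_1,N_2,N_3}|\lesssim N_1^{1-\frac{3\alpha}{4}-s}L_2^{-\nu}N_3^{-\frac{\alpha-1}{2}}$, which is worse than the stated bound by a factor $N_1^{\frac{1}{2}-\frac{\alpha}{4}}$ and, for $\alpha<\frac{4}{3}$, does not even reach $N_1^{-s}$. The fix is to keep the same order of summation as in the first subcase: in the factor carrying $|a_3(k_3)|^2$, sum over $k_1$ first at fixed $(k_2,k_3)$ (non-degenerate direction, count $1+N_1^{2-\alpha}\langle k_2-k_3\rangle^{-1}$), then over $k_2$ (giving $N_2+N_1^{2-\alpha}\log N_1\lesssim N_2$); only the $\sup_k$ factor changes, now carrying $|a_1(k_1)|^2$ and handled by the $O(1)$ count of $\sum_{k_3}S_{k_1,k_1+k_3-k,k_3}$ at fixed $(k,k_1)$, which uses $N_3\ll N_2$. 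This is what the paper does, and it yields the stated $N_1^{-\frac{\alpha-1}{2}-s}L_2^{-\nu}N_3^{-\frac{\alpha-1}{2}}$.
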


\noi
$\bullet${\bf Case A-3: $a_1,a_2$ are all of type (G) or (C) }

If at least one of $a_1,a_2$ is of type (C), we first execute  {\bf Algorithm 4}. Without loss of generality, we may assume that $a_1$ is of type (C) and $L_1\geq L_2$. From the same argument as for {\bf Case A-2}, we have
$$ |\mathcal{U}_{L_1,L_2,L_3}^{N_1,N_2,N_3}|\lesssim N_2^{\frac{1}{2}}\|a_1\|_{l^{\infty}}\|a_2\|_{l^2}\|a_3\|_{l^2}. 
$$
If $a_2$ is of type (G), we have
\begin{align}\label{A-3Al4-1} N_2^{\frac{1}{2}}\|a_1\|_{l^{\infty}}\|a_2\|_{l^2}\|a_3\|_{l^2}\lesssim N_1^{-(\alpha-1)}L_1^{-\nu}N_3^{-\frac{\alpha-1}{2}},
\end{align}
since the worst case is that $a_3$ is of type (G). If $a_2$ is of type (C), we have
\begin{align}\label{A-3Al4-2}
 N_2^{\frac{1}{2}}\|a_1\|_{l^{\infty}}\|a_2\|_{l^2}\|a_3\|_{l^2}\lesssim N_1^{-\frac{3(\alpha-1)}{2}}L_1^{-\nu}L_2^{-\nu}N_3^{-\frac{\alpha-1}{2}}.
\end{align}
In order to get a better bound when $L_1\vee L_2$ is relatively small, we need to execute {\bf Algorithm 2}. In what follows, we do not distinguish the type (G) and (C), since we will only use the $S^{b,q}$ norm of $h^{N_jL_j}$, which does not make any difference between type (G) and (C) terms. Fix $L=10(L_1\vee L_2)$, recall the definition \eqref{etakk3} of $\eta_{k,k'}^{(3)}$, we have 
\begin{align*}
&\sum_{k,k':|k-k'|\geq L}|\eta_{k,k'}^{(3)}|^2\\=&
\sum_{\substack{k,k'\\
|k-k'|\geq L }}\Big|\sum_{\substack{(k_1,k_2,k_3)\in\Gamma(k)\\
(k_1',k_2',k_3)\in\Gamma(k')  \\ k_1^*,k_1'^*,k_2^*,k_2'^*
}} h_{k_1k_1^*}\ov{h}_{k_1'k_1'^*}\ov{h}_{k_2k_2^*}h_{k_{2}'k_2'^*}S_{k_1,k_2,k_3}S_{k_1',k_2',k_3}\frac{g_{k_1^*}\ov{g}_{k_1'^*}\ov{g}_{k_2^*}g_{k_2'^*} }{[k_1^*]^{\frac{\alpha}{2}}
[k_1'^*]^{\frac{\alpha}{2}}
[k_2^*]^{\frac{\alpha}{2}}
[k_2'^*]^{\frac{\alpha}{2}} } \Big|^2.
\end{align*}
To simplify the summation before expanding the square, we split the inner sum $\sum_{k_1^*,k_1'^*,k_2^*,k_2'^* }$ into six groups:
\begin{itemize}
	\item[(1)] $k_1^*=k_1'^*,k_2^*=k_2'^*$;
	\item[(2)] $k_1^*=k_2^*,k_1'^*=k_2'^*$;
	\item[(3)] $k_1^*=k_1'^*, k_2^*\neq k_2'^*$;
	\item[(4)] $k_1^*\neq k_1'^*,k_2^*=k_2'^*$;
	\item[(5)] $k_1^*=k_2^*, k_1'^*\neq k_2'^*$ or $k_1'^*=k_2'^*, k_1^*\neq k_2^*$;
	\item[(6)] No pairings in $k_1^*,k_1'^*,k_2^*,k_2'^*$ (in the sense that $k_1^*\neq k_1'^*,k_2^*$ and $ k_2^*\neq k_2'^*, k_1^*$).
\end{itemize}
Taking the expectation, the contributions from ($j$) ($j\in\{1,2,3,4,5,6\}$) can be bounded by
\begin{align*}
\mathcal{C}_{j}:=&N_1^{-4\alpha}\sum_{\substack{ k,k'\\
|k-k'|\geq L } }\sum_{\substack{(k_1,k_2,k_3)\in \Gamma(k)\\ (k_1',k_2',k_3)\in\Gamma(k')  } }\sum_{ \substack{
		(m_1,m_2,m_3)\in\Gamma(k)\\ (m_1',m_2',m_3)\in\Gamma(k')  }} \prod_{j=1}^2 \|h_{k_jk_j^*}\|_{l_{k_j^*}^2}\|h_{m_jm_j^*}\|_{l_{m_j^*}^2}\|h_{k_j'k_j'^*}\|_{l_{k_j'^*}^2}\|h_{m_j'm_j'^*}\|_{l_{m_j'^*}^2}\\
	&\hspace{5cm}\times S_{k_1,k_2,k_3}S_{k_1',k_2',k_3}S_{m_1,m_2,m_3}S_{m_1',m_2',m_3}\mathbf{1}_{\mathcal{A}_j}
\end{align*}
where $\mathcal{A}_j$ is the index set of $k_1,k_2,k_3,k_1',k_2',m_1,m_2,m_3,m_1',m_2',k,k' 
$ defining by the constraints:
\begin{align*}
&\mathcal{A}_1:=\{|k_i-k_i'|\leq 2L_i, |m_i-m_i'|\leq 2L_i, i=1,2  \};\\
&\mathcal{A}_2:=\{|k_1-k_2|,|m_1-m_2|\leq 2L_1\vee L_2; |k_1'-k_2'|,|m_1'-m_2'|\leq 2L_1\vee L_2 \};\\
&\mathcal{A}_3:=\{|k_1-k_1'|,|m_1-m_1'|\leq 2L_1, |k_2-m_2|,|k_2'-m_2'|\leq 2L_2  \};\\
&\mathcal{A}_4:=\{|k_1-m_1|,|k_1'-m_1'|\leq 2L_1, |k_2-k_2'|,|m_2-m_2'|\leq 2L_2 \};\\
&\mathcal{A}_5:=\{|k_1-k_2|,|m_1-m_2|\leq 2L_1\vee L_2, |k_1'-m_1'|\leq 2L_1,|k_2'-m_2'|\leq 2L_2 \}\\
&\hspace{0.7cm}\cup \{|k_1'-k_2'|,|m_1'-m_2'|\leq 2L_1\vee L_2, |k_1-m_1|\leq 2L_1,|k_2-m_2|\leq 2L_2 \};\\
&\mathcal{A}_6:=\{|k_1-m_1|,|k_1'-m_1'|\leq 2L_1, |k_2-m_2|,|k_2'-m_2'|\leq 2L_2 \}\\
&\hspace{0.7cm}\cup \{|k_1-m_2'|,|k_2'-m_1|\leq 2L_1\vee L_2,|k_1'-m_1'|\leq 2L_1, |k_2-m_2|\leq 2L_2  \};\\
&\hspace{0.72cm}\cup\{|k_1'-m_2|,|k_2-m_1'|\leq 2L_1\vee L_2,|k_1-m_1|\leq 2L_1, |k_2'-m_2'|\leq 2L_2 \}\\
&\hspace{0.72cm}\cup\{|k_1-m_2'|,|k_2-m_1'|,||k_1'-m_2|,|k_2'-m_1|\leq 2L_1\vee L_2  \}.
\end{align*}
Note that under the constraint $|k-k'|\geq L=10(L_1\vee L_2)$, $\mathcal{A}_1=\mathcal{A}_2=\emptyset$, hence $\mathcal{C}_1=\mathcal{C}_2=0$. Indeed, on $\mathcal{A}_1$,
$$ |k-k'|=|(k_1-k_2+k_3)-(k_1'-k_2'+k_3)|\leq |k_1-k_1'|+|k_2-k_2'|\leq 2L_1+2L_2<L.
$$
On $\mathcal{A}_2$,
$$ |k-k'|\leq |k_1-k_2|+|k_1'-k_2'|\leq 4(L_1\vee L_2)<L.
$$
On $\mathcal{A}_3$, $|k-k'+k_2-k_2'|\leq 2L_1, |k-k'+m_2-m_2'|\leq 2L_1$ and $|k_2-m_2|\leq 2L_2, |k_2'-m_2'|\leq 2L_2$, we have
\begin{align*}
\mathcal{C}_3\lesssim  &N_1^{-4\alpha}\prod_{j=1}^2\|h_{k_jk_j^*}^{N_jL_j}\|_{l_{k_j}^{\infty}l_{k_j^*}^{2}}^4\cdot\!\!\!\!\!\sum_{\substack{k,k',k_2,k_2',k_3\\
m_2,m_2',m_3 } }\!\!\!\!\!S_{k+m_2-m_3,m_2,m_3}S_{k'+m_2'-m_3,m_2',m_3}
S_{k+k_2-k_3,k_2,k_3}S_{k'+k_2'-k_3,k_2',k_3}\\
&\hspace{5cm}\times\mathbf{1}_{\substack{|k-k'+k_2-k_2'|\leq 2L_1\\
|k-k'+m_2-m_2'|\leq 2L_1 } }\mathbf{1_{\substack{|k_2-m_2|\leq 2L_2\\
|k_2'-m_2'|\leq 2L_2 } } }\\
\lesssim &N_1^{-4\alpha}(L_1L_2)^{-4\nu}L_2^2\sum_{\substack{k,k',k_2,k_2',k_3 } }\mathbf{1}_{|k-k'+k_2-k_2'|\leq 2L_1}S_{k+k_2-k_3,k_2,k_3}S_{k'+k_2'-k_3,k_2',k_3},
\end{align*}
where we first use
$$ \sum_{m_3}S_{k+m_2-m_3,m_2,m_3}\lesssim 1
$$
and then sum for $\mathbf{1}_{|k_2-m_2|\leq 2L_2,|k_2'-m_2'|\leq 2L_2}$ over $m_2,m_2'$. Next we sum over $k'$ first and estimate (modulo a small factor $N_1^{\epsilon}$)
\begin{align*}
&\sum_{k,k',k_2,k_2',k_3}\mathbf{1}_{|k-k'+k_2-k_2'|\leq 2L_1}S_{k+k_2-k_3,k_2,k_3}S_{k'+k_2'-k_3,k_2',k_3}\\
\leq &2L_1\sum_{k_2,k_3}\sum_{k_2'} \sum_{k}S_{k+k_2-k_3,k_2,k_3}\\
\lesssim &L_1N_2\sum_{k_2,k_3}\big(1+\frac{N_1^{2-\alpha}}{\lg k_2-k_3\rg}\big)\lesssim L_1N_2(N_2N_3+N_1^{2-\alpha}N_3)\lesssim L_1N_1^2N_3.
\end{align*}
Therefore,
$$ \mathcal{C}_3\lesssim L_1^{1-4\nu}L_2^{2-4\nu}N_1^{2-4\alpha}N_3.
$$ 
Note that in the estimate of $\mathcal{C}_3$, we do not make use of the constraint $|k-k'|\geq L$.

The estimate for $\mathcal{C}_4$ is similar as for $\mathcal{C}_3$, by switching $k_2,k_2',m_2,m_2'$ to $k_1,k_1',m_1,m_1'$. More precisely, 
\begin{align*}
\mathcal{C}_4\lesssim  &N_1^{-4\alpha}\prod_{j=1}^2\|h_{k_jk_j^*}^{N_jL_j}\|_{l_{k_j}^{\infty}l_{k_j^*}^{2}}^4\cdot\!\!\!\!\!\sum_{\substack{k,k',k_1,k_1',k_3\\
		m_1,m_1',m_3 } }\!\!\!\!\!S_{k_1,m_1+m_3-k,m_3}S_{m_1',m_1'+m_3-k',m_3}
S_{k_1,k_1+k_3-k,k_3}S_{k_1',k_1'+k_3-k',k_3}\\
&\hspace{5cm}\times\mathbf{1}_{\substack{|k_1-k_1'+k'-k|\leq 2L_2\\
		|m_1-m_1'+k'-k|\leq 2L_2 } }\mathbf{1_{\substack{|k_1-m_1|\leq 2L_1\\
			|k_1'-m_1'|\leq 2L_1 } } }\\
\lesssim &N_1^{-4\alpha}(L_1L_2)^{-4\nu}L_1^2\sum_{\substack{k,k',k_1,k_1',k_3 } }\mathbf{1}_{|k_1-k_1'+k'-k|\leq 2L_2}S_{k_1,k_1+k_3-k,k_3}S_{k_1',k_1'+k_3-k',k_3},
\end{align*}
where we used
$$ \sum_{m_1,m_1'}\mathbf{1}_{\substack{|m_1-k_1|\leq 2L_1\\
|m_1'-k_1'|\leq 2L_1 } }\sum_{m_3}S_{m_1,m_1+m_3-k,m_3}\lesssim L_1^2.
$$
Next we sum over $k'$ first and estimate (modulo a factor $N_1^{\epsilon}$)
\begin{align*}
&\sum_{k,k',k_1,k_1',k_3 }\mathbf{1}_{|k_1-k_1'+k'-k|\leq 2L_2}S_{k_1,k_1+k_3-k,k_3}S_{k_1',k_1'+k_3-k',k_3}\\
\lesssim &L_2\sum_{k,k_3}\sum_{k_1'}\sum_{k_1}S_{k_1,k_1+k_3-k,k_3}
\lesssim L_2N_1\sum_{k,k_3}\big(1+\frac{N_1^{2-\alpha}}{\lg k-k_3\rg}\big)\lesssim L_2N_1^2N_3.  
\end{align*}
Thus
$$ \mathcal{C}_4\lesssim L_2^{1-4\nu}L_1^{2-4\nu}N_1^{2-4\alpha}N_3.
$$

To estimate $\mathcal{C}_5$, by symmetry, 
\begin{align*}
\mathcal{C}_5\lesssim &N_1^{-4\alpha}\prod_{j=1}^2\|h_{k_jk_j^*}^{N_jL_j}\|_{l_{k_j}^{\infty}l_{k_j^*}^{2}}^4\cdot\!\!\!\!\!\sum_{\substack{k,k',k_2,k_2',k_3\\
		m_2,m_2',m_3 } }\!\!\!\!\!S_{k+m_2-m_3,m_2,m_3}S_{k'+m_2'-m_3,m_2',m_3}
S_{k+k_2-k_3,k_2,k_3}S_{k'+k_2'-k_3,k_2',k_3}\\
&\hspace{5cm}\times\mathbf{1}_{\substack{|k-k_3|\leq 2L_1\vee L_2\\
		|k-m_3|\leq 2L_1\vee L_2 } }\mathbf{1}_{|k_2'-m_2'|\leq 2L_2}.
\end{align*}
First we sum over $m_2$ and obtain that
$$ \sum_{m_2}S_{k+m_2-m_3,m_2,m_3}\mathbf{1}_{|k-m_3|\leq 2L_1\vee L_2}\lesssim \frac{N_1^{2-\alpha}}{\lg k-m_3\rg}\mathbf{1}_{|k-m_3|\leq 2L_1\vee L_2},
$$ 
since $L_1\vee L_2\ll N_1^{2-\alpha}$. Next we sum over $m_2'$ and then $m_3$ to obtain that
\begin{align*}
\mathcal{C}_5\lesssim &N_1^{-4\alpha}(L_1L_2)^{-4\nu}L_2\sum_{k,k',k_2,k_2',k_3}N_1^{2-\alpha}\mathbf{1}_{|k-k_3|\leq 2L_1\vee L_2}S_{k+k_2-k_3,k_2,k_3}S_{k'+k_2'-k_3,k_2',k_3}.
\end{align*}
Using
$$ \mathbf{1}_{|k-k_3|\leq 2L_1\vee L_2}\sum_{k_2}S_{k+k_2-k_3,k_2,k_3}\lesssim \frac{N_1^{2-\alpha}}{\lg k-k_3\rg}\mathbf{1}_{|k-k_3|\leq 2L_1\vee L_2}
$$
and
$$ \sum_{k'}S_{k'+k_2'-k_3,k_2',k_3}\lesssim 1+\frac{N_1^{2-\alpha}}{\lg k_2'-k_3\rg},
$$
we have (modulo a factor $N_1^{\epsilon}$)
\begin{align*}
\mathcal{C}_5\lesssim &N_1^{-4\alpha+2-\alpha}L_1^{-4\nu}L_2^{1-4\nu}\sum_{k,k_2',k_3}\mathbf{1}_{|k-k_3|\leq 2L_1\vee L_2}\frac{N_1^{2-\alpha}}{\lg k-k_3\rg}\big(1+\frac{N_1^{2-\alpha}}{\lg k_2'-k_3\rg}\big)\\
\lesssim &N_1^{-4\alpha+2(2-\alpha)}L_1^{-4\nu}L_2^{1-4\nu}\Big(N_2\sum_{k,k_3}\frac{1}{\lg k-k_3\rg}+\sum_{k_3}\sum_{k}\frac{1}{\lg k-k_3\rg}\sum_{k_2'}\frac{N_1^{2-\alpha}}{\lg k_2'-k_3\rg} \Big)\\
\lesssim_{\epsilon} &N_1^{-4\alpha+2(2-\alpha)}L_1^{-4\nu}L_2^{1-4\nu}(N_2N_3+N_1^{2-\alpha}N_3).
\end{align*}
Thus
$$\mathcal{C}_5\lesssim N_1^{5-6\alpha}N_3L_1^{-4\nu}L_2^{1-4\nu}.
$$
Next we estimate $\mathcal{C}_6$. We observe that whatever the pairing is, we can always do the sum $\sum_{m_2,m_2',m_3}$ first to obtain a factor $L_1L_2^2+L_1^2L_2$, hence
\begin{align*}
\mathcal{C}_6\lesssim &N_1^{-4\alpha}\prod_{j=1}^2\|h_{k_jk_j^*}^{N_jL_j}\|_{l_{k_j}^{\infty}l_{k_j^*}^{2}}^4\cdot (L_1^2L_2+L_1L_2^2)\sum_{k,k',k_2,k_2',k_3}S_{k+k_2-k_3,k_2,k_3}S_{k'+k_2'-k_3,k_2',k_3}.
\end{align*}
Then we estimate
\begin{align*}
\sum_{k,k',k_3}\sum_{k_2,k_2'}S_{k+k_2-k_3,k_2,k_3}S_{k'+k_2'-k_3,k_2',k_3}
\lesssim \sum_{k,k',k_3}\big(1+\frac{N_1^{2-\alpha}}{\lg k-k_3\rg}\big)\big(1+\frac{N_1^{2-\alpha}}{\lg k'-k_3\rg}\big)
\lesssim &N_1^2N_3.
\end{align*}
Hence
$$ \mathcal{C}_6\lesssim N_1^{2-4\alpha}N_3(L_1\wedge L_2)^{1-4\nu}(L_2\vee L_2)^{2-4\nu}.
$$
We remark that the estimates for $\mathcal{C}_3,\mathcal{C}_4,\mathcal{C}_5,\mathcal{C}_6$ do not use the constraint $|k-k'|\geq L$. Hence it remains to estimate
\begin{align*}
\mathcal{C}'_j=&LN_1^{-4\alpha}\sup_{\substack{ k,k'\\
		|k-k'|<L } } \sum_{\substack{(k_1,k_2,k_3)\in \Gamma(k)\\ (k_1',k_2',k_3)\in\Gamma(k')  } }\sum_{ \substack{
		(m_1,m_2,m_3)\in\Gamma(k)\\ (m_1',m_2',m_3)\in\Gamma(k')  }} \prod_{j=1}^2 \|h_{k_jk_j^*}\|_{l_{k_j^*}^2}\|h_{m_jm_j^*}\|_{l_{m_j^*}^2}\|h_{k_j'k_j'^*}\|_{l_{k_j'^*}^2}\|h_{m_j'm_j'^*}\|_{l_{m_j'^*}^2}\\
&\hspace{5cm}\times S_{k_1,k_2,k_3}S_{k_1',k_2',k_3}S_{m_1,m_2,m_3}S_{m_1',m_2',m_3}\mathbf{1}_{\mathcal{A}_j}
\end{align*}
for $j=1,2$.

For $\mathcal{C}_1'$, we bound it by (with $k,k'$ fixed)
\begin{align*}
\mathcal{C}_1'\lesssim  &LN_1^{-4\alpha}\prod_{j=1}^2\|h_{k_jk_j^*}^{N_jL_j}\|_{l_{k_j}^{\infty}l_{k_j^*}^{2}}^4\cdot\!\!\!\!\!\sum_{\substack{k_2,k_2',k_3\\
		m_2,m_2',m_3 } }\!\!\!\!\!S_{k+m_2-m_3,m_2,m_3}S_{k'+m_2'-m_3,m_2',m_3}
\\
&\hspace{5cm}\times S_{k+k_2-k_3,k_2,k_3}S_{k'+k_2'-k_3,k_2',k_3} \mathbf{1}_{\substack{|k_2-k_2'|\leq 2L_2\\
|m_2-m_2'|\leq 2L_2 } }
\\
\lesssim &	L\cdot L_2^2N_1^{-4\alpha}(L_1L_2)^{-4\nu}\sum_{k_2,m_2,k_3,m_3}S_{k+k_2-k_3,k_2,k_3}S_{k+m_2-m_3,m_2,m_3}\\
\lesssim &(L_1\vee L_2)^{3-4\nu}(L_1\wedge L_2)^{-4\nu}N_1^{-4\alpha}\sum_{k_2,k_3,m_2}S_{k+k_2-k_3,k_2,k_3}\\
\lesssim &(L_1\vee L_2)^{3-4\nu}(L_1\wedge L_2)^{-4\nu}N_1^{-4\alpha}\cdot N_2(N_3+N_2^{2-\alpha})\\
\lesssim &(L_1\vee L_2)^{3-4\nu}(L_1\wedge L_2)^{-4\nu}\big(N_1^{1-4\alpha}N_3+N_1^{2-5\alpha} \big). 
\end{align*}
Next, for fixed $k,k'$ (and without loss of generality we assume that $L_2\geq L_1$),
\begin{align*}
\mathcal{C}_2'\lesssim  &LN_1^{-4\alpha}\prod_{j=1}^2\|h_{k_jk_j^*}^{N_jL_j}\|_{l_{k_j}^{\infty}l_{k_j^*}^{2}}^4\cdot\!\!\!\!\!\sum_{\substack{k_2,k_2',k_3\\
		m_2,m_2',m_3 } }\!\!\!\!\!S_{k+m_2-m_3,m_2,m_3}S_{k'+m_2'-m_3,m_2',m_3}
\\
&\hspace{3cm}\times S_{k+k_2-k_3,k_2,k_3}S_{k'+k_2'-k_3,k_2',k_3} \mathbf{1}_{\substack{|k-k_3|\leq 2(L_1\vee L_2)\\
		|k-m_3|\leq 2(L_1\vee L_2) } }
	\mathbf{1}_{\substack{|k'-k_3|\leq 2(L_1\vee L_2)\\
			|k'-m_3|\leq 2(L_1\vee L_2) } }\\
\lesssim &LN_1^{-4\alpha}(L_1L_2)^{-4\nu}\sum_{k_3,m_3}\mathbf{1}_{\substack{|k-k_3|\leq 2(L_1\vee L_2)\\
		|k-m_3|\leq 2(L_1\vee L_2) } }
\mathbf{1}_{\substack{|k'-k_3|\leq 2(L_1\vee L_2)\\
		|k'-m_3|\leq 2(L_1\vee L_2) } }\\
&\hspace{1.5cm} \times\sum_{k_2}S_{k+k_2-k_3,k_2,k_3}\sum_{k_2'}S_{k'+k_2'-k_3,k_2',k_3}
\sum_{m_2}S_{k+m_2-m_3,m_2,m_3}
	\sum_{m_2'}S_{k'+m_2'-m_3,m_2',m_3}\\
	\lesssim& (L_1\vee L_2)^{1-4\nu}(L_1\wedge L_2)^{-4\nu}N_1^{-8(\alpha-1)},
\end{align*}
where we use the facts that
$$ \sum_{k_2}S_{k+k_2-k_3,k_2,k_3}\mathbf{1}_{|k-k_3|\leq 2(L_1\vee L_2)}\lesssim \frac{N_1^{2-\alpha}}{\lg k-k_3\rg},$$
$$
\sum_{k_2'}S_{k'+k_2'-k_3,k_2,k_3}\mathbf{1}_{|k'-k_3|\leq 2(L_1\vee L_2)}\lesssim \frac{N_1^{2-\alpha}}{\lg k'-k_3\rg}
$$
and
$$ \sum_{m_2}S_{k+m_2-m_3,m_2,m_3}\mathbf{1}_{|k-m_3|\leq 2(L_1\vee L_2)}\lesssim \frac{N_1^{2-\alpha}}{\lg k-m_3\rg},$$
$$
\sum_{m_2'}S_{k'+m_2'-m_3,m_2,m_3}\mathbf{1}_{|k'-m_3|\leq 2(L_1\vee L_2)}\lesssim \frac{N_1^{2-\alpha}}{\lg k'-m_3\rg},
$$
since $L_1\vee L_2\ll N_1^{2-\alpha}$. Hence
$$ \mathcal{C}_2'\lesssim (L_1\vee L_2)^{1-4\nu}(L_1\wedge L_2)^{-4\nu}N_1^{-8(\alpha-1)}.
$$ Note that here the treatment is different, compared with $\mathcal{C}_1'$, due to the different type of pairing. In summary, we have
\begin{align*}
\mathcal{C}_1'+\mathcal{C}_2'+\sum_{j=3}^6\mathcal{C}_j\lesssim 
 &(L_1\wedge L_2)^{1-4\nu}(L_1\vee L_2)^{2-4\nu}N_3\big[N_1^{2-4\alpha}+(L_1L_2)^{-1}N_1^{5-6\alpha}+
N_1^{1-4\alpha}\big(\frac{L_1\vee L_2}{L_1\wedge L_2}\big) \big]\\
+&(L_1\wedge L_2)^{-4\nu}(L_1\vee L_2)^{1-4\nu}N_1^{-8(\alpha-1)}.
\end{align*}
Combining the estimates above and \eqref{A-3Al4-1},\eqref{A-3Al4-2}, we obtain that:
\begin{proposition}\label{CaseA-3}
Assume that $N_1\sim N_2\gg N_3$. Then outside a set of probability $<\mathrm{e}^{-N_1^{\theta}R}$, modulo factors $N^{\epsilon}$, $\epsilon<\kappa^{-0.1}$, we have:
\item[(i)]  If $a_1,a_2$ are both of type (C), then
	$$ |\mathcal{U}_{L_1,L_2,L_3}^{N_1,N_2,N_3}|\leq CRN_1^{-\frac{3(\alpha-1)}{2}}N_3^{-\frac{\alpha-1}{2}}(L_1L_2)^{-\nu},
	$$
	and
	\begin{align*}
	|\mathcal{U}_{L_1,L_2,L_3}^{N_1,N_2,N_3}|\leq &CRN_1^{-\frac{3(\alpha-1)}{2}}N_3^{-\frac{\alpha-1}{2}}(L_1L_2)^{\frac{1}{2}-\nu}\big[N_3^{\frac{1}{4}}N_1^{-\frac{1}{4}}(L_1\vee L_2)^{-\frac{1}{4}}(L_1\wedge L_2)^{-\frac{1}{2}}+N_3^{\frac{1}{4}}N_1^{-\frac{5}{4}+\frac{\alpha}{2}}\frac{(L_1\vee L_2)^{\frac{1}{4}}}{(L_1\wedge L_2)^{\frac{1}{2}}} \big]\\
	+&CRN_1^{-\frac{3(\alpha-1)}{2}}N_3^{-\frac{\alpha-1}{2}}(L_1L_2)^{\frac{1}{2}-\nu}N_3^{\frac{1}{4}}N_1^{-(1-\frac{\alpha}{2})}(L_1\wedge L_2)^{-\frac{1}{4}}\\
	+&CR(L_1L_2)^{-\nu}(L_1\vee L_2)^{\frac{1}{4}}N_1^{-2(\alpha-1)}N_3^{-\frac{\alpha-1}{2}}.
	\end{align*}
\item[(ii)]  If $a_1$ is of type (C) and $a_2$ is of type (G) (or $a_1$ is of type (G), $a_2$ is of type (C)), then
		$$ |\mathcal{U}_{L_1,L_2,L_3}^{N_1,N_2,N_3}|\leq RN_1^{-(\alpha-1)}N_3^{-\frac{\alpha-1}{2}}L_1^{-\nu},
	$$
	and
	\begin{align*}
	|\mathcal{U}_{L_1,L_2,L_3}^{N_1,N_2,N_3}|\leq &CRN_1^{-\frac{3(\alpha-1)}{2}}N_3^{-\frac{\alpha-1}{2}}L_1^{-\nu}\big(N_3^{\frac{1}{4}}N_1^{-\frac{1}{4}}L_1^{\frac{1}{4}}+N_3^{\frac{1}{4}}N_1^{-\frac{5}{4}+\frac{\alpha}{2}}L_1^{\frac{3}{4}} \big)\\
	+&CRN_1^{-\frac{3(\alpha-1)}{2}}N_3^{-\frac{\alpha-1}{2}}L_1^{-\nu}N_3^{\frac{1}{4}}N_1^{-(1-\frac{\alpha}{2})}L_1^{\frac{1}{2}} +CRL_1^{\frac{1}{4}-\nu}N_1^{-2(\alpha-1)}N_3^{-\frac{\alpha-1}{2}}.
	\end{align*}
	Note that when $a_1$ is of type (G) and $a_2$ is of type (C), the estimates above true by switching $L_1$ and $L_2$.
	\item[(iii)] If $a_1,a_2$ are both of type (G), then
	$$ |\mathcal{U}_{L_1,L_2,L_3}^{N_1,N_2,N_3}|\leq CRN_1^{\frac{5}{4}-\frac{3\alpha}{2}}N_3^{\frac{3}{4}-\frac{\alpha}{2}}+RN_1^{-2(\alpha-1)}N_3^{-\frac{\alpha-1}{2}}<CRN_1^{-2(\alpha-1)}.
	$$
Finally, in whatever the situation, we always have, modulo $N_1^{\epsilon}, \epsilon<\kappa^{-0.1}$,
\begin{align}\label{numerology3} |\mathcal{U}_{L_1,L_2,L_3}^{N_1,N_2,N_3}|\leq CR\max\big\{N_1^{-(\frac{3}{2}+2\nu)(\alpha-1)},N_1^{-(\alpha-1)-\nu},N_1^{-\frac{3(\alpha-1)}{2}-\frac{2\nu}{3} }, N_1^{-(1+4\nu)(\alpha-1)}\big\}.
\end{align}
\end{proposition}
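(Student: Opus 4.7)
The plan is to combine two approaches: a direct deterministic estimate via \textbf{Algorithm 4}, which yields the baseline bounds \eqref{A-3Al4-1} and \eqref{A-3Al4-2}, and a random-matrix estimate via \textbf{Algorithm 2}, which exploits the independence structure to gain smallness that becomes decisive when $L_1 \vee L_2$ is not too large.

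For the deterministic step, I would use Cauchy--Schwarz to pull out $\|a_1\|_{\ell^\infty}$ (or $\|a_2\|_{\ell^\infty}$), then use the counting bound
\[
\sum_{k_3} S_{k+k_2-k_3,k_2,k_3} \mathbf{1}_{|k_3|\sim N_3} \mathbf{1}_{|k+k_2-k_3|\sim N_1} \lesssim 1
\]
since $N_3 \ll N_1 \sim N_2$, together with $\sum_{k_1} S_{k_1,k_2,k_3} \lesssim 1 + N_1^{2-\alpha}/\langle k_2-k_3\rangle$ from Lemma \ref{counting1}. Summing, this produces the factor $N_2^{1/2} + N_1^{1-\alpha/2} \sim N_2^{1/2}$ (since $\alpha > 1$) that appears in the bounds $\|a_1\|_{\ell^\infty} \|a_2\|_{\ell^2} \|a_3\|_{\ell^2} \cdot N_2^{1/2}$, which after substituting the $S^{b,q}$-bound on $h^{N_1 L_1}$ yields \eqref{A-3Al4-1} and \eqref{A-3Al4-2}.

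For the random step, set $L = 10(L_1 \vee L_2)$ and consider $\eta^{(3)}_{k,k'}$ defined in \eqref{etakk3}. Expanding $|\eta^{(3)}_{k,k'}|^2$ in terms of the Gaussians via $a_j(k_j) = \sum_{k_j^*} h^{N_j L_j}_{k_j k_j^*} g_{k_j^*}/[k_j^*]^{\alpha/2}$ yields four $g$'s and four $\bar g$'s, and taking conditional expectation forces Gaussian pairings. I would split into six configurations $(\mathcal{C}_1,\ldots,\mathcal{C}_6)$ based on which pairs of starred indices coincide, as listed in the paper. The crucial first observation is that $\mathcal{C}_1 = \mathcal{C}_2 = 0$ whenever $|k-k'| \geq L$: the pairing constraints force $|k_i - k_i'| \lesssim L_i$ or $|k_1 - k_2| \lesssim L_1 \vee L_2$, and combining with the $\Gamma(k)$ relation gives $|k-k'| \lesssim L_1 + L_2$ or $|k-k'| \lesssim 4(L_1 \vee L_2)$, both strictly below $L$.

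For the remaining four cases $\mathcal{C}_3, \mathcal{C}_4, \mathcal{C}_5, \mathcal{C}_6$, which do not need the constraint $|k-k'| \geq L$, I would run through the sums in the order dictated by which off-diagonal constraint is cheapest: sum the variables with no pairing partners first using the $O(1)$ counting $\sum_{k_3} S \lesssim 1$, then reduce the starred-index constraints to localization $\mathbf{1}_{|k_j - k_j'| \lesssim L_j}$ that costs a factor $L_j$ per unpaired direction, and finally sum the remaining constrained variables via Lemma \ref{counting1}. For the near-diagonal contributions $\mathcal{C}_1', \mathcal{C}_2'$ (the $\sup$ part of \textbf{Algorithm 2}), the extra factor $L$ in the final bound is absorbed by the fact that localization in the starred indices shrinks the effective sum. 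The key technical point is that for $\mathcal{C}_2', \mathcal{C}_5$, when the pairing forces $|k - k_3| \lesssim L_1 \vee L_2 \ll N_1^{2-\alpha}$, one should use $\sum_{k_2} S_{k+k_2-k_3, k_2, k_3} \mathbf{1}_{|k-k_3| \lesssim L_1 \vee L_2} \lesssim N_1^{2-\alpha}/\langle k - k_3\rangle$ rather than the trivial $O(1)$ estimate, and similarly for the other dual factor; the combined contribution is then majorized by $(L_1 \vee L_2)(L_1 \wedge L_2) \cdot N_1^{-8(\alpha-1)}$-type expressions.

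The hard part will be the bookkeeping to distill the outputs of $\mathcal{C}_1',\mathcal{C}_2',\mathcal{C}_3,\ldots,\mathcal{C}_6$ into the claimed case-by-case estimates (i),(ii),(iii), and then to optimize over $L_1, L_2$ (balancing the $L_j^{1/2 - \nu}$ gain from \textbf{Algorithm 2} against the worse $L_j^{-\nu}$ in \textbf{Algorithm 4}) to extract the unified bound \eqref{numerology3}. The worst configuration depends on whether $L_1 \vee L_2$ is larger or smaller than a critical scale comparable to $N_1^{\alpha-1}$: when $L_1 \vee L_2$ is small the random bound dominates and gives $N_1^{-(3/2+2\nu)(\alpha-1)}$, while when $L_1 \vee L_2$ approaches $N_1^{1-\delta}$ the deterministic bound $N_1^{-(\alpha-1)-\nu}$ takes over; the remaining two regimes correspond to the intermediate range. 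Matching these against the constraint $\nu < \min\{s, 7(\alpha-1)/4\} - 100(\epsilon_1+\epsilon_2)$ and the numerical hierarchy in Remark \ref{numerical} is what enforces $\alpha > \alpha_0$ via \eqref{constraint-main}.
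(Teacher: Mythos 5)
Your proposal follows essentially the same route as the paper: Algorithm 4 (Cauchy--Schwarz plus the counting bounds of Lemmas \ref{counting1} and \ref{counting2ndorder}) for the baseline estimates \eqref{A-3Al4-1}--\eqref{A-3Al4-2}, then Algorithm 2 with $L=10(L_1\vee L_2)$, the six Gaussian-pairing configurations with $\mathcal{C}_1=\mathcal{C}_2=0$ off-diagonal, the refined counting $\sum_{k_2}S_{k+k_2-k_3,k_2,k_3}\lesssim N_1^{2-\alpha}/\langle k-k_3\rangle$ on the localized sets, and a final $\min\{1,\cdot\}$ optimization over $L_1,L_2$ to reach \eqref{numerology3}. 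The only loose spots are descriptive (the $\mathcal{C}_5$ bound is $N_1^{5-6\alpha}N_3L_1^{-4\nu}L_2^{1-4\nu}$ rather than an $N_1^{-8(\alpha-1)}$-type term, and the optimization threshold is $(L_1L_2)^{1/2}$ versus $N_1^{1/4}N_3^{-1/4}$ rather than a scale $N_1^{\alpha-1}$), neither of which affects the validity of the plan.
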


\begin{proof}
We only need to justify the last assertion \eqref{numerology3}. First we assume that $a_1,a_2$ are both of type (C) and without loss of generality, $L_1\geq L_2$. Note that, for $A>0, B>0$,
$$ \min\{1,A+B\}\leq \min\{1,A\}+\min\{1,B\}.
$$
Then from the two inequalities of (i), we have a rougher bound
\begin{align*}  |\mathcal{U}_{L_1,L_2,L_3}^{N_1,N_2,N_3}|\leq &CRN_1^{-\frac{3(\alpha-1)}{2}}N_3^{-\frac{\alpha-1}{2}}(L_1L_2)^{-\nu}\min\big\{1,(L_1L_2)^{\frac{1}{2}}\big(\frac{N_3}{N_1}\big)^{\frac{1}{4}}+L_1\frac{N_3^{\frac{1}{4}}}{N_1^{\frac{5}{4}-\frac{\alpha}{2}}} \big\}\\
+&CRN_1^{-\frac{3(\alpha-1)}{2}}N_3^{-\frac{\alpha-1}{2}}(L_1L_2)^{-\nu}\min\{1, L_1^{\frac{1}{4}}N_1^{-\frac{\alpha-1}{2}} \}.
\end{align*}
When $(L_1L_2)^{\frac{1}{2}}<N_1^{\frac{1}{4}}N_3^{-\frac{1}{4}}$,
we have
\begin{align*}
 |\mathcal{U}_{L_1,L_2,L_3}^{N_1,N_2,N_3}|\leq &CRN_1^{-\frac{3(\alpha-1)}{2}}N_3^{-\frac{\alpha-1}{2}}\big((L_1L_2) ^{\frac{1}{2}-\nu}N_3^{\frac{1}{4}}N_1^{-\frac{1}{4} }+L_1^{1-\nu}N_3^{\frac{1}{4}}N_1^{\frac{\alpha}{2}-\frac{5}{4}} \big)\\
 \leq &CRN_1^{-\frac{3(\alpha-1)}{2}-\frac{\nu}{2}}N_3^{\frac{\nu}{2}-\frac{\alpha-1}{2}}+CRN_1^{-\frac{3(\alpha-1)}{2}-\frac{5}{4}+\frac{\alpha}{2}}N_3^{-\frac{\alpha-1}{2}+\frac{1}{4}}\cdot (N_1^{\frac{1}{4}}N_3^{-\frac{1}{4}})^{2(1-\nu)} \\
 \leq &CRN_1^{-\frac{3(\alpha-1)}{2}-\frac{\nu}{2}}N_3^{\frac{\nu}{2}-\frac{\alpha-1}{2}},
\end{align*}
since $L_1^{\frac{1}{2}}<N_1^{\frac{1}{4}}N_3^{-\frac{1}{4}}$ and 
$$ N_1^{-\frac{3(\alpha-1)}{2}-\frac{5}{4}+\frac{\alpha}{2}}N_3^{-\frac{\alpha-1}{2}+\frac{1}{4}}\cdot (N_1^{\frac{1}{4}}N_3^{-\frac{1}{4}})^{2(1-\nu)}=N_1^{-\alpha+\frac{3}{4}-\frac{\nu}{2}}N_3^{\frac{1}{4}-\frac{\alpha}{2}+\frac{\nu}{2}}<N_1^{-\frac{3(\alpha-1)}{2}-\frac{\nu}{2}}N_3^{\frac{\nu}{2}-\frac{\alpha-1}{2}}.
$$
When $(L_1L_2)^{\frac{1}{2}}\geq N_1^{\frac{1}{4}}N_3^{-\frac{1}{4}},$ we have simply
$$ N_1^{-\frac{3(\alpha-1)}{2}}N_3^{-\frac{\alpha-1}{2}}(L_1L_2)^{-\nu}\leq CRN_1^{-\frac{3(\alpha-1)}{2}-\frac{\nu}{2}}N_3^{\frac{\nu}{2}-\frac{\alpha-1}{2}}.
$$
Similarly, 
$$ N_1^{-\frac{3(\alpha-1)}{2}}N_3^{-\frac{\alpha-1}{2}}(L_1L_2)^{-\nu}\min\{1,L_1^{\frac{1}{4}}N_1^{-\frac{\alpha-1}{2}} \}\leq N_1^{-(\frac{3}{2}+2\nu)(\alpha-1)}N_3^{-\frac{\alpha-1}{2}}.
$$
Therefore, for whatever $L_1,L_2$ and $N_3\ll N_1$, we have the bound
$$ |\mathcal{U}_{L_1,L_2,L_3}^{N_1,N_2,N_3}|\leq CR N_1^{-\frac{3(\alpha-1)}{2}-\frac{\nu}{2}}N_3^{\frac{\nu}{2}-\frac{\alpha-1}{2}}+CRN_1^{-(\frac{3}{2}+2\nu)(\alpha-1)}N_3^{-\frac{\alpha-1}{2}}.
$$
Note that $\nu>(\alpha-1)$, we have
$$ |\mathcal{U}_{L_1,L_2,L_3}^{N_1,N_2,N_3}|\leq CRN_1^{-2(\alpha-1)}+CRN_1^{-(\frac{3}{2}+2\nu)(\alpha-1)}.
$$

Next we assume that $a_1$ is of type (C) and $a_2$ is of type (G), from (ii) we have
\begin{align*}
|\mathcal{U}_{L_1,L_2,L_3}^{N_1,N_2,N_3}|\leq &CRN_1^{-(\alpha-1)}N_3^{-\frac{\alpha-1}{2}}L_1^{-\nu}\min\{1, L_1^{\frac{1}{4}}N_3^{\frac{1}{4}}N_1^{-\frac{1}{4}-\frac{\alpha-1}{2} } \}\\+&CRN_1^{-(\alpha-1)}N_3^{-\frac{\alpha-1}{2}}L_1^{-\nu} \min\{1,N_3^{\frac{1}{4}}N_1^{-\frac{5}{4}+\frac{\alpha}{2}-\frac{\alpha-1}{2}}L_1^{\frac{3}{4}} \}\\
+&CRN_1^{-(\alpha-1)}N_3^{-\frac{\alpha-1}{2}}L_1^{-\nu}\min\{1, L_1^{\frac{1}{2}}N_3^{\frac{1}{4}}N_1^{-(1-\frac{\alpha}{2})-\frac{\alpha-1}{2}}   \}
 \\
+&CRN_1^{-(\alpha-1)}N_3^{-\frac{\alpha-1}{2}}L_1^{-\nu}\min\{1,L_1^{\frac{1}{4}} N_1^{-(\alpha-1)} \}.
\end{align*}
Note that
$$ N_1^{-(\alpha-1)}N_3^{-\frac{\alpha-1}{2}}L_1^{-\nu}\min\{1,L_1^{\frac{1}{4}}N_3^{\frac{1}{4}}N_1^{-\frac{1}{4}-\frac{\alpha-1}{2}} \}\leq N_1^{-(\alpha-1)-\nu(1+2(\alpha-1))}N_3^{\nu-\frac{\alpha-1}{2}}\leq N_1^{-(\frac{3}{2}+2\nu)(\alpha-1)},
$$
since $\nu>\alpha-1$ and thus $N_3^{\frac{\nu}{2}-\frac{\alpha-1}{2}}<N_1^{\frac{\nu}{2}-\frac{\alpha-1}{2}}$. Next, 
\begin{align*}
 N_1^{-(\alpha-1)}N_3^{-\frac{\alpha-1}{2}}L_1^{-\nu}\min\{1, N_3^{\frac{1}{4}}N_1^{-\frac{5}{4}+\frac{\alpha}{2}-\frac{\alpha-1}{2}}L_1^{\frac{3}{4}} \}\leq &N_1^{-(\alpha-1)-\nu}N_3^{\frac{\nu}{3}-\frac{\alpha-1}{2}}\\ \leq &\max\{ N_1^{-(\alpha-1)-\nu}, N_1^{-\frac{3(\alpha-1)}{2}-\frac{2\nu}{3}}\}.
\end{align*}
For the third term, we have
\begin{align*}
N_1^{-(\alpha-1)}N_3^{-\frac{\alpha-1}{2}}L_1^{-\nu}\min\{1, L_1^{\frac{1}{2}}N_3^{\frac{1}{4}}N_1^{-(1-\frac{\alpha}{2})-\frac{\alpha-1}{2}} \}\leq &N_1^{-(\alpha-1)-\nu}N_3^{\frac{\nu}{2}-\frac{\alpha-1}{2}}\\ \leq &N_1^{-\frac{3}{2}(\alpha-1)-\frac{\nu}{2}}<N_1^{-(\frac{3}{2}+2\nu)(\alpha-1)},
\end{align*}
since $\alpha<\frac{5}{4}$.
Finally,
$$ N_1^{-(\alpha-1)}N_3^{-\frac{\alpha-1}{2}}L_1^{-\nu}\min\{1,L_1^{\frac{1}{4}}N_1^{-(\alpha-1)} \}\leq N_1^{-(1+4\nu)(\alpha-1)}N_3^{-\frac{\alpha-1}{2}}\leq N_1^{-(1+4\nu)(\alpha-1)}.
$$
Therefore, we have
$$ |\mathcal{U}_{L_1,L_2,L_3}^{N_1,N_2,N_3}|\leq CR\max\big\{N_1^{-(\frac{3}{2}+2\nu)(\alpha-1)},N_1^{-(\alpha-1)-\frac{3}{4}\nu},N_1^{-(1+4\nu)(\alpha-1)}\big\}.
$$
This completes the proof of Proposition \ref{CaseA-3}.
\end{proof}

\subsection{The case $N_1\sim N_3\gg N_2$}
For the high-low-high interactions, first we deal with\\
\noi
$\bullet${\bf Case A-4: At least one of $a_1,a_3$ is of type (D)}

We execute {\bf Algorithm 4}.
By inserting the indicator $\mathbf{1}_{k_1\cdot k_3\geq 0}$ and $\mathbf{1}_{k_1\cdot k_2<0}$, we have
\begin{align*}
|\mathcal{U}_{L_1,L_2,L_3}^{N_1,N_2,N_3}|^2\lesssim &\Big(\sum_{|k|\leq N_1}\sum_{(k_1,k_2,k_3)\in\Gamma(k)}|a_1(k_1)|^2|a_2(k_2)|^2\mathbf{1}_{k_1k_3\geq 0}\Big)\cdot
\Big(\sup_{k}\sum_{(k_1,k_2,k_3)\in\Gamma(k)}|a_3(k_3)|^2\Big)\\
+&\Big(\sum_{|k|\leq N_1}\sum_{(k_1,k_2,k_3)\in\Gamma(k)}|a_1(k_1)|^2|a_2(k_2)|^2\mathbf{1}_{k_1k_3< 0}\Big)\cdot
\Big(\sup_{k}\sum_{(k_1,k_2,k_3)\in\Gamma(k)}|a_3(k_3)|^2\Big).
\end{align*}
When $k_1k_3\geq 0$, for fixed $k_1,k_2$, we have
$$ |\partial_{k_3}\Phi_{k_1,k_2,k_3}|=\alpha\big|\mathrm{sign}(k_3)|k_3|^{\alpha-1}-\mathrm{sign}(k_1-k_2+k_3)|k_1-k_2+k_3|^{\alpha-1} \big|\gtrsim N_1^{\alpha-1},
$$ 
hence
$$ \sum_{|k|\leq N_1}\sum_{(k_1,k_2,k_3)\in\Gamma(\ov{k}) }|a_1(k_1)|^2|a_2(k_2)|^2\mathbf{1}_{k_1k_3\geq 0}\lesssim \|a_1\|_{l^2}^2\|a_2\|_{l^2}^2.
$$
Since $|k+k_2-k_3|=|k_1|\gg |k_2|$, for fixed $k,k_3$,
$$ |\partial_{k_2}\Phi(k+k_2-k_3,k_2,k_3)|=\alpha\big|\mathrm{sign}(k+k_2-k_3)|k+k_2-k_3|^{\alpha-1}-\mathrm{sign}(k_2)|k_2|^{\alpha-1} \big|\sim N_1^{\alpha-1}.
$$ 
Thus
$$\Big(\sum_{|k|\leq N_1}\sum_{(k_1,k_2,k_3)\in\Gamma(\ov{k})}|a_1(k_1)|^2|a_2(k_2)|^2\mathbf{1}_{k_1k_3\geq 0}\Big)\cdot
\Big(\sup_{k}\sum_{(k_1,k_2,k_3)\in\Gamma(\ov{k})}|a_3(k_3)|^2\mathbf{1}_{k_1\cdot k_3\geq 0}\Big)\lesssim \prod_{j=1}^3\|a_j\|_{l^2}^2.
$$
When $k_1k_3<0$, if sign$(k_3)\neq$ sign$(k_1-k_2+k_3)$, $|\partial_{k_3}\Phi_{k_1,k_2,k_3}|\gtrsim N_1^{\alpha-1}$. If sign$(k_3)=$sign$(k_3-(-k_1+k_2))$, we must have
$ |k_3|>|k_1-k_2|.
$ 
Without loss of generality, we may assume that $k_3>0$, hence
$$ |\partial_{k_3}\Phi_{k_1,k_2,k_3}|=\alpha(\alpha-1)\big|\int_{k_3-k_2+k_1}^{k_3}|x|^{\alpha-2}dx \big|\gtrsim \frac{|k_1-k_2|}{|k_3|^{2-\alpha}}\sim N_1^{\alpha-1}.
$$
Therefore,
$$\Big(\sum_{|k|\leq N_1}\sum_{(k_1,k_2,k_3)\in\Gamma(\ov{k})}|a_1(k_1)|^2|a_2(k_2)|^2\mathbf{1}_{k_1k_3< 0}\Big)\cdot
\Big(\sup_{k}\sum_{(k_1,k_2,k_3)\in\Gamma(\ov{k})}|a_3(k_3)|^2\mathbf{1}_{k_1\cdot k_3\geq 0}\Big)\lesssim \prod_{j=1}^3\|a_j\|_{l^2}^2.
$$
Therefore, we have proved:
\begin{proposition}\label{CaseA-4}
Assume that $N_1\sim N_3\gg N_2$ and at least one of $a_1,a_2$ is of type (D), then
$$ \mathcal{U}_{L_1,L_2,L_3}^{N_1,N_2,N_3}\lesssim \prod_{j=1}^3\|a_j\|_{l^2}\lesssim N_1^{-\frac{\alpha-1}{2}-s}N_3^{-\frac{\alpha-1}{2}}.
$$
\end{proposition}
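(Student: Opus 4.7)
The plan is to execute \textbf{Algorithm 4} from Section \ref{reductionsection}, i.e.\ prove a purely deterministic bound
$$ |\mathcal{U}_{L_1,L_2,L_3}^{N_1,N_2,N_3}|^2\;\lesssim\;\prod_{j=1}^3\|a_j\|_{l^2}^2, $$
from which, using the normalizations \eqref{boundDsimplified'}, \eqref{boundGsimplified'} (and noting that since at least one of $a_1,a_3$ is of type (D), the worst of the remaining inputs at frequency $N_1$ is still bounded by $N_1^{-s}$, while the factor at $N_3$ loses at worst $N_3^{-(\alpha-1)/2}$) the stated bound $N_1^{-(\alpha-1)/2-s}N_3^{-(\alpha-1)/2}$ follows. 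Since $L_1,L_2,L_3$ do not intervene in the deterministic count, they can be suppressed in the notation.

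To prove the bilinear-type $l^2$ bound, I would apply Cauchy--Schwartz by pairing $a_3(k_3)$ with $a_1(k_1)a_2(k_2)$, so that
$$ |\mathcal{U}|^2\;\le\;\Bigl(\sum_{|k|\lesssim N_1}\sum_{(k_1,k_2,k_3)\in\Gamma(k)}|a_1(k_1)|^2|a_2(k_2)|^2 S_{k_1,k_2,k_3}\Bigr)\cdot\Bigl(\sup_{k}\sum_{(k_1,k_2,k_3)\in\Gamma(k)}|a_3(k_3)|^2 S_{k_1,k_2,k_3}\Bigr). $$
The second factor reduces, for fixed $k,k_3$, to counting the number of $k_2$ satisfying the modulation constraint: since $|k+k_2-k_3|=|k_1|\sim N_1\gg|k_2|\sim N_2$, one has $|\partial_{k_2}\Phi(k+k_2-k_3,k_2,k_3)|\sim N_1^{\alpha-1}\gg 1$, so by Lemma \ref{countingprinciple} the $k_2$-sum is $O(1)$ and the whole factor is $\lesssim \|a_3\|_{l^2}^2$.

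For the first factor, I would insert $\mathbf{1}_{k_1 k_3\geq 0}+\mathbf{1}_{k_1 k_3<0}=1$ and, for each piece, fix $k_1,k_2$ and count admissible $k_3$ using $\partial_{k_3}\Phi_{k_1,k_2,k_3}=\alpha\bigl(\mathrm{sign}(k_3)|k_3|^{\alpha-1}-\mathrm{sign}(k_1-k_2+k_3)|k_1-k_2+k_3|^{\alpha-1}\bigr)$. When $\mathrm{sign}(k_3)\neq\mathrm{sign}(k_1-k_2+k_3)$ the two terms add constructively and $|\partial_{k_3}\Phi|\gtrsim N_1^{\alpha-1}$, giving an $O(1)$ count. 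When the signs agree, I would separate the two subcases: if $k_1 k_3\geq 0$, then (since $|k_2|\ll N_1$) one automatically has $\mathrm{sign}(k_1)=\mathrm{sign}(k_1-k_2+k_3)=\mathrm{sign}(k_3)$ and the derivative is still $\gtrsim N_1^{\alpha-1}$ by a mean-value argument along the segment from $k_3-k_2+k_1$ to $k_3$; whereas if $k_1 k_3<0$ with the same signs, this forces $|k_3|>|k_1-k_2|\sim N_1$, and the mean value theorem gives $|\partial_{k_3}\Phi|\sim (\alpha-1)|k_1-k_2|/|k_3|^{2-\alpha}\gtrsim N_1^{\alpha-1}$. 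In every branch the inner $k_3$-sum is $O(1)$, so Schur's test yields $\lesssim \|a_1\|_{l^2}^2\|a_2\|_{l^2}^2$, completing the bound.

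The main (only mildly technical) obstacle is the subcase $k_1 k_3<0$ with $\mathrm{sign}(k_3)=\mathrm{sign}(k_1-k_2+k_3)$, where $|\partial_{k_3}\Phi|$ a priori vanishes to leading order; there the gain has to be extracted from the $(\alpha-1)$ prefactor of the mean-value calculus identity together with the fact that $|k_1-k_2|\sim N_1$. Everything else is routine counting based on the non-degenerate derivative and the hypothesis $N_2\ll N_1\sim N_3$, which is what makes this regime essentially as favorable as the fully transversal case.
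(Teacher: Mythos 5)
Your proposal is correct and follows essentially the same route as the paper: the same Cauchy--Schwartz pairing of $a_1a_2$ against $a_3$, the same $O(1)$ count in $k_2$ for the $\sup_k$ factor via $|\partial_{k_2}\Phi|\sim N_1^{\alpha-1}$, and the same sign case analysis for $\partial_{k_3}\Phi$ (including the only delicate branch, $k_1k_3<0$ with $\mathrm{sign}(k_3)=\mathrm{sign}(k_1-k_2+k_3)$, which forces $|k_3|>|k_1-k_2|$ and is handled by the same mean-value computation). The only difference is the cosmetic order in which the sign cases are enumerated.
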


\noi
$\bullet${\bf $a_1,a_3$ are both of type (G) or (C)  }

This situation is similar to {\bf Case A-3} and we can obtain the same upper bound. For this reason, we omit the details. Finally, we remark that from the choice of parameters $\nu, s$ in Remark \ref{numerical}, under the constraint $\alpha>\alpha_0$, there exists $\sigma>0$, sufficiently small, such that (1) of Proposition \ref{Multilinearkey} holds in the situation $N_{(1)}\sim N_{(2)}\gg N_{(3)}$.
\section{High-low-low interactions}

In this section, we finish the proof of Proposition \ref{Multilinearkey} by showing (2), (4) and the regime  $N_{(1)}\gg N_{(2)}\gtrsim N_{(1)}^{1-\delta}$ or $N_{(1)}\gg N_{(2)}$ and $N_2\sim N_{(1)}$ for (1). 

\noi
$\bullet${\bf Case B-1: $N_1\gg N_2, N_3$, $a_1$ is of type (D) and $N_{(2)}\gtrsim N_{(1)}^{1-\delta}$}

In this case, we execute {\bf Algorithm 4}. By Cauchy-Schwartz,
\begin{align*}
|\mathcal{U}_{L_1,L_2,L_3}^{N_1,N_2,N_3}|^2\leq & \Big(\sum_{k_1,k_2,k_3}S_{k_1,k_2,k_3}|a_1(k_1)|^2|a_2(k_2)|^2 \Big)\cdot\sup_{k}\Big(\sum_{k_2,k_3}|a_3(k_3)|^2S_{k+k_2-k_3,k_2,k_3} \Big).
\end{align*}
Since $N_1\gg N_2, N_3$,
$$ |\partial_{k_2}\Phi(k+k_2-k_3,k_2,k_3)|\gtrsim N_1^{\alpha-1},\quad |\partial_{k_3}\Phi_{k_1,k_2,k_3}|\gtrsim N_1^{\alpha-1},
$$
we have
$$ \sum_{k_2}S_{k+k_2-k_3,k_2,k_3}\lesssim 1,\quad \sum_{k_3}S_{k_1,k_2,k_3}\lesssim 1,
$$
hence
$$ |\mathcal{U}_{L_1,L_2,L_3}^{N_1,N_2,N_3}|\lesssim \|a_1\|_{l^2}\|a_2\|_{l^2}\|a_3\|_{l^2}.
$$
Therefore, we have proved:
\begin{proposition}\label{CaseB-1}
Assume that $N_1\gg N_2,N_3$, $N_{(2)}\gtrsim N_{(1)}^{1-\delta}$ and $a_1$ is of type (D), then
$$ |\mathcal{U}_{L_1,L_2,L_3}^{N_1,N_2,N_3}|\lesssim \prod_{j=1}^3\|a_j\|_{l^2}\lesssim N_{(1)}^{-s-\frac{\alpha-1}{2}(1-\delta)}, 
$$
where the factor $N^{-\frac{\alpha-1}{2}}$ comes from the worst case when $a_{(2)}$ is of type (G).
\end{proposition}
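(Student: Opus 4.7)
My plan is to observe that since $a_1$ is of type (D), we do not need to exploit any Gaussian randomness and can therefore invoke the purely deterministic \textbf{Algorithm 4} of Section~\ref{reductionsection}. So the task reduces to establishing a bound of the form
$$
|\mathcal{U}_{L_1,L_2,L_3}^{N_1,N_2,N_3}| \lesssim K_4(N_1,N_2,N_3) \quad \text{with } K_4(N_1,N_2,N_3) \lesssim \prod_{j=1}^3 \|a_j\|_{l^2},
$$
after which the hypothesized $l^2$-bounds for each type (G)/(C)/(D) term together with the assumption $N_{(2)} \gtrsim N_{(1)}^{1-\delta}$ will yield the claim upon taking worst cases (type (G), which gives only $N_j^{-(\alpha-1)/2}$, is less favorable than type (C), so the numerology is saturated by (G)).

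For the deterministic bound, I would write
$$
|\mathcal{U}_{L_1,L_2,L_3}^{N_1,N_2,N_3}|^2 = \sum_{k} \Big| \sum_{(k_1,k_2,k_3)\in\Gamma(k)} a_1(k_1)\ov a_2(k_2) a_3(k_3) S_{k_1,k_2,k_3}\Big|^2
$$
and apply Cauchy-Schwarz in one of two ways. The cleanest way is to peel off $a_3$: by Cauchy-Schwarz in $(k_2,k_3)$ (with $k$ fixed, then summing in $k$ via the change $k_1=k-k_2+k_3$), one obtains
$$
|\mathcal{U}_{L_1,L_2,L_3}^{N_1,N_2,N_3}|^2 \lesssim \Big(\sum_{k_1,k_2,k_3} S_{k_1,k_2,k_3}|a_1(k_1)|^2|a_2(k_2)|^2\Big) \cdot \sup_k \Big(\sum_{k_2,k_3} S_{k+k_2-k_3,k_2,k_3}|a_3(k_3)|^2\Big).
$$

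The crucial and only analytic input is then the counting: since $N_1 \gg N_2, N_3$, for fixed $k_2, k_3$ the derivative $|\partial_{k_3}\Phi_{k_1,k_2,k_3}| = \alpha\bigl||k_3|^{\alpha-1}\mathrm{sgn}(k_3) - |k_1-k_2+k_3|^{\alpha-1}\mathrm{sgn}(k_1-k_2+k_3)\bigr| \gtrsim N_1^{\alpha-1} \gg 1$; symmetrically $|\partial_{k_2}\Phi(k+k_2-k_3,k_2,k_3)| \gtrsim N_1^{\alpha-1}$. Lemma~\ref{countingprinciple} then gives $\sum_{k_3} S_{k_1,k_2,k_3} \lesssim 1$ and $\sum_{k_2} S_{k+k_2-k_3,k_2,k_3} \lesssim 1$ uniformly. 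Plugging these back yields $|\mathcal{U}_{L_1,L_2,L_3}^{N_1,N_2,N_3}| \lesssim \|a_1\|_{l^2}\|a_2\|_{l^2}\|a_3\|_{l^2}$.

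Finally, to pass to the stated bound, I would combine the deterministic estimate $\|a_1\|_{l^2} \lesssim N_1^{-s}$ for the type (D) input with the worst-case type (G) bounds $\|a_j\|_{l^2} \lesssim N_j^{-(\alpha-1)/2}$ for $j=2,3$; using $N_1=N_{(1)}$ and $N_{(2)}\gtrsim N_{(1)}^{1-\delta}$ (with $N_{(3)}\geq 1$), one concludes $\prod_j \|a_j\|_{l^2} \lesssim N_{(1)}^{-s-\frac{\alpha-1}{2}(1-\delta)}$. There is essentially no obstacle here beyond verifying the two counting inequalities; this is the easiest of the high-low-low regimes precisely because the smallness of $N_2,N_3$ compared to $N_1$ keeps both $\partial_{k_2}\Phi$ and $\partial_{k_3}\Phi$ comparable to $N_1^{\alpha-1}$, avoiding the degeneracies that force one into randomness-based algorithms in the subsequent cases.
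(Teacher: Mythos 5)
Your proposal is correct and follows essentially the same route as the paper: execute Algorithm 4, apply the same Cauchy--Schwarz splitting isolating $|a_1|^2|a_2|^2$ against $\sup_k\sum_{k_2,k_3}|a_3(k_3)|^2S_{k+k_2-k_3,k_2,k_3}$, and use the two counting bounds $\sum_{k_3}S_{k_1,k_2,k_3}\lesssim 1$, $\sum_{k_2}S_{k+k_2-k_3,k_2,k_3}\lesssim 1$ coming from $|\partial_{k_2}\Phi|,|\partial_{k_3}\Phi|\gtrsim N_1^{\alpha-1}$. The concluding numerology with the worst-case type (G) bounds and $N_{(2)}\gtrsim N_{(1)}^{1-\delta}$ is also exactly as in the paper.
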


\noi
$\bullet${\bf Case B-2: $N_1\gg N_2, N_3$, $a_1$ is of type (G) or (C), $a_{(2)}$ is of type (D) and $N_{(2)}\gtrsim N_{(1)}^{1-\delta}$}

The estimate in this case is the same as {\bf Case B-1}, and we summarize as follows:
\begin{proposition}\label{CaseB-2}
	Assume that $N_1\gg N_2,N_3$, $N_{(2)}\gtrsim N_{(1)}^{1-\delta}$ and $a_1$ is of type (G) or (C), then
	\begin{align}\label{numerology3'}
	 |\mathcal{U}_{L_1,L_2,L_3}^{N_1,N_2,N_3}|\lesssim \prod_{j=1}^3\|a_j\|_{l^2}\lesssim N_{(1)}^{-(1-\delta)s-\frac{\alpha-1}{2}}, 
		\end{align}
	where the factor $N^{-\frac{\alpha-1}{2}}$ comes from the worst case when $a_{1}$ is of type (G).
\end{proposition}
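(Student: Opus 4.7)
The plan is to execute \textbf{Algorithm 4} in complete parallel with the proof of Proposition \ref{CaseB-1}, since the Cauchy--Schwartz and counting steps are insensitive to the probabilistic versus deterministic nature of $a_1$. First I would write, by Cauchy--Schwartz in the $(k_1,k_2)$ variables,
$$ |\mathcal{U}_{L_1,L_2,L_3}^{N_1,N_2,N_3}|^2\leq \Big(\sum_{k_1,k_2,k_3}S_{k_1,k_2,k_3}|a_1(k_1)|^2|a_2(k_2)|^2 \Big)\cdot\sup_{k}\Big(\sum_{k_2,k_3}|a_3(k_3)|^2S_{k+k_2-k_3,k_2,k_3} \Big). $$

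Next, I would invoke the elementary counting principle (Lemma \ref{countingprinciple}) together with the derivative lower bounds available in the high-low-low regime $N_1\gg N_2, N_3$. Namely, for fixed $k_1, k_2$ one has $|\partial_{k_3}\Phi_{k_1,k_2,k_3}|\gtrsim N_1^{\alpha-1}\gg 1$, and for fixed $k, k_3$ one has $|\partial_{k_2}\Phi(k+k_2-k_3,k_2,k_3)|\gtrsim N_1^{\alpha-1}\gg 1$. Consequently
$$ \sum_{k_3}S_{k_1,k_2,k_3}\lesssim 1, \qquad \sum_{k_2}S_{k+k_2-k_3,k_2,k_3}\lesssim 1, $$
uniformly in the frozen variables, which yields (modulo a harmless $N_1^\epsilon$ for $\epsilon<\kappa^{-0.1}$, absorbed into the unified small factor) the deterministic bound
$$ |\mathcal{U}_{L_1,L_2,L_3}^{N_1,N_2,N_3}|\lesssim \|a_1\|_{\ell^2}\|a_2\|_{\ell^2}\|a_3\|_{\ell^2}. $$

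It then remains to insert the $\ell^2$ bounds \eqref{boundCsimplified'}--\eqref{boundGsimplified'} for each type. The worst case for the first slot is $a_1$ of type (G), contributing $\|a_1\|_{\ell^2}\lesssim N_1^{-(\alpha-1)/2}$; since $a_{(2)}$ is, by hypothesis, of type (D) with characterized frequency $N_{(2)}\gtrsim N_{(1)}^{1-\delta}$, it contributes $\|a_{(2)}\|_{\ell^2}\lesssim N_{(2)}^{-s}\lesssim N_{(1)}^{-(1-\delta)s}$; the remaining third factor is bounded by $O(1)$. Multiplying yields the claimed bound $N_{(1)}^{-(1-\delta)s-(\alpha-1)/2}$. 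Note that the type (C) alternative for $a_1$ produces $N_1^{-(\alpha-1)}L_1^{-\nu}$, which is strictly smaller.

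There is no genuine obstacle here: the structure of the derivative bound is identical to Case B-1, and the random character of $a_1$ plays no role in the deterministic counting. The only thing to verify is that the resulting bound, combined with the $N_{(1)}^{\alpha/q_0+\theta+100(\kappa^{-0.1}+\epsilon_1+\epsilon_2)}$ overhead from \textbf{Algorithm 4}, comfortably dominates the target $RN_{(1)}^{-s}N_{(2)}^{-\delta_0}$ of Proposition \ref{Multilinearkey}(1); this reduces to the numerical inequality $(\alpha-1)/2-\delta s > (1-\delta)\delta_0+100(\kappa^{-0.1}+\epsilon_1+\epsilon_2)$, which holds for the parameter choice in Remark \ref{numerical} with $\sigma$ chosen small enough, since $(\alpha-1)/2$ is a fixed positive number while all the $\delta, \delta_0, \epsilon_j$ are $O(\sigma^{c})$ for various $c>0$.
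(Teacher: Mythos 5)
Your proof is correct and is exactly the paper's argument: the paper simply notes that "the estimate in this case is the same as Case B-1" and you have reproduced that Cauchy--Schwartz plus counting argument verbatim, inserting the appropriate $\ell^2$ bounds for each type. The concluding numerical check is a harmless bonus the paper defers to its general parameter discussion.
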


\noi
$\bullet${\bf Case B-3: $N_1\gg N_2, N_3$, $a_{(1)},a_{(2)}$ are both of type (G) or (C) and $N_{(2)}\gtrsim N_{(1)}^{1-\delta}$}

If $a_{(1)}, a_{(2)}$ are both of type (C), then we can apply the same argument as for {\bf Case B-1} to obtain that
\begin{align}\label{numerology3''}
 |\mathcal{U}_{L_1,L_2,L_3}^{N_1,N_2,N_3}|\lesssim \prod_{j=1}^3\|a_j\|_{l^2}\lesssim N_{(1)}^{-2(\alpha-1)(1-\delta)},
\end{align}
which is conclusive. Therefore we may assume that at least one of $a_{(1)}, a_{(2)}$ is of type (G). If $a_{(1)}$ is of type (C) and $L_{(1)}\gtrsim  N_{(2)}(\gtrsim N_{(1)}^{1-\delta})$, we have the deterministic bound
\begin{align}\label{numerology3'''} |\mathcal{U}_{L_1,L_2,L_3}^{N_1,N_2,N_3}|\lesssim \prod_{j=1}^3\|a_j\|_{l^2}\lesssim N_{(1)}^{-(\alpha-1)}L_{(1)}^{-\nu}N_{(2)}^{-\frac{\alpha-1}{2}}\lesssim N_{(1)}^{-\frac{3(\alpha-1)(1-\delta)}{2}-\nu}
\end{align}
which is also conclusive. Now we assume that $L_{(1)}\ll N_{(2)}$, then 
 this situation is essentially the same as {\bf Case A-3}. Revisiting all the analysis for {\bf Case A-3} and {\bf Case A-4}, the only difference here is that we should replace $N_{(2)}\sim N_{(1)}$ by $N_{(2)}\sim N_{(1)}^{1-\delta}$. All the outputs of the summations like $\sum_{k_2}S_{k_1,k_2,k_3}, \sum_{k_3}S_{k_1,k_2,k_3}, \sum_{k_1}S_{k_1,k_2,k_3}$ and $\sum_{k}S_{k+k_2-k_3,k_2,k_3}$ are smaller than the case $N_{(1)}\sim N_{(2)}\gg N_{(3)}$, up to a loss of small power $N_{(1)}^{\delta}$. We omit the details.


\noi
$\bullet${\bf Case B-4: $N_2\gg N_1, N_3$, $a_2$ is of type (G) or (C)}

We execute {\bf Algorithm 3}. By Cauchy-Schwartz,
\begin{align*}
|\mathcal{U}_{L_1,L_2,L_3}^{N_1,N_2,N_3}|^2\leq \Big(\sum_{k_1,k_2,k_3}|a_1(k_1)|^2|a_2(k_2)|^2S_{k_1,k_2,k_3} \Big)\cdot 
\sup_k\Big(\sum_{k_1,k_3}|a_3(k_3)|^2S_{k_1,k_1+k_3-k,k_3} \Big).
\end{align*}
Note that for fixed $k,k_2$, $|\partial_{k_1}\Phi(k_1,k_1+k_3-k,k_3)|\gtrsim N_2^{\alpha-1}$ since $N_1\ll N_2$, we have
$$ \sum_{k_1}S_{k_1,k_1+k_3-k,k_3}\lesssim 1.
$$
For fixed $k_1,k_3$, by Lemma \ref{counting2ndorder},
$$ \sum_{k_2}S_{k_1,k_2,k_3}\lesssim N_2^{1-\frac{\alpha}{2}}.
$$
Thus we have
\begin{align*}
|\mathcal{U}_{L_1,L_2,L_3}^{N_1,N_2,N_3}|\lesssim N_2^{\frac{1}{2}-\frac{\alpha}{4}}\|a_1\|_{l^2}\|a_2\|_{l^{\infty}}\|a_3\|_{l^2}\leq CRN_1^{-\frac{\alpha-1}{2}}N_3^{-\frac{\alpha-1}{2}}N_2^{\frac{1}{2}-\frac{3\alpha}{4}}\leq CRN_{(1)}^{-\frac{\alpha}{4}-\frac{\alpha-1}{2}}.
\end{align*}
We have proved:
\begin{proposition}\label{CaseB-4}
Assume that $N_2\gg N_1,N_3$ and $a_2$ is of type (G) or (C). We have
$$ |\mathcal{U}_{L_1,L_2,L_3}^{N_1,N_2,N_3}|\leq CRN_{(1)}^{-\frac{\alpha}{4}-\frac{\alpha-1}{2}}.
$$
\end{proposition}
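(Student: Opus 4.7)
The plan is to execute \textbf{Algorithm 3} from Section \ref{reductionsection}, since here the single random input $a_2$ occupies the position with the dominant frequency and the two deterministic-type norms are naturally placed on $a_1$ and $a_3$. The key heuristic is that in the regime $N_2\gg N_1,N_3$, the resonance function $\Phi_{k_1,k_2,k_3}$ is essentially controlled by $k_2$ (through a large first derivative in $k_1$), while the summation in $k_2$ is genuinely second-order and has to be controlled by Lemma \ref{counting2ndorder}; this is the step that dictates the final $N_{(1)}^{-\alpha/4}$ factor through a square-root bound $N_2^{1-\alpha/2}$ in the number of $k_2$-solutions.

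First I would apply Cauchy--Schwarz to the defining sum, separating the summation variables that interact with $a_2$ (which we wish to extract in $l^\infty$) from those which will support $l^2$-norms on $a_1$ and $a_3$. A natural splitting gives
\[
|\mathcal{U}_{L_1,L_2,L_3}^{N_1,N_2,N_3}|^2\leq \Big(\sum_{k_1,k_2,k_3}|a_1(k_1)|^2|a_2(k_2)|^2 S_{k_1,k_2,k_3}\Big)\cdot\sup_{k}\Big(\sum_{k_1,k_3}|a_3(k_3)|^2 S_{k_1,k_1+k_3-k,k_3}\Big).
\]
Next I would estimate the two factors using the counting lemmas of Section 2. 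For the second factor, since $N_2\gg N_1,N_3$, the derivative $|\partial_{k_1}\Phi(k_1,k_1+k_3-k,k_3)|$ is essentially $N_2^{\alpha-1}\gg 1$, so Lemma \ref{countingprinciple} yields $\sum_{k_1}S_{k_1,k_1+k_3-k,k_3}\lesssim 1$ and the whole second factor collapses to $\|a_3\|_{l^2}^2$. For the first factor, the new ingredient is the $k_2$-sum with $k_1,k_3$ fixed: here the two signs in $\partial_{k_2}\Phi$ may coincide, and only the \emph{second-order} counting Lemma \ref{counting2ndorder} gives the sharp bound $\sum_{k_2}S_{k_1,k_2,k_3}\lesssim N_2^{1-\alpha/2}$. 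Plugging this in, the first factor is dominated by $N_2^{1-\alpha/2}\|a_1\|_{l^2}^2\|a_2\|_{l^\infty}^2$.

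Combining the two bounds gives
\[
|\mathcal{U}_{L_1,L_2,L_3}^{N_1,N_2,N_3}|\lesssim N_2^{\frac{1}{2}-\frac{\alpha}{4}}\|a_1\|_{l^2}\|a_2\|_{l^\infty}\|a_3\|_{l^2},
\]
which must then be combined with the size bounds \eqref{boundCsimplified'}--\eqref{boundGsimplified'}. The worst-case choice comes from $a_1,a_3$ of type (G), giving $\|a_1\|_{l^2}\leq N_1^{-(\alpha-1)/2}$ and $\|a_3\|_{l^2}\leq N_3^{-(\alpha-1)/2}$, while $\|a_2\|_{l^\infty}\leq N_2^{-\alpha/2}L_2^{-\nu}$ for both (G) and (C). Using $N_1,N_3\leq N_2=N_{(1)}$ and ignoring the harmless $L_2^{-\nu}$ factor (which only improves the estimate), the exponents collapse cleanly: $\tfrac{1}{2}-\tfrac{\alpha}{4}-\tfrac{\alpha}{2}=-\tfrac{\alpha}{4}-\tfrac{\alpha-1}{2}$, which is the claimed bound.

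The main obstacle I anticipate is not in the counting itself but in being careful that the purely deterministic Algorithm 3 is already sharp enough: one might be tempted to run Algorithm 2 to exploit the randomness of $a_2$ more systematically, but in this regime the gain from the Wiener chaos is not needed and the cost of an extra operator-type reduction would complicate the argument. Verifying that Lemma \ref{counting2ndorder} is invoked in the correct direction (the second-order phase in $k_2$, not $k_1$ or $k_3$) is the only subtle numerological point, and the rest is bookkeeping of the ambient small factors $N_{(1)}^{\epsilon}$, $R$, and powers of $\kappa^{-0.1}$ that have been absorbed throughout the section.
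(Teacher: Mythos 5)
Your proposal is correct and follows essentially the same route as the paper: the identical Cauchy--Schwarz splitting, $\sum_{k_1}S_{k_1,k_1+k_3-k,k_3}\lesssim 1$ from the large $k_1$-derivative of the phase, the second-order counting Lemma \ref{counting2ndorder} for the $k_2$-sum giving $N_2^{1-\frac{\alpha}{2}}$, and the same final exponent bookkeeping with $\|a_2\|_{l^\infty}\lesssim N_2^{-\alpha/2}$.
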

\vspace{0.3cm}

\noi
$\bullet${\bf Case B-5:
Projective terms $\Pi_{N_1}^{\perp}\mathcal{I}\mathcal{N}_3(v_1,v_2,v_3)$ and $a_1$ is of type (G) or (C)}

This time we denote slight differently by
$$ \wt{\mathcal{U}}_{L_1,L_2,L_3}^{N_1,N_2,N_3}:=\Big(\sum_{|k|>N_1}\Big|\sum_{\substack{(k_1,k_2,k_3)\in\Gamma(k)\\
|k_j|\lesssim N_j,j=1,2,3 } } a_1(k_1)\ov{a}_2(k_2)a_3(k_3) \Big|^2 \Big)^{\frac{1}{2}}.
$$
The key point here is that the range of $k_1$ in the summation satisfies
$$ N_1-(N_2\vee N_3)\leq N_1-|k_2-k_3|<|k_1|<N_1+|k_2-k_3|<N_1+(N_2\vee N_3),
$$
hence for fixed $k_2,k_3$, the range of $k$ (or $k_1$) is at most $2|k_2-k_3|$. We have the following improved counting bound (again we ignore small powers of $N_1$ in the definition of $S_{k_1,k_2,k_3}$):
\begin{align}\label{improvedcounting} \sum_{|k|>N_1}S_{k+k_2-k_3,k_2,k_3}\lesssim \min\{\lg k_2-k_3\rg,\big(1+\frac{N_1^{2-\alpha}}{\lg k_2-k_3\rg}\big) \}.
\end{align}
Now let us first execute {\bf Algorithm 1}. Without loss of generality, we assume that $N_3\leq N_2$. By Cauchy-Schwartz, we have
\begin{align*}
|\wt{\mathcal{U}}_{L_1,L_2,L_3}^{N_1,N_2,N_3}|^2\leq &\sum_{\substack{k,k_2,k_3\\|k|>N_1 }}|a_1(k+k_2-k_3)|^2|a_2(k_2)|^2S_{k+k_2-k_3,k_2,k_3} \cdot\sup_{|k|>N_1}\sum_{k_2,k_3 } |a_3(k_3)|^2S_{k+k_2-k_3,k_2,k_3}\\
\lesssim & \|a_1\|_{l_{k_1}^{\infty}}^2\|a_3\|_{l_{k_3}^2}^2\sum_{k_2,k_3}|a_2(k_ 2)|^2\min\{\lg k_2-k_3\rg,1+\frac{N_1^{2-\alpha}}{\lg k_2-k_3\rg} \}, 
\end{align*}
where we used \eqref{improvedcounting} and the fact that $\sum_{k_2}S_{k+k_2-k_3,k_2,k_3}\lesssim 1$ in the last step. Note that $\lg k_2-k_3\rg<1+\frac{N_1^{2-\alpha}}{\lg k_2-k_3\rg}$ only if $\lg k_2-k_3\rg\lesssim N_1^{1-\frac{\alpha}{2}}$. Hence if $N_2\ll N_1^{1-\frac{\alpha}{2}}$, we can bound $|\widetilde{\mathcal{U}}_{L_1,L_2,L_3}^{N_1,N_2,N_3}|^2$ by $N_2N_3\|a_1\|_{l^{\infty}}^2\|a_2\|_{l^2}^2\|a_3\|_{l^2}^2$. When $N_2\gtrsim N_1^{1-\frac{\alpha}{2}}$, we can split the sum of $k_3$ into $\lg k_3-k_2\rg\leq N_1^{1-\frac{\alpha}{2}}$ and $\lg k_3-k_2\rg>N_1^{1-\frac{\alpha}{2}}$.The sum (over $k_2,k_3$) for the former case can be bounded by
$$ \|a_2\|_{l^2}^2\min\{N_3N_1^{1-\frac{\alpha}{2}},N_1^{2-\alpha}
\log(N_2) \},
$$
while the sum for the later case can be bounded by
$$ \|a_2\|_{l^2}^2(N_3+\min\big\{N_1^{2-\alpha}\log(N_3),N_3N_1^{1-\frac{\alpha}{2}} \big\} ).
$$
Therefore, we have
\begin{align*}
|\widetilde{\mathcal{U}}_{L_1,L_2,L_3}^{N_1,N_2,N_3}|^2\lesssim \|a_1\|_{l^{\infty}}^2\|a_2\|_{l^2}^2\|a_3\|_{l^2}^2(N_3+\min\{N_1^{2-\alpha}+N_3N_1^{1-\frac{\alpha}{2}} \} )\log(N_2).
\end{align*}

\begin{lemme}\label{Projective-1}
Assume that $N_2,N_3<N_1^{1-\delta}$. Then modulo small powers of $N_1$, we have:
\begin{align*}
&\mathrm{(1)}\quad  |\wt{\mathcal{U}}_{L_1,L_2,L_3}^{N_1,N_2,N_3}| \lesssim (N_2N_3)^{\frac{1}{2}}\|a_1\|_{l^{\infty}}\|a_2\|_{l^2}\|a_3\|_{l^2},\quad \text{ if }N_2\vee N_3\ll N_1^{1-\frac{\alpha}{2}};\\
&\mathrm{(2)}\quad |\wt{\mathcal{U}}_{L_1,L_2,L_3}^{N_1,N_2,N_3}| \lesssim \big[(N_2\wedge N_3)^{\frac{1}{2}}+N_1^{\frac{1}{2}-\frac{\alpha}{4}}\min\{(N_2\wedge N_3)^{\frac{1}{2}},N_1^{\frac{1}{2}-\frac{\alpha}{4}} \} \big] \|a_1\|_{l^{\infty}}\|a_2\|_{l^2}\|a_3\|_{l^2},\\ &\hspace{3cm}\text{ if }N_2\vee N_3\gtrsim N_1^{1-\frac{\alpha}{2}}.
\end{align*}
\end{lemme}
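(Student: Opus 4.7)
\emph{Proof plan for Lemma \ref{Projective-1}.} The key input is the improved counting estimate \eqref{improvedcounting}, which uses the projection $\Pi_{N_1}^{\perp}$: because $k = k_1 - k_2 + k_3$ with $|k_1|\sim N_1$ and $|k|>N_1$, one has $|k-k_1|\leq |k_2-k_3|$, so for fixed $k_2,k_3$ the admissible $k$ lies in an interval of length $O(|k_2-k_3|)$, giving
\[
\sum_{|k|>N_1} S_{k+k_2-k_3,k_2,k_3}\lesssim \min\bigl\{\langle k_2-k_3\rangle,\ 1+N_1^{2-\alpha}\langle k_2-k_3\rangle^{-1}\bigr\}.
\]
The overall strategy is to Cauchy--Schwarz the $k_3$ sum outside the square: one factor becomes the supremum
$ \sup_k\sum_{k_2,k_3}|a_3(k_3)|^2 S_{k+k_2-k_3,k_2,k_3}$, which is $\lesssim \|a_3\|_{l^2}^2$ by the counting bound $\sum_{k_2}S\lesssim 1$ (valid since $|\partial_{k_2}\Phi|\gtrsim N_1^{\alpha-1}$ in this high-low-low regime where $N_2\vee N_3\ll N_1$). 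After pulling $\|a_1\|_{l^{\infty}}^2$ out of the other factor, everything reduces to bounding
\[
\mathcal{I}:=\sum_{k_2,k_3}|a_2(k_2)|^2\min\bigl\{\langle k_2-k_3\rangle,\ 1+N_1^{2-\alpha}\langle k_2-k_3\rangle^{-1}\bigr\}.
\]

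For assertion (1), the hypothesis $N_2\vee N_3\ll N_1^{1-\alpha/2}$ forces $\langle k_2-k_3\rangle\leq N_2\vee N_3<N_1^{1-\alpha/2}$, so the minimum equals $\langle k_2-k_3\rangle$. Summing first in $k_3$ over a range of length $\lesssim N_3$ yields a factor of $N_3(N_2\vee N_3)\lesssim N_2N_3$, giving $\mathcal{I}\lesssim N_2N_3\|a_2\|_{l^2}^2$ and hence the stated bound upon taking square roots. For assertion (2), I split the $k_3$-sum at the threshold $|k_2-k_3|=N_1^{1-\alpha/2}$ where the two expressions in the minimum coincide. On the small-separation side $\{|k_2-k_3|\leq N_1^{1-\alpha/2}\}$, the number of $k_3$'s is $\lesssim\min\{N_3,N_1^{1-\alpha/2}\}$ and each summand is $\lesssim N_1^{1-\alpha/2}$, contributing at most $\min\{(N_2\wedge N_3)N_1^{1-\alpha/2},\ N_1^{2-\alpha}\}$ up to log factors. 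On the large-separation side, the bound $1+N_1^{2-\alpha}\langle k_2-k_3\rangle^{-1}$ produces a term of size $(N_2\wedge N_3)$ (the ``$1$'' part) plus $N_1^{2-\alpha}\log N_1$ (a harmonic sum). Adding these contributions gives $\mathcal{I}\lesssim\|a_2\|_{l^2}^2\bigl[(N_2\wedge N_3)+N_1^{1-\alpha/2}\min\{N_2\wedge N_3,\ N_1^{1-\alpha/2}\}\bigr]$ modulo logs, which is exactly the squared form of the stated estimate; taking square roots concludes.

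The main technical point is the bookkeeping in (2): one must compare the spatial support $N_2\wedge N_3$ against the modulation-driven threshold $N_1^{1-\alpha/2}$ and keep track of which wins on each subregion, which is what produces the somewhat awkward $\min$ in the advertised bound. All logarithmic and other subpolynomial losses in $N_1$ are absorbed into the ``modulo small powers of $N_1$'' qualifier, consistent with the unified factor $N_{(1)}^{\frac{\alpha}{q_0}+\frac{3}{q}+\theta+100(\kappa^{-1}+\epsilon_1+\epsilon_2)}$ already set aside in the reductions of Section \ref{reductionsection}.
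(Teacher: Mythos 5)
Your plan reproduces the paper's own proof of Lemma \ref{Projective-1} essentially verbatim: the same improved counting bound \eqref{improvedcounting} coming from $|k-k_1|\le|k_2-k_3|$, the same Cauchy--Schwarz splitting in which $\sup_{k}\sum_{k_2,k_3}|a_3(k_3)|^2S\lesssim\|a_3\|_{l^2}^2$ via $\sum_{k_2}S_{k+k_2-k_3,k_2,k_3}\lesssim 1$, and the same dichotomy of $\mathcal{I}$ at the threshold $\langle k_2-k_3\rangle\sim N_1^{1-\alpha/2}$. Two cosmetic points only: the inequality $N_3(N_2\vee N_3)\lesssim N_2N_3$ presupposes the normalization $N_3\le N_2$ (harmless — attach the counting weight to whichever of the two low slots carries the larger frequency), and on the large-separation side the harmonic piece must also be capped by $(N_2\wedge N_3)N_1^{1-\alpha/2}$ (number of terms times maximal summand), not just $N_1^{2-\alpha}\log N_1$, in order to land on the $\min$ in the advertised bound — both fixes are the same one-line counting you already use on the small-separation side.
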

Consequently, we have:
\begin{corollaire}\label{projective-2}
Assume that $N_2,N_3<N_1^{1-\delta}$. Then modulo small powers of $N_1$:
\begin{itemize}
	\item If $a_2,a_3$ are both of type (D), then 
	\begin{align}\label{numerology4}
	 |\wt{\mathcal{U}}_{L_1,L_2,L_3}^{N_1,N_2,N_3}|\lesssim N_1^{-\frac{\alpha-1}{2}-\frac{\alpha^2}{8}}
	\end{align}
	\item If $L_1\gtrsim N_2\wedge N_3$, then 
	\begin{align}\label{numerology5}
	|\wt{\mathcal{U}}_{L_1,L_2,L_3}^{N_1,N_2,N_3}|\lesssim N_1^{-(\alpha-1+\nu)(1-\frac{\alpha}{2})-(\alpha-1)}+N_1^{-\frac{3(\alpha-1)}{2}-\nu}.
		\end{align}
\end{itemize}	
In particular, when $\alpha>\alpha_0$, for sufficiently small free parameter $\sigma>0$, we have
$$ |\widetilde{\mathcal{U}}_{L_1,L_2,L_3}^{N_1,N_2,N_3}|\lesssim N_1^{-s-\delta_0}.
$$
\end{corollaire}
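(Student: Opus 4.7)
\smallskip

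\noindent\textbf{Proof plan for Corollary \ref{projective-2}.} The proof is essentially a book-keeping exercise starting from Lemma~\ref{Projective-1}, with the main work being the numerology that determines the threshold $\alpha_0$. Throughout I would write $m=N_2\wedge N_3$ and $M=N_2\vee N_3$, and ignore small losses of the form $N_1^{\epsilon}$, $\epsilon<\kappa^{-0.1}$, which are compensated by the unified prefactor from {\bf Algorithm 3}. The three ingredients to substitute are: (i) $\|a_1\|_{l^{\infty}}\leq N_1^{-\alpha/2}L_1^{-\nu}$ from \eqref{boundCsimplified'} or \eqref{boundGsimplified'}; (ii) for type~(D) entries, $\|a_j\|_{l^2}\lesssim N_j^{-s}$; (iii) for type~(G) or (C) entries, $\|a_j\|_{l^2}$ is at worst $N_j^{-(\alpha-1)/2}$ (type~(G) being the worst case).

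\smallskip

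\noindent\emph{First bullet.} Plug the bounds $\|a_2\|_{l^2}\|a_3\|_{l^2}\lesssim (mM)^{-s}$ and $\|a_1\|_{l^{\infty}}\leq N_1^{-\alpha/2}$ (dropping the harmless factor $L_1^{-\nu}\leq 1$) into Lemma~\ref{Projective-1}. In the regime $M\ll N_1^{1-\alpha/2}$, part~(1) yields $N_1^{-\alpha/2}(mM)^{1/2-s}$; since $1/2-s=\alpha/4-\sigma>0$, the worst case is $m=M=N_1^{1-\alpha/2}$, giving $N_1^{-\alpha^2/4-2\sigma(1-\alpha/2)}$. In the regime $M\gtrsim N_1^{1-\alpha/2}$, part~(2) produces three sub-contributions; the dominant one comes from the square factor $N_1^{1-\alpha/2}$ (the min being $N_1^{1/2-\alpha/4}$), yielding $N_1^{1-\alpha-s(1-\alpha/2)}$ after taking $M=N_1^{1-\alpha/2}$, which simplifies exactly to $N_1^{-(\alpha-1)/2-\alpha^2/8}$ by the identity
\begin{equation*}
1-\alpha-\tfrac{1}{2}(1-\tfrac{\alpha}{4})(2-\alpha)\;=\;-\tfrac{\alpha-1}{2}-\tfrac{\alpha^2}{8}.
\end{equation*}
Since $-\alpha^2/4\leq -(\alpha-1)/2-\alpha^2/8$ reduces to $(\alpha-2)^2\geq 0$, this last bound dominates.

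\smallskip

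\noindent\emph{Second bullet.} The assumption $L_1\gtrsim m$ gives the usable gain $L_1^{-\nu}\lesssim m^{-\nu}$. Now use the worst-case $\|a_2\|_{l^2}\|a_3\|_{l^2}\lesssim (mM)^{-(\alpha-1)/2}$ and substitute into Lemma~\ref{Projective-1}. In the regime $M\ll N_1^{1-\alpha/2}$, one obtains $m^{(2-\alpha)/2-\nu}M^{(2-\alpha)/2}N_1^{-\alpha/2}$; the optimum occurs at $m=M=N_1^{1-\alpha/2}$ and, after using the same algebraic identity as above to handle $(\alpha-1)(4-\alpha)/2=(\alpha-1)(2-\alpha/2)$, produces precisely the first term $N_1^{-(\alpha-1+\nu)(1-\alpha/2)-(\alpha-1)}$. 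The second term $N_1^{-3(\alpha-1)/2-\nu}$ arises from the $m\ll M$ corner (which is relevant when $(2-\alpha)/2-\nu>0$, the regime where we cannot afford to push $m$ up), balancing against $L_1^{-\nu}\lesssim m^{-\nu}\leq 1$; taking $M\sim N_1$ up to $N_1^{1-\delta}$ gives this second form after book-keeping.

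\smallskip

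\noindent\emph{Final assertion.} What remains is to check, for $\alpha>\alpha_0$ and $\sigma$ small, the three numerical inequalities
\begin{equation*}
\tfrac{\alpha-1}{2}+\tfrac{\alpha^2}{8}\;\geq\;s+\delta_0,\qquad (\alpha-1+\nu)(1-\tfrac{\alpha}{2})+(\alpha-1)\;\geq\;s+\delta_0,\qquad \tfrac{3(\alpha-1)}{2}+\nu\;\geq\;s+\delta_0.
\end{equation*}
The first reduces, modulo $\sigma,\delta_0$, to $\alpha^2+6\alpha-8>0$, i.e., $\alpha>-3+\sqrt{17}\approx 1.1231$, which is weaker than $\alpha>\alpha_0\approx 1.1240$. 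The third, with $\nu=7(\alpha-1)/4-\sigma$ in the relevant regime $\alpha<9/8$, reduces to $14\alpha\geq 15$, trivially satisfied. The middle one is the binding constraint: substituting $\nu=7(\alpha-1)/4$ and $s=1/2-\alpha/4$ and clearing denominators, it becomes
\begin{equation*}
7\alpha^2-31\alpha+26\;\leq\;0,
\end{equation*}
whose smaller root is exactly $\alpha_0=(31-\sqrt{233})/14$. Thus the definition of $\alpha_0$ is tuned precisely to make this inequality hold with room for the perturbation $\sigma+\delta_0$.

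\smallskip

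\noindent\emph{Main obstacle.} The serious work is not in the individual estimates but in the optimization step: for each regime one must check carefully which corner of $(m,M,L_1)\in[\tfrac12,N_1^{1-\delta}]^3$ (subject to $L_1\gtrsim m$) maximizes the resulting monomial, since the sign of the exponent $(2-\alpha)/2-\nu$ switches at $\alpha=11/9$ and the sign of $\alpha/4-\nu$ switches at $\alpha=7/6$, both within the range of interest. Once these sign analyses are handled, the final numerology collapses to the identity $(\alpha-1)(4-\alpha)/2=(\alpha-1)(2-\alpha/2)$, which is what makes the bound match the quadratic defining $\alpha_0$.
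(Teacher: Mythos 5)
Your overall route is the same as the paper's: substitute the bounds from Lemma \ref{Projective-1} together with $\|a_1\|_{l^\infty}\leq N_1^{-\alpha/2}L_1^{-\nu}$ and the worst-case $l^2$ bounds on $a_2,a_3$, optimize over the dyadic corners, and then check the numerology. The two displayed bounds \eqref{numerology4} and \eqref{numerology5} are obtained by the paper in exactly this way (the term $N_1^{-3(\alpha-1)/2-\nu}$ comes from the regime $N_2\wedge N_3>N_1^{1-\alpha/2}$, via $N_3^{3/2-\alpha-\nu}N_1^{-\alpha/2}$ with $N_3\leq N_1$, rather than from an ``$m\ll M$ corner'', but that is a cosmetic mismatch in your sketch).

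The problems are in your numerology. First, the identity you display is false as written: $1-\alpha-\tfrac12(1-\tfrac{\alpha}{4})(2-\alpha)=-\tfrac{\alpha}{4}-\tfrac{\alpha^2}{8}$, not $-\tfrac{\alpha-1}{2}-\tfrac{\alpha^2}{8}$. The correct computation is $1-\alpha-s(1-\tfrac{\alpha}{2})$ with $s(1-\tfrac{\alpha}{2})=\tfrac12(1-\tfrac{\alpha}{2})^2=\tfrac12-\tfrac{\alpha}{2}+\tfrac{\alpha^2}{8}$, which does give $-\tfrac{\alpha-1}{2}-\tfrac{\alpha^2}{8}$; presumably a typo, but you invoke this identity twice as the key algebraic step. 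Second, and more substantively, your claim that the middle inequality $(\alpha-1+\nu)(1-\tfrac{\alpha}{2})+(\alpha-1)\geq s+\delta_0$ is ``the binding constraint'' reducing to $7\alpha^2-31\alpha+26\leq 0$ is wrong. With $\nu=\tfrac{7(\alpha-1)}{4}$ and $s=\tfrac12-\tfrac{\alpha}{4}$ one gets $\tfrac{11(\alpha-1)(2-\alpha)}{8}+(\alpha-1)\geq\tfrac{2-\alpha}{4}$, i.e.\ $11\alpha^2-43\alpha+34\leq 0$, whose smaller root is $\approx 1.1005$ — a strictly weaker requirement than $\alpha>\alpha_0$. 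The quadratic $7\alpha^2-31\alpha+26$ that defines $\alpha_0$ actually comes from the condition $(\alpha-1)+2s\nu-s>0$ (see the remark after Proposition \ref{keyinduction} and its use in Lemma \ref{lem:mappingXsb}), not from this corollary at all. Within this corollary the binding constraint is your \emph{first} inequality, $\alpha\geq\sqrt{17}-3\approx 1.1231$, which is indeed below $\alpha_0\approx 1.1240$. So your final conclusion survives, but only because the true version of the middle constraint is weaker than the one you wrote down; as stated, your verification of that inequality does not go through, and the structural claim that $\alpha_0$ is ``tuned'' to this corollary is incorrect.
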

\begin{proof}
Assume that $a_2,a_3$ are both of type (D). From Lemma \ref{Projective-1}, when $N_2\vee N_3\ll N_1^{1-\frac{\alpha}{2}}$, we have $$|\wt{\mathcal{U}}_{L_1,L_2,L_3}^{N_1,N_2,N_3}|\lesssim (N_2N_3)^{\frac{1}{2}-s}N_1^{-\frac{\alpha}{2}}\leq N_1^{-\frac{\alpha}{2}}(N_2\vee N_3)^{1-2s}\ll N_1^{-\frac{\alpha^2}{4}},$$
since $s=\frac{1}{2}-\frac{\alpha}{4}+\sigma>\frac{1}{2}-\frac{\alpha}{4}$.
Now we assume without loss of generality that $N_3\leq N_2, N_2\gtrsim N_1^{1-\frac{\alpha}{2}}$. If $N_3\leq N_1^{1-\frac{\alpha}{2}}$, we have
\begin{align*} 
|\wt{\mathcal{U}}_{L_1,L_2,L_3}^{N_1,N_2,N_3}|\lesssim &N_3^{\frac{1}{2}}N_1^{\frac{1}{2}-\frac{\alpha}{4}}\cdot N_1^{-\frac{\alpha}{2}}N_2^{-s}N_3^{-s}\leq N_3^{\frac{1}{2}-s}N_1^{\frac{1}{2}(1-\frac{\alpha}{2})-s(1-\frac{\alpha}{2})}N_1^{-\frac{\alpha}{2}}\\
\leq & N_1^{(\frac{1}{2}-s)(1-\frac{\alpha}{2})+(\frac{1}{2}-s)(1-\frac{\alpha}{2})}N_1^{-\frac{\alpha}{2}}\leq N_1^{-\frac{\alpha^2}{4}+\delta_0}N_2^{-\delta_0}.
\end{align*}
When $N_3>N_1^{1-\frac{\alpha}{2}}$, we have
\begin{align*}
|\wt{\mathcal{U}}_{L_1,L_2,L_3}^{N_1,N_2,N_3}|\lesssim & (N_3^{\frac{1}{2}}+N_1^{1-\frac{\alpha}{2}} )\cdot N_1^{-\frac{\alpha}{2}}N_2^{-s}N_3^{-s}\leq (N_2^{\frac{1}{2}-2s}+N_1^{1-\frac{\alpha}{2} }N_2^{-s} )N_1^{-\frac{\alpha}{2}}\\  \leq &(N_1^{-\frac{\alpha-1}{2}-2s}+N_1^{-\frac{\alpha-1}{2}-\frac{\alpha^2}{8}} ).
\end{align*}
Note that when $\alpha>\alpha_0$, $\frac{\alpha-1}{2}+\frac{\alpha^2}{8}>s$ for very small free numerical parameter $\sigma>0$, thus we obtain the first inequality.

 Next we assume that $L_1\gtrsim N_2\wedge N_3$, from 
$\|a_1\|_{l^{\infty}}\lesssim N_1^{-\frac{\alpha}{2}}L_1^{-\nu}\lesssim N_1^{-\frac{\alpha}{2}}(N_2\wedge N_3)^{-\nu}, 
$ we have, if $N_2\vee N_3\ll N_1^{1-\frac{\alpha}{2}}$,
$$ |\wt{\mathcal{U}}_{L_1,L_2,L_3}^{N_1,N_2,N_3}|\lesssim (N_2N_3)^{\frac{1}{2}}N_1^{-\frac{\alpha}{2}}(N_2\wedge N_3)^{-\nu}\|a_2\|_{l^2}\|a_3\|_{l^2}.
$$
In the worst case, $\|a_j\|_{l^2}\lesssim N_j^{-\frac{\alpha-1}{2}}$ for $j=2,3$, we have
$$|\wt{\mathcal{U}}_{L_1,L_2,L_3}^{N_1,N_2,N_3}|\lesssim (N_2\vee N_3)^{2-\alpha-\nu}N_1^{-\frac{\alpha}{2}}<N_1^{2-\frac{3\alpha}{2}-(\alpha+\nu)(1-\frac{\alpha}{2}) }, $$
since $2-\alpha-\nu>0$.
Now we assume that $N_2\gtrsim N_1^{1-\frac{\alpha}{2}}$ and $N_3\leq N_2$. If $N_3\leq N_1^{1-\frac{\alpha}{2}}$, we have
\begin{align*}
|\wt{\mathcal{U}}_{L_1,L_2,L_3}^{N_1,N_2,N_3}|\lesssim &
N_3^{\frac{1}{2}}N_1^{\frac{1}{2}(1-\frac{\alpha}{2})}\cdot N_1^{-\frac{\alpha}{2}}N_2^{-\frac{\alpha-1}{2}}N_3^{-\nu-\frac{\alpha-1}{2}}\leq N_1^{(1-\frac{\alpha}{2}-\nu-\frac{\alpha-1}{2} )(1-\frac{\alpha}{2}) +\frac{1}{2}(1-\frac{\alpha}{2}) }\cdot N_1^{-\frac{\alpha}{2}}\\
\leq &N_1^{-(\alpha+\nu)(1-\frac{\alpha}{2})+2-\frac{3\alpha}{2}}=N_1^{-(\alpha-1+\nu)(1-\frac{\alpha}{2})-(\alpha-1)},
\end{align*}
since $1-\frac{\alpha}{2}-\nu>0$.
 When $N_3>N_1^{1-\frac{\alpha}{2}}$, we have
 \begin{align}
 |\wt{\mathcal{U}}_{L_1,L_2,L_3}^{N_1,N_2,N_3}|\lesssim &(N_3^{\frac{1}{2}}+N_1^{1-\frac{\alpha}{2}} )\cdot N_1^{-\frac{\alpha}{2}}N_2^{-\frac{\alpha-1}{2}}N_3^{-\nu-\frac{\alpha-1}{2}}\notag \\ \leq &N_3^{\frac{3}{2}-\alpha-\nu}N_1^{-\frac{\alpha}{2}}+N_1^{-(\alpha-1)}N_2^{-\frac{\alpha-1}{2}}N_3^{-\nu-\frac{\alpha-1}{2}}\notag  \\
 \lesssim & N_1^{-\nu-\frac{3(\alpha-1)}{2}}+N_1^{-(\alpha-1+\nu)(1-\frac{\alpha}{2})-(\alpha-1)}, \notag
 \end{align}
 where to the last step we used $N_3^{\frac{3}{2}-\alpha-\nu}<N_1^{\frac{3}{2}-\alpha-\nu}$.  
When $\alpha>\alpha_0$, we have $(\alpha-1+\nu)(1-\frac{\alpha}{2})+(\alpha-1)>s+\delta_0$ and $\frac{3(\alpha-1)}{2}+\nu>s+\delta_0$. This completes the proof of Lemma \ref{projective-2}.
\end{proof}
To deal with other situations, we need to execute other Algorithms.

\noi
$\bullet${\bf Subcase B-5(a): Exact one of $a_2,a_3$ is of type (G) or (C) and the other is of type (D) }

Without loss of generality, we may assume that $a_2$ is of type (G) or (C) and $a_3$ is of type (D), since in the regime $N_1\gg N_2,N_3$, the second and third positions in $\mathcal{N}_3(\cdot,\cdot,\cdot)$ are similar. By Corollary \ref{projective-2}, it suffices to consider the case $L_1\ll N_2\wedge N_3\leq N_2$. By implementing {\bf Algorithm 2}, it suffices to estimate the quantity
$$ \sum_{k,k':k\neq k'} \mathbb{E}^{\mathcal{C}}[|\sigma_{kk'}^{(3)}|^2]+\sup_{k}\mathbb{E}^{\mathcal{C}}[|\sigma_{kk}^{(3)}|^2],
$$
and then take the square root of the obtained upper bound, where $\mathcal{C}$ is the $\sigma$-algebra generated by $\mathcal{B}_{L_1\vee L_2}$,
\begin{align*}
\sigma_{kk'}^{(3)}=&\sum_{\substack{k_1,k_1',k_2,k_2',k_3\\
k_1=k+k_2-k_3\\
k_1'=k'+k_2'-k_3  }}\ov{a}_1(k_1')a_1(k_1)a_2(k_2')\ov{a}_2(k_2)S_{k_1',k_2',k_3}S_{k_1,k_2,k_3}\mathbf{1}_{|k|,|k'|>N_1}
\end{align*}
and
$$ a_j(k_j)=\sum_{|k_j^*|\sim N_j}\mathbf{1}_{|k_j-k_j^*|\leq L_j}h_{k_jk_j^*}^{(q)}\frac{g_{k_j^*}}{[k_j^*]^{\frac{\alpha}{2}}},\quad j=1,2.
$$

\begin{lemme}\label{SubcaseB5a}
	Assume that $a_1,a_2$ are both of type (G) or (C) and $L_1\ll N_2$. Then by implementing {\bf Algorithm 2}, outside a set of probability $<\mathrm{e}^{-N_{(1)}^{\theta}R }$ and modulo small powers of $N_{(1)}$:
	\begin{itemize} 
	\item If $N_2\vee N_3<N_1^{1-\frac{\alpha}{2}}$, then
	\begin{align*}
	|\wt{\mathcal{U}}_{L_1,L_2,L_3}^{N_1,N_2,N_3}|\lesssim 
	N_1^{-\frac{\alpha}{2}}N_2^{-\frac{\alpha}{2}}L_1^{-\nu}L_2^{\frac{1}{2}-\nu}(N_2\vee N_3)(N_2\wedge N_3)^{\frac{1}{4}}
\|a_3\|_{l^2}
	\end{align*}
\item If $N_2\vee N_3\geq N_1^{1-\frac{\alpha}{2}}$, then
\begin{align*}
\hspace{1.5cm}	|\wt{\mathcal{U}}_{L_1,L_2,L_3}^{N_1,N_2,N_3}|\lesssim 
	&N_1^{-\frac{\alpha}{2}}N_2^{-\frac{\alpha}{2}}L_1^{-\nu}L_2^{\frac{1}{2}-\nu}
	\big[N_2^{\frac{1}{2}}N_3^{\frac{1}{4}}+N_3^{\frac{1}{4}}N_1^{\frac{1}{2}(1-\frac{\alpha}{2}) }\min\{N_2^{\frac{1}{2}},N_1^{\frac{1}{2}(1-\frac{\alpha}{2}) } \}  \big]\|a_3\|_{l^2} \\+&N_1^{-\frac{\alpha}{2}}N_2^{-\frac{\alpha}{2}}L_1^{-\nu}L_2^{\frac{1}{2}-\nu}\cdot N_2^{\frac{1}{2}}N_1^{\frac{1}{2}(1-\frac{\alpha}{2})}\min\{N_3^{\frac{1}{4}},N_1^{\frac{1}{4}(1-\frac{\alpha}{2})} \} \|a_3\|_{l^2} .
	\end{align*}
\end{itemize}
If $a_1,a_3$ are both of type (G) or (C) and $a_2$ is of type (D) and $L_1\ll N_3$, the above estimates hold by switching $N_2$ to $N_3$, $a_3$ to $a_2$ and $L_2$ to $L_3$.
\end{lemme}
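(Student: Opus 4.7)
The strategy is to execute \textbf{Algorithm 2} as described in Section \ref{reductionsection}, pairing the two random inputs $a_1$ and $a_2$ while treating $a_3$ deterministically through its $l^2$ norm. Write
$$a_j(k_j) = \sum_{|k_j^*|\sim N_j} \mathbf{1}_{|k_j-k_j^*|\le L_j} h^{(q)}_{k_j k_j^*} \frac{g_{k_j^*}(\omega)}{[k_j^*]^{\alpha/2}}, \quad j = 1,2,$$
and expand the kernel $\sigma_{kk'}^{(3)}$ from \eqref{etakk3} (with the additional projector $\mathbf{1}_{|k|,|k'|>N_1}$). Computing $\mathbb{E}^{\mathcal{C}}[|\sigma_{kk'}^{(3)}|^2]$ introduces a second copy with primed spectral variables $m_1, m_1', m_2, m_2', m_3$, and Wick contractions reduce the four-Gaussian expectation to a sum over the pairings classified in Section \ref{section:kernel}: the ``diagonal'' pairing $(k_1^*=m_1^*, k_1'^*=m_1'^*, k_2^*=m_2^*, k_2'^*=m_2'^*)$, the ``displaced'' pairings $(k_1^*=k_1'^*, \ldots)$ and $(k_1^*=m_1'^*, \ldots)$, and various cross pairings. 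First I will discard the pairings that force $|k-k'|\le L_1+L_2$; together with the threshold $L = 10(L_1\vee L_2)$ appearing in Lemma \ref{matrixboundoffdiagonal}, these contribute only to the supremum piece $\sup_{|k-k'|<L}\mathbb{E}^{\mathcal{C}}[|\sigma_{kk'}^{(3)}|^2]$.

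The key technical input is the improved counting estimate \eqref{improvedcounting}, which is exactly where the projective structure $|k|>N_1$ pays off. After fixing $k_2,k_3$ (resp.\ $k_2', k_3'$), the sum over $k$ (resp.\ $k'$) with $|k|>N_1$ satisfies
$$\sum_{|k|>N_1} S_{k+k_2-k_3, k_2, k_3} \lesssim \min\Big\{\langle k_2-k_3\rangle,\; 1 + \frac{N_1^{2-\alpha}}{\langle k_2-k_3\rangle}\Big\},$$
which is sharper than the unrestricted counting $\sum_{k_1} S_{k_1,k_2,k_3} \lesssim 1+N_1^{2-\alpha}/\langle k_2-k_3\rangle$ in the regime $\langle k_2-k_3\rangle \lesssim N_1^{1-\alpha/2}$. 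The two cases in the statement of the lemma correspond to whether $N_2\vee N_3 \ll N_1^{1-\alpha/2}$ (so the first term in the minimum always wins) or $N_2\vee N_3 \gtrsim N_1^{1-\alpha/2}$ (in which case the sum in $k_3$ splits into the two regimes $\langle k_2-k_3\rangle \lesssim N_1^{1-\alpha/2}$ and $\langle k_2-k_3\rangle > N_1^{1-\alpha/2}$, each producing a separate factor in the final bound).

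For each pairing I will carry out the summation in the order $\sum_{m_3}\sum_{m_2,m_2'}\sum_{k_3}\sum_{k_2,k_2'}$ (for the diagonal pairing) or a permutation adapted to the pairing structure. The localization indicators $\mathbf{1}_{|k_j-k_j^*|\le L_j}$ translate into constraints $|k_2-k_2'|\le 2L_2$, $|m_2-m_2'|\le 2L_2$, $|k_1-k_1'|\le 2L_1$, etc., and it is by summing the two $L_2$ constraints with Young's convolution in $k_2, k_2'$ that one obtains a factor $L_2$ rather than $L_2^2$; taking the fourth root after applying Lemma \ref{matrixboundoffdiagonal} is what converts this into the $L_2^{1/2}$ and explains the final $L_2^{1/2-\nu}$ power. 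The $L_1^{-\nu}$ factor comes directly from the $S^{b,q}$ norm bound $\|h_{k_1k_1^*}^{(q)}\|_{l_{k_1}^\infty l_{k_1^*}^2}\le L_1^{-\nu}$, while $N_1^{-\alpha/2}$ and $N_2^{-\alpha/2}$ are the Gaussian normalisation factors $[k_j^*]^{-\alpha/2}$. Collecting the factors by taking the square root of the variance sum and multiplying by $\|a_3\|_{l^2}$ (as dictated by Algorithm 2) then yields the claimed bound.

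The main obstacle is the bookkeeping of the several pairings and the corresponding regimes, in particular ensuring that the cross-type pairings (of Case 5 / Case 6 type in Section \ref{section:kernel}) do not produce a term worse than the diagonal one. Once the counting for the diagonal pairing is established, the cross pairings are all strictly smaller: they either carry an extra $N_1^{2-\alpha}$ saving via the second-order expansion of the resonant function (when signs align, as in Lemma \ref{counting2ndorder}), or are ruled out altogether by the constraint $|k-k'|\ge L$. Finally, the symmetry between the second and third slots in the regime $N_1 \gg N_2, N_3$ allows one to transpose the roles of $a_2$ and $a_3$ and deduce the companion estimate stated at the end of the lemma simply by relabelling $(L_2, N_2)\leftrightarrow (L_3, N_3)$.
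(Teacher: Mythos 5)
Your overall architecture matches the paper's: execute \textbf{Algorithm 2} with $a_1,a_2$ as the random inputs, expand $\mathbb{E}^{\mathcal{C}}[|\sigma_{kk'}^{(3)}|^2]$ into Wick pairings, exploit the improved counting \eqref{improvedcounting} made available by the projector $\mathbf{1}_{|k|,|k'|>N_1}$, and split the $k_3$-sum according to whether $\lg k_2-k_3\rg$ is below or above $N_1^{1-\frac{\alpha}{2}}$. However, there are two genuine problems. First, your claim that ``the cross pairings are all strictly smaller'' than the diagonal one is false, and the mechanism you invoke for it (a second-order saving via Lemma \ref{counting2ndorder}) is not operative in the regime $N_1\gg N_2,N_3$. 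In the paper's computation the cross pairing $k_2^*=m_2^*$, $k_2'^*=m_2'^*$ produces the contribution $\mathcal{C}_2$ in \eqref{C2}, whose distinctive term $N_3N_1^{2-\alpha}\min\{N_2^2,N_1^{2-\alpha}\}$ strictly dominates the corresponding term of $\mathcal{C}_1$ when $N_2<N_1^{1-\frac{\alpha}{2}}\leq N_3$; its fourth root is precisely the middle term $N_3^{\frac{1}{4}}N_1^{\frac{1}{2}(1-\frac{\alpha}{2})}\min\{N_2^{\frac{1}{2}},N_1^{\frac{1}{2}(1-\frac{\alpha}{2})}\}$ in the second bullet of the lemma. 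Discarding this pairing as ``dominated'' would yield a bound missing that term. Second, your plan to push the pairings that force $|k-k'|\lesssim L_1+L_2$ into the supremum piece of Lemma \ref{matrixboundoffdiagonal} creates an obligation you never discharge: the term $L\sup_{|k-k'|<L}|\sigma_{kk'}^{(3)}|$ must still be estimated, and near the diagonal \emph{all} pairings contribute to it. The paper sidesteps this entirely by never restricting to $|k-k'|\geq L$ here: it bounds the full sum $\sum_{|k|,|k'|>N_1}\mathbb{E}[|\sigma_{kk'}^{(3)}|^2]$, collapsing the $a_1$-pairings crudely by Cauchy--Schwarz into $\|h^{(q)}_{k_1k_1^*}\|_{l_{k_1}^{\infty}l_{k_1^*}^2}^4=L_1^{-4\nu}$ so that only the two $a_2$-pairings ($\mathcal{C}_1$ and $\mathcal{C}_2$) need to be tracked, and neither of them forces $|k-k'|$ small.

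A minor but telling arithmetic slip: you say Young's convolution yields ``a factor $L_2$ rather than $L_2^2$'' and that the fourth root then gives $L_2^{1/2}$. These two statements are incompatible. The variance must carry $L_2^{2}$ (one factor from summing $\mathbf{1}_{|m_2-m_2'|\leq 2L_2}$ over $m_2'$, a second from the Schur/Young step in $(k_2,k_2')$), and it is $(L_2^{2})^{1/4}=L_2^{1/2}$ combined with $(L_2^{-4\nu})^{1/4}$ that produces the stated $L_2^{\frac{1}{2}-\nu}$. Also note that at the end you should take the \emph{fourth} root of $\sum_{k,k'}\mathbb{E}[|\sigma_{kk'}^{(3)}|^2]$ (since $\|\mathcal{G}_3\|=\|\mathcal{G}_3\mathcal{G}_3^*\|^{1/2}$ and the Hilbert--Schmidt bound on $\mathcal{G}_3\mathcal{G}_3^*$ is itself a square root), not the square root as written in your final paragraph.
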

\begin{proof}
From our assumption, $a_1(k_1'),a_1(k_1)$ are independent of $a_2(k_2), a_2(k_2')$. Noticing that
\begin{align*}
 |\sigma_{kk'}^{(3)}|^2&=\sum_{\substack{k_2,k_2',k_3\\
 m_2,m_2',m_3 }  } S_{k'+k_2'-k_3,k_2',k_3}S_{k+k_2-k_3,k_2,k_3}S_{k'+m_2'-m_3,m_2',m_3} S_{k+m_2-m_3,m_2,m_3}\\
 &\hspace{1cm}\times\ov{a}_1(k'+k_2'-k_3)a_1(k+k_2-k_3)a_1(k'+m_2'-m_3)\ov{a}_1(k+m_2-m_3)\\ &\hspace{1cm}\times a_2(k_2')\ov{a}_2(k_2)\ov{a}_2(m_2')a_2(m_2).
 \end{align*}
 By using the independence and Cauchy-Schwartz, we have
\begin{align*}
&\sum_{k,k':|k|>N_1, |k'|>N_1 }\mathbb{E}[|\sigma_{kk'}^{(3)}|^2]\\ \lesssim  &(N_1N_2)^{-2\alpha}\sum_{\substack{ k,k',k_2,k_2',k_3\\
m_2,m_2',m_3    } } S_{k'+k_2'-k_3,k_2',k_3}S_{k+k_2-k_3,k_2,k_3}S_{k'+m_2'-m_3,m_2',m_3} S_{k+m_2-m_3,m_2,m_3}\mathbf{1}_{|k|,|k'|>N_1}\\
\times &\prod_{j=1}^2\|h_{k_jk_j^*}^{(q)}\|_{l_{k_j}^{\infty}l_{k_j^*}^2 }^4
 \Big(\mathbf{1}_{|k_2-k_2'|\leq L_2,|m_2-m_2'|\leq L_2 
 }+ \mathbf{1}_{|k_2-m_2|<L_2, |k_2'-m_2'|<L_2
  }
 \Big).
\end{align*}
Denote by $\mathcal{C}_1$ the contribution from the indicator
$
 \mathbf{1}_{|k_2-k_2'|\leq L_2,
|m_2-m_2'|\leq L_2 }.
$
We first sum over $m_3$, using the fact that
$$ \sum_{m_3}S_{k+m_2-m_3,m_2,m_3}S_{k'+m_2'-m_3,m_2',m_3}\lesssim 1,
$$
and then we sum over $k,k'$ by using the inequality \eqref{improvedcounting}. This yields
\begin{align*}
\mathcal{C}_1\lesssim & (N_1N_2)^{-2\alpha}\prod_{j=1}^2\|h_{k_jk_j^*}^{(q)}\|_{l_{k_j}^{\infty}l_{k_j^*}^{2} }^4\sum_{k_2,k_2',k_3,m_2,m_2' }\mathbf{1}_{\substack{|k_2-k_2'|<L_2\\
|m_2-m_2'|<L_2 }} B(k_2,k_3)B(k_2',k_3)\\
\lesssim &(N_1N_2)^{-2\alpha}L_2\prod_{j=1}^2\|h_{k_jk_j^*}^{(q)}\|_{l_{k_j}^{\infty}l_{k_j^*}^{2} }^4\sum_{k_2,k_2',k_3,m_2 }\mathbf{1}_{|k_2-k_2'|<L_2} B(k_2,k_3)B(k_2',k_3),
\end{align*}
where
$$ B(k_2,k_3):=\min\big\{1+\frac{N_1^{2-\alpha}}{\lg k_2-k_3 \rg },\lg k_2-k_3\rg  \big\}. 
$$ 
Now by Schur's test and Cauchy-Schwartz, we have
\begin{align*}
&\sum_{k_3,m_2}\sum_{k_2,k_2'}\mathbf{1}_{|k_2-k_2'|<L_2}B(k_2,k_3)B(k_2',k_3)\\
\leq &L_2N_2\|B(k_2,k_3)\|_{l_{k_2,k_3}^2}^2
\lesssim \begin{cases}
& L_2(N_2\vee N_3)^4(N_2\wedge N_3), \text{ if } N_2\vee N_3<N_1^{1-\frac{\alpha}{2}} \\
&L_2N_2^{2}(N_3+N_1^{2-\alpha}\min\{N_3,N_1^{1-\frac{\alpha}{2}} \} ),\text{ if }N_2\vee N_3\geq N_1^{1-\frac{\alpha}{2}}.
\end{cases}
\end{align*}
Therefore,  
\begin{align}\label{C1}
\mathcal{C}_1\lesssim (N_1N_2)^{-2\alpha}L_1^{-4\nu}L_2^{2-4\nu}\cdot
\begin{cases}
&(N_2\vee N_3)^4(N_2\wedge N_3),\text{ if }N_2\vee N_3<N_1^{1-\frac{\alpha}{2}};\\
& N_2^2N_3+N_1^{2-\alpha}N_2^2\min\{N_3,N_1^{1-\frac{\alpha}{2}} \}, \text{ if } N_2\vee N_3\geq N_1^{1-\frac{\alpha}{2}}. 
\end{cases}
\end{align}
Denote by $\mathcal{C}_2$ the contribution from the indicator
$
\mathbf{1}_{|k_2-m_2|\leq L_2,
		|k_2'-m_2'|\leq L_2 }.
$
Similarly, we first sum over $m_3$ and then $k,k'$ using \eqref{improvedcounting}, we have
\begin{align*}
\mathcal{C}_2\lesssim &(N_1N_2)^{-2\alpha}\prod_{j=1}^2\|h_{k_jk_j^*}^{(q)}\|_{l_{k_j}^{\infty}l_{k_j^*}^2 }^4\sum_{k_2,k_2',k_3,m_2,m_2' } \mathbf{1}_{\substack{|k_2-m_2|<L_2\\ 
|k_2'-m_2'|<L_2} }
B(k_2,k_3)B(k_2',k_3)\\
\lesssim &
(N_1N_2)^{-2\alpha}L_2^2\prod_{j=1}^2\|h_{k_jk_j^*}^{(q)}\|_{l_{k_j}^{\infty}l_{k_j^*}^2 }^4
\sum_{k_2,k_2',k_3}B(k_2,k_3)B(k_2',k_3).
\end{align*}
For fixed $k_3$, we have
\begin{align*}
\sum_{k_2}B(k_2,k_3)\lesssim \begin{cases}
& (N_2\vee N_3)N_2,\text{ if }N_2\vee N_3<N_1^{1-\frac{\alpha}{2}}\\
& N_2+\min\{N_1^{1-\frac{\alpha}{2}}N_2, N_1^{2-\alpha} \},\text{ if } N_2\vee N_3\geq N_1^{1-\frac{\alpha}{2}}.
\end{cases}
\end{align*}
Therefore, modulo small powers,
\begin{align}\label{C2} 
\mathcal{C}_2\lesssim &(N_1N_2)^{-2\alpha}L_1^{-4\nu}L_2^{2-4\nu}\cdot
\begin{cases}
& (N_2\vee N_3)^4(N_2\wedge N_3),\text{ if } N_2\vee N_3<N_1^{1-\frac{\alpha}{2}}\\
& N_2^2N_3+N_3N_1^{2-\alpha}\min\{N_2^2,N_1^{2-\alpha}\},\text{ if }N_2\vee N_3\geq N_1^{1-\frac{\alpha}{2}}.
\end{cases}
\end{align}
 Implementing {\bf Algorithm 2}, the proof of Lemma \ref{SubcaseB5a} is complete.
\end{proof}

\begin{corollaire}\label{Cor5Ba}
	Assume that $N_1\gg N_2,N_3$ and $a_1,a_2$ are of type (G) or (C), $a_3$ is of type (D). Assume that $L_1\ll N_2$, then outside a set of probability $<\mathrm{e}^{-N_{(1)}^{\theta}R }$ and modulo a small power $N_1^{\epsilon},\epsilon<\kappa^{-0.1}$, we have
	\begin{align} |\wt{\mathcal{U}}_{L_1,L_2,L_3}^{N_1,N_2,N_3}|\lesssim &N_1^{-s-2\delta_0}.
 	\end{align}
	The same estimate holds if we switch $a_2$ and $a_3$.
\end{corollaire}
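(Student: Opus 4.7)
The plan is to dichotomize according to the size of $L_1$ relative to $N_2\wedge N_3$, and in each regime either invoke an earlier bound or plug the relevant norms of $a_3$ into Lemma \ref{SubcaseB5a} and simplify. Since $a_3$ is of type (D) with characterized frequency $N_3$, the only property of $a_3$ we use is $\|a_3\|_{l^2}\lesssim N_3^{-s}$, with $s=\tfrac{1}{2}-\tfrac{\alpha}{4}+\sigma$. The entire argument should be insensitive to small powers of $N_1$, which will be absorbed into the unified prefactor $N_1^{100(\kappa^{-0.1}+b_1-\frac{1}{2}+\frac{1}{q}+\theta+\epsilon_1+\epsilon_2)}$.

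\textbf{Regime 1: $L_1\gtrsim N_2\wedge N_3$.} Here we apply Corollary \ref{projective-2} (second bullet), yielding
\[
|\wt{\mathcal{U}}_{L_1,L_2,L_3}^{N_1,N_2,N_3}|\lesssim N_1^{-(\alpha-1+\nu)(1-\frac{\alpha}{2})-(\alpha-1)}+N_1^{-\frac{3(\alpha-1)}{2}-\nu}.
\]
For the choice of numerical constants in Remark \ref{numerical} and $\alpha>\alpha_0$, both exponents exceed $s+2\delta_0$: for the second term this is exactly the reservoir inequality $\tfrac{3(\alpha-1)}{2}+\nu>s$ built into $\nu<\tfrac{7(\alpha-1)}{4}$; for the first term we use $(\alpha-1+\nu)(1-\tfrac{\alpha}{2})+(\alpha-1)>s$, which follows from $(\alpha-1)+2s\nu-s>0$ (verified at $\alpha>\alpha_0$) after bounding $1-\tfrac{\alpha}{2}\geq 2s$.

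\textbf{Regime 2: $L_1\ll N_2\wedge N_3$.} We plug $\|a_3\|_{l^2}\lesssim N_3^{-s}$ into Lemma \ref{SubcaseB5a}. In the subregime $N_2\vee N_3<N_1^{1-\alpha/2}$, the bound is
\[
N_1^{-\frac{\alpha}{2}}N_2^{-\frac{\alpha}{2}}L_1^{-\nu}L_2^{\frac{1}{2}-\nu}(N_2\vee N_3)(N_2\wedge N_3)^{\frac{1}{4}}N_3^{-s}.
\]
Using $L_1,L_2\geq \tfrac12$, $L_2^{\frac12-\nu}\leq (N_2\vee N_3)^{\frac12-\nu}$ (since $L_2\leq N_2^{1-\delta}$) and $L_1^{-\nu}\leq 1$, one reduces to a power of $N_2\vee N_3$ and $N_3$ only, which by the trivial bound $(N_2\vee N_3)^{\beta}\leq N_1^{\beta(1-\alpha/2)}$ for $\beta>0$ (and similarly for $N_3$) converts into a negative power of $N_1$ strictly smaller than $-s-2\delta_0$, again via $(\alpha-1)+2s\nu>s$. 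In the subregime $N_2\vee N_3\geq N_1^{1-\alpha/2}$, the two summands in Lemma \ref{SubcaseB5a} are treated the same way: each is of the form $N_1^{-\alpha/2}N_2^{-\alpha/2}L_1^{-\nu}L_2^{\frac12-\nu}\cdot X\cdot N_3^{-s}$ where $X$ is dominated by $N_2^{1/2}N_3^{1/4}+N_3^{1/4}N_1^{(1-\alpha/2)}+N_2^{1/2}N_1^{(1-\alpha/2)/2}\min\{N_3^{1/4},N_1^{(1-\alpha/2)/4}\}$. In each case, using $N_2\leq N_1^{1-\delta}$ to trade factors of $N_2$ against powers of $N_1$ and using $L_2\leq N_2^{1-\delta}$, one verifies that the total exponent of $N_1$ is at most $-s-2\delta_0$ whenever $\sigma>0$ is sufficiently small and $\alpha>\alpha_0$.

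The expected main obstacle is the boundary subcase $N_2\sim N_3\sim N_1^{1-\alpha/2}$ with $L_2$ close to its maximal size $N_2^{1-\delta}$: here all of the elementary loss factors in Lemma \ref{SubcaseB5a} simultaneously reach their extremes, and the only cushion comes from the defining inequalities of Remark \ref{numerical}, namely $(\alpha-1)+2s\nu-s>0$ and $\nu<\min\{s,\tfrac{7(\alpha-1)}{4}\}-100(\epsilon_1+\epsilon_2)$. Verifying this boundary case numerically against $\alpha_0=\tfrac{31-\sqrt{233}}{14}$ is what pins down the threshold, and up to choosing the free parameter $\sigma>0$ sufficiently small, it delivers the required margin $2\delta_0$.
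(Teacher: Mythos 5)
Your Regime 1 and the overall reduction (dispose of $L_1\gtrsim N_2\wedge N_3$ via Corollary \ref{projective-2}, then feed $\|a_3\|_{l^2}\lesssim N_3^{-s}$ into Lemma \ref{SubcaseB5a}) match the paper. The gap is in Regime 2: you close the $L_2$-dependence by the crude bound $L_2^{\frac12-\nu}\le N_2^{(1-\delta)(\frac12-\nu)}$ and claim the resulting exponent of $N_1$ is $\le -s-2\delta_0$ "via $(\alpha-1)+2s\nu>s$". This is false near $\alpha_0$. Concretely, take the second summand of the second bullet of Lemma \ref{SubcaseB5a} (the paper's term $\mathrm{II}= N_1^{-\frac{\alpha}{2}}N_2^{-\frac{\alpha}{2}}L_2^{\frac12-\nu}N_3^{\frac14-s}N_1^{1-\frac{\alpha}{2}}$) in the configuration $N_2\sim N_1^{1-\frac{\alpha}{2}}$, $N_3\sim N_1^{1-\delta}$, $L_2\sim N_2^{1-\delta}$, $L_1=\frac12$. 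With $\alpha=\alpha_0\approx1.124$ one has $s\approx0.219$, $\nu\approx0.217$, and your substitution gives the exponent $(1-\alpha)+(\tfrac14-s)+(1-\tfrac{\alpha}{2})(\tfrac12-\nu-\tfrac{\alpha}{2})\approx-0.124+0.031-0.122=-0.215$, which is strictly larger than $-s\approx-0.219$; the deficit $N_1^{0.004}$ cannot be absorbed by $\sigma$, $\delta$, or $\delta_0$, all of which are arbitrarily small.

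What rescues this case in the paper is not the inequalities of Remark \ref{numerical} alone but an interpolation between two bounds with opposite monotonicity in $L_2$: the random-averaging bound of Lemma \ref{SubcaseB5a}, which grows like $L_2^{\frac12-\nu}$, and the deterministic bound of Lemma \ref{Projective-1}, which (through $\|a_2\|_{l^\infty}\lesssim N_2^{-\frac{\alpha}{2}}L_2^{-\nu}$) decays like $L_2^{-\nu}$. One fixes a threshold power of $N_1$ for $L_2^{\frac12-\nu}$; below it the first bound is conclusive, and above it the constraint converts $L_2^{-\nu}$ in the second bound into an extra negative power $N_1^{-\frac{2\nu}{1-2\nu}[\cdots]}$. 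Optimizing the threshold is precisely what yields the admissible ranges $\alpha>1.069$, $1.0698$, $1.0724$ quoted in the paper's proof, all of which lie below $\alpha_0$, whereas the single-bound argument you propose does not even reach $\alpha_0$. You need to restore this case analysis on the size of $L_2$ (and the corresponding application of Lemma \ref{Projective-1} in the large-$L_2$ branch) in each of the three subregimes of Regime 2.
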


\begin{proof}
The proof is just a numerical computation.
Assume that $a_2$ is of type (G) or (C). 
First we assume that $N_2\vee N_3<N_1^{1-\frac{\alpha}{2}}$. Then by (i) of Lemma \ref{SubcaseB5a}, if $N_3\leq N_2<N_1^{1-\frac{\alpha}{2}}$,
\begin{align}\label{sL2} |\wt{\mathcal{U}}_{L_1,L_2,L_3}^{N_1,N_2,N_3}|\lesssim  &(N_1N_2)^{-\frac{\alpha}{2}}N_3^{-s} L_2^{\frac{1}{2}-\nu}\cdot N_2N_3^{\frac{1}{4}}<N_1^{-\frac{\alpha}{2}}L_2^{\frac{1}{2}-\nu}N_2^{\frac{5}{4}-s-\frac{\alpha}{2}}<L_2^{\frac{1}{2}-\nu}N_1^{-\frac{\alpha}{2}+(\frac{5}{4}-s-\frac{\alpha}{2})(1-\frac{\alpha}{2}) }\notag \\
\lesssim & N_1^{-s}\cdot L_2^{\frac{1}{2}-\nu}N_1^{-\frac{(1-s)\alpha}{2}+(\frac{5}{4}-\frac{\alpha}{2})(1-\frac{\alpha}{2}) }.
\end{align}
 Note that if $N_2\leq N_3<N_1^{1-\frac{\alpha}{2}}$, we have the same upper bound. When $L_2^{\frac{1}{2}-\nu}<N_1^{\frac{(1-s)\alpha}{2}-(\frac{5}{4}-\frac{\alpha}{2})(1-\frac{\alpha}{2})-2\delta_0 }$, this bound is conclusive. When $L_2^{\frac{1}{2}-\nu}\geq N_1^{\frac{(1-s)\alpha}{2}-(\frac{5}{4}-\frac{\alpha}{2})(1-\frac{\alpha}{2})-2\delta_0}$, we alternatively apply (1) of Lemma \ref{Projective-1} to get
\begin{align}\label{bL2} 
|\wt{\mathcal{U}}_{L_1,L_2,L_3}^{N_1,N_2,N_3}|\lesssim &N_1^{-\frac{\alpha}{2}}N_2^{\frac{1}{2}-(\alpha-1)}L_2^{-\nu}N_3^{\frac{1}{2}-s}\leq N_1^{-\frac{\alpha}{2}}L_2^{-\nu}(N_2\vee N_3)^{2-\alpha-s}\notag \\<&L_2^{-\nu}N_1^{-\frac{\alpha}{2}+(2-\alpha-s)(1-\frac{\alpha}{2})}\leq N_1^{-\frac{\alpha}{2}+(2-\alpha-s)(1-\frac{\alpha}{2}) }\cdot N_1^{-\frac{2\nu}{1-2\nu}\cdot \big[\frac{(1-s)\alpha}{2}-(\frac{5}{4}-\frac{\alpha}{2})(1-\frac{\alpha}{2})-2\delta_0 \big] }.
\end{align}
By numerical computation, when $\alpha>1.069 (<\alpha_0)$, we are able to choose sufficiently small $\sigma>0$, such that $|\wt{\mathcal{U}}_{L_1,L_2,L_3}^{N_1,N_2,N_3}|\lesssim N_1^{-s-2\delta_0}$, which is conclusive. 

Next we assume that $N_2\vee N_3\geq N_1^{1-\frac{\alpha}{2}}$. First we deal with the case $N_2,N_3\geq N_1^{1-\frac{\alpha}{2}}$. By (ii) of Lemma \ref{SubcaseB5a}, we have
\begin{align*}
 |\wt{\mathcal{U}}_{L_1,L_2,L_3}^{N_1,N_2,N_3}|\lesssim &\underbrace{N_1^{-\frac{\alpha}{2}}N_2^{-\frac{\alpha}{2}}L_2^{\frac{1}{2}-\nu}N_3^{\frac{1}{4}-s}N_2^{\frac{1}{2}}}_{\mathrm{I}}+
 \underbrace{N_1^{-\frac{\alpha}{2}}N_2^{-\frac{\alpha}{2}}L_2^{\frac{1}{2}-\nu}N_3^{\frac{1}{4}-s}N_1^{1-\frac{\alpha}{2}}}_{\mathrm{II}}\\
 +&\underbrace{N_1^{-\frac{\alpha}{2}}N_2^{-\frac{\alpha}{2}}L_2^{\frac{1}{2}-\nu}N_2^{\frac{1}{2}}N_1^{\frac{3}{4}(1-\frac{\alpha}{2})}N_3^{-s}  }_{\mathrm{III}}.
\end{align*}
Using $L_2\ll N_2$, we have
\begin{align*}
\mathrm{I}\leq &N_1^{-\frac{\alpha}{2}}N_2^{1-\frac{\alpha}{2}-\nu }N_3^{\frac{1}{4}-s}\leq N_1^{-\frac{\alpha}{2}+1-\frac{\alpha}{2}-\nu+\frac{1}{4}-s}<N_1^{-s-2\delta_0},
\end{align*}
for sufficiently small $\sigma>0$, since $\alpha+\min\{\frac{1}{2}-\frac{\alpha}{4},\frac{7(\alpha-1)}{4} \}>\frac{5}{4}$.
Next,
\begin{align*}
\mathrm{II}\leq &N_1^{-(\alpha-1)}N_2^{-\frac{\alpha}{2}}L_2^{\frac{1}{2}-\nu}N_1^{\frac{1}{4}-(\frac{1}{2}-\frac{\alpha}{4}) }=N_1^{-\frac{3(\alpha-1)}{4}}N_2^{-\frac{\alpha}{2}}L_2^{\frac{1}{2}-\nu}<N_1^{-\frac{\alpha}{2}(1-\frac{\alpha}{2})-\frac{3(\alpha-1)}{4}}L_2^{\frac{1}{2}-\nu}.
\end{align*}
\begin{align*}
\mathrm{III}\leq N_1^{-\frac{\alpha}{2}+\frac{3}{4}(1-\frac{\alpha}{2})}L_2^{\frac{1}{2}-\nu}N_2^{-\frac{\alpha-1}{2}}N_3^{-s}\leq N_1^{-(\alpha-1)-\frac{1+\alpha}{4}(1-\frac{\alpha}{2})}L_2^{\frac{1}{2}-\nu}.
\end{align*}
Since $N_1^{-(\alpha-1)-\frac{1+\alpha}{4}(1-\frac{\alpha}{2})}<N_1^{-\frac{\alpha}{2}(1-\frac{\alpha}{2})-\frac{3(\alpha-1)}{4} },$ we have
\begin{align}\label{sL2'} \mathrm{I}+\mathrm{II}+\mathrm{III}\lesssim N_1^{-s-2\delta_0}+L_2^{\frac{1}{2}-\nu}N_1^{-s}N_1^{s-\frac{\alpha}{2}(1-\frac{\alpha}{2})-\frac{3(\alpha-1)}{4}}.
\end{align}
When $L_2^{\frac{1}{2}-\nu}<N_1^{\frac{\alpha}{2}(1-\frac{\alpha}{2})+\frac{3(\alpha-1)}{4}-s-2\delta_0}$, the upper bound \eqref{sL2'} is conclusive. When $L_2^{\frac{1}{2}-\nu}\geq N_1^{\frac{\alpha}{2}(1-\frac{\alpha}{2})+\frac{3(\alpha-1)}{4}-s-2\delta_0},$ we alternatively apply (2) of Lemma \ref{Projective-1} to obtain
\begin{align}\label{bL2'} 
|\wt{\mathcal{U}}_{L_1,L_2,L_3}^{N_1,N_2,N_3}|\lesssim & (N_2^{\frac{1}{2}}+N_1^{1-\frac{\alpha}{2}})N_1^{-\frac{\alpha}{2}}N_2^{-(\alpha-1)}L_2^{-\nu}N_3^{-s}\notag \\
\lesssim & N_1^{-\frac{\alpha}{2}}N_2^{\frac{3}{2}-\alpha}N_3^{-s}L_2^{-\nu}+N_1^{-(\alpha-1)}N_2^{-(\alpha-1)}N_3^{-s}L_2^{-\nu}\notag \\
\leq & N_1^{-(\alpha-1)(2-\frac{\alpha}{2})-s(1-\frac{\alpha}{2})}L_2^{-\nu}<N_1^{-(\alpha-1)(2-\frac{\alpha}{2})-s(1-\frac{\alpha}{2})-\frac{2\nu}{1-2\nu}\cdot \big[\frac{\alpha}{2}(1-\frac{\alpha}{2})+\frac{3(\alpha-1)}{4}-s-2\delta_0\big] }.
\end{align}
By numerical computation, we are able to choose sufficiently small $\sigma>0$, such that the above upper bound can be bounded by $N_1^{-s-2\delta_0}$, provided that $\alpha>1.0698 (<\alpha_0)$.
Thus
$$\mathrm{I}+\mathrm{II}+\mathrm{III}\lesssim 
N_1^{-s-2\delta_0}.
$$ 
Finally we assume that $N_2\vee N_3\geq N_1^{1-\frac{\alpha}{2}}>N_2\wedge N_3$.
 By using (2) of Lemma \ref{SubcaseB5a}, we have
\begin{align*}
|\wt{\mathcal{U}}_{L_1,L_2,L_3}^{N_1,N_2,N_3}|\lesssim & N_1^{-\frac{\alpha}{2}}N_2^{-\frac{\alpha}{2}}L_2^{\frac{1}{2}-\nu}N_3^{\frac{1}{4}-s}\cdot N_2^{\frac{1}{2}}N_1^{\frac{1}{2}(1-\frac{\alpha}{2})} ,\text{ if }N_3<N_1^{1-\frac{\alpha}{2}}\leq N_2,
\end{align*}
and
\begin{align*}
|\wt{\mathcal{U}}_{L_1,L_2,L_3}^{N_1,N_2,N_3}|\lesssim & N_1^{-\frac{\alpha}{2}}N_2^{-\frac{\alpha-1}{2}}L_2^{\frac{1}{2}-\nu}N_3^{-s}\cdot N_3^{\frac{1}{4}}N_1^{\frac{1}{2}(1-\frac{\alpha}{2})} ,\text{ if }N_2<N_1^{1-\frac{\alpha}{2}}\leq N_3.
\end{align*}
If $N_2\wedge N_3<N_1^{1-\frac{\alpha}{2}}\leq N_2\vee N_3$, we have
\begin{align}\label{sL2''}
|\wt{\mathcal{U}}_{L_1,L_2,L_3}^{N_1,N_2,N_3}|\lesssim & N_1^{-\frac{\alpha}{2}+\frac{1}{2}(1-\frac{\alpha}{2})}N_1^{\frac{1}{4}-s}L_2^{\frac{1}{2}-\nu}\leq N_1^{-s }\cdot N_1^{-\frac{3(\alpha-1)}{4}}L_2^{\frac{1}{2}-\nu}.
\end{align}
When $L_2^{\frac{1}{2}-\nu}<N_1^{\frac{3(\alpha-1)}{4}-2\delta_0}$, this bound is conclusive. When $L_2^{\frac{1}{2}-\nu}\geq N_1^{\frac{3(\alpha-1)}{4}-2\delta_0}$, we can alternatively apply (2) of Lemma \ref{Projective-1} to obtain
\begin{align*}
|\wt{\mathcal{U}}_{L_1,L_2,L_3}^{N_1,N_2,N_3}|\lesssim & (N_2\wedge N_3)^{\frac{1}{2}}N_1^{\frac{1}{2}-\frac{\alpha}{4}}\cdot N_1^{-\frac{\alpha}{2}}N_2^{-(\alpha-1)}L_2^{-\nu}N_3^{-s}\\
\leq & N_1^{\frac{1}{2}-\frac{3\alpha}{4}}(N_2\vee N_3)^{-(\alpha-1)}(N_2\wedge N_3)^{\frac{1}{2}-s}L_2^{-\nu},
\end{align*}
since $\alpha<\frac{6}{5}$ and $\alpha-1<s$.
Thus
\begin{align*} |\wt{\mathcal{U}}_{L_1,L_2,L_3}^{N_1,N_2,N_3}|\lesssim N_1^{\frac{1}{2}-\frac{3\alpha}{4}+(\frac{1}{2}-s-(\alpha-1))(1-\frac{\alpha}{2}) -\frac{2\nu}{1-2\nu}\cdot \big[\frac{3(\alpha-1)}{4}-2\delta_0\big] }.
\end{align*}
By numerical computation, when $\alpha>1.0724 (<\alpha_0)$, we are able to choose sufficiently small $\sigma>0$, such that 
$$ |\wt{\mathcal{U}}_{L_1,L_2,L_3}^{N_1,N_2,N_3}|\lesssim N_1^{-s-2\delta_0}.
$$
This completes the proof of Corollary \ref{Cor5Ba}.
\end{proof}

\noi
$\bullet${\bf Subcase B-5(b):  $a_2,a_3$ are both of type (G) or (C) and $L_1\ll N_2\wedge N_3$ }

First we assume that $N_3\gg N_2$ and $L_3\gtrsim N_2$ (similar for the case $N_2\gg N_3$ and $L_2\gtrsim N_3$). In this case, we cannot execute {\bf Algorithm 1} since $h_{k_2k_2^*}^{(q)}$ are not independent of $g_{k_3^*}$. Instead, we apply Lemma \ref{Projective-1}. When $N_3<N_1^{1-\frac{\alpha}{2}}$, we have
\begin{align*}
|\wt{\mathcal{U}}_{L_1,L_2,L_3}^{N_1,N_2,N_3}|\lesssim & (N_2N_3)^{\frac{1}{2}}N_1^{-\frac{\alpha}{2}}N_2^{-\frac{\alpha-1}{2}}N_3^{-(\alpha-1)}L_3^{-\nu}\lesssim N_1^{-\frac{\alpha}{2}}N_2^{1-\frac{\alpha}{2}-\nu}N_3^{\frac{3}{2}-\alpha}\notag\\
\leq & N_3^{\frac{5}{2}-\frac{3\alpha}{2}-\nu}N_1^{-\frac{\alpha}{2}}<N_1^{-\frac{\alpha}{2}+(\frac{5-3\alpha}{2}-\nu)(1-\frac{\alpha}{2})}.
\end{align*}
When $N_3\gg N_2\geq N_1^{1-\frac{\alpha}{2}}$, we have
\begin{align*}
|\wt{\mathcal{U}}_{L_1,L_2,L_3}^{N_1,N_2,N_3}|\lesssim &(N_2^{\frac{1}{2}}+N_1^{1-\frac{\alpha}{2}} )\cdot N_1^{-\frac{\alpha}{2}}N_2^{-\frac{\alpha-1}{2}}N_3^{-(\alpha-1)}L_3^{-\nu}\notag \\
\leq &N_1^{-(2-\frac{\alpha}{2})(\alpha-1)-\nu}+N_1^{-\frac{\alpha}{2}+(\frac{5-3\alpha}{2}-\nu)(1-\frac{\alpha}{2})}.
\end{align*}
When $N_3\geq N_1^{1-\frac{\alpha}{2}}>N_2$, we have
\begin{align*}
|\wt{\mathcal{U}}_{L_1,L_2,L_3}^{N_1,N_2,N_3}|\lesssim & N_1^{\frac{1}{2}(1-\frac{\alpha}{2})}N_2^{\frac{1}{2}}\cdot N_1^{-\frac{\alpha}{2}}N_2^{-\frac{\alpha-1}{2}}N_3^{-(\alpha-1)}L_3^{-\nu}\notag \\
\lesssim & N_1^{-\frac{\alpha-1}{2}-\frac{\alpha}{4}}N_2^{1-\frac{\alpha}{2}-\nu}N_3^{-(\alpha-1)}\leq N_1^{-\frac{\alpha}{2}+(\frac{5-3\alpha}{2}-\nu)(1-\frac{\alpha}{2})}.
\end{align*}
Therefore,
\begin{align}\label{numerology9}
|\wt{\mathcal{U}}_{L_1,L_2,L_3}^{N_1,N_2,N_3}|\lesssim N_1^{-\frac{\alpha}{2}+(\frac{5-3\alpha}{2}-\nu)(1-\frac{\alpha}{2})}+
N_1^{-(2-\frac{\alpha}{2})(\alpha-1)-\nu}.
\end{align}
By numerical computation, when $\alpha>1.0918 (<\alpha_0)$, we are able to choose sufficiently small $\sigma>0$, small enough, such that the right side of \eqref{numerology9} is bounded by $N_1^{-s-2\delta_0}$.

Now we assume that $a_2,a_3$ are both of type (G) or (C) and $L_1\ll N_2\wedge N_3$, $L_2\vee L_3\ll N_2\wedge N_3$. In this case, we execute {\bf Algorithm 1} and the goal is to estimate
\begin{align*}
\mathbb{E}^{\mathcal{C}}[|\wt{\mathcal{U}}_{L_1,L_2,L_3}^{N_1,N_2,N_3}|^2]=&\sum_{|k|>N_1}\sum_{\substack{(k_1,k_2,k_3)\in\Gamma(k)\\
(m_1,m_2,m_3)\in\Gamma(k) } }\mathbb{E}^{\mathcal{C}}[a_1(k_1)\ov{a}_1(m_1)]\cdot \mathbb{E}^{\mathcal{C}}[\ov{a}_2(k_2)a_2(m_2)a_3(k_3)\ov{a}_3(m_3) ],
\end{align*}
where $\mathcal{C}=\mathcal{B}_{\leq \max\{L_1,L_2,L_3\}}$. Here we used the fact that $a_1(k_1),a_1(m_1)$ are independent of $a_j(k_j),a_j(m_j)$ for $j=2,3$ since $N_1\gg N_2,N_3$ and $L_1\ll N_2\wedge N_3$. Using the independence and Cauchy-Schwartz, we have
\begin{align*}
\mathbb{E}^{\mathcal{C}}[|\wt{\mathcal{U}}_{L_1,L_2,L_3}^{N_1,N_2,N_3}|^2]\lesssim & (N_1N_2N_3)^{-\alpha}\prod_{j=1}^3\|h_{k_jk_j^*}^{(q)}\|_{l_{k_j}^{\infty}l_{k_j^*}^2}^2\\ \times &\!\!\!\!\!\!\!\!\sum_{\substack{k,k_2,k_3,m_2,m_3\\|k|>N_1}}  S_{k+k_2-k_3,k_2,k_3}S_{k+m_2-m_3,m_2,m_3} \Big(\mathbf{1}_{\substack{|k_2-m_2|<L_2\\
|k_3-m_3|<L_3} }+
\mathbf{1}_{\substack{|k_2-k_3|<L_2\vee L_3\\
		|m_2-m_3|<L_2\vee L_3 } } \Big).
\end{align*}
To sum the second line of the right side, we first sum over $m_3$ by using $\sum_{m_3}S_{k+m_2-m_3,m_2,m_3}\lesssim 1$ and then sum over $|k|>N_1$ by using \eqref{improvedcounting}. This procedure yields
\begin{align*}
\mathbb{E}^{\mathcal{C}}[|\wt{\mathcal{U}}_{L_1,L_2,L_3}^{N_1,N_2,N_3}|^2]\lesssim &(N_1N_2N_3)^{-\alpha}\prod_{j=1}^3\|h_{k_jk_j^*}^{(q)}\|_{l_{k_j}^{\infty}l_{k_j^*}^2}^2 \sum_{m_2,k_2,k_3}\!\!\!(\mathbf{1}_{|k_2-m_2|<L_2}+\mathbf{1}_{|k_2-k_3|<L_2\vee L_3})B(k_2,k_3),
\end{align*}
where
$$ B(k_2,k_3)=\min\{\lg k_2-k_3\rg,1+\frac{N_1^{2-\alpha}}{\lg k_2-k_3\rg} \}.
$$
To estimate the contribution from $\mathbf{1}_{|k_2-m_2|<L_2}$, we recall that
\begin{align*}
 \sum_{k_2,k_3}B(k_2,k_3) \lesssim
&\begin{cases}
& (N_2\vee N_3)^2(N_2\wedge N_3),\text{ if }N_2\vee N_3<N_1^{1-\frac{\alpha}{2}}\\
& N_2N_3+(N_2\vee N_3)\min\{N_1^{2-\alpha},(N_2\wedge N_3)N_1^{1-\frac{\alpha}{2}} \},\text{ if }N_2\vee N_3\geq N_1^{1-\frac{\alpha}{2}}.
\end{cases}
\end{align*}
To estimate the contribution from $\mathbf{1}_{|k_2-k_3|<L_2\vee L_3}$, we note that if $L_2\vee L_3\leq N_1^{1-\frac{\alpha}{2}}$,
$$ \sum_{k_2,k_3}\mathbf{1}_{|k_2-k_3|<L_2\vee L_3}B(k_2,k_3)\lesssim (L_2\vee L_3)^2(N_2\wedge N_3),
$$
and if $L_2\vee L_3>N_1^{1-\frac{\alpha}{2}}$,
\begin{align*}
 &\sum_{k_2,k_3}\mathbf{1}_{|k_2-k_3|<L_2\vee L_3}B(k_2,k_3)=\sum_{k_2,k_3 } \mathbf{1}_{|k_2-k_3|\leq N_1^{1-\frac{\alpha}{2}}}B(k_2,k_3)+\sum_{k_2,k_3}\mathbf{1}_{N_1^{1-\frac{\alpha}{2}}<|k_2-k_3|<L_2\vee L_3} B(k_2,k_3)\\
\lesssim &N_1^{1-\frac{\alpha}{2}}\min\{(N_2\wedge N_3)N_1^{1-\frac{\alpha}{2}},N_2N_3 \}
+(L_2\vee L_3)(N_2\wedge N_3)+(N_2\wedge N_3)N_1^{2-\alpha}.
\end{align*}
In what follows, we may assume that $N_2\leq N_3$. Therefore, modulo possible small powers of $N_1$, we obtain that
$$
\mathbb{E}^{\mathcal{C}}[|\wt{\mathcal{U}}_{L_1,L_2,L_3}^{N_1,N_2,N_3}|^2] \lesssim \mathrm{I}+\mathrm{II},$$
where
\begin{align*}
 \mathrm{I}=&(N_1N_2N_3)^{-\alpha}(L_1L_2L_3)^{-2\nu}(L_2\vee L_3)\cdot\!\!\!
\begin{cases}
& \!\!\!\!\!(N_2\vee N_3)^2(N_2\wedge N_3),\text{ if } N_3<N_1^{1-\frac{\alpha}{2}}\\
&\!\!\!\!\! N_2N_3+N_3\min\{N_1^{2-\alpha},N_2N_1^{1-\frac{\alpha}{2}} \},\text{ if } N_3\geq N_1^{1-\frac{\alpha}{2}},
\end{cases}
\end{align*}
and
\begin{align*}
\mathrm{II}=(N_1N_2N_3)^{-\alpha}(L_1L_2L_3)^{-2\nu}(L_2\vee L_3)^2N_2N_3,
\end{align*}
if $L_2\vee L_3\leq N_1^{1-\frac{\alpha}{2}}$, while
\begin{align*}
\mathrm{II}=&(N_1N_2N_3)^{-\alpha}(L_1L_2L_3)^{-2\nu}N_2\big[N_1^{1-\frac{\alpha}{2}}\min\{N_2N_1^{1-\frac{\alpha}{2}},N_2N_3 \}
+(L_2\vee L_3)N_2+N_2N_1^{2-\alpha}\big],
\end{align*}
if $L_2\vee L_3>N_1^{1-\frac{\alpha}{2}}$.

Let us first estimate $\mathrm{I}$:
If $ N_3<N_1^{1-\frac{\alpha}{2}}$, using our hypothesis $L_2\vee L_3\ll N_2$, we can bound I by
\begin{align*}
\mathrm{I}\lesssim& N_1^{-\alpha}L_2^{-2\nu}L_3^{-2\nu}(L_2\vee L_3) N_3^{2-\alpha}N_2^{1-\alpha} \notag  
\lesssim  N_1^{-\alpha}N_2^{2-\alpha-2\nu}N_3^{2-\alpha}\leq N_1^{-\alpha} N_3^{2(2-\alpha-\nu)}\notag \\
\leq &N_1^{-\alpha+2(2-\alpha-\nu)(1-\frac{\alpha}{2}) }. 
\end{align*}
In the case $N_3\geq N_1^{1-\frac{\alpha}{2}}$, if $N_3\geq N_2\geq N_1^{1-\frac{\alpha}{2}}$, using $L_2\vee L_3\ll N_2$, we have
\begin{align*}
\mathrm{I}\leq & N_1^{-\alpha}N_2^{-(\alpha-1)}N_3^{-(\alpha-1)}N_2^{1-2\nu}+N_1^{-2(\alpha-1)}N_2^{-\alpha}N_3^{-(\alpha-1)}N_2^{1-2\nu}\\
\leq &N_1^{-\alpha}N_2^{2-\alpha-2\nu}N_3^{-(\alpha-1)}+N_1^{-2(\alpha-1)}N_2^{-(\alpha+2\nu-1)}N_3^{-(\alpha-1)}\\
\leq &N_1^{-\alpha}N_3^{(3-2\nu-2\alpha)}+N_1^{-2(\alpha-1)}N_1^{-(2(\alpha-1)+2\nu)(1-\frac{\alpha}{2}) }.
\end{align*}
If $N_3\geq N_1^{1-\frac{\alpha}{2}}>N_2$, we have
\begin{align*}
\mathrm{I}\leq &N_1^{-\alpha+1-\frac{\alpha}{2}}(N_2N_3)^{-(\alpha-1)}(L_2\vee L_3)^{1-2\nu}\leq N_1^{1-\frac{3\alpha}{2}}N_2^{2-\alpha-2\nu}N_3^{-(\alpha-1)}\\
\leq & N_1^{1-\frac{3\alpha}{2}}N_1^{(2-\alpha-2\nu)(1-\frac{\alpha}{2})}N_1^{-(\alpha-1)(1-\frac{\alpha}{2})}.
\end{align*}
Therefore, we obtain that
\begin{align*}
\mathrm{I}
\lesssim & \begin{cases}
& N_1^{-3(\alpha-1)-2\nu}+N_1^{-\alpha+2(2-\alpha-\nu)(1-\frac{\alpha}{2})},\text{ if }N_3\geq N_2\geq N_1^{1-\frac{\alpha}{2}}\\
&N_1^{-\alpha+2(2-\alpha-\nu)(1-\frac{\alpha}{2}) },\text{ if }N_3\geq N_1^{1-\frac{\alpha}{2}}>N_2,
\end{cases} 
\end{align*}
and in particular,
\begin{align}\label{numerology10}
 \mathrm{I}\lesssim N_1^{-3(\alpha-1)-2\nu}+N_1^{-\alpha+2(2-\alpha-\nu)(1-\frac{\alpha}{2}) }.
\end{align}
When $\alpha>1.1205(<\alpha_0)$, we are able to choose sufficiently small $\sigma>0$, such that the right side of \eqref{numerology10} $<N_1^{-2s-4\delta_0}$.

It remains to estimate II. First we assume that $L_2\vee L_3\leq N_1^{1-\frac{\alpha}{2}}$, since $N_2\geq L_2\vee L_3$, we have
\begin{align*}
\mathrm{II}=&N_1^{-\alpha}(N_2N_3)^{-(\alpha-1)}(L_1L_2L_3)^{-2\nu}(L_2\vee L_3)^2\\
\leq &N_1^{-\alpha}(L_2\vee L_3)^{2(1-\nu)-(\alpha-1)}\leq N_1^{-\alpha+(2(1-\nu)-(\alpha-1))(1-\frac{\alpha}{2})}.
\end{align*}
By numerical computation, when $\alpha>\frac{10}{9}(<\alpha_0)$, we are able to choose sufficiently small $\sigma>0$, such that 
$$ N_1^{-\alpha+2(1-\nu-\frac{\alpha-1}{2})(1-\frac{\alpha}{2})}<N_1^{-2s-4\delta_0},
$$
which is conclusive. Finally we assume that $L_2\vee L_3>N_1^{1-\frac{\alpha}{2}}$, then
\begin{align*}
\mathrm{II}\leq &(N_1N_2N_3)^{-\alpha}(L_2\vee L_3)^{-2\nu}(N_2N_3N_1^{2-\alpha}+(L_2\vee L_3)N_2N_3 )\\
\leq &N_1^{-2(\alpha-1)}(L_2\vee L_3)^{-2\nu-(\alpha-1)}+N_1^{-\alpha}(L_2\vee L_3)^{1-2\nu-(\alpha-1)}\\
\leq & N_1^{-2(\alpha-1)-(2\nu+(\alpha-1))(1-\frac{\alpha}{2})}+N_1^{-\alpha+1-2\nu-(\alpha-1)}\\
\leq & N_1^{-2(\alpha-1)-2(\nu+\frac{\alpha-1}{2} )(1-\frac{\alpha}{2}) }+N_1^{-2(\alpha-1+\nu)}.
\end{align*}
By numerical computation, when $\alpha>\frac{10}{9}(<\alpha_0)$, we are able to choose sufficiently small $\sigma>0$, such that the right hand side of the above inequality is smaller than $N_1^{-2s-4\delta_0}$.

 Implementing {\bf Algorithm 1}.
The proof of tri-linear estimate for {\bf Case B-5} is complete.
In summary, the proof of Proposition \ref{Multilinearkey} is completely finished.
\section*{Appendix 1: Proof of Proposition \ref{iteratedPicard}}

By the triangle inequality and the Wiener chaos estimate, the second assertion in Proposition \ref{iteratedPicard} follows from the expectation bound of $|z_{2k+1}^{\omega}(t,x)|$, hence we will only show that
\begin{align}\label{Appendix1} 
\mathbb{E}[|z_{2k+1}^{\omega}(t,x)|^2]\leq C_0t^{2j}(2j+1)!\big(\frac{(2j-1)!!}{j!}\big)^2.
\end{align} 
Without loss of generality, we take $t>0$. Recall the expression \eqref{expressionz2k+1} and the equation of $z_{2j+1}$, we have the recurrence relation for the coefficient $c_{j}(t,k_1,\cdots,k_{2j+1})$:
\begin{align*}
c_{j}(t,k_1,\cdots,k_{2j+1})=-&\!\!\!\!\!\!\!\sum_{\substack{j_1,j_2,j_3\\
		j_1+j_2+j_3=j-1}}\int_0^t\mathrm{e}^{i(t-t')|k_1-k_2+\cdots-k_{2j}+k_{2j+1}|^{\alpha}}dt'\\
	\times&c_{j_1}(t',k_1,\cdots,k_{2j_1+1})c_{j_2}(t',k_{2j_1+2},\cdots, k_{2j_1+2j_2+2})c_{j_3}(t',k_{2j_2+3}+\cdots +k_{2j+1}).
\end{align*} 
Note that $c_0(t,k_1)=1$ and 
$|c_1(t,k_1)|\leq t$. 
Define recurrently the series $\{\kappa_j\}_{j\geq 0}$ by $\kappa_0=1$ and 
\begin{align}\label{recurrence}
 \kappa_j:=\frac{1}{j}\sum_{\substack{j_1,j_2,j_3\\
j_1+j_2+j_3=j-1 }} \kappa_{j_1}\kappa_{j_2}\kappa_{j_3}.
\end{align}
By induction we deduce that for any $j\geq 0$,
$$ |c_j(t,n_1,\cdots,n_{2j+1})|\leq \kappa_jt^j.
$$ 

Next we determine $\kappa_j$. Considering the power series
$$ f(x)=\sum_{j=0}^{\infty}\kappa_j z^j,
$$
the recurrence relation \eqref{recurrence} implies that
$ f'(z)=f(z)^3, f(0)=0.
$
Solving this ODE we obtain that
$f(z)=(1-2z)^{-\frac{1}{2}}$, hence 
$$ \kappa_j=\frac{(2j-1)!!}{j!}.
$$
Therefore, we obtain that
\begin{align}\label{boundc_j}
 |c_j(t,k_1,\cdots, k_{2j+1})|\leq \kappa_j t^j:=\frac{(2j-1)!!}{j!}t^j.
\end{align}
Thus
\begin{align*}
\mathbb{E}[|z_{2j+1}^{\omega}(t,x)|^2]=&\sum_{\substack{k_1,\cdots,k_{2j+1}\\
m_1,\cdots,m_{2j+1}  }  }
c_j(t,k_1,\cdots,k_{2j+1})\ov{c}_j(t,m_1,\cdots,m_{2j+1})\\
\times & \mathrm{e}_{k_1-k_2+\cdots+k_{2j+1}}\mathrm{e}_{-m_1+m_2-\dots-m_{2j+1}} \mathbb{E}\Big[\prod_{j'=1}^{2j+1}\frac{g^{\iota_{j'}}_{k_{j'}}g^{-\iota_{j'}}_{m_{j'}} }{[k_{j'}]^{\frac{\alpha}{2}}[m_{j'}]^{\frac{\alpha}{2}} }\Big],
\end{align*}
where $\iota_{j'}\in\{+1,-1\}$ and we use the convention $g_k^{+1}=g_j$ and  $g_{k}^{-1}=\ov{g}_k$. Using the independence of Gaussians and \eqref{boundc_j}, we have
\begin{align*}
\mathbb{E}[|z_{2j+1}^{\omega}(t,x)|^2]\leq&\sum_{k_1,\cdots, k_{2j+1}} \kappa_j^2t^{2j} \cdot (2j+1)!\prod_{j'=1}^{2j+1}\frac{1}{[k_{j'}]^{\frac{\alpha}{2}}}\leq C_0t^{2j}(2j+1)!\big(\frac{(2j-1)!!}{j!}\big)^2,
\end{align*} 
where the extra factor $(2j+1)!$ comes from the number of pairings for the index
 $$(k_1,\cdots,k_{2j+1};m_1,\cdots,m_{2j+1}).$$
 This completes the proof of \eqref{Appendix1}.
\section*{Appendix 2: Proof of Lemma \ref{Timelocalization}}

It suffices to prove the first inequality
\begin{align}\label{appendix2-1} \|\chi_T(t)u\|_{X_{p,q}^{0,\gamma}}\lesssim T^{\gamma_1-\gamma}\|u\|_{X_{p,q}^{0,\gamma_1}}
\end{align}
with $u\in X_{q,p}^{0,\gamma}$ satisfying $u|_{t=0}=0$, where $1\leq p\leq \infty$, $0< \gamma<\gamma_1<1+\frac{1}{q'}$ and $1\leq q<\infty$, since we may regard $\|\wt{\Theta}_{kk'}(\lambda,\lambda')\|_{L_{\lambda'}^2l_{k'}^2}$ as
$\wt{u}(\lambda,k)$. We decompose $u=u_1+u_2$, where
$$ \mathcal{F}_{t,x}u_1(\tau,k)=\mathbf{1}_{|\tau-|k|^{\alpha}|\geq\frac{1}{T}}\mathcal{F}_{t,x}u(\tau,k),\quad \mathcal{F}_{t,x}u_2(\tau,k)=\mathbf{1}_{|\tau-|k|^{\alpha}|<\frac{1}{T}}\mathcal{F}_{t,x}u(\tau,k).
$$ 
To prove \eqref{appendix2-1}, it suffices to show that, uniformly in $k\in\Z$,
\begin{align}\label{appendix2-2}
&\mathrm{(a)}\quad \|\lg\tau-|k|^{\alpha}\rg^{\gamma}(\widehat{\chi_T}*\mathcal{F}_{t,x}u_1)(\tau,k)\|_{L_{\tau}^q}\lesssim T^{\gamma_1-\gamma}\|\lg\eta-|k|^{\alpha}\rg^{\gamma_1}\mathcal{F}_{t,x}u(\eta,k) \|_{L_{\eta}^q};\\
&\mathrm{(b)}\quad \|\lg\tau-|k|^{\alpha}\rg^{\gamma}(\widehat{\chi_T}*\mathcal{F}_{t,x}u_2)(\tau,k)\|_{L_{\tau}^q}\lesssim T^{\gamma_1-\gamma}\|\lg\eta-|k|^{\alpha}\rg^{\gamma_1}\mathcal{F}_{t,x}u(\eta,k) \|_{L_{\eta}^q}.
\end{align}
To prove (a), we note that 
$$\lg\tau-|k|^{\alpha}\rg^{\gamma}(\widehat{\chi}_T*\mathcal{F}_{t,x}u_1)(\tau,k)=\int_{\R}K_T^+(\tau-|k|^{\alpha},\omega)\lg\omega\rg^{\gamma_1}\wt{u}(\omega,k)d\omega,
$$
where
$$ K_T^+(\lambda,\omega)=\widehat{\chi}(T(\lambda-\omega))\frac{T\lg\lambda\rg^{\gamma}}{\lg\omega\rg^{\gamma_1}}\mathbf{1}_{|\omega|\geq \frac{1}{T}}.
$$
By Schur's test, it suffices to show that
\begin{align*}
\sup_{\lambda\in\R}\int_{\R}|K_T^+(\lambda,\omega)|d\omega\leq C_1T^{\gamma_1-\gamma},\quad \sup_{\omega\in\R}\int_{\R}|K_T^+(\lambda,\omega)|d\lambda\leq C_2T^{\gamma_1-\gamma }
\end{align*}
with $C_1,C_2$ \emph{independent} of $T$. One can check these two inequalities by direct computation, here we explain it in an informal way. Since $\widehat{\chi}$ is a Schwartz function,  $|\lambda-\omega|$ is essentially bounded by $O(\frac{1}{T})$. Due to the fact that $|\omega|\geq \frac{1}{T}$, $\lambda$ is essentially constraint in the region $|\lambda|\lesssim \frac{1}{T}$. Since the length of the integration is of size $\frac{1}{T}$, we deduce that two integrations are bounded by $O(1)T^{\gamma_1-\gamma}$.

The proof of (b) exploits the cancellation from the condition $u|_{t=0}=0$. By the Fourier inversion formula, we have
$$ \int_{\R}\wt{u}(\omega,k)d\omega=0.
$$
Therefore, with $\lambda=\tau-|k|^{\alpha}$,
\begin{align*}
(\widehat{\chi}_T\ast\mathcal{F}_{t,x}u_2)(\lambda,k)=&\int_{|\omega|<\frac{1}{T}} T\wh{\chi}(T(\lambda-\omega))\wt{u}(\omega,k)d\omega\\
=&\int_{|\omega|<\frac{1}{T}}T\wt{u}(\omega,k)[\wh{\chi}(T(\lambda-\omega))-\wh{\chi}(T\lambda)]d\omega+T\wh{\chi}(T\lambda)\int_{|\omega|\geq \frac{1}{T}}\wt{u}(\omega,k)d\omega
\end{align*}
By H\"older's inequality, 
$$ \int_{|\omega|\geq \frac{1}{T}}|\wt{u}(\omega,k)|d\omega\leq \|\lg\omega\rg^{\gamma_1}\wt{u}(\omega,k)\|_{L_{\omega}^q}\cdot T^{\gamma_1-\frac{1}{q'}},
$$
we have
$$ \Big\|\lg\lambda\rg^{\gamma}T\wh{\chi}(T\lambda)\int_{|\omega|\geq \frac{1}{T}}\wt{u}(\omega,k)d\omega\Big\|_{L_{\lambda}^q}\lesssim T^{\gamma_1-\gamma}\|\lg\omega\rg^{\gamma_1}\wt{u}(\omega,k)\|_{L_{\omega}^q}.
$$
Finally, from the fact that $[\widehat{\chi}(T(\lambda-\omega))-\widehat{\chi}(T\lambda)]\mathbf{1}_{T|\omega|<1}=O(|T\omega|\lg T\lambda\rg^{-100})$, we deduce by H\"older that
$$ \Big|\int_{|\omega|<\frac{1}{T}}T\wt{u}(\omega,k)\widehat{\chi}(T(\lambda-\omega))-\widehat{\chi}(T\lambda)]d\omega \Big|\leq T^2O(\lg T\lambda\rg^{-100})\|\lg\omega\rg^{\gamma_1}\wt{u}(\omega,k)\|_{L_{\omega}^q} \Big\|\frac{|\omega|}{\lg\omega\rg^{\gamma_1}}\Big\|_{L^{q'}(|\omega|\leq \frac{1}{T})}.
$$
Multiplying the right hand side by $\lg\lambda\rg^{\gamma}$ and then taking the $L^q$ norm in $\lambda$, we obtain the desired upper bound $T^{\gamma_1-\gamma}$. This proves (b). The proof of Lemma \ref{Timelocalization} is now complete.

\end{document}